\newcolumntype{?}{!{\vrule width 0.4pt}}
\DeclareRobustCommand{\SkipTocEntry}[5]{}
\definecolor{darkgreen}{rgb}{0.0, 0.6, 0.13}
\newtheorem{thm}{Theorem}[section]
 \newtheorem{cor}[thm]{Corollary}
 \newtheorem{lem}[thm]{Lemma}
 \newtheorem{prop}[thm]{Proposition}
 \theoremstyle{definition}
 \newtheorem{df}[thm]{Definition}
 \theoremstyle{remark}
 \newtheorem{rem}[thm]{Remark}
 \numberwithin{equation}{section}
\newcommand{\Ab}{\mathbb A}
\newcommand{\Bb}{\mathbb B}
\newcommand{\Cb}{\mathbb C}
\newcommand{\Eb}{\mathbb E}
\newcommand{\Gb}{\mathbb G}
\newcommand{\Hb}{\mathbb H}
\newcommand{\Kb}{\mathbb K}
\newcommand{\Lb}{\mathbb L}
\newcommand{\Mb}{\mathbb M}
\newcommand{\Pb}{\mathbb P}
\newcommand{\Rb}{\mathbb R}
\newcommand{\Tb}{\mathbb T}
\newcommand{\Ub}{\mathbb U}
\newcommand{\Vb}{\mathbb V}
\newcommand{\Wb}{\mathbb W}
\newcommand{\Xb}{\mathbb X}
\newcommand{\Yb}{\mathbb Y}
\newcommand{\Zb}{\mathbb Z}
\newcommand{\Bc}{\mathcal B}
\newcommand{\Cc}{\mathcal C}
\newcommand{\Dc}{\mathcal D}
\newcommand{\Fc}{\mathcal F}
\newcommand{\Gc}{\mathcal{G}}
\newcommand{\Hc}{\mathcal H}
\newcommand{\Ic}{\mathcal I}
\newcommand {\Jc}{\mathcal{J}}
\newcommand{\Kc}{\mathcal K}
\newcommand{\Lc}{\mathcal L}
\renewcommand{\Mc}{\mathcal M}
\newcommand{\Nc}{\mathcal N}
\newcommand{\Oc}{\mathcal O}
\newcommand{\Pc}{\mathcal P}
\newcommand{\Qc}{\mathcal Q}
\newcommand{\Sc}{\mathcal S}
\newcommand{\Tc}{\mathcal T}
\newcommand{\Uc}{\mathcal U}
\newcommand{\Vc}{\mathcal V}
\newcommand{\Wc}{\mathcal W}
\newcommand{\Xc}{\mathcal X}
\newcommand{\Zc}{\mathcal Z}
\newcommand{\Bs}{\mathscr B}
\newcommand{\Cs}{\mathscr C}
\newcommand{\Ds}{\mathscr D}
\newcommand{\Es}{\mathscr E}
\newcommand{\Fs}{\mathscr F}
\newcommand{\Gs}{\mathscr G}
\newcommand{\Hs}{\mathscr H}
\newcommand{\Is}{\mathscr I}
\newcommand{\Ks}{\mathscr K}
\newcommand{\Ls}{\mathscr L}
\newcommand{\Ns}{\mathscr N}
\newcommand{\Os}{\mathscr O}
\newcommand{\Ps}{\mathscr P}
\newcommand{\Qs}{\mathscr Q}
\newcommand{\Rs}{\mathscr R}
\newcommand{\Ts}{\mathscr T}
\newcommand{\Us}{\mathscr U}
\newcommand{\Vs}{\mathscr V}
\newcommand{\Ws}{\mathscr W}
\newcommand{\Xs}{\mathscr X}
\newcommand{\Ys}{\mathscr Y}
\newcommand{\Zs}{\mathscr Z}
\newcommand{\Af}{\mathfrak A}
\newcommand{\Bf}{\mathfrak B}
\newcommand{\Cf}{\mathfrak C}
\newcommand{\Df}{\mathfrak D}
\newcommand{\Ef}{\mathfrak E}
\newcommand{\Ff}{\mathfrak F}
\newcommand{\Gf}{\mathfrak G}
\newcommand{\If}{\mathfrak I}
\newcommand{\Lf}{\mathfrak L}
\newcommand{\Mf}{\mathfrak M}
\newcommand{\Nf}{\mathfrak N}
\newcommand{\Pf}{\mathfrak P}
\newcommand{\Rf}{\mathfrak R}
\newcommand{\Sf}{\mathfrak S}
\newcommand{\Xf}{\mathfrak X}
\newcommand{\ff}{\mathfrak f}
\newcommand{\gf}{\mathfrak g}
\newcommand{\lf}{\mathfrak l}
\newcommand{\mf}{\mathfrak m}
\newcommand{\nf}{\mathfrak n}
\newcommand{\rf}{\mathfrak r}
\newcommand{\uf}{\mathfrak u}
\newcommand{\wf}{\mathfrak w}
\newcommand{\dirac}{\boldsymbol{\delta}}
\begin{document}

\title{Long time justification of wave turbulence theory}
\author{Yu Deng}
\address{\textsc{Department of Mathematics, University of Southern California, Los Angeles, CA, USA}}
\email{\texttt{yudeng@usc.edu}}
\author{Zaher Hani}
\address{\textsc{Department of Mathematics, University of Michigan, Ann Arbor, MI, USA}}
\email{\texttt{zhani@umich.edu}}
\date{}
\maketitle

\begin{abstract} In a series of previous works \cite{DH21,DH21-2,DH23}, we gave a rigorous derivation of the homogeneous wave kinetic equation (WKE) up to small multiples of the \emph{kinetic timescale}, which corresponds to short time solutions to the wave kinetic equation. In this work, we extend this justification to arbitrarily long times that cover the full lifespan of the WKE. This is the first long-time derivation ever obtained in any large data nonlinear (particle or wave)  collisional kinetic limit.
\end{abstract}
\tableofcontents
\section{Introduction} The problem of rigorously justifying the laws of kinetic theory and more generally statistical physics, starting from Hamiltonian first principles, was first raised to prominence by Hilbert in his famous list of problems announced at the ICM in 1900.

More than a century has passed since Hilbert announced this problem---now called \emph{Hilbert's Sixth Problem}---and during this time, a tremendous amount of progress has been achieved towards its end. We shall review this progress more thoroughly in Section \ref{history}, but it can be summarized briefly as follows:
In the classical context of particle kinetic theory, the turning point was Lanford's result \cite{Lan75} in 1975 which justified Boltzmann's kinetic theory up to small kinetic times; this result was later completed and developed upon in many works, most notably \cite{GST14}. In the context of wave kinetic theory (also known as wave turbulence theory), where colliding particles are replaced by nonlinearly interacting waves, the parallel results were only recently achieved by the authors \cite{DH21,DH21-2,DH23} following several important developments after the turn of the century in linear or equilibrium settings. However, for all this time, one of the biggest challenges in kinetic theory remained open, which is to resolve Hilbert's sixth problem for a nonlinear system (such as classical Boltzmann or wave turbulence) over arbitrarily long time intervals in the kinetic timescale. 

In this paper, we give the \emph{first ever large data long-time justification result} of a kinetic limit for a nonlinear Hamiltonian (particle or wave) collisional system: we derive the homogeneous wave kinetic equation (WKE), from the cubic nonlinear Schr\"{o}dinger (NLS) equation, up to \emph{large} multiples of the kinetic time $T_{\mathrm{kin}}$, that cover the \emph{full lifespan} of the WKE. We also obtain associated results such as propagation of chaos, density evolution, and the hierarchical evolution of higher moments over the same long time range. This extends the results of \cite{DH21,DH21-2,DH22,DH23}, which concern only sufficiently small multiples of $T_{\mathrm{kin}}$.
\subsection{Setup and the main result}\label{intro-nls} We start by describing the setup and main result of this paper, before moving to the background and discussions in Section \ref{history}. Following \cite{DH23}, in dimension $d\geq 3$, consider the cubic nonlinear Schr\"{o}dinger equation
\begin{equation}\label{nls}\tag{NLS}
\left\{
\begin{split}&(i\partial_t-\Delta)u+\alpha|u|^2u=0,\quad x\in \Tb_L^d=[0,L]^d,\\
&u(0,x)=u_{\mathrm{in}}(x)
\end{split}
\right.
\end{equation} 
on the square torus\footnote{All results and proofs extend without change to arbitrary rectangular tori as long as $0<\gamma<1$ (see below). For $\gamma=1$ we need a genericity condition on the torus.} $\Tb_L^d=[0,L]^d$ of size $L$. Here $\alpha$ is a parameter indicating the strength of the nonlinearity, and \[\Delta:=\frac{1}{2\pi}(\partial_{x_1}^2+\cdots +\partial_{x_d}^2)\] is the normalized Laplacian. Let $\Zb_L^d:=(L^{-1}\Zb)^d$ be the dual of $\Tb_L^d$, and fix the space Fourier transform as
\begin{equation}\label{fourier}
\widehat u(t, k) =\frac{1}{L^{d/2}}\int_{\Tb^d_L} u(t, x) e^{-2\pi i k\cdot x} \, dx, \qquad u(t,x) =\frac{1}{L^{d/2}}\sum_{k\in\Zb_L^d}\widehat{u}(t,k)e^{2\pi ik\cdot x}.
\end{equation}

Assume the initial data of (\ref{nls}) is given by
\begin{equation}
\label{data}\tag{DAT}u_{\mathrm{in}}(x)=\frac{1}{L^{d/2}}\sum_{k\in\Zb_L^d}\widehat{u_{\mathrm{in}}}(k)e^{2\pi ik\cdot x},\quad \widehat{u_{\mathrm{in}}}(k)=\sqrt{\varphi_{\mathrm{in}}(k)}\cdot \gf_k,
\end{equation}
where $\varphi_{\mathrm{in}}:\Rb^d\to[0,\infty)$ is a given Schwartz function, and $\{\gf_k\}$ is a collection of i.i.d. random variables. For concreteness, we will assume each $\gf_k$ is a standard normalized Gaussian. The results remain true with suitable modifications in non-Gaussian cases, see Remark \ref{suppresult} and \cite{DH21-2}.

Define the \emph{kinetic (or Van Hove) time}
\[T_{\mathrm{kin}}:=\frac{1}{2\alpha^2}.\] In this paper we will study the dynamics of (\ref{nls}) under the limit $L\to\infty,\alpha\to 0$. We assume that they are related by $\alpha=L^{-\gamma}$, so in particular $T_{\mathrm{kin}}=\frac{1}{2} L^{2\gamma}$, where $\gamma\in(0,1]$ is a fixed value called the \emph{scaling law} between $L$ and $\alpha$. This is the full range of admissible scaling laws for the problem as we  explain in Remark \ref{scalingrem} below.
\subsubsection{The wave kinetic equation} The wave kinetic equation is given by:
\begin{equation}\label{wke}\tag{WKE}
\left\{
\begin{split}&\partial_\tau\varphi(\tau,k)=\Kc(\varphi(\tau),\varphi(\tau),\varphi(\tau))(k),\\
&\varphi(0,k)=\varphi_{\mathrm{in}}(k),
\end{split}
\right.
\end{equation} where $\varphi_{\mathrm{in}}$ is as above and $k\in\Rb^d$, and the nonlinearity $\Kc$ (i.e. cubic collision operator) is given by
\begin{align}
\Kc(\varphi_1,\varphi_2,\varphi_3)(k)&=\int_{(\Rb^d)^3}\big\{\varphi_1(k_1)\varphi_2(k_2)\varphi_3(k_3)-\varphi_1(k)\varphi_2(k_2)\varphi_3(k_3)\nonumber\\&\qquad\qquad\qquad\quad+\varphi_1(k_1)\varphi_2(k)\varphi_3(k_3)-\varphi_1(k_1)\varphi_2(k_2)\varphi_3(k)\big\}\nonumber\\\label{wke2}\tag{COL}&\qquad\qquad\times\dirac(k_1-k_2+k_3-k)\dirac(|k_1|^2-|k_2|^2+|k_3|^2-|k|^2)\,\mathrm{d}k_1\mathrm{d}k_2\mathrm{d}k_3.
\end{align}
Here and below $\dirac$ denotes the Dirac delta, and we define
\[|k|^2:=\langle k,k\rangle,\quad \langle k,\ell\rangle:=k^1 \ell^1+\cdots +k^d\ell^d,\] where $k=(k^1,\cdots,k^d)$ and $\ell=(\ell^1,\cdots,\ell^d)$ are $\Zb_L^d$ or $\Rb^d$ vectors. 

The local well-posedness of (\ref{wke}) is proved in \cite{GIT20}. In general, global well-posedness is not true; there are known examples of finite-time blowup \cite{EV15,EV15-2,CL19}. In fact such blowup, in the form of $\delta$ singularity, is expected under mild conditions on initial data, and is connected to the formation of condensates. See Section \ref{diffusion} for more related discussions.
\subsubsection{The main result} Our main result is as follows.
\begin{thm}\label{main} Fix $d\geq 3$ and $\gamma\in(0,1)$ (or assume $\gamma=1$ but replace $\Tb_L^d$ by a rectangular torus and assume a genericity condition on its aspect ratios), and fix Schwartz initial data $\varphi_{\mathrm{in}}\geq 0$. We also fix $\tau_*\in(0,\tau_{\mathrm{max}})$, where $\tau_{\max}$ is the maximal time of existence for (\ref{wke}) (which may be finite or $\infty$), see Proposition \ref{wkelwp} for the precise definition. Consider the equation (\ref{nls}) with random initial data (\ref{data}), and assume $\alpha=L^{-\gamma}$ so that $T_{\mathrm{kin}}=\frac{1}{2} L^{2\gamma}$.

Then, for sufficiently large $L$, (\ref{nls}) has a smooth solution $u(t,x)$ up to time \[T=\tau_*\cdot\frac{L^{2\gamma}}{2}=\tau_*\cdot T_{\mathrm{kin}},\] with probability $\geq 1-e^{-(\log L)^{40d}}$. Moreover we have
\begin{equation}\label{limit}\sup_{t\in[0,T]}\sup_{k\in\Zb_L^d}\left|\Eb\,|\widehat{u}(t,k)|^2-\varphi\bigg(\frac{t}{T_{\mathrm{kin}}},k\bigg)\right|\leq L^{-\theta},
\end{equation} where $\theta>0$ depends only on $(d,\gamma)$ (see the line following (\ref{parameters}) in Section \ref{setupparam}), $\widehat{u}$ is as in \eqref{fourier}, and $\varphi(\tau,k)$ is the solution to (\ref{wke}).  In (\ref{limit}) and below we understand that the expectation $\Eb$ is taken assuming (\ref{nls}) has a smooth solution on $[0,T]$, which is an event with overwhelming probability.
\end{thm}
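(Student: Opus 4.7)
The natural strategy is to iterate the short-time kinetic limit of \cite{DH21,DH21-2,DH23}. Fix a small constant $\delta>0$ that lies within the range of admissible kinetic times covered by those earlier works, partition $[0,T]$ into $N\sim\tau_*/\delta$ subintervals $[t_j,t_{j+1}]$ of length $\delta\cdot T_{\mathrm{kin}}$, and set $\tau_j=t_j/T_{\mathrm{kin}}$. On each subinterval I would run the short-time derivation with initial data $u(t_j,\cdot)$ and target profile $\varphi(\tau_j,\cdot)$. By Proposition \ref{wkelwp}, this target profile stays in a fixed Schwartz ball up to $\tau_*<\tau_{\mathrm{max}}$, so the local target data is well under control along the whole iteration.

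The main obstacle is that although the data at $t_0=0$ is a Gaussian random field with independent Fourier coefficients, the solution $u(t_j,\cdot)$ for $j\geq 1$ is no longer Gaussian: the cubic nonlinearity generates all higher correlations, and in fact those are exactly the interactions that drive the collision kernel $\Kc$ at the kinetic scale. One cannot therefore reapply the previous short-time theorem as a black box after the first step; what must be propagated inductively is a richer statistical description of $u(t_j,\cdot)$. Concretely, one needs to control not only the second moment (as in (\ref{limit})) but also all higher cumulants of $\widehat{u}(t_j,\cdot)$ in an appropriate norm, and show that they are small enough (say of order $L^{-\theta_p}$ for each fixed $2p$-point correlation) so that $u(t_j,\cdot)$ behaves, for the purposes of the next short step, like a Gaussian random field with covariance $\varphi(\tau_j,\cdot)$ up to a negligible non-Gaussian remainder. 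This is precisely the long-time propagation of chaos advertised in the introduction.

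I would accordingly formulate a single inductive hypothesis at level $j$ combining three pieces: (i) a pointwise covariance bound sharpening (\ref{limit}) at $t=t_j$, (ii) smallness of all higher cumulants of $\widehat{u}(t_j,\cdot)$, capturing propagation of chaos, and (iii) an a priori Sobolev/Wiener-chaos bound placing $u(t_j,\cdot)$ in the regime where the local step applies, valid on an event of probability $\geq 1-e^{-(\log L)^{40d}}$. To pass from $j$ to $j+1$, I would run a conditional Feynman-diagram expansion of the NLS flow on $[t_j,t_{j+1}]$, gluing the new diagrams at scale $\delta T_{\mathrm{kin}}$ to the higher-order statistical data at $t_j$. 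Diagrams built from Wick pairings reproduce one step of the WKE through the resonance and counting analysis of \cite{DH23}; non-pairing diagrams either pick up the small cumulants from (ii) or inherit the oscillatory cancellation from the resonance mechanism developed in \cite{DH21,DH23}.

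The hard part will be ensuring that the per-step errors remain summable over $N\sim\tau_*/\delta$ iterations: each step must lose at most $O(L^{-\theta'})$ for a fixed $\theta'>0$ depending only on $(d,\gamma)$ and independent of $j$. A naive perturbative expansion would lose a full power of $L$ per step from the mismatch between $u(t_j,\cdot)$ and its Gaussian proxy, so closing the bootstrap requires that the long-time diagram combinatorics does not generate $N$-dependent constants, and in fact that the higher-cumulant control in (ii) is \emph{improved} rather than merely propagated across each step. Once the induction reaches $j=N$, extracting (\ref{limit}) at $t=T$ from piece (i) and discarding the bad event via the large-deviation bound in (iii) are comparatively routine.
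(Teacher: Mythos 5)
Your overall architecture (partition $[0,T]$ into $O(\tau_*/\delta)$ kinetic subintervals, propagate an inductive statistical description of $\widehat{u}(t_j,\cdot)$ including higher cumulants, and rerun a diagrammatic expansion on each step) matches the paper's strategy. But there is a genuine gap in your inductive hypothesis (ii): you propose to propagate only a \emph{smallness bound} on the higher cumulants and to treat $u(t_j,\cdot)$ as "Gaussian with covariance $\varphi(\tau_j,\cdot)$ up to a negligible non-Gaussian remainder." This is exactly the step the paper shows cannot work. The order-$4$ cumulant at time $\delta T_{\mathrm{kin}}$ is indeed pointwise small, of size $L^{-(d-\gamma)}$ as in (\ref{introcm2}), but when it is fed into the Duhamel expansion on the next subinterval, the sum over the resonant set $\{|\Omega|\lesssim L^{-2\gamma}\}$ produces a contribution $\Yb$ of size $L^{-d-\gamma}\cdot T_{\mathrm{kin}}\cdot L^{2(d-\gamma)}\cdot L^{-(d-\gamma)}\sim 1$ to $\Eb|\widehat{u}(2\delta T_{\mathrm{kin}},k)|^2$ --- the \emph{same order} as the main term, and not part of the WKE approximation (see (\ref{introex3})--(\ref{Mark})). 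So the non-pairing diagrams do not "pick up the small cumulants" harmlessly; they are only killed by a cancellation that uses the \emph{exact structure} of the cumulant, namely its own expansion over the earlier time interval, whose time variables are disjoint from those of the new step. This is also where time irreversibility enters: a pure norm bound is symmetric under time reversal and therefore cannot distinguish the forward derivation from the (false) backward one, cf.\ (\ref{Mark}) versus (\ref{Mark2}). Closing your induction thus requires replacing (ii) by a structural ansatz --- in the paper, the identity (\ref{ansatz3}) expressing each cumulant as an explicit sum of $\Kc_\Gc$ over canonical layered gardens plus an exponentially small remainder --- and this is the main new object of the whole argument, not an implementation detail.

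A second, related omission: even granting a structural expansion, if one iterates the Duhamel/cumulant expansion all the way back to time $0$ the combinatorics diverges, because the number of admissible layerings of a regular couple of order $2n$ can be as large as $\Df^n$, turning the per-couple gain $(C_1\delta)^n$ into $(C_1\tau_*)^n$ (Section \ref{intro2-3}). Your proposal's requirement that "the long-time diagram combinatorics does not generate $N$-dependent constants" names the danger but offers no mechanism to avoid it. The paper's fix is the \emph{canonical} (partial) expansion rule: stop expanding whenever a two-point correlation is met and substitute the uniformly bounded $\varphi(p\delta,k)$, expanding further only for cumulants of order $\geq 4$; this is what makes the count of coherent layerings $O(C_0^n)$ (Proposition \ref{layerreg2}) and lets the scheme escape the radius of convergence of the time-$0$ Taylor series. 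Without both of these ingredients the bootstrap as you describe it does not close.
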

\begin{rem} The choice of the equation (\ref{nls}) is not essential, and ideas of the proof work for any semilinear dispersive equation modulo technicalities\footnote{For equations with derivative nonlinearity or quasilinear equations (such as water wave), it is physically natural to add vanishing dissipations to the system, or to put ultraviolet cutoffs at very high frequencies that converge to the identity in the kinetic limit. With such considerations, our method of proof also work for these equations.}. The reason we choose to work with (\ref{nls}) is that it can be viewed as the universal Hamiltonian equation, in the sense that any other Hamiltonian equation reduces to (\ref{nls}) in a certain limit regime of solutions (see \cite{SS99}). Moreover, the kinetic theory for (\ref{nls}) is closely related to both classical and quantum Boltzmann equations. In fact, the integration domain in the collision kernel in \eqref{wke2} is the same as that in the Boltzmann equation (see Section \ref{history} below for further discussions). All this makes (NLS) the perfect choice as the underlying system in our studies.
\end{rem}
\begin{rem}
A major significance of Theorem \ref{main} is that it allows all the rich long-time dynamics of (\ref{wke}), see Section \ref{diffusion}, to manifest via (\ref{nls}). In the case when (\ref{wke}) has finite time blowup, Theorem \ref{main} is clearly optimal (unless we want to extend beyond the blowup time). When (\ref{wke}) has global solution, we can allow the $\tau_*$ in Theorem \ref{main} to grow with $L$, and an interesting question is to determine the optimal dependence of $\tau_*$ on $L$. In general this depends on the long-time behavior of the solution to (\ref{wke}); for uniformly bounded solutions, our proof yields $\tau_*\sim\log\log L$. This may be improved to $\tau_*\sim \log L/\log\log L$ by refining the arguments in the proof (we made no such effort in this paper).
\end{rem}
\begin{rem}\label{scalingrem}The role of the scaling law $\gamma$ has been vague in early physics literature, but has recently been clarified by the authors in \cite{DH21,DH21-2,DH22,DH23}. For the equation (\ref{nls}) on an arbitrary torus (i.e. without imposing genericity condition on its aspect ratios), the admissible range of scaling laws is $\gamma\in(0,1)$, with the endpoint $\gamma=0$ also being admissible in the \emph{discrete} setting (see \cite{LS11}) and $\gamma=1$ also being admissible if we impose genericity condition on the torus \cite{DH21}. This full range is covered in \cite{DH23} and the current paper. Two specific scaling laws are of particular importance: $\gamma=1$ ($T_{\mathrm{kin}}\sim L^2$) which is linked to the notable Gibbs measure invariance problem in dimension $d=3$, and $\gamma=1/2$ ($T_{\mathrm{kin}}\sim L$), which corresponds to the Boltzmann-Grad scaling in the particle case, and is a natural scaling law in the inhomogeneous setting. See \cite{DH22,DH23} for more discussions.
\end{rem}
\begin{rem}\label{suppresult} As in the earlier papers \cite{DH21,DH21-2,DH23}, we can also obtain the accompanying results to Theorem \ref{main} pertaining to higher order statistics, using the same arguments in \cite{DH21-2}. These include propagation of chaos (which in fact directly follows from the proof of Theorem \ref{main}), probability density evolution in the case of non-Gaussian initial data, and derivation of the wave kinetic hierarchy; see \cite{DH21-2} for more details.
\end{rem}
\begin{rem} Unlike in \cite{DH21,DH21-2,DH23}, here the Schwartz assumption in Theorem \ref{main} is necessary (at least the level of regularity and decay required for $\varphi_{\mathrm{in}}$ should depend on $\tau_*$), because in our inductive argument (Section \ref{sectioninduct}), the norms involved in the bounds get successively weaker with each iteration.
\end{rem}
\subsection{Background and history}\label{history} The work of Boltzmann \cite{Bol72} in the 1870s marked the formal beginning of \emph{kinetic theory}. In general, it studies the evolution of macroscopic\footnote{In some of the literature, a distinction is made between the \emph{mesoscopic} observables studied in kinetic theory (such as the density function), and the macroscopic ones obtained by taking averages of the mescoscopic quantitites (such as fluid velocity).} observables in the kinetic limit, where certain statistical averages of a given microscopic system (e.g. $N$ colliding particles) are studied in the limit of infinite system size ($N\to \infty$). In the classical case of colliding particles, the microscopic dynamics is Newtonian and the corresponding macroscopic limit is given by the \emph{Boltzmann equation}.
\smallskip

The parallel theory for waves, under the name of \emph{wave kinetic theory} or \emph{wave turbulence theory}, was initiated in the work of Peierls \cite{Pei29} in the 1920s. Here particles are replaced by wave modes (or wave packets), collisions are replaced by nonlinear wave interactions such as (\ref{nls}), and the Boltzmann equation is replaced by the wave kinetic equation. With advances in the last century, the wave kinetic theory has now become a vast subject, which provides systematic treatments of wave interactions and has major scientific applications across various disciplines \cite{Has62,Has63,Zak65,BS66,Ved67,ZS67,BN69,Dav72,GS79,ZLF92,Sec98,Jan08}, see Nazarenko \cite{Naz11} for a relatively modern survey. In particular, we mention the influential papers of Hasselmann \cite{Has62,Has63} and Zakharov \cite{Zak65}: the former initiates the study of kinetic theory for water waves, leading to highly successful applications to oceanography. The latter one discovers the \emph{Zakharov spectrum} in analogy to the Kolmogorov spectrum in hydrodynamic turbulence, which gives the theory its common name of \emph{wave turbulence}.

Another aspect of wave kinetic theory appears when we replace classical particles in the Boltzmann picture with quantum particles (Bosons or Fermions). This leads to the quantum Boltzmann or \emph{Boltzmann-Nordheim equation}, see the papers of Nordheim \cite{Nor28} and Uehling-Uhlenbeck \cite{UU33} in the 1920--30s. The quantum Boltzmann equation is closely related to both the classical Boltzmann equation and the wave kinetic equation (\ref{wke}) associated with (\ref{nls}); in fact its collision operator is an exact linear combination of the other two, and all three collision operators are given by integrations in the same domain. On another hand, one can also consider classical or quantum particle systems where the nonlinear interactions is rescaled by the size of the system, in what is called the \emph{mean field limit}. This leads to Vlasov-type equations in the classical setting, and nonlinear Schr\"odinger equations in the quantum setting. Such mean-field limits are relatively better understood, and we shall not elaborate on them in our discussion here (cf. \cite{Spo80, ESY07, KM08, Sch17, J14, HJ15, Go16} for a sample of results and some review articles).
\smallskip

Mathematically speaking, the main challenge is to rigorously derive the kinetic equations, as well as subsequent diffusion and other asymptotic limits, from the underlying microscopic systems. For classical particles, this was explicitly stated in \emph{Hilbert's Sixth Problem} back in 1900 \cite{Hil00}. The state of art in this particle setting has been the rigorous derivation of the Boltzmann equation for \emph{short time} (cf. Lanford \cite{Lan75}, King \cite{Kin75}, which have been completed and improved more recently in Gallagher-Saint-Raymond-Texier \cite{GST14}) or \emph{small data} (Illner-Pulvirenti \cite{IP86,IP89}, Pulvirenti \cite{Pul87}); see also \cite{PS16, BGSS20,BGSS20-1,BGSS22} for some recent developments. The next step, which would be the justification of Boltzmann equation for arbitrarily long time, has been a major open problem in this subject since the 1975 paper of Lanford \cite{Lan75} \footnote{More can be said when the microscopic particle system is replaced by a random collision model as in \cite{OVY93}.}. 

\smallskip
On the wave turbulence side, the mathematical progress started much slower, but is quickly catching up in recent years. Earlier results such as Spohn \cite{Spo77} and Erd\"{o}s-Yau \cite{EY00} focused on linear problems where the Feynman diagram expansion has a simpler structure. These were then extended beyond the kinetic time scale in the breakthrough of Erd\"{o}s-Salmhofer-Yau \cite{ESY08}.

The next major advancement was Lukkarinen-Spohn \cite{LS11}. Though only involving equilibrium settings, it was the first one to treat the full nonlinear problem, and inspired many of the subsequent studies. In nonlinear,  out-of-equilibrium settings, partial results were obtained by several different groups including the authors \cite{BGHS19,DH19,CG19,CG20,DeS22}, which covered successively better time scales but still fell short of the kinetic time $T_{\mathrm{kin}}$.

The state of the art prior to this work has been the authors' recent papers \cite{DH21,DH21-2,DH23}. These provide the first rigorous derivation of (\ref{wke}) from (\ref{nls}) that reaches (a small multiple of) $T_{\mathrm{kin}}$, thus matching the state of the art in the particle case \cite{GST14}. We also mention a few results concerning the inhomogeneous setting and stochastic variants etc., see \cite{Fao20,DK21,ST21,Her22,HRST22,DK23,ACG21,Ma22}, and concerning the solution theory to (\ref{wke}), see \cite{EV15,EV15-2,CL19,GIT20,ST20,CDG22}. Finally, the derivation of the Boltzmann-Nordheim equation (see \cite{ESY04} for precise formulation) is still completely open, even for short time (for recent progress, cf. \cite{CHH23,CCH23, CHHol23} and references therein).

\smallskip
We conclude that, in all the \emph{large data, nonlinear} settings discussed above, the results have been limited to short time, even for the equilibrium result in \cite{LS11}. In this context, the current paper provides the first rigorous derivation of any collisional nonlinear kinetic equation (particle or wave) that goes beyond the small data or short time perturbative setting.
\subsection{The difficulty}\label{diff} As discussed in Section \ref{history}, in both particle and wave settings, the justification of the kinetic approximation has been limited to short time, and extending this to arbitrarily long time has been a major unsolved problem. The only exception is Erd\"{o}s-Salmhofer-Yau \cite{ESY08} (extending Erd\"{o}s-Yau \cite{EY00}), which concerns a \emph{linear} equation.

The fundamental difficulty involved in these long-time problems is the {\bf divergence of Taylor series}, which we explain below. In general, there are two approaches to the short-time justification of the kinetic approximation: the \emph{hierarchical approach} \cite{Lan75,Kin75,GST14,ESY08,LS11}, which relies on the infinite hierarchy system satisfied by the $n$-th order statistics (i.e. $n$-th moments or $n$-particle distributions),
and the \emph{dynamical approach} \cite{BGHS19,DH19,CG19,CG20,DH21,DH21-2,DH23}, which relies on approximating the dynamics of the microscopic system.

In the dynamical approach, the solution to the microscopic system (such as \ref{nls})) is approximated by high order Duhamel expansions; this is essentially a Taylor series in time, which will have to converge if we want this approach to be valid. In the hierarchical approach, a key challenge is to obtain exponential a priori bounds for high order target quantities (i.e. $n$-th order statistics for very large $n$), which also requires the absolute convergence of a certain Taylor-type series expansion. Thus in either case, {\bf as long as the approach is based on the structure of time $0$ data, the result cannot go beyond the radius of convergence of the Taylor series at time $0$.}

For \emph{linear} equations such as \cite{ESY08} this is not a problem, because under suitable assumptions the above radius of convergence is \emph{always} $\infty$. However, even for the simplest nonlinear equation, it is very likely that a regular solution exists on a longer time interval, but the radius of convergence of the Taylor expansion at time $0$ is much smaller. In such cases, it is not possible to go beyond this radius of convergence, by simply setting up an argument ``based on time $0$". On the other hand, if we want to exploit any ``base time" $t_*$ greater than 0, then we are faced with the difficulty that the data at time $t_*$ does not have independent states or modes, and the behavior of the target quantities at time $t_*$ relies on the full nonlinear dynamics on $[0,t_*]$. This difficulty is intimately linked to the emergence of \emph{time irreversibility}, which we will discuss in Sections \ref{strategyintro} and \ref{subsec:irreversibility} below.
\subsection{The strategy}\label{strategyintro} Our main theorem, Theorem \ref{main}, resolves the difficulty in Section \ref{diff} and completes the long-time justification of the kinetic limit in the wave turbulence setting. This relies on the following strategy, which is summarized here, and expanded upon in Section \ref{intro2} below.
\medskip

\uwave{Idea 1: Shifting base times.} We follow the dynamical approach. As discussed in Section \ref{diff}, to go beyond the radius of convergence, we need to shift the base time, and perform expansions at later times $t>0$. In practice, for fixed $\tau_*$ as in Theorem \ref{main}, we may choose $\delta$ small enough depending on $\tau_*$, such that $\tau_*=\Df\delta$ with $\Df\in\Zb$, and the solution $\varphi(\tau,k)$ has a convergent Taylor expansion on $[p\delta,(p+1)\delta]$ with base point $p\delta$, for each $0\leq p<\Df$. This formally allows us to expand $u(\tau\cdot T_{\mathrm{kin}})$ as a Taylor series of $u(p\delta\cdot T_{\mathrm{kin}})$ for $\tau\in[p\delta,(p+1)\delta]$, so \emph{if $u(p\delta\cdot T_{\mathrm{kin}})$ were to have independent Fourier coefficients as $u(0)=u_{\mathrm{in}}$ does in (\ref{data})}, then repeating the same arguments as in \cite{DH21,DH21-2,DH23} would immediately yield (\ref{limit}) for $t\in[p\delta,(p+1)\delta]$. Of course, such independence is not true due to the nonlinear evolution (\ref{nls}) on $[0,p\delta\cdot T_{\mathrm{kin}}]$, so the first central component of our proof is to {\bf examine the sense in which the Fourier coefficients of $u(p\delta\cdot T_{\mathrm{kin}})$ can be viewed as nearly independent.} This near independence condition is to be proved \emph{inductively in $p$}, which means it will get successively weaker and involve successively lower order quantities as $p$ increases; at each fixed $p$ it has to be strong enough to imply the validity of kinetic approximation, but also has to be verified by the actual time evolution of (\ref{nls}). More importantly, this condition has to recognize the irreversibility of the kinetic equation, which means that, at each $p$ it has to indicate a {\bf unique preferred direction} of time in which the derivation can proceed. This emergence of time irreversibility is a subtle point on which we shall elaborate below and in Section \ref{subsec:irreversibility}.

\medskip

\uwave{Idea 2: Quantifying ``near independence".} Now the question becomes how to measure the dependence of Fourier coefficients of $u(p\delta\cdot T_{\mathrm{kin}})$. One possibility, suggested by the proof of propagation of chaos in \cite{DH21-2}, is to express this in terms of mixed moments, or more precisely \emph{cumulants}, which quantify how far the behavior of the mixed higher order moments is from independence and Gaussianity. 

Recall the definition of cumulants $\Kb(X_1,\cdots,X_r)$ (see \cite{LS11}, or Definition \ref{defcm} below). Now consider
\begin{equation}\label{introcm}\Kb\big(\widehat{u}(p\delta\cdot T_{\mathrm{kin}},k_1)^{\zeta_1}\,,\widehat{u}(p\delta\cdot T_{\mathrm{kin}},k_2)^{\zeta_2}\,, \cdots, \widehat{u}(p\delta\cdot T_{\mathrm{kin}},k_r)^{\zeta_r}\big),\end{equation} where $k_j\in\Zb_L^d$ and $\zeta_j\in\{\pm\}$ (indicating possible conjugates, see Section \ref{setupnotat} below). By gauge and space translation invariance, it is easy to see that (\ref{introcm}) vanishes unless $\sum_j\zeta_j=\sum_{j}\zeta_jk_j=0$ (so $r$ must be even); moreover (\ref{introcm}) equals $\Eb|\widehat{u}(p\delta\cdot T_{\mathrm{kin}},k)|^2\approx \varphi(p\delta,k)$ if $r=2$ and $k_1=k_2=k$. As such, we know that the independence (and Gaussianity) of the Fourier coefficients of $u(p\delta\cdot T_{\mathrm{kin}})$ exactly corresponds to the vanishing of the cumulants (\ref{introcm}) for $r\geq 4$.

Now, for near independence, it is natural to think that such vanishing should be replaced by suitable smallness or decay conditions for (\ref{introcm}). Indeed, it follows from the proof in \cite{DH21-2} that
\begin{equation}\label{introcm2}|(\ref{introcm})|\leq L^{-cr},\quad \forall r\geq 4
\end{equation} for some small value $c>0$. Unfortunately, this is too weak as we shall see in an example below. On the other hand, in the Gibbs measure case \cite{LS11}, (\ref{introcm2}) is replaced by the much stronger \emph{physical space} $L^1$ bound for cumulants; however, this is too strong and a calculation shows that such bounds cannot hold away from equilibrium (see Appendix \ref{physicalL1}).

In fact, there is a deep reason why the bound (\ref{introcm2}) or the one in \cite{LS11}, or {\bf any norm upper bound} for that matter, will {\bf not be enough to run a successful inductive argument} following Idea 1 above. This is because such a norm upper bound does not indicate an intrinsic direction of time that would allow the derivation to proceed \emph{only} in that direction but not in the other.

More precisely, if such a norm bound for cumulants at time $p\delta\cdot T_{\mathrm{kin}}$ was sufficient to derive (\ref{wke}) on the time interval $[p\delta\cdot T_{\mathrm{kin}}, (p+1)\delta\cdot T_{\mathrm{kin}}]$, then since (\ref{nls}) is time-reversible, the same argument would also give a derivation of the \emph{time-reversed} version of (\ref{wke}) on the time interval $[(p-1)\delta\cdot T_{\mathrm{kin}}, p \delta\cdot T_{\mathrm{kin}}]$. But this would contradict the fact that we actually have the \emph{original} (\ref{wke}) on the time interval $[(p-1)\delta\cdot T_{\mathrm{kin}}, p \delta\cdot T_{\mathrm{kin}}]$. Such a ``paradox" is well-known in the kinetic literature, and presents a major challenge to any long-time derivation attempt for kinetic equations following Idea 1 above.

In the context of (\ref{nls}) and (\ref{wke}), this difficulty can be demonstrated by a concrete example, whose details we leave to Section \ref{intro2-1} as it requires some preliminaries. Basically, for $p=1$, we can find a term $\Kb_0$ in the Taylor series expansion of (\ref{introcm}) with $r=4$, such that when we expand $\widehat{u}(2\delta\cdot T_{\mathrm{kin}})$ into a Taylor series consisting of multilinear expressions of $\widehat{u}(\delta\cdot T_{\mathrm{kin}})$, the contribution of $\Kb_0$ to $\Eb|\widehat{u}(2\delta\cdot T_{\mathrm{kin}},k)|^2$ is a term that \emph{does not appear in} the leading term approximation $\varphi(2\delta,k)$. However, this contribution has the same order of magnitude as this leading term! This anomaly, presented by $\Kb_0$, still vanishes in the limit, but only because of the cancellations coming from the \emph{exact structure} of $\Kb_0$, namely its own Taylor series expansion on the interval $[0, \delta\cdot T_{\mathrm{kin}}]$.

In other words, such cancellation relies crucially on the fact that $\Kb_0$ comes from a forward-in-time expansion of the (\ref{nls}) flow. If we choose to run the \emph{backward} evolution of (\ref{nls}) from time $\delta\cdot T_{\mathrm{kin}}$ to time $0$ (instead of from time $\delta\cdot T_{\mathrm{kin}}$ to time $2\delta\cdot T_{\mathrm{kin}}$), this cancellation will fail and we will get a different effective equation due to the contribution of $\Kb_0$. This is explained in more detail in Sections \ref{intro2-1} and \ref{subsec:irreversibility}.

\medskip

\uwave{Idea 3: Setting up the ansatz.} We now need to set up the precise ansatz for the cumulants (\ref{introcm}) at time $p\delta\cdot T_{\mathrm{kin}}$, which involves not only upper bounds but also exact structures. This ansatz will contain \emph{a separate expansion} for each term (\ref{introcm}) up to some high order, plus a remainder that is sufficiently small. As explained in Idea 2 above, such expansion at time $p\delta\cdot T_{\mathrm{kin}}$ also needs to distinguish between going forward to time $(p+1)\delta\cdot T_{\mathrm{kin}}$ and going backward to time $(p-1)\delta\cdot T_{\mathrm{kin}}$. This is achieved by having this expansion \emph{memorize the whole history of interactions by (\ref{nls}) on the time interval $[0,p\delta\cdot T_{\mathrm{kin}}]$}. More precisely, the expansion will be a suitable iteration of short-time Taylor expansions on time intervals $[q,\delta\cdot T_{\mathrm{kin}},(q+1)\delta\cdot T_{\mathrm{kin}}]$ for $0\leq q<p$.

Schematically, we may choose $R_p\gg R_{p+1}$, and consider the ansatz
\begin{equation}\label{cmansatz}\Kb\big(\widehat{u}(p\delta\cdot T_{\mathrm{kin}},k_1)^{\zeta_1}\,, \cdots, \widehat{u}(p\delta\cdot T_{\mathrm{kin}},k_r)^{\zeta_r}\big)=\sum(\mathrm{terms\ at\ step\ }p)+(\mathrm{small\ remainder})
\end{equation} for all $r\leq R_p$; the inductive argument amounts to proving that if (\ref{cmansatz}) is true for $p$, then it is true for $p+1$. Of course, this is based on the fact that $\widehat{u}((p+1)\delta\cdot T_{\mathrm{kin}})$ can be expanded into Taylor series consisting of multilinear expressions of $\widehat{u}(p\delta\cdot T_{\mathrm{kin}})$. As in all our previous works, this Taylor series is organized into ternary trees; after using the technical Lemma \ref{propertycm}, we then obtain a ``pre-Feynman diagram" expansion of the cumulants \eqref{introcm} for $p+1$ in terms of those for $p$, which is organized into multiple ternary trees.

The exact ansatz (\ref{cmansatz}) is then determined by inductively iterating the above pre-expansion. By looking at the first few $p$'s, we may guess that (\ref{cmansatz}) can be organized into \emph{gardens} (i.e. multiple trees with their leaves paired, see \cite{DH21-2} or Definition \ref{defgarden} below). These gardens $\Gc$ at level $p+1$ (which we denote by $\Gc_{p+1}$) can be constructed by induction as follows. Consider the multiple ternary trees occurring in the pre-expansion above; upon using Lemma \ref{propertycm}, their leaves are divided into two-element pairs (corresponding to the two-point cumulant $\Eb|\widehat{u}(p\delta\cdot T_{\mathrm{kin}},k_\lf)|^2$) and four-or-more-element subsets (corresponding to higher order cumulants (\ref{introcm}) for $p$ with $r\geq 4$). We then construct the garden $\Gc_{p+1}$ by keeping all the leaf pairs, and replacing each four-or-more-element subset by a garden of form $\Gc_p$.

This strategy can be described as a ``partial time series expansion", i.e. we stop the time series expansion at time $p\delta$ if we encounter a two-point cumulant, and continue with the expansion if we encounter higher order cumulants. We replace the two-point cumulant $\Eb|\widehat{u}(p\delta\cdot T_{\mathrm{kin}},k_\lf)|^2$ by its approximation $\varphi(p\delta,k_\lf)$, and replace the higher order cumulants by the induction hypothesis in \eqref{cmansatz}. This partial time series expansion has critical importance in our proof, as it allows us to make use of the uniform bound of $\varphi$ \emph{outside the radius of convergence at $t=0$}, and avoid falling into the trap of divergent Taylor series.

Eventually, these iteration steps lead to {\bf completely new combinatorial structures, which we call layered gardens (Definition \ref{deflayer}) and canonical layered gardens (Definition \ref{defcanon})}. We need to work out all the properties of these objects from scratch, including the combinatorial ones (Propositions \ref{canonequiv}--\ref{layerreg2}, \ref{lftwistprop}, \ref{layerlad}--\ref{layervine}) and the analytic ones (Propositions \ref{proplayer1}, \ref{proplayer2}--\ref{proplayer4}, \ref{vineest}, \ref{ladderl1new}--\ref{ladderl1old}). These are the most central parts of this paper.

Finally, once the ansatz for the cumulants is proved inductively for all $p$, we can conclude that the main contribution to $\Eb |\widehat u(p\delta \cdot T_{\mathrm{kin}}, k)|^2$ comes from the order $r=2$ cumulants in (\ref{introcm}) from the previous time step $(p-1)\delta\cdot T_{\mathrm{kin}}$, which are themselves approximated by the solution $\varphi((p-1)\delta, k)$ of the \eqref{wke}. Following the arguments in \cite{DH21, DH23}, the resulting contribution of these two-point correlations can be shown to sum up to the power series expansion of $\varphi(p\delta, k)$ in terms of $\varphi((p-1)\delta,k)$, thus establishing the approximation of $\Eb |\widehat u(p\delta \cdot T_{\mathrm{kin}}, k)|^2$ with $\varphi(p\delta, k)$ by induction.

For a detailed discussion of the example mentioned in Idea 2 above and emergence of the arrow of time, the motivation of the ansatz, the main components of the proof, and the relation to the previous works \cite{DH21,DH21-2,DH23}, see Section \ref{intro2} below.
\subsection{Future horizons} Theorem \ref{main} and its proof opens a door to many problems that were previously out of reach. In this section we only mention the three most important avenues that should be now open for investigation.
\subsubsection{Extension of Lanford's theorem} With the analogy drawn between particle and wave kinetic theory, it is natural to expect that the particle counterpart of Theorem \ref{main}, i.e. the long-time extension of Lanford's theorem \cite{Lan75,Kin75,GST14}, would also be true. The authors are investigating this in an upcoming paper \cite{DHM} jointly with Xiao Ma. 
\subsubsection{Fluctuations and large deviations} The cumulant estimates in this paper should be sufficient to obtain the fluctuation and large deviation dynamics around the wave kinetic limit given by \eqref{wke}, over the same long time interval as in Theorem \ref{main}.  Such fluctuation and large deviation dynamics were described from a physics point of view in \cite{GBE22}. The parallel results in the particle setting were proved recently in \cite{BGSS20} for short times.

\subsubsection{Diffusion and other asymptotic limits}\label{diffusion} Once the kinetic approximation is justified for arbitrarily long time, the next step in the Hilbert's Sixth Problem, in both particle and wave settings, is then to reach the time scale for diffusion and other asymptotic limits.

More precisely, in the case of Boltzmann equation, it is known that certain rescaled limits of solutions converge to solutions to fluid equations, such as the Euler and Navier-Stokes equations, see \cite{GS04,SR09,GT19} and references therein. Deriving those fluid equations directly from Newtonian dynamics would then require extending Lanford's theorem to the adequate time scale of this limit.

In the case of \emph{inhomogeneous} wave kinetic equation or Boltzmann-Nordheim equation\footnote{We remark that the derivation of these inhomogeneous equations has not been complete, at least in the setting without noise (see \cite{ACG21,HRST22,HSZ23} for some related progress); however we expect this to follow from similar arguments as in \cite{DH21,DH21-2,DH23} and the current paper, modulo technical differences.}, i.e. when $\varphi=\varphi(\tau,x,k)$ in (\ref{wke}) and $\partial_\tau\varphi$ is replaced by $(\partial_\tau+k\cdot\nabla_x)\varphi$, it is not clear whether a diffusion limit similar to Boltzmann exists. What is the possible asymptotic behavior of the kinetic equation, and what this may imply about the behavior of (\ref{nls}) \emph{beyond the kinetic time scale}, is a major open avenue of investigation in this subject.

Finally, for the \emph{homogeneous} wave kinetic equation or the Boltzmann-Nordheim equation, there are known finite time blowups where $\tau_{\mathrm{max}}<\infty$ with formation of $\delta$ singularity (see \cite{EV15,EV15-2,CL19}). In fact this is expected to be fairly general, and corresponds to the formation of condensates. It may be possible to continue (\ref{wke}) weakly after the blowup time, but the sense in which the approximation \eqref{limit} holds probably needs to be modified. How to make this rigorous is another outstanding open problem.

There also exist (formal) global power-law solutions to \eqref{wke} known as the Kolmogorov-Zakharov cascade spectra, which were recently investigated in \cite{CDG22}. These solutions are constant-flux cascade solutions for (\ref{wke}), and their rigorous understanding is yet another fascinating question. Once this cascade behavior is understood at the level of \eqref{wke}, Theorem \ref{main} will immediately allow to prove energy cascade at the level of (\ref{nls}), i.e. rigorously establish the generic cascade phenomenon for nonlinear Schr\"{o}dinger equations. This links to the extensively studied, yet still open, question of growth of Sobolev norms for (\ref{nls}) and similar nonlinear dispersive equations.

\addtocontents{toc}{\SkipTocEntry}
\subsection*{Acknowledgements} The authors are partly supported by a Simons Collaboration Grant on Wave Turbulence. The first author is supported in part by NSF grant DMS-2246908 and Sloan Fellowship. The second author is supported in part by NSF grant DMS-1936640. The authors would like to thank the anonymous referees for suggestions that substantially improved the exposition.

\section{Description of main ideas}\label{intro2} 

In this section we detail out some of the key ideas leading to the main ansatz (Proposition \ref{propansatz}), which was briefly stated in Section \ref{strategyintro}. Throughout this section we will use the (now standard) concepts of trees, couples, gardens and decorations etc., see Figure \ref{fig:gardenintro}; the reader may consult the authors' earlier papers \cite{DH21,DH21-2,DH23} for the definitions of these terms, or see Definitions \ref{deftree}--\ref{defdec} below.
\begin{figure}[h!]
\includegraphics[scale=0.4]{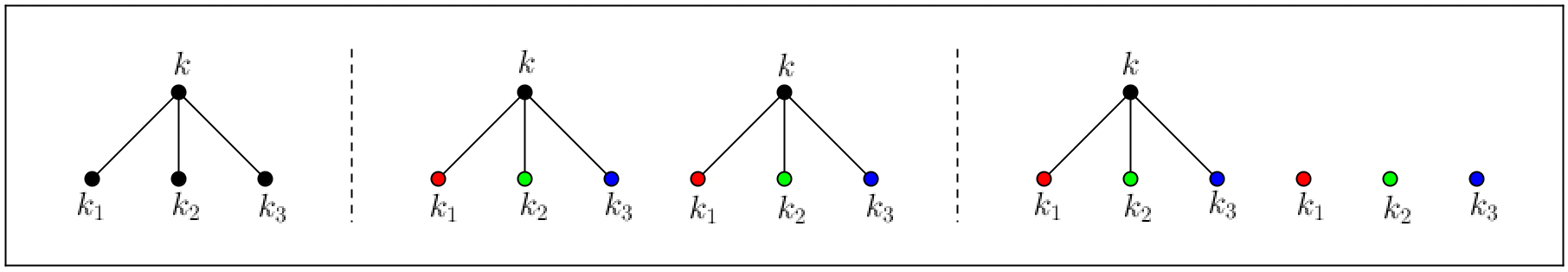}
\caption{From left to right: a tree, a couple and a garden of $4$ trees, with corresponding decorations (leaves of same color indicate pairings). See Definitions \ref{deftree}--\ref{defdec} below or \cite{DH21,DH21-2,DH23} for precise definitions.}
\label{fig:gardenintro}
\end{figure}
\subsection{An example}\label{intro2-1} We start by discussing the example mentioned in Section \ref{strategyintro}, Idea 2. Recall that the Fourier coefficients $\widehat{u}(0,k)$ are independent for different $k$, but $\widehat{u}(\delta\cdot T_{\mathrm{kin}},k)$ are no longer independent due to the nonlinear evolution. The first deviation from independence is characterized by the order $4$ cumulant (all odd ordered cumulants vanish as shown in Section \ref{strategyintro}), say
\begin{equation}\label{introex1}\Kb\big(\overline{\widehat{u}(\delta\cdot T_{\mathrm{kin}},k_1)},\widehat{u}(\delta\cdot T_{\mathrm{kin}},k_2),\overline{\widehat{u}(\delta\cdot T_{\mathrm{kin}},k_3)},\widehat{u}(\delta\cdot T_{\mathrm{kin}},k_4)\big).\end{equation} Recall that $\widehat{u}(\delta\cdot T_{\mathrm{kin}})$ can be expanded into a Taylor series consisting of multilinear expressions of $\widehat{u}(0)$, where these terms are organized using ternary trees (see Definition \ref{deftree}, and (\ref{defjt}) and (\ref{defjn}) below). If all four factors in (\ref{introex1}) are replaced by the \emph{linear} evolution, then the independence at time $0$ will carry over and the resulting cumulant will be $0$; thus the first nonzero contribution to (\ref{introex1}) is given by the case when three of the four factors are replaced by the linear evolution, and the remaining one is replaced by the cubic term (i.e. the first iterate, which corresponds to the ternary tree with only one branching node).

We know the tree expansion is calculated by iterating the nonlinearity in (\ref{nls}), for example
\begin{equation}\label{introex2}\widehat{u}(t,k_1)\sim e^{2\pi i|k_1|^2t}\bigg[\widehat{u}(0,k_1)+iL^{-d}\alpha\sum_{k_2-k_3+k_4=k_1}\int_0^{t}e^{2\pi i\Omega s}\,\mathrm{d}s\cdot\widehat{u}(0,k_2)\overline{\widehat{u}(0,k_3)}\widehat{u}(0,k_4)\bigg]\end{equation} (we only show the first two terms in the expansion), where $\Omega:=|k_2|^2-|k_3|^2+|k_4|^2-|k_1|^2$. Now, if we replace the first factor in (\ref{introex1}) by the cubic term, and apply Lemma \ref{propertycm}, we would obtain (part of) the first nonzero contribution to (\ref{introex1}), which has the form
\begin{equation}\label{introex3}\Kb_0(k_1,k_2,k_3,k_4):=e^{2\pi i\Omega \cdot\delta T_{\mathrm{kin}}}\cdot L^{-d-\gamma}\cdot\mathbf{1}_{k_1-k_2+k_3-k_4=0}\cdot\prod_{j=2}^4\varphi_{\mathrm{in}}(k_j)\cdot\int_0^{\delta\cdot T_{\mathrm{kin}}}e^{-2\pi i\Omega s}\,\mathrm{d}s.\end{equation} Note that this term (as well as the other terms in the full cumulant expansion (\ref{introex1})) is bounded pointwise by $L^{-(d-\gamma)}$ (using that $T_{\mathrm{kin}}=L^{2\gamma}$); however, knowing \emph{only} this upper bound will not be sufficient, which is what we will demonstrate through this example.

To see this, consider the expansion similar to (\ref{introex2}) but with starting time $\delta\cdot T_{\mathrm{kin}}$ rather than 0. Now we calculate the quantity $\Eb|\widehat{u}(2\delta\cdot T_{\mathrm{kin}},k)|^2$, in which we replace one of the two factors by the linear evolution of $\widehat{u}(\delta\cdot T_{\mathrm{kin}})$, and the other factor by the term that is cubic in $\widehat{u}(\delta\cdot T_{\mathrm{kin}})$. This then leads to the contribution
\begin{equation}\label{introex4}
\Yb:=L^{-d-\gamma}\sum_{k_1-k_2+k_3=k}e^{-2\pi i\Omega\cdot \delta T_{\mathrm{kin}}}\cdot\int_{\delta\cdot T_{\mathrm{kin}}}^{2\delta\cdot T_{\mathrm{kin}}}e^{2\pi i\Omega t}\,\mathrm{d}t\cdot\Kb_0(k,k_1,k_2,k_3),
\end{equation} where $\Omega$ is defined for $(k,k_1,k_2,k_3)$. In this sum we may restrict to the region $|\Omega|\leq L^{-2\gamma}$ (in other words $|t\Omega|\leq 1$), so the $\mathrm{d}t$ integral has saturated size $\sim T_{\mathrm{kin}}$; a dimension counting argument then yields that the number of choices of $(k_1,k_2,k_3)$ satisfying $k_1-k_2+k_3=k$ and $|\Omega|\leq L^{-2\gamma}$ for fixed $k$ is at most $L^{2(d-\gamma)}$ (see Lemma \ref{countlem}). Now, if we only have the upper bound $L^{-(d-\gamma)}$ for $\Kb_0$, then restricting the sum in (\ref{introex4}) to the region $|\Omega|\leq L^{-2\gamma}$ yields a sum whose upper bound is $L^{-d-\gamma}\cdot T_{\mathrm{kin}}\cdot L^{2(d-\gamma)}\cdot L^{-(d-\gamma)}\sim 1$, which has \emph{the same size} as the leading term; however it is clearly \emph{not} a part of the leading term approximation to $\Eb|\widehat{u}(2\delta\cdot T_{\mathrm{kin}},k)|^2$.

The conclusion is that, it is impossible to exclude the ``fake leading term" $\Yb$ and obtain the correct asymptotics, if we only use the upper bound for $\Kb_0$ defined in (\ref{introex3}). To eliminate $\Yb$ in the limit, we need to use the exact structure (\ref{introex3}), which (after a time rescaling) leads to the expression
\begin{equation}\label{Mark}
\Yb= L^{-2(d-\gamma)}\sum_{k_1-k_2+k_3=k}\prod_{j=1}^3\varphi_{\mathrm{in}}(k_j)\cdot\int_{0<s<\delta<t<2\delta} e^{\pi i\Omega\cdot L^{2\gamma}(t-s)}\,\mathrm{d}t\mathrm{d}s \to 0\,\,\,(L\to\infty).
\end{equation}
 Now $\Yb$ disappears in the limit, because for fixed $(t,s)$, the asymptotics of the summation in $(k_1,k_2,k_3)$ contains a factor $\dirac(t-s)$, whose integral in $(t,s)$ then vanishes due to the restriction $s<\delta<t$. In other words, the contribution of $\Kb_0$ vanishes in the limit only if we take into account its own structure (\ref{introex3}), which memorizes the interaction history on the time interval $[0,\delta\cdot T_{\mathrm{kin}}]$.

The above cancellation leading to the vanishing of $\Yb$ is due to the fact that the intervals $[0,\delta]$ and $[\delta,2\delta]$ have no common interior; this draws interesting analogy with the properties of Markov processes (for example, the Brownian motion whose increments on disjoint intervals are independent and have vanishing correlation). In fact, what the above analysis demonstrates is exactly a kind of \emph{Markovian property}; namely, to the first order, the two-point correlations at time $2\delta\cdot T_{\mathrm{kin}}$ depend only on the two-point correlations at time $\delta\cdot T_{\mathrm{kin}}$, but not on the nonlinear evolution history on $[0,\delta\cdot T_{\mathrm{kin}}]$ which is recorded by the higher order cumulants $\Kb_0$. Nevertheless, such Markovian property holds only if we go forward in time, and is clearly false if we go backward in time and replace $t\in[\delta,2\delta]$ by $t\in[0,\delta]$ in (\ref{Mark}), see Section \ref{subsec:irreversibility}.

\subsection{Building up gardens: A naive (but incorrect) attempt}\label{intro2-2} Recall the notations introduced in Section \ref{strategyintro} and our choice of $\delta$ so that the interval $[0,\tau_*]$ is split into $\Df$ subintervals of length $\delta$. As explained in Section \ref{strategyintro}, a central step in estimating $\Eb|\widehat{u}(p\delta\cdot T_{\mathrm{kin}},k)|^2$, and hence in the proof of Theorem \ref{main}, is to analyze and control the exact structure of the cumulants of form (\ref{introcm}) (these cumulants are related to the corresponding moments as in Definition \ref{defcm} below).

As explained in Section \ref{strategyintro}, Idea 2 and Idea 3, such structure should also memorize the full interaction history of (\ref{nls}) on the time interval $[0,p\delta\cdot T_{\mathrm{kin}}]$. The example in Section \ref{intro2-1} already illustrates a step in the construction of such history, namely calculating moments and cumulants at time $(p+1)\delta\cdot T_{\mathrm{kin}}$, using their Taylor expansions at time $p\delta\cdot T_{\mathrm{kin}}$ or earlier. The same example also shows that the information carried by such cumulant expansion is crucial to showing that their contribution vanishes in the limit. Generalizing this time iterative step then naturally leads to the construction of \emph{layered gardens and couples} (i.e. each node is in a unique time layer indicated by nonnegative integers), as briefed in Section \ref{strategyintro}, Idea 3. To motivate, in this section we will first describe a naive but incorrect attempt, and leave the real construction, which is only one step away, to Section \ref{intro2-3}.

Recall the $R_p$ described in Section \ref{strategyintro}; let $N_p$ be such that $N_p\gg R_p\gg N_{p+1}$. For each $p<\Df$, we can always perform an order $N_{p+1}$ Duhamel expansion of the solution at time $(p+1)\delta \cdot T_{\mathrm{kin}}$, which expresses it as a sum of iterates of the time $p\delta\cdot T_{\mathrm{kin}}$ data  plus a remainder term. The iterates are multilinear expressions in the solution at time $p\delta \cdot T_{\mathrm{kin}}$ and can be arranged as sums of ternary trees whose order (i.e. number of branching nodes) equal the order of the iteration (cf. equations \eqref{defbk}, \eqref{defjt}, and Definitions \ref{deftree} and \ref{defdec}). Note that for $p=0$, all the cumulants
\[\Kb\big(\widehat{u}(0,k_1)^{\zeta_1}\,,\widehat{u}(0,k_2)^{\zeta_2}\,, \cdots, \widehat{u}(0,k_r)^{\zeta_r}\big)\]at time $0$ and of order $r\geq 4$ should vanish due to exact independence and Gaussianity in (\ref{data}).

For $p=1$, by performing the above expansion, and applying Lemma \ref{propertycm}, we can write the main (non-remainder) contribution to the cumulant \begin{equation}
\label{time1cm}\Kb\big(\widehat{u}(\delta\cdot T_{\mathrm{kin}},k_1)^{\zeta_1}\,,\widehat{u}(\delta\cdot T_{\mathrm{kin}},k_2)^{\zeta_2}\,, \cdots, \widehat{u}(\delta\cdot T_{\mathrm{kin}},k_r)^{\zeta_r}\big)\end{equation}as a sum over \emph{irreducible gardens} $\Gc$ of width $r$. Here an irreducible garden is a garden of width $r$ (i.e. a collection of  $r$ trees whose leaves are completely paired with each other, see Definition \ref{defgarden} below), such that no proper subset of these $r$ trees have their leaves completely paired. Note that this irreducibility property precisely corresponds to the definition of the cumulant $\Kb$ in (\ref{time1cm}); instead, the corresponding moment in (\ref{time1cm}) would consist of the sum over all (reducible and irreducible) gardens. This gives the explicit structure of the cumulants at time $\delta \cdot T_{\mathrm{kin}}$ as a sum over irreducible gardens $\Gc$, which we denote by\footnote{We use the notion $\Ns$ because the construction here is \emph{not} the correct ansatz, but a naive attempt; see Section \ref{intro2-3} for more discussions.} $\Gc\in\Ns_1$. We shall put all the nodes in this garden $\Gc\in \Ns_1$ into layer $0$; then each term occurring in this summation is of form $\Kc_\Gc$ which consists of a summation over all decorations $(k_\nf)$ and an integration over time variables $(t_\nf)$ defined by the layering (which are all $0$ for $\Gc\in\Ns_1$). See (\ref{defkg0}) in Definition \ref{defkg} below for the exact expression $\Kc_\Gc$; note that all input functions $F_{\Lf_\lf}(k_\lf)$ should be replaced by $\varphi_{\mathrm{in}}(k_\lf)=\varphi(0,k_\lf)$.

Now let us move to $p=2$. Again we express the solution $\widehat u(2\delta \cdot T_{\mathrm{kin}}, k)$ as a Duhamel expansion, but now \emph{with initial time} $\delta \cdot T_{\mathrm{kin}}$. Up to a remainder term that we ignore for now, this writes $\widehat u(2\delta \cdot T_{\mathrm{kin}}, k)$ as a sum of terms associated with ternary trees whose leaves are associated with $\widehat u(\delta \cdot T_{\mathrm{kin}}, k_\lf)$. As such, this allows to express the cumulant 
\begin{equation}\label{time2cm}
\Kb\big(\widehat u(2\delta \cdot T_{\mathrm{kin}},k_1)^{\zeta_1},\cdots, \widehat u(2\delta \cdot T_{\mathrm{kin}},k_r)^{\zeta_r}\big)=\sum \Kb\big((\Jc_{\Tc_1})_{k_1}, \cdots, (\Jc_{\Tc_r})_{k_r}\big)
\end{equation}
where the sum is taken over all trees $\Tc_j$ of sign $\zeta_j$ and order at most $N_2$, and each $\Jc_{\Tc_j}$ is the Duhamel iterate associated with the interaction history described by $\Tc_j$, whose exact expression is as in \eqref{defjt} below. Then, the cumulants in the sum in (\ref{time2cm}) can be written as linear combinations of 
\begin{equation}\label{time2cm2}
\Kb\bigg(\prod_{\lf \in \Lc_1} \widehat u(\delta \cdot T_{\mathrm{kin}}, k_\lf)^{\sigma_\lf}, \cdots, \prod_{\lf \in \Lc_r}\widehat u(\delta \cdot T_{\mathrm{kin}}, k_\lf)^{\sigma_\lf}\bigg),
\end{equation}
where $\Lc_j$ denotes the set of leaf nodes of the tree $\Tc_j$ and $\sigma_\lf\in\{\pm\}$ is the sign of each leaf $\lf$. By applying Lemma \ref{propertycm} again, we an express (\ref{time2cm2}) as a sum of form 
\begin{equation}\label{time2cm3}
\sum_\Pc\prod_{B\in \Pc} \Kb\big((\widehat u(\delta \cdot T_{\mathrm{kin}}, k_\lf)^{\sigma_\lf})_{\lf \in B}\big).
\end{equation}
Here the summation in (\ref{time2cm3}) is taken over all irreducible partitions $\Pc$ of the set $\Lc_1\cup \cdots \Lc_r$ (i.e. no nontrivial union of sets in $\Pc$ equals any nontrivial union of sets $\Lc_j$), and the cumulant for each $B$ involves the random variables $\widehat u(\delta \cdot T_{\mathrm{kin}}, k_\lf)^{\sigma_\lf}$ for $\lf \in B$.

Now, we can use the formula deduced above for the cumulants (\ref{time1cm}) at time $\delta\cdot T_{\mathrm{kin}}$, namely the expansion into terms associated with $\Gc\in\Ns_1$, to replace each individual term in (\ref{time2cm3}), and subsequently obtain an expression for (\ref{time2cm}). Combinatorially, this expression is associated with a new, bigger garden $\Gc_2$, which is obtained as follows: start from the collection of trees $(\Tc_j:1\leq j\leq r)$, then for each set $B\in\Pc$ of leaves, we attach a garden $\Gc=\Gc(B)\in\Ns_1$ at these leaves, so that these leaves become the roots of the trees in this garden. Moreover, we put all \emph{branching nodes} of the trees $\Tc_j$ into layer $1$, and keep all nodes of each $\Gc(B)$ in layer $0$, to make $\Gc_2$ a layered garden, and denote all such layered gardens by $\Ns_2$. Analytically, by the definitions of $\Jc_\Tc$ and $\Kc_\Gc$ (see (\ref{defjt}) and (\ref{defkg0}) below), it is easy to see that the expression associated with $\Gc_2$ is precisely $\Kc_{\Gc_2}$ in (\ref{defkg0}) below, where again the time integration is defined by the layering, and all inputs $F_{\Lf_\lf}(k_\lf)$ should be replaced by $\varphi(0,k_\lf)$.

A simple instance of the above construction is already contained in the example described in Section \ref{intro2-1}, where we have two trees $(\Tc_1,\Tc_2)$ with one being trivial (only one leaf) and the other having exactly one branch and three leaves, and attach the garden in the right of Figure \ref{fig:gardenintro}, to obtain the couple in the middle of Figure \ref{fig:gardenintro} with suitable layering; note that the garden we attach has \emph{width $4$}. Another instance of this construction is given in Figure \ref{fig:reglay2}, where we have two trees $(\Tc_1,\Tc_2)$ each with two branches, and each of the five gardens in $\Ns_1$ we attach has \emph{width $2$} (i.e. is a couple) with three being trivial and two being nontrivial.
\begin{figure}[h!]
\includegraphics[scale=0.5]{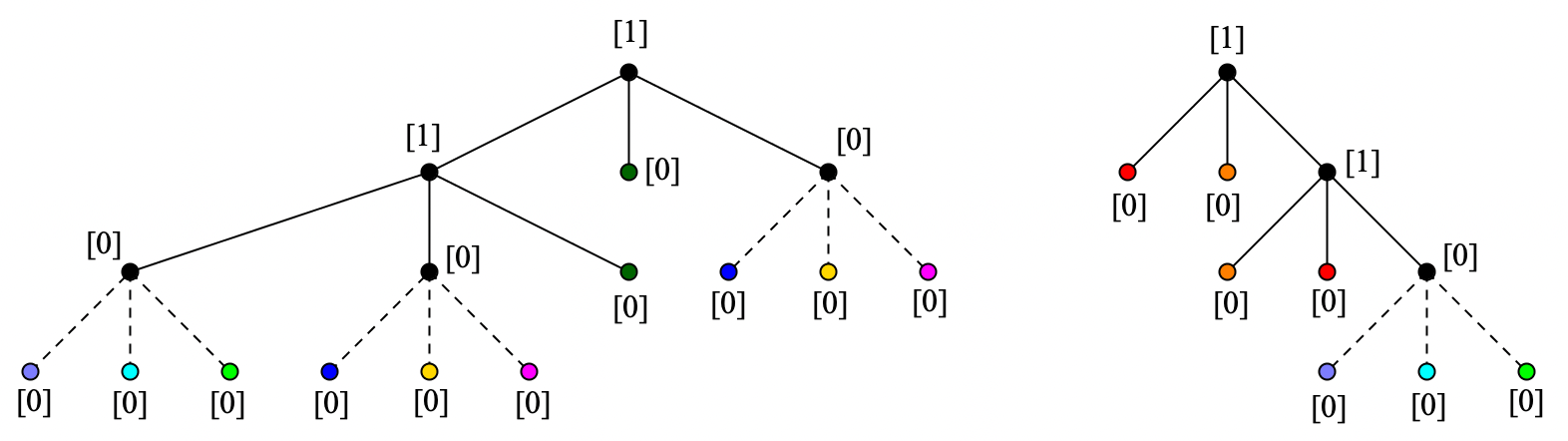}
\caption{An example of a layered couple as described in Section \ref{intro2-2} which appears in the expansion of $\widehat u(2\delta \cdot T_{\mathrm{kin}})$ (the numbers [0] or [1] denote the layer of each node). This is a regular couple (see Definition \ref{defreg} below) but is \emph{not} canonical (see Section \ref{intro2-3}). In the modified construction in Section \ref{intro2-3}, this will be replaced by the canonical regular couple in Figure \ref{fig:reglay3}.}
\label{fig:reglay2}
\end{figure}

At this point, it is now clear how to continue this process inductively and construct $\Ns_{p+1}$ from $\Ns_p$ etc., which ultimately gives (up to acceptable remainder terms) an exact \emph{structural} description of the cumulants at any time $p\delta\cdot T_{\mathrm{kin}}$, as a sum of terms of form $\Kc_\Gc$ over all irreducible layered gardens $\Gc\in \Ns_p$. This clearly accounts for the example in Section \ref{intro2-1}; however this naive attempt still does not give the correct ansatz, due to one fatal weakness. We discuss this weakness, as well as the one simple step to fix it, in Section \ref{intro2-3} below.
\subsection{Canonical layering and the true ansatz}
\label{intro2-3} An indication why the construction in Section \ref{intro2-2} does not work is that, by this construction, all leaf nodes \emph{must} be in layer $0$ (in other words, all the inputs in $\Kc_\Gc$ in (\ref{defkg0}) must be replaced by the initial data $\varphi(0,k_\lf)$), which is exactly what we want to avoid in Section \ref{diff} (i.e. base all the analysis on time $0$ data).

To see an instance of this major issue, we take a closer look at the layered couple $\Qc$ in Figure \ref{fig:reglay2}. Note that it is a \emph{regular couple} (see Definition \ref{defreg} below), and does not incorporate the cancellation shown in the example in Section \ref{intro2-1} (i.e. it is \emph{coherent} in the terminology of Definition \ref{defcoh} below); thus there is no gain of powers of $L$ in the associated $\Kc_\Qc$ expression. Moreover, if the layer $1$ nodes in $\Qc$ are replaced by layer $p$, then the roots of the two sub-couples with dashed lines can have \emph{arbitrary} layer between $0$ and $p$ while still being consistent with the construction in Section \ref{intro2-2}. Therefore, the \emph{number of layerings} of a given regular couple with order $2n$ can be as large as $\Df^n$. Since the $\Kc_\Qc$ quantity for each individual regular couple $\Qc$ gains only a factor of $(C_1\delta)^n$ (where $C_1$ is a large constant depending on initial data $\varphi_{\mathrm{in}}$, in the same way as in \cite{DH21,DH21-2,DH23}), this will lead to serious divergence issues as $(C_1\Df\delta)^n=(C_1\tau_*)^n$ grows in $n$ unless $\tau_*$ is sufficiently small.

Of course, the above weakness comes from the fact that we are trying to perform the Duhamel expansion all the way to time $0$ \emph{in every case}. Indeed, whenever we encounter the \emph{two-point correlation} $\Eb|\widehat{u}(p\delta\cdot T_{\mathrm{kin}},k_\lf)|^2$ in the expression, we should simply replace it by its (expected) approximation $\varphi(p\delta,k_\lf)$, instead of further Duhamel-expanding it by earlier time data and going all the way to time $0$. In this way we can exploit the fact that $\varphi(p\delta,k_\lf)$ is uniformly bounded in $p$, despite that it cannot be expanded as a power series involving only $\varphi_{\mathrm{in}}=\varphi(0,\cdot)$.

Now, recall the reduction and construction process described in Section \ref{intro2-2}. By Lemma \ref{propertycm}, it is clear that, we would encounter the two-point correlation $\Eb|\widehat{u}(p\delta\cdot T_{\mathrm{kin}},k_\lf)|^2$ \emph{only when} one of the sets $B\in\Pc$ as in Section \ref{intro2-2} has exactly cardinality $2$. This leads to the following minor but crucial modification to the construction process in Section \ref{intro2-2}, which is consistent with the brief description in Section \ref{strategyintro}, Idea 3: namely, in reducing (\ref{time2cm3}) (and same with $\delta$ replaced by arbitrary $p\delta$), we further expand the cumulant $\Kb\big(((\widehat{u}(\delta\cdot T_{\mathrm{kin}},k_\lf))^{\sigma_\lf})_{\lf\in B}\big)$ using earlier time ansatz \emph{only when} $|B|\geq 4$. When $|B|=2$, we simply replace the cumulant, which is now just $\Eb|\widehat{u}(\delta\cdot T_{\mathrm{kin}},k_\lf)|^2$, by $\varphi(\delta,k_\lf)$ plus a decaying sub-leading term. Note that this minor modification addresses both issued mentioned above: now we are stopping expansion at time $p\delta\cdot T_{\mathrm{kin}}$ whenever we encounter a two-point correlation at this time, and a combinatorial argument allows us to control the number of coherent regular couples (see Section \ref{intro2-3-1} below).

Combinatorially, the new construction leads to a new class of layered gardens (and couples) $\Gc$, which we refer to as \emph{canonical} layered gardens or $\Gc\in\Gs_p$, as follows: for $p=1$, $\Gs_1$ still denotes the set of all irreducible gardens with all nodes in layer $0$ (which is same as $\Ns_1$ in Section \ref{intro2-2}); for $p\geq 1$, $\Gs_{p+1}$ is constructed from $\Gs_p$ in the same way as $\Ns_{p+1}$ is constructed from $\Ns_p$ in Section \ref{intro2-2}, except that we attach the garden $\Gc(B)\in\Gs_p$ \emph{only when} $|B|\geq 4$, while for $|B|=2$ we simply \emph{turn these two leaves into a leaf pair and put both of them in layer $p$}.

For example, the example described in Section 2.1 is canonical, because the only garden we attach has width $4$; on the other hand, the layered garden in Figure \ref{fig:reglay2} is not canonical, as the attached gardens all have width $2$. Indeed, starting from two trees each with 2 branches as in Section \ref{intro2-2}, instead of attaching the five width $2$ gardens (i.e. couples), the new construction procedure requires to turn each two-leaf set $B$ into a leaf pair in layer $1$, which results in the garden as illustrated in Figure \ref{fig:reglay3}, which is a canonical layered garden in $\Gs_2$. See also Figures \ref{fig:layergarden} and \ref{fig:canongarden} for a canonical garden in $\Gs_3$ and its construction process.
\begin{figure}[h!]
\includegraphics[scale=0.5]{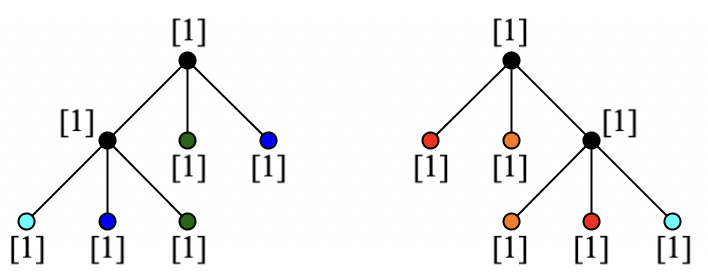}
\caption{The canonical (regular) couple we get, if we start from the same setup in Figure \ref{fig:reglay2} in Section \ref{intro2-2}, but follow the new construction procedure in Section \ref{intro2-3}. Here we do not attach any width $2$ garden (i.e. couple) but turn any two-leaf subset $B$ into a leaf pair in layer $1$.}
\label{fig:reglay3}
\end{figure}

It can be shown, see Proposition \ref{canonequiv}, that a layered garden is canonical if and only if for any two nodes $(\nf,\nf')$ such that all their descendant leaves are completely paired, we always have $\max(\Lf_\nf,\Lf_{\nf'})\geq\min(\Lf_{\nf^{\mathrm{pr}}},\Lf_{(\nf')^{\mathrm{pr}}})$, where $\Lf_\nf$ is the layer of $\nf$ and $\nf^{\mathrm{pr}}$ is the parent node of $\nf$ etc. This provides another easy way to test if a given layered garden is canonical (for example the garden in Figure \ref{fig:reglay2} clearly violates this condition), and also allows to bound the number of layerings satisfying the coherent condition in Definition \ref{defcoh}, see Proposition \ref{layerreg2}.

Analytically, with the new reduction process, and by similar arguments as in Section \ref{intro2-2}, we see that the cumulants at time $p\delta\cdot T_{\mathrm{kin}}$ can be expressed as a sum of $\Kc_\Gc$ over all canonical layered gardens $\Gc\in\Gs_p$, where $\Kc_\Gc$ now is defined exactly as in (\ref{defkg0}), with time integration defined by the layering, and the inputs given by\footnote{Compared to Section \ref{intro2-2}, here the inputs are not replaced by the initial data $\varphi(0,\cdot)$ because we may not be expanding all the way to time $0$.} $F_{\Lf_\lf}(k_\lf)$. Here $\Lf_\lf$ is the layer of $\lf$ and in practice $F_p(k)$ equals $\varphi(p\delta,k)$ plus a decaying sub-leading term. For an example (associated with the garden in Figures \ref{fig:layergarden} and \ref{fig:canongarden}) of deducing the expression $\Kc_\Gc$ for $\Gc\in\Gs_3$ from expressions $\Kc_{\Gc'}$ for $\Gc'\in\Gs_2$, see Section \ref{secduhamel}.
\subsection{Emergence of the arrow of time}\label{subsec:irreversibility} With the detailed description of the ansatz in Sections \ref{intro2-1}--\ref{intro2-3}, the following statement should now become clear: {\bf the structure of the cumulants in (\ref{introcm}) at time $p\delta\cdot T_{\mathrm{kin}}$ depends on (and memorizes) the whole interaction history of (\ref{nls}) on the time interval $[0,p\delta\cdot T_{\mathrm{kin}}]$.} Indeed, the ansatz is constructed in terms of canonical layered gardens, which are formed by carefully designed iteration procedures involving the short-time Taylor expansions of (\ref{nls}) on $[q\delta\cdot T_{\mathrm{kin}},(q+1)\delta\cdot T_{\mathrm{kin}}]$ for $0\leq q<p$, and thus sees the whole interaction history on $[0,p\delta\cdot T_{\mathrm{kin}}]$.

Now we can demonstrate, with such an ansatz that memorizes the interaction history, how an intrinsic arrow of time is created and resolves the ``paradox" described in Section \ref{strategyintro}, Idea 2. In fact, suppose we start at time $p\delta\cdot T_{\mathrm{kin}}$. The full statistics of $\widehat{u}(p\delta\cdot T_{\mathrm{kin}},k)$ is then given by two-point correlations, which match the solution $\varphi(p\delta ,k)$ of (\ref{wke}), plus the cumulant terms $\Kb_0$, which are lower order but memorize the full history on $[0,p\delta\cdot T_{\mathrm{kin}}]$.

Note that, regardless of whether we go forward to time $(p+1)\delta\cdot T_{\mathrm{kin}}$ or go backward to time $(p-1)\delta\cdot T_{\mathrm{kin}}$, the contribution of the two-point correlation terms $\varphi(p\delta ,k)$ to the new two-point correlations will \emph{always be} $\varphi((p+1)\delta,k)$, which matches the solution to the original (\ref{wke}) if we go forward in time, and matches the solution to the \emph{time-reversed} (\ref{wke}) if we go backward in time. However, whether such approximation is valid still depends on the contribution of the cumulant terms $\Kb_0$ to the new two-point correlations, which we denote by $\Yb$.

Now, if we go forward, then the terms $\Yb$ will be combinations of the \emph{existing history} on $[0,p\delta\cdot T_{\mathrm{kin}}]$ already seen by $\Kb_0$, and the \emph{new history} coming from interactions of (\ref{nls}) on $[p\delta\cdot T_{\mathrm{kin}},(p+1)\delta\cdot T_{\mathrm{kin}}]$. As the existing history and the new history belong to \emph{disjoint time intervals} $[0,p\delta\cdot T_{\mathrm{kin}}]$ and $[p\delta\cdot T_{\mathrm{kin}},(p+1)\delta\cdot T_{\mathrm{kin}}]$, we know $\Yb$ will vanish in the limit, as illustrated in Section \ref{intro2-1}, where $\Kb_0$ is as in (\ref{introex3}) and $\Yb$ as in (\ref{Mark}). Therefore, the two-point correlations in the full statistics of $\widehat{u}((p+1)\delta\cdot T_{\mathrm{kin}},k)$ still matches the solution $\varphi((p+1)\delta,k)$ to (\ref{wke}). It can be shown that the new cumulant terms are still lower order, but they will memorize the the full history of $[0,(p+1)\delta\cdot T_{\mathrm{kin}}]$, which is built up by both $[0,p\delta\cdot T_{\mathrm{kin}}]$ and $[p\delta\cdot T_{\mathrm{kin}},(p+1)\delta\cdot T_{\mathrm{kin}}]$, and the induction goes on.

If instead we go backward to time $(p-1)\delta\cdot T_{\mathrm{kin}}$, then the terms $\Yb$ {\bf will not vanish in the limit}, because we are repeating existing history on $[(p-1)\delta\cdot T_{\mathrm{kin}},p\delta\cdot T_{\mathrm{kin}}]$ instead of exploring new history on $[p\delta\cdot T_{\mathrm{kin}},(p+1)\delta\cdot T_{\mathrm{kin}}]$. The Markovian property in Section \ref{intro2-1} then fails because $\Kb_0$ ``recognizes" the history it has experienced; see for example (\ref{Mark}), where the leading term of $\Yb$ coming from $\dirac(t-s)$ will {\bf not be zero} if the ``new history'' $t\in[\delta,2\delta]$ is changed to $t\in[0,\delta]$ and ``repeats the existing history" $s\in [0,\delta]$. In other words, instead of (\ref{Mark}), we now have
\begin{equation}\label{Mark2}
\Yb=-L^{-2(d-\gamma)}\sum_{k_1-k_2+k_3=k}\prod_{j=1}^3\varphi_{\mathrm{in}}(k_j)\cdot\int_{0<s<\delta;\,0<t<\delta} e^{\pi i\Omega\cdot L^{2\gamma}(t-s)}\,\mathrm{d}t\mathrm{d}s \not\to 0\,\,\,(L\to\infty).
\end{equation} In such cases, due to these contributions $\Yb$, the two-point correlations in the full statistics of $\widehat{u}((p-1)\delta\cdot T_{\mathrm{kin}},k)$ will {\bf not} match $\varphi((p+1)\delta,k)$ given by the time-reversed (\ref{wke}).

In summary, we have demonstrated how our canonical layered garden ansatz has played a crucial role in creating the arrow of time and resolving the time irreversibility ``paradox". It is also clear that the iteration steps in the current proof are {\bf fundamentally different from} the short-time proof in our earlier works \cite{DH21,DH21-2,DH23}, as the latter just corresponds to the special case $p=0$, which has {\bf empty history}.
\subsection{Main components of the proof}\label{intro2-3-1} With the key definition of canonical layered gardens introduced and the ansatz for cumulants (\ref{introcm}) fixed in Section \ref{intro2-3}, we now describe the main components of the rest of the proof. The proof requires to (i) estimate and analyze the expressions $\Kc_\Gc$ for individual canonical layered gardens $\Gc$ (or suitable combinations thereof), and (ii) control the number of canonical layered gardens under suitable assumptions.
\begin{itemize}
\item \uwave{Non-regular couples:} It is known since \cite{DH21,DH21-2,DH23} that the \emph{regular couples} $\Gc=\Qc$ (see Definition \ref{defreg}) provide the main contributions to $\Kc_\Gc$. For non-regular couples $\Gc$, and non-regular parts of arbitrary gardens and couples (apart from the special structures of \emph{vines} discussed below), the corresponding $\Kc_\Gc$ can be estimated by applying a particular counting algorithm. This algorithm is a major component in the proofs of \cite{DH21,DH23}, and exactly the same algorithm and proof applies also in this paper. This is summarized in the \emph{rigidity theorem} (Proposition \ref{lgmolect}), which we will use as a black box.
\item \uwave{Non-coherent regular couples:} Now we restrict to regular couples. A key new notion in this paper is that of \emph{incoherency}, see Definition \ref{defcoh} below. This notion of incoherency relies on the layering structure, and basically corresponds to the time disjointness property exhibited in the example in Section \ref{intro2-1}; heuristically, such incoherency will occur in regular couples each time we attach a garden of width at least $4$ to a set $B$ of leave as described in Section \ref{intro2-2} and above. Due to the Markovian property observed in Section \ref{intro2-1}, we can prove that each occurrence of incoherency leads to a gain of power $L^{-c}$ for some absolute constant $c$ (see Proposition \ref{proplayer1}), which suffices for our proof.
\item \uwave{Coherent regular couples:} Turning to regular couples with no incoherency (i.e. coherent), we can prove a structure theorem for them (see Propositions \ref{canonequiv}--\ref{layerreg1}), which essentially reduces them to regular couples in a single layer (as the one in Figure \ref{fig:reglay3}) which are already treated in \cite{DH21,DH23}. This allows us to obtain the full asymptotics and cancellation for $\Kc_\Gc$ associated with such couples $\Gc=\Qc$, see Propositions \ref{proplayer2}--\ref{proplayer4}. In particular, the leading contribution in $\Eb|\widehat{u}(p\delta\cdot T_{\mathrm{kin}},k)|^2$ can be precisely calculated using these asymptotics, which is then shown to be $\varphi(p\delta,k)$ by induction in time.

In addition, we can show that, for any coherent regular part of arbitrary gardens and couples that has size $n$, the number of choices for this part, including possible layerings, is bounded by $C_0^n$ where $C_0$ is a constant \emph{independent} of $\delta$ (see Proposition \ref{layerreg2}). This is a key property that relies on the modified construction process described above (and is certainly not true in the setting of Section \ref{intro2-2}), and allows to avoid the divergence in Section \ref{intro2-2} caused by the factor $\Df^n$.
\item \uwave{Vines and twists:} The structure of \emph{vines} and \emph{vine chains}, which was discovered in \cite{DH23} (with a special case contained in \cite{DH21}), is the main obstacle in the analysis of non-regular couples. Such structures require a delicate cancellation argument, where we combine the $\Kc_\Gc$ terms for different gardens $\Gc$, that are related by a special combinatorial operation called \emph{twist}.

In this paper, all these arguments need to be adapted to the new layering structure (consistency between twisting and layering structures is proved in Proposition \ref{lftwistprop}). We also need to introduce similar incoherency notions for vines and vine chains (Definition \ref{cohmol}), and establish similar gains for each occurrence of incoherency, as with regular couples above, see Proposition \ref{vineest}. The number of coherent vines and vine chains can again be controlled, see Proposition \ref{layervine}.
\item \uwave{Ladders:} The \emph{ladders} are neutral objects observed in \cite{DH21,DH23} that do not gain or lose any powers of $L$. As such, in this paper we only need to avoid $\Df^n$ type loss in Section \ref{intro2-2}; just like for regular couples and vines, we can define the incoherency notion for ladders (Definition \ref{cohmol}), and prove similar decay estimates and counting bounds for the given number of occurrences of incoherency, see Propositions \ref{layerlad} and \ref{ladderl1new}. Finally we need an $L^1$ estimate for time integrals, similar to \cite{DH21,DH23} but adapted to the layered case, see Proposition \ref{ladderl1old}.
\end{itemize}
\subsubsection{Structure of the paper} The rest of this paper is organized as follows. In Section \ref{section3} we introduce the relevant notations (couples, gardens etc.), including the fundamental objects of layered gardens and canonical layered gardens, and write down the main ansatz. Then Sections \ref{layerobject1}--\ref{seclayer3} are devoted to the study of regular couples: Section \ref{layerobject1} contains the combinatorial results, Section \ref{seclayer2} contains the decay estimate for incoherent regular couples, and Section \ref{seclayer3} contains the asymptotics for the coherent ones. In Sections \ref{section7}--\ref{section8} we define the notions of molecules, vines and twists, and prove relevant combinatorial results for layerings of these objects. In Section \ref{cancelvine} we prove the main estimate for vines and vine chains. Putting these together, in Sections \ref{stage1red}--\ref{redcount} we perform the necessary reduction steps to the molecule associated with a given garden $\Gc$, and apply the blackbox algorithm in \cite{DH21,DH23} to complete the desired estimates for $\Kc_\Gc$. Finally, in Section \ref{endgame} we provide the estimates for the linearization operator and the remainder term, and complete the proof of Theorem \ref{main}. In the whole proof process we will need various auxiliary lemmas (combinatorial, analytical, number theoretic and counting), which are listed in Appendix \ref{appendmisc}. Then Appendix \ref{physicalL1} illustrates the failure of some physical space $L^1$ bound in \cite{LS11} (discussed in Section \ref{strategyintro}, Idea 2), Appendix \ref{schwartzwke} proves the persistence of regularity for solutions to \eqref{wke} which gives existence of solutions in Schwartz space, and Appendix \ref{appendtable} contains three tables collecting some of the important notations used in this paper.

\subsubsection{Relation with previous works \cite{DH21,DH21-2,DH23}} As pointed out at the end of Section \ref{subsec:irreversibility}, the long-time proof in the current paper is not an iteration of the short-time proof in our previous works \cite{DH21,DH21-2,DH23}; in fact the inductive step in this paper is fundamentally different from the short time analysis in \cite{DH21,DH21-2,DH23}. On the other hand, the current work does rely on many technical ingredients developed in \cite{DH21,DH21-2,DH23}, which we briefly describe below.

 First, our proof adopts the same Feynman diagram expansion paradigm from \cite{DH21,DH21-2,DH23}, where iterates of (\ref{nls}) are expanded in trees, two-point correlations of these iterates are expanded in \emph{couples} of trees \cite{DH21, DH23}, and multi-point correlations (and cumulants) are expanded in \emph{gardens} of trees \cite{DH21-2}. We still give self-contained definitions of all those objects and the quantities associated to them in Section \ref{section3}. 
 
Next, at a number of instances, especially concerning the common combinatorial objects that appear in both \cite{DH21,DH21-2,DH23} and the current paper, we are directly citing the lemmas and propositions that are proved there. These mainly include (i) Propositions \ref{propstructure}--\ref{deflink} and \ref{regref1}--\ref{regref3}, concerning structural properties of regular couples and exact asymptotics of certain expressions associated with them, (ii) Parts of Section \ref{section7} concerning structural properties of vines and their relation to couples and gardens, and (iii) Proposition \ref{lgmolect} concerning counting estimates for certain molecules, which is proved in Proposition 9.6 of \cite{DH23} and used as a black box here.

At each place where we directly cite from \cite{DH21,DH21-2,DH23}, we make sure that the relevant proof in the current paper is either exactly the same as in the cited papers, or requires at most minimal and essentially trivial adaptations. For any result whose proof requires even minor modifications from \cite{DH21,DH21-2,DH23}, we have included a full self-contained proof in the current paper.
\section{Preparations for the proof}\label{section3}
\subsection{First reductions} Let $\delta$ be a small parameter to be fixed below, such that $\tau_*=\Df\delta,\,\Df\in\Zb$ (see Section \ref{setupparam}). We perform the first reduction for the equation (\ref{nls}) as follows. Let $u$ be a solution to (\ref{nls}), and recall $\alpha=L^{-\gamma}$. Let $M=\fint|u|^2$ be the conserved mass of $u$ (where $\fint$ takes the average on $\Tb_L^d$), and define $v:=e^{-2iL^{-\gamma} Mt}\cdot u$, then $v$ satisfies the Wick ordered equation
\begin{equation}(i\partial_t-\Delta)v+L^{-\gamma}\bigg(|v|^2v-2\fint|v|^2\cdot v\bigg)=0.
\end{equation} By switching to Fourier space, rescaling in time and reverting the linear Schr\"{o}dinger flow, we define
\begin{equation}\label{reducedcoef}a_k(t)=e^{-\pi i\cdot\delta L^{2\gamma}|k|^2t}\cdot\widehat{v}( \delta T_{\mathrm{kin}}\cdot t,k)
\end{equation} with $\widehat{v}$ as in (\ref{fourier}). Then $\textit{\textbf{a}}:=a_k(t)$, where $t\in[0,\Df]$ and $k\in\Zb_L^d$, will satisfy the equation
 \begin{equation}\label{akeqn}
 \left\{
\begin{aligned}
\partial_ta_k &= \Cc_+(\textit{\textbf{a}},\overline{\textit{\textbf{a}}},\textit{\textbf{a}})_k(t),\\
a_k(0) &=(a_k)_{\mathrm{in}}=\sqrt{\varphi_{\mathrm{in}}(k)}\cdot \gf_k,
\end{aligned}
\right.
\end{equation} with the nonlinearity
\begin{equation}\label{akeqn2} \Cc_\zeta(\textit{\textbf{f}},\textit{\textbf{g}},\textit{\textbf{h}})_k(t):=\frac{\delta}{2L^{d-\gamma}}\cdot(i\zeta)\sum_{k_1-k_2+k_3=k}\epsilon_{k_1k_2k_3}
e^{\zeta\pi i\cdot\delta L^{2\gamma}\Omega(k_1,k_2,k_3,k)t}f_{k_1}(t)g_{k_2}(t)h_{k_3}(t)
\end{equation}for $\zeta\in\{\pm\}$. Here in (\ref{akeqn2}) and below, the summation is taken over $(k_1,k_2,k_3)\in(\Zb_L^d)^3$, and 
\begin{equation}\label{defcoef0}\epsilon_{k_1k_2k_3}=
\left\{
\begin{aligned}+&1,&&\mathrm{if\ }k_2\not\in\{k_1,k_3\};\\
-&1,&&\mathrm{if\ }k_1=k_2=k_3;\\
&0,&&\mathrm{otherwise},
\end{aligned}
\right.\end{equation} and the resonance factor (assuming $k_1-k_2+k_3=k$) is
\begin{equation}\label{res}
\Omega=\Omega(k_1,k_2,k_3,k):=|k_1|^2-|k_2|^2+|k_3|^2-|k|^2=2\langle k_1-k,k-k_3\rangle.\end{equation} Note that $\epsilon_{k_1k_2k_3}$ is always supported in the set \begin{equation}\label{defset}\Gf:=\big\{(k_1,k_2,k_3):\mathrm{\ either\ }k_2\not\in\{k_1,k_3\},\mathrm{\ or\ }k_1=k_2=k_3\big\}.\end{equation}

The rest of this paper is focused on the system (\ref{akeqn})--(\ref{akeqn2}) for $\textit{\textbf{a}}$, with the relevant terms defined in (\ref{defcoef0})--(\ref{res}), in the time interval $t\in[0,\Df]$.
\subsection{Moments and cumulants} Let $a_k(t)$ be the (random) solution to (\ref{akeqn})--(\ref{akeqn2}). By gauge ($u\mapsto e^{i\theta}u$) and space translation ($u(x)\mapsto u(x-x_0)$) invariance it is easy to see that, for any $k_j\in\Zb_L^d$, $t_j\in\Rb$ and $\zeta_j\in\{\pm\}$ we have
\begin{equation}\label{gaugeinv}\Eb\bigg(\prod_{j=1}^ra_{k_j}(t_j)^{\zeta_j}\bigg)=0
\end{equation} unless $\zeta_1+\cdots +\zeta_r=0$ (in particular $r$ is even) and $\zeta_1k_1+\cdots +\zeta_rk_r=0$ (in particular $k_1=k_2=k$ when $r=2$). Note that any event $\Ff$ can be interpreted as a subset of initial data $u_{\mathrm{in}}$ as in (\ref{data}). We say an event $\Ff$ has gauge or space translation symmetry, if the corresponding subset is gauge or space translation invariant. Now we recall the standard notion of cumulants (see \cite{LS11}):
\begin{df}[Cumulants \cite{LS11}]\label{defcm} For any complex random variables $X_j\,(1\leq j\leq r)$, define the \emph{cumulant}
\begin{equation}\label{eqncm}\Kb(X_1,\cdots, X_r):=\sum_{\Pc}(-1)^{|\Pc|-1}(|\Pc|-1)!\prod_{A\in\Pc}\Eb\bigg(\prod_{j\in A}X_j\bigg),
\end{equation} where the sum is taken over all partitions $\Pc$ of $\{1,\cdots,r\}$. We then have the inversion formula (see for example \cite{LS11})
\begin{equation}\label{eqncm2}\Eb(X_1\cdots X_r):=\sum_{\Pc}\prod_{A\in\Pc}\Kb\big((X_j)_{j\in A}\big).
\end{equation}

Note that, due to (\ref{gaugeinv}), if $X_j=a_{k_j}(t_j)^{\zeta_j}$, then (\ref{eqncm}) also vanishes unless $\zeta_1+\cdots +\zeta_r=0$ and $\zeta_1k_1+\cdots +\zeta_rk_r=0$, and in both (\ref{eqncm}) and (\ref{eqncm2}) we can always assume that $(\zeta_j:j\in A)$ contains half $+$ and half $-$ for each $A\in\Pc$. In addition, if all the moments and cumulants are taken assuming a particular event $\Ff$ (i.e. taken with an additional factor $\mathbf{1}_{\Ff}$) with the same gauge and space translation symmetries, then all the above conclusions remain true.
\end{df}
\subsection{Norms, parameters, and notations} We next define the necessary norms, fix the parameters and introduce the relevant notations.
\subsubsection{Norms}\label{setupnorms} In the rest of this paper, we use $\widehat{\cdot}$ \emph{only} for the time Fourier transform, defined as\[\widehat{u}(\lambda)=\int_\Rb u(t) e^{-2\pi i\lambda t}\,\mathrm{d}t,\quad u(t)=\int_\Rb \widehat{u}(\lambda)e^{2\pi i\lambda t}\,\mathrm{d}\lambda,\] and similarly for higher dimensional versions.

For $\theta\in\Rb$, integers $\Lambda_1,\Lambda_2\geq 0$ and a function $F=F(t_1,\cdots,t_r,k)$ on $\Rb^r\times\Rb^d$, where $t_j\in\Rb$ are the time variables and $k\in\Rb^d$ is the vector variable, define the $\Xf^{\theta,\Lambda_1,\Lambda_2}$ norm 
\begin{equation}\label{normX}\|F\|_{\Xf^{\theta,\Lambda_1,\Lambda_2}}=\max_{|\rho|\leq\Lambda_1}\int(\max(\langle\lambda_1\rangle,\cdots,\langle\lambda_r\rangle))^{\theta}\cdot\|\langle k\rangle^{\Lambda_2}\partial_k^\rho\widehat{F}(\lambda_1,\cdots,\lambda_r,k)\|_{L_k^\infty}\,\mathrm{d}\lambda_1\cdots\mathrm{d}\lambda_r.\end{equation} In special cases where $F=F(t_1,\cdots,t_r)$ or $F=F(k)$, define accordingly the norms
\begin{equation}\label{normX1}\|F\|_{\Xf^\theta}=\int(\max(\langle\lambda_1\rangle,\cdots,\langle\lambda_r\rangle))^{\theta}\cdot|\widehat{F}(\lambda_1,\cdots,\lambda_r)|\,\mathrm{d}\lambda_1\cdots\mathrm{d}\lambda_r
\end{equation}
\begin{equation}\label{normX2}\|F\|_{\Sf^{\Lambda_1,\Lambda_2}}=\max_{|\rho|\leq\Lambda_1}\|\langle k\rangle^{\Lambda_2}\partial_k^\rho F(k)\|_{L_k^\infty}.
\end{equation}

Suppose $F=F(t_1,\cdots,t_r,k)$ is a function on $\Bc\times\Zb_L^d$ (or any subset of $\Rb^d$ instead of $\Zb_L^d$), where $\Bc\subset\Rb^r$ is a set of time variables $(t_j)$, then we may define the $\Xf^{\theta,\Lambda_1,\Lambda_2}(\Bc)$ norm of $F$ by
\begin{equation}\label{locnorm}\|F\|_{\Xf^{\theta,\Lambda_1,\Lambda_2}(\Bc)}=\inf\big\{\|G\|_{\Xf^{\theta,\Lambda_1,\Lambda_2}}:G|_{\Bc\times\Zb_L^d}=F\big\}.
\end{equation}
The same applies to norms $\Xf^\theta$ (so we can define the $\Xf^\theta(\Bc)$ norm) and $\Sf^{\Lambda_1,\Lambda_2}$ (so it can be defined for functions on $\Zb_L^d$). In addition, if $\Lambda_1=\Lambda_2=0$, we can define the $\Xf^{\theta,0,0}$ and $\Xf^{\theta,0,0}(\Bc)$ norms for functions $F$ depending on multiple variables $k_j\,(1\leq j\leq q)$ with the same formula.

Finally, for any interval $\Bc$, integer $\Lambda\geq 0$ and any function $\textit{\textbf{a}}=a_k(t)$ where $t\in \Bc$ and $k\in\Zb_L^d$, define the norm
\begin{equation}\label{defznorm}\|\textit{\textbf{a}}\|_{Z^\Lambda(\Bc)}=\sup_{t\in\Bc}\sup_{k\in\Zb_L^d}\langle k\rangle^{\Lambda}|a_k(t)|.
\end{equation}
\subsubsection{Parameters}\label{setupparam}
First recall the local well-posedness result proved in \cite{GIT20}. For our purpose, we state it below in the form of an (equivalent) blowup criterion.
\begin{prop}\label{wkelwp} Given any Schwartz initial data $\varphi_{\mathrm{in}}$, there exists a maximal time $\tau_{\max}>0$ (may be $\infty$) such that (\ref{wke}) has a unique solution $\varphi=\varphi(\tau,k)$ in Schwartz class, for $\tau\in[0,\tau_{\max})$. Moreover, if $\tau_{\mathrm{max}}<\infty$ then we have the blowup criterion
\begin{equation}\label{blowuptest}\lim_{\tau\to\tau_{\max}}\|\langle k\rangle^{s}\varphi(\tau,k)\|_{L_k^{2}}=\infty,\quad \forall s>d/2-1.
\end{equation}
\end{prop}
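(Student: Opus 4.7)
My plan is to establish the proposition via Picard iteration at the critical regularity indicated by the blowup criterion. For fixed $s > d/2 - 1$, let $X_s$ be the Banach space of functions $\varphi:\Rb^d\to\Cb$ with norm $\norm{\varphi}_{X_s} := \norm{\langle k\rangle^s \varphi}_{L^2_k(\Rb^d)}$, and consider the Duhamel reformulation $\varphi(\tau) = \varphi_{\mathrm{in}} + \int_0^\tau \Kc(\varphi,\varphi,\varphi)(\sigma)\,d\sigma$. The main analytic ingredient, and the step I expect to be the genuine obstacle, is a trilinear estimate
\begin{equation*}
\norm{\Kc(\varphi_1,\varphi_2,\varphi_3)}_{X_s} \les \prod_{j=1}^3 \norm{\varphi_j}_{X_s}.
\end{equation*}
Granted this, a contraction mapping argument on $C([0,\tau_0]; X_s)$ with $\tau_0 \sim \norm{\varphi_{\mathrm{in}}}_{X_s}^{-2}$ produces a unique local solution, and uniqueness plus continuous dependence follow in the standard way.

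To prove the trilinear estimate, I would use the momentum delta in (\ref{wke2}) to eliminate $k_2 = k_1 + k_3 - k$, after which the energy delta becomes $\dirac(-2\langle k_1 - k, k_3 - k\rangle)$ by identity (\ref{res}). Introducing the shifted variables $a := k_1 - k$ and $b := k_3 - k$, the collision kernel reduces to an integral against surface measure on the orthogonality cone $\{(a,b)\in \Rb^{2d}:\langle a,b\rangle = 0\}$, of dimension $2d-1$. A dyadic decomposition in $|a|$ and $|b|$, combined with the standard estimate that for $|a|\sim A$ the transverse integral of $\dirac(\langle a,b\rangle)$ over $|b|\les B$ has mass $\sim B^{d-1}/A$, reduces the problem to a weighted Cauchy--Schwarz computation. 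The threshold $s > d/2 - 1$ is exactly what is required for the polynomial weights $\langle k_j\rangle^s$ to absorb the $A^{-1}$ loss from the cone integration; this is essentially the content of the multilinear analysis in \cite{GIT20}, to which I would defer for the detailed computation.

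With local well-posedness in hand, I define $\tau_{\max}$ as the supremum of $T > 0$ such that a solution in $C([0,T); X_s)$ exists. A standard continuation argument shows that if $\tau_{\max} < \infty$ then $\limsup_{\tau\to\tau_{\max}}\norm{\varphi(\tau)}_{X_s} = \infty$ for every $s > d/2 - 1$, since otherwise local well-posedness at that regularity could be restarted close enough to $\tau_{\max}$ to extend the solution past it. This yields the blowup criterion (\ref{blowuptest}).

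Finally, to show that Schwartz initial data propagates to a Schwartz solution on $[0,\tau_{\max})$, I would change variables in (\ref{wke2}) to $l_j := k_j - k$, in which coordinates momentum conservation becomes $l_1 - l_2 + l_3 = 0$ and the energy delta reads $\dirac(-2\langle l_1, l_3\rangle)$, with no remaining $k$-dependence. In this form the derivatives $\partial_k^\alpha$ interact with $\Kc$ only through Leibniz's rule on the input factors. Running the trilinear estimate at arbitrary polynomial weight and Sobolev regularity then yields, inductively in $|\alpha|$ and in the weight $N$, differential inequalities for $\norm{\langle k\rangle^N \partial_k^\alpha \varphi(\tau)}_{L^2_k}$ with right-hand sides polynomial in lower-order norms and in $\norm{\varphi}_{X_s}$. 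A Gr\"onwall argument then propagates all these norms on any subinterval $[0,\tau]\subset[0,\tau_{\max})$, giving persistence of Schwartz regularity up to the maximal time.
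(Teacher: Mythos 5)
Your proposal follows essentially the same route as the paper's: local well-posedness in $Z_s=\|\langle k\rangle^s\cdot\|_{L^2}$ via the trilinear bound of \cite{GIT20}, the translation-invariance of the collision kernel after passing to $\ell_j=k_j-k$ (so that $\partial_k$ acts only via Leibniz on the inputs and $k^\alpha$ weights distribute via momentum conservation), an induction in the number of derivatives and weights closed by Gr\"onwall, and the standard continuation argument for the blowup criterion. The only cosmetic difference is that the paper explicitly splits $\Kc=\sum(-1)^j\Kc_j$ into the four positive pieces before writing the identities, but your argument is the same in substance.
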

\begin{proof} See Appendix \ref{schwartzwke}.
\end{proof}

\smallskip
Throughout this paper, we will fix $\tau_{\mathrm{max}}$ as in Proposition \ref{wkelwp}, and fix $\tau_*\in(0,\tau_{\mathrm{max}})$ as in Theorem \ref{main}. We also fix the solution $\varphi(\tau,k)$ to (\ref{wke}) as in Theorem \ref{main} and Proposition \ref{wkelwp}. We will use the notation $C_0$ to denote arbitrary large constant that depends only on $(d,\gamma)$, and fix $\eta$ that is small enough depending only on $C_0$. Define
\begin{equation}\label{defC}\Cf:=\sup_{\tau\in[0,\tau_*]}\|\varphi(\tau)\|_{\Sf^{40d,40d}}<\infty,
\end{equation} with $\Sf^{40d,40d}$ defined as above, which depends only on $\tau_*$ and the initial data $\varphi_{\mathrm{in}}$. We use $C_1$ to denote arbitrary large constant that depends only on $(\tau_*,\eta)$ and $\Cf$. Fix also $\nu$ that is small enough depending on $C_1$, and $\delta$ that is small enough depending on $\nu$, such that
\begin{equation}\label{defD}\Df:=\frac{\tau_*}{\delta}\in\Zb.
\end{equation} We will use $C_2$ to denote arbitrary large constant that depends only on $\Df$ and $\delta$ (and also $\varphi_{\mathrm{in}}$), and assume $L$ is large enough depending on $C_2$. For integer $0\leq p\leq\Df$, define
\begin{equation}\label{parameters} \theta_p:=\eta^{20}(1+2^{-p}),\quad\Lambda_p:=(40d)^{\Df-p+1},
\end{equation} note that $\theta_{p+1}<\frac{4\theta_{p+1}+\theta_p}{5}<\frac{3\theta_{p+1}+2\theta_p}{5}<\frac{2\theta_{p+1}+3\theta_p}{5}<\frac{\theta_{p+1}+4\theta_p}{5}<\theta_p$. We choose the $\theta$ in Theorem \ref{main} as $\theta=\eta^{20}$, which is smaller than all the $\theta_p$ in (\ref{parameters}). Define also $N_p$ and $R_p$ such that\footnote{This choice is just for convenience; the same proof allows for any choice of $R_\Df$ between $(\log L)^{40d}$ and $L^{\nu^2(40d)^{-2\Df}}$.}\begin{equation}\label{parameters2}R_\Df:=\lfloor(\log L)^{40d}\rfloor;\quad N_p=R_p^{40d}\,(0\leq p\leq\Df),\quad R_{p-1}=N_p^{40d}\,(1\leq p\leq \Df).\end{equation} Finally, define
\begin{equation}\label{defgamma1}\gamma_0:=\min(\gamma,1-\gamma),\quad \gamma_1:=\min(2\gamma,1,2(d-1)(1-\gamma)).
\end{equation}
\subsubsection{Notations}\label{setupnotat} We will use $X=\Oc_j(Y)$, $X\lesssim_jY$ and $X\ll_jY$ etc., where $j\in\{0,1,2\}$, to indicate that the implicit constants depend only on $C_j$ (where $C_j$ are defined in Section \ref{setupparam}). Note that by definition we have $\nu\ll_11$ and $\delta^\nu\ll_11$. For multi-index $\rho=(\rho_1,\cdots,\rho_m)$, denote $|\rho|=\rho_1+\cdots+\rho_m$ and $\rho!=(\rho_1)!\cdots(\rho_m)!$, etc. For an index set $A$, we use the vector notation $\alpha[A]=(\alpha_j)_{j\in A}$ and $\mathrm{d}\alpha[A]=\prod_{j\in A}\mathrm{d}\alpha_j$, etc. The sign $\zeta\in\{\pm\}$ is understood as $\pm 1$ in algebraic manipulations. Denote $x^j$ as the coordinates of vector $(x^1,\cdots,x^d)\in\Rb^d$, and $z^+=z$ for a complex number $z$, and $z^-=\overline{z}$.

For $x\in\Rb$ fix the cutoff function
\[\chi_0=\chi_0(x)=\left\{
\begin{aligned}&0,&&\mathrm{for\ }|x|\geq 1;\\
&1,&&\mathrm{for\ }|x|\leq 1/2;\\
&\mathrm{smooth\ in\ }[0,1],&&\mathrm{for\ }1/2\leq|x|\leq 1.
\end{aligned}
\right.\] For $x\in\Rb^d$ define $\chi_0(x)=\chi_0(x^1)\cdots \chi_0(x^d)$, and $\chi_\infty=1-\chi_0$. In general, we may use $\chi$ to denote other cutoff functions with slightly different supports. These functions, as well as the other cutoff functions, will be in Gevrey class $2$ (i.e. the $m$-th order derivatives are bounded by $(2m)!$).
\subsection{Combinatorial structures} We define the combinatorial structures that will be used throughout this paper, including trees, gardens, couples and their decorations. For illustrations of structures defined in \cite{DH21,DH21-2,DH23} the reader may refer to those papers; here we only provide illustration for the new structures introduced in this paper.
\subsubsection{Trees, gardens and couples} We start with the notion of trees in \cite{DH21}, which will be restricted to ternary trees throughout this paper.
\begin{df}[Trees \cite{DH21}]\label{deftree} A \emph{ternary tree} $\Tc$ (we will simply say a \emph{tree} below) is a rooted tree where each non-leaf (or \emph{branching}) node has exactly three children nodes, which we shall distinguish as the \emph{left}, \emph{mid} and \emph{right} ones. A node $\mf$ is a \emph{descendant} of a node $\nf$, or $\nf$ is an \emph{ancestor} of $\mf$, if $\mf$ belongs to the subtree rooted at $\nf$ (we allow $\mf=\nf$). We say $\Tc$ is \emph{trivial} (and write $\Tc=\bullet$) if it has only the root, in which case this root is also viewed as a leaf.

We denote generic nodes by $\nf$, generic leaves by $\lf$, the root by $\rf$, the set of leaves by $\Lc$ and the set of branching nodes by $\Nc$. The \emph{order} of a tree $\Tc$ is defined by $n(\Tc)=|\Nc|$, so if $n(\Tc)=n$ then $|\Lc|=2n+1$ and $|\Tc|=3n+1$. Denote the parent node of a non-root node $\nf$ by $\nf^{\mathrm{pr}}$.

A tree $\Tc$ may have sign $+$ or $-$. If its sign is fixed then we decide the signs of its nodes as follows: the root $\rf$ has the same sign as $\Tc$, and for any branching node $\nf\in\Nc$, the signs of the three children nodes of $\nf$ from left to right are $(\zeta,-\zeta,\zeta)$ if $\nf$ has sign $\zeta\in\{\pm\}$. Once the sign of $\Tc$ is fixed, we will denote the sign of $\nf\in\Tc$ by $\zeta_\nf$. Define $\zeta(\Tc)=\prod_{\nf\in\Nc}(i\zeta_\nf)$. We also define the conjugate $\overline{\Tc}$ of a tree $\Tc$ to be the same tree but with opposite sign.
\end{df}
Next we define gardens (and the special case of couples), which are collections of trees with their leaves paired. The definition of couples first appeared in \cite{DH21}, and general gardens in \cite{DH21-2}.
\begin{df}[Gardens and couples \cite{DH21,DH21-2}]\label{defgarden} Given a sequence $(\zeta_1,\cdots,\zeta_{2R})$, where $\zeta_j\in\{\pm\}$ and exactly half of them are $+$, we define a \emph{garden} $\Gc$ of \emph{signature} $(\zeta_1,\cdots,\zeta_{2R})$, to be an ordered collection of trees $(\Tc_1,\cdots,\Tc_{2R})$, such that $\Tc_j$ has sign $\zeta_j$ for $1\leq j\leq 2R$, together with a partition $\Ps$ of the set of leaves in all $\Tc_j$ into two-element subsets (called \emph{pairings}) such that the two paired leaves have opposite signs. The \emph{width} of the garden is defined to be $2R$, which is always an even number. The \emph{order} $n(\Gc)$ of a garden $\Gc$ is the sum of orders of all $\Tc_j\,(1\leq j\leq 2R)$. We denote $\Lc=\Lc_1\cup\cdots\cup\Lc_{2R}$ to be the set of leaves and $\Nc=\Nc_1\cup\cdots\cup\Nc_{2R}$ to be the set of branching nodes, where $\Lc_j$ and $\Nc_j$ are the sets of leaves and branching nodes of $\Tc_j$, and define $\zeta(\Gc)=\prod_{\nf\in\Nc}(i\zeta_\nf)$.

A garden of width $2$ (i.e. $R=1$) is called a \emph{couple}, and denoted by $\Qc$. A non-couple garden (i.e. $R\geq 2$) is called \emph{proper}. For a couple we will always assume $(\zeta_1,\zeta_2)=(+,-)$ and denote $(\Tc_1,\Tc_2)$ by $(\Tc^+,\Tc^-)$. We also define a \emph{paired tree} to be a tree $\Tc$ whose leaves are all paired to each other according to the same pairing rule for gardens and couples, except for \emph{one unpaired leaf} which is called the \emph{lone leaf}. Note that if $\Tc$ is a paired tree, then it automatically forms a couple with the trivial tree $\bullet$. Define the conjugate of a couple $\Qc = (\Tc^+,\Tc^-)$ as $\overline{\Qc} = (\overline{\Tc^-},\overline{\Tc^+})$; for a paired tree $\Tc$ we also define its conjugate as $\overline{\Tc}$ with the same pairings, where $\overline{\Tc}$ is as in Definition \ref{deftree}.

We say a garden $\Gc$ is \emph{irreducible} if there exists no proper subset $A\subset\{1,\cdots,2R\}$ (i.e. $A\neq\varnothing$ and $A\neq\{1,\cdots,2R\}$) such that the leaves of $\Tc_j\,(j\in A)$ are all paired with each other. Clearly any garden $\Gc$ can be uniquely decomposed into irreducible gardens corresponding to a partition $(A_i)$ of $\{1,\cdots,2R\}$, where $|A_i|$ is even and $(\zeta_j:j\in A_i)$ contains half $+$ and half $-$ for each $i$. These are called the \emph{components} of $\Gc$. We say $\Gc$ is a \emph{multi-couple} if all its components are couples (i.e. have width $2$), and \emph{trivial} if each tree $\Tc_j$ is trivial. Note that a trivial garden must be a multi-couple formed by $R$ trivial couples.
\end{df}
Next we define decorations of couples and gardens (this is defined in \cite{DH21} for couples, and \cite{DH21-2} for general gardens).
\begin{df}[Decorations \cite{DH21,DH21-2}]\label{defdec} For any tree $\Tc$, a \emph{decoration} $\Ds$ of $\Tc$ is a mapping $k[\Tc]:\Tc\to\Zb_L^d$ consisting of $k_\nf$ for each node $\nf$ of $\Tc$, such that
\[k_{\nf_1}-k_{\nf_2}+k_{\nf_3}=k_\nf,\quad\mathrm{equivalently}\quad \zeta_{\nf_1}k_{\nf_1}+\zeta_{\nf_2}k_{\nf_2}+\zeta_{\nf_3}k_{\nf_3}=\zeta_{\nf}k_{\nf}\] for any branching node $\nf$ and its three children nodes $(\nf_1,\nf_2,\nf_3)$ from left to right (recall $\zeta_\nf$ defined in Definition \ref{deftree}). Clearly a decoration $\Ds$ is uniquely determined by the values of $(k_\lf)_{\lf\in\Lc}$. For $k\in\Zb_L^d$, we say $\Ds$ is a $k$-decoration if $k_\rf=k$ for the root $\rf$.

For any garden $\Gc=(\Tc_1,\cdots,\Tc_{2R})$, a \emph{decoration} $\Is$ of $\Gc$ is a mapping $k[\Gc]:\Gc\to\Zb_L^d$ consisting of $k_\nf$ for each node $\nf$ of $\Gc$, such that the restriction to each $\Tc_j$ is a decoration of $\Tc_j$, and $k_\lf=k_{\lf'}$ for each leaf pair $(\lf,\lf')$ of $\Gc$. For any $k_j\in\Zb_L^d\,(1\leq j\leq 2R)$, we say $\Is$ is a $(k_1,\cdots,k_{2R})$-decoration if $k_{\rf_j}=k_j$ where $\rf_j$ is the root of the tree $\Tc_j$. Note that in this case we necessarily have $\zeta_1k_1+\cdots +\zeta_{2R}k_{2R}=0$, where $\zeta_j$ are as in Definition \ref{defgarden}; in particular for couples we must have $k_1=k_2:=k$, thus for any $k\in\Zb_L^d$ we may also talk about $k$-decorations for couples. We can define decorations of paired trees in the same way (again requiring that $k_\lf=k_{\lf'}$ for each leaf pair $(\lf,\lf')$); in this case we must have $k_\rf=k_{\lf_{\mathrm{lo}}}:=k$ for the root $\rf$ and lone leaf $\lf_{\mathrm{lo}}$, so we may also talk about $k$-decorations of paired trees. Moreover, \emph{for couples and paired trees}, we shall define $k$-decorations for \emph{general vectors} $k\in\Rb^d$, by requiring $k_\nf-k\in\Zb_L^d$ instead of $k_\nf\in\Zb_L^d$, and keeping all other requirements.

For any decoration $\Ds$ or $\Is$ defined above, we define the coefficient
\begin{equation}\label{defepscoef}\epsilon_\Ds \,(\mathrm{or\ }\epsilon_\Is)=\prod_{\nf\in\Nc}\epsilon_{k_{\nf_1}k_{\nf_2}k_{\nf_3}}\end{equation} where $(\nf_1,\nf_2,\nf_3)$ is as above and $\epsilon_{k_1k_2k_3}$ is as in (\ref{defcoef0}); in the support of $\epsilon_\Ds$ (or $\epsilon_\Is$) we have $(k_{\nf_1},k_{\nf_2},k_{\nf_3})\in\Gf$ as in (\ref{defset}) for any $\nf\in\Nc$. Define the resonance factor $\Omega_\nf$ for each $\nf\in\Nc$ by
\begin{equation}\label{defomega}\Omega_\nf=\Omega_\nf(k_{\nf_1},k_{\nf_2},k_{\nf_3},k_\nf)=|k_{\nf_1}|^2-|k_{\nf_2}|^2+|k_{\nf_3}|^2-|k_\nf|^2.\end{equation} Note that, for a $k$-decoration of couples and paired trees, if we translate all the $k_\nf$, including $k$, by the same constant vector $\overline{k}\in\Rb^d$ (which may not be in $\Zb_L^d$), then the decoration conditions, as well as the values of $\Omega_\nf$, do not change.
\end{df}
\subsubsection{Layerings} Now we introduce the key new structure of \emph{layerings} imposed on gardens (and couples and paired trees), which corresponds to iterating the solution at different time slices.
\begin{df}[Layered gardens]\label{deflayer} A \emph{layering} of a garden $\Gc$ is a mapping $\Lf[\Gc]:\Gc\to\Zb$ consisting of $\Lf_\nf$ for each node $\nf$ of $\Gc$, such that $0\leq\Lf_{\nf'}\leq\Lf_{\nf}$ whenever $\nf'$ is a child of $\nf$ and $\Lf_{\lf}=\Lf_{\lf'}$ whenever $(\lf,\lf')$ are two paired leaves, see Figure \ref{fig:layergarden}. We call $\Lf_\nf$ the \emph{layer} of $\nf$. A garden $\Gc$ with a layering fixed is called a \emph{layered} garden. We also define layerings of paired trees in the same way. Moreover, define a \emph{pre-layering} (of a garden or a paired tree) to be the same as layering except we define $\Lf_\nf$ only for \emph{branching nodes} $\nf$, and according to the same rules as above.
\begin{figure}[h!]
\includegraphics[scale=0.5]{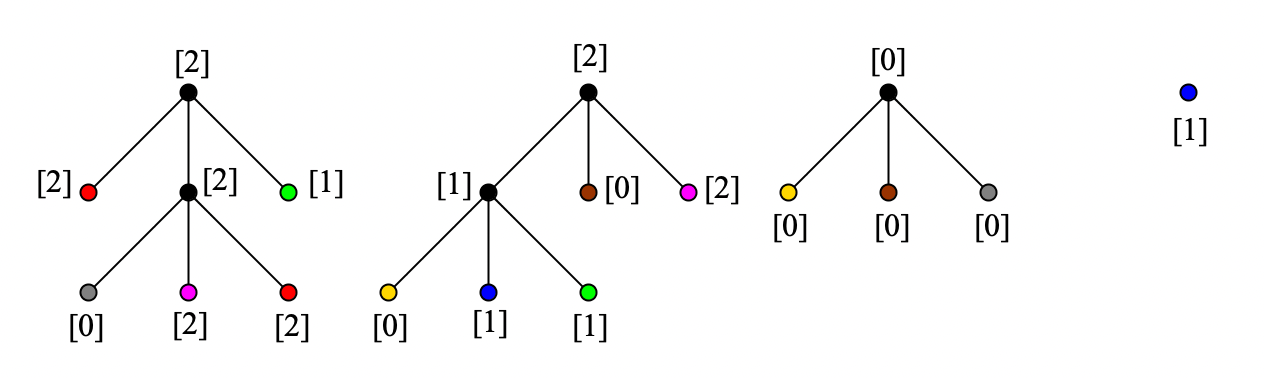}
\caption{A layered garden, as in Definition \ref{deflayer}. The signs of trees are $(+,-,+,-)$ from left to right.}
\label{fig:layergarden}
\end{figure}
\end{df}
We will consider an important class of layered gardens, namely the \emph{canonical layered gardens}, which includes the ones that occur in the actual inductive scheme (see Proposition \ref{propansatz}).
\begin{df}[Canonical layered gardens]\label{defcanon} Let $p\geq 0$. We inductively define a collection $\Gs_p$ of irreducible proper layered gardens (see Definition \ref{defgarden}); gardens in $\Gs_p$ will be called \emph{canonical layered gardens of depth $p$}. Let $\Gs_0=\varnothing$. Suppose $\Gs_p$ is defined for some $p\geq 0$, to define $\Gs_{p+1}$ we consider the following process.

First choose any $2R\,(R\geq 2)$ trees $\Tc_j^p\,(1\leq j\leq 2R)$ with signs $\zeta_j$ as in Definition \ref{defgarden} (half $+$ and half $-$), and put all their branching nodes in  layer $p$. Next, divide the set of leaves of all these trees into subsets $B_i$ such that the signs of leaves in each subset are half $+$ and half $-$; we also require that there is no proper subset $A\subset\{1,\cdots,2R\}$ such that the set of leaves of $\Tc_j^p$ for all $j\in A$ equals the union of \emph{some of the sets} $B_i$. Then, we turn any \emph{two-element} subset $B_i$ into a pairing, putting the involved leaves again in layer $p$. For any subset $B_i$ of \emph{at least four elements}, we replace its leaves by an irreducible proper layered garden in $\Gs_p$, such that the roots of the trees in this garden (including the signs) are just the elements of $B_i$. This results in a big proper layered garden of width $2R$, which is easily shown to be irreducible, and we define $\Gs_{p+1}$ to consist of all the irreducible proper layered gardens that can be obtained in this way, see Figure \ref{fig:canongarden}.

We also define, for $p\geq 0$, a collection $\Cs_{p+1}$ of layered \emph{couples}, by choosing $R=1$ and following the same procedure in the above construction (i.e. choosing two trees of signs $+$ and $-$, dividing the set of leaves into pairings and subsets of at least four elements, and replacing the latters by layered gardens in $\Gs_p$). Couples in $\Cs_{p+1}$ are called \emph{canonical layered couples}.
\begin{figure}[h!]
\includegraphics[scale=0.36]{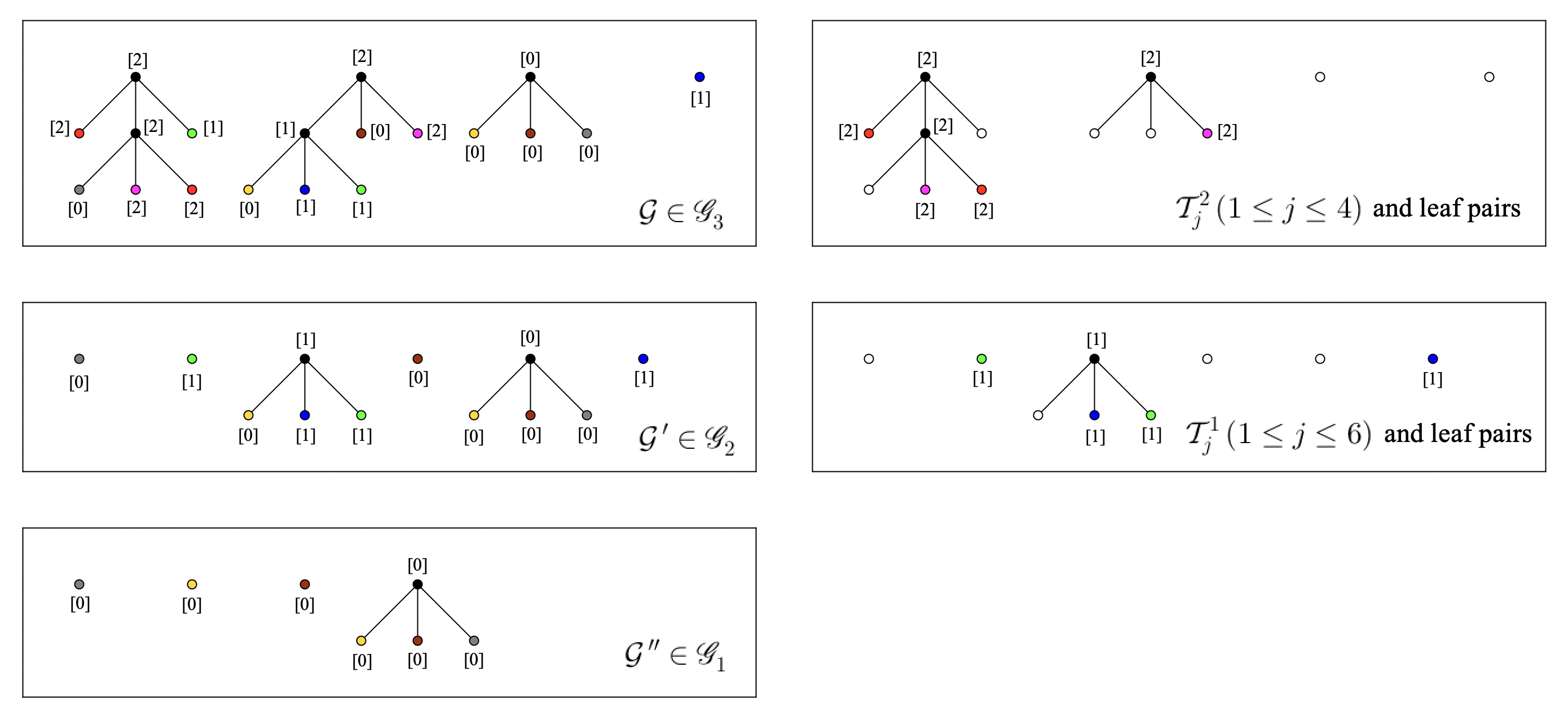}
\caption{An illustration showing that the layered garden in Figure \ref{fig:layergarden} is canonical, following the inductive construction in Definition \ref{defcanon}.}
\label{fig:canongarden}
\end{figure}
\end{df}
Finally, we define an important quantity $\Kc_\Gc$ for gardens $\Gc$ (as well as $\Kc_\Qc^*$ and $\Kc_\Tc^*$ for couples $\Qc$ and paired trees $\Tc$), which is similar to the ones in \cite{DH21,DH21-2}, but also takes into account the layers. This quantity is the main subject of study in the proof of the current paper.
\begin{df}[Definition of $\Kc_\Gc$, $\Kc_\Qc^*$ and $\Kc_\Tc^*$]\label{defkg} Fix an integer $p\geq 0$, let $\Gc$ be a layered garden consisting of trees $\Tc_j\,(1\leq j\leq 2R)$ with sign $\zeta_j$, such that all nodes of $\Gc$ are in layer at most $p$. Fix also the input functions $F_q=F_q(k)$ for $0\leq q\leq p$, then for any $k_1,\cdots,k_{2R}\in\Zb_L^d$ (satisfying $\zeta_1k_1+\cdots +\zeta_{2R}k_{2R}=0$, where $(\zeta_1,\cdots,\zeta_{2R})$ is the signature of $\Gc$) and $t>p$, we may define 
\begin{equation}\label{defkg0}\Kc_\Gc=\Kc_\Gc(t,k_1,\cdots,k_{2R})=\bigg(\frac{\delta}{2L^{d-\gamma}}\bigg)^n\zeta(\Gc)\sum_\Is\epsilon_\Is\int_{\Ic}\prod_{\nf\in\Nc}e^{\pi i\zeta_\nf\cdot\delta L^{2\gamma}\Omega_\nf t_\nf}\,\mathrm{d}t_\nf\cdot\prod_{\lf\in\Lc}^{(+)}F_{\Lf_\lf}(k_\lf).
\end{equation} Here in (\ref{defkg0}) $n$ is the order of $\Gc$, $\zeta(\Gc)$ is as in Definition \ref{defgarden} and $\epsilon_\Is$ as in Definition \ref{defdec}. The sum is taken over all $(k_1,\cdots,k_{2R})$-decorations $\Is$ of $\Gc$, $\Omega_\nf$ is as in (\ref{defomega}), and the last product is taken over all $\lf\in\Lc$ with sign $+$. The time domain $\Ic$ is defined by
\begin{equation}\label{timegarden}\Ic=\left\{t[\Nc]:\Lf_{\nf}<t_{\nf}<\Lf_{\nf}+1\quad\mathrm{and}\quad 0<t_{\nf'}<t_\nf<t,\mathrm{\ whenever\ \nf'\ is\ a\ child\ of\ \nf}\right\},
\end{equation} where $\Lf_\nf$ is the layer of $\nf$ as in Definition \ref{deflayer}. Note that the definition of $\Ic$ depends only on the \emph{pre-layering} of $\Gc$.

Now we can define $\Kc_\Qc$ for couples $\Qc$ via (\ref{defkg0}), in which case we can allow $k_1=k_2:=k$ to be \emph{arbitrary vector in $\Rb^d$, provided that the input functions $F_r=F_r(k)$ for $0\leq r\leq p$ are defined on $\Rb^d$, and that the sum in (\ref{defkg0}) is still taken over all $k$-decorations $\Is$ of $\Qc$ as in Definition \ref{defdec}.} For later use, we will extend this definition to $\Kc_\Qc^*$, and also define similar expression $\Kc_\Tc^*$ for paired trees $\Tc$, as follows.

First, let $\Qc=(\Tc^+,\Tc^-)$ be a layered couple such that the two roots are in layer at most $q$ and $q'$ respectively where $q,q'\leq p$, and fix the input functions $F_r=F_r(k)$ for $0\leq r\leq p$ (which are defined on $\Rb^d$ as stated above). Then for all $k\in\Rb^d$ and $t>q$, $s>q'$, we may define $\Kc_\Qc^*=\Kc_\Qc^*(t,s,k)$, in the same way as (\ref{defkg0}), except that the time domain is now
\begin{multline}\label{timecouple}\Ic^*=\left\{t[\Nc]:\Lf_{\nf}<t_{\nf}<\Lf_{\nf}+1\quad\mathrm{and}\quad 0<t_{\nf'}<t_\nf,\mathrm{\ whenever\ \nf'\ is\ a\ child\ of\ \nf},\right.\\\left.\mathrm{and}\quad t_{\nf}<t\ \mathrm{(or\ }s),\mathrm{\ whenever\ \nf\ is\ a\ node\ of\ \Tc^+\ (or\ \Tc^-)}\right\}.
\end{multline} This gives an extension of the expression $\Kc_\Qc$ in (\ref{defkg0}) when $\Gc=\Qc$ is a couple and $k_1=k_2=k\in\Rb^d$, such that the latter corresponds to the special case $t=s>p$.

Similarly, let $\Tc$ be a layered paired tree such that the root is in layer at most $q$ and the lone leaf is in layer $q'$ where $q'\leq q\leq p$, and fix the input functions $F_r=F_r(k)$ for $0\leq r\leq p$ which are defined on $\Rb^d$. Then \emph{for all $k\in\Rb^d$ and $(t,s)$ satisfying $t>q$, $s<q'+1$ and $t>s$}, we may define $\Kc_\Tc^*=\Kc_\Tc^*(t,s,k)$ in the same way as (\ref{defkg0}), except that (i) the sum is taken over all $k$-decorations $\Ds$ of $\Tc$ where again $k$ may not be in $\Zb_L^d$, (ii) the last product is taken over all leaves $\lf\in\Lc$ with sign $+$ \emph{other than the lone leaf $\lf_{\mathrm{lo}}$}, and (iii) the time domain is now \begin{multline}\label{timepairtree}\Dc^*=\big\{t[\Nc]:\Lf_{\nf}<t_{\nf}<\Lf_{\nf}+1\quad\mathrm{and}\quad 0<t_{\nf'}<t_\nf<t,\mathrm{\ whenever\ \nf'\ is\ a\ child\ of\ \nf},\\\mathrm{and}\quad t_{\nf_{\mathrm{lo}}^{\mathrm{pr}}}>s,\mathrm{\ where\ \nf_{\mathrm{lo}}^{\mathrm{pr}}\ is\ the\ parent\ node\ of\ the\ lone\ leaf\ \lf_{\mathrm{lo}}}\big\}.
\end{multline}
\end{df}
\subsection{The inductive scheme}\label{sectioninduct} With all the above preparations, we can now describe the inductive scheme which is the central component in the proof of Theorem \ref{main}. 

Recall that we are considering the system (\ref{akeqn})--(\ref{akeqn2}) for $t\in[0,\Df]$. Moreover, as the distinction between different $C_j$ in Section \ref{setupparam} will be important, we also recall it here:
\[C_0\ll\eta^{-1}\ll C_1\ll\delta^{-1}\ll C_2.\] Here $C_0$ is any ``absolute" constant depending only on $(d,\gamma)$, which can be absorbed by $\eta$; $C_1$ is much larger than $\eta^{-1}$ and also depends on the uniform bound $\Cf$ of the solution $\varphi$ to (\ref{wke}) defined in (\ref{defC}), but can still be absorbed by the small parameter $\delta$ indicating the length of each short interval; $C_2$ is the largest and may depend on the number $\Df$ of intervals which may be much larger than $\delta^{-1}$, but can still be absorbed by any quantity that increases with $L$.
\begin{prop}\label{propansatz} Recall the parameters defined in (\ref{parameters2}). For $0\leq p\leq\Df$ and $2\leq R\leq R_p$, we can define an event $\Ff_p$, functions $\Rs_p=\Rs_p(k)$ and $\Es_p=\Es_p(k)$, and $\widetilde{\Es_p}=\widetilde{\Es_p}(\zeta_1,\cdots,\zeta_{2R},k_1,\cdots,k_{2R})$, such that $\Rs_p(k)$ is real valued, and the followings hold for each $0\leq p\leq\Df$:
\begin{enumerate}[{(1)}]
\item Assuming $\Ff_p$, the system (\ref{akeqn})--(\ref{akeqn2}) has a unique solution $\textit{\textbf{a}}=a_k(t)$ on $[0,p]$. We also have $\Ff_p\supset\Ff_{p+1}$ and $\Pb(\Ff_p)\geq 1-e^{-N_p}$. Moreover each $\Ff_p$ has gauge and space translation symmetry; the expectations $\Eb$ and cumulants $\Kb$ below are taken assuming $\Ff_p$, and they come with subscript $p$.
\item Let the cumulant $\Kb$ be defined as in (\ref{eqncm}), and $\Kb_p$ as in (1). For any $k,\ell\in\Zb_L^d$ we have
\begin{equation}\label{ansatz1}\Kb_p(a_{k}(p),\overline{a_{\ell}(p)})=\Eb_p(a_{k}(p)\overline{a_{\ell}(p)})=\mathbf{1}_{k=\ell}\cdot\big(\varphi(\delta p,k)+\Rs_p(k)+\Es_p(k)\big),
\end{equation} where $\varphi$ is the solution to (\ref{wke}), the sub-leading term $\Rs_p(k)$ and remainder $\Es_p(k)$ satisfy
\begin{equation}\label{ansatz2}\|\Rs_p(k)\|_{\Sf^{\Lambda_p,\Lambda_p}}\leq L^{-\theta_p},\quad |\Es_p(k)|\leq \langle k\rangle^{-\Lambda_p}\cdot e^{-N_p}.
\end{equation}
\item For any $2\leq R\leq R_p$, signs $\zeta_j\,(1\leq j\leq 2R)$ as in Definition \ref{defgarden}, and $k_j\in\Zb_L^d\,(1\leq j\leq 2R)$, we have 
\begin{equation}\label{ansatz3}\Kb_p\big(a_{k_1}(p)^{\zeta_1},\cdots,a_{k_{2R}}(p)^{\zeta_{2R}}\big)=\sum_\Gc\Kc_\Gc(p,k_1,\cdots,k_{2R})+\widetilde{\Es_p}.\end{equation} Here the sum is taken over all \emph{canonical layered gardens} $\Gc\in\Gs_p$ of signature $(\zeta_1,\cdots,\zeta_{2R})$ and \emph{order at most $N_p$} (in particular all nodes of $\Gc$ are in layer at most $p-1$), and $\Kc_\Gc$ is as in (\ref{defkg0}) in Definition \ref{defkg} with input functions $F_q(k)=\varphi(\delta q,k)+\Rs_q(k)$ for $0\leq q\leq p-1$. The remainder $\widetilde{\Es_p}$ is supported in the set $\zeta_1+\cdots +\zeta_{2R}=0$ and $\zeta_1k_1+\cdots +\zeta_{2R}k_{2R}=0$, and satisfies that
\begin{equation}\label{ansatz4}
|\widetilde{\Es_p}|\leq\big(\langle k_1\rangle\cdots\langle k_{2R}\rangle\big)^{-\Lambda_p}\cdot e^{-N_p}.
\end{equation}
\item In addition, for any $n\leq N_p$, consider the expression on the right hand side of (\ref{ansatz3}), but with the summation taken over all canonical layered gardens $\Gc$ \emph{of order $n$} and without the remainder $\widetilde{\Es_p}$. Then this expression, denoted by $\Ks_{p,n}$, satisfies that
\begin{equation}\label{ansatz5}
|\Ks_{p,n}|\leq \big(\langle k_1\rangle\cdots\langle k_{2R}\rangle\big)^{-\Lambda_p}\cdot L^{-(R-1)/6}\cdot \delta^{\nu n}.
\end{equation}
\end{enumerate}
\end{prop}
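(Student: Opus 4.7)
The plan is to prove Proposition \ref{propansatz} by induction on $p\in\{0,1,\ldots,\Df\}$. The base case $p=0$ is essentially free: $\Gs_0=\varnothing$ so the sum in \eqref{ansatz3} is empty, and by \eqref{data} the $\gf_k$ are i.i.d.\ Gaussians, so $\Eb_0(a_k(0)\overline{a_\ell(0)})=\mathbf{1}_{k=\ell}\varphi_{\mathrm{in}}(k)=\mathbf{1}_{k=\ell}\varphi(0,k)$, all higher cumulants vanish exactly, and we set $\Ff_0$ to be the full probability space with $\Rs_0=\Es_0=\widetilde{\Es_0}=0$. The whole content is the inductive step $p\rightsquigarrow p+1$, which occupies the bulk of the paper.

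\textbf{Inductive step: Duhamel expansion plus cumulant reduction.} Assuming the ansatz at $p$, on the time interval $[p,p+1]$ I would expand the solution $a_k(t)$ to \eqref{akeqn} into a truncated Duhamel expansion of order $N_{p+1}$ with base point $p$, producing a sum of ternary tree iterates $\Jc_\Tc$ (exactly as in \cite{DH21,DH23}, and as used schematically in \eqref{introex2}) with leaves carrying the values $a_{k_\lf}(p)$, plus a remainder $\Rc_{p+1}$. This Duhamel expansion is substituted into each factor of the cumulant $\Kb_p(a_{k_1}(p+1)^{\zeta_1},\ldots,a_{k_{2R}}(p+1)^{\zeta_{2R}})$. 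Applying Lemma \ref{propertycm} converts the cumulant of products $\prod_{\lf\in\Lc_j}a_{k_\lf}(p)^{\sigma_\lf}$ into a sum over irreducible partitions $\Pc$ of $\bigcup_j\Lc_j$, weighted by products of cumulants of the individual blocks. For each two-element block I substitute the inductive two-point ansatz \eqref{ansatz1}, splitting off $\varphi(\delta p,k_\lf)+\Rs_p(k_\lf)$ (which become leaves in layer $p$) from the exponentially small remainder $\Es_p(k_\lf)$. For each block of size $\geq 4$ I substitute the inductive higher-order ansatz \eqref{ansatz3}, attaching a canonical layered garden in $\Gs_p$ at that location, plus a remainder $\widetilde{\Es_p}$. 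By the exact construction in Definition \ref{defcanon}, the resulting diagrammatic object is precisely a canonical layered garden in $\Gs_{p+1}$, which gives \eqref{ansatz3} at level $p+1$. The leading two-point correlations \eqref{ansatz1} for $p+1$ are extracted from the $R=1$ case: regular coherent couples produce the power series of $\varphi(\delta(p+1),k)$ in $\varphi(\delta p,k)$ (matching the Picard iteration of \eqref{wke}) via the structure theorems of Section \ref{seclayer3}, and everything else is absorbed into $\Rs_{p+1}$ or $\Es_{p+1}$.

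\textbf{Estimates and high-probability event.} The analytic heart of the inductive step is to show that the garden sum on the right of \eqref{ansatz3} converges and obeys \eqref{ansatz4}--\eqref{ansatz5}. For each canonical layered garden $\Gc$ of order $n$, I would bound $\Kc_\Gc$ by (i) the rigidity theorem Proposition \ref{lgmolect} as a black box on non-regular, non-vine parts; (ii) Proposition \ref{vineest} on vines and vine chains, gaining $L^{-c}$ per incoherency and absorbing coherent layerings via Proposition \ref{layervine}; (iii) Propositions \ref{proplayer1}--\ref{proplayer4} on regular couples, yielding the precise asymptotics for coherent ones and $L^{-c}$ gains per incoherent layering; and (iv) Propositions \ref{layerlad} and \ref{ladderl1new}--\ref{ladderl1old} for ladders, which are neutral in $L$ but must be controlled in time via the adapted $L^1$ estimate. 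These per-diagram estimates, combined with the counting bound $C_0^n$ for coherent layered combinatorial structures (Propositions \ref{layerreg2}, \ref{layervine}, \ref{layerlad}), give a geometric series in $\delta^\nu$ and yield \eqref{ansatz5} and, after summing, \eqref{ansatz4}. The event $\Ff_{p+1}\subset\Ff_p$ is defined as the intersection of $\Ff_p$ with the large-deviation event on which all relevant multilinear random sums on $[p,p+1]$ obey their expected bounds; standard hypercontractive tail bounds for polynomials in Gaussians then give $\Pb(\Ff_{p+1})\geq 1-e^{-N_{p+1}}$, and a standard continuation argument based on the norm $\|\textit{\textbf{a}}\|_{Z^\Lambda}$ (cf.\ Section \ref{endgame}) promotes these bounds to existence and uniqueness of the solution on $[0,p+1]$.

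\textbf{Main obstacle.} The genuine difficulty, and the reason this paper exists, is controlling the sum over canonical layered gardens in \eqref{ansatz3}--\eqref{ansatz5} without the factor $\Df^n$ catastrophe described in Section \ref{intro2-3}. The key point is that canonicality forbids attaching width-two gardens and therefore drastically restricts admissible layerings of any coherent regular sub-structure; this is where Proposition \ref{layerreg2} does the decisive work. The second subtle point is establishing cancellation for incoherent regular couples and incoherent vines (Propositions \ref{proplayer1}, \ref{vineest}): these require a careful comparison between the actual time integration domain defined by the layering \eqref{timegarden} and the domain that would arise from a single-layer analysis, and exploiting the disjointness of time layers to obtain the $L^{-c}$ gains (the mechanism exhibited in miniature by \eqref{Mark}). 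Once these two families of estimates are in place, the overall induction becomes a bookkeeping exercise in which the decreasing parameters $\theta_p,\Lambda_p,N_p,R_p$ from \eqref{parameters}--\eqref{parameters2} absorb the polynomial losses incurred at each step.
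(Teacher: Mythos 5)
Your overall architecture is correct and matches the paper: trivial base case at $p=0$, inductive step via Duhamel expansion with base point $p$, reduction via Lemma \ref{propertycm}, substitution of the inductive two-point and higher-order ansatzes to produce canonical layered gardens in $\Gs_{p+1}$, and the analytic estimates assembled from Propositions \ref{lgmolect}, \ref{vineest}, \ref{proplayer1}--\ref{proplayer4}, \ref{layerreg2}, \ref{layervine}, \ref{layerlad}, \ref{ladderl1new}--\ref{ladderl1old}. You also correctly identify the $\Df^n$ catastrophe and why canonicality is the decisive point. However, there is a genuine gap in your treatment of the event $\Ff_{p+1}$ and the probability bound.

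You write that ``standard hypercontractive tail bounds for polynomials in Gaussians'' give $\Pb(\Ff_{p+1})\geq 1-e^{-N_{p+1}}$. This cannot work for $p\geq 1$: the random variables $a_k(p)$ are not Wiener chaos of bounded degree (indeed, the whole point of the paper, emphasized in Section \ref{diff}, is that their power series expansion in $\gf_k$ at time $0$ diverges beyond the short-time radius of convergence), so hypercontractivity gives no useful tail control. The paper instead computes the high moments $\Eb_p|\cdot|^{2P}$ of the relevant multilinear expressions (such as $a_k^{\mathrm{tr}}$, $\Ls_0$ and the parametrix kernels) \emph{directly} via the flower-garden expansion of Proposition \ref{flowerest1}, which crucially requires the inductive cumulant ansatz \eqref{ansatz1}--\eqref{ansatz4} at time $p$ as input, and then applies Chebyshev. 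This is a nontrivial bootstrap: the ansatz at time $p$ controls cumulants, which control moments, which control tails, which define $\Ff_{p+1}$. In addition, the ``standard continuation argument'' you invoke for solving on $[p,p+1]$ is in fact the contraction mapping of \eqref{fixedest4}, which requires constructing a parametrix for $1-\Ls_1$ (Section \ref{flower}) and is itself a substantial step. Finally, your description leaves unaddressed the bookkeeping for gardens of order $n>N_{p+1}$ that inevitably arise after attaching $\Gs_p$-gardens of order up to $N_p$ at the leaves of the truncated Duhamel trees; the paper absorbs these into the remainders $\Es_{p+1}^{(2)},\Es_{p+1}^{(3)}$ using the decay in $n$ from \eqref{ansatz5} and \eqref{estgarden}--\eqref{estcouple2}, and also needs to control the discrepancy $\Kb_{p+1}-\Kb_p$ arising from conditioning on $\Ff_{p+1}$ rather than $\Ff_p$.
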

\subsection{The Duhamel expansion}\label{secduhamel} The proof of Proposition \ref{propansatz} will occupy most of the remaining parts of this paper. Since the statements (1)--(4) are trivially true for $p=0$ (with $\Rs_0=\Es_0=\widetilde{\Es_0}=0$ and $\Ff_0$ the trivial true event) due to the independence and Gaussian assumption (\ref{data}), we only need to justify the inductive step. Thus, for the rest of this paper (except Section \ref{proofmain}), we will fix $0\leq p\leq\Df-1$ and assume (1)--(4) of Proposition \ref{propansatz} hold for any $0\leq q\leq p$, and we just need to prove the same for $p+1$.

To achieve this, we need to expand the solution $(a_k(p+1))$ in terms of the previous data $(a_k(p))$. Assuming the event $\Ff_p$, for any tree $\Tc$ and any $t>p$, define the expression
\begin{equation}\label{defjt}
(\Jc_\Tc)_k(t)=\bigg(\frac{\delta}{2L^{d-\gamma}}\bigg)^n\zeta(\Tc)\sum_{\Ds}\epsilon_{\Ds}\int_{\Dc}\prod_{\nf\in\Nc}e^{\pi i\zeta_\nf\cdot\delta L^{2\gamma}\Omega_\nf t_\nf}\,\mathrm{d}t_\nf\cdot\prod_{\lf\in\Lc}a_{k_\lf}(p).
\end{equation} Here in (\ref{defjt}), $n$ is the order of $\Tc$, $\zeta(\Tc)$ is as in Definition \ref{deftree} and $\epsilon_\Ds$ as in Definition \ref{defdec}. The sum is taken over all $k$-decorations $\Ds$ of $\Tc$, and $\Omega_\nf$ is as in (\ref{defomega}). The integral domain $\Dc$ is defined by
\begin{equation}\label{timetree}\Dc=\left\{t[\Nc]:p<t_{\nf'}<t_\nf<t,\mathrm{\ whenever\ \nf'\ is\ a\ child\ of\ \nf}\right\}.
\end{equation} Now, define
\begin{equation}\label{defjn}(\Jc_n)_k(t)=\sum_{n(\Tc^+)=n}(\Jc_{\Tc^+})_k(t)
\end{equation} where the sum is taken over all trees $\Tc^+$ of order $n$ and sign $+$. Define the function $\textit{\textbf{b}}=b_k(t)$, where $t\in[p,p+1]$ and $k\in\Zb_L^d$, by
\begin{equation}\label{defbk}a_k(t)=a_k^{\mathrm{tr}}(t)+b_k(t),\quad a_k^{\mathrm{tr}}(t):=\sum_{n=0}^{N_{p+1}}(\Jc_n)_k(t).
\end{equation} It is then easy to see that the system (\ref{akeqn})--(\ref{akeqn2}) for $\textit{\textbf{a}}=a_k(t)$ on $[p,p+1]$ is equivalent to the following equation for $\textit{\textbf{b}}$:
\begin{equation}\label{eqnbk}
\textit{\textbf{b}}=\Ls_0+\Ls_1\textit{\textbf{b}}+\Ls_2(\textit{\textbf{b}},\textit{\textbf{b}})+\Ls_3(\textit{\textbf{b}},\textit{\textbf{b}},\textit{\textbf{b}}),
\end{equation} or equivalently 
\begin{equation}\label{eqnbk2}\textit{\textbf{b}}=(1-\Ls_1)^{-1}(\Ls_0+\Ls_2(\textit{\textbf{b}},\textit{\textbf{b}})+\Ls_3(\textit{\textbf{b}},\textit{\textbf{b}},\textit{\textbf{b}})),
\end{equation}where 
\begin{equation}\label{eqnbk1.5}(\Ls_j(\textit{\textbf{b}},\cdots,\textit{\textbf{b}}))_k(t)=\sum_{(j)}\int_p^t\Cc_+(\textit{\textbf{u}},\textit{\textbf{v}},\textit{\textbf{w}})_k(s)\,\mathrm{d}s.\end{equation} The sums in (\ref{eqnbk1.5}) are taken over $(\textit{\textbf{u}},\textit{\textbf{v}},\textit{\textbf{w}})$, where each of them equals either $\textit{\textbf{b}}$ or $\Jc_n$ for some $0\leq n\leq N_{p+1}$. Assume in the sum $\sum_{(j)}$ that exactly $j$ inputs in $(\textit{\textbf{u}},\textit{\textbf{v}},\textit{\textbf{w}})$ equal $\textit{\textbf{b}}$, and in the sum $\sum_{(0)}$ we further require that $(\textit{\textbf{u}},\textit{\textbf{v}},\textit{\textbf{w}})=(\Jc_{n_1},\Jc_{n_2},\Jc_{n_3})$ with $n_1+n_2+n_3\geq N_{p+1}$. Note that each $\Ls_j$ is a $j$-multilinear form over $\Rb$.

To calculate the cumulants
\begin{equation}\label{cmatp+1}\Kb_{p+1}\big(a_{k_1}(p+1)^{\zeta_1},\cdots,a_{k_{2R}}(p+1)^{\zeta_{2R}}\big)\end{equation} in Proposition \ref{propansatz}, we first calculate the contribution of truncated tree terms, namely
\begin{equation}\label{cmattr}\Kb_{p}\big(a_{k_1}^{\mathrm{tr}}(p+1)^{\zeta_1},\cdots,a_{k_{2R}}^{\mathrm{tr}}(p+1)^{\zeta_{2R}}\big).\end{equation} Now, by  Lemma \ref{propertycm}, Definition \ref{defcanon}, gauge and space translation symmetry of $\Ff_p$, and the induction hypothesis, we can write the cumulant (\ref{cmattr}) as the sum of one explicit term, plus remainder terms that contain one of $\Es_p$ or $\widetilde{\Es_p}$. For $R\geq 2$ this explicit term equals
\[\sum_\Gc\Kc_\Gc(p+1,k_1,\cdots,k_{2R}),\] where $\Kc_\Gc$ is as in (\ref{defkg0}) in Definition \ref{defkg} with input functions $F_q(k)=\varphi(\delta q,k)+\Rs_q(k)$ for $0\leq q\leq p$, and $\Gc$ runs over a specific collection $\Gs_{p+1}^{\mathrm{tr}}$ of canonical layered gardens in $\Gs_{p+1}$; for $R=1$ we should replace $\Gc$ by a couple $\Qc$ which runs over a specific collection $\Cs_{p+1}^{\mathrm{tr}}$ of canonical layered couples in $\Cs_{p+1}$. Here $\Gs_{p+1}^{\mathrm{tr}}$ and $\Cs_{p+1}^{\mathrm{tr}}$ are defined as follows:
\begin{df}\label{defgtr} For any canonical layered garden $\Gc\in\Gs_{p+1}$ (or couple $\Qc\in\Cs_{p+1}$), we define $\Gc\in\Gs_{p+1}^{\mathrm{tr}}$ (or $\Qc\in\Cs_{p+1}^{\mathrm{tr}}$) if and only if, following the construction process in Definition \ref{defcanon}, we have that (i) each initial tree $\Tc_j^p$ in Definition \ref{defcanon} has order at most $N_{p+1}$, and (ii) each layered garden in $\Gs_p$ that replaces the leaves in some subset $B_i$ has order at most $N_p$.
\end{df}
We illustrate the above expansion by a concrete example, which corresponds to the canonical layered garden in Figure \ref{fig:canongarden}. Consider (\ref{cmattr}) with $p=2$ and $R=2$, and $\zeta_j=(-1)^{j-1}$. Replace the four $a^{\mathrm{tr}}$ factors in (\ref{cmattr}), from left to right, by $\Jc_{\Tc_j^2}$ for the four trees $\Tc_j^2$ in Figure \ref{fig:canongarden} (where $\Tc_2^2$ and $\Tc_4^2$ have sign $-$, corresponding to $\zeta_2=\zeta_4=-$). Note that the root of $\Tc_j^2$ is decorated by $k_j$, and we denote the decoration of the nodes in $\Tc_j^2$ by $k_{ji}\,(i\geq 1)$ with numbering from top to bottom and left to right; similarly we denote the time variables associated with $\Tc_j^2$ by $t_j'<t_j$ etc. This then leads to the expression
\begin{align}&\sum_{\substack{k_1=k_{11}-k_{12}+k_{13}\\ k_{12}=k_{14}-k_{15}+k_{16}}}\sum_{k_2=k_{21}-k_{22}+k_{23}}\bigg(\frac{\delta}{2L^{d-\gamma}}\bigg)^3(-i)\cdot\epsilon_{k_{11}k_{12}k_{13}}\epsilon_{k_{14}k_{15}k_{16}}\epsilon_{k_{21}k_{22}k_{23}}\nonumber\\
&\qquad\,\,\times\int_{2<t_1'<t_1<3}e^{\pi i\delta L^{2\gamma}(|k_{12}|^2-|k_{14}|^2+|k_{15}|^2-|k_{16}|^2)t_1'}\cdot e^{\pi i\delta L^{2\gamma}(|k_{11}|^2-|k_{12}|^2+|k_{13}|^2-|k_1|^2)t_1}\,\mathrm{d}t_1'\mathrm{d}t_1\nonumber\\
&\qquad\,\,\times \int_{2<t_2<3}e^{\pi i\delta L^{2\gamma}(|k_{2}|^2-|k_{21}|^2+|k_{22}|^2-|k_{23}|^2)t_2}\,\mathrm{d}t_2\nonumber\\
\label{cumulantexample}&\qquad\,\,\times\Kb_2\big(a_{k_{11}}(2)\overline{a_{k_{14}}(2)}a_{k_{15}}(2)\overline{a_{k_{16}}(2)}a_{k_{13}}(2),
\overline{a_{k_{21}}(2)}a_{k_{22}}(2)\overline{a_{k_{23}}(2)},a_{k_3}(2),\overline{a_{k_4}(2)}\big).
\end{align} Now we apply Lemma \ref{propertycm} to the cumulant expression in (\ref{cumulantexample}); corresponding to the pairings between $\Tc_j^2$ in Figure \ref{fig:canongarden}, we select the term corresponding to
\[\Kb_2(a_{k_{11}}(2),\overline{a_{k_{16}}(2)})\cdot\Kb_2(a_{k_{15}}(2),\overline{a_{k_{23}}(2)})\cdot\Kb_2\big(\overline{a_{k_{14}}(2)},a_{k_{13}}(2),\overline{a_{k_{21}}(2)},a_{k_{22}}(2),a_{k_3}(2),\overline{a_{k_4}(2)}\big),\] which also determines the sets $B_i$ in Definition \ref{defcanon}. By induction hypothesis, the first two cumulants above can be replaced by $\mathbf{1}_{k_{11}=k_{16}}\cdot F_2(k_{11})$ etc. (ignoring $\Es_p$ and $\widetilde{\Es_p}$, same below); the last cumulant is the sum of \[\Kc_{\Gc'}(2,k_{14},k_{13},k_{21},k_{22},k_3,k_4)\] over all $\Gc'\in\Gs_2$ (with signature $(-,+,-,+,+,-)$ and an upper bound for $n(\Gc')$). If we select the garden $\Gc'\in\Gs_2$ in Figure \ref{fig:canongarden}, we get the factor
\begin{align}&\sum_{k_{21}=k_{24}-k_{25}+k_{26}}\sum_{k_3=k_{31}-k_{32}+k_{33}}\bigg(\frac{\delta}{2L^{d-\gamma}}\bigg)^2(+1)\cdot \epsilon_{k_{24}k_{25}k_{26}}\epsilon_{k_{31}k_{32}k_{33}}\nonumber\\
&\qquad\,\,\times\int_{1<t_2'<2}\int_{0<t_3<1}e^{\pi i\delta L^{2\gamma}(|k_{21}|^2-|k_{24}|^2+|k_{25}|^2-|k_{26}|^2)t_2'}\cdot e^{\pi i\delta L^{2\gamma}(|k_{31}|^2-|k_{32}|^2+|k_{33}|^2-|k_{3}|^2)t_3}\,\mathrm{d}t_2'\mathrm{d}t_3\nonumber\\
&\qquad\,\,\times\mathbf{1}_{k_{14}=k_{33}}F_0(k_{14})\cdot \mathbf{1}_{k_{13}=k_{26}}F_1(k_{13})\cdot \mathbf{1}_{k_{24}=k_{31}}F_0(k_{24})\cdot \mathbf{1}_{k_{25}=k_4}F_1(k_{25})\cdot \mathbf{1}_{k_{22}=k_{32}}F_0(k_{22}).\nonumber
\end{align} Plugging this into (\ref{cumulantexample}), we get the total expression
\begin{align*}
&\sum_{\substack{k_1=k_{11}-k_{12}+k_{13}\\ k_{12}=k_{14}-k_{15}+k_{16}}}\sum_{\substack{k_2=k_{21}-k_{22}+k_{23}\\ k_{21}=k_{24}-k_{25}+k_{26}}}\sum_{k_3=k_{31}-k_{32}+k_{33}}\bigg(\frac{\delta}{2L^{d-\gamma}}\bigg)^5(-i)\cdot \epsilon_{k_{11}k_{12}k_{13}}\epsilon_{k_{14}k_{15}k_{16}}\epsilon_{k_{21}k_{22}k_{23}}\\
&\qquad\,\,\times \int_{2<t_1'<t_1<3}e^{\pi i\delta L^{2\gamma}(|k_{12}|^2-|k_{14}|^2+|k_{15}|^2-|k_{16}|^2)t_1'}\cdot e^{\pi i\delta L^{2\gamma}(|k_{11}|^2-|k_{12}|^2+|k_{13}|^2-|k_1|^2)t_1}\,\mathrm{d}t_1'\mathrm{d}t_1\\
&\qquad\,\,\times \int_{1<t_2'<2}\int_{2<t_2<3}e^{\pi i\delta L^{2\gamma}(|k_{2}|^2-|k_{21}|^2+|k_{22}|^2-|k_{23}|^2)t_2}\cdot e^{\pi i\delta L^{2\gamma}(|k_{21}|^2-|k_{24}|^2+|k_{25}|^2-|k_{26}|^2)t_2'}\,\mathrm{d}t_2'\mathrm{d}t_2\\
&\qquad\,\,\times \epsilon_{k_{24}k_{25}k_{26}}\epsilon_{k_{31}k_{32}k_{33}}\int_{0<t_3<1}e^{\pi i\delta L^{2\gamma}(|k_{31}|^2-|k_{32}|^2+|k_{33}|^2-|k_{3}|^2)t_3}\,\mathrm{d}t_3\cdot\mathbf{1}_{k_{11}=k_{16}}F_2(k_{11})\cdot\mathbf{1}_{k_{15}=k_{23}}F_2(K_{15})\\&\qquad\,\,\times \mathbf{1}_{k_{14}=k_{33}}F_0(k_{14})\cdot \mathbf{1}_{k_{13}=k_{26}}F_1(k_{13})\cdot \mathbf{1}_{k_{24}=k_{31}}F_0(k_{24})\cdot \mathbf{1}_{k_{25}=k_4}F_1(k_{25})\cdot \mathbf{1}_{k_{22}=k_{32}}F_0(k_{22}),
\end{align*} which is exactly $\Kc_\Gc(3,k_1,k_2,k_3,k_4)$ for the canonical layered garden $\Gc$ illustrated in Figure \ref{fig:canongarden}.

Now we recall that, due to the smallness of $\delta$ relative to $\Cf$ defined in (\ref{defC}),  the solution $\varphi((p+1)\delta,k)$ to the wave kinetic equation (\ref{wke}) admits a power series expansion in terms of the data $\varphi(p\delta,k)$, namely, for $k\in\Rb^d$:
\begin{equation}\label{wketaylor}\varphi((p+1)\delta,k)=\sum_{n=0}^\infty\Uc_n(p+1,k),
\end{equation} where $\Uc_n(t,k)$ is defined for $t\in[p,p+1]$ such that
\begin{equation}\label{wketaylor2}\Uc_0(t,k)=\varphi(p\delta,k),\quad \Uc_{n}(t,k)=\delta\sum_{n_1+n_2+n_3=n-1}\int_p^t\Kc(\Uc_{n_1}(t'),\Uc_{n_2}(t'),\Uc_{n_3}(t'))(k)\,\mathrm{d}t',
\end{equation} where $\Kc$ is defined as in (\ref{wke2}). It is easy to see that
\begin{equation}\label{taylorbound}\sup_{t\in[p,p+1]}\|\Uc_n(t,\cdot)\|_{\Sf^{4\Lambda_{p+1},4\Lambda_{p+1}}}\leq C_2\cdot (C_1\delta)^n
\end{equation} for any $n\geq 0$.

We can now state the main estimates needed in the inductive step. They are listed in the following propositions:
\begin{prop}\label{layergarden} Given any $1\leq R\leq 10R_{p+1}$ and $n\leq N_{p+1}^4$, consider the expression
\begin{equation}\label{expgarden}\Ks_{p+1,n}^{\mathrm{tr}}(\zeta_1,\cdots,\zeta_{2R},k_1,\cdots,k_{2R}):=\sum_{
\Gc}\Kc_\Gc(p+1,k_1,\cdots,k_{2R}).
\end{equation} Here, for $R\geq 2$, $\Gc$ runs over all canonical layered gardens in $\Gs_{p+1}^{\mathrm{tr}}$ of signature $(\zeta_1,\cdots,\zeta_{2R})$ and order $n$; for $R=1$ we replace $\Gc$ by a couple $\Qc$ which runs over all canonical layered couples in $\Cs_{p+1}^{\mathrm{tr}}$ of order $n$, where $\Gs_{p+1}^{\mathrm{tr}}$ and $\Cs_{p+1}^{\mathrm{tr}}$ are defined in Definition \ref{defgtr}. Note that when $n\leq N_{p+1}^4$, the requirement (ii) in Definition \ref{defgtr}, namely each layered garden in $\Gs_p$ that replaces the leaves in some subset $B_i$ has order at most $N_p$, is redundant. Then we have that:
\begin{enumerate}[{(1)}]
\item If $R\geq 2$, then for any choices of $(\zeta_1,\cdots,\zeta_{2R})$, we have
\begin{equation}\label{estgarden}|\Ks_{p+1,n}^{\mathrm{tr}}|\leq \big(\langle k_1\rangle\cdots\langle k_{2R}\rangle\big)^{-4\Lambda_{p+1}}\cdot L^{-(R-1)/4}\cdot\delta^{2\nu n}.
\end{equation}
\item If $R=1$, the expression $\Ks_{p+1,n}^{\mathrm{tr}}(+,-,k_1,k_2)$ is nonzero only when $k_1=k_2:=k$. In this case, we have
\begin{equation}\label{estcouple}
\begin{aligned}\Ks_{p+1,2n}^{\mathrm{tr}}(+,-,k,k)&=\Uc_n(p+1,k)+\Rs_{p+1,2n}(k),\\
\Ks_{p+1,2n+1}^{\mathrm{tr}}(+,-,k,k)&=\Rs_{p+1,2n+1}(k),
\end{aligned}
\end{equation} where $\Uc_n$ is defined in (\ref{wketaylor2}); moreover for any $n\leq N_{p+1}^4$ we have
\begin{equation}
\label{estcouple2}\|\Rs_{p+1,n}(k)\|_{\Sf^{4\Lambda_{p+1},4\Lambda_{p+1}}}\leq L^{-(4\theta_{p+1}+\theta_p)/5}\cdot \delta^{2\nu n}.
\end{equation}
\end{enumerate}
\end{prop}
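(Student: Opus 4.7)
The proof decomposes $\Ks_{p+1,n}^{\mathrm{tr}}$ according to the combinatorial type of the contributing canonical layered gardens. Using the molecule reductions of Sections \ref{stage1red}--\ref{redcount}, each $\Kc_\Gc$ is rewritten so that the maximal regular sub-couples are isolated from a non-regular ``skeleton''. The skeleton's contribution is then controlled by the rigidity counting theorem (Proposition \ref{lgmolect}), invoked as a black box from \cite{DH23}; applied fiberwise over each layering, it produces the $L^{-(R-1)/4}$ factor for $R\geq 2$ together with extra smallness proportional to the non-regular order. The regular sub-couples themselves carry no net power of $L$, so the delicate part is to control both their number (uniformly in $\delta$ and $\Df$) and the oscillation cancellation they encode. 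The vanishing condition $k_1=k_2$ in case (2) is then a direct consequence of the gauge and translation invariance from Definition \ref{defcm}.

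For the regular part I distinguish coherent and incoherent layerings via Definition \ref{defcoh}. Coherent layered regular couples are, by Propositions \ref{canonequiv}--\ref{layerreg1}, essentially equivalent to single-layer regular couples; their cardinality grows only like $C_0^n$ by Proposition \ref{layerreg2}, which avoids the fatal $\Df^n$ blow-up of the naive scheme of Section \ref{intro2-2}. The exact asymptotics of Propositions \ref{proplayer2}--\ref{proplayer4} then attach a precise value to each such couple. Incoherent layerings, by contrast, gain $L^{-c}$ per incoherency from the Markovian cancellation illustrated in (\ref{Mark}), via Proposition \ref{proplayer1}; combined with the counting of Proposition \ref{layerreg2}, this is absorbed into the remainder (\ref{estcouple2}). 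The parallel treatment for vines and vine chains in the non-regular skeleton uses twist cancellation, for which Proposition \ref{lftwistprop} certifies compatibility with the layering, followed by Proposition \ref{vineest}; ladders are handled by the neutral bound of Proposition \ref{layerlad} together with the $L^1$-in-time estimates of Propositions \ref{ladderl1new}--\ref{ladderl1old}.

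To establish part (2) I combine the above with the observation that, after summing the asymptotics over coherent layered regular couples of order $n$, the result matches the Duhamel iterate $\Uc_n(p+1,k)$ defined in (\ref{wketaylor2}). The matching uses the input $F_q(k)=\varphi(q\delta,k)+\Rs_q(k)$: the leading piece $\varphi(q\delta,\cdot)$, inserted at each leaf, generates exactly the Riemann-sum approximation of the nested time integrals in (\ref{wketaylor2}) once the layer-constrained intervals $\Lf_\nf<t_\nf<\Lf_\nf+1$ in (\ref{timegarden}) are unfolded, while any occurrence of $\Rs_q$ contributes a factor $L^{-\theta_q}\leq L^{-(4\theta_{p+1}+\theta_p)/5}$ since $\theta_q>\theta_p$ for $q\leq p$ by (\ref{parameters}). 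The Schwartz weights $\langle k_j\rangle^{-4\Lambda_{p+1}}$ are propagated through the decorations using $\Lambda_q\gg \Lambda_{p+1}$ together with standard $k$-derivative bounds for the oscillatory sums in $\Kc_\Gc$, exactly as in \cite{DH21,DH23}. The $\delta^{2\nu n}$ factor is the residue of the explicit $\delta^n$ in (\ref{defkg0}) after the counting of decorations absorbs most of the $L$-powers against the prefactor $L^{-(d-\gamma)n}$.

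The main obstacle is the combinatorial and analytic bookkeeping required to carry out the regular/non-regular split in the presence of layering. Concretely the two delicate new points are: (i) verifying that twist equivalence respects the layering structure (Proposition \ref{lftwistprop}), so that the vine cancellation strategy of \cite{DH23} survives layer by layer; and (ii) showing that coherent layered regular couples reduce cleanly to their single-layer counterparts and that the resulting layered Riemann sum converges to $\Uc_n(p+1,k)$ with errors of size $L^{-(4\theta_{p+1}+\theta_p)/5}\delta^{2\nu n}$ rather than merely summable in $n$. Both points are genuinely new compared with \cite{DH21,DH21-2,DH23}, and together they occupy essentially all of Sections \ref{layerobject1}--\ref{cancelvine}; once they are in place, the proof of Proposition \ref{layergarden} assembles as indicated above.
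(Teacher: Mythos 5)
Your proposal follows the paper's architecture closely: the four-way split into non-regular (via the rigidity theorem), coherent regular, incoherent regular, and the residual remainder is exactly the decomposition into the terms (a)--(d) around \eqref{finalterma}--\eqref{finaltermd}, and you invoke the same chain of lemmas in the same roles (Propositions \ref{canonequiv}--\ref{layerreg2} for counting coherent layerings, Propositions \ref{proplayer1}--\ref{proplayer4} for the asymptotics, \ref{lftwistprop} and \ref{vineest} for twist cancellation, \ref{layerlad}--\ref{ladderl1old} for ladders, and \ref{lgmolect} as a black box for the molecule count). The weights, the $L^{-(R-1)/4}$ budget, and the $\delta^{2\nu n}$ bookkeeping are all in the right places.

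The one point I would press you on is the description of how part (2) produces $\Uc_n(p+1,k)$. You write of the inputs $\varphi(q\delta,\cdot)$ ``generating the Riemann-sum approximation of the nested time integrals in \eqref{wketaylor2} once the layer-constrained intervals $\Lf_\nf<t_\nf<\Lf_\nf+1$ are unfolded.'' That picture is misleading for the term \eqref{finaltermd}. For a coherent regular couple at the \emph{top level} of $\Cs_{p+1}$, both $q$ and $q'$ in Proposition \ref{layerreg1} equal $p$, and Proposition \ref{layerreg1} then forces \emph{every} node of $\Qc$ into layer $p$. There is no stack of layers and nothing to ``unfold'': all time variables live in $[p,p+1]$, all leaf inputs are $F_p=\varphi(p\delta,\cdot)+\Rs_p$, and the match with $\Uc_n$ is the single-layer identity \eqref{regref1-5} of \cite{DH21}, transplanted via Proposition \ref{proplayer3}. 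The Riemann sum that appears is the momentum-space discretization converging to the collision integral, not a sum over layers. Multi-layer ``staircase'' coherent structures (as in Proposition \ref{layerreg1} with $q>q'$) do appear, but only for regular couples and trees \emph{embedded inside} the skeleton in terms (a) and (b), and there the output is the bound \eqref{layerregest4}--\eqref{layerregest6}, not an identity with $\Uc_n$. Your phrase ``showing that coherent layered regular couples reduce cleanly to their single-layer counterparts'' is closer to the truth, so this may be a slip of language rather than a substantive error; but as written it conflates the two regimes, and I would want the final argument to make explicit that the $\Uc_n$ matching is the $q=q'=p$ case where the whole couple is forced into a single layer. With that clarification the outline is sound.
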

\begin{prop}\label{fixedpoint} There exists an event $\Ff_{p+1}\subset\Ff_p$, which has gauge and space translation symmetry, such that $\Pb(\Ff_p\backslash\Ff_{p+1})\leq e^{-4N_{p+1}}$. Moreover, assuming $\Ff_{p+1}$, we have the following estimates:
\begin{enumerate}[{(1)}]
\item For the term $a_k^{\mathrm{tr}}(t)$, and each $k\in\Zb_L^d$ and $t\in[p,p+1]$, we have \begin{equation}\label{fixedest1}|(a_k^{\mathrm{tr}})(t)|\leq \langle k\rangle^{-(3/2)\Lambda_{p+1}}\cdot e^{3N_{p+1}}.\end{equation}
\item For the term $\Ls_0$ defined in (\ref{eqnbk1.5}), and each $k\in\Zb_L^d$ and $t\in[p,p+1]$, we have
\begin{equation}\label{fixedest2}|(\Ls_0)_k(t)|\leq \langle k\rangle^{-(3/2)\Lambda_{p+1}}\cdot e^{-20N_{p+1}}.
\end{equation}
\item The $\Rb$-linear operator $\Ls_1$ defined in (\ref{eqnbk1.5}) satisfies that $1-\Ls_1$ is invertible in the space $Z:=Z^{(5/4)\Lambda_{p+1}}([p,p+1])$ (see (\ref{defznorm}) for definition), and that
\begin{equation}\label{fixedest3}\|(1-\Ls_1)^{-1}\|_{Z\to Z}\leq e^{3N_{p+1}}.
\end{equation}
\end{enumerate}
Finally, assuming (\ref{fixedest1})--(\ref{fixedest3}), the mapping
\begin{equation}\label{fixedest4}
\textit{\textbf{b}}\mapsto(1-\Ls_1)^{-1}(\Ls_0+\Ls_2(\textit{\textbf{b}},\textit{\textbf{b}})+\Ls_3(\textit{\textbf{b}},\textit{\textbf{b}},\textit{\textbf{b}}))
\end{equation} is a contraction mapping from the metric space $\{\textit{\textbf{b}}\in Z:\|\textit{\textbf{b}}\|_{Z}\leq e^{-10N_{p+1}}\}$ to itself.
\end{prop}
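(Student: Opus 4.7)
The plan is to construct $\Ff_{p+1}$ as the intersection of $\Ff_p$ with three high-probability sub-events on which each of (\ref{fixedest1})--(\ref{fixedest3}) holds individually, and then to verify the contraction mapping statement as a direct consequence. The uniform strategy for all three estimates is to view the quantity in question as a random multilinear polynomial in the time-$p$ data $(a_{k}(p))$, compute a sufficiently high moment (of order $\asymp N_{p+1}$), expand via the cumulant inversion formula (\ref{eqncm2}), and apply the induction hypothesis (\ref{ansatz3})--(\ref{ansatz5}) together with the per-garden estimates for $\Kc_\Gc$ developed in Sections \ref{seclayer2}--\ref{cancelvine} and the blackbox rigidity statement Proposition \ref{lgmolect}. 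Markov's inequality and a union bound on a polynomial-in-$L$ net in $(t,k)$ (using crude Lipschitz estimates coming from the factor $\delta L^{2\gamma}$ in the phases) then convert each moment bound into its pointwise version.

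\textbf{Estimates (\ref{fixedest1}) and (\ref{fixedest2}).} For $a_k^{\mathrm{tr}}(t) = \sum_{n \leq N_{p+1}} (\Jc_n)_k(t)$, I would raise $|a_k^{\mathrm{tr}}(t)|$ to the power $2m$ with $m \asymp N_{p+1}$, take $\Eb_p$, open each of the $2m$ copies via (\ref{defjt}) and regroup the $2m$ leaves through (\ref{eqncm2}). This produces a sum indexed by gardens of width $2m$ whose leaves are further expanded: each cumulant block of size $\geq 4$ is substituted using the ansatz (\ref{ansatz3})--(\ref{ansatz5}), and each block of size $2$ using (\ref{ansatz1})--(\ref{ansatz2}). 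The outcome is a sum of $\Kc_\Gc$ expressions for canonical layered gardens, controlled by the counting algorithm of Proposition \ref{lgmolect} together with the regular-couple, vine, and ladder estimates of Sections \ref{seclayer2}--\ref{cancelvine}. This yields a moment bound of the form $\Eb_p|a_k^{\mathrm{tr}}(t)|^{2m} \leq (C_2 \langle k\rangle^{-2\Lambda_{p+1}})^m$, which after Markov and a net argument gives (\ref{fixedest1}) off an event of probability $\leq e^{-10 N_{p+1}}$. For (\ref{fixedest2}) the same argument applies, with the crucial observation that the restriction $n_1+n_2+n_3 \geq N_{p+1}$ in $\sum_{(0)}$ in (\ref{eqnbk1.5}) forces every contributing garden to have order $\geq N_{p+1}$, producing an additional smallness $\delta^{\nu N_{p+1}}$ from (\ref{ansatz5}) and its per-garden analogues, which is much stronger than the required $e^{-20N_{p+1}}$.

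\textbf{Estimate (\ref{fixedest3}).} This is the main obstacle. A crude bound on $\|\Ls_1\|_{Z\to Z}$ using (\ref{fixedest1}) alone is only polynomial in $L$, so a Neumann series for $1-\Ls_1$ is not directly available. Instead I would consider the $M$-th iterate $\Ls_1^M$ for $M \asymp N_{p+1}$, viewed as a bounded $\Rb$-linear operator on $Z$ whose kernel is a polynomial of degree $2M$ in $(a_{k}(p))$, and prove a high-moment bound on this kernel by exactly the same garden/cumulant machinery used for (\ref{fixedest1})--(\ref{fixedest2}). The gain comes entirely from oscillations in the phases $e^{\pi i \zeta_\nf \delta L^{2\gamma}\Omega_\nf t_\nf}$ via the layered regular-couple/vine cancellations; the outcome is $\|\Ls_1^M\|_{Z\to Z} \leq e^{-5 N_{p+1}}$ outside an event of probability $\leq e^{-10 N_{p+1}}$. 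Once this is in hand, I write
\[ (1-\Ls_1)^{-1} = \Big(\sum_{j=0}^{M-1}\Ls_1^j\Big) \cdot \sum_{\ell \geq 0}(\Ls_1^M)^\ell, \]
and combine the geometric bound on the right factor with the crude pointwise bound $\|\Ls_1\|_{Z \to Z} \leq e^{2N_{p+1}}$ (from (\ref{fixedest1}) together with the convolution sum $k_1-k_2+k_3=k$ absorbed by the weight gap between $(3/2)\Lambda_{p+1}$ and $(5/4)\Lambda_{p+1}$) to reach (\ref{fixedest3}).

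\textbf{Contraction mapping.} Granted (\ref{fixedest1})--(\ref{fixedest3}), the final claim is routine. For $\textit{\textbf{b}}, \textit{\textbf{b}}' \in Z$ with $\|\textit{\textbf{b}}\|_Z, \|\textit{\textbf{b}}'\|_Z \leq e^{-10 N_{p+1}}$, the convolution sum defining $\Cc_+$, together with the weight gap between $(5/4)\Lambda_{p+1}$ and $(3/2)\Lambda_{p+1}$, gives
\[ \|\Ls_2(\textit{\textbf{b}},\textit{\textbf{b}})\|_Z \lesssim \delta\, \|a^{\mathrm{tr}}\|_{Z} \|\textit{\textbf{b}}\|_Z^2 \lesssim e^{3N_{p+1}} \cdot e^{-20 N_{p+1}}, \qquad \|\Ls_3(\textit{\textbf{b}},\textit{\textbf{b}},\textit{\textbf{b}})\|_Z \lesssim \|\textit{\textbf{b}}\|_Z^3 \lesssim e^{-30 N_{p+1}}, \]
and similarly for differences. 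Adding $\|\Ls_0\|_Z \leq e^{-20 N_{p+1}}$ and multiplying by $\|(1-\Ls_1)^{-1}\|_{Z\to Z} \leq e^{3N_{p+1}}$ puts the image of the ball of radius $e^{-10N_{p+1}}$ strictly inside itself, with contraction constant $\leq 1/2$.
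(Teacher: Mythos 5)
Your treatment of (1), (2) and of the final contraction step is essentially sound and close in spirit to the paper's (the paper gets away with second moments plus a Sobolev-in-time argument rather than $2m$-th moments and a net, but that is a cosmetic difference). The problem is in your argument for (3), and it is quantitative, not merely technical. You propose to prove $\|\Ls_1^M\|_{Z\to Z}\leq e^{-5N_{p+1}}$ for a single $M\asymp N_{p+1}$ and then write $(1-\Ls_1)^{-1}=\big(\sum_{j=0}^{M-1}\Ls_1^j\big)\sum_{\ell\geq 0}(\Ls_1^M)^\ell$, controlling the first factor by the crude bound $\|\Ls_1\|_{Z\to Z}\leq e^{2N_{p+1}}$. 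But submultiplicativity then only gives $\|\sum_{j<M}\Ls_1^j\|\leq M e^{2(M-1)N_{p+1}}=e^{O(N_{p+1}^2)}$, which is nowhere near the required $e^{3N_{p+1}}$ and destroys the contraction step (you need $\|(1-\Ls_1)^{-1}\|\cdot\|\Ls_0\|\leq e^{-10N_{p+1}}$, so any constant larger than about $10$ in the exponent, let alone a power of $N_{p+1}$, is fatal). To close this you would have to prove a \emph{uniform} bound $\|\Ls_1^j\|_{Z\to Z}\leq e^{2N_{p+1}}$ for every $j<M$ by the moment method, i.e.\ control the whole truncated Neumann sum directly rather than power by power. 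That is exactly what the paper's parametrix does: $\Xs$ in Section \ref{flower} is (in effect) the sum of all flower-tree iterates of order at most $N_{p+1}$, its kernel moments are estimated in one shot in Proposition \ref{flowerest1} using the $\delta^{\nu n}$ per-order gain, and the errors $\Ys=(1-\Ls_1)\Xs-1$, $\Ws=\Xs(1-\Ls_1)-1$ consist only of trees of order $>N_{p+1}$ and hence are $O(e^{-100N_{p+1}})$; invertibility then follows from the existence of approximate left and right inverses, with $\|(1-\Ls_1)^{-1}\|\leq 2\|\Xs\|$.

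A secondary but real issue: for the operator norm in (3) you must control $\sup_{k,k',t,s}\langle k-\zeta k'\rangle^{(3/2)\Lambda_{p+1}}|K_{kk'}(t,s)|$ over \emph{all} $k\in\Zb_L^d$, and the kernel has decay only in $k-\zeta k'$, not in $|k|$ itself. A ``polynomial-in-$L$ net'' therefore does not suffice for large $|k|$; the paper handles this via the representation lemma (Lemma \ref{finitelem}), which reduces the unbounded sup in $k$ to at most $(M_1M_2)^{C_0}$ representatives after observing that the kernel depends on $k$ only through the resonance factors $\Omega_\nf$ along the stem. Your proposal would need this (or an equivalent device) spelled out; it is available technology, but it is not a routine net argument.
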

\subsection{Proof of Proposition \ref{propansatz} and Theorem \ref{main}}\label{proofmain} Assuming Propositions \ref{layergarden}--\ref{fixedpoint}, we can now prove Proposition \ref{propansatz} and Theorem \ref{main}.
\begin{proof}[Proof of Proposition \ref{propansatz}] As pointed out in Section \ref{secduhamel}, we only need to prove the statements (1)--(4) in Proposition \ref{propansatz} for $p+1$. Let the event $\Ff_{p+1}$ be defined as in Proposition \ref{fixedpoint}. Then, assuming $\Ff_{p+1}$, by the contraction mapping property of  (\ref{fixedest4}), the equation (\ref{eqnbk}) will have a unique solution $\textit{\textbf{b}}=b_k(t)$, which means that $\textit{\textbf{a}}=a_k(t)$ as in (\ref{defbk}) solves (\ref{akeqn})--(\ref{akeqn2}) on $[p,p+1]$. As $\Ff_{p+1}\subset\Ff_p$, we get a unique solution $\textit{\textbf{a}}=a_k(t)$ to (\ref{akeqn})--(\ref{akeqn2}) on $[0,p+1]$, and \[\Pb(\Ff_{p+1})=\Pb(\Ff_p)-\Pb(\Ff_p\backslash\Ff_{p+1})\geq 1-e^{-N_p}-e^{-4N_{p+1}}\geq 1-e^{-N_{p+1}},\] which proves statement (1).

Next, for any $1\leq R\leq R_{p+1}$ and any $(\zeta_1,\cdots,\zeta_{2R},k_1,\cdots,k_{2R})$, consider the cumulant (\ref{cmatp+1}). By gauge and space translation symmetry, we know it satisfies the support conditions required in (2) and (3) of Proposition \ref{propansatz}. As $a_k(t)=a_k^{\mathrm{tr}}(t)+b_k(t)$, we shall first consider the contribution (\ref{cmattr}). Following the discussions in Section \ref{secduhamel}, we expand each $a_{k}^{\mathrm{tr}}$ into tree terms of order at most $N_{p+1}$ (see (\ref{defbk})), apply Lemma \ref{propertycm} and replace each resulting cumulant term using the induction hypothesis; note that these cumulants have degree at most $2R'\leq 10R_{p+1}N_{p+1}\leq R_p/10$ (recall $R_p=N_{p+1}^{40d}$ and $N_{p+1}=R_{p+1}^{40d}$). Using Definition \ref{defcanon} and the induction hypothesis, we get
\begin{equation}\label{proofmain-1}
(\ref{cmattr})=\sum_\Gc\Kc_\Gc(p+1,k_1,\cdots,k_{2R})+\Es_{p+1}^{(1)},
\end{equation} where $\Kc_\Gc$ is as in (\ref{defkg0}) in Definition \ref{defkg} with input functions $F_q(k)=\varphi(\delta q,k)+\Rs_q(k)$ for $0\leq q\leq p$, and $\Gc$ runs over $\Gs_{p+1}^{\mathrm{tr}}$ as defined in Definition \ref{defgtr}; for $R=1$ we should replace $\Gc$ by a couple $\Qc$ which runs over $\Cs_{p+1}^{\mathrm{tr}}$.

The error term $\Es_{p+1}^{(1)}$ in (\ref{proofmain-1}) contains those contributions where at least one of the resulting cumulant terms is replaced by $\Es_p$ or $\widetilde{\Es_p}$. By (\ref{ansatz2}) and (\ref{ansatz4}) for $p$, these terms decay like $e^{-N_p}$, where recall that $N_p=R_p^{40d}$; the other cumulant terms can be estimated using (\ref{ansatz5}) (by summing over all $n\leq N_p$), and they are simply bounded by $L^{-(R-1)/6}\leq 1$. Since the whole expression involves the summation of at most $10R_{p+1}N_{p+1}$ vector variables $k_\lf\in\Zb_L^d$, and the negative powers $\langle k_j\rangle^{-\Lambda_p}$ in (\ref{ansatz2}), (\ref{ansatz4}) and (\ref{ansatz5}) allows us to restrict each $k_\lf$ to a fixed unit ball using summability, we conclude that
\[|\Es_{p+1}^{(1)}|\leq\big(\langle k_1\rangle\cdots\langle k_{2R}\rangle\big)^{-\Lambda_{p}/4}\cdot L^{20R_{p+1}N_{p+1}}\cdot e^{-N_p}\] pointwise, where the negative power of $\max(\langle k_1\rangle,\cdots,\langle k_{2R}\rangle)$ also comes from the $\langle k_\lf\rangle^{-\Lambda_p}$ powers above. This is clearly sufficient for the bounds (\ref{ansatz2}) and (\ref{ansatz4}) for $p+1$.

Next consider the sum of the $\Kc_\Gc$ terms in (\ref{proofmain-1}), and assume $n(\Gc)=n$. If $n>N_{p+1}^4$, then in the process of replacing each cumulant term using the induction hypothesis as described above, one of these cumulants must be replaced by a $\Kc_{\Gc'}$ term as in (\ref{ansatz3}) for some $\Gc'$ with $n(\Gc')\geq N_{p+1}^2$ (because the number of $\Gc'$ involved at most $10R_{p+1}N_{p+1}$, while the sum of all these $n(\Gc')$ is $n\geq N_{p+1}^4$). Since we are also summing in $\Gc'$ with the value of $n(\Gc')$ fixed, using (\ref{ansatz5}) for $p$, we can estimate the sum of $\Kc_{\Gc'}$ by $e^{-N_{p+1}^2}$ and the product of other cumulants simply by $1$ as above, with the same decay factors $\langle k_\lf\rangle^{-\Lambda_p}$. This contribution, denoted by $\Es_{p+1}^{(2)}$, thensatisfies that
\[|\Es_{p+1}^{(2)}|\leq\big(\langle k_1\rangle\cdots\langle k_{2R}\rangle\big)^{-\Lambda_{p}/4}\cdot L^{20R_{p+1}N_{p+1}}\cdot e^{-N_{p+1}^2}\] pointwise, which is also sufficient for (\ref{ansatz2}) and (\ref{ansatz4}) for $p+1$. 

Moreover, if $N_{p+1}<n\leq N_{p+1}^4$, denote this contribution by $\Es_{p+1}^{(3)}$, then we can use (\ref{estgarden}) and (\ref{estcouple2}) from Proposition \ref{layergarden}, to bound it by
\[|\Es_{p+1}^{(3)}|\leq\big(\langle k_1\rangle\cdots\langle k_{2R}\rangle\big)^{-(3/2)\Lambda_{p+1}}\cdot e^{-10N_{p+1}},\] which is again sufficient for (\ref{ansatz2}) and (\ref{ansatz4}) for $p+1$. This finally allows us to restrict to $n\leq N_{p+1}$, in which case both requirements (i) and (ii) in the definition of $\Gs_{p+1}^{\mathrm{tr}}$ and $\Cs_{p+1}^{\mathrm{tr}}$ in Definition \ref{defgtr} become redundant. Therefore, in (\ref{proofmain-1}) we can just sum over all $\Gc\in\Gs_{p+1}$ (or $\Qc\in\Cs_{p+1}$) that has order at most $N_{p+1}$, which matches the description of Proposition \ref{propansatz} for $p+1$. Apart from the difference between (\ref{cmatp+1}) and (\ref{cmattr}), this already proves statement (3) of Proposition \ref{propansatz}; moreover statement (2) follows from (\ref{estcouple})--(\ref{estcouple2}) of Proposition \ref{layergarden} together with (\ref{wketaylor}) and (\ref{taylorbound}) (to get real valued $\Rs_{p+1}$ we simply take real part), and statement (4) follows from (\ref{estgarden}) of Proposition \ref{layergarden}.

Now it remains to consider the difference between (\ref{cmatp+1}) and (\ref{cmattr}), and prove that it belongs to an error term. Recall the definition of $\Kb_{p+1}$ and $\Kb_p$ in Proposition \ref{propansatz} and that $a_k(t)=a_k^{\mathrm{tr}}(t)+b_k(t)$, we know that this difference consists of two terms, denoted by $\Es_{p+1}^{(4)}$ and $\Es_{p+1}^{(5)}$. First $\Es_{p+1}^{(4)}$ contains terms of
\[\Kb_{p+1}\big(c_{k_1}(p+1)^{\zeta_1},\cdots, c_{k_{2R}}(p+1)^{\zeta_{2R}}\big)\] where each $c_k(t)$ is either $a_k^{\mathrm{tr}}(t)$ or $b_k(t)$ but with at least one factor being $b_k(t)$; note that, assuming the event $\Ff_{p+1}$, we always have $\|b_k(t)\|_Z\leq e^{-10N_{p+1}}$ by Proposition \ref{fixedpoint}, where $Z:=Z^{(5/4)\Lambda_{p+1}([p,p+1])}$. Now there are at most $2^{2R}$ choice for $c_j$ which can be easily absorbed, and using (\ref{eqncm}) we can reduce the cumulants to the corresponding moments with a loss of at most $(C_0R_{p+1})!$. Then, by using Cauchy-Schwartz for each moment, we can control $\Es_{p+1}^{(4)}$ by terms of form
\[|\Es_{p+1}^{(4)}|\leq (C_0R_{p+1})!\cdot e^{-10N_{p+1}}\prod_{j\not\in A}\langle k_j\rangle^{-(5/4)\Lambda_{p+1}}\cdot\bigg[\Eb\bigg(\mathbf{1}_{\Ff_{p}}\prod_{j\in A}|a_{k_j}^{\mathrm{tr}}(p+1)|^2\bigg)\bigg]^{1/2}\] for some $A\subset\{1,\cdots,2R\}$. The last moment expression can again be rewritten, using (\ref{eqncm2}), in terms of cumulants of $\{a_{k_j}^{\mathrm{tr}}(p+1)\}$ and $\{\overline{a_{k_j}^{\mathrm{tr}}(p+1)}\}$ at another loss of at most $(C_0R_{p+1})!$, while each of these cumulants has the form (\ref{cmatp+1}) (with $R$ replaced by $2R$, which does not matter). By repeating all the above arguments, we can bound
\begin{equation}\label{proofmain-2}\Eb\bigg(\mathbf{1}_{\Ff_{p}}\prod_{j\in A}|a_{k_j}^{\mathrm{tr}}(p+1)|^2\bigg)\leq (C_0R_{p+1})! \cdot\prod_{j\in A}\langle k_j\rangle^{-3\Lambda_{p+1}},\end{equation} and putting together we get the bound for $\Es_{p+1}^{(4)}$ that is sufficient for (\ref{ansatz2}) and (\ref{ansatz4}) for $p+1$, noticing that $N_{p+1}=R_{p+1}^{40d}$. Finally, the term $\Es_{p+1}^{(5)}$ equals the difference \[(\Kb_{p+1}-\Kb_{p})\big(a_{k_1}^{\mathrm{tr}}(p+1)^{\zeta_1},\cdots,a_{k_{2R}}^{\mathrm{tr}}(p+1)^{\zeta_{2R}}\big).\] By (\ref{eqncm}), we can write this as a sum of products of moments, with at least one moment taken assuming the event $\Ff_p\backslash\Ff_{p+1}$. By using another Cauchy-Schwartz, we can bound each such moment by (\ref{proofmain-2}), with at least one extra factor $\Pb(\Ff_p\backslash\Ff_{p+1})^{1/2}$. This is again sufficient for (\ref{ansatz2}) and (\ref{ansatz4}) for $p+1$, since $\Pb(\Ff_p\backslash\Ff_{p+1})\leq e^{-4N_{p+1}}$ by (\ref{fixedpoint}). This proves Proposition \ref{propansatz}.
\end{proof}
\begin{proof}[Proof of Theorem \ref{main}] Let $t=T_{\mathrm{kin}}\cdot\tau$ where $0<\tau\leq\tau_*$. Since $\tau_*=\Df\cdot\delta$, we may assume $\tau=t'\delta$ with $t'\in[p,p+1]$ for some $0\leq p<\Df$. First assume $t'=p$, then Proposition \ref{propansatz} already implies
\[\Eb\big(\mathbf{1}_{\Ff_p}|\widehat{u}(t,k)|^2\big)=\varphi(\tau,k)+\Oc_0(L^{-\theta_p})\] uniformly in $p$ and $k$, so we only need to consider the expectation with $\mathbf{1}_{\Ff\backslash\Ff_p}$, where $\Ff\supset\Ff_p$ is the event that (\ref{nls}) has a smooth solution on $[0,T]$ (so $\Pb(\Ff)\geq\Pb(\Ff_p)\geq\Pb(\Ff_\Df)\geq 1-e^{-(\log L)^{20d}}$). By mass conservation for (\ref{nls}), we have
\[\Eb\big(\mathbf{1}_{\Ff\backslash\Ff_p}|\widehat{u}(t,k)|^2\big)\leq\sum_{k}\Eb\big(\mathbf{1}_{\Ff\backslash\Ff_p}|\widehat{u_{\mathrm{in}}}(k)|^2\big)\leq2\sum_{k}\varphi_{\mathrm{in}}(k)\cdot\Pb(\Ff\backslash\Ff_p)^{1/2}\leq C_0L^d\cdot\Pb(\Ff_p^c)\to 0\] uniformly in $p$ and $k$, which then proves (\ref{limit}).

In general, suppose $t'\in[p,p+1]$. We use Proposition \ref{propansatz} for $p$, then, assuming $\Ff_{p+1}$, we repeat the arguments applied in the inductive step (which is used to prove Proposition \ref{propansatz} for $p+1$), but apply them to rescaled time $t'$ instead of $p+1$. It is clear from the proof that none of the ingredients is affected by this change, and the result follows in the same way. This proves Theorem \ref{main}.
\end{proof}
The rest of this paper is devoted to the proof of Propositions \ref{layergarden} and \ref{fixedpoint}. Recall that we have fixed $p\in[0,\Df-1]$ as in Section \ref{secduhamel}; this value will be kept the same throughout the proof.
\section{Layered regular objects I: Combinatorics}\label{layerobject1}
\subsection{Regular objects and structure theorem} First recall the definition and basic properties of \emph{regular couples} and \emph{regular trees} in \cite{DH21}, which are of fundamental importance in both \cite{DH21} and the current paper.
\begin{df}[Regular couples and regular trees \cite{DH21}]\label{defreg} Define a \emph{$(1,1)$-mini-couple} to be a couple of order $2$ formed by two trees of order $1$ with no siblings paired. It has two possibilities indicated by codes $00$ and $01$, see Figure \ref{fig:minicpl}. Define a \emph{mini tree} to be a paired tree of order $2$, again with no siblings paired. It has six possibilities indicated by codes $10,11,20,21,30,31$, see Figure \ref{fig:minitree}.

For any couple $\Qc$ we can define two operations: operation A where a leaf pair is replaced by a $(1, 1)$-mini-couple, and operation B where a node is replaced by a mini tree, see Figure \ref{fig:operab}. Then, we define a couple $\Qc$ to be \emph{regular} if it can be formed, starting from the trivial couple, by operations A and B. We also define a paired tree $\Tc$ to be a \emph{regular tree}, if $\Tc$ forms a regular couple with the trivial tree $\bullet$. Clearly the order of any regular couple and regular tree must be even.
\begin{figure}[h!]
\includegraphics[scale=0.47]{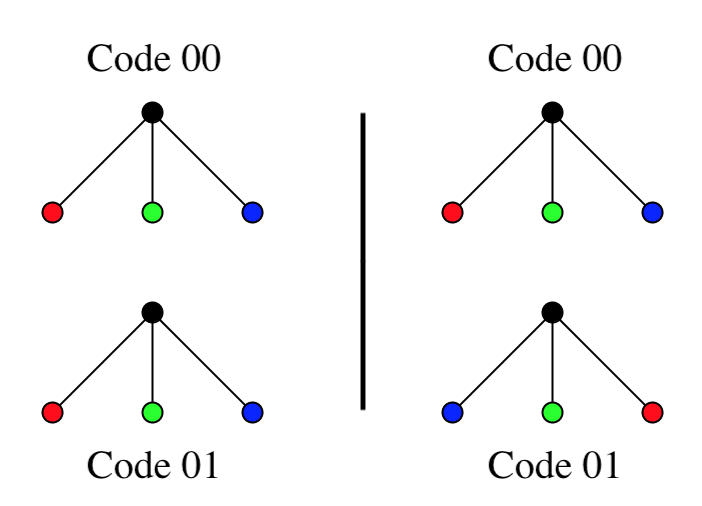}
\caption{A $(1,1)$-mini-couple as in Definition \ref{defreg}; the codes $00$ and $01$ indicate the two different possibilities.}
\label{fig:minicpl}
\end{figure}
\begin{figure}[h!]
\includegraphics[scale=0.45]{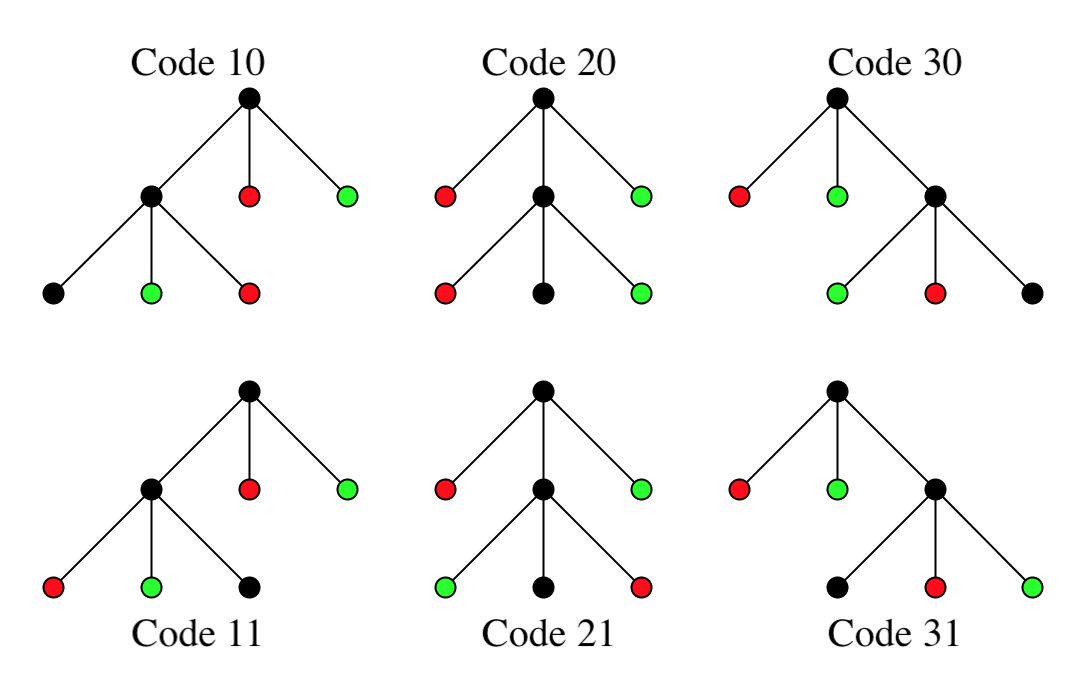}
\caption{A mini-tree as in Definition \ref{defreg}; the codes $10,11,20,21,30,31$ indicate the six different possibilities.}
\label{fig:minitree}
\end{figure}
\begin{figure}[h!]
\includegraphics[scale=0.45]{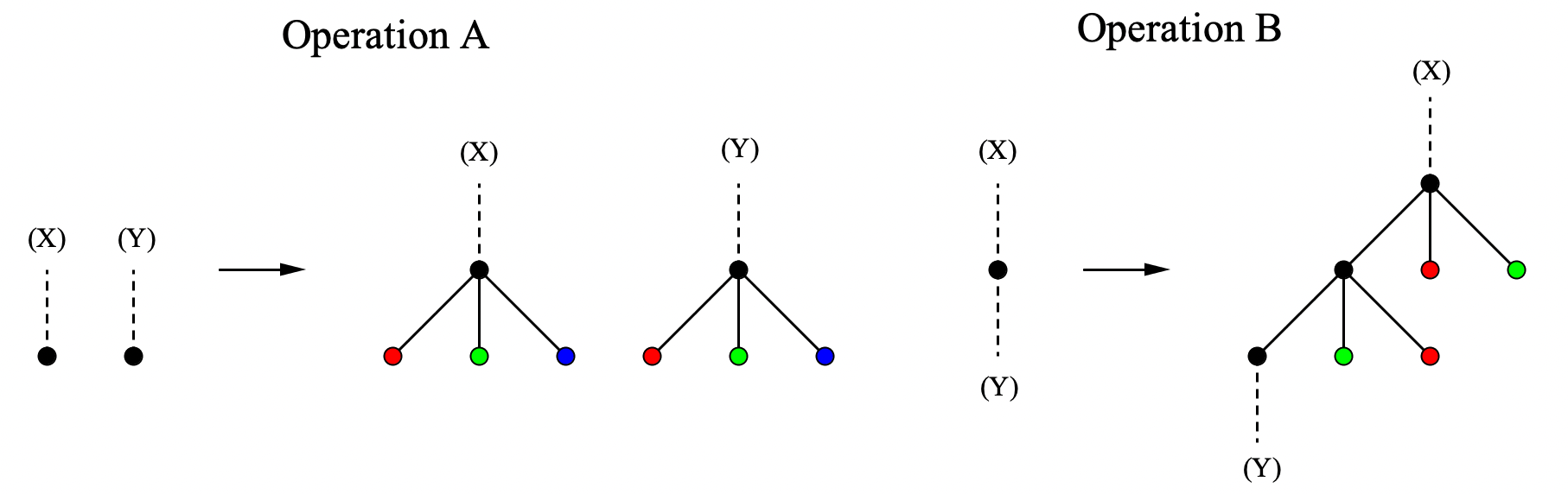}
\caption{Operations $A$ and $B$ as in Definition \ref{defreg}; there are different possibilities depending on the structure of the $(1,1)$-mini-couple and mini-tree, see Figures \ref{fig:minicpl} and \ref{fig:minitree}. The ends $X$ and $Y$ here represent the rest of the couple, which is unaffected by the operation.}
\label{fig:operab}
\end{figure}
\end{df}
We also need the notion of \emph{sub-couples} and \emph{sub-paired-trees}, which are intuitively clear, and are made precise in the following definition.
\begin{df}[Embedded objects]\label{subobject} Let $\Gc$ be a garden, for nodes $\nf$ and $\nf'$, if the leaves in the two trees rooted at $\nf$ and $\nf'$ are all paired to each other, then these two trees form a couple $\Qc$, called a \emph{sub-couple} of $\Gc$; we also say $\Qc$ is \emph{embedded in} $\Gc$. Similarly, if $\nf'$ is a descendant of $\nf$, and all the leaves that are descendants of $\nf$ but not descendants of $\nf'$ are paired to each other, then the tree rooted at $\nf$ becomes a paired tree $\Tc$ after replacing the subtree rooted at $\nf'$ by a single (lone) leaf. In this case we also call $\Tc$ a \emph{sub-paired-tree} and say $\Tc$ is \emph{embedded in} $\Gc$. Also, starting from a garden $\Gc$, when we say we ``replace" a leaf pair by a couple, or a node by a paired tree (such as in Definition \ref{defreg}), then this couple or paired tree will be embedded in the resulting new garden $\Gc_1$. Conversely, we may ``collapse" an embedded couple or paired tree into a leaf pair or a node.
\end{df}
Before introducing the notion of layers, we first recall the definition of regular chains in \cite{DH21}, as well as two structure theorems: first one for regular couples and regular trees, and second one for general gardens. These results are proved in \cite{DH21} and \cite{DH21-2}.
\begin{df}[Regular chains \cite{DH21}]\label{defregchain} Given $m\geq 0$, consider a partition $\Pc$ of $\{1,\cdots,2m\}$ into $m$ pairwise disjoint two-element subsets (or pairs). We say this partition $\Pc$ is \emph{legal} if there do not exist $a<b<c<d$ such that $\{a,c\}\in\Pc$ and $\{b,d\}\in\Pc$; below we will always assume $a<b$ and order these pairs $\{a,b\}$ in the increasing order of $a$. Now, for any legal partition $\Pc$, we define a \emph{regular chain} to be a paired tree of $2m$ branching nodes $\nf_j\,(1\leq j\leq 2m)$, such that $\nf_{j+1}$ is a child of $\nf_j$ for $1\leq j\leq 2m-1$, and some specified child of $\nf_{2m}$ is the lone leaf. Moreover, for any $\{a,b\}\in\Pc$, the two remaining children nodes of $\nf_a$ are paired with the two remaining children nodes of $\nf_b$ with no siblings paired. Define also a \emph{dominant chain} to be a regular chain where $\Pc=\{\{1,2\},\cdots,\{2m-1,2m\}\}$.
\end{df}
\begin{prop}[Structure theorem for regular objects]\label{propstructure} Let $\Qc$ be a nontrivial regular couple. Then exactly one of the two cases happen:
\begin{enumerate}[{(a)}]
\item There exist unique regular couples $\Qc_j\,(1\leq j\leq 3)$ such that $\Qc$ is obtained from the $(1,1)$-mini-couple $\Qc_0$ (see Figure \ref{fig:minicpl}), by replacing the red, green and blue leaf pair with $\Qc_1$, $\Qc_2$ and $\Qc_3$ respectively. In this case we say $\Qc$ is a \emph{type 1} regular couple.
\item There exists a unique nontrivial regular couple $\Qc_0$ which is formed by pairing the two lone leaves of two regular chains $\Xc^\epsilon$ of sign $\epsilon\in\{\pm\}$ and order $2m_\epsilon$, and unique regular couples $\Qc_{j,\epsilon,\iota}$ where $1\leq j\leq m_\epsilon$ and $\iota\in\{1,2\}$, and a unique type 1 or trivial regular couple $\Qc_{\mathrm{lp}}$, such that $\Qc$ is obtained from $\Qc_0$ by the following operations. First we replace the lone leaf pair with $\Qc_{\mathrm{lp}}$. Next, let the branching nodes of $\Xc^{\epsilon}$ be $\nf_a^\epsilon\,(1\leq a\leq 2m_\epsilon)$ with legal partition $\Pc^\epsilon$ as in Definition \ref{defregchain}; for $1\leq j\leq m_\epsilon$, let the $j$-th pair in $\Pc^\epsilon$ (in the ordering of Definition \ref{defregchain}) be $\{a,b\}$, then the nodes $\nf_a^\epsilon$, $\nf_b^\epsilon$ and all their children can be rearranged into a mini-tree as in Figure \ref{fig:minitree} (by setting $\nf_b^\epsilon$ as the child of $\nf_a^\epsilon$ in this new mini-tree). Then, we replace the red leaf pair in this mini-tree with $\Qc_{j,\epsilon,1}$, and replace the green leaf pair in this mini-tree with $\Qc_{j,\epsilon,2}$. In this case we say $\Qc$ is a \emph{type 2} regular couple. See Figure \ref{fig:regchain}.
\end{enumerate}

Similarly, let $\Tc$ be a nontrivial regular tree, then there exists a unique nontrivial regular chain $\Xc_0$ of order $2m_0$, and unique regular couples $\Qc_{j,0,\iota}$ where $1\leq j\leq m_0$ and $\iota\in\{1,2\}$, such that $\Tc$ is obtained from $\Xc_0$ by the following operations. Let the branching nodes of $\Xc_0$ be $\nf_a^0\,(1\leq a\leq 2m_0)$ with legal partition $\Pc^0$; for $1\leq j\leq m_0$, let the $j$-th pair in $\Pc^0$ be $\{a,b\}$, then the nodes $\nf_a^0$, $\nf_b^0$ and all their children can be rearranged into a mini-tree as in Figure \ref{fig:minitree}. Then, we replace the red leaf pair in this mini-tree with $\Qc_{j,0,1}$, and replace the green leaf pair in this mini-tree with $\Qc_{j,0,2}$.

We may also inductively define the notion of \emph{dominant couples}, which is a subclass of regular couples, as follows: a regular couple $\Qc$ is dominant, if and only if it is trivial, or it has type 1 and the corresponding regular couples $\Qc_j\,(1\leq j\leq 3)$ are all dominant, or it has type 2, and the regular chain $\Xc^\epsilon$ is dominant for $\epsilon\in\{\pm\}$, and the regular couples $\Qc_{j,\epsilon,\iota}$ and $\Qc_{\mathrm{lp}}$ are all dominant. Similarly, we define a regular tree $\Tc$ is dominant, if and only if the regular chain $\Xc_0$ is dominant, and the corresponding regular couples $\Qc_{j,0,\iota}$ are all dominant.

Finally, given any integer $m\geq 0$, the number of regular couples of order $2m$ is at most $C_0^m$, and the same holds for regular trees (see notations for $C_j$ in Section \ref{sectioninduct}).
\begin{figure}[h!]
\includegraphics[scale=0.46]{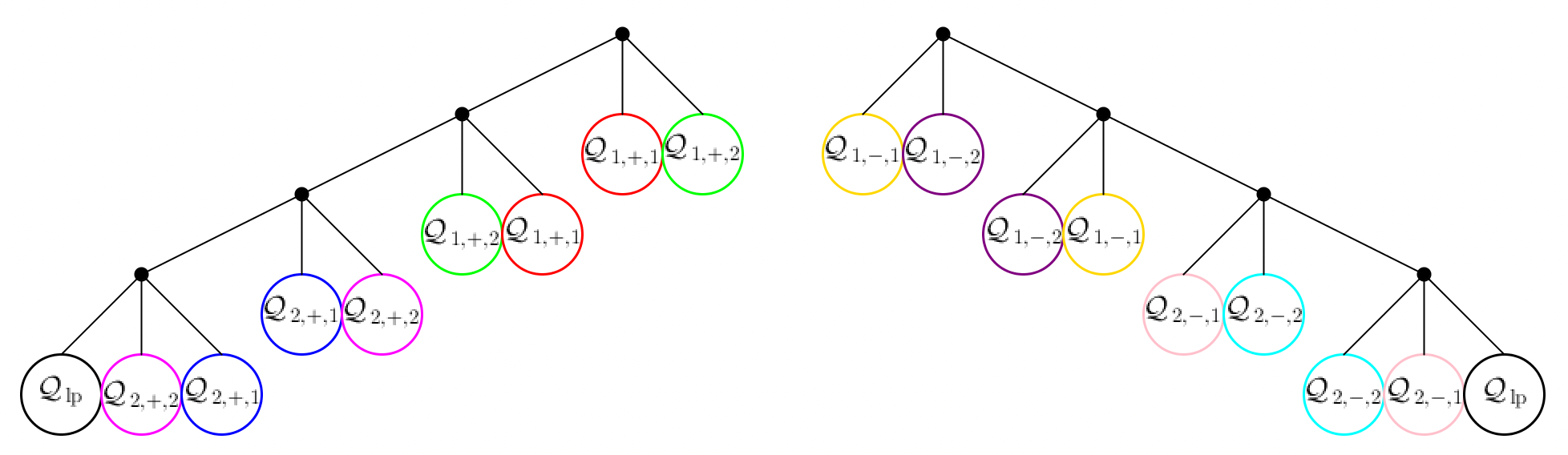}
\caption{A type 2 regular (in fact dominant) couple formed by two regular chains as in Proposition \ref{propstructure}, with the corresponding notations $\Qc_{j,\epsilon,\iota}$ and $\Qc_{\mathrm{lp}}$. The regular tree case is similar.}
\label{fig:regchain}
\end{figure}
\end{prop}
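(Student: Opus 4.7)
The plan is to prove Proposition \ref{propstructure} by strong induction on the order $2m$ of the regular couple $\Qc$. The case $m=0$ is trivial, and for $m=1$ the only possibility is $\Qc$ being a $(1,1)$-mini-couple itself, which matches the type 1 structure with $\Qc_1,\Qc_2,\Qc_3$ all trivial. For the inductive step with $m\geq 2$, the strategy is to read off the decomposition type from the pairing structure at the root level and invoke the inductive hypothesis on smaller regular couples embedded as sub-couples.

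Concretely, let $\nf_1,\nf_2,\nf_3$ and $\nf_1',\nf_2',\nf_3'$ be the three children of $\rf^+$ and $\rf^-$, respectively, and define a bipartite multigraph $H$ on these six vertices whose edge between $\nf_i$ and $\nf_j'$ counts the leaf pairs of $\Qc$ with one endpoint in the subtree of $\nf_i$ and the other in the subtree of $\nf_j'$. I would first argue that $H$ always has one of two shapes forced by the inductive A/B construction: either (a) $H$ is a perfect matching between the two sides, giving three sub-couples that, together with the matching data, exhibit a $(1,1)$-mini-couple at the root (type 1); or (b) there is a distinguished child on each side whose subtree leaves are paired into more than one subtree on the other side, while the other four children on the two sides form mini-tree patterns paired across (type 2). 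In case (a), the regularity of each sub-couple would follow by restricting any operation sequence producing $\Qc$ to the three disjoint subtrees, and the inductive hypothesis then decomposes them. In case (b), the distinguished children launch a chain in each of $\Tc^+,\Tc^-$, and iterating the same root-level analysis down the chain identifies the regular chains $\Xc^{\pm}$, the lone-leaf couple $\Qc_{\mathrm{lp}}$, and the mini-tree slots $\Qc_{j,\epsilon,\iota}$, all decomposable by induction. The regular tree case proceeds in parallel by embedding $\Tc$ with the trivial tree as a couple, which automatically falls into case (b) and identifies the chain $\Xc_0$ directly.

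Uniqueness of the decomposition in each case is expected to follow because the chain is forced by tracing the unique ``non-matching'' direction at each branching, and the type 1/2 dichotomy is read off from $H$ without ambiguity. For the counting bound $C_0^m$, the plan is a generating-function majorization: letting $a_m$ count the regular couples of order $2m$, the structural recursion reads
\[
a_m\;\leq\; 2\!\!\sum_{m_1+m_2+m_3=m-1}\!\!a_{m_1}a_{m_2}a_{m_3}\;+\;\Sigma^{(2)}_m,
\]
where $\Sigma^{(2)}_m$ factors through the Catalan-type count of regular chains, the six mini-tree codes, and products of smaller $a_{m'}$'s; comparison with an explicit dominating series then yields the claimed exponential bound, and the regular tree count follows analogously. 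The main obstacle will be justifying case (b), namely ruling out more complicated non-matching patterns in $H$: this requires showing that whenever operations A and B create interleaved pairings across $\Tc^+$ and $\Tc^-$, the interleaving stays localized along a single chain of branching nodes, which can in turn be established by inducting on the last applied operation together with a case analysis on whether that operation occurs on or off the putative chain.
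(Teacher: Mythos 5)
Your proposal has a genuine error in the description of the type 2 structure, and this error is fatal to the proposed dichotomy.

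Your case (b) claims that ``there is a distinguished child on each side whose subtree leaves are paired into more than one subtree on the other side, while the other four children on the two sides form mini-tree patterns paired across.'' This is not what happens in a type 2 regular couple. Look at the definition of a regular chain (Definition~\ref{defregchain}): it is a \emph{paired tree}, and the pairings among the non-chain children of $\nf_a^\epsilon$ and $\nf_b^\epsilon$ are \emph{within that single tree} $\Tc^\epsilon$, not across to $\Tc^{-\epsilon}$. When you further replace those red/green leaf pairs by the couples $\Qc_{j,\epsilon,\iota}$, both trees of each $\Qc_{j,\epsilon,\iota}$ are embedded inside the same $\Tc^\epsilon$, so all their leaf pairs are again internal to $\Tc^\epsilon$. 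The only cross-tree pairings in a type 2 couple come from the lone-leaf pair of $\Qc_0$ and its replacement $\Qc_{\mathrm{lp}}$, all of which lie in the subtree of the single ``chain child'' on each side. Consequently your bipartite multigraph $H$, which records only $\Tc^+ \leftrightarrow \Tc^-$ pairings, is \emph{almost entirely zero} in type 2: it has at most a single nonzero entry (and if one of the trees is trivial, there is no graph $H$ at all). It is emphatically not the structure you describe, and it cannot, by itself, distinguish the chain: the chain's signature lives entirely in the within-tree pairings, which $H$ discards. As written, your dichotomy neither matches type 1 nor type 2 on the type 2 side, so the induction cannot close.

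A related but secondary gap: even granting the corrected dichotomy, your justification that sub-objects inherit regularity (``restricting any operation sequence producing $\Qc$ to the three disjoint subtrees'') needs an argument that operations A and B can be reordered so as to act on disjoint subtrees in blocks; this is plausible but not automatic, since a single operation sequence can interleave. You flag ruling out ``more complicated non-matching patterns in $H$'' as the main obstacle, but the real obstacle is that $H$ is simply the wrong object: you need to track intra-tree leaf pairings among the six root-level children as well, and show they assemble into mini-trees linked along a single descending path. Note also that the paper does not prove Proposition~\ref{propstructure} in-line; it cites Propositions 4.5, 4.7, 4.8 and Corollary 4.9 of \cite{DH21} where this decomposition is established.
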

\begin{proof} See Propositions 4.5, 4.7, 4.8 and Corollary 4.9 of \cite{DH21}.
\end{proof}
\begin{prop}\label{propstructure2} Define a garden $\Gc$ to be \emph{prime} if it does not contain any embedded $(1,1)$-mini-couple or mini-tree. Then, for any garden $\Gc$ there is a unique prime garden $\Gc_{\mathrm{sk}}$, called the \emph{skeleton} of $\Gc$, such that $\Gc$ is obtained from $\Gc_{\mathrm{sk}}$ by replacing each leaf pair with a regular couple and each branching node with a regular tree. The choices of these regular couples and regular trees are also unique. Moreover, given any $\Gc_{\mathrm{sk}}$, the number of gardens $\Gc$ that has order m, width 2R and skeleton $\Gc_{\mathrm{sk}}$ is $\leq C_0^{m+R}$.
\end{prop}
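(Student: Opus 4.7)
I would argue by strong induction on $n(\Gc)$. If $\Gc$ is already prime, set $\Gc_{\mathrm{sk}}=\Gc$ with trivial regular couples and trivial regular trees filling every slot (both are no-ops). Otherwise, pick any embedded $(1,1)$-mini-couple or mini-tree in $\Gc$ and collapse it to a single leaf pair or branching node, producing a garden $\Gc'$ with $n(\Gc')=n(\Gc)-2$. By induction $\Gc'$ possesses a skeleton $\Gc'_{\mathrm{sk}}$ together with regular fillings. Since operations $A$ and $B$ of Definition~\ref{defreg} preserve regularity, reinserting the collapsed $(1,1)$-mini-couple (resp.\ mini-tree) into the corresponding regular couple (resp.\ regular tree) occupying that slot of $\Gc'_{\mathrm{sk}}$ just enlarges the regular object filling the same slot. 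Thus $\Gc_{\mathrm{sk}}:=\Gc'_{\mathrm{sk}}$ is a valid skeleton for $\Gc$.

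\textbf{Uniqueness.} For uniqueness I would extract the skeleton intrinsically rather than through a sequential procedure. For any leaf pair $p$ of $\Gc$, any two embedded regular sub-couples of $\Gc$ containing $p$ must be nested, since their two roots each lie on the unique ancestral path of one of the two leaves of $p$; hence there is a well-defined maximal regular sub-couple $\Qc(p)$ containing $p$. After contracting each $\Qc(p)$ back to its leaf pair, any remaining branching node that still admits a nontrivial regular sub-paired-tree rooted at it admits a unique maximal such sub-paired-tree, by the analogous nesting argument. Collapsing all these yields a canonical prime garden; by the unique structural decomposition in Proposition~\ref{propstructure}, this must coincide with $\Gc_{\mathrm{sk}}$ in any valid decomposition, and the regular fillings are necessarily the $\Qc(p)$ together with the maximal regular sub-paired-trees.

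\textbf{Counting.} A prime garden $\Gc_{\mathrm{sk}}$ of width $2R$ and order $n_0\leq m$ has exactly $n_0$ branching nodes and $n_0+R$ leaf pairs, hence $s:=2n_0+R$ slots. A garden $\Gc$ of order $m$ with skeleton $\Gc_{\mathrm{sk}}$ corresponds to orders $2m_1,\ldots,2m_s\geq 0$ with $\sum m_i=(m-n_0)/2$, together with a choice of regular couple or regular tree of that order at each slot. Proposition~\ref{propstructure} bounds each such choice by $C_0^{m_i}$, giving
\[
\#\{\Gc\}\;\leq\;\sum_{\sum m_i=(m-n_0)/2}\prod_{i=1}^{s}C_0^{m_i}\;\leq\; C_0^{(m-n_0)/2}\binom{(m-n_0)/2+s-1}{s-1}\;\leq\;(2C_0)^{2m+R},
\]
which is $\leq C_0^{m+R}$ after enlarging the absolute constant $C_0$.

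\textbf{Main obstacle.} The delicate step is the uniqueness assertion, because a naive sequential collapsing procedure depends on the order in which the $(1,1)$-mini-couples and mini-trees are removed. The key input is the nesting property of embedded regular sub-objects sharing a common leaf pair or common root, together with the unique structural decomposition from Proposition~\ref{propstructure}; these combine to pin down the skeleton canonically and the regular fillings uniquely. Once uniqueness is in hand, the existence argument and the counting bound are both routine.
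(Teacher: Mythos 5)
The paper itself does not prove this proposition; it simply cites Propositions 4.4--4.6 of the companion paper [DH21-2], so a line-by-line comparison is not possible here. Your overall strategy (existence by iterated collapse, uniqueness via an intrinsic maximal-regular-sub-object characterization, and the slot-counting bound) is the natural one and is almost certainly in the spirit of the cited reference; your counting argument is correct as written.

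However, your uniqueness argument has two real gaps that should be flagged. First, the nesting claim for embedded regular sub-couples $\Qc_1,\Qc_2$ containing the leaf pair $p$ does not follow solely from the observation that the four roots lie on the two ancestral paths of $p$: that only gives comparability of $\nf_1^+$ with $\nf_2^+$ and of $\nf_1^-$ with $\nf_2^-$, and a priori one could have $\nf_1^+\succeq \nf_2^+$ while $\nf_2^-\succ \nf_1^-$, which is not a nesting. One must add the observation that if $\Tc_2^+\subseteq\Tc_1^+$, then (since the pairing of $\Gc$ is fixed) every leaf of $\Tc_2^-$ is the partner of a leaf of $\Tc_2^+\subseteq\Tc_1^+$ and hence lies in $\Tc_1^-$; combined with comparability of the minus-roots, this forces $\Tc_2^-\subseteq\Tc_1^-$. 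The analogous statement for two regular sub-paired-trees rooted at the same branching node with incomparable lone leaves $\nf_1',\nf_2'$ requires a parity argument: the three leaf-sets ``descendants of $\nf_1'$ only,'' ``descendants of $\nf_2'$ only,'' and ``neither'' are each forced to be internally paired, but the first two have odd cardinality, a contradiction. Second, you gloss over the consistency of the two-stage collapse: after collapsing all maximal regular sub-couples $\Qc(p)$, you still need to check that the maximal regular sub-paired-trees at distinct branching nodes do not overlap inconsistently, and that the collapsed garden is actually prime (a mini-couple or mini-tree in the collapsed garden would, after re-inserting the fillings, produce a strictly larger regular sub-object, contradicting maximality — this is the right argument, but it needs to be said, especially because the parent/child structure around a node changes under the sub-paired-tree collapse). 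These are not fatal — the argument goes through — but as written the uniqueness paragraph would not persuade a careful reader.
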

\begin{proof} See Propositions 4.4, 4.5 and 4.6 of \cite{DH21-2}.
\end{proof}
Finally, recall from \cite{DH21} that branching nodes in any regular couple and regular tree are also naturally divided into pairs, which we refer to as \emph{links} for distinction:
\begin{prop}[Links of branching nodes]\label{deflink} For any regular couple $\Qc$ and regular tree $\Tc$, there is a unique way to divide all the \emph{branching nodes} into two-element subsets called \emph{links}. This is defined inductively as follows:

For any regular couple $\Qc$ (or any regular tree $\Tc$), let the relevant notions be defined as in Proposition \ref{propstructure}. Then, we link the branching nodes of the couples $\Qc_j\,(1\leq j\leq 3)$, or $\Qc_{j,\epsilon,\iota}$ and $\Qc_{\mathrm{lp}}$, or $\Qc_{j,0,\iota}$ depending on different situations, by the induction hypothesis. Moreover, if $\Qc$ has type 1, then we link the two roots of the two trees of $\Qc$; if $\Qc$ has type 2, then we link $\nf_a^\epsilon$ with $\nf_b^\epsilon$ for any pair $\{a,b\}\in\Pc^\epsilon$, and for regular tree $\Tc$ we link $\nf_a^0$ with $\nf_b^0$ for any pair $\{a,b\}\in\Pc^0$. For later use, we may also fix a set $\Nc^{\mathrm{ch}}\subset\Nc$ by inductively selecting one branching node from each link: in each inductive step, for type 1 regular couples we select the root with sign $+$ into $\Nc^{\mathrm{ch}}$, and for type 2 regular couples or regular trees we select $\nf_a^\epsilon$ or $\nf_a^0$ into $\Nc^{\mathrm{ch}}$ for each pair $\{a,b\}$ with $a<b$.

Then, for any decoration $\Is$ of $\Qc$ (or $\Ds$ of $\Tc$) as in Definition \ref{defdec}, we must have $\zeta_{\nf'}\Omega_{\nf'}=-\zeta_\nf\Omega_\nf$ for any linked branching nodes $(\nf,\nf')$, with $\zeta_\nf$ as in Definition \ref{deftree} and $\Omega_\nf$ as in (\ref{defomega}). Moreover, let $\nf_j$ and $\nf_j'$ be the children nodes of $\nf$ and $\nf'$ as in Definition \ref{defdec}, and define $(x_\nf,y_\nf)=(k_{\nf_1}-k_{\nf},k_\nf-k_{\nf_3})$ and similarly for $\nf'$, then we have $\{x_{\nf'},y_{\nf'}\}=\{\zeta_1 x_\nf,\zeta_2 y_\nf\}$ for some choices of $\zeta_j\in\{\pm\}$.
\end{prop}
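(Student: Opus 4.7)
The plan is to prove all three assertions (well-definedness and uniqueness of the link partition, the resonance identity $\zeta_{\nf'}\Omega_{\nf'}=-\zeta_\nf\Omega_\nf$, and the $\{x_{\nf'},y_{\nf'}\}=\{\zeta_1 x_\nf,\zeta_2 y_\nf\}$ identity) simultaneously by strong induction on the order of the regular object, following the structural decomposition already recorded in Proposition \ref{propstructure}. The base case is the trivial couple of order $0$, which has no branching nodes, so everything holds vacuously; uniqueness of the inductive selection $\Nc^{\mathrm{ch}}$ will fall out of the recipe since the root of sign $+$ and the index $a$ with $a<b$ are both canonically specified.

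In the type 1 inductive step, the two roots $\rf^+,\rf^-$ of $\Qc$ are declared linked, and the remaining links are inherited from $\Qc_1,\Qc_2,\Qc_3$ by induction; uniqueness of the resulting partition follows directly from the uniqueness of the type 1 decomposition in Proposition \ref{propstructure}. For the resonance identity at the roots, I would use that any decoration of a couple satisfies $k_{\rf^+}=k_{\rf^-}=k$, and that the three children of $\rf^+$ are paired to those of $\rf^-$ according to the mini-couple code $00$ or $01$. Direct substitution into the definition $\Omega_\nf=|k_{\nf_1}|^2-|k_{\nf_2}|^2+|k_{\nf_3}|^2-|k_\nf|^2$ then produces $\Omega_{\rf^+}=\Omega_{\rf^-}$, and since $\zeta_{\rf^-}=-\zeta_{\rf^+}$ this gives the claimed identity; the $\{x,y\}$ matching can be read off from the same pairing table, tracking which child becomes the left versus the right child on each side.

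In the type 2 and regular-tree inductive steps, the new links are precisely the chain pairs $\{a,b\}\in\Pc^\epsilon$ (or $\Pc^0$), while all other links come from $\Qc_{j,\epsilon,\iota}$, $\Qc_{\mathrm{lp}}$, and $\Qc_{j,0,\iota}$ by the induction hypothesis. Fixing such a pair $\{a,b\}$, Definition \ref{defregchain} guarantees that $\nf_a^\epsilon,\nf_b^\epsilon$ together with their non-chain children assemble into one of the six mini-trees of Figure \ref{fig:minitree}. In the evaluation of $\zeta_{\nf_a^\epsilon}\Omega_{\nf_a^\epsilon}+\zeta_{\nf_b^\epsilon}\Omega_{\nf_b^\epsilon}$, the contributions of the chain children telescope (as $\nf_b^\epsilon$ is itself a child of $\nf_a^\epsilon$ and $k_{\nf_b^\epsilon}$ appears on both sides with opposite signs), while the contributions of the non-chain children cancel by the same mechanism as in the type 1 case applied to the internal pairings of the mini-tree. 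The $\{x,y\}$ identity then follows by tracking which non-chain child of $\nf_a^\epsilon$ is paired with which non-chain child of $\nf_b^\epsilon$.

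The main obstacle is purely bookkeeping: one must enumerate the two mini-couple codes and the six mini-tree codes, and for each determine the precise child-to-child pairing, the left/mid/right labels, and the induced signs, then verify the elementary polynomial identity in each case. None of these computations is deep, but keeping track of signs across all eight codes, and across the choice of which linked node enters $\Nc^{\mathrm{ch}}$, requires care. Once the identity is checked for one representative mini-tree code, the remaining cases follow by obvious symmetry in the roles of left/right children and in the sign $\epsilon$ of the ambient chain.
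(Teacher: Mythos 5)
Your strategy --- simultaneous strong induction along the decomposition of Proposition \ref{propstructure}, with a finite case check over the mini-couple and mini-tree codes --- is the same route as the cited source (the paper itself gives no proof here, deferring to Proposition 4.3 of \cite{DH21}), and your treatment of the base case, the uniqueness of the partition into links, and the type 1 step is sound.

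The gap is in the type 2 / regular tree step. You justify the cancellation of the chain contributions by asserting that ``$\nf_b^\epsilon$ is itself a child of $\nf_a^\epsilon$'', so that $|k_{\nf_b^\epsilon}|^2$ appears in $\zeta_{\nf_a^\epsilon}\Omega_{\nf_a^\epsilon}$ and $\zeta_{\nf_b^\epsilon}\Omega_{\nf_b^\epsilon}$ with opposite signs. That is only true for adjacent pairs $b=a+1$, i.e.\ for \emph{dominant} chains; a general regular chain carries an arbitrary legal partition, in which a pair $\{a,b\}$ may be nested around other pairs, so $\nf_b^\epsilon$ is merely a descendant of $\nf_a^\epsilon$ and no such term is shared. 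Writing $\Sigma_j=\zeta_{\nf_j^\epsilon}|k_{\nf_j^\epsilon}|^2$, the pairing of the two remaining children of $\nf_a^\epsilon$ with those of $\nf_b^\epsilon$ only yields $\zeta_{\nf_a^\epsilon}\Omega_{\nf_a^\epsilon}+\zeta_{\nf_b^\epsilon}\Omega_{\nf_b^\epsilon}=\Sigma_{a+1}-\Sigma_a+\Sigma_{b+1}-\Sigma_b$, where $\nf_{a+1}^\epsilon$ and $\nf_{b+1}^\epsilon$ denote the chain children; to finish you need $\Sigma_{a+1}=\Sigma_b$ and $\Sigma_{b+1}=\Sigma_a$. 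These hold because legality of $\Pc^\epsilon$ forces $\{a+1,\dots,b-1\}$ to be a union of pairs, so the chain segment strictly between $\nf_a^\epsilon$ and $\nf_b^\epsilon$ (resp.\ the segment from $\nf_a^\epsilon$ down to $\nf_{b+1}^\epsilon$) is an embedded sub-paired-tree whose root and lone leaf carry equal decorations and equal signs; one proves this by telescoping the decoration relations over the nested pairs and iterating the sign bookkeeping of the mini-tree codes. The same supplement is needed for your $\{x_{\nf'},y_{\nf'}\}=\{\zeta_1x_\nf,\zeta_2y_\nf\}$ identity. With these derived identities inserted, your case analysis closes.
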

\begin{proof} See Proposition 4.3 of \cite{DH21}. By Propositions 4.7 and 4.8 of \cite{DH21}, it is easy to see that the definition of links stated in the proof of Proposition 4.3 of \cite{DH21} coincides with the definition here.
\end{proof}
\subsection{Layered regular objects} Now we study the structures of regular couples and regular trees with layering. Note that the definition of sub-couples and sub-paired-trees in Definition \ref{subobject} can obviously be extended to the corresponding layered objects, so we can talk about a layered couple or layered paired tree being embedded in some layered garden.

In fact, throughout the proof we will mainly be looking at canonical layered gardens as in Definition \ref{defcanon}; therefore the regular couples and regular trees studied will also appear as sub-couples and sub-paired-trees of canonical gardens. We now introduce the following definition, which is of key importance in the study of layered regular objects:
\begin{df}[Coherent regular objects]\label{defcoh} We say a layered regular couple or regular tree is \emph{coherent}, if for any link of branching nodes $(\nf,\nf')$ as in Definition \ref{deflink} we have $\Lf_{\nf}=\Lf_{\nf'}$. In general we define the \emph{incoherency index} to be the number of links of branching nodes $(\nf,\nf')$ such that $\Lf_{\nf}\neq \Lf_{\nf'}$.
\end{df}
Our goal is to classify all coherent layered regular couples and regular trees, as well as those with fixed incoherency index, \emph{that are embedded in canonical layered gardens}. For this, we first need to state a convenient equivalent condition for a layered garden to be canonical.
\begin{prop}\label{canonequiv} Let $\Gc$ be an irreducible proper layered garden. For any $p\geq 0$, we have $\Gc\in\Gs_{p+1}$ if and only if $\Gc$ satisfies the following two conditions: (i) all nodes $\nf\in\Gc$ satisfy $\Lf_\nf\leq p$; (ii) if two nodes $\nf$ and $\nf'$ are such that all leaves of the trees rooted at $\nf$ and $\nf'$ are completely paired, then we must have $\max(\Lf_{\nf},\Lf_{\nf'})\geq \min(\Lf_{\nf^{\mathrm{pr}}},\Lf_{(\nf')^{\mathrm{pr}}})$, where $\nf^{\mathrm{pr}}$ and $(\nf')^{\mathrm{pr}}$ are the parent nodes of $\nf$ and $\nf'$, as in Definition \ref{deftree}; if $\nf$ or $\nf'$ is a root then the corresponding $\Lf_{\nf^{\mathrm{pr}}}$ or $\Lf_{(\nf')^{\mathrm{pr}}}$ is replaced by $p$. Similarly, we have that a couple $\Qc\in\Cs_{p+1}$ if and only if it satisfies both (i) and (ii) above.
\end{prop}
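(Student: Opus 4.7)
The plan is to prove both directions by joint induction on $p$. The base case $p=0$ is trivial: $\Gs_1$ is by definition the collection of irreducible proper gardens with all nodes in layer $0$, so (i) and (ii) are vacuous (every $\max$ and $\min$ equals $0$).

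For the forward direction in the inductive step, suppose $\Gc\in\Gs_{p+1}$ is constructed per Definition~\ref{defcanon} from top trees $\Tc_j^p$ (branching nodes in layer $p$) together with sub-gardens $\Gc(B_i)\in\Gs_p$ attached at subsets $B_i$ with $|B_i|\geq 4$. Condition (i) is immediate: all newly introduced nodes lie in layer $p$, while sub-garden nodes are in layer $\leq p-1$ by induction. For (ii), consider a sub-couple $(\nf,\nf')$ (two distinct nodes whose descendant leaves are completely paired together). First, a sub-couple cannot span two distinct sub-gardens $\Gc(B_i)\neq\Gc(B_{i'})$, because each $\Gc(B_i)$ is self-contained under pairings. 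Second, if at least one of $\nf,\nf'$ lies in the top part (hence has $\Lf=p$), then $\max(\Lf_\nf,\Lf_{\nf'})=p\geq\min(\Lf_{\nf^{\mathrm{pr}}},\Lf_{(\nf')^{\mathrm{pr}}})$ trivially. The only remaining case has both $\nf,\nf'$ inside a single $\Gc(B_i)$; the width $\geq 4$ and irreducibility of $\Gc(B_i)$ rule out both being distinct roots of $\Gc(B_i)$ (otherwise two of its trees would form a proper sub-couple, contradicting irreducibility). Hence at most one of $\nf,\nf'$ is a root of $\Gc(B_i)$, and one checks directly that the $\min$ of parent layers computed in $\Gc$ coincides with that computed in $\Gc(B_i)$ (using the convention $p-1$ for roots), so the inductive hypothesis applied to $\Gc(B_i)\in\Gs_p$ gives (ii).

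For the backward direction, assume $\Gc$ satisfies (i) and (ii) with maximal layer at most $p$, and reconstruct it as in Definition~\ref{defcanon}. For each tree $\Tc_j$ of $\Gc$, define its top tree $\Tc_j^p$ as the maximal subtree from the root whose branching nodes all have $\Lf=p$; if the root of $\Tc_j$ already has $\Lf<p$, take $\Tc_j^p$ to be trivial with its root-leaf a formal leaf to be replaced by $\Tc_j$ itself. The leaves of the $\Tc_j^p$'s split into (A) true leaves of $\Gc$ in layer $p$, and (B) formal leaves corresponding to nodes of $\Gc$ in layer $<p$ whose subtrees lie entirely in layers $\leq p-1$. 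Partition these leaves into subsets $B_i$: group Type (A) leaves into pairs according to $\Gc$'s pairing, and group Type (B) leaves into maximal subsets whose induced subtrees form irreducible sub-gardens. The crucial use of (ii) is that a $2$-element subset consisting of two Type (B) leaves cannot occur: such a pair would form a sub-couple at layer $<p$ whose parents (branching nodes of some $\Tc_j^p$'s) lie in layer $p$, forcing $\max(\Lf_\nf,\Lf_{\nf'})\geq p$ and contradicting $\Lf_\nf,\Lf_{\nf'}<p$. Hence every $2$-element $B_i$ is a genuine layer-$p$ leaf pair, and every larger $B_i$ has $|B_i|\geq 4$ and corresponds to a sub-garden.

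It remains to show each constructed sub-garden lies in $\Gs_p$ and that the ``no proper $A$'' condition in Definition~\ref{defcanon} holds. Each sub-garden satisfies (i) (layers $\leq p-1$) by construction, and inherits (ii) from $\Gc$ once one reconciles the two parent-layer conventions at its roots: in any sub-couple inside a sub-garden, the parent of a non-root node has layer $\leq p-1$, so the $\min$ in $\Gc$ (taking $p$ at sub-garden roots) and the $\min$ in the sub-garden (taking $p-1$ at roots per the $\Gs_p$ convention) agree. By induction each sub-garden lies in $\Gs_p$; the maximality of the Type (B) grouping enforces its irreducibility; and the irreducibility of $\Gc$ itself enforces the top-partition ``no proper $A$'' condition. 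This yields $\Gc\in\Gs_{p+1}$, and the couple case $\Cs_{p+1}$ is identical with $R=1$ throughout. The main technical delicacy is precisely this reconciliation of the two parent-layer conventions at sub-garden roots, which is where condition (ii) does its real work: it rules out exactly the $2$-element Type (B) subsets that would otherwise make the reverse construction fail.
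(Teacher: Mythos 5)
Your proof is correct and follows essentially the same strategy as the paper's: a separate induction on $p$ for each direction, with the forward direction reducing sub-couples to a single attached sub-garden in $\Gs_p$ (after ruling out two distinct roots via irreducibility and observing that the $p$-versus-$(p-1)$ parent-layer conventions yield the same $\min$), and the backward direction peeling off the layer-$p$ top trees, showing the remainder decomposes into irreducible sub-gardens, and invoking (ii) precisely to exclude width-$2$ components. The one place where you compress a step the paper spells out is the case analysis behind ``the $\min$ of parent layers coincides'' in the forward direction; this is correct (since the only potentially differing contribution, $p$ vs.\ $p-1$ at a root of the sub-garden, is never the minimizer once both-roots is excluded), but it is the exact content of the paper's three-case split.
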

\begin{proof} Consider first the garden case. Note that (i) is obvious by definition (as in Definition \ref{defcanon} we put the branching nodes and paired leaves of the trees $\Tc_j^p$ in layer $p$, and all other nodes are in layer $<p$), we first prove (ii) by induction. If $p=0$ then $\Lf_\nf=0$ for all nodes $\nf$, so (ii) is trivial.  Suppose (ii) holds for $p-1$, now let us consider $\Gc\in\Gs_{p+1}$ and two nodes $\nf$ and $\nf'$ as in (ii). Consider the trees $\Tc_j^p$ in the recursive construction process in Definition \ref{defcanon} (see for example Figure \ref{fig:canongarden}); all their branching nodes and leaf pairs are in layer $p$, while all their unpaired leaves (which may be leaves or branching nodes in the garden $\Gc$) are in layer $\leq p-1$. Moreover, these nodes are divided into groups of at least four, such that the layered trees rooted at all nodes in each individual group form a canonical layered garden in $\Gs_p$. We may assume $\Lf_\nf<p$ and $\Lf_{\nf'}<p$ or otherwise (ii) already holds, so in particular both $\nf$ and $\nf'$ belong to one of these $\Gs_p$ gardens; by the complete pairing assumption for $\nf$ and $\nf'$, we know they must belong to the same $\Gs_p$ garden, say $\Gc_1$.

Next, note that $\Lf_{\nf^{\mathrm{pr}}}=p$ if and only if $\nf$ is the root of a tree in $\Gc_1$ (same for $\nf'$). Consider three cases: if $\max(\Lf_{\nf^{\mathrm{pr}}},\Lf_{(\nf')^{\mathrm{pr}}})<p$, then both $\nf^{\mathrm{pr}}$ and $(\nf')^{\mathrm{pr}}$ are nodes in $\Gc_1$, so the result in (ii) follows from induction hypothesis. If $\Lf_{\nf^{\mathrm{pr}}}=p$ and $\Lf_{(\nf')^{\mathrm{pr}}}<p$, then $\nf$ is the root of a tree in $\Gc_1$ and $(\nf')^{\mathrm{pr}}$ is a node in $\Gc_1$, so the induction hypothesis gives that \[\max(\Lf_\nf,\Lf_{\nf'})\geq \min(p-1,\Lf_{(\nf')^{\mathrm{pr}}})=\Lf_{(\nf')^{\mathrm{pr}}}=\min(p,\Lf_{(\nf')^{\mathrm{pr}}})=\min(\Lf_{\nf^{\mathrm{pr}}},\Lf_{(\nf')^{\mathrm{pr}}}),
\] thus (ii) is still true. If $\Lf_{\nf^{\mathrm{pr}}}=\Lf_{(\nf')^{\mathrm{pr}}}=p$, then $\nf$ and $\nf'$ are the roots of two trees in $\Gc_1$, which contradicts the assumption that $\Gc_1$ is an irreducible proper garden. Note also that if $\nf$ (or $\nf'$) is the root of a tree in $\Gc$, then the proof goes in the same way as in the case $\Lf_{\nf^{\mathrm{pr}}}=p$ (or $\Lf_{(\nf')^{\mathrm{pr}}}=p$). This completes the proof of (ii) in the garden case; the couple case is completely analogous.

Finally, we prove that (i) and (ii) imply $\Gc\in\Gs_{p+1}$ (again the couple case follows from the same proof). If $p=0$ the result is true, because any irreducible proper garden with all nodes in layer $0$ belongs to $\Gs_1$. Suppose the result holds for $p-1$, then consider any irreducible proper layered garden $\Gc$ satisfying (i) and (ii) for $p$. Let $\Uc$ be the set of nodes $\nf\in\Gc$ such that $\Lf_{\nf}<p$ and $\Lf_{\nf^{\mathrm{pr}}}=p$ (including when $\nf$ is the root of a tree in $\Gc$), then none of these $\nf\in\Uc$ can be ancestor or descendant of each other, and the trees roots at these nodes $\nf$ form a garden (leaves in these garden are precisely those leaves in layer $\leq p-1$, which are paired to each other). Moreover all nodes in this garden are in layer $\leq p-1$, and by (ii) and induction hypothesis, we see that this garden can be decomposed into irreducible proper layered gardens (which cannot be couples) that belong to $\Gs_p$. Then, we consider the trees formed by replacing the tree rooted at each node $\nf\in\Uc$ with  a single leaf node $\nf$, and use them as the trees $\Tc_j^p\,(1\leq j\leq 2R)$ in Definition \ref{defcanon}. The branching nodes and leaf pairs in these $\Tc_j^p$ trees then are all in layer $p$; moreover, to form $\Gc$, the unpaired leaves in these trees are divided into groups of at least four, and each group is replaced by a canonical garden in $\Gs_p$. In addition, the requirement in Definition \ref{defcanon} that no union of any of such leaf pairs and leaf groups equals the set of leaves of $\Tc_j^p\,(j\in A)$ for any proper subset $A\subset\{1,\cdots,2R\}$, follows from the irreducibility of $\Gc$. Therefore, we have verified all conditions in Definition \ref{defcanon} and hence $\Gc\in\Gs_{p+1}$. This completes the proof.
\end{proof}
We can now state the two main results of this section, namely the classification of coherent and near-coherent layered regular couples in canonical layered gardens.
\begin{prop}[Structure theorem for coherent regular objects]\label{layerreg1} Suppose a regular couple $\Qc$ is embedded in a \emph{canonical} layered garden $\Gc\in\Gs_{p+1}$, with roots located at nodes $\nf^+$ and $\nf^-$ of $\Gc$. Assume that (i) this layering makes $\Qc$ a \emph{coherent} layered regular couple, and (ii) some $q,q'\in\Zb$ are fixed such that $\Lf_{(\nf^+)^{\mathrm{pr}}}=q$ and $\Lf_{(\nf^-)^{\mathrm{pr}}}=q'$; if $\nf^+$ or $\nf^-$ is the root of a tree in $\Gc$ then the corresponding $q$ or $q'$ is replaced by $p$. Assume $q\geq q'$ (the case $q<q'$ follows by symmetry) and define the relative notions as in Proposition \ref{propstructure}, then the followings hold. If $\Qc$ has type 1 or trivial, then all nodes in $\Qc$ must be in layer $q'$. If $\Qc$ has type 2, then all nodes in the couples $\Qc_{j,-,\iota}\,(1\leq j\leq m_-)$ and the couple $\Qc_{\mathrm{lp}}$, as well as the nodes $\nf_a^-\,(1\leq a\leq 2m_-)$, must be in layer $q'$. We also have $q\geq \Lf_{\nf_1^+}\geq\cdots\geq \Lf_{\nf_{2m_+}^+}\geq q'$; for the $j$-th pair $\{a,b\}$ in $\Pc^+$ (where $1\leq j\leq m_+$), we have $\Lf_{\nf_a^+}=\Lf_{\nf_b^+}$ and all nodes in the couples $\Qc_{j,+,\iota}$ must be in this layer. See Figure \ref{fig:layerregchain}.

Similarly, suppose a regular tree $\Tc$ is embedded in a \emph{canonical} layered garden $\Gc\in\Gs_{p+1}$, with the roots located at node $\nf$ and lone leaf located at node $\nf'$ of $\Gc$. Assume that (i) this layering makes $\Tc$ a \emph{coherent} layered regular tree, and (ii) some $q,q'\in\Zb$ are fixed such that $\Lf_{\nf^{\mathrm{pr}}}=q\geq q'=\Lf_{\nf'}$; if $\nf$ is the root of a tree in $\Gc$ then $q$ is replaced by $p$. Define the relative notions as in Proposition \ref{propstructure}, then the followings hold. We have $q\geq \Lf_{\nf_1^0}\geq\cdots\geq \Lf_{\nf_{2m_0}^0}\geq q'$; for the $j$-th pair $\{a,b\}$ in $\Pc^0$ (where $1\leq j\leq m_0$), we have $\Lf_{\nf_a^0}=\Lf_{\nf_b^0}$ and all the nodes in the couple $\Qc_{j,0,\iota}$ must be in this layer.

In addition, if we fix $(q,q')$ and the structure and layering of rest of the garden, but replace the regular couple $\Qc$ or regular tree $\Tc$ by any \emph{other} regular couple or regular tree with layering satisfying all the above assumptions, then the resulting garden also belong to $\Gs_{p+1}$.\footnote{As a result, when we talk about coherent regular objects in the context of a larger garden, they actually only rely on the values $(q,q')$ and not on the rest structure of the garden.} Finally, all the above results hold if the garden $\Gc$ is replaced by a couple, and $\Gs_{p+1}$ is replaced by $\Cs_{p+1}$.
\begin{figure}[h!]
\includegraphics[scale=0.46]{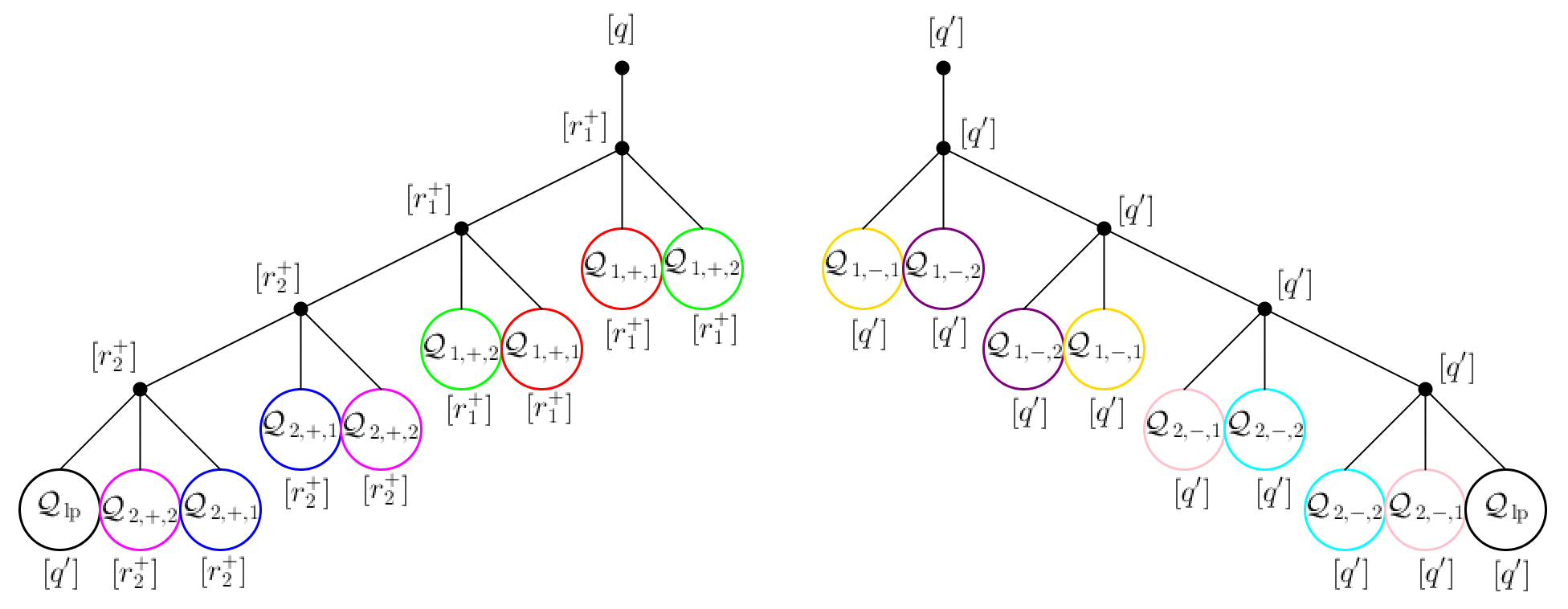}
\caption{A layering of the regular couple in Figure \ref{fig:regchain} that makes it coherent, as in Proposition \ref{layerreg1}. Here $r_j^+=\Lf_{\nf_j^+}$ and $q\geq r_1^+\geq r_2^+\geq q'$. The regular tree case is similar.}
\label{fig:layerregchain}
\end{figure}
\end{prop}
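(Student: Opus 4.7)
The plan is to prove Proposition \ref{layerreg1} by strong induction on the order $n(\Qc)$, using the type 1 / type 2 decomposition of Proposition \ref{propstructure} as the inductive scaffold and the ``max-min'' criterion (ii) of Proposition \ref{canonequiv} as the main mechanism to pin down layers. The trivial case ($\Qc=$ leaf pair) is immediate: pairing forces $\Lf_{\nf^+}=\Lf_{\nf^-}$, the child-$\leq$-parent bound gives $\Lf_{\nf^\pm}\leq q'$, and (ii) applied to $(\nf^+,\nf^-)$ yields the matching lower bound $\max(\Lf_{\nf^+},\Lf_{\nf^-})\geq q'$. The type 1 step is essentially the same: the two roots $\nf^\pm$ are linked (Proposition \ref{deflink}), so coherence gives $\Lf_{\nf^+}=\Lf_{\nf^-}$, and the previous argument forces both to equal $q'$. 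Each of the smaller couples $\Qc_1,\Qc_2,\Qc_3$ is then coherent and embedded in $\Gc$ with parent layers $(q',q')$, so the induction hypothesis places all of their nodes in layer $q'$.

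The type 2 step is where the main work lies. Coherence and chain monotonicity together show that within each ``block'' of consecutive chain positions delimited by cut points of $\Pc^\epsilon$ (positions not straddled by any pair of $\Pc^\epsilon$), all branching nodes $\nf_a^\epsilon$ of the chain $\Xc^\epsilon$ share a common layer, and the block layers form a non-increasing sequence $r_1^\epsilon\geq r_2^\epsilon\geq\cdots\geq r_{s^\epsilon}^\epsilon$ with $r_1^+\leq q$ and $r_1^-\leq q'$. The key structural observation is that the legality (nestedness) of $\Pc^\epsilon$ makes the tail of $\Xc^\epsilon$ at each cut point a sub-paired-tree, so for any $\Pc^-$-cut point $a$ and $\Pc^+$-cut point $b$, the pair $(\nf_a^-,\nf_b^+)$ forms a sub-couple of $\Qc$ (paired internally via the $\Qc_{j,\pm,\iota}$ and at the two lone leaves via $\Qc_{\mathrm{lp}}$); likewise, a $\Pc^-$-cut-point tail together with the $+$-side root of $\Qc_{\mathrm{lp}}$ forms another sub-couple. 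Applying (ii) to the two roots of $\Qc_{\mathrm{lp}}$ (equivalently to the two lone leaves when $\Qc_{\mathrm{lp}}$ is trivial) fixes their common layer at $\min(r_{s^+}^+,r_{s^-}^-)$.

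I expect the main obstacle to be proving $\Lf_{\nf_a^-}=q'$ on the entire chain $\Xc^-$, which will proceed by contradiction at the smallest block index $j^*$ with $r_{j^*}^-<q'$. Applying (ii) to the cross-chain sub-couples $(\nf_a^-,\nf_b^+)$ with $a$ fixed at the $j^*$-th $\Pc^-$-cut point and $b$ ranging over $\Pc^+$-cut points, and inducting forward on the $\Pc^+$-block index, gives that every $+$-block layer is $\geq q'$; in particular $r_{s^+}^+\geq q'$. Then applying (ii) to the asymmetric sub-couple consisting of the same tail together with the $+$-side root of $\Qc_{\mathrm{lp}}$ yields $\max(r_{j^*}^-,\min(r_{s^+}^+,r_{s^-}^-))\geq\min(q',r_{s^+}^+)=q'$; but $r_{j^*}^-<q'$ and $\min(r_{s^+}^+,r_{s^-}^-)=r_{s^-}^-\leq r_{j^*}^-<q'$, forcing the left side strictly below $q'$, a contradiction. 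Once $\Lf_{\nf_a^-}=q'$ is established, the remaining $-$-side claims follow from the induction hypothesis: $\Qc_{j,-,\iota}$ has parent layers $(q',q')$ and $\Qc_{\mathrm{lp}}$ has parent layers $(r_{s^+}^+,q')$ with $r_{s^+}^+\geq q'$, placing all their nodes in layer $q'$. The $+$-side statement then follows from coherence within each $+$-block and induction on $\Qc_{j,+,\iota}$.

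The regular tree case is strictly easier: there is a single chain whose lone leaf is the fixed node $\nf'$ at layer $q'$, so chain monotonicity, coherence, and induction on the $\Qc_{j,0,\iota}$ suffice (no cross-chain argument is needed). The ``replaceability'' statement is then obtained by verifying that any substitute regular couple or tree whose layering satisfies the described conditions continues to obey condition (ii) of Proposition \ref{canonequiv}: pairs of nodes disjoint from the interior of the substituted object are unaffected, while pairs touching its interior satisfy (ii) by direct computation from the structured layering---essentially running the necessity inequalities in reverse.
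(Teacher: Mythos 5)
Your proposal is correct and follows essentially the same route as the paper: induction through the type 1/type 2 decomposition of Proposition \ref{propstructure}, using coherence of links to equalize layers along the chains and the criterion (ii) of Proposition \ref{canonequiv} applied to sub-couples formed by chain tails to pin those layers down, with the replaceability statement verified by checking that condition (ii) is preserved under substitution. The only difference is cosmetic: in the type 2 case the paper runs the contradiction directly on the common layer $q_*$ of the two lone leaves (choosing on each chain the first node whose layer equals $q_*$ and observing that the two resulting tails form a sub-couple violating (ii)), whereas you run it on the first $-$-chain block falling below $q'$ together with a forward induction along the $+$-chain — the underlying mechanism is identical.
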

\begin{proof} (1) We prove by induction. If $\Qc$ is the trivial couple, which is a leaf pair where the two leaves are in the same layer by Definition \ref{deflayer}, then these two leaves must be in layer $q'$ by Definition \ref{deflayer} and Proposition \ref{canonequiv}. Suppose the result holds for any smaller couple $\widetilde{\Qc}$, then in particular, when $q=q'$, all nodes in $\widetilde{\Qc}$ must be in layer $q$. Now consider any $\Qc$, and let the the relative notions be defined as in Proposition \ref{propstructure}.

If $\Qc$ has type 1, then by Proposition \ref{deflink} and Definition \ref{defcoh}, the two roots of the two trees of $\Qc$ must be in the same layer, and this layer again must be $q'$ due to Proposition \ref{canonequiv}. Then, we apply the induction hypothesis to the regular couples $\Qc_j\,(1\leq j\leq 3)$, to see that all nodes of each $\Qc_j\,(1\leq j\leq 3)$ must be in layer $q'$. Therefore we obtain that all nodes in $\Qc$ are in layer $q'$.

Now Suppose $\Qc$ has type 2. By Proposition \ref{deflink} and Definition \ref{defcoh}, we know that for $\Lf_{\nf_a^\epsilon}=\Lf_{\nf_b^\epsilon}$ for $\epsilon\in\{\pm\}$ and each pair $\{a,b\}\in\Pc^\epsilon$. Applying the induction hypothesis to $\Qc_{j,\epsilon,\iota}$ where $1\leq j\leq m_\epsilon$ is such that $\{a,b\}$ is the $j$-th pair in $\Pc^\epsilon$, we know that all the nodes in $\Qc_{j,\epsilon,\iota}$ are in layer $\Lf_{\nf_a^\epsilon}$.

Next, consider the lone leaves of $\Xc^\epsilon$ for $\epsilon\in\{\pm\}$, which we denote by $\nf_*^\epsilon$. They are also the roots of trees of the type 1 or trivial couple $\Qc_{\mathrm{lp}}$, so they must be in the same layer (say $q_*$) due to Proposition \ref{deflink} and Definition \ref{defcoh}; again by induction hypothesis, all nodes in $\Qc_{\mathrm{lp}}$ are also in layer $q_*$. Clearly $q_*\leq q'$; if $q_*<q'$ (and by assumption $q'\leq q$), then for $\epsilon\in\{\pm\}$, we may choose the smallest $a=a(\epsilon)\in\{1,\cdots,2m_\epsilon+1\}$ such that $\Lf_{\nf_{a(\epsilon)}^\epsilon}=q_*$ (we understand that $\nf_{2m_\epsilon+1}^\epsilon:=\nf_*^\epsilon$). Using that $\Lf_{\nf_a^\epsilon}=\Lf_{\nf_b^\epsilon}$ for each pair $\{a,b\}\in\Pc^\epsilon$, it is easy to see that the branching nodes in the set $\{\nf_{a'}^\epsilon:a(\epsilon)\leq a\leq 2m_\epsilon\}$ are linked to each other, which means that the leaves of the two trees rooted at $\nf_{a(\epsilon)}^\epsilon\,(\epsilon\in\{\pm\})$ are completely paired, but this contradicts Proposition \ref{canonequiv}.

Now we know that $q_*=q'$, so in particular all nodes $\nf_a^-\,(1\leq a\leq 2m_-)$ and all nodes in the couples $\Qc_{j,-,\iota}\,(1\leq j\leq m_-,\iota\in\{1,2\})$ and $\Qc_{\mathrm{lp}}$ are in layer $q_*$. The rest of the claims regarding $\Lf_{\nf_a^+}$ and layers of nodes in $\Qc_{j,+,\iota}$ is already obtained above, and clearly we have $q\geq \Lf_{\nf_1^+}\geq\cdots\geq \Lf_{\nf_{2m_+}^+}\geq q'$ because $\Lf_{(\nf^+)^{\mathrm{pr}}}=q$ and $\Lf_{\nf_{*}^+}=q_*=q'$.

The case of regular tree $\Tc$ is proved similarly. With the regular chain $\Xc_0$ and relevant notions fixed as in Proposition \ref{propstructure}, we have that $\Lf_{\nf_a^0}=\Lf_{\nf_b^0}$ and this equals the layer of all nodes in the couple $\Qc_{j,0,\iota}$, in the same way as above. Clearly also $q\geq \Lf_{\nf_1^0}\geq\cdots\geq \Lf_{\nf_{2m_0}^0}\geq q'$  because $\Lf_{\nf^{\mathrm{pr}}}=q$ and $\Lf_{\nf'}=q'$. The whole proof works for couples in $\Cs_{p+1}$ instead of gardens in $\Gs_{p+1}$ without any change, and if $\nf^+$ or $\nf$ is the root of a tree in $\Gc$ then we simply replace $\Lf_{(\nf^+)^{\mathrm{pr}}}$ or $\Lf_{\nf^{\mathrm{pr}}}$ by $p$.

(2) To prove the last statement about replacing $\Qc$ or $\Tc$ by any other regular couple or regular tree of the same structure, we first show that attaching a layered regular couple to a leaf pair or attaching a layered regular tree to a node \emph{does not change} the canonicity of a layered garden, provided that all nodes in the regular couple or regular tree (other than the lone leaf) are in \emph{the same} layer.

More precisely, let $\Gc$ be a layered garden (the case of layered couple being exactly the same), and pick a leaf pair $(\nf,\nf')$ such that $\Lf_{\nf}=\Lf_{\nf'}=q$, then we may form a new layered garden $\Gc_1$ by replacing this leaf pair with a layered couple $\Qc$ (with roots of the two trees at $\nf$ and $\nf'$) such that all nodes in $\Qc$ are in layer $q$. Similarly, picking any node $\nf$ of $\Gc$ (which can be leaf or branching node) such that $\Lf_{\nf}=q'$ and $\Lf_{\nf^{\mathrm{pr}}}=q\geq q'$ (with $q$ replaced by $p$ if $\nf$ is the root of a tree in $\Gc$), we may form a new layered garden $\Gc_2$ by replacing $\nf$ with a paired tree $\Tc$, such that the lone leaf of $\Tc$ is still in layer $q'$, while all other nodes in $\Tc$ are in layer $r$ for some $q\geq r\geq q'$. Then, we have that \[\Gc\in\Gs_{p+1}\Leftrightarrow \Gc_1\in\Gs_{p+1}\Leftrightarrow\Gc_2\in\Gs_{p+1}.\] This is intuitively clear, and can be verified using the equivalent condition for canonicity introduced in Proposition \ref{canonequiv}. Consider for example $\Gc$ and $\Gc_2$ (the case of $\Gc_1$ is similar and easier); by Proposition \ref{canonequiv}, to check if they are canonical, we only need to check for instances where
\begin{equation}\label{violation}\min(\Lf_{\mf^{\mathrm{pr}}},\Lf_{(\mf')^{\mathrm{pr}}})>\max(\Lf_\mf,\Lf_{\mf'}),\end{equation} while the leaves of the two trees rooted at $\mf$ and $\mf'$ are completely paired. By our choice, in any such instance, neither $\mf$ nor $\mf'$ can be any node in $\Tc$ other than the root (denoted by $\nf_{\mathrm{rt}}$ here) and the lone leaf (denoted by $\nf_{\mathrm{lo}}$ here); if neither $\mf$ nor $\mf'$ equals $\nf_{\mathrm{rt}}$ or $\nf_{\mathrm{lo}}$ then the form of (\ref{violation}) does not change from $\Gc$ to $\Gc_2$, so equivalence holds in this case. Now consider the cases where (say) $\mf=\nf_{\mathrm{rt}}$ or $\mf=\nf_{\mathrm{lo}}$ in $\Gc_2$, and $\mf=\nf$ in $\Gc$, then we can verify the elementary fact that
\[\min(q,\Lf_{(\mf')^{\mathrm{pr}}})>\max(q',\Lf_{\mf'})\Leftrightarrow [\min(q,\Lf_{(\mf')^{\mathrm{pr}}})>\max(r,\Lf_{\mf'})]\vee [\min(r,\Lf_{(\mf')^{\mathrm{pr}}})>\max(q',\Lf_{\mf'})],\] therefore an instance (\ref{violation}) happens in $\Gc$ if and only if an instance (\ref{violation}) happens in $\Gc_2$.

Now let $\widetilde{\Gc}$ be the garden obtained from $\Gc$ by replacing the regular couple $\Qc$ with one  leaf pair in layer $q'$, or by replacing the regular tree $\Tc$ with one single node in layer $q'$. By the structure of the layered regular couple $\Qc$ or layered regular tree $\Tc$ described above, it is easy to see that $\Gc$ can be formed from $\widetilde{\Gc}$ by repeatedly applying the $\Gc\mapsto\Gc_1$ and $\Gc\mapsto\Gc_2$ transformations described above, so we have that
\[\Gc\in\Gs_{p+1}\Leftrightarrow\widetilde{\Gc}\in\Gs_{p+1}.\] Now if $\Qc$ or $\Tc$ is replaced by any other regular couple or regular tree with layering satisfying all the above assumptions, to form a new garden $\Gc'$, then we still have
\[\Gc\in\Gs_{p+1}\Leftrightarrow\widetilde{\Gc}\in\Gs_{p+1}\Leftrightarrow \Gc'\in\Gs_{p+1}.\] This completes the proof.
\end{proof}
\begin{prop}\label{layerreg2} Consider the same setting as in Proposition \ref{layerreg1}, and with $(q,q')$ fixed, but instead of assuming $\Qc$ (or $\Tc$) is coherent, we assume it has \emph{incoherency index} $g$ with $0\leq g\leq m$, where $2m$ is the order of $\Qc$ (or $\Tc$). Then, for all possible choices of layerings given by $\Gc\in\Gs_{p+1}$ (or $\Cs_{p+1}$), the number of possible layerings it induces on $\Qc$ (or $\Tc$) does not exceed $C_2^g C_0^{m+|q-q'|}$.
\end{prop}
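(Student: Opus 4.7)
We proceed by strong induction on the order $m$ of $\Qc$; the case of $\Tc$ is fully analogous, via the second part of Proposition \ref{propstructure} applied to its unique regular chain $\Xc_0$. The base case $m=0$ (trivial couple) is immediate: by Proposition \ref{canonequiv} the unique leaf pair is forced into layer $q'$, yielding exactly one layering with $g=0$, and the bound $1 \le C_2^0 C_0^{|q-q'|}$ holds.

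For the inductive step, apply Proposition \ref{propstructure} to decompose $\Qc$ as either a \emph{type 1} couple, built from a $(1,1)$-mini-couple with its three leaf pairs replaced by sub-couples $\Qc_1,\Qc_2,\Qc_3$, or a \emph{type 2} couple, built from two regular chains $\Xc^\pm$ with attached sub-couples $\Qc_{j,\epsilon,\iota}$ and a terminal lone-leaf-pair couple $\Qc_{\mathrm{lp}}$. The set of links of $\Qc$ partitions naturally into the \emph{top-level links} (the root link in type 1, or the chain pairs $\{a,b\}\in\Pc^\pm$ in type 2) and the links lying inside each sub-couple, so that the incoherency index satisfies $g = g_{\mathrm{top}} + \sum g_j$ where $g_j$ is the incoherency of the $j$-th sub-couple and $g_{\mathrm{top}}$ counts the incoherent top-level links.

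For each admissible layering, classify every top-level link as coherent or incoherent. By Propositions \ref{canonequiv} and \ref{layerreg1}, every coherent top-level link forces the corresponding node layers: in type 1 both root layers must equal $q'$; in type 2 the entire $-$-chain together with $\Qc_{\mathrm{lp}}$ lies in layer $q'$, while the $+$-chain layers form a weakly decreasing sequence $q\ge r_1\ge\cdots\ge r_{m_+}\ge q'$ with the sub-couples $\Qc_{j,+,\iota}$ inheriting parent layers $(r_j,r_j)$. The number of such weakly decreasing sequences is $\binom{m_+ + |q-q'|}{m_+}\le C_0^{m+|q-q'|}$. By contrast, each incoherent top-level link allows the two involved node layers to take distinct values in $\{0,\ldots,p\}$ subject to the canonicity inequality of Proposition \ref{canonequiv}, contributing a multiplicative factor bounded by $C_2$ per incoherency.

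Apply the induction hypothesis to each sub-couple $\Qc_j$ with the parent layers $(q_j,q_j')$ dictated by the top-level layering to obtain at most $C_2^{g_j}C_0^{m_j+|q_j-q_j'|}$ layerings; then multiply over sub-couples and sum over the partition of $g$ and over all admissible top-level layer configurations. The multinomial coefficients from partitioning $g$ into $(g_{\mathrm{top}},g_1,g_2,\ldots)$, together with geometric-series-type sums over the layer ranges at incoherent links and the telescoping of $|q_j-q_j'|$ along the $+$-chain, are absorbed by choosing $C_0$ and $C_2$ sufficiently large, yielding the claimed bound $C_2^g C_0^{m+|q-q'|}$. The main obstacle is precisely this final combinatorial bookkeeping: in particular, in type 2 one must verify that the differences $|q_j-q_j'|$ for the sub-couples $\Qc_{j,+,\iota}$ telescope correctly along the weakly decreasing chain so that their sum contributes at most $C_0^{|q-q'|}$ rather than an uncontrolled exponential in $p$; and at each incoherent link one must check that the extra layer-choice factor, once combined with the induced enlargement of $|q_j-q_j'|$ in the attached sub-couples and the associated inductive factors $C_0^{|q_j-q_j'|}$, still collapses to a single factor of $C_2$.
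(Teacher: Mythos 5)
Your argument mirrors the paper's proof: induction through the type 1/type 2 decomposition of Proposition \ref{propstructure}, forcing coherent top-level links via Propositions \ref{canonequiv}/\ref{layerreg1} and bounding the resulting weakly decreasing chain of layers by a binomial $\le C_0^{m+|q-q'|}$, charging each incoherent top-level link to a $C_2$ factor, and closing by the induction hypothesis on the sub-couples. The only cosmetic difference is that the paper fixes the positions of the $g$ incoherent links upfront (costing $\binom{m}{g}\le 2^m$) and opens $\Qc_{\mathrm{lp}}$ one further level so that every sub-couple sees $q_j=q_j'$, whereas you keep $\Qc_{\mathrm{lp}}$ intact and absorb the resulting $C_0^{|q_j-q_j'|}$ with a geometric-series sum over the chain endpoint — both routes close with the same bookkeeping.
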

\begin{proof} We will consider the case of regular couples $\Qc$, while the case of regular trees $\Tc$ follows the same way. By Proposition \ref{propstructure}, we may fix the structure (without layering) of $\Qc$ at a cost of $C_0^m$; then we may also fix the exact positions of the $g$ links of branching nodes $(\nf,\nf')$ such that $\Lf_{\nf}\neq \Lf_{\nf'}$, at a cost of $\binom{m}{g}\leq 2^m$. Therefore, we will assume below that $\Qc$ is fixed as a regular couple, and the places where $\Lf_{\nf}\neq \Lf_{\nf'}$ happens are also fixed.

We proceed by induction. If $\Qc$ is trivial then $m=g=0$, and both leaves have to be in layer $q'$ by Proposition \ref{canonequiv}, so the number of choices of layerings is $1$. Suppose this number of choices is bounded by $C_2^g C_0^{m+|q-q'|}$ for any smaller couple $\widetilde{\Qc}$, then consider a couple $\Qc$. If $\Qc$ has type 1, then it is formed by two roots $\nf_+,\nf_-$ and three couples $\Qc_j\,(1\leq j\leq 3)$. Let the $m$ and $g$ values for $\Qc_j$ be $m_j$ and $g_j$, then we have $m=m_1+m_2+m_3+1$ and $g=g_1+g_2+g_3+\xi$, where $\xi$ equals $0$ or $1$ depending on whether $\Lf_{\nf_+}=\Lf_{\nf_-}$ or not. Now, if $\Lf_{\nf_+}=\Lf_{\nf_-}$, then this value must be $q'$ due to Proposition \ref{canonequiv}, and $\xi=0$; by induction hypothesis, the number of choices of layerings for $\Qc_j$ is at most $C_2^{g_j}C_0^{m_j}$, noticing also $\Lf_{\nf_+}=\Lf_{\nf_-}$. Putting together we get that the number of choices of layerings for $\Qc$ is at most
\[C_2^{g_1+g_2+g_3}\cdot C_0^{m_1+m_2+m_3}\leq C_2^g C_0^m.\] If instead $\Lf_{\nf_+}\neq\Lf_{\nf_-}$, then the number of choices for them is at most $(\Df+1)^2$ since we always have $0\leq\Lf_{\nf}\leq p\leq \Df$, and moreover $\xi=1$. By induction hypothesis for each $\Qc_j$, we get that the number of choices of layerings for $\Qc$ is at most
\[(\Df+1)^2\cdot C_2^{g_1+g_2+g_3}C_0^{m_1+m_2+m_3}\cdot C_0^{3\Df}\leq C_2^g C_0^m\] using that $g=g_1+g_2+g_3+1$, provided that $C_2\geq (\Df+1)^2C_0^{3\Df}$.

Now assume $\Qc$ has type 2, let the relevant notions be defined as in Proposition \ref{propstructure}. Let the $m$ and $g$ values for $\Qc_{j,\epsilon,\iota}$ be $m_{j,\epsilon,\iota}$ and $g_{j,\epsilon,\iota}$ etc., and similarly for $\Qc_{\mathrm{lp}}$. Moreover, $\Qc_{\mathrm{lp}}$ is either trivial or has type 1; if it has type 1, then let $\Qc_{\mathrm{lp},j}\,(1\leq j\leq 3)$ be defined from $\Qc_{\mathrm{lp}}$ as in Proposition \ref{propstructure} (a), let $m_{\mathrm{lp},j}$ and $g_{\mathrm{lp},j}$ be corresponding $m$ and $g$ values. We then have
\[m=\sum_{j,\epsilon,\iota}m_{j,\epsilon,\iota}+\sum_{j=1}^3m_{\mathrm{lp},j}+m_++m_-+\mathbf{1}_{m_{\mathrm{lp}}>0};\quad g=\sum_{j,\epsilon,\iota}g_{j,\epsilon,\iota}+\sum_{j=1}^3g_{\mathrm{lp},j}+\xi_++\xi_-+\eta,\] where for $\epsilon\in\{\pm\}$, $\xi_\epsilon$ is defined to be the number of pairs $\{a,b\}\in\Pc^\epsilon$ such that $\Lf_{\nf_a^\epsilon}\neq\Lf_{\nf_b^\epsilon}$, and $\eta$ equals $0$ or $1$ depending on whether $\Lf_{\nf_*^+}=\Lf_{\nf_*^-}$ or not, where $\nf_*^\epsilon$ are the lone leaves (i.e. roots of $\Qc_{\mathrm{lp}}$). When $\Qc_{\mathrm{lp}}$ is trivial we understand that the $m_{\mathrm{lp},j}$ and $g_{\mathrm{lp},j}$ terms are absent (same in the proof below), in which case we also have $\eta=0$. Next we classify the value of $g_0:=\xi_++\xi_-+\eta$.

If $g_0=0$, then we must have $\Lf_{\nf_*^+}=\Lf_{\nf_*^-}$ and $\Lf_{\nf_a^\epsilon}=\Lf_{\nf_b^\epsilon}$ for each pair $\{a,b\}\in\Pc^\epsilon$. This puts us in the same situation as in part (1) of the proof of Proposition \ref{layerreg1}, so the same proof there leads to the conclusion that $\Lf_{\nf_*^+}=\Lf_{\nf_*^-}=q'$ (assuming say $q\geq q'$). Therefore $\Lf_{\nf_a^-}=q'$ for all $1\leq a\leq 2m_-$ just as in Proposition \ref{layerreg1}, and $(\Lf_{\nf_a^+}:1\leq a\leq 2m_+)$ forms a decreasing sequence in $[q',q]$, while an elementary combinatorial argument shows that the number of such sequences is at most $2^{q-q'+2m_+}$. Now as long as $C_0\geq 4$, by using the induction hypothesis for each of the couples $\Qc_{j,\epsilon,\iota}$ and $\Qc_{\mathrm{lp},j}$, we get that the number of choices of layerings for $\Qc$ is at most
\[\prod_{j,\epsilon,\iota}C_2^{g_{j,\epsilon,\iota}}C_0^{m_{j,\epsilon,\iota}}\cdot \prod_{j=1}^3C_2^{g_{\mathrm{lp},j}}C_0^{m_{\mathrm{lp},j}}\cdot 2^{q-q'+2m_+}\leq C_2^g C_0^{m+|q-q'|}.\]

Now assume $g_0\geq 1$. In this case $\Lf_{\nf_*^\epsilon}$ may not be $q'$, but $(\Lf_{\nf_a^\epsilon}:1\leq a\leq 2m_\epsilon)$ for $\epsilon\in\{\pm\}$ are still two decreasing sequences in $[0,\Df]$, so they have at most $2^{2\Df+2m_++2m_-}$ choices. The $(\Lf_{\nf_*^+},\Lf_{\nf_*^-})$ has at most $(\Df+1)^2$ choices, and using again the the induction hypothesis for each of the couples $\Qc_{j,\epsilon,\iota}$ and $\Qc_{\mathrm{lp},j}$, we get that the number of choices of layerings for $\Qc$ is at most
\[\prod_{j,\epsilon,\iota}C_2^{g_{j,\epsilon,\iota}}C_0^{m_{j,\epsilon,\iota}}\cdot \prod_{j=1}^3C_2^{g_{\mathrm{lp},j}}C_0^{m_{\mathrm{lp},j}}\cdot 2^{2\Df+2m_++2m_-}\cdot (\Df+1)^2C_0^{2g_0\Df+3\Df}\leq C_2^g C_0^{m},\] as long as $C_0\geq 4$, and $C_2\geq (4C_0)^{5\Df}$. This completes the proof. 
\end{proof}
\section{Layered regular objects II: Decay for non-coherent objects}\label{seclayer2} Recall the definition of $\Kc_\Qc^*$ and $\Kc_\Tc^*$ for layered regular couples $\Qc$ and regular trees $\Tc$ in Definition \ref{defkg}. The goal of this section and Section \ref{seclayer3} is to analyze these expressions; in this section we consider the non-coherent case, and in Section \ref{seclayer3} we consider the coherent case.

Below and in Section \ref{seclayer3}, we fix a layered regular couple $\Qc$ and a layered regular tree $\Tc$ of order $2n$ and incoherency index $0\leq g\leq n$ (except in Proposition \ref{proplayer4} where the sum of the two orders is $2n$), where $n\leq N_{p+1}^4$; if $g=0$ we shall restrict to those coherent objects described in Proposition \ref{layerreg1}. Consider the expression $\Kc_\Qc^*$ and $\Kc_\Tc^*$ as in Definition \ref{defkg}, with input functions $F_r(k)=\varphi(\delta r,k)+\Rs_r(k)$, as in Proposition \ref{propansatz} and Section \ref{secduhamel}. Assume $\Rs_r(k)$ is \emph{defined for all $k\in\Rb^d$} and \emph{real valued}, and satisfies the estimate (\ref{ansatz2}) for $0\leq r\leq p$, but with $\Lambda_p$ replaced by $40d$. Moreover, let $q$ and $q'$ be as in Definition \ref{defkg} (and in Proposition \ref{layerreg1} if $g=0$), and let $\Bc=[q,q+1]\times[q',q'+1]$ for $\Qc$, and let $\Bc=([q,q+1]\times[q',q'+1])\cap\{t>s\}$ for $\Tc$.
\subsection{The non-coherent estimates}\label{seclayer2-1} We start by proving the main estimates for $\Kc_\Qc^*$ and $\Kc_\Tc^*$ for non-coherent objects $\Qc$ and $\Tc$.
\begin{prop}\label{proplayer1} If $g\geq 1$, then we have
\begin{equation}\label{layerregest1}\|\Kc_\Qc^*\|_{\Xf^{\eta,30d,30d}(\Bc)}+\|\Kc_\Tc^*\|_{\Xf^{\eta,30d,0}(\Bc)}\lesssim_1 (C_1\delta)^n\cdot L^{-(\gamma_1-\sqrt{\eta})\cdot g}.
\end{equation}
\end{prop}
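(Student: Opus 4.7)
The plan is to exploit the structural decomposition of regular couples and regular trees via Proposition \ref{propstructure}, together with the phase cancellation identity of Proposition \ref{deflink}, to extract a factor of $L^{-\gamma_1+\sqrt{\eta}}$ per incoherent link while still obtaining the standard $O(C_1\delta)$ factor per coherent link. The starting point is the observation that for any link $(\nf,\nf')$ of branching nodes in a regular couple or regular tree, Proposition \ref{deflink} gives $\zeta_\nf\Omega_\nf=-\zeta_{\nf'}\Omega_{\nf'}$, so the product of the two phase factors attached to the link collapses to
\[
e^{\pi i\zeta_\nf\cdot\delta L^{2\gamma}\Omega_\nf(t_\nf-t_{\nf'})}.
\]
When the link is coherent, the variables $t_\nf,t_{\nf'}$ lie in the same unit interval and the integration is precisely the situation treated in \cite{DH21,DH23}, giving rise to the usual resonant contribution of size $O(\delta)$. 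When the link is incoherent, the two variables lie in disjoint unit intervals with $|t_\nf-t_{\nf'}|\geq 1$, and integration by parts in one of them produces an additional $O((\delta L^{2\gamma}|\Omega_\nf|)^{-1})$ factor whenever $\Omega_\nf\neq 0$.

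To organize this into a clean estimate, I would proceed by induction on the order $2m$ of $\Qc$ (and analogously for $\Tc$) using the type 1 / type 2 dichotomy of Proposition \ref{propstructure}. Each type 1 operation contributes one link (between the two roots of the mini-couple) and three sub-couples $\Qc_j$, while each type 2 building block contributes $m_+$ and $m_-$ links from the chains $\Xc^\pm$ together with nested sub-couples. For each link I would first integrate out the pair $(t_\nf,t_{\nf'})$, keeping the rest of the integration frozen, so that the bound decouples multiplicatively over links. In the incoherent case, the resonant denominator $|\Omega_\nf|^{-1}$ is then absorbed using the counting estimate (Lemma \ref{countlem}) on pairs of decorations with prescribed resonance modulus, which converts the $L^{-2\gamma}|\Omega_\nf|^{-1}$ into a genuine $L^{-\gamma_1}$ gain, where $\gamma_1=\min(2\gamma,1,2(d-1)(1-\gamma))$ is precisely the exponent dictated by the counting geometry. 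The small loss $L^{\sqrt{\eta}}$ covers the logarithmic inflation from summing dyadically over $|\Omega_\nf|$ and from the $\Xf^\eta$ norm.

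Multiplying the $n$ per-link bounds, using the input bound $\sup_r\|F_r\|_{\Sf^{40d,40d}}\lesssim_1 1$ provided by (\ref{defC}) together with the assumed estimate on $\Rs_r$, and finally summing over the decorations of the tree by the standard ternary-tree counting argument of \cite{DH21}, yields the total bound $(C_1\delta)^n\cdot L^{-(\gamma_1-\sqrt{\eta})g}$. For the regular tree $\Tc$ the same scheme applies, except that one of the chain variables is now tied to the external time $s$ via the lone-leaf constraint in $\Dc^*$; this is handled by the same integration-by-parts argument after treating the lone-leaf time as a parameter. The $\Xf^{\eta,30d,30d}$ norm is then obtained by taking the time Fourier transform in $(t,s)$ and carrying out the $\partial_k^\rho$ derivatives: all derivatives fall either on smooth cutoffs or on Schwartz input functions $F_r$, producing only $C_1$-bounded losses that are absorbed into $\lesssim_1$.

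The main obstacle will be the proper bookkeeping of the decay through the $\Xf^{\eta,30d,30d}$ (respectively $\Xf^{\eta,30d,0}$) norm after the integration by parts in the incoherent links, because the boundary terms of integration by parts live at $t_\nf=\Lf_\nf$ or $\Lf_\nf+1$, where they can couple nontrivially to adjacent links in a regular chain. To deal with these boundary terms I would combine them with the corresponding short-interval integrals for the adjacent link using the identity $\zeta_\nf\Omega_\nf=-\zeta_{\nf'}\Omega_{\nf'}$ a second time, effectively telescoping the boundary contributions along each chain so that only the endpoints of the chain survive, and these endpoints are controlled by the same mechanism applied to the adjacent sub-couple $\Qc_{j,\epsilon,\iota}$ in the decomposition of Proposition \ref{propstructure}. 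This telescoping, which is adapted to the new layered setting and does not appear in \cite{DH21,DH23}, is where the bulk of the technical work lies.
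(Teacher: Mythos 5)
Your proposal correctly identifies the role of Propositions \ref{propstructure} and \ref{deflink}, the change of variables $\Omega_\nf=2\langle x_\nf,y_\nf\rangle$, and the general shape of the final bound. However, the incoherency mechanism you propose is wrong in a way that would prevent the argument from closing, and the fix is exactly the step you did not anticipate.

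You write that for an incoherent link $(\nf,\nf')$ ``the two variables lie in disjoint unit intervals with $|t_\nf-t_{\nf'}|\geq 1$,'' and propose to extract the gain by integrating by parts in time to produce a factor $(\delta L^{2\gamma}|\Omega_\nf|)^{-1}$. Both halves of this break down. First, the time separation claim is false: if $\Lf_\nf=r$ and $\Lf_{\nf'}=r-1$ (adjacent layers, which is the generic case along a regular chain because consecutive layers decrease by $1$), then $t_\nf\in[r,r+1]$ and $t_{\nf'}\in[r-1,r]$, so $|t_\nf-t_{\nf'}|$ ranges over $(0,2]$ and can be arbitrarily small. Second, and more fundamentally, even if the separation held, integration by parts yields $\min\bigl(1,(\delta L^{2\gamma}|\Omega_\nf|)^{-1}\bigr)$, which is $O(1)$ and gives \emph{no gain at all} in the near-resonant region $|\Omega_\nf|\lesssim \delta^{-1}L^{-2\gamma}$. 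But that region, where by Lemma \ref{countlem} the decoration count is $\sim\delta^{-1}L^{2(d-\gamma)}$, is precisely where the dominant contribution lives; so your per-link estimate degenerates to exactly the same bound as a coherent link, and no power of $L^{-\gamma_1}$ is extracted. The subsequent ``telescoping'' you describe is then an attempt to repair boundary terms of an integration by parts that should not have been performed, rather than the source of the decay.

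The mechanism the paper actually uses is geometric rather than oscillatory. After Poisson summation (Lemma \ref{lemlayer1}) the link sum $\Sc_\nf(t_\nf,t_{\nf'})$ satisfies the pointwise bound $|\Sc_\nf|\lesssim_0 L^{2d}\langle s_\nf\rangle^{-d}$ with $s_\nf=\delta L^{2\gamma}(t_\nf-t_{\nf'})$, valid whether or not $\Omega_\nf$ is near-resonant. For an incoherent link, the fact that $t_\nf$ and $t_{\nf'}$ sit on opposite sides of a fixed integer $q$ forces $|t_\nf-q|\leq|t_\nf-t_{\nf'}|=\delta^{-1}L^{-2\gamma}|s_\nf|$, i.e.\ $t_\nf$ is confined to an interval of length $\delta^{-1}L^{-2\gamma}|s_\nf|$ near $q$. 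This shrinkage of the time domain, not oscillation of the phase, is what upgrades the $L^1$-in-time bound for $|s_\nf|\leq L$ from $\delta^{-1}L^{2(d-\gamma)}$ to $\delta^{-1}L^{2(d-\gamma)}\cdot L^{-2\gamma}$ as in (\ref{incohproof15}); the complementary regime $|s_\nf|\geq L$ is handled by the Weyl-type estimate of Lemma \ref{lemlayer2}, yielding the $L^{-2(d-1)(1-\gamma)}$ alternative that completes $\gamma_1$. No boundary terms arise and no telescoping is needed: the $\Xf^\eta$-norm control then comes from the interpolation argument via Lemmas \ref{timeintlem}--\ref{timeintlem2}, as you anticipated for the weight and derivative bookkeeping, but applied to the output of this confinement estimate rather than an integration by parts.
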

The proof of Proposition \ref{proplayer1} follows the strategy of Section 7 of \cite{DH23}, which is much simpler than the arguments in \cite{DH21}. We need the following two lemmas from \cite{DH23}.
\begin{lem}\label{lemlayer1} We use the notation $e(z)=e^{2\pi i z}$ (same for Lemma \ref{lemlayer2} below), and recall the cutoff function $\chi_0$ defined in Section \ref{setupnotat}. Then for any $|s|\leq L$, and uniformly in $(a,b,\xi,\xi')\in(\Rb^d)^4$, we have
\begin{equation}\label{layerregproof2}\sum_{(g,h)\in\Zb^{2d}}\bigg|\int_{\Rb^{2d}}\chi_0(x-a)\chi_0(y-b)\cdot e[\langle Lg+\xi,x\rangle+\langle Lh+\xi',y\rangle+s\langle x,y\rangle]\,\mathrm{d}x\mathrm{d}y\bigg|\lesssim_0 \langle s\rangle^{-d}.
\end{equation}
\end{lem}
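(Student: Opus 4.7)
The plan is to view the inner double integral as an oscillatory integral whose quadratic-form phase $s\langle x,y\rangle$ has a single nondegenerate critical point, and to gain the factor $\langle s\rangle^{-d}$ by stationary-phase/integration-by-parts analysis, while controlling the sum over $(g,h)\in\Zb^{2d}$ via a localization argument that exploits the hypothesis $|s|\leq L$.

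First I would translate $x\to x+a$, $y\to y+b$. This rephases the integrand by a unimodular factor (irrelevant to absolute values), restricts the effective domain to $\supp\chi_0\times\supp\chi_0$, and replaces $Lg+\xi$ by $p:=Lg+\xi+sb$ and $Lh+\xi'$ by $q:=Lh+\xi'+sa$; each then runs over an $L$-spaced lattice with arbitrary offset. Carrying out the $x$-integral explicitly, each summand becomes $|\widehat{F_{s,p}}(-q)|$, where $F_{s,p}(y):=\chi_0(y)\widehat{\chi_0}(-sy-p)$ and $\widehat{\chi_0}$ is the Schwartz Fourier transform of $\chi_0$. A change of variable $z=-sy-p$ yields the uniform estimate $\|F_{s,p}\|_{L^1}\lesssim \min(1,|s|^{-d})$, which on the regime $|p|\geq 2|s|$ is upgraded via the pointwise inequality $|-sy-p|\geq |p|/2$ (for $y\in\supp\chi_0$) and Schwartz decay of $\widehat{\chi_0}$ to $\|F_{s,p}\|_{L^1}\lesssim_N\langle p\rangle^{-N}$ for any $N$. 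Differentiating in $y$ introduces at most $|s|^{|\alpha|}$, so $\|\partial^\alpha F_{s,p}\|_{L^1}$ obeys the same bounds up to this extra factor.

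Combined with the standard inequality $|\widehat{F_{s,p}}(-q)|\leq |q|^{-|\alpha|}\|\partial^\alpha F_{s,p}\|_{L^1}$, these bounds allow the lattice sum to be split into four regions according to whether $|p|,|q|$ are above or below $2|s|$. In the bad region $|p|,|q|\leq 2|s|$, the uniform pointwise bound $\lesssim |s|^{-d}$ combined with the lattice count $\lesssim(|s|/L+1)^{2d}=O(1)$ (crucially using $|s|\leq L$) gives contribution $\lesssim |s|^{-d}$. In each of the other three regions one picks $|\alpha|$ large and combines the Schwartz decay $\langle p\rangle^{-N}$ with the integration-by-parts decay $(|s|/|q|)^{|\alpha|}$; the lattice sums $\sum_{|p|>2|s|}\langle p\rangle^{-N}$ and $\sum_{|q|>2|s|}(|s|/|q|)^{|\alpha|}$ are each $\lesssim |s|^{-d}$ (in fact much smaller) once $N,|\alpha|$ exceed $d$. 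The residual regime $|s|\leq 1$ (where $\langle s\rangle^{-d}\asymp 1$) is handled by Taylor-expanding $e[s\langle x,y\rangle]$ and using Schwartz decay of $\widehat{\chi_0}$ term by term.

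The only delicate point is the four-way case analysis: one must verify that no sub-region produces a bound worse than $|s|^{-d}$. This hinges on the hypothesis $|s|\leq L$, which keeps the number of lattice points in the bad box $\{|p|,|q|\leq 2|s|\}$ bounded rather than growing like $(|s|/L)^{2d}$; without this hypothesis the bad region would give a contribution $\sim |s|^{d}/L^{2d}$, and the stated estimate would fail.
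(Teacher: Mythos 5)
Your proof is correct, and it rests on the same two structural pillars as the paper's argument: a near/far decomposition of the shifted $(g,h)$-lattice whose near box contains only $O(1)$ points thanks to $|s|\leq L$, together with a stationary-phase-type gain of $\langle s\rangle^{-d}$ in the near box and rapid decay in the far one. The execution, however, is genuinely different. The paper keeps the two-dimensional oscillatory integral intact: it splits according to whether $|Lg+\xi+sb|$ or $|Lh+\xi'+sa|$ exceeds the absolute threshold $10dL$, integrates by parts $10d$ times in $(u,v)=(x-a,y-b)$ in the far case, and invokes the standard $2d$-dimensional stationary-phase bound $\lesssim\langle s\rangle^{-d}$ for the $O(1)$ near points (the Hessian of $s\langle x,y\rangle$ in the joint variable has determinant $(\pm s)^{2d}$). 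You instead carry out the $x$-integral explicitly, writing each summand as $|\widehat{F_{s,p}}(-q)|$ with $F_{s,p}(y)=\chi_0(y)\widehat{\chi_0}(-sy-p)$, and extract the $\langle s\rangle^{-d}$ gain from the $L^1$ bound on $F_{s,p}$ via a change of variables --- which is stationary phase in disguise, since $\widehat{\chi_0}(-sy-p)$ concentrates near the critical point $y=-p/s$; the far-field decay then factors cleanly into Schwartz decay in $p$ and integration-by-parts decay in $q$. Your $s$-dependent threshold $|p|,|q|\lessgtr 2|s|$ is more economical than the paper's fixed $10dL$, placing exactly the $|s|$-neighborhood of the stationary point in the bad box; both work because $|s|\leq L$ keeps that box to $O(1)$ lattice points. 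A cosmetic point: the threshold should really be $C|s|$ with $C$ comparable to $\mathrm{diam}(\mathrm{supp}\,\chi_0)$ so that $|-sy-p|\geq|p|/2$ holds on the support, but that is only a harmless constant adjustment.
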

\begin{proof} The proof is an easier version of Lemma 6.5 of \cite{DH23} (with $\Phi$ replaced by $1$ in Lemma 6.5 of \cite{DH23}). For fixed $(a,b,\xi,\xi')$, consider the two cases where (i) $|Lg+\xi+sb|\geq 10dL$ or $|Lh+\xi'+sa|\geq 10dL$, and (ii) $|Lg+\xi+sb|\leq 10dL$ and $|Lh+\xi'+sa|\leq 10dL$. In case (i), define
\[u:=x-a,\quad v:=y-b;\quad a':=a+\frac{Lh+\xi'}{s},\quad b':=b+\frac{Lg+\xi}{s},\] where we may assume $s\neq 0$, then up to a unimodular factor, the integral in $(x,y)$ reduces to
\begin{equation}\label{layerregnew1}
\int_{\Rb^{2d}}\chi_0(u)\chi_0(v)\cdot e(s\langle u+a',v+b'\rangle)\,\mathrm{d}u\mathrm{d}v.
\end{equation} Note that $|s|\leq L$, and $|u|,|v|\leq d$ due to the cutoff, and $\max(|sa'|,|sb'|)\geq 10dL$ due to assumption (i). Thus we can integrate by parts in $u$ (if $|sb'|\geq 10dL$) or $v$ (if $|sa'|\geq 10dL$) a total of $10d$ times to bound the $(x,y)$ integral by \[L^{-3d}(|sa+Lh+\xi'|^{-3d}+|sb+Lg+\xi|^{-3d}),\] which satisfies (\ref{layerregproof2}) upon summation in $(g,h)$, using also $|s|\leq L$.

We are left with case (ii). With fixed $(a,b,\xi,\xi')$ and $s$, this leaves at most $(20d+1)^{2d}\leq C_0$ choices for $(g,h)$, while for fixed $(g,h)$, the standard stationary phase argument implies that the integral in $(x,y)$ is bounded by $C_0\langle s\rangle^{-d}$, so (\ref{layerregproof2}) is also true in this case.
\end{proof}
\begin{lem}\label{lemlayer2} Uniformly in $(a,b,\xi,\xi'
)\in(\Rb^d)^4$, we have
\begin{multline}\label{layerregproof4}
\int_{L\leq |s|\leq \Df\delta L^{2\gamma}}\bigg|\sum_{(x,y)\in\Zb_L^{2d}}\chi_0(x-a)\chi_0(y-b)\cdot e(\langle x,\xi\rangle+\langle y,\xi'\rangle+s\langle x,y\rangle)\bigg|\,\mathrm{d}s\\\lesssim_2 L^{2d-2(d-1)(1-\gamma)+\eta}.
\end{multline}
\end{lem}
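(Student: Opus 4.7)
My plan is to combine Poisson summation with stationary-phase analysis and the product structure of the bilinear form $\langle x,y\rangle$. First, note that the region $L\le|s|\le\Df\delta L^{2\gamma}$ is non-empty for large $L$ only when $\gamma\ge 1/2$, which I therefore assume. Poisson summation in $(x,y)\in\Zb_L^{2d}$ converts the sum into $L^{2d}\sum_{(g,h)\in\Zb^{2d}}I_s(g,h)$, where $I_s(g,h)$ is precisely the oscillatory integral appearing in Lemma \ref{lemlayer1}. Crucially, since both the cutoffs $\chi_0$ and the bilinear form $\langle x,y\rangle=\sum_j x^jy^j$ split across the $d$ coordinates, the whole sum factors as $S(s)=\prod_{j=1}^d S^{(j)}(s)$, with each $S^{(j)}$ a one-dimensional analogue.

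For each factor with $|s|\ge L$, stationary-phase analysis gives $|I_s^{(j)}(g,h)|\lesssim|s|^{-1}$ when the critical point $(u^*,v^*)=((Lh-(\xi')^{(j)})/s,(Lg-\xi^{(j)})/s)$ lies in the unit cutoff (restricting $(g,h)$ to a box of $\sim(|s|/L)^2$ integer points), with non-stationary tails controlled by integration by parts exactly as in Lemma \ref{lemlayer1}. Collecting the surviving phase, each factor reduces to $(L^2/|s|)\,\sigma_j(s)$ where
\[\sigma_j(s)=\sum_{|g_j|,|h_j|\lesssim |s|/L} e\!\left(-\tfrac{L^2}{s}g_jh_j+\alpha_jg_j+\beta_jh_j\right)\]
is a Weyl-type bilinear exponential sum of range $M=|s|/L$ with frequency $\phi=L^2/s\in[L^{2-2\gamma},L]$. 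Applying Dirichlet-kernel summation on the inner index gives $|\sigma_j(s)|\lesssim\sum_{|g|\le M}\min(M,\,1/\|\beta_j-\phi g\|)$, which by the Erd\H{o}s--Tur\'an inequality is $\lesssim M\log L$ away from a measure-small set of $s$ where $\phi$ is very well approximated by rationals of small denominator.

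The $s$-integration then combines these single-factor bounds with the $L^2$ mean-value estimate $\int|S^{(j)}(s)|^2\,ds\lesssim L^{2+2\gamma+\eta}$, which I prove directly by expanding the square, integrating in $s$ to obtain the weight $\min(L^{2\gamma},1/|uv-u'v'|)$, and counting lattice quadruples via Ingham's divisor bound. H\"older's inequality $\int\prod_j|S^{(j)}|\,ds\le\prod_j\|S^{(j)}\|_{L^d}$, combined with interpolation between the above $L^2$-bound and the trivial $L^\infty$-bound $\|S^{(j)}\|_\infty\le L^2$, then yields the target $L^{2d-2(d-1)(1-\gamma)+\eta}$, where the factor $(d-1)(1-\gamma)$ in the exponent emerges precisely from the balance between $L^\infty$ and $L^2$ levels distributed across the $d$ product factors.

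The main obstacle is that $|\sigma_j(s)|$ is \emph{not} uniformly $\lesssim M$ pointwise: at resonant values of $s$ where $\phi=L^2/s$ is extremely close to a rational of small denominator, $|\sigma_j|$ can spike up to its trivial bound $M^2$. Consequently, genuine averaging in $s$ is indispensable, and one must work throughout with $L^p$ norms in $s$ rather than pointwise control. A secondary technical point, which I expect to be the most delicate, is ensuring that the $L^2$--$L^\infty$ interpolation delivers a sharp exponent for $d\ge 3$; if it falls short the remedy is to bound $\int|S^{(j)}|^p\,ds$ for $p>2$ by counting lattice solutions on the bilinear variety $\sum_i(\tilde u_i\tilde v_i-\tilde u_i'\tilde v_i')=0$ in $[-L,L]^{4p}$. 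The $L^\eta$ loss then absorbs the various logarithmic factors coming from the Erd\H{o}s--Tur\'an estimate, the divisor bound, and the exceptional-$s$ contribution.
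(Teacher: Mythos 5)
Your reduction to a product of $d$ two-dimensional factors and the Poisson-summation/stationary-phase dualization are reasonable, but the quantitative core of your argument does not close, for two independent reasons. First, the claimed $L^2$ mean value $\int|S^{(j)}(s)|^2\,\mathrm{d}s\lesssim L^{2+2\gamma+\eta}$ is false for $\gamma<1$ by the very computation you describe: expanding the square gives $\sum_{(x,y,x',y')}\min(\delta L^{2\gamma},|xy-x'y'|^{-1})$, and writing $xy-x'y'=k/L^2$ with $k\in\Zb$, the divisor bound gives $\lesssim L^{2+\eta}$ quadruples per value of $k$, so the diagonal $k=0$ contributes $L^{2+2\gamma+\eta}$ but the off-diagonal contributes $\sum_{k\neq0}L^{2+\eta}\min(\delta L^{2\gamma},L^{2}/|k|)\sim L^{4+\eta}$, which dominates whenever $\gamma<1$. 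Second, even granting your claimed $L^2$ bound, the interpolation $\int|S^{(j)}|^{d}\le\|S^{(j)}\|_\infty^{d-2}\int|S^{(j)}|^{2}\le L^{2(d-2)}\cdot L^{2+2\gamma+\eta}$ combined with H\"older yields only $L^{2d-2+2\gamma+\eta}$, which exceeds the target $L^{2+2\gamma(d-1)+\eta}$ by a factor $L^{2(d-2)(1-\gamma)}$; the arithmetic in your final step does not work out, and with the correct $L^2$ bound $L^{4+\eta}$ the scheme degenerates to the trivial estimate $L^{2d}$.

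The missing idea is a \emph{pointwise} gain on each factor coming from the specific range of $s$. Writing $4\langle x,y\rangle=|x+y|^2-|x-y|^2$ reduces each coordinate factor to a product of two one-dimensional Gauss sums $G(s')$ with $s'=s/L^2$, for which Weyl's inequality gives $|G(s')|\lesssim L/(\sqrt{q}(1+L|s'-a/q|^{1/2}))$ in terms of the Dirichlet approximation of $s'$. The constraints $L\le|s|\le\Df\delta L^{2\gamma}$ force $a\neq0$ and hence $q\gtrsim L^{2-2\gamma}$, so each of the $2d$ Gauss sums is pointwise $\lesssim L^{\gamma}$ rather than $L$; integrating over the $(a,q,R)$ decomposition of the range of $s'$ then produces exactly $L^{2+2\gamma(d-1)+\eta}$. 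This denominator lower bound is the sole source of the exponent $2(d-1)(1-\gamma)$, and it is invisible to moment and mean-value estimates taken over the whole range of $s$, which is why your $L^2$--$L^\infty$ interpolation cannot recover it. (This is the mechanism of Lemma 6.6 of \cite{DH23}, which the paper cites for this lemma, and it is carried out explicitly for the analogous trilinear sum in the first half of the proof of Lemma \ref{lemcubic}.) Your dual bilinear sum $\sigma_j$ with frequency $\phi=L^2/s$ could in principle be handled the same way via $\sum_{|h|\le M}\min(M,\|\phi h-\alpha\|^{-1})\lesssim(1+M/q)(M+q)\log q$, but you would again need to extract the lower bound on $q$ from the range of $s$, which your write-up never does.
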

\begin{proof} See Lemma 6.6 of \cite{DH23} (with $D=\lambda=1$ and $\Phi\equiv 1$ in Lemma 6.6 of \cite{DH23}). The $\lesssim_2$ inequality is due to the upper bound $|s|\leq \Df\delta L^{2\gamma}$ where $\Df$ is controlled by $C_2$.
\end{proof}
We can now prove Proposition \ref{proplayer1}.
\begin{proof}[Proof of Proposition \ref{proplayer1}] We start with $\Kc_\Qc^*$; the discussion of $\Kc_\Tc^*$ is similar (with some additional twists) and will be left to the last step. We use the notation $e(z)=e^{2\pi i z}$ throughout this proof.

(1) \uwave{Preparations.} Recall the definition of $\Kc_\Qc^*$ and $\Kc_\Tc^*$ as in Definition \ref{defkg}, with the expression similar to (\ref{defkg0}) and time domain as in (\ref{timecouple}). For regular couple $\Qc$ we can link its branching nodes and select the subset $\Nc^{\mathrm{ch}}$ as in Proposition \ref{deflink}, then $|\Nc^{\mathrm{ch}}|=n$ is half the order of $\Qc$. For each $\nf\in\Nc^{\mathrm{ch}}$ define $x_\nf=k_{\nf_1}-k_\nf$ and $y_{\nf}=k_\nf-k_{\nf_3}$ as in Definition \ref{deflink}, where $\nf_j$ and $\nf_j'$ are the children nodes of $\nf$ and $\nf'$ as in Definition \ref{defdec}; note that $\Omega_\nf=2\langle x_\nf,y_\nf\rangle$ and $\zeta_{\nf'}\Omega_{\nf'}=-\zeta_\nf\Omega_\nf$ by Proposition \ref{deflink}.

Now consider the $2n+1$ vectors $k_\lf-k$ for all leaves $\lf\in\Lc$ of sign $+$; by examining the $k$-decoration of the tree with sign $+$, we see that they satisfy one linear equation of form
\begin{equation}\label{incohproof1}
\sum_{\lf\in\Lc}^{(+)}\sigma(\lf)(k_\lf-k)=0,
\end{equation} where $\sigma(\lf)\in\{-1,0,1\}$ with the sum of all the $\sigma(\lf)$ being also $\pm1$; moreover each $k_\lf-k$ is a fixed integer linear combination of $(x_\nf,y_\nf)$ and vice versa, which forms an integer coefficient linear bijection between $(\Zb_L^d)^{2n}$ and the subspace of $(\Zb_L^d)^{2n+1}$ defined by (\ref{incohproof1}). Therefore, the summation over all $k$-decorations $\Is$ of $\Qc$, appearing in (\ref{defkg0}), can be replaced by the summation over all $(x[\Nc^{\mathrm{ch}}],y[\Nc^{\mathrm{ch}}])\in(\Zb_L^d)^{2n}$ (using also the last statement in Proposition \ref{deflink}), so we can reduce (\ref{defkg0}) to
\begin{multline}\label{incohproof2}
\Kc_\Qc^*(t,s,k)=\bigg(\frac{\delta}{2L^{d-\gamma}}\bigg)^{2n}\zeta(\Qc)\int_{\Ic^*}\sum_{(x[\Nc^{\mathrm{ch}}],y[\Nc^{\mathrm{ch}}])}
\epsilon\prod_{\nf\in\Nc^{\mathrm{ch}}}e^{2\pi i\zeta_\nf\cdot\delta L^{2\gamma}(t_\nf-t_{\nf'})\langle x_\nf,y_\nf\rangle}\\\times W(k,x[\Nc^{\mathrm{ch}}],y[\Nc^{\mathrm{ch}}])\,\mathrm{d}t[\Nc].
\end{multline} Here $\Ic^*$ is the same as (\ref{timecouple}) and $(x[\Nc^{\mathrm{ch}}],y[\Nc^{\mathrm{ch}}])\in(\Zb_L^d)^{2n}$, and $\epsilon=\epsilon_\Is$ is as in Definition \ref{defdec} (in particular it only depends on $(x_\nf,y_\nf)$). Moreover $\nf'$ is the branching node linked with $\nf$, and $W$ is the function defined by
\begin{equation}\label{incohproof3}
W(k,x[\Nc^{\mathrm{ch}}],y[\Nc^{\mathrm{ch}}])=\prod_{\lf\in\Lc}^{(+)}F_{\Lf_\lf}(k+z_\lf),
\end{equation} where $z_\lf=k_\lf-k$ is a fixed integer coefficient linear combination of $(x[\Nc^{\mathrm{ch}}],y[\Nc^{\mathrm{ch}}])$.

Note that the expression (\ref{incohproof2}) depends on $k$ only via $W$ and $F_{\Lf_\lf}$ in (\ref{incohproof3}), so we if take any derivative in $k$, the derivative will fall on one of the $F_{\Lf_\lf}$ factors. Since we are taking $30d$ derivatives in (\ref{layerregest1}) and each $F_{\Lf_\lf}(k_\lf)$ is bounded in $\Sf^{40d,40d}$ by assumption, we may absorb these derivatives at the price of weakening the $\Sf^{40d,40d}$ norms by $\Sf^{10d,10d}$. Similarly, we can also absorb the weight $\langle k\rangle^{30d}$ occurring in (\ref{layerregest1}), because $k$ is a linear combination of all the $k_\lf$ by (\ref{incohproof1}), which implies \[\langle k\rangle^{30d}\lesssim_0 n^{30d}\cdot\max_{\lf}\langle k_\lf\rangle^{30d},\] where the $n^{30d}$ factor can be absorbed by $C_1^n$. Therefore, below we will only bound the $\Xf^{\eta,0,0}(\Bc)$ norm of $\Kc_\Qc^*$, assuming only that each ($\Rs_{\Lf_\lf}(k_\lf)$ and) $F_{\Lf_\lf}(k_\lf)$ is bounded in $\Sf^{10d,10d}$. Here note that, strictly speaking, the norm $\Xf^{\eta,30d,30d}(\Bc)$ involves an extension of $\Kc_\Qc^*$ to all values of $(t,s)\in\Rb^2$; however such extension will not depend on the derivatives taken, so this point does not affect the proof. In fact, this extension will be canonically fixed using some fixed smooth cutoff functions (see for example (\ref{incohproof19})), after which we only need to deal with the $\Xf^{\eta,0,0}$ norm. 

(2) \uwave{Localization of $x_\nf$ and $y_\nf$.} Now start with (\ref{incohproof2}). Note that the $\Xf^{\eta,0,0}$ norm, defined in (\ref{normX}), takes $L_k^\infty$ first before $L_{\lambda_j}^1$, which is less convenient here; however, using the extra decay in $k$ and Sobolev we have
\[\|G(k)\|_{L_k^\infty}\lesssim_0\max_{|\rho|\leq 2d}\|\partial_k^\rho G(k)\|_{L_k^1}\lesssim_0\max_{|\rho|\leq 2d}\|\langle k\rangle^{2d}\partial_k^\rho G(k)\|_{L_k^\infty}\] for any function $G(k)$, which then implies that
\begin{equation}\label{incohproof4}\|F\|_{\Xf^{\eta,0,0}}\lesssim_0\max_{|\rho|\leq 2d}\sup_k\langle k\rangle^{2d}\cdot\|\partial_k^\rho F(\cdot,k)\|_{\Xf^\eta},
\end{equation} where in the last norm we fix $k$ and measure the $\Xf^\eta$ norm in the time variables. The $2d$ derivatives and weight $\langle k\rangle^{2d}$ can again be absorbed at the price of weakening the $\Sf^{10d,10d}$ norm of $F_{\Lf_\lf}(k_\lf)$ to $\Sf^{8d,8d}$, which then allows us to fix $k$ and only deal with the $\Xf^\eta$ norm (where, again, the extension does not depend on the extra derivatives nor on $k$).

With fixed $k$, we may fix some value $k_\lf^0\in\Zb^d$ for each $\lf\in\Lc$ of sign $+$, and restrict to the region $|k_\lf-k_\lf^0|\leq 1$ for each such $\lf$, where $k_\lf=k+z_\lf$ as above; such term carries a quickly decaying and summable coefficient in $(k_\lf^0)$, due to the decay in $k_\lf$ provided by the $\Sf^{8d,8d}$ norm bounds of each $\Rs_{\Lf_\lf}(k_\lf)$ and $F_{\Lf_\lf}(k_\lf)$. Then, with fixed $(k_\lf^0)$, we may apply Lemma \ref{treelemma}, which allows to decompose each single term with fixed $(k_\lf^0)$ into at most $C_0^n$ sub-terms, such that for each sub-term there exist constant vectors $(a_\nf,b_\nf)$ for $\nf\in\Nc^{\mathrm{ch}}$ such that \[|x_\nf-a_\nf|\leq 1/2\mathrm{\ and\ }\quad |y_\nf-b_\nf|\leq 1/2 \mathrm{\ for\ all\ }\nf\in\Nc^{\mathrm{ch}}.\]

Now we will focus on one of the sub-terms (the factor $C_0^n$ can be absorbed) and fix $(a_\nf,b_\nf)$; in particular, using a partition of unity we can insert the cutoff functions $\chi_0(x_\nf-a_\nf)\chi_0(y_\nf-b_\nf)$. Next, notice that the two sets of vectors $(k_\lf)$ and $(k,x_\nf,y_\nf)$, where $\lf\in\Lc$ has sign $+$ and $\nf\in\Nc^{\mathrm{ch}}$, are related by a (volume preserving) integer coefficient linear transform, all of whose coefficients are at most $1$ in absolute value. Now the function
\begin{equation}\label{incohproof5}
W=\prod_{\lf\in\Lc}^{(+)}F_{\Lf_\lf}(k_\lf)=\prod_{\lf\in\Lc}^{(+)}(\varphi(\delta\Lf_\lf,k+z_\lf)+\Rs_{\Lf_\lf}(k+z_\lf)),
\end{equation} viewed in the variables $(k_\lf)$, has Fourier $L^1$ norm $\lesssim_1C_1^n$ due to the factorization structure and $\Sf^{8d,8d}$ bound assumptions; with the change of variables, we can bound the Fourier $L^1$ norm in the $(k,x_\nf,y_\nf)$ variables, namely
\begin{equation}\label{incohproof6}
W(k,x[\Nc^{\mathrm{ch}}],y[\Nc^{\mathrm{ch}}])=\int_{(\Rb^d)^{2n+1}}\widehat{W}(\lambda,\xi[\Nc^{\mathrm{ch}}],\xi'[\Nc^{\mathrm{ch}}])\cdot e(\langle \lambda,k\rangle)\prod_{\nf\in\Nc^{\mathrm{ch}}}e(\langle \xi_\nf,x_\nf\rangle+\langle \xi_\nf',y_\nf\rangle)\,\mathrm{d}\xi_\nf\mathrm{d}\xi_\nf'\mathrm{d}\lambda,
\end{equation} where
\begin{equation}\label{incohproof7}
\int_{(\Rb^d)^{2n+1}}|\widehat{W}(\lambda,\xi[\Nc^{\mathrm{ch}}],\xi'[\Nc^{\mathrm{ch}}])|\,\mathrm{d}\lambda\mathrm{d}\xi[\Nc^{\mathrm{ch}}]\mathrm{d}\xi'[\Nc^{\mathrm{ch}}]\lesssim_1 C_1^n.
\end{equation} As $C_1^n$ can be absorbed by the right hand side of (\ref{layerregest1}), we can replace the function $W$ in (\ref{incohproof2}) by products of $\chi_0$ and phase-shift factors, and we only need to estimate
\begin{multline}\label{incohproof8}
\widetilde{\Kc}(t,s):=\bigg(\frac{\delta}{2L^{d-\gamma}}\bigg)^{2n}\zeta(\Qc)\int_{\Ic^*}\sum_{(x[\Nc^{\mathrm{ch}}],y[\Nc^{\mathrm{ch}}])}
\epsilon\prod_{\nf\in\Nc^{\mathrm{ch}}}\bigg(\chi_0(x_\nf-a_\nf)\chi_0(y_\nf-b_\nf)\\\times e\big[\zeta_\nf\cdot\delta L^{2\gamma}(t_\nf-t_{\nf'})\langle x_\nf,y_\nf\rangle+\langle \xi_\nf,x_\nf\rangle+\langle \xi_\nf',y_\nf\rangle\big]\bigg)\,\mathrm{d}t[\Nc]
\end{multline} uniformly in all choices of $(\xi_\nf,\xi_\nf')$.

(3) \uwave{Estimating the summation.} Now consider $\widetilde{\Kc}(t,s)$, with fixed $k$, defined in (\ref{incohproof8}). With fixed $(a_\nf,b_\nf,\xi_\nf,\xi_\nf')$, we can write (where $\nf'$ is the branching node linked with $\nf$)
\begin{equation}\label{incohproof9}
\widetilde{\Kc}(t,s)=\bigg(\frac{\delta}{2L^{d-\gamma}}\bigg)^{2n}\zeta(\Qc)\int_{\Ic^*}\prod_{\nf\in\Nc^{\mathrm{ch}}}\Sc_{\nf}(t_{\nf},t_{\nf'})\,\mathrm{d}t[\Nc],
\end{equation} where
\begin{equation}\label{incohproof10}
\Sc_{\nf}(t_{\nf},t_{\nf'})=\sum_{(x,y)\in\Zb_L^{2d}}\epsilon_\nf\cdot\chi_0(x_\nf-a_\nf)\chi_0(y_\nf-b_\nf)\cdot e\big[\zeta_\nf\cdot\delta L^{2\gamma}(t_\nf-t_{\nf'})\langle x_\nf,y_\nf\rangle+\langle \xi_\nf,x_\nf\rangle+\langle \xi_\nf',y_\nf\rangle\big],
\end{equation} and $\epsilon_\nf$ equals $1$, $0$ or $-1$ if $0$, $1$ or $2$ vectors in $\{x,y\}$ equal $0$. Note also that, by (\ref{timecouple}), we have
\begin{equation}\label{incohproof11}\Ic^*\subset\prod_{\nf\in\Nc^{\mathrm{ch}}}\Bc_\nf,\quad \Bc_\nf=[\Lf_{\nf},\Lf_{\nf}+1]\times[\Lf_{\nf'},\Lf_{\nf'}+1];
\end{equation} in particular \emph{for each $\nf$ such that $\Lf_{\nf}\neq \Lf_{\nf'}$,} there exists a fixed integer $q$ such that $t_{\nf}\leq q\leq t_{\nf'}$ or $t_{\nf'}\leq q\leq t_\nf$.

We now analyze $\Sc_n$. A trivial bound is $|\Sc_\nf|\lesssim_1 L^{2d}$. Next, let $\zeta_\nf\cdot\delta L^{2\gamma}(t_\nf-t_{\nf'}):=s_\nf$, so that $|s_\nf|\leq \Df\delta L^{2\gamma}$ and $\Sc_\nf$ depends only on $s_\nf$, then upon replacing $\epsilon_\nf$ with $1$, we get that
\begin{equation}\label{incohproof12}
\Sc_\nf=\sum_{(x,y)\in\Zb_L^{2d}}\chi_0(x_\nf-a_\nf)\chi_0(y_\nf-b_\nf)\cdot e\big[s_\nf\langle x_\nf,y_\nf\rangle+\langle \xi_\nf,x_\nf\rangle+\langle \xi_\nf',y_\nf\rangle\big]+\Oc_0(L^d).
\end{equation}If $|s_\nf|\geq L$, then Lemma \ref{lemlayer2} implies that
\begin{equation}\label{incohproof13}
\|\Sc_\nf\cdot\mathbf{1}_{|s_\nf|\geq L}\|_{L^1(\Bc_\nf)}\lesssim_2\delta^{-1}L^{2(d-\gamma)}\cdot L^{-2(d-1)(1-\gamma)+\eta};
\end{equation} if $|s_\nf|\leq L$, we can apply Poisson summation to (\ref{incohproof12}) to bound it by an expression similar to the left hand side of (\ref{layerregproof2}), then Lemma \ref{lemlayer1} implies that $|\Sc_\nf|\lesssim_0 L^{2d} \langle s\rangle^{-d}$, and hence
\begin{equation}\label{incohproof14}
\|\Sc_\nf\cdot\mathbf{1}_{|s_\nf|\leq L}\|_{L^1(\Bc_\nf)}\lesssim_0\delta^{-1}L^{2(d-\gamma)}.
\end{equation} Moreover, \emph{if $\Lf_{\nf}\neq \Lf_{\nf'}$}, then there exists a fixed integer $q$ such that $|t_\nf-q|\leq |t_\nf-t_{\nf'}|=\delta^{-1}L^{-2\gamma}|s_\nf|$, hence
\begin{equation}\label{incohproof15}
\|\Sc_\nf\cdot\mathbf{1}_{|s_\nf|\leq L}\|_{L^1(\Bc_\nf)}\lesssim_2 \delta^{-1}L^{2(d-\gamma)}\cdot L^{-2\gamma}
\end{equation} as $\delta^{-1}\leq C_2$. Putting together, and noticing that there are exactly $1\leq g\leq n$ nodes $\nf$ such that $\Lf_{\nf}\neq \Lf_{\nf'}$, we get that
\begin{equation}\label{incohproof16}
\int_{\Ic^*}\prod_{\nf\in\Nc^{\mathrm{ch}}}|\Sc_{\nf}(t_{\nf},t_{\nf'})|\,\mathrm{d}t[\Nc]\leq C_0^nC_2^g\cdot \delta^{-n}L^{2(d-\gamma)n}\cdot L^{-(\gamma_1-\eta)g}.
\end{equation}

Finally, we turn to the study of $\widetilde{\Kc}(t,s)$. By (\ref{incohproof9}) and the definition of $\Ic^*$, we can write
\begin{equation}\label{incohproof17}
\widetilde{\Kc}(t,s)=\int_q^t\int_{q'}^s\Sc(t',s')\,\mathrm{d}t'\mathrm{d}s'
\end{equation} for some function $\Sc$ derived from (\ref{incohproof9}), where $t'$ and $s'$ are the $t_\nf$ variables with $\nf$ being the roots of two trees of $\Qc$. Note that, in view of the possibilities of layerings, one or both of the integrations in (\ref{incohproof17}) may be taken on a unit interval independent of $t$ (or $s$) such as $[r,r+1]$ for some $r<q$ (or $r<q'$). In such cases, including the case when one of the trees of $\Qc$ is trivial, the function $\widetilde{\Kc}(t,s)$ will not depend on $t$ (or $s$) and the proof becomes much easier. Moreover, in (\ref{incohproof17}) we have
\begin{equation}\label{incohproof18}
\|\Sc\|_{L^1}\leq (C_0\delta)^nL^{-(\gamma_1-2\eta)g},\quad \|\Sc\|_{L^\infty}\leq (C_0\delta)^nL^{-(\gamma_1-2\eta)g}\cdot L^{2d},
\end{equation} where the first inequality follows from (\ref{incohproof16}), and the second inequality follows from (\ref{incohproof13})--(\ref{incohproof15}) and the pointwise bound for $\Sc_\nf$. Now by translation we may replace $q$ and $q'$ by $0$ and assume $t,s\in[0,1]$, then from (\ref{incohproof18}), Lemma \ref{timeintlem} and interpolation, we conclude that a canonical extension
\begin{equation}\label{incohproof19}
(\widetilde{\Kc})_{\mathrm{ext}}(t,s)=\int_0^t\int_0^s\chi(t)\chi(t')\chi(s)\chi(s')\cdot\Sc(t',s')\,\mathrm{d}t'\mathrm{d}s',
\end{equation} for some suitable cutoff functions $\chi$, satisfies that
\begin{equation}\label{incohproof20}
\|(\widetilde{\Kc})_{\mathrm{ext}}\|_{\Xf^\eta}\lesssim_1 (C_0\delta)^nL^{-(\gamma_1-100d\eta)g}.
\end{equation} By the discussion at the end of (1), this proves (\ref{layerregest1}) for $\Kc_\Qc^*$.

(4) \uwave{The $\Kc_\Tc^*$ case.} The proof of (\ref{layerregest1}) for $\Kc_\Tc^*$ is mostly similar to the $\Kc_\Qc^*$ case but with one important difference, namely that, due to the absence of the factor $F_{\Lf_\lf}(k_\lf)$ for the lone leaf $\lf=\lf_{\mathrm{lo}}$, the function $\Kc_\Tc^*(t,s,k)$ will not have any decay in $k$. This first leads to the choice of the $\Xf^{\eta,30d,0}$ norm without weight in (\ref{layerregest1}) (while the derivatives are treated in the same way as $\Kc_\Qc^*$); moreover the manipulation in (\ref{incohproof4}) is not possible, so we still need to use the original $\Xf^{\eta,30d,0}$ norm that takes $L_k^\infty$ before $L_{\lambda_j}^1$, but the first inequality in (\ref{estichi}) is not true for such norms because the Hausdorff-Young inequality is not available for Banach space valued functions. Therefore new arguments are needed to treat this norm.

By the same arguments as in the $\Kc_\Qc^*$ case, we can reduce the estimate of $\Kc_\Tc^*$ to that of
\begin{equation}\label{incohproof21}
\widetilde{\Kc}(t,s,k)=\bigg(\frac{\delta}{2L^{d-\gamma}}\bigg)^{2n}\zeta(\Tc)\int_{\Dc^*}\prod_{\nf\in\Nc^{\mathrm{ch}}}\Sc_{\nf}(t_{\nf},t_{\nf'})\,\mathrm{d}t[\Nc]
\end{equation} similar to (\ref{incohproof9}), where $\Dc^*$ is as in (\ref{timepairtree}), $\Sc_\nf$ is as in (\ref{incohproof10}) and satisfies (\ref{incohproof13})--(\ref{incohproof15}). This expression depends on the parameters $(a_\nf,b_\nf,\xi_\nf,\xi_\nf')$ as defined above, all of which are allowed to depend on $k$.

Consider the nodes $\nf\in\{\rf,\uf\}$ and the corresponding linked nodes $\nf'$, where $\rf$ is the root of $\Tc$ and $\uf:=\nf_{\mathrm{lo}}^{\mathrm{pr}}$ is the parent node of the lone leaf. By Propositions \ref{propstructure} and \ref{deflink} there are only two cases: when $\rf'=\uf$, or when $(\rf,\rf',\uf',\uf)$ are listed in this order with each being a descendant of the previous one. In the first case we have
\begin{equation}\label{incohproof22}
\widetilde{\Kc}(t,s,k)=\int_{s}^t\int_{s}^{t_1}\Sc_0(t_1,s_1,k)\Sc(t_1,s_1,k)\,\mathrm{d}s_1\mathrm{d}t_1,
\end{equation} and in the second case we have
\begin{equation}\label{incohproof23}
\widetilde{\Kc}(t,s,k)=\int_{s}^t\int_{s}^{t_1}\int_s^{t_2}\int_s^{s_2}\Sc_1(t_1,t_2,k)\Sc_2(s_2,s_1,k)\Sc(t_2,s_2,k)\,\mathrm{d}s_1\mathrm{d}s_2\mathrm{d}t_2\mathrm{d}t_1
\end{equation} for some functions $\Sc$ and $\Sc_j$ derived from (\ref{incohproof21}), where in (\ref{incohproof22}) we have $t_1=t_\rf$ and $s_1=t_{\uf}$, and in (\ref{incohproof23}) we have $(t_1,t_2,s_2,s_1)=(t_\rf,t_{\rf'},t_{\uf'},t_{\uf})$. Note that, in view of the possibilities of layerings, the upper and lower bounds $t_j$ or $s_j$ in some of the integrals in (\ref{incohproof22}) and (\ref{incohproof23}) may be replaced by some constant integer $r$ with $q\leq r\leq q'$, but the proof in these cases can be done similarly.

Now, with the formulas (\ref{incohproof22}) and (\ref{incohproof23}), we can define the canonical extension $(\widetilde{\Kc})_{\mathrm{ext}}$ by inserting all the cutoff functions $\chi(t)\chi(s)$ and $\chi(t_j)\chi(s_j)$ (after suitable translations) for all $j$ as in (\ref{incohproof19}), and we only need to prove that 
\begin{equation}\label{incohproof24}\|(\widetilde{\Kc})_{\mathrm{ext}}\|_{\Xf^{\eta,0,0}}\lesssim_1 (C_0\delta)^nL^{-(\gamma_1-100d\eta)g}
\end{equation} under the assumption that all input functions $F_{\Lf_\lf}(k_\lf)$ are bounded in $\Sf^{10d,10d}$. We may assume that all the $\Sc$ and $\Sc_j$ functions are supported in some fixed unit boxes; they satisfy the bound
\begin{equation}\label{incohproof25}
\begin{aligned}
\|\Sc_0(t_1,s_1)\|_{L^1(\Rb^2)}\cdot\|\Sc(t_1,s_1)\|_{L^\infty(\Rb^2)}&\leq (C_0\delta)^nL^{-(\gamma_1-2\eta)g}\qquad \mathrm{for\ }(\ref{incohproof22}),\\\|\Sc_1(t_1,t_2)\|_{L^1(\Rb^2)}\cdot\|\Sc_2(s_2,s_1)\|_{L^1(\Rb^2)}\cdot\|\Sc(t_2,s_2)\|_{L^\infty(\Rb^2)}&\leq (C_0\delta)^nL^{-(\gamma_1-2\eta)g}\qquad\mathrm{for\ }(\ref{incohproof23})
\end{aligned}
\end{equation} uniformly in $k$ (due to (\ref{incohproof13})--(\ref{incohproof15})), so Lemma \ref{timeintlem} already implies
\begin{equation}\label{incohproof26}\|(\widetilde{\Kc})_{\mathrm{ext}}\|_{\Xf^{-\eta,0,0}}\lesssim_1 (C_0\delta)^nL^{-(\gamma_1-2d\eta)g}.
\end{equation} Next, by expanding out $\Sc_j$ using (\ref{incohproof10}) and allowing to lose a power of at most $L^{4d}$, we can replace $\Sc_0$ in (\ref{incohproof22}) by $e^{i\Gamma_1(t_1-s_1)}$,  and replace $\Sc_1$ and $\Sc_2$ in (\ref{incohproof23}) by $e^{i\Gamma_1(t_1-t_2)}$ and $e^{i\Gamma_2(s_2-s_1)}$ in (\ref{incohproof23}), where $\Gamma_j$ are quantities depending on $k$. We also have
\begin{equation}\label{incohproof27}
\begin{aligned} \|\partial_{t_1}\partial_{s_1}\Sc(t_1,s_1)\|_{L^1(\Rb^2)}&\leq (C_0\delta)^nL^{-(\gamma_1-100d\eta)g}\cdot L^{4d}\qquad\mathrm{for\ }(\ref{incohproof22}),\\
\|\partial_{t_2}\partial_{s_2}\Sc(t_2,s_2)\|_{L^1(\Rb^2)}&\leq (C_0\delta)^nL^{-(\gamma_1-100d\eta)g}\cdot L^{4d}\qquad\mathrm{for\ }(\ref{incohproof23})
\end{aligned}
\end{equation} again uniformly in $k$, because any exponential factor depending on $(t_1,s_1)$ (or $(t_2,s_2)$ for (\ref{incohproof23})) are already included in the $\Sc_0$ and $(\Sc_1,\Sc_2)$ factors. Then, by Lemma \ref{timeintlem2}, we get
\begin{equation}\label{incohproof28}
\|(\widetilde{\Kc})_{\mathrm{ext}}\|_{\Xf^{1/4,0,0}}\lesssim_1(C_0\delta)^nL^{-(\gamma_1-100d\eta)g}\cdot L^{4d};
\end{equation} finally by interpolating with (\ref{incohproof26}) we get (\ref{incohproof24}).
\end{proof}
\section{Layered regular objects III: Asymptotics for coherent objects}\label{seclayer3}
In this section we consider the coherent case, as described in Proposition \ref{layerreg1}. Let the objects and notations (including $(q,q')$ and $\Bc$ etc.) be fixed as in the beginning of Section \ref{seclayer2}, where we now assume $g=0$. Note that the expressions $(\Kc_\Qc^*,\Kc_\Tc^*)$ and all our analysis below depend only on the values $(q,q')$, which are defined using the bigger garden $\Gc$ in Proposition \ref{layerreg1}, but \emph{not} on the rest of the garden $\Gc$.
\subsection{The coherent estimates}\label{seclayer3-1} The main estimates for $\Kc_\Qc^*$ and $\Kc_\Tc^*$ for coherent objects $\Qc$ and $\Tc$, including the asymptotic information, are stated as follows.
\begin{prop}\label{proplayer2} If $g=0$, then we can decompose
\begin{equation}\label{layerregest2}\Kc_\Qc^*=(\Kc_\Qc^*)_{\mathrm{app}}+\Rs_\Qc^*,\quad \Kc_\Tc^*=(\Kc_\Tc^*)_{\mathrm{app}}+\Rs_\Tc^*,
\end{equation} where $(\Kc_\Qc^*)_{\mathrm{app}}$ and $(\Kc_\Tc^*)_{\mathrm{app}}$ are nonzero only for \emph{dominant} couples $\Qc$ and trees $\Tc$ (see Proposition \ref{propstructure} for definition), and the remainders satisfy that
\begin{equation}\label{layerregest3}\|\Rs_\Qc^*\|_{\Xf^{\eta,30d,30d}(\Bc)}+\|\Rs_\Tc^*\|_{\Xf^{\eta,30d,0}(\Bc)}\lesssim_1 (C_1\delta)^n\cdot L^{-(\gamma_1-\sqrt{\eta})}.
\end{equation} Moreover, $(\Kc_\Qc^*)_{\mathrm{app}}$ can be written as the sum of at most $2^n$ terms of form
\begin{equation}\label{layerregest4}
(\Kc_\Qc^*)_{\mathrm{app}}(t,s,k)=\sum\delta^n\cdot\widetilde{\Jc_\Qc}(t,s)\cdot \widetilde{\Mc_\Qc}(k),
\end{equation} where each term satisfies
\begin{equation}\label{layerregest5}
\|\widetilde{\Jc_\Qc}\|_{\Xf^{1-\eta}(\Bc)}\lesssim_1 C_1^{n},\quad\|\widetilde{\Mc_\Qc}\|_{\Sf^{30d,30d}}\lesssim_1 C_1^{n}.
\end{equation} Similarly, $(\Kc_\Tc^*)_{\mathrm{app}}$ can be written as the sum of at most $2^n$ terms of form (\ref{layerregest4}) but with $\widetilde{\Jc_\Tc}$ and $\widetilde{\Mc_\Tc}$ instead of $\widetilde{\Jc_\Qc}$ and $\widetilde{\Mc_\Qc}$, where each term satisfies
\begin{equation}\label{layerregest6}
\|\widetilde{\Jc_\Tc}\|_{\Xf^{1-\eta}(\Bc)}\lesssim_1 C_1^{n},\quad\|\widetilde{\Mc_\Tc}\|_{\Sf^{30d,0}}\lesssim_1 C_1^{n}.
\end{equation}
\end{prop}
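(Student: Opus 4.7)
The plan is to use the structure theorem for coherent layered regular objects (Proposition \ref{layerreg1}) to reduce the estimate to a single-layer problem already treated in \cite{DH21, DH23}, and then to assemble the layered asymptotics by exploiting the time-disjointness of distinct layers. First I would invoke Proposition \ref{layerreg1} to organize the coherent layered regular couple $\Qc$: if $\Qc$ is trivial or type 1, all nodes are in layer $q'$, so after restricting to the unit box in $(t,s)$ we are in the single-layer regime. In the type 2 case, the couple decomposes as two regular chains $\Xc^\pm$ whose branching nodes $\nf_a^\pm$ lie in a decreasing sequence of layers between $q'$ and $q$, together with subordinate regular couples $\Qc_{j,\epsilon,\iota}$ each supported in a single layer $\Lf_{\nf_a^\epsilon}$, a type-1 or trivial $\Qc_{\mathrm{lp}}$ in layer $q'$, and all $\nf_a^-$ in layer $q'$. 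Crucially, by Proposition \ref{deflink}, for each link of branching nodes the two associated resonance factors satisfy $\zeta_\nf\Omega_\nf+\zeta_{\nf'}\Omega_{\nf'}=0$, so the oscillatory integrand for any link living entirely in a single layer $r$ depends only on $t_\nf-t_{\nf'}$, which varies over a unit interval and reproduces the single-layer asymptotics.

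Second, for each single-layer sub-couple $\Qc_{j,\epsilon,\iota}$ and $\Qc_{\mathrm{lp}}$, I would apply the single-layer asymptotic machinery (the results cited as Propositions \ref{regref1}--\ref{regref3} from \cite{DH21,DH23}). If any such sub-couple is not dominant, the corresponding estimate yields an $L^{-\gamma_1+\sqrt\eta}$ gain; summing over layerings using Proposition \ref{layerreg2} with $g=0$ (which bounds the number of layerings by $C_0^{n+|q-q'|}$ without $C_2$), this places the entire contribution in the remainder $\Rs_\Qc^*$, establishing \eqref{layerregest3}. Thus in the leading part $(\Kc_\Qc^*)_{\mathrm{app}}$, we may restrict to dominant $\Qc$, in which case every sub-couple is dominant and Proposition \ref{regref1}--\ref{regref3} give an explicit factorized asymptotic for each layer.

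Third, to extract the factorization \eqref{layerregest4} for dominant $\Qc$, I would integrate out the time variables $t_\nf$ for all interior layers, using that those variables lie in disjoint unit intervals $[\Lf_\nf,\Lf_\nf+1]$ and that the cross-layer time integrals therefore multiply out; what remains are only the two outermost time variables $t_\rf$ and $t_{\rf'}$ (which become $t$ and $s$) living in the unit boxes around $q$ and $q'$. The inner integrations and decoration sums produce the factor $\widetilde{\Mc_\Qc}(k)$ (the frequency-space "collision kernel" built out of the dominant asymptotics for each sub-couple), while the outer integrations form $\widetilde{\Jc_\Qc}(t,s)$. The $\Sf^{30d,30d}$ bound on $\widetilde{\Mc_\Qc}$ follows from differentiating the input functions $F_r$ (bounded in $\Sf^{40d,40d}$) just as in step (1) of the proof of Proposition \ref{proplayer1}, and the $\Xf^{1-\eta}(\Bc)$ bound on $\widetilde{\Jc_\Qc}$ follows from Lemma \ref{timeintlem}-type estimates for the two outer integrals. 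The factor $2^n$ reflects the binary enumeration of dominant structures at each link (as in Proposition \ref{propstructure}). The regular tree case $\Kc_\Tc^*$ is parallel, replacing $\Ic^*$ by $\Dc^*$ and losing the decay in $k$ for the lone leaf (which accounts for the $\Sf^{30d,0}$ norm in \eqref{layerregest6}); the treatment of the $\Xf^{\eta,30d,0}$ norm on $\Rs_\Tc^*$ uses the interpolation argument from step (4) of the proof of Proposition \ref{proplayer1}.

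The hard part will be the bookkeeping that separates the cross-layer time integrations from the within-layer ones while preserving the dominant-couple asymptotics in each layer. In particular, one must verify that when the dominant leading term at each layer (which is itself an oscillatory time-integral identity from \cite{DH21,DH23}) is multiplied across all layers, the resulting product can be cleanly split into a function of $(t,s)$ and a function of $k$ with independent norm bounds, and that the derivatives $\partial_k^\rho$ needed for the $\Sf^{30d,30d}$ norm on $\widetilde{\Mc_\Qc}$ can be taken through the cross-layer product without generating extra time dependence. This coordination between the layered structure (new to this paper) and the single-layer asymptotic machinery (imported from \cite{DH21,DH23}) is the main technical step.
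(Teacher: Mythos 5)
Your overall direction is the paper's: apply the structure theorem (Proposition \ref{layerreg1}) to the coherent layered couple, then reduce to the single-layer asymptotics of Propositions \ref{regref1}--\ref{regref3}. But you miss the key simplification that makes the paper's proof essentially immediate, and as a result you identify a ``hard part'' (coordinating cross-layer time integrals with within-layer asymptotics, keeping track of $\partial_k^\rho$ through the cross-layer product) that does not actually arise.

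The missing idea is an \emph{exact factorization identity}. Group the branching nodes of $\Qc$ by layer: for each $q'\le r\le q$ the chain nodes $\nf_a^+$ with $\Lf_{\nf_a^+}=r$, together with the sub-couples $\Qc_{j,+,\iota}$ whose nodes lie in layer $r$, form a single-layer regular tree $\Tc_{[r]}$, and the tree rooted at $\rf_{[q']}$ pairs with $\Tc^-$ to form a single-layer regular couple $\Qc_{[q']}$. Two observations make this precise: (i) in any $k$-decoration of $\Qc$, the roots $\rf_{[r]}$ of all the $\Tc_{[r]}$ carry the \emph{same} vector $k$, so the decoration sum factorizes across layers; (ii) the time variables for interior layers $q'<r<q$ range over the full unit interval $[r,r+1]$ independently of $(t,s)$. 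Together these give the identity
\[
\Kc_\Qc^*(t,s,k)=\prod_{q'<r<q}\Kc_{\Tc_{[r]}}^*(r+1,r,k)\cdot\Kc_{\Tc_{[q]}}^*(t,q,k)\cdot \Kc_{\Qc_{[q']}}^*(q'+1,s,k)
\]
(and the analogous formula for $\Kc_\Tc^*$). After this, the proposition follows by applying (\ref{regref1-2})--(\ref{regref1-3}) to each factor and multiplying out: the interior factors are functions of $k$ alone, only the $r=q$ and $r=q'$ factors depend on $(t,s)$, and each factor is already in the form $\delta^{n_r}\Jc\Bc^*\cdot\Mc^*(k)$. So the split into a $(t,s)$-dependent piece and a $k$-dependent piece is automatic, the $\Sf^{30d,30d}$ bound follows from Leibniz across at most $n$ nontrivial $\Mc^*$ factors, and any term containing one $\Rs_{\Tc_{[r]}}^*$ or $\Rs_{\Qc_{[q']}}^*$ goes directly into $\Rs_\Qc^*$. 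Dominance of $\Qc$ is inherited: if $\Qc$ is not dominant, some $\Tc_{[r]}$ or $\Qc_{[q']}$ is not dominant and its $(\cdots)_{\mathrm{app}}$ vanishes, which kills the product.

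Two smaller points. First, the single-layer machinery should be applied to the bundled objects $\Tc_{[r]}$ and $\Qc_{[q']}$, not to the individual $\Qc_{j,\epsilon,\iota}$ and $\Qc_{\mathrm{lp}}$: the chain nodes $\nf_a^\pm$ are not contained in those sub-couples, and your description leaves them unaccounted for. Second, the appeal to Proposition \ref{layerreg2} and ``summing over layerings'' is a confusion: this proposition concerns a single fixed layered couple $\Qc$, and no sum over layerings occurs; the layering count only becomes relevant later, when Propositions \ref{layerreg2} and \ref{proplayer2} are combined in the proof of Proposition \ref{layergarden}.
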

\begin{prop}\label{proplayer3} For $t\in[p,p+1]$ and $0\leq n\leq N_{p+1}^4$, we have
\begin{equation}\label{layerregest7}\sum_{\Qc}(\Kc_\Qc^*)_{\mathrm{app}}(t,t,k)=\Uc_n(t,k)+\widetilde{\Rs_n}(t,k),
\end{equation} where $\Uc_n$ is defined as in (\ref{wketaylor2}), and the sum is taken over all regular couples $\Qc$ of order $2n$ such that all its nodes are in layer $p$. The remainder satisfies, for all $t\in[p,p+1]$ and $k\in\Rb^d$, that
\begin{equation}\label{layerregest8}|\widetilde{\Rs_n}(t,k)|\lesssim_1 (C_1\delta)^n\langle k\rangle^{-30d}\cdot L^{-(\theta_{p+1}+4\theta_{p})/5}.
\end{equation}
\end{prop}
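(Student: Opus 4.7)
The plan is to reduce the sum to the single-layer, short-time setting treated in our previous works \cite{DH21,DH23}, and then account for the error coming from the decomposition $F_p = \varphi(\delta p,\cdot)+\Rs_p(\cdot)$. Since every node of each $\Qc$ being summed over lies in layer $p$, the time domain $\Ic^*$ in Definition \ref{defkg} collapses to the standard simplex $\{p<t_{\nf'}<t_\nf<t\}$, the leaf input functions appearing in (\ref{defkg0}) are uniformly equal to $F_p$, and the dominant-couple asymptotic extracted in Proposition \ref{proplayer2} is then precisely of the type treated in the short-time derivation of the WKE.

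First I would split $F_p(k_\lf)=\varphi(\delta p,k_\lf)+\Rs_p(k_\lf)$ at every $+$ leaf and expand the resulting $(2n+1)$-linear expression into $2^{2n+1}$ pieces indexed by subsets $S$ of $+$ leaves at which $\Rs_p$ is selected. The piece $S=\varnothing$ is the principal term, with $\varphi(\delta p,\cdot)$ used at every leaf. For this principal piece I would invoke Propositions \ref{regref1}--\ref{regref3}, cited directly from \cite{DH21,DH23}, which give the single-layer asymptotic summation identity for $\sum_\Qc(\Kc_\Qc^*)_{\mathrm{app}}$. Matching the recursive structure of regular couples in Proposition \ref{propstructure} (type-1 decomposition into three sub-couples $\Qc_1,\Qc_2,\Qc_3$; type-2 decomposition into a chain of ladder pieces with a residual sub-couple) to the recursive Duhamel expansion (\ref{wketaylor2}) of the WKE starting from the data $\varphi(\delta p,\cdot)$ yields exactly $\Uc_n(t,k)$, up to an error of size $(C_1\delta)^n\langle k\rangle^{-30d}L^{-(\gamma_0-\sqrt{\eta})}$ coming from non-dominant contributions to the asymptotic (the analogue of Proposition \ref{proplayer1} at $g=0$).

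For the pieces with $S\neq\varnothing$, I would use the multilinearity of $(\Kc_\Qc^*)_{\mathrm{app}}$ in its leaf inputs together with the estimate (\ref{layerregest5}) of Proposition \ref{proplayer2}: replacing any single factor $\varphi(\delta p,\cdot)$ by $\Rs_p(\cdot)$ costs a factor of $\|\Rs_p\|_{\Sf^{30d,30d}}\leq L^{-\theta_p}$ by the induction hypothesis (\ref{ansatz2}), using also $\Lambda_p\geq 30d$. Combining with the count $|\{\Qc:n(\Qc)=2n\}|\leq C_0^n$ from Proposition \ref{propstructure} and summing over the at most $2^{2n+1}$ nonempty subsets $S$, one finds a total contribution of size at most $(C_1\delta)^n\langle k\rangle^{-30d}L^{-\theta_p}$. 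Since by (\ref{parameters}) we have $\theta_p>(\theta_{p+1}+4\theta_p)/5$, and since $\gamma_0-\sqrt\eta\gg\theta_p$ for $\eta$ small, both error sources fit inside the bound (\ref{layerregest8}).

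The main obstacle I anticipate is in invoking the single-layer identity in the second step, namely that $\sum_\Qc(\Kc_\Qc^*)_{\mathrm{app}}(t,t,k)$ with constant inputs $\varphi(\delta p,\cdot)$ equals, up to a small error, the $n$-th Picard iterate of (\ref{wke}). This is a fundamental cancellation and summation identity from \cite{DH21,DH23}, which rests on recursively unfolding a regular couple via Proposition \ref{propstructure} and matching its three type-1 sub-couples to the three-fold product $\Uc_{n_1}\Uc_{n_2}\Uc_{n_3}$ inside the collision operator $\Kc$ in (\ref{wke2}), while type-2 contributions (the ladder chains) reconstruct the remaining symmetric structure of $\Kc$. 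Here it is used essentially as a black box, with only the cosmetic translation of the initial time from $0$ to $p\delta$; verifying that this translation really does not affect any of the arguments in \cite{DH21,DH23} is the only genuine checking required.
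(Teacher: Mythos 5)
Your proposal is essentially the paper's own proof: split $F_p=\varphi(\delta p,\cdot)+\Rs_p$ at every $+$-leaf, apply (\ref{regref1-5}) to the pure-$\varphi$ piece (which gives $\Uc_n(t,k)$ exactly), and absorb each term with at least one $\Rs_p$ input using the $L^{-\theta_p}$ gain from (\ref{ansatz2}), the $C_0^n$ count of regular couples from Proposition \ref{propstructure}, and the parameter inequality $\theta_p>(\theta_{p+1}+4\theta_p)/5$ from (\ref{parameters}). One small inaccuracy: you attach an extra error of size $L^{-(\gamma_0-\sqrt\eta)}$ ``from non-dominant contributions,'' but that term never arises here, because $(\Kc_\Qc^*)_{\mathrm{app}}$ vanishes identically on non-dominant couples by Proposition \ref{proplayer2} and (\ref{regref1-5}) is an exact identity once the $\varphi$-inputs are in place; fortunately it is harmless since $\gamma_0-\sqrt\eta\gg\theta_p$ and the bound (\ref{layerregest8}) still holds.
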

\begin{prop}\label{proplayer4} Let the conjugate of couple $\Qc$ and paired tree $\Tc$ be $\overline{\Qc}$ and $\overline{\Tc}$ as in Definitions \ref{deftree} and \ref{defgarden} (which are obviously extended to include layerings), then we have
\begin{equation}\label{layerregest9}\Kc_{\overline{\Qc}}^*(t,s,k)=\overline{\Kc_\Qc^*(s,t,k)},\quad \Kc_{\overline{\Tc}}^*(t,s,k)=\overline{\Kc_\Tc^*(t,s,k)}
\end{equation} (the same holds for the $(\cdots)_{\mathrm{app}}$ approximations in Proposition \ref{proplayer2}). Moreover, fix any $0\leq q'\leq q\leq p$ and $0\leq n\leq N_{p+1}^4$. Then for any $(t,s)\in\Bc$ (with $t>s$) and $k\in\Rb^d$, we have
\begin{equation}\label{layerregest10}
\sum_{(\Qc,\Tc)}(\Kc_\Qc^*)_{\mathrm{app}}(t,s,k)\cdot\overline{(\Kc_\Tc^*)_{\mathrm{app}}(t,s,k)}\in\Rb,
\end{equation} where the sum is taken over all the coherent regular couples $\Qc$ and regular trees $\Tc$ as described in Proposition \ref{layerreg1} corresponding to this fixed $(q,q')$, such that $\Tc$ has sign $+$, and $\Qc$ and $\Tc$ has \emph{total order} $n(\Qc)+n(\Tc)=2n$. In addition, if in (\ref{layerregest10}) we make the further restriction that the total number of layer $p$ branching nodes in $\Qc$ and $\Tc$ equals a given constant, then (\ref{layerregest10}) remains true.
\end{prop}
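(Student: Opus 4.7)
The proof naturally splits into two parts.

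For (\ref{layerregest9}), the identities can be verified by direct inspection of Definition \ref{defkg}. Under our standing assumptions, the input functions $F_r(k)=\varphi(\delta r,k)+\Rs_r(k)$ are real valued and $\epsilon_\Is\in\{-1,0,+1\}$, so the only complex ingredients in the formula (\ref{defkg0}) are the prefactor $\zeta(\cdot)=\prod_\nf(i\zeta_\nf)$ and the oscillatory phases $e^{\pi i\zeta_\nf\delta L^{2\gamma}\Omega_\nf t_\nf}$. Taking complex conjugate therefore amounts to replacing every $\zeta_\nf$ by $-\zeta_\nf$, which is precisely the structural effect of passing from $\Qc=(\Tc^+,\Tc^-)$ to $\overline{\Qc}=(\overline{\Tc^-},\overline{\Tc^+})$, respectively from $\Tc$ to $\overline{\Tc}$. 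For couples the time domain (\ref{timecouple}) treats the sign-$+$ and sign-$-$ trees asymmetrically, with upper bounds $t$ and $s$ respectively; the conjugation-induced exchange of the two trees is compensated by the swap $(t,s)\to(s,t)$, which gives $\Kc_{\overline{\Qc}}^*(t,s,k)=\overline{\Kc_\Qc^*(s,t,k)}$. For paired trees, the domain (\ref{timepairtree}) depends on $(t,s)$ only via sign-independent constraints at the root and the lone-leaf parent, so no swap occurs and $\Kc_{\overline{\Tc}}^*(t,s,k)=\overline{\Kc_\Tc^*(t,s,k)}$. These identities transfer verbatim to the approximations $(\cdots)_{\mathrm{app}}$, because the leading-order construction in the proof of Proposition \ref{proplayer2} (change of variables at each link, stationary-phase reduction to $\dirac(\Omega_\nf)$, resonant summation, and Taylor expansion of the time factors) is termwise compatible with complex conjugation.

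For (\ref{layerregest10}), the strategy is to construct an involution $\iota$ on the set of admissible pairs $(\Qc,\Tc)$ that acts on the summand as complex conjugation while preserving all constraints: the fixed $(q,q')$ with $q\geq q'$, the coherency (Definition \ref{defcoh}), the dominance from Proposition \ref{propstructure}, the sign-$+$ property of $\Tc$, the total order $n(\Qc)+n(\Tc)=2n$, and (for the last part of the proposition) the total number of layer-$p$ branching nodes. The candidate $\iota$ is built from the link structure of dominant regular objects provided by Proposition \ref{deflink}: for each link of nodes $(\nf,\nf')$ one has $\zeta_{\nf'}\Omega_{\nf'}=-\zeta_\nf\Omega_\nf$ and $\{x_{\nf'},y_{\nf'}\}=\{\pm x_\nf,\pm y_\nf\}$. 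I would define $\iota$ by performing at each link the decoration sign flip $(x_\nf,y_\nf)\to(-x_\nf,-y_\nf)$ (or an appropriate variant thereof). This is an involution on the decoration space; it conjugates each link's phase factor, leaves $\epsilon_\Is$ and $F_r(k_\lf)$ unchanged, and preserves the layer of every node (by coherency $\Lf_\nf=\Lf_{\nf'}$, so the layer is a property of the link itself); Proposition \ref{layerreg1} then ensures that the coherent layering constraint is preserved.

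Combining Part~1 link-by-link with the explicit factorization $(\Kc_\Qc^*)_{\mathrm{app}}(t,s,k)=\sum\delta^n\widetilde{\Jc_\Qc}(t,s)\widetilde{\Mc_\Qc}(k)$ from Proposition \ref{proplayer2} shows that $\iota$ implements complex conjugation on each summand: the spatial factor $\widetilde{\Mc_{(\cdot)}}(k)$ is a resonant sum of products of real-valued $F_r$ and is therefore real, while the time factor $\widetilde{\Jc_{(\cdot)}}(t,s)$ is altered only through conjugation of the link phases. Since $\iota$ pairs each orbit of size $2$ with its complex conjugate, and each fixed point contributes a real summand, the total sum is real. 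The refinement fixing the number of layer-$p$ branching nodes is immediate because $\iota$ preserves the layer of every node.

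The main technical obstacle will be reconciling the asymmetric role of $(t,s)$ in the couple time domain (\ref{timecouple}) with the symmetric structure produced by $\iota$: Part~1 gives $\overline{(\Kc_\Qc^*)_{\mathrm{app}}(t,s,k)}=(\Kc_{\overline{\Qc}}^*)_{\mathrm{app}}(s,t,k)$ with the swap $(t,s)\to(s,t)$, while in (\ref{layerregest10}) we must stay at fixed $(t,s)$. To resolve this, I expect to revisit the stationary-phase analysis underlying $(\Kc_\Qc^*)_{\mathrm{app}}$ and show that, after the $\dirac(\Omega_\nf)$-reduction at each link, the resulting time factor for a coherent dominant $\Qc$ depends symmetrically on $(t,s)$ up to an error already absorbed into the remainder $\Rs_\Qc^*$ of Proposition \ref{proplayer2}. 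Once this link-level time-symmetry is established, the link-wise involution $\iota$ becomes the correct bijection implementing complex conjugation on the summand without any $(t,s)$ swap, and the argument above closes.
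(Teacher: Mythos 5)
Your treatment of \eqref{layerregest9} is essentially correct, and matches what the paper does (the paper simply says the identity follows from the definitions). The difficulties begin with \eqref{layerregest10}, where your approach diverges from the paper's and has serious gaps.

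First, a technical error: the sign flip $(x_\nf,y_\nf)\mapsto(-x_\nf,-y_\nf)$ does \emph{not} conjugate the phase $e^{\pi i\zeta_\nf\delta L^{2\gamma}\Omega_\nf t_\nf}$, because $\Omega_\nf=2\langle x_\nf,y_\nf\rangle$ is invariant under $(x,y)\mapsto(-x,-y)$. You would need a flip like $(x_\nf,y_\nf)\mapsto(-x_\nf,y_\nf)$, which does negate $\Omega_\nf$; but this is not merely cosmetic, because one then has to check that the flip preserves the $\epsilon_\Is$ constraint, the support restrictions on the inputs $F_r$, and the discrete lattice $\Zb_L^d$ structure. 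Moreover, after the reduction to $(\Kc_\Qc^*)_{\mathrm{app}}$ the sum over decorations has already been replaced by the continuum integrals $\Mc_\Xs^*(k)$ against $\dirac(\Omega)$ or $\mathrm{p.v.}\frac{1}{\Omega}$ factors (Propositions \ref{regref1}--\ref{regref3}), so there is no decoration left to flip; $\Mc_\Xs^*(k)$ and $\Jc\Bc_\Qc^*(t,s)$ are explicit real-valued functions, and the source of non-realness is entirely the purely imaginary prefactor $\prod_{\nf\in Z}(\zeta_\nf\pi i)^{-1}$ in \eqref{regref1-3}. Your ``main technical obstacle'' --- the $(t,s)$-asymmetry of $\Kc_\Qc^*$ --- is real, but your proposed fix (that $\widetilde{\Jc_\Qc}(t,s)$ is symmetric in $(t,s)$ up to an error absorbed in $\Rs_\Qc^*$) is false: formula \eqref{regref2-1} for a type-2 dominant couple depends on $t$ through an $m_+$-fold nested integral and on $s$ through an $m_-$-fold nested integral, with generically $m_+\neq m_-$, and this asymmetry is a structural feature of the leading term, not an error.

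The deeper gap is conceptual: realness of \eqref{layerregest10} does \emph{not} come from pairing each summand with its complex conjugate. The paper's proof shows something stronger and of a different nature: after expanding via Propositions \ref{regref1}--\ref{regref3}, the summand factors into a manifestly real part (the $\Mc$ factors times $2^{-2n}\delta^n\zeta(\Qc)\zeta(\Tc)$) and the $Z$-dependent part $\prod_{\nf\in Z}(\widetilde{\zeta_\nf}\pi i)^{-1}\cdot(\text{time integrals})$. The argument then fixes the unordered multiset of ``extended tuples,'' sums over all ways of distributing them among the three chains (two chains of $\Qc$, one chain of $\Tc$), and exploits the telescoping identity
\[
\sum_{(A_+,A_-,A_0)}\prod_{j\in A_-}(-1)^{\mathtt{I}_j}\mathbf{1}_{v_j<s}\prod_{j\in A_0}(-1)^{\mathtt{I}_j}\mathbf{1}_{v_j>s}=\prod_{j=1}^m\bigl(1+(-1)^{\mathtt{I}_j}\bigr),
\]
which vanishes whenever $Z\neq\varnothing$ (i.e.\ some $\mathtt{I}_j=1$). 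So all the imaginary contributions sum to \emph{zero} outright, and the remaining $Z=\varnothing$ terms are already real; there is no conjugate-pairing. This resummation over distributions among the three chains is the crux, and it is also exactly what handles the $(t,s)$-asymmetry you worried about: the dummy time variables $t_j,s_j,u_j$ are merged into a single ordered chain $t>v_1>\cdots>v_m>q'$, and the $(t,s)$-asymmetric constraints ($v_j<s$ for the $-$-chain, $v_j>s$ for the $0$-chain) cancel in the telescoping sum. Without this combinatorial rearrangement, the involution you propose cannot close: by \eqref{layerregest9}, the conjugate of $(\Kc_\Qc^*)_{\mathrm{app}}(t,s,k)\overline{(\Kc_\Tc^*)_{\mathrm{app}}(t,s,k)}$ is $(\Kc_{\overline{\Qc}}^*)_{\mathrm{app}}(s,t,k)\,\overline{(\Kc_{\overline{\Tc}}^*)_{\mathrm{app}}(t,s,k)}$, which has a $(t,s)$-swap in the first factor only, changes the sign of the tree to $-$, and (when $q\neq q'$) sends $\Qc$ to a couple associated with $(q',q)$ rather than $(q,q')$ --- so it lands outside the original index set.
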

\subsection{Preparations for the proof} \label{seclayer3-2} The proof of Propositions \ref{proplayer2}--\ref{proplayer4} mainly rely on combinatorial arguments, plus the analysis of $\Kc_\Qc^*$ and $\Kc_\Tc^*$, where $\Qc$ and $\Tc$ are regular couples and regular trees such that all their nodes (except the lone leaf) are in the same layer. Such analysis is proved in earlier works \cite{DH21,DH23}, and will be used as black box.
\begin{df}[Enhanced dominant couples and equivalence \cite{DH21,DH23}]\label{defencpl} Recall the notion of type 1 and 2 regular couples, and dominant couples (and paired trees) in Proposition \ref{propstructure}. Let $\Qc$ be a layered dominant couple, and assume that all nodes of $\Qc$ are in the same layer. We define an \emph{enhanced dominant couple} to be a pair $\Qs=(\Qc,Z)$, such that $Z\subset\Nc^{\mathrm{ch}}$, and for each $\nf\in Z$ the branching node $\nf'$ linked with $\nf$ is a child node of $\nf$ (we call such $Z\subset\Nc^{\mathrm{ch}}$ a \emph{special} subset).

Using Proposition \ref{propstructure}, we can define an equivalence relation $\sim$ between enhanced dominant couples $\Qs=(\Qc,Z)$, as follows. First the enhanced trivial couple (with no branching node so $\Nc^{\mathrm{ch}}=Z=\varnothing$) is only equivalent to itself, and two enhanced dominant couples where $\Qc$ have different types or different layers are never equivalent. Next, if $\Qs=(\Qc,Z)$ and $\Qs'=(\Qc',Z')$, where $\Qc$ and $\Qc'$ have type 1 and the same layer, then we have $\Nc^{\mathrm{ch}}=\Nc_1^{\mathrm{ch}}\cup\Nc_2^{\mathrm{ch}}\cup\Nc_3^{\mathrm{ch}}\cup\{\rf\}$ as in Proposition \ref{deflink}, where $\Nc_j^{\mathrm{ch}}$ is the $\Nc^{\mathrm{ch}}$ set for $\Qc_j$ and $\rf$ is the root with $+$ sign, see Proposition \ref{propstructure}. In this case $Z$ is special if and only if $Z=Z_1\cup Z_2\cup Z_3$ (i.e. $\rf$ is \emph{not} in $Z$) where $Z_j\subset \Nc_j^{\mathrm{ch}}$ is special, and similarly for $\Qc'$. Let $\Qs_j=(\Qc_j,Z_j)$, we then define $\Qs\sim\Qs'$ if and only if $\Qs_j\sim\Qs_j'$ for $1\leq j\leq 3$.

Now let $\Qs$ and $\Qs'$ be as before, but suppose $\Qc$ and $\Qc'$ have type $2$ and the same layer. Let $\Qc_0$ and $\Xc^{\epsilon}$ be associated with $\Qc$ as in Proposition \ref{propstructure}, and similarly for $\Qc'$ (same for the other objects appearing below). We use the notation in Proposition \ref{propstructure} for $\Qc$, and correspondingly for $\Qc'$; note that $\nf_{2j-1}^\pm$ is paired with $\nf_{2j}^\pm$ for $1\leq j\leq m_\pm$ by definition of dominant couples. Recall also that 
\begin{equation}\label{unionch}\Nc^{\mathrm{ch}}=\bigg(\bigcup_{j,\epsilon,\iota}\Nc_{j,\epsilon,\iota}^{\mathrm{ch}}\bigg)\cup\Nc_{\mathrm{lp}}^{\mathrm{\mathrm{ch}}}\cup\big\{\nf_{2j-1}^+:1\leq j\leq m_+\big\}\cup\big\{\nf_{2j-1}^-:1\leq j\leq m_-\big\}
\end{equation} as in Proposition \ref{deflink}; then $Z$ is special if and only if
\begin{equation}\label{unionz}Z=\bigg(\bigcup_{j,\epsilon,\iota}Z_{j,\epsilon,\iota}\bigg)\cup Z_{\mathrm{lp}}\cup\big\{\nf_{2j-1}^+:j\in Z^+\big\}\cup\big\{\nf_{2j-1}^-:j\in Z^-\big\}
\end{equation} for some special subsets $Z_{j,\epsilon,\iota}\subset \Nc_{j,\epsilon,\iota}^{\mathrm{ch}}$ and $Z_{\mathrm{lp}}\subset\Nc_{\mathrm{lp}}^{\mathrm{ch}}$, and some subsets $Z^\epsilon\subset \{1,\cdots,m_\epsilon\}$. Similar representations are defined for $\Qs'$. For $\epsilon\in\{\pm\}$ and each $1\leq j\leq m_\epsilon$, consider the tuple $(\mathtt{I}_{j,\epsilon},\mathtt{c}_{j,\epsilon},\Xs_{j,\epsilon,1},\Xs_{j,\epsilon,2})$; here $\mathtt{I}_{j,\epsilon}=1$ if $j\in Z^\epsilon$ and $\mathtt{I}_{j,\epsilon}=0$ otherwise, $\mathtt{c}_{j,\epsilon}\in\{1,2,3\}$ is the \emph{first digit of} the code of the mini tree associated with the pair $\{2j-1,2j\}\in \Pc^\epsilon$ (see Definition \ref{defreg}),  and $\Xs_{j,\epsilon,\iota}$ is the equivalence class of the enhanced dominant couple $\Qs_{j,\epsilon,\iota}=(\Qc_{j,\epsilon,\iota},Z_{j,\epsilon,\iota})$ for $\iota\in\{1,2\}$. Define also $\Xs_{\mathrm{lp}}$ to be the equivalence class of the enhanced dominant couple $\Qs_{\mathrm{lp}}=(\Qc_{\mathrm{lp}},Z_{\mathrm{lp}})$.

We now define $\Qs\sim\Qs'$, if and only if (i) $m_++m_-=m_+'+m_-'$, and (ii) the tuples coming from $\Qc$ (there are total $m_++m_-$ of them) form \emph{a permutation of} the corresponding tuples coming from $\Qc'$ (there are total $m_+'+m_-'$ of them), and (iii) $\Xs_{\mathrm{lp}}=\Xs_{\mathrm{lp}}'$. Finally, note that if $\Qs=(\Qc,Z)$ and $\Qs'=(\Qc',Z')$ are equivalent then $n(\Qc)=n(\Qc')$ and $|Z|=|Z'|$.
\end{df}
\begin{df}[Enhanced dominant trees and equivalence \cite{DH23}]\label{defentree} Let $\Tc$ be a layered dominant tree, and assume that all its nodes (except the lone leaf) are in the same layer. We define an \emph{enhanced dominant tree} to be a pair $\Ts=(\Tc,Z)$ where $Z\subset\Nc^{\mathrm{ch}}$ is a \emph{special} subset in the sense that for each $\nf\in Z$ the branching node $\nf'$ linked with $\nf$ is a child node of $\nf$.

We then define the notion of equivalence between enhanced dominant trees in the same way as in Definition \ref{defencpl}: instead of (\ref{unionch})--(\ref{unionz}) we have
\begin{equation}\label{unionchz2}
\Nc^{\mathrm{ch}}=\bigg(\bigcup_{j,\epsilon,\iota}\Nc_{j,\epsilon,\iota}^{\mathrm{ch}}\bigg)\cup\big\{\nf_{2j-1}^0:1\leq j\leq m_0\big\},\quad Z=\bigg(\bigcup_{j,\epsilon,\iota}Z_{j,\epsilon,\iota}\bigg)\cup\big\{\nf_{2j-1}^0:j\in Z^0\big\}
\end{equation} in the notation of Proposition \ref{propstructure}. Consider the tuples $(\mathtt{I}_{j,0},\mathtt{c}_{j,0},\Xs_{j,0,1},\Xs_{j,0,2})$; here $\mathtt{I}_{j,0}=1$ if $j\in Z^0$ and $\mathtt{I}_{j,0}=0$ otherwise, $\mathtt{c}_{j,0}\in\{1,2,3\}$ is the first digit of the code of the mini tree associated with the pair $\{2j-1,2j\}\in \Pc^0$ (see Definition \ref{defreg}),  and $\Xs_{j,0,\iota}$ is the equivalence class of the enhanced dominant couple $\Qs_{j,0,\iota}=(\Qc_{j,0,\iota},Z_{j,0,\iota})$ for $\iota\in\{1,2\}$. Then, we define $\Ts=(\Tc,Z)$ and $\Ts'=(\Tc',Z')$ to be equivalent if they have the same layer, and the tuples coming from $\Tc$ form a permutation of the corresponding tuples coming from $\Tc'$.
\end{df}
We now state the results for $\Kc_\Qc^*$ and $\Kc_\Tc^*$ proved in \cite{DH21,DH23}. In Propositions \ref{regref1}--\ref{regref3} below, we assume $\Qc$ is a regular couple and $\Tc$ is a regular tree of order $2n$, such that all the nodes (except the lone leaf of $\Tc$) are in the same layer $r$ with $0\leq r\leq p$.
\begin{prop}\label{regref1} If $\Qc$ and $\Tc$ are \emph{not dominant}, then we have
\begin{equation}\label{regref1-1}
\|\Kc_\Qc^*\|_{\Xf^{\eta,30d,30d}(\Bc)}+\|\Kc_\Tc^*\|_{\Xf^{\eta,30d,0}(\Bc)}\lesssim_1(C_1\delta)^nL^{-(\gamma_1-\sqrt{\eta})}.
\end{equation} If $\Qc$ and $\Tc$ are dominant, then we have
\begin{equation}\label{regref1-2}\Kc_\Qc^*=(\Kc_\Qc^*)_{\mathrm{app}}+\Rs_\Qc^*,\quad \Kc_\Tc^*=(\Kc_\Tc^*)_{\mathrm{app}}+\Rs_\Tc^*
\end{equation} with $\Rs_\Qc^*$ and $\Rs_\Tc^*$ satisfying the same bound (\ref{regref1-1}).

Moreover, we have
\begin{equation}\label{regref1-3}
\begin{aligned}(\Kc_\Qc^*)_{\mathrm{app}}(t,s,k)&=2^{-2n}\delta^n\zeta(\Qc)\sum_{Z}\prod_{\nf\in Z}\frac{1}{\zeta_\nf\pi i}\Jc\Bc_{\Qc}^*(t,s)\cdot \Mc_\Qs^*(k)[F_{r}],\\
(\Kc_\Tc^*)_{\mathrm{app}}(t,s,k)&=2^{-2n}\delta^n\zeta(\Tc)\sum_{Z}\prod_{\nf\in Z}\frac{1}{\zeta_\nf\pi i}\Jc\Bc_{\Tc}^*(t,s)\cdot \Mc_\Ts^*(k)[F_{r}],
\end{aligned}
\end{equation} where the sum is taken over all special subsets $Z\subset\Nc^{\mathrm{ch}}$, and $\Qs=(\Qc,Z)$ and $\Ts=(\Tc,Z)$ are the corresponding enhanced dominant couple and dominant tree. The functions $\Jc\Bc_\Qc^*(t,s)$ and $\Jc\Bc_\Tc^*(t,s)$ depend only on $\Qc$ (and $\Tc$) and does not depend on $Z$. The functions $\Mc_\Qs^*(k)$ and $\Mc_\Ts^*(k)$ are constructed from the input functions $F_{r}(k_\lf)=\varphi(\delta {r},k_\lf)+\Rs_{r}(k_\lf)$, and also depends on the enhanced object $\Qs$ (and $\Ts$). All these functions are real valued, and for any $\Qs$ and $\Ts$, they satisfy the bounds
\begin{equation}\label{regref1-4}
\begin{aligned}
\|\Jc\Bc_\Qc^*\|_{\Xf^{1-\eta}(\Bc)}&\lesssim_1 C_1^{n},&\|\Mc_\Qs^*\|_{\Sf^{30d,30d}}&\lesssim_1 C_1^{n},\\
\|\Jc\Bc_\Tc^*\|_{\Xf^{1-\eta}(\Bc)}&\lesssim_1 C_1^n,&
\|\Mc_\Ts^*\|_{\Sf^{30d,0}}&\lesssim_1C_1^n.
\end{aligned}
\end{equation}

Finally, if ${r}=p$ and the input functions $F_p(k_\lf)=\varphi(\delta p,k_\lf)+\Rs_p(k_\lf)$ are replaced by $\varphi(\delta p,k_\lf)$, then we have
\begin{equation}\label{regref1-5}\sum_{n(\Qc)=2n}\Kc_\Qc^*(t,t,k)=\Uc_n(t,k),
\end{equation} where the sum is taken over all dominant couples $\Qc$ of order $2n$, and $\Uc_n(t,k)$ is defined as in (\ref{wketaylor2}).
\end{prop}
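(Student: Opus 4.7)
The proof strategy is to reduce the layered expressions $\Kc_\Qc^*$ and $\Kc_\Tc^*$ to their non-layered counterparts studied in \cite{DH21,DH23}, and then to invoke the analysis there as a black box. Concretely, since all nodes of $\Qc$ (resp.\ all nodes of $\Tc$ other than the lone leaf) sit in the single layer $r$, the time domain $\Ic^*$ in (\ref{timecouple}) becomes, after the shift $t_\nf \mapsto t_\nf - r$, the standard domain $\{0 < t_{\nf'} < t_\nf \text{ whenever } \nf' \text{ child of } \nf,\ 0 < t_\nf < 1,\ t_\nf < t - r \text{ or } s - r\}$. The resonance phases in (\ref{defkg0}) transform under the shift by constant unimodular prefactors that can be pulled out, and the leaf inputs $F_r(k_\lf) = \varphi(\delta r, k_\lf) + \Rs_r(k_\lf)$ are constant in time and bounded in $\Sf^{40d,40d}$ by (\ref{defC}) together with the hypothesis on $\Rs_r$. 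Thus $\Kc_\Qc^*(t,s,k)$ is identified with the corresponding non-layered object of \cite{DH21,DH23} evaluated at translated times $(t-r, s-r)\in[0,1]^2$, with $F_r$ playing the role of the Schwartz initial data of those papers; the paired-tree case is analogous with the additional constraint $t_{\nf_{\mathrm{lo}}^{\mathrm{pr}}} > s - r'$ treated as in \cite{DH23}.

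For the non-dominant case, the bound $(C_1\delta)^n L^{-(\gamma_1 - \sqrt{\eta})}$ follows from the molecule counting and $L^1$ time-integral estimates of \cite{DH21,DH23} (the counting contribution is exactly the rigidity estimate of \cite[Proposition 9.6]{DH23}). The weighted derivative norms $\Xf^{\eta,30d,30d}(\Bc)$ and $\Xf^{\eta,30d,0}(\Bc)$ are inherited from the corresponding norms on $[0,1]^2$ via the shift; derivatives in $k$ and the weight $\langle k\rangle^{30d}$ are absorbed into the leaf factors $F_r$ exactly as in the proof of Proposition \ref{proplayer1}.

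For the dominant case, I would invoke the asymptotic expansion from \cite[Section 5]{DH21} (and its paired-tree adaptation in \cite{DH23}), which writes $\Kc_\Qc^*$ as a sum over \emph{special} subsets $Z \subset \Nc^{\mathrm{ch}}$ encoding, for each link $\{\nf, \nf'\}$, whether the time integration over the child branching node has been replaced by a resonance factor $(\zeta_\nf \pi i)^{-1}$ (when $\nf \in Z$) or kept as a genuine integral. The factorization into a time part $\Jc\Bc_\Qc^*(t,s)$ and a frequency part $\Mc_\Qs^*(k)$, and the bounds (\ref{regref1-4}), are then verified by induction on the type-$1$/type-$2$ structure of dominant couples supplied by Proposition \ref{propstructure}; the $\Xf^{1-\eta}$ regularity in $(t,s)$ follows from the ladder-like structure of the surviving time integrals, which lose at worst an $L^\eta$ factor upon integration. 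The identity (\ref{regref1-5}) is the main approximation theorem of \cite{DH21}: summing the leading term over all dominant couples of order $2n$ matches the Picard iterate $\Uc_n$ of (\ref{wke}) by matching the recursive type-$1$/type-$2$ decomposition of dominant couples against the recursion (\ref{wketaylor2}) for $\Uc_n$.

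The main obstacle is ensuring that all the estimates in (\ref{regref1-4}) depend only on the $\Sf^{40d,40d}$ bound of $F_r$ (not on any separation of the pieces $\varphi(\delta r,\cdot)$ and $\Rs_r(\cdot)$) and hold uniformly in the admissible perturbations $\Rs_r$. This reduces to checking that the multilinear leaf-product structure preserves the factorization used in \cite{DH21,DH23}, and that the subleading $\Rs_r$ does not disrupt the asymptotics established there for pure $\varphi$-input; given the $L^{-\theta_p}$ smallness of $\Rs_r$ and the polynomial dependence on leaf inputs, this is a routine verification, and real-valuedness of $\Jc\Bc_\Qc^*$, $\Jc\Bc_\Tc^*$, $\Mc_\Qs^*$, $\Mc_\Ts^*$ follows from the conjugation symmetry in (\ref{layerregest9}) combined with the realness of $F_r$.
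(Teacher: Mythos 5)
Your proposal takes the same black-box citation route as the paper: translate all time variables by $r$ to reduce to the single-layer (non-layered) case, observe that the inputs $F_r=\varphi(\delta r,\cdot)+\Rs_r(\cdot)$ are real-valued and bounded in $\Sf^{40d,40d}$ just like the Schwartz data of \cite{DH21,DH23}, and invoke the relevant propositions there (Propositions 6.7, 6.10, 7.5, 7.11 of \cite{DH21} and Proposition 6.1 of \cite{DH23}). Two remarks on precision. First, your phrase that the resonance phases transform under the time shift ``by constant unimodular prefactors that can be pulled out'' glosses over the point that each individual prefactor $e^{\pi i\zeta_\nf\delta L^{2\gamma}\Omega_\nf r}$ \emph{does} depend on the decoration via $\Omega_\nf$; only the full product is constant (in fact equal to $1$) because the link structure of Proposition \ref{deflink} forces $\sum_{\nf\in\Nc}\zeta_\nf\Omega_\nf=0$ for any regular couple or regular tree. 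This cancellation is what makes $\Kc_\Qc^*$ and $\Kc_\Tc^*$ genuinely translation invariant, and it should be stated explicitly rather than suppressed. Second, your argument that realness of $\Jc\Bc_\Qc^*$, $\Mc_\Qs^*$, etc.\ ``follows from the conjugation symmetry in (\ref{layerregest9}) combined with the realness of $F_r$'' is not a valid derivation: (\ref{layerregest9}) is a relation between $\Kc_{\overline{\Qc}}^*$ and $\overline{\Kc_\Qc^*}$, which in no way implies that the individual factors in the $(\cdots)_{\mathrm{app}}$ decomposition are real, and moreover Proposition \ref{proplayer4} appears logically downstream of Proposition \ref{regref1} in the paper. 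The realness of these functions is a nontrivial structural fact about dominant couples proved in \cite[Propositions 7.5, 7.7]{DH21}; as with the rest of the proposition, it should simply be cited, with the only new observation being (as you correctly note elsewhere) that the proofs there use only real-valuedness and $\Sf^{40d,40d}$ boundedness of the leaf inputs, both of which hold for $F_r$.
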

\begin{proof} When the scaling law $\gamma$ is close to $1$, this is proved in Propositions 6.7, 6.10, 7.5 and 7.11 of \cite{DH21} (the case of regular tree $\Tc$ is not included there, but follows in the same way as it forms a regular couple with the trivial tree). For general $\gamma$, the general decomposition leading to (\ref{regref1-1})--(\ref{regref1-2}) is proved in Proposition 6.1 of \cite{DH23}, while the exact expressions are independent of $\gamma$, so (\ref{regref1-3})--(\ref{regref1-5}) are the same as in \cite{DH21}.

Note that the current setting has two differences compared to \cite{DH21,DH23}: first, all the nodes are in layer ${r}$ instead of layer $0$, but this does not matter because $\Kc_\Qc^*$ and $\Kc_\Tc^*$ are clearly invariant after translating all the time variables by the same value ${r}$. Second, the input functions here are $F_{r}(k_\lf)=\varphi(\delta {r},k_\lf)+\Rs_{r}(k_\lf)$ instead of $\varphi_{\mathrm{in}(k_\lf)}$ in \cite{DH21,DH23}, but by our assumptions on $\Rs_{r}(k_\lf)$, these inputs are also real valued, and are bounded in the same regularity space that is more than enough for the proofs in \cite{DH21,DH23} to work. Thus the same proof in \cite{DH21,DH23} also applies here.
\end{proof}
\begin{prop}\label{regref2} Let the functions $\Jc\Bc_\Qc^*(t,s)$ and $\Jc\Bc_\Tc^*(t,s)$ be defined as in Proposition \ref{regref1}. Then for for trivial couple $\Qc$ we have $\Jc\Bc_\Qc^*(t,s)=1$. For type 1 dominant couple $\Qc$, let the notations be as in Proposition \ref{propstructure}, then for any $t,s\in[{r},{r}+1]$ we have
\begin{equation}\label{regref2-0}\Jc\Bc_\Qc^*(t,s)=\Jc\Bc_\Qc^*(\min(t,s))=2\int_{{r}}^{\min(t,s)}\prod_{j=1}^3\Jc\Bc_{\Qc_j}^*(\tau,\tau)\,\mathrm{d}\tau.
\end{equation}

If $\Qc$ is a dominant couple of type 2, let the notations be as in Proposition \ref{propstructure}, then for any $t,s\in[{r},{r}+1]$, we have (understanding $t_0=t$ and $s_0=s$) that
\begin{multline}\label{regref2-1}
\Jc\Bc_\Qc^*(t,s)=\int_{t>t_1>\cdots >t_{m_+}>{r}}\int_{s>s_1>\cdots >s_{m-}>{r}}\prod_{j=1}^{m_+}\prod_{\iota=1}^2\Jc\Bc_{\Qc_{j,+,\iota}}^*(t_j,t_j)\\
\times\prod_{j=1}^{m_-}\prod_{\iota=1}^2\Jc\Bc_{\Qc_{j,-,\iota}}^*(s_j,s_j)\cdot\Jc\Bc_{\Qc_{\mathrm{lp}}}^*(\min(t_{m_+},s_{m_-}))\prod_{j=1}^{m_+}\mathrm{d}t_j\prod_{j=1}^{m_-}\mathrm{d}s_j.
\end{multline}

If $\Tc$ is a dominant tree, let the notations be as in Proposition \ref{propstructure}, then for any $t,s\in[{r},{r}+1]$ and $t>s$, we have
\begin{equation}\label{regref2-2}
\Jc\Bc_{\Tc}^*(t,s)=\int_{t>t_1>\cdots>t_{m_0}>s}\prod_{j=1}^{m_0}\prod_{\iota=1}^2\Jc\Bc_{\Qc_{j,0,\iota}}^*(t_j,t_j)\prod_{j=1}^{m_0}\mathrm{d}t_j.
\end{equation}

Moreover, for any equivalence class $\Xs$ of enhanced dominant couples $\Qs=(\Qc,Z)$ \emph{with $Z\neq\varnothing$}, we have
\begin{equation}\label{regref2-3}
\sum_{\Qs=(\Qc,Z)\in\Xs}\bigg(\prod_{\nf\in Z}\frac{1}{\zeta_\nf \pi i}\bigg)\Jc\Bc_\Qc^*(t,t)=0.
\end{equation}
\end{prop}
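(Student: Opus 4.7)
The plan is to prove all five parts by induction on the total order $n(\Qc)+n(\Tc)$, using the recursive structure given by Proposition \ref{propstructure}. The base case $\Qc=$ trivial couple is immediate from Definition \ref{defkg}: the time domain $\Ic^*$ is empty, so $\Kc_\Qc^*(t,s,k)=F_r(k)$, and comparing with (\ref{regref1-3}) forces $\Jc\Bc_\Qc^*\equiv 1$.

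For the formulas (\ref{regref2-0})--(\ref{regref2-2}), I would work directly from the definition (\ref{defkg0}) and isolate the outermost structural piece. In the type 1 case the roots $\rf^\pm$ of $\Qc$ are linked with opposite signs (Proposition \ref{deflink}) and, being coherent, lie in the same layer, so the two exponentials combine into $e^{\pi i \zeta_{\rf^+}\delta L^{2\gamma}\Omega_{\rf^+}(t_{\rf^+}-t_{\rf^-})}$. The remaining inner decorations and time variables factor as the three sub-couple expressions $\Kc_{\Qc_j}^*$, and after extracting the $\Mc_\Qs^*$ part that carries all $k$-dependence coming from the sub-couple roots, a stationary-phase/resonance asymptotic in the transverse decoration $(x_{\rf^+},y_{\rf^+})$ localizes to $t_{\rf^+}=t_{\rf^-}=\tau$ with $\tau\le\min(t,s)$. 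The Jacobian together with the two possible codes $00/01$ of the $(1,1)$-mini-couple produce the factor $2$, giving (\ref{regref2-0}). The type 2 formula (\ref{regref2-1}) and the tree formula (\ref{regref2-2}) then follow by iterating this argument along each linked pair $\{\nf_{2j-1}^\epsilon,\nf_{2j}^\epsilon\}$ in the chains $\Xc^\epsilon$ (or $\Xc^0$): each pair collapses to one integration variable $t_j$ (or $s_j$), the chain's descendance order imposes $t_1>\cdots>t_{m_\pm}$, each mini-tree replacement contributes $\Jc\Bc_{\Qc_{j,\epsilon,\iota}}^*(t_j,t_j)$ by induction, and $\Jc\Bc_{\Qc_{\mathrm{lp}}}^*(\min(t_{m_+},s_{m_-}))$ appears because the lone leaves of the two chains are joined at the top of $\Qc_{\mathrm{lp}}$, whose topmost time variable is simultaneously bounded by $t_{m_+}$ and $s_{m_-}$.

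The cancellation (\ref{regref2-3}) is the core combinatorial identity and the heart of the proof. By the induction hypothesis, $\Jc\Bc_\Qc^*(t,t)$ depends on $\Qc\in\Xs$ only through (i) the common inner factors $\Jc\Bc_{\Qc_{j,\epsilon,\iota}}^*$ and $\Jc\Bc_{\Qc_{\mathrm{lp}}}^*$, which depend only on the equivalence classes $\Xs_{j,\epsilon,\iota}$ and $\Xs_{\mathrm{lp}}$ and factor out of the sum, and (ii) how the multiset of tuples $(\mathtt{I}_{j,\epsilon},\mathtt{c}_{j,\epsilon},\Xs_{j,\epsilon,1},\Xs_{j,\epsilon,2})$ is distributed between the two chains and ordered within each. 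The weight $\prod_{\nf\in Z}(\zeta_\nf\pi i)^{-1}$ splits into a common sub-object factor times $(\pi i)^{-|Z|}\cdot(-1)^{|Z^-|}$. Summing over orderings within each chain symmetrizes the ordered integrations over $D_\pm=\{t_1>\cdots>t_{m_\pm}\}$, and summing over partitions of the multiset into $+/-$ chains with the total number $N_1=|Z^+|+|Z^-|$ of $\mathtt{I}=1$ tuples fixed produces a binomial alternating sum $\sum_{k=0}^{N_1}\binom{N_1}{k}(-1)^k=0$ whenever $N_1\ge 1$; the contribution of $Z_{\mathrm{lp}}\neq\varnothing$ is separately handled by applying the recursive statement to $\Qc_{\mathrm{lp}}$, which has strictly smaller order.

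The main obstacle I foresee is precisely the coupling between the two chains through the factor $\Jc\Bc_{\Qc_{\mathrm{lp}}}^*(\min(t_{m_+},s_{m_-}))$ in (\ref{regref2-1}): without this term the $+$ and $-$ chain integrals would be independent and the Abel identity would apply immediately, but the $\min$ ties together the innermost variables of the two chains and obstructs a direct factorization. I would handle this either by conditioning on the value and location of the minimum and treating separately the two cases $t_{m_+}<s_{m_-}$ and $s_{m_-}<t_{m_+}$, or, more cleanly, by constructing a sign-reversing involution on $\Xs$ that swaps a distinguished ``non-minimum'' chain tuple between $+$ and $-$; verifying that such an involution can be defined canonically and pairs all terms for which $Z\neq\varnothing$ is the step where I expect the most technical care will be required.
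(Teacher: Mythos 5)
Your strategy is essentially the one behind the paper's proof: the paper itself only cites Propositions 7.5 and 7.8 of \cite{DH21}, but the argument you outline — recursive extraction of the chain structure for (\ref{regref2-0})--(\ref{regref2-2}), then for (\ref{regref2-3}) factoring out the class-invariant data and summing over the distribution of the $\mathtt{I}=1$ tuples between the two chains to get $\prod_j(1+(-1)^{\mathtt{I}_j})=0$ — is exactly the mechanism, and it reappears verbatim (in layered, three-chain form) in this paper's own proof of Proposition \ref{proplayer4}. Your sign bookkeeping is also right: for a dominant chain one checks from Definition \ref{defregchain} that $\zeta_{\nf_{2j-1}^\epsilon}=\epsilon$ for every $j$, which is what makes the weight reduce to $(\pi i)^{-|Z|}(-1)^{|Z^-|}$.

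Two remarks. First, the obstacle you flag at the end is not really there in the case you need: identity (\ref{regref2-3}) is only asserted at $t=s$, and then all chain variables lie in the common interval $({r},t)$, so after merging them into a single decreasing sequence $v_1>\cdots>v_m$ the argument of $\Jc\Bc_{\Qc_{\mathrm{lp}}}^*\bigl(\min(t_{m_+},s_{m_-})\bigr)$ is always $v_m$ regardless of how the $v_j$ are assigned to the two chains; it factors out of the assignment sum and the product identity applies directly, with no involution or case split on the location of the minimum. (This is precisely how the paper handles $s_{\mathrm{min}}=\min(v_m,s)$ in the proof of Proposition \ref{proplayer4}.) Second, your derivation of (\ref{regref2-0})--(\ref{regref2-2}) "from the definition" via a stationary-phase localization to $t_{\nf}=t_{\nf'}$ at each link is only a sketch of what is really the bulk of the regular-couple analysis of \cite{DH21} (Sections 6--7 there); in the present paper those formulas are taken as the defining recursion for $\Jc\Bc^*$ inside $(\Kc_\Qc^*)_{\mathrm{app}}$, so the substantive new content of the proposition is (\ref{regref2-3}), which you prove correctly.
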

\begin{proof} See Proposition 7.5 and 7.8 of \cite{DH21} (the case of dominant tree $\Tc$ is not included there, but follows in the same way as it forms a dominant couple with the trivial tree). These results are independent of the scaling law $\gamma$, so they also apply to the current case.
\end{proof}
\begin{prop}\label{regref3} Let the functions $\Mc_\Qs^*(k)$ and $\Mc_\Ts^*(k)$ be defined as in Proposition \ref{regref1}, then they only depend on the \emph{equivalence class} $\Xs$ (and $\Zs$) of the enhanced dominant couples $\Qs$ (and enhanced dominant trees $\Ts$), so we may denote $\Mc_\Qs^*(k)=\Mc_\Xs^*(k)$ and $\Mc_\Ts^*(k)=\Mc_\Zs^*(k)$. If $\Qc$ is trivial then we have $\Mc_\Qs^*(k)=F_{r}(k)$. If $\Qc$ is a type 1 dominant couple, let the notations be fixed as in Proposition \ref{propstructure} and let $\Xs_j$ be the equivalence class of $\Qs_j$ in Definition \ref{defencpl}, then
\begin{equation}\label{regref3-0}
\Mc_\Xs^*(k)=\int_{(\Rb^d)^3}\dirac(k_1-k_2+k_3-k)\dirac(|k_1|^2-|k_2|^2+|k_3|^2-|k|^2)\prod_{j=1}^3\Mc_{\Xs_j}^*(k_j)\,\mathrm{d}k_1\mathrm{d}k_2\mathrm{d}k_3.
\end{equation}

If $\Qc$ is a type 2 dominant couple, let the notations be fixed as in Proposition \ref{propstructure}, and also recall the tuples $(\mathtt{I}_{j,\epsilon},\mathtt{c}_{j,\epsilon},\Xs_{j,\epsilon,1},\Xs_{j,\epsilon,2})$ and $\Xs_{\mathrm{lp}}$, as in Definition \ref{defencpl}. Then we have
\begin{equation}\label{regref3-1}
\Mc_\Xs^*(k)=\Mc_{\Xs_{\mathrm{lp}}}^*(k)\cdot\prod_{\epsilon\in\{\pm\}}\prod_{j=1}^{m_\epsilon}\Nc_{(j,\epsilon)}^*(k).
\end{equation} If $\Tc$ is a dominant tree, let the notations be fixed as in Proposition \ref{propstructure}, and also recall the tuples $(\mathtt{I}_{j,0},\mathtt{c}_{j,0},\Xs_{j,0,1},\Xs_{j,0,2})$ as in Definition \ref{defentree}. Then we have
\begin{equation}\label{regref3-1-1}
\Mc_\Zs^*(k)=\prod_{j=1}^{m_0}\Nc_{(j,0)}^*(k).
\end{equation}
Here in (\ref{regref3-1}) and (\ref{regref3-1-1}) we have
\begin{equation}\label{regref3-2}
\Nc_{(j,\epsilon)}^*(k)=\int_{(\Rb^d)^3}\dirac(k_1-k_2+k_3-k)\cdot(\mathrm{Factor\ }1)\cdot(\mathrm{Factor\ }2)\,\mathrm{d}k_1\mathrm{d}k_2\mathrm{d}k_3,
\end{equation} and the same with $\epsilon$ replaced by $0$, where
\begin{equation}\label{regref3-3}(\mathrm{Factor\ }1)=
\left\{
\begin{aligned}
&\dirac(|k_1|^2-|k_2|^2+|k_3|^2-|k|^2),&\mathrm{if\ }\mathtt{I}_{j,\epsilon}&=0;\\
&\mathrm{p.v.}\,\frac{1}{|k_1|^2-|k_2|^2+|k_3|^2-|k|^2},&\mathrm{if\ }\mathtt{I}_{j,\epsilon}&=1,
\end{aligned}
\right.
\end{equation} and
\begin{equation}\label{regref3-4}
(\mathrm{Factor\ }2)=
\left\{
\begin{aligned}&\Mc_{\Xs_{j,\epsilon,1}}^*(k_2)\Mc_{\Xs_{j,\epsilon,2}}^*(k_3),&\mathrm{if\ }\mathtt{c}_{j,\epsilon}&=1;\\
&\Mc_{\Xs_{j,\epsilon,1}}^*(k_1)\Mc_{\Xs_{j,\epsilon,2}}^*(k_3),&\mathrm{if\ }\mathtt{c}_{j,\epsilon}&=2;\\
&\Mc_{\Xs_{j,\epsilon,1}}^*(k_1)\Mc_{\Xs_{j,\epsilon,2}}^*(k_2),&\mathrm{if\ }\mathtt{c}_{j,\epsilon}&=3.
\end{aligned}
\right.
\end{equation}
\end{prop}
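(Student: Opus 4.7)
The plan is to proceed by induction on the inductive structure of dominant couples and dominant trees provided by Proposition \ref{propstructure}, building the identities simultaneously with the corresponding identities for $\Jc\Bc_\Qc^*$ and $\Jc\Bc_\Tc^*$ in Proposition \ref{regref2}. The starting point will be the explicit definition of $\Kc_\Qc^*$ in (\ref{defkg0}) and the extraction of the leading piece $(\Kc_\Qc^*)_{\mathrm{app}}$ carried out in Proposition \ref{regref1}; I will unfold this leading piece along the recursive decomposition and match it with the stated product formulas by isolating the momentum integrals from the time integrals.

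First I would handle the base case: when $\Qc$ is the trivial couple, the leaf pair carries a single decoration $k_\lf = k$ and the only surviving input is $F_r(k)$, so by (\ref{regref1-3}) we must have $\Mc_\Qs^*(k) = F_r(k)$. Next, for a type 1 dominant couple, Proposition \ref{propstructure} identifies three sub-couples $\Qc_j$ attached at the three leaf pairs of a $(1,1)$-mini-couple, glued by one branching node on each side. In the expression for $\Kc_\Qc^*$ the two root time variables and the decoration constraints at the two roots decouple the momentum sum into the three sub-sums associated with $\Qc_j$; extracting the leading part using the resonance extraction in \cite{DH21} one sees that the two root time integrals produce a single delta function $\dirac(\Omega)$ at the root (because the root nodes are in $\Nc^{\mathrm{ch}}\setminus Z$ by the convention in Definition \ref{defencpl}), while the linear momentum conservation yields $\dirac(k_1-k_2+k_3-k)$. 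This is exactly (\ref{regref3-0}), and by induction hypothesis each $\Mc_{\Xs_j}^*$ depends only on the equivalence class of $\Qs_j$, hence so does $\Mc_\Xs^*$.

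For the type 2 case the key observation is that the two regular chains $\Xc^\pm$ produce a vertical structure of linked pairs $\{2j-1,2j\}\in\Pc^\epsilon$ along which the mini-tree codes $\mathtt{c}_{j,\epsilon}$ and membership indicators $\mathtt{I}_{j,\epsilon}$ determine exactly the local momentum configuration at that pair. Here $\mathtt{I}_{j,\epsilon}=0$ corresponds to the linked node not being in $Z$, so the associated time integration produces $\dirac(\Omega)$, while $\mathtt{I}_{j,\epsilon}=1$ corresponds to $\nf\in Z$, where the time integrations against the resonance phase generate a principal value $\mathrm{p.v.}\,1/\Omega$ by the standard Plemelj computation used in \cite{DH21}. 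The three cases for $\mathtt{c}_{j,\epsilon}$ encode which of the three children branches of the mini-tree carries the next level of structure, which dictates the pairing of momenta $k_j$ with the recursive factors $\Mc_{\Xs_{j,\epsilon,1}}^*$ and $\Mc_{\Xs_{j,\epsilon,2}}^*$ as in (\ref{regref3-4}). The lone leaf pair, which is replaced by $\Qc_{\mathrm{lp}}$, contributes a single factor $\Mc_{\Xs_{\mathrm{lp}}}^*(k)$ carrying the outer decoration, and the chain factors multiply because the chain structure yields a product of disjoint momentum integrals. The dominant tree case (\ref{regref3-1-1}) is analogous, with the chain $\Xc_0$ playing the role of both $\Xc^\pm$ and with no lone leaf pair factor.

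The main obstacle I expect is to carefully keep track of the resonance factor extraction for each linked pair and justify that each local contribution depends only on the local tuple $(\mathtt{I}_{j,\epsilon},\mathtt{c}_{j,\epsilon},\Xs_{j,\epsilon,1},\Xs_{j,\epsilon,2})$ and not on the specific positions of the pairs along the chain. This is the content of the equivalence-class invariance, and it reduces to the fact that the chain integrations in (\ref{regref2-1})--(\ref{regref2-2}) are symmetric under reshuffling the pairs (after the momentum integrals have been separated from the time integrals), allowing one to commute the products appearing in (\ref{regref3-1}). Since all of the resonance extraction, momentum integration, and principal value identification are exactly as in \cite{DH21}, and the only change here is translating all time variables by the common layer $r$ and replacing the input $\varphi_{\mathrm{in}}$ by $F_r = \varphi(\delta r,\cdot) + \Rs_r$ (which is still real-valued and in the required regularity class by the assumptions on $\Rs_r$), the argument of \cite{DH21} applies without modification and yields the stated formulas.
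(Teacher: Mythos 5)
Your proposal is correct and reaches the same conclusion as the paper's own proof, which is simply a citation to Proposition 7.7 of \cite{DH21} together with the observation that the formulas are independent of the scaling law $\gamma$ and hence transfer directly. You arrive at the same endpoint — that the \cite{DH21} argument applies unchanged once one notes that the layer shift $t_\nf \mapsto t_\nf - r$ is immaterial and that the inputs $F_r = \varphi(\delta r, \cdot) + \Rs_r$ are real-valued and lie in the required regularity class — while additionally sketching, correctly, the internal mechanism of the \cite{DH21} computation (resonance/$\dirac$ extraction at nodes not in $Z$, Sokhotski–Plemelj principal value at nodes in $Z$, placement of the recursive factors dictated by the mini-tree code $\mathtt{c}_{j,\epsilon}$, product over the chain, and permutation-invariance of the tuples giving well-definedness on equivalence classes). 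So the content is the same; you simply unfold the cited argument rather than invoke it as a black box.
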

\begin{proof} See Proposition 7.7 of \cite{DH21}. Again the result is independent of the scaling law $\gamma$, so it also applies to the current case.
\end{proof}
\subsection{Proof of Propositions \ref{proplayer2}--\ref{proplayer4}}\label{seclayer3-3} We now prove Propositions \ref{proplayer2}--\ref{proplayer4}.
\begin{proof}[Proof of Proposition \ref{proplayer2}] 
We will assume $q>q'$, since when $q=q'$ all the nodes in $\Qc$ must be in the same layer and the result follows from Proposition \ref{regref1}. Recall the structure of $\Qc$ described in Proposition \ref{layerreg1}. For each $q'\leq r\leq q$, consider all the nodes $\nf_a^+$ where $1\leq a\leq 2m_+$ such that $\Lf_{\nf_a^+}=r$. All these branching nodes are linked to each other, and they are precisely all the branching nodes of some regular tree $\Tc_{[r]}$. Denote the root of $\Tc_{[r]}$ by $\rf_{[r]}$, then the lone leaf of $\Tc_{[r]}$ is just $\rf_{[r-1]}$ or the root of a tree in $\Qc_{\mathrm{lp}}$, and all the nodes in $\Tc_{[r]}$ except the lone leaf are in layer $r$ (note that some of the $\Tc_{[r]}$ may be trivial, in which case some of the $\rf_{[r]}$ may coincide).

Next, note that the tree rooted at $\rf_{[q']}$ (which contains one of the trees in $\Qc_{\mathrm{lp}}$) forms a regular couple $\Qc_{[q']}$ with the other tree $\Tc^-$ of $\Qc$, and all nodes in $\Qc_{[q']}$ are in layer $q'$. Moreover, in the $k$-decoration we must have $k_{\rf_{[r]}}=k$ for each $q'\leq r\leq q$. For each $q'\leq r<q$, in the domain $\Ic^*$ defined in (\ref{timecouple}), the range for $t_{\rf_{[r]}}$ is $[r,r+1]$; for $r=q$ (so $\rf_{[q]}$ is just the root $\rf^+$ of the tree $\Tc^+$ of $\Qc$) the range should be replaced by $t_{\rf^+}\in[q,t]$, and similarly the range of $t_{\rf^-}$ is $t_{\rf^-}\in[q',s]$ (if $\rf_{[q']}$ or $\rf^-$ is a leaf then these should be suitably modified but this will not affect (\ref{prooflayer1-1}) below).

Therefore, by definition of $\Kc_\Qc^*$, we obtain that
\begin{equation}\label{prooflayer1-1}
\Kc_\Qc^*(t,s,k)=\prod_{q'<r< q}\Kc_{\Tc_{[r]}}^*(r+1,r,k)\cdot\Kc_{\Tc_{[q]}}^*(t,q,k)\cdot \Kc_{\Qc_{[q']}}^*(q'+1,s,k).
\end{equation} Since all the nodes of each individual $\Tc_{[r]}$ (except the lone leaf) and $\Qc_{[q']}$ are in the same layer, we may decompose each $\Kc_{\Tc_{[r]}}^*$ and $\Kc_{\Qc_{[q']}}^*$ as in (\ref{regref1-2}) and (\ref{regref1-3}), and putting together gives the decomposition (\ref{layerregest2}) and (\ref{layerregest4}) for $\Kc_\Qc^*$ (where any term with at least one $\Rs_{\Tc_{[r]}}^*$ or $\Rs_{\Qc_{[q']}}^*$ goes into $\Rs_\Qc^*$). The same argument applies for $\Tc$, where we have $\Tc_{[r]}$ for $q'\leq r\leq q$ but no $\Qc_{[q']}$, and similarly
\begin{equation}\label{prooflayer1-2}
\Kc_\Tc^*(t,s,k)=\prod_{q'<r< q}\Kc_{\Tc_{[r]}}^*(r+1,r,k)\cdot\Kc_{\Tc_{[q]}}^*(t,q,k)\cdot \Kc_{\Tc_{[q']}}^*(q'+1,s,k).
\end{equation} Finally, if $\Qc$ and $\Tc$ are not dominant, the at least one of the $\Tc_{[r]}$ or $\Qc_{[q']}$ will not be dominant, so the corresponding $(\cdots)_{\mathrm{app}}$ quantities will vanish, thus $(\Kc_{\Qc}^*)_{\mathrm{app}}$ and $(\Kc_{\Tc}^*)_{\mathrm{app}}$ will also vanish.
\end{proof}
\begin{proof}[Proof of Proposition \ref{proplayer3}] This basically follows from (\ref{regref1-5}). The input functions here are $F_p(k_\lf)=\varphi(\delta p,k_\lf)+\Rs_p(k_\lf)$, if we replace them by $\varphi(\delta p,k_\lf)$, the resulting $(\Kc_\Qc^*)_{\mathrm{app}}$ will be exactly $\Uc_n(t,k)$ by (\ref{regref1-5}); the difference, on the other hand, contains those terms with at least one input being $\Rs_p(k_\lf)$. The number of these terms is at most $2^n$ which can be absorbed by $C_1^n$, while each individual term includes an additional decay factor $L^{-\theta_p}$ due to the bound (\ref{ansatz2}) for $\Rs_p(k_\lf)$. This proves (\ref{layerregest8}).
\end{proof}
\begin{proof}[Proof of Proposition \ref{proplayer4}] First (\ref{layerregest9}) directly follows from the definitions of $\Kc_\Qc^*$ and $\Kc_\Tc^*$ in Definition \ref{defkg} and definition of conjugate in Definition \ref{defgarden} (the same holds for the $(\cdots)_{\mathrm{app}}$ approximations).

Now we focus on the proof of (\ref{layerregest10}). Let $\Qc$ and $\Tc$ be fixed such that $n(\Qc)+n(\Tc)=2n$. We may also assume $q>q'$, since otherwise all the nodes in $\Qc$ and $\Tc$ must be in the same layer, and the result follows from Proposition 6.3 of \cite{DH23}. In fact, the proof here can be viewed as an extended version of the proof of Proposition 6.3 of \cite{DH23}, taking into account of the layers.

Next, by Proposition \ref{proplayer2} we may restrict $\Qc$ and $\Tc$ to be dominant. Then, each $\Tc_{[r]}$ and $\Qc_{[q']}$ is dominant and has all its nodes (except the lone leaf for $\Tc_{[r]}$) in the same layer, so the corresponding $(\cdots)_{\mathrm{app}}$ quantities are given by (\ref{regref1-3}). Moreover each of these objects has its own inner structure (for example each $\Tc_{[r]}$ has the form in Proposition \ref{propstructure} with branching nodes of the regular chain being those $\nf_a^+$ where $\Lf_{\nf_a^+}=r$) that allows us to apply Propositions \ref{regref2} and \ref{regref3}. Now recall the notations in Proposition \ref{layerreg1}. Let $Z$ be the union of a special subset of $\Qc$ and a special subset of $\Tc$, then $Z$ is uniquely determined by the sets $Z_{j,\epsilon,\iota}$ and $Z_{j,0,\iota}$ (where $\epsilon\in\{\pm\}$, $1\leq j\leq m_\epsilon$ or $1\leq j\leq m_0$, and $\iota\in\{1,2\}$), $Z_{\mathrm{lp}}$, and the sets $(Z^\epsilon,Z^0)$, as in Definitions \ref{defencpl} and \ref{defentree}. Define also $\Qs_{\mathrm{lp}}$ and $\Xs_{\mathrm{lp}}$, and the tuples $(\mathtt{I}_{j,\epsilon},\mathtt{c}_{j,\epsilon},\Xs_{j,\epsilon,1},\Xs_{j,\epsilon,2})$ and $(\mathtt{I}_{j,0},\mathtt{c}_{j,0},\Xs_{j,0,1},\Xs_{j,0,2})$ etc., as in Definitions \ref{defencpl} and \ref{defentree}. Define $r_j^+=\Lf_{\nf_{2j-1}^+}$ for $1\leq j\leq m_+$ and $r_j^0=\Lf_{\nf_{2j-1}^0}$ for $1\leq j\leq m_0$, and $r_j^-=q'$.

Starting with the formulas (\ref{prooflayer1-1}) and (\ref{prooflayer1-2}), by applying (\ref{regref1-3}), (\ref{regref2-1})--(\ref{regref2-2}) and (\ref{regref3-1})--(\ref{regref3-1-1}), we get the formula
\begin{align}
(\ref{layerregest10})&=2^{-2n}\delta^{n}\sum_m\sum_{m_++m_-+m_0=m}\sum_{(\mathrm{tuples})}\sum_{(\Qs)}\zeta(\Qc)\zeta(\Tc)\prod_{\nf\in Z}\frac{ 1}{\widetilde{\zeta_\nf} \pi i}\int_{t>t_1>\cdots >t_{m_+}>q';\,r_j^+<t_j<r_j^++1}\nonumber\\&\times\int_{s>s_1>\cdots >s_{m_-}>q'}\int_{t>u_1>\cdots >u_{m_0}>s;\,r_j^0<u_j<r_j^0+1}\prod_{j=1}^{m_+}\mathrm{d}t_j\prod_{j=1}^{m_-}\mathrm{d}s_j\prod_{j=1}^{m_0}\mathrm{d}u_j\nonumber\\
&\times\prod_{j,\iota}\Jc\Bc_{\Qc_{j,+,\iota}}^*(t_j,t_j)\prod_{j,\iota}\Jc\Bc_{\Qc_{j,-,\iota}}^*(s_j,s_j)\prod_{j,\iota}\Jc\Bc_{\Qc_{j,0,\iota}}^*(u_j,u_j)\cdot \Jc\Bc_{\Qc_{\mathrm{lp}}}^*(s_{\mathrm{min}})\nonumber\\
\label{regrefproof3-1}&\times 
\Mc_{\Xs_{\mathrm{lp}}}^*(k)\cdot\prod_{\epsilon\in\{\pm\}}\prod_{j=1}^{m_\epsilon}\Nc_{(j,\epsilon)}^*(k)\prod_{j=1}^{m_0}\Nc_{(j,0)}^*(k).
\end{align}
 Here in (\ref{regrefproof3-1}) we have $s_{\mathrm{min}}=\min(t_{m_+},s_{m_-},s)$, $Z$ is the union of all the special sets $(Z_{j,\epsilon,\iota},Z_{j,0,\iota})$ and $Z_{\mathrm{lp}}$ and $(Z^\epsilon,Z^0)$, and $\Nc_{(j,\epsilon)}^*$ and $\Nc_{(j,0)}^*$ are defined in (\ref{regref3-2}). The symbol $\widetilde{\zeta_\nf}$ equals $-\zeta_\nf$ if $\nf$ belongs to $Z_{j,0,\iota}$ or $Z^0$, and equals $\zeta_\nf$ otherwise. Finally, the sum $\sum_{(\mathrm{tuples})}$ is taken over all the \emph{the extended tuples}
\begin{equation}\label{regrefproof3-2}\big\{(r_j^\epsilon,\mathtt{I}_{j,\epsilon},\mathtt{c}_{j,\epsilon},\Xs_{j,\epsilon,1},\Xs_{j,\epsilon,2}):\epsilon\in\{\pm\},1\leq j\leq m_\epsilon\big\}\cup \big\{(r_j^0,\mathtt{I}_{j,0},\mathtt{c}_{j,0},\Xs_{j,0,1},\Xs_{j,0,2}):1\leq j\leq m_0\big\},\end{equation} and the sum $\sum_{(\Qs)}$ is taken over all the enhanced regular couples $\Qs_{j,\epsilon,\iota}=(\Qc_{j,\epsilon,\iota},Z_{j,\epsilon,\iota})$ (and similarly for $(j,0,\iota)$ and the lone pair) that belong to the equivalence class $\Xs_{j,\epsilon,\iota}$. In these summations we assume $n(\Qc)+n(\Tc)=2n$; for the last statement of Proposition \ref{proplayer4} we make the further restriction that the total number $n_p$ of layer $p$ branching nodes in $\Qc$ and $\Tc$ equals a given constant.

Now continue with (\ref{regrefproof3-1}). If $Z=\varnothing$, then the whole expression is clearly real valued. If any of the sets $Z_{j,\epsilon,\iota}$ (or $Z_{j,0,\iota}$ or $Z_{\mathrm{lp}}$) is not empty, then the sum of $\prod_{\nf\in Z_{j,\epsilon,\iota}}(\widetilde{\zeta_\nf} \pi i)^{-1}\cdot\Jc\Bc_{\Qc_{j,\epsilon,\iota}}^*(\cdots)$ over all $\Qs_{j,\epsilon,\iota}$, which is a part of the summation $\sum_{(\Qs)}$, will give $0$ due to (\ref{regref2-3}); here note that all the other ingredients of (\ref{regrefproof3-1}) are independent of $\Qs_{j,\epsilon,\iota}$ (in particular we can check that varying $\Qs_{j,\epsilon,\iota}$ within a fixed equivalence class will not change the value of $\zeta(\Qc_{j,\epsilon,\iota})$). Therefore, below we may assume $Z=Z^+\cup Z^-\cup Z^0\neq\varnothing$ (and in particular $m=m_++m_-+m_0>0$).

In the sum $\sum_{(\mathrm{tuples})}$ in (\ref{regrefproof3-1}), we shall first fix $\Xs_{\mathrm{lp}}$ and the \emph{unordered collection} of all the extended tuples in (\ref{regrefproof3-2}), and replace $\sum_{(\mathrm{tuples})}$ by the summation over all choices of individual tuples, which are permutations of this fixed unordered collection; see Figure \ref{fig:equivregchain} for an illustration. We can do this because the value of $n(\Qc)+n(\Tc)$ is determined by the unordered collection. In the same way, if in (\ref{layerregest10}) we make the further restriction that $n_p$ equals a given constant, then we can again first fix the unordered collection of extended tuples, because the value of $n_p$ is also determined by the unordered collection. Now it will suffice to prove that the result is $0$ for each fixed unordered collection. For convenience, we shall denote the extended tuples in (\ref{regrefproof3-2}) by $\Us_j\,(1\leq j\leq m_+)$ and $\Vs_j\,(1\leq j\leq m_-)$ and $\Ws_j\,(1\leq j\leq m_0)$. Let the fixed unordered collection be $(\Ys_1,\cdots,\Ys_m)$ where $m=m_++m_-+m_0$ is also fixed.

We proceed by simplifying the expression (\ref{regrefproof3-1}). First $\Mc_{\Xs_{\mathrm{lp}}}^*(k)$ is fixed in the summation, moreover, using the definition in (\ref{regref3-2}), we can write the product
\[\prod_{\epsilon\in\{\pm\}}\prod_{j=1}^{m_\epsilon}\Nc_{(j,\epsilon)}^*(k)\prod_{j=1}^{m_0}\Nc_{(j,0)}^*(k)=\prod_{j=1}^{m_+}\Mf[\Us_j]\prod_{j=1}^{m_-}\Mf[\Vs_j]\prod_{j=1}^{m_0}\Mf[\Ws_j]=\prod_{j=1}^m\Mf[\Ys_j],\]
where $\Mf[\Ys]$ is an expression that depends only on the extended tuple $\Ys$. In particular this product does not change when we permute all the $\Ys_j$, so it can be extracted as a common factor. We can also verify that the value of $\zeta(\Qc)\zeta(\Tc)$ does not vary, so it can also be extracted.

Next, for any equivalence class $\Xs$ and extended tuple $\Ys=(r,\mathtt{I},\mathtt{c},\Xs_1,\Xs_2)$, define \begin{equation}\label{defBf}\Bf[\Xs](t):=\sum_{\Qs\in\Xs}\Jc\Bc_{\Qc}^*(t,t),\quad \Bf[\Ys](t):=\mathbf{1}_{r<t<r+1}\cdot\Bf[\Xs_1](t)\Bf[\Xs_2](t).\end{equation} Note also that the value of $\widetilde{\zeta_\nf}$ equals $1$ for $\nf\in Z^+$, and equals $-1$ for $\nf\in Z^-\cup Z^0$, and that the choices of the \emph{second digits} of the codes of mini trees gives another factor $2^m$ (which is another common factor). Then, after extracting the common factors, the remaining summation in (\ref{regrefproof3-1}) then reads
\begin{align}
&\sum_{m_++m_-+m_0=m}\sum_{(\Us_1,\cdots, \Us_{m_+},\Vs_1,\cdots,\Vs_{m_-}, \Ws_1, \cdots, \Ws_{m_0})}\int_{t>t_1>\cdots >t_{m_+}>q'}\int_{s>s_1>\cdots>s_{m_-}>q'}\int_{t>u_1>\cdots >u_{m_0}>s}\nonumber\\\label{regrefproof3-3}&\times \prod_{j=1}^{m_-}(-1)^{\mathtt{I}_{j,-}'} \prod_{j=1}^{m_0}(-1)^{\mathtt{I}_{j,0}'}\prod_{j=1}^{m_+}\Bf[\Us_j](t_j)\,\mathrm{d}t_j\prod_{j=1}^{m_-}\Bf[\Vs_j](s_j)\,\mathrm{d}s_j\prod_{j=1}^{m_0}\Bf[\Ws_j](u_j)\,\mathrm{d}u_j\cdot \Bf[\Xs_{\mathrm{lp}}](s_{\mathrm{min}}).
\end{align} Here $\mathtt{I}_{j,-}'$ and $\mathtt{I}_{j,0}'$ represent the ``$\mathtt{I}$" component of $\Vs_j$ and $\Ws_j$ respectively, and the sequence $(\Us_1,\cdots, \Us_{m_+},\Vs_1,\cdots,\Vs_{m_-}, \Ws_1, \cdots, \Ws_{m_0})$ runs over \emph{all permutations} of the sequence of extended tuples $(\Ys_1,\cdots,\Ys_m)$. Strictly speaking we should also require that the ``$r$" components of $\Us_j$ must be decreasing in $j$ (same for $\Vs_j$ and $\Ws_j$), but this restriction is redundant, since otherwise the product of the $\Bf$ factors will be zero due to the $\mathbf{1}_{r<t<r+1}$ factor in $\Bf[\Ys]$ as in (\ref{defBf}).

Now we rearrange the time variables $t_j\,(1\leq j\leq m_+)$, $s_j\,(1\leq j\leq m_-)$ and $u_j\,(1\leq j\leq m_0)$ into $t>v_1>\cdots >v_m>q'$, and let $A_+$ be the set of $j$ such that $v_j$ equals some $t_i\,(1\leq i\leq m_+)$, similarly define $A_-$ and $A_0$, then these subsets completely determine the rule of correspondence between the variables $(v_j)$ and $(t_i,s_i,u_i)$. Note that $s_{\mathrm{min}}=\min(v_m,s)$, and
\[\sum_{(\Us_1,\cdots, \Us_{m_+},\Vs_1,\cdots,\Vs_{m_-}, \Ws_1, \cdots, \Ws_{m_0})}\prod_{j=1}^{m_+}\Bf[\Us_j](t_j)\prod_{j=1}^{m_-}\Bf[\Vs_j](s_j)\prod_{j=1}^{m_0}\Bf[\Ws_j](u_j)
=\sum_{(\widetilde{\Ys_1},\cdots,\widetilde{\Ys_m})}\prod_{j=1}^m\Bf[\widetilde{\Ys_j}](v_j),\] where the latter summation is taken over all permutations $(\widetilde{\Ys_1},\cdots,\widetilde{\Ys_m})$ of $(\Ys_1,\cdots,\Ys_m)$. Note also that any $j$ for which $v_j<s$ can only be put in $A_+$ or $A_-$, and any $j$ for which $v_j>s$ can only be put in $A_+$ or $A_0$. Then we can reduce (\ref{regrefproof3-3}) to
\begin{multline}\int_{t>v_1>\cdots>v_m>q'} \sum_{(\widetilde{\Ys_1}, \cdots, \widetilde{\Ys_m})}\prod_{j=1}^m\Bf(\widetilde{\Ys_j})(v_j) \cdot\Bf_{\Xs_{\mathrm{lp}}}\left(\min(v_m, s) \right)\\
\times\sum_{A_+\cup A_-\cup A_0=\{1,\cdots,m\}} \prod_{j\in A_-}(-1)^{\mathtt{I}_{j}}\mathbf{1}_{v_j<s} \prod_{j\in A_0}(-1)^{\mathtt{I}_{j}}\mathbf{1}_{v_j>s}\,\mathrm{d}v_1\cdots\mathrm{d}v_{m},
\end{multline} where the first sum is taken over all permutations $(\widetilde{\Ys_1},\cdots,\widetilde{\Ys_m})$ of $(\Ys_1,\cdots,\Ys_m)$, the second sum is taken over all partitions of $\{1,\cdots,m\}$ into $(A_+,A_-,A_0)$, and $\mathtt{I}_j$ is the ``$\mathtt{I}$" component of $\widetilde{\Ys_j}$. Finally, since
$$
\sum_{(A_+,A_-,A_0)} \prod_{j\in A_-}(-1)^{\mathtt{I}_{j}}\mathbf{1}_{v_j<s} \prod_{j\in A_0}(-1)^{\mathtt{I}_{j}}\mathbf{1}_{v_j>s}=\prod_{j=1}^{m}\left(1+(-1)^{\mathtt{I}_{j}}\mathbf{1}_{v_j<s}+(-1)^{\mathtt{I}_{j}}\mathbf{1}_{v_j>s}\right)=\prod_{j=1}^{m}\left(1+(-1)^{\mathtt{I}_{j}}\right),
$$ we know that this expression must be $0$ as long as $Z\neq\varnothing$, since this implies that at least one $\mathtt{I}_j=1$. This completes the proof.
\begin{figure}[h!]
\includegraphics[scale=0.39]{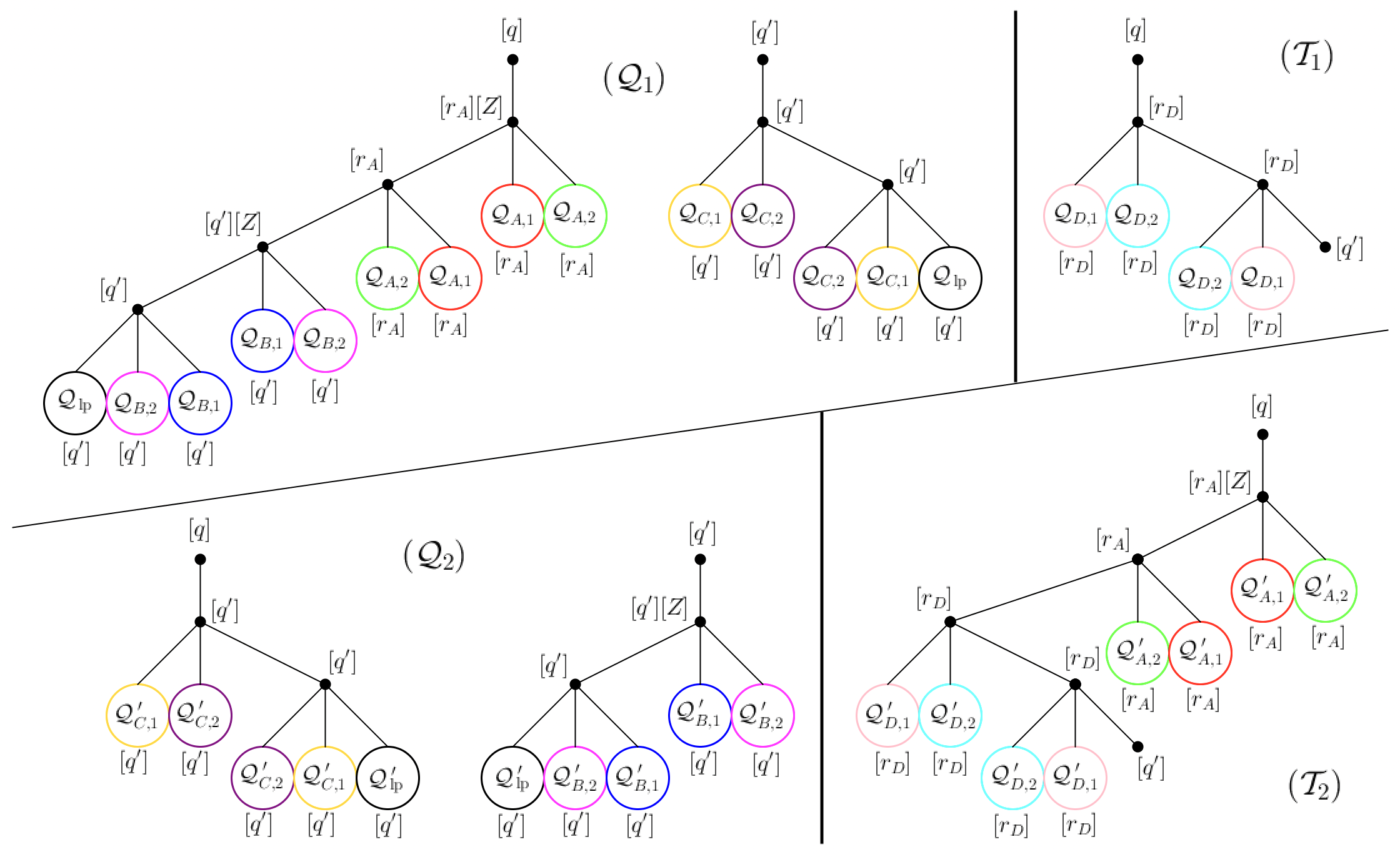}
\caption{An example of two terms, associated with $(\Qc_1,\Tc_1)$ and $(\Qc_2,\Tc_2)$, that occur in the summation with the same unordered collection of extended tuples. Here $r_A$ etc. are layers, $\Qc_{A,1}$ etc. (with $Z_{A,1}=\varnothing$ etc.) are equivalent to $\Qc_{A,1}'$ etc., and $[Z]$ indicates that the corresponding branching node belongs to $Z$.}
\label{fig:equivregchain}
\end{figure}
\end{proof}
\section{Molecules, vines and twists}\label{section7}
\subsection{Molecules} We recall the notion of molecules introduced in \cite{DH21,DH21-2}.
\begin{df}[Molecules \cite{DH21,DH21-2}]\label{defmol} A \emph{molecule} $\Mb$ is a directed graph, formed by vertices (called \emph{atoms}) and edges (called \emph{bonds}), where multiple and self-connecting bonds are allowed. We will write $v\in \Mb$ and $\ell\in\Mb$ for atoms $v$ and bonds $\ell$ in $\Mb$, and write $\ell\leftrightarrow v$ if $v$ is one of the two endpoints of $\ell$. The degree of an atom $v$ (including bonds of both directions) is denoted by $d(v)$. We further require that (i) each atom has at most $2$ outgoing bonds and at most $2$ incoming bonds (a self-connecting bond counts as outgoing once and incoming once), and that (ii) there is no saturated (connected) component, where connectedness is always understood in terms of undirected graphs, and a component is saturated if it contains only degree $4$ atoms. For a molecule $\Mb$ we define $V$ to be the number of atoms, $E$ the number of bonds and $F$ the number of components. Define $\chi:=E-V+F$ to be its \emph{circuit rank}. Finally, by an \emph{atomic group} we mean any subset $\Gb$ of atoms of $\Mb$, together with all the bonds between atoms in $\Gb$.
\end{df}
\begin{df}[Molecules from gardens \cite{DH21,DH21-2}]\label{defcplmol} Given a garden $\Gc$, define the molecule $\Mb=\Mb(\Gc)$ associated with $\Gc$, as follows. The atoms of $\Mb$ are the branching nodes $\nf\in\Nc$ of $\Gc$. For any two atoms $\nf_1$ and $\nf_2$, we connect them by a bond if either (i) $\nf_1$ is the parent of $\nf_2$, or (ii) a child of $\nf_1$ is paired to a child of $\nf_2$ as leaves. We fix the direction of each bond as follows: in case (i) the bond should go from $\nf_1$ to $\nf_2$ (or $\nf_2$ to $\nf_1$) if $\nf_2$ has sign $-$ (or sign $+$); in case (ii) the bond should go from the $\nf_j$ whose paired child has sign $-$ to the one whose paired child has sign $+$.

For any atom $v\in\Mb(\Gc)$, let $\nf=\nf(v)$ be the corresponding branching node in $\Gc$. We also introduce a labeling system to bonds in $\Mb(\Gc)$ \emph{in addition to its molecule structure}: for any bond between $v_1$ and $v_2$, if $\nf(v_1)$ is the parent of $\nf(v_2)$ then we label this bond by PC, place a label P at $v_1$, and place a label C at $v_2$; otherwise we label this bond by LP. Note that one atom $v$ may receive multiple P and C labels coming from different bonds $\ell\leftrightarrow v$. Finally, for any bond $\ell\leftrightarrow v$, define also $\mf=\mf(v,\ell)$ such that (i) if $\ell$ is PC with $v$ labeled C, then $\mf=\nf(v)$; (ii) if $\ell$ is PC with $v$ labeled P, then $\mf$ is the branching node corresponding to the other endpoint of $\ell$ (which is a child of $\nf(v)$); (iii) if $\ell$ is LP then $\mf$ is the leaf in the leaf pair defining $\ell$ that is a child of $\nf(v)$.

Note that, if $\Gc$ is a pre-layered garden, then this naturally leads to a layering of the corresponding molecule $\Mb(\Gc)$, where each atom $v$ is in layer $\Lf_v:=\Lf_{\nf(v)}$, which is the layer of the branching node $\nf(v)$ in $\Gc$.
\end{df}
\begin{prop}\label{propcplmol}
If $\Gc=\Qc$ is a nontrivial couple of order $n$, then $\Mb=\Mb(\Qc)$ has either two atoms of degree $3$ or one atom of degree $2$, and the other atoms all have degree $4$; in particular it has $n$ atoms and $2n-1$ bonds, and circular rank $\chi(\Mb)=n$. If $\Gc$ is an irreducible garden of width $2R$ and order $n$, then $\Mb=\Mb(\Gc)$ has $n$ atoms and $2n-R$ bonds, and circular rank $\chi(\Mb)=n-R+1$, in particular $n\geq R-1$. In both cases $\Mb$ is connected. Finally, the number of gardens $\Gc$ with fixed order $n$, fixed width $2R$, and fixed molecule $\Mb=\Mb(\Gc)$ (as a directed graph), is at most $(C_0R)!C_0^n$.
\end{prop}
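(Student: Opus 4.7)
My plan is to verify each clause directly from the structural definitions, with the final counting bound being the main obstacle.

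\textbf{Step 1 (atoms and bonds).} By Definition~\ref{defcplmol} the atoms are exactly the branching nodes, giving $|V|=n$. I split the bonds into PC and LP. The number of PC bonds equals the number of non-root branching nodes, namely $n-R'$ where $R'$ denotes the number of non-trivial trees of $\Gc$. The number of LP bonds equals the number of leaf pairs whose \emph{both} endpoints sit in a non-trivial tree, since the unique leaf of a trivial tree has no branching-node parent and therefore produces no bond. The total leaf count is $\sum_j(2n_j+1)=2n+2R$, giving $n+R$ leaf pairs. Irreducibility forbids pairing two trivial-tree leaves (this would yield a proper self-paired subset of size two), so each of the $2R-R'$ trivial-tree leaves sits in a distinct pair with a non-trivial-tree leaf; subtracting yields $(n+R)-(2R-R')=n-R+R'$ LP bonds, hence $2n-R$ bonds in total. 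For the couple case $R=1$ this specializes to $2n-1$.

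\textbf{Step 2 (degree sequence of couples).} I tally the contribution to each atom's degree from its three child slots and its parent slot, recalling that a self-pair of sibling leaves creates a self-loop contributing $2$ to the degree. Any non-root branching node has degree $4$. If both trees of $\Qc$ are non-trivial, their two roots each have degree $3$. If one tree, say $\Tc^-$, is trivial, then its only leaf is paired with some leaf of $\Tc^+$, and this pair produces no bond. The branching node $\nf$ whose child is the matched $\Tc^+$-leaf thus loses one degree: if $\nf$ is the root of $\Tc^+$, its degree drops from $3$ to $2$ (one atom of degree $2$); otherwise $\nf$ drops from $4$ to $3$ and is accompanied by the still-degree-$3$ root of $\Tc^+$ (two atoms of degree $3$).

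\textbf{Step 3 (connectivity and circular rank).} Suppose $\Mb(\Gc)$ decomposes into nonempty components $G_1,G_2$ with no bond between them. Since PC bonds cannot cross components, the branching nodes of any given tree lie in a single $G_i$, partitioning the non-trivial trees as $A_1\sqcup A_2$. Since LP bonds cannot cross either, each leaf of an $A_1$-tree is paired with a leaf of another $A_1$-tree or with the leaf of some trivial tree. Appending every such trivial tree to $A_1$ yields an extended subset $A_1'\subseteq\{1,\dots,2R\}$ whose leaves are paired entirely within $A_1'$. The set $A_1'$ is nonempty (since $G_1$ contained an atom, forcing $A_1\ne\varnothing$) and proper (since $A_2$ contains non-trivial trees absent from $A_1'$), contradicting irreducibility. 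Hence $F=1$ and $\chi=E-V+F=(2n-R)-n+1=n-R+1$, which in the couple case is $\chi=n$. Non-negativity of the circuit rank then forces $n\ge R-1$.

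\textbf{Step 4 (counting gardens with fixed molecule).} I expect this to be the main obstacle. Given $\Mb$ as a directed graph, I would reconstruct $\Gc$ through a sequence of choices whose total count is to be bounded. First, assign each bond the label PC or LP and, for each PC bond, identify which endpoint is the parent; the in-degree and out-degree constraints at each atom, combined with the direction data, cut the number of consistent labelings to at most $C_0^n$. Second, at each branching node choose left/mid/right placements for its (up to three) children and its parent slot, yielding another $C_0^n$. Third, reconstruct the trivial trees and the global ordering of the $2R$ roots in the tree list of the garden: each trivial tree is attached to one of the $O(n+R)$ ``dangling'' leaf slots of the non-trivial skeleton, and the $2R$ root positions are permuted, which crudely costs $(C_0R)!$. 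Multiplying these factors gives the claimed bound $(C_0R)!\,C_0^n$. The subtle point I expect to wrestle with is ensuring that distinct sequences of choices produce distinct gardens, so that the enumeration is genuinely an upper bound; in particular, the bookkeeping of how trivial trees reattach to the directed-graph $\Mb$ without creating hidden multiplicities is where the $(C_0R)!$ factor is unavoidable.
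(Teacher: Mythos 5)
Your proof is correct and follows the same route as the paper: the atom/bond/degree counts and the connectivity argument are exactly the direct verification from Definition \ref{defcplmol} that the paper compresses into two sentences, and your Step 4 is the same reconstruction count that the paper simply outsources to Proposition 6.4 of \cite{DH21-2}. One clarification on the point you flag at the end of Step 4: for an \emph{upper} bound you do not need distinct choice-sequences to yield distinct gardens (collisions only make the bound more generous); what you need is that each choice-sequence determines at most one garden and that every garden with the given molecule is recovered by at least one sequence, both of which your reconstruction already supplies. You should also record the signature $(\zeta_1,\dots,\zeta_{2R})$ among the choices, at a harmless additional cost of $2^{2R}$.
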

\begin{proof} The count for atoms and bonds follows from Definition \ref{defcplmol} (some  discussion is needed when some trees of $\Gc$ is trivial), which also implies the statement about degrees when $\Gc=\Qc$ is a couple. Connectivity follows because all atoms (branching nodes) in the same tree are connected by PC bonds, and those from different trees are connected by LP bonds due to irreducibility of $\Gc$. Finally, the bound on the number of choices for $\Gc$ is proved in Proposition 6.4 of \cite{DH21-2}.
\end{proof}
\begin{df}[Decorations of molecules \cite{DH21,DH21-2}]\label{defdecmol} Let $\Mb$ be a molecule, suppose we also fix the vectors $c_v\in\Zb_L^d$ for each $v\in\Mb$ such that $c_v=0$ when $v$ has degree $4$. We then define a $(c_v)$-\emph{decoration} (or just a decoration) of $\Mb$ to be a set of vectors $(k_\ell)$ for all bonds $\ell\in\Mb$, such that $k_\ell\in\Zb_L^d$ and
\begin{equation}\label{decmole1}\sum_{\ell\leftrightarrow v}\zeta_{v,\ell}k_\ell=c_v
\end{equation} for each atom $v\in\Mb$. Here the sum is taken over all bonds $\ell\leftrightarrow v$, and $\zeta_{v,\ell}$ equals $1$ if $\ell$ is outgoing from $v$, and equals $-1$ otherwise. For each such decoration and each atom $v$, define also that
\begin{equation}\label{defomegadec}\Gamma_v=\sum_{\ell\leftrightarrow v}\zeta_{v,\ell}|k_\ell|^2.
\end{equation}

Suppose $\Mb=\Mb(\Gc)$ comes from a garden $\Gc=(\Tc_1,\cdots,\Tc_{2R})$ with signature $(\zeta_1,\cdots,\zeta_{2R})$. Let $\rf_j$ be the root of $\Tc_j$. By Definition \ref{defcplmol}, we have $d(v)<4$ for an atom $v$ if and only if $\nf(v)=\rf_i$ for some $i$ or a child of $\nf(v)$ is paired with $\rf_j$ as leaves for some $j$ (or both). Denote these two possibilities by $\Rf_i$ and $\Pf_j$. Now, for $k_j\in\Zb_L^d\,(1\leq j\leq 2R)$ satisfying $\zeta_1k_1+\cdots +\zeta_{2R}k_{2R}=0$, we define a $(k_1,\cdots,k_{2R})$-decoration of $\Mb$ to be a $(c_v)$-decoration with $(c_v)$ given by 
\begin{equation}\label{molecv}
c_v=-\bigg(\mathbf{1}_{\Rf_i}\cdot \zeta_{i}k_i+\sum_j\mathbf{1}_{\Pf_j}\cdot \zeta_jk_j\bigg),
\end{equation} where the summation is taking over all $j$ such that $\Pf_j$ holds. Note also that for \emph{couples $\Qc$ and $k$-decorations of $\Qc$}, the vectors involved (including $k$ itself) may be general $\Rb^d$ vectors as in Definition \ref{defdec}. Thus we shall define $k$-decorations of $\Mb=\Mb(\Qc)$ for any $k\in\Rb^d$ in the same way as above, but allow $k_\ell$ and $c_v$ to be general $\Rb^d$ vectors such that $k_\ell-k\in\Zb_L^d$.

Given any $(k_1,\cdots,k_{2R})$-decoration of $\Gc$ in the sense of Definition \ref{defdec}, define a $(k_1,\cdots,k_{2R})$-decoration of $\Mb(\Gc)$ such that $k_\ell=k_{\mf(v,\ell)}$ for an endpoint $v$ of $\ell$ (Definition \ref{defcplmol}). We can verify that this $k_\ell$ is well-defined  (i.e. does not depend on the choice of $v$), and gives a bijection between $(k_1,\cdots,k_{2R})$-decorations of $\Gc$ and $(k_1,\cdots,k_{2R})$-decoration of $\Mb(\Gc)$. For such decorations we have
\begin{equation}\label{molegammav}
\Gamma_v=-\zeta_{\nf(v)}\Omega_{\nf(v)}-\bigg(\mathbf{1}_{\Rf_i}\cdot \zeta_{i}|k_i|^2+\sum_j\mathbf{1}_{\Pf_j}\cdot \zeta_j|k_j|^2\bigg).
\end{equation} All the above definitions also work in the case of $k$-decorations of couples and molecules, which allows for general vectors $k\in\Rb^d$.

Finally, given $\beta_v\in\Rb$ for each $v\in\Mb$ and $k_\ell^0\in\Zb_L^d$ for each $\ell\in\Mb$, we define a decoration $(k_\ell)$ to be \emph{restricted by} $(\beta_v)$ and/or $(k_\ell^0)$, if $|\Gamma_v-\beta_v|\leq \delta^{-1}L^{-2\gamma}$ for each $v$ and/or $|k_\ell-k_\ell^0|\leq 1$ for each $\ell$.
\end{df}
\subsection{Blocks, vines and ladders} On the molecular level, the combinatorial structures of \emph{blocks} with the special case of \emph{vines}, as well as \emph{ladders}, were introduced in \cite{DH23} and also play a fundamental role in the current paper.
\subsubsection{Blocks} We start by recalling the definition of blocks.
\begin{df}[Blocks \cite{DH23}]\label{defblock}Given a molecule $\Mb$, an atomic group $\Bb\subset\Mb$ is called a \emph{block}, if all atoms in $\Bb$ have degree $4$ within $\Bb$, except for exactly two atoms $v_1$ and $v_2$ (called \emph{joints} of the block, the other atoms called \emph{interior atoms}), each of which having out-degree $1$ and in-degree $1$ (hence total degree $2$) within $\Bb$, see Figure \ref{fig:blocknew}. Define $\sigma(\Bb)$ as the number of bonds between $v_1$ and $v_2$. Note that $\sigma(\Bb)\in\{0,1,2\}$, and $\sigma(\Bb)=2$ if and only if $\Bb$ is a double bond. Moreover, we define a \emph{hyper-block} $\Hb$ to be the atomic group formed by adding one bond between the two joints $v_1$ and $v_2$ of a block $\Bb$ (we call $\Hb$ and $\Bb$ the \emph{adjoint} of each other), and define $\sigma(\Hb)=\sigma(\Bb)+1$.

If two blocks share one common joint and no other common atom, we define their \emph{concatenation} to be their union, which is either a block or a hyper-block (depending on whether the two other joints of the two blocks are connected by a bond), see Figure \ref{fig:blocknew}. Note that a hyper-block cannot be concatenated with another block or hyper-block in this way. In general any finitely many (at least two) blocks can be concatenated to form a new block $\Bb$, or a new hyper-block $\Hb$, in which case we must have $\sigma(\Bb)=0$ and $\sigma(\Hb)=1$.
\end{df}
  \begin{figure}[h!]
  \includegraphics[scale=.35]{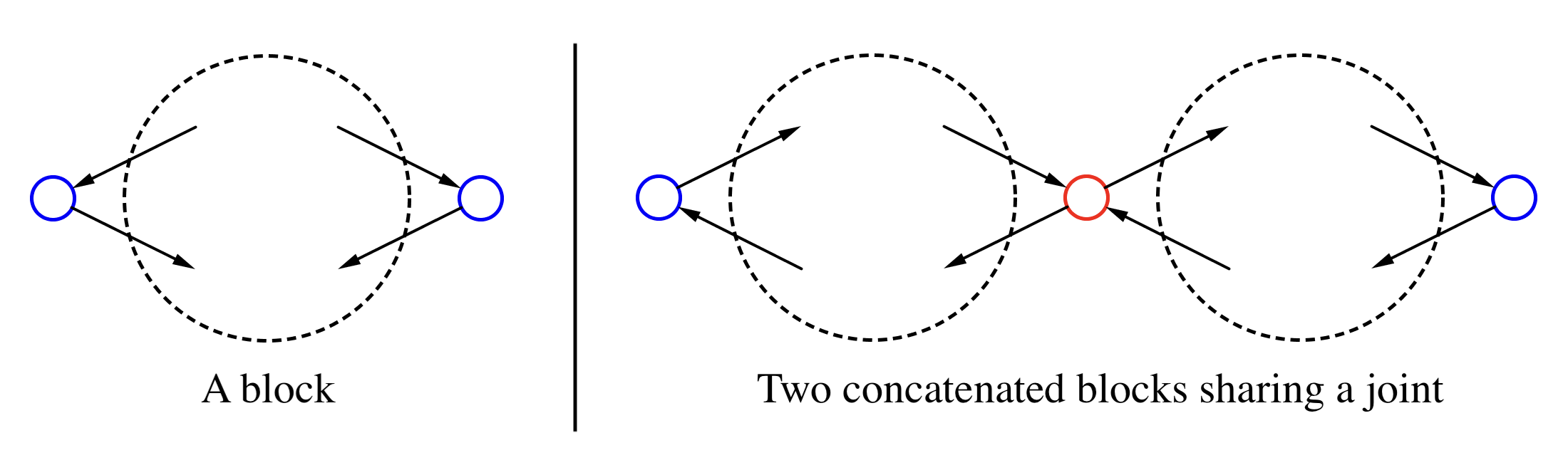}
  \caption[Chain of two blocks]{An illustration of a block, and two concatenated blocks sharing a joint, as in Definition \ref{defblock}. Here the two joint atoms at the end are colored blue, the one common joint atom colored red, and all other atoms have degree $4$.  Longer chains can be constructed similarly.}
  \label{fig:blocknew}
\end{figure} 
\begin{lem}\label{disjointlem} Let $\Mb$ be a molecule. Suppose $\Ab,\Bb\subset\Mb$, each of them is a block or a hyper-block, and $\Ab\not\subset\Bb$, $\Bb\not\subset\Ab$ and $\Ab\cap\Bb\neq\varnothing$.

Let $a_1$ and $a_2$ be the joints of $\Ab$, and $b_1$ and $b_2$ be the joints of $\Bb$. Suppose further that (i) $\Bb\backslash\{b_1,b_2\}$ is connected, and (ii) for any $v\in\Bb\backslash\{b_1,b_2\}$, the subset $\Bb\backslash\{v\}$ is either connected, or has two connected components containing $b_1$ and $b_2$ respectively, and (iii) the same holds for $\Ab$. 

Then $\Ab$ and $\Bb$ are both blocks, and exactly one of the three following scenarios happens: (a) $\Ab$ and $\Bb$ share two common joints and no other common atom, and $\sigma(\Ab)=\sigma(\Bb)=1$, (b) $\Ab$ and $\Bb$ share one common joint and no other common atom, and can be concatenated like in Definition \ref{defblock}; (c) $\Ab$ is formed by concatenating two blocks $\Cb_0$ and $\Cb_1$, and $\Bb$ is formed by concatenating $\Cb_1$ with another block $\Cb_2$ (where $\Cb_0\cap\Cb_2=\varnothing$).
\end{lem}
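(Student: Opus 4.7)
The plan is to analyze the intersection $D:=\Ab\cap\Bb$ by classifying each atom $v\in D$ according to whether it is interior or a joint in $\Ab$ and in $\Bb$. The key propagation fact, which drives the entire argument, is: if $v$ is interior in $\Ab$ then $d_\Ab(v)=4$, so all four $\Mb$-neighbors of $v$ lie in $\Ab$. Combined with the analogous statement for $\Bb$, this gives that if $v$ is interior in both then all four $\Mb$-neighbors lie in $D$, and if $v$ is interior in only $\Ab$ and a joint in $\Bb$, then the two neighbors of $v$ along its $\Bb$-bonds still lie in $D$, so $v$ forces at least two other atoms into $D$.

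The cases $|D|\leq 2$ yield scenarios (a) and (b). By the propagation fact, any atom of $D$ that is interior in either $\Ab$ or $\Bb$ would force $|D|\geq 3$; hence $|D|\leq 2$ forces every $v\in D$ to be a joint of both blocks, so $D\subset\{a_1,a_2\}\cap\{b_1,b_2\}$. The case $|D|=1$ is immediately scenario (b). The case $|D|=2$ forces $\{a_1,a_2\}=\{b_1,b_2\}$, which is scenario (a); the common value $\sigma(\Ab)=\sigma(\Bb)$ equals the number of $\Mb$-bonds between the two shared joints (such bonds have both endpoints in $D$, so automatically lie in both $\Ab$ and $\Bb$), and one rules out $\sigma=2$ via $\Ab\not\subset\Bb$ and $\sigma=0$ via a short argument using the degree constraints at the joints together with the interior-connectedness hypotheses (i)/(iii).

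The main case is $|D|\geq 3$, which must force scenario (c). Here the propagation fact implies that $D$ itself has the structure of a block inside $\Mb$: the atoms of $D$ whose four $\Mb$-neighbors all lie in $D$ form the ``interior'' of $D$ and coincide with the atoms interior to both $\Ab$ and $\Bb$, while the remaining ``boundary'' atoms of $D$ all lie in $\{a_1,a_2,b_1,b_2\}$ and have degree $2$ within $D$. The decisive step is to show, using hypotheses (i)--(iii), that this boundary consists of \emph{exactly two} atoms, one from $\{a_1,a_2\}$ (call it $c_2$) and one from $\{b_1,b_2\}$ (call it $c_1$). Once this is established, one sets $\Cb_1:=D$, $\Cb_0:=(\Ab\setminus D)\cup\{c_1\}$, $\Cb_2:=(\Bb\setminus D)\cup\{c_2\}$; the propagation fact together with (i)--(iii) applied to $\Ab$ and $\Bb$ shows that $\Cb_0$ and $\Cb_2$ are blocks with the expected joint structure, the concatenations $\Ab=\Cb_0\cup\Cb_1$ and $\Bb=\Cb_1\cup\Cb_2$ are built into the construction, and $\Cb_0\cap\Cb_2=\varnothing$ holds automatically since $(\Ab\setminus D)\cap(\Bb\setminus D)=\varnothing$ and $c_1\neq c_2$.

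The main obstacle is the decisive step above. A priori the boundary of $D$ could contain up to four atoms arbitrarily distributed between $\{a_1,a_2\}$ and $\{b_1,b_2\}$, and only one specific pattern is compatible with scenario (c). The connectedness and dichotomy hypotheses are tailored exactly to rule out the wrong patterns: a boundary atom of $D$ that happened to be interior to $\Bb$ would, upon removal from $\Bb$, leave a splitting of $\Bb$ violating the dichotomy in (ii), because the piece coming from $D\setminus\{v\}$ need not contain either $b_1$ or $b_2$; a symmetric argument using (iii) constrains the $\Ab$-side. Hyper-blocks are ruled out along the way: an extra bond between the two joints of $\Ab$ (say) would have both endpoints in $\{a_1,a_2\}$, forcing that bond into $\Bb$ as well whenever $\{a_1,a_2\}\subset\Bb$, and a case-by-case check against each of the three scenarios under the non-containment hypothesis then gives a contradiction. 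Once the boundary of $D$ is pinned down, the remaining verifications in each scenario become routine from the definitions.
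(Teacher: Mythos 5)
The high-level strategy — classify $D=\Ab\cap\Bb$ via the degree/propagation observation, case on $|D|$, and let hypotheses (i)–(iii) pin down the boundary structure of $D$ — is sensible and very likely matches the shape of the cited proof in \cite{DH23}. However, the argument offered for what you yourself call the decisive step is incorrect, and this is a genuine gap rather than a detail to be filled in.

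You claim that a boundary atom $v$ of $D$ that is interior to $\Bb$ leads to a contradiction with hypothesis (ii) because $\Bb\setminus\{v\}$ splits with "the piece coming from $D\setminus\{v\}$" missing both $b_1$ and $b_2$. This does not follow: there is no reason removing a single such $v$ disconnects $\Bb$ at all, since $D$ can meet $\Bb\setminus D$ through several boundary atoms, and even when $D$'s interior only attaches through $\{a_1,a_2\}$, removing $a_1$ still leaves $D$'s interior tied to the rest of $\Bb$ through $a_2$. What actually works, and is what the hypotheses are set up for, is a connectivity argument at the \emph{joints}, not at an interior atom. Suppose both $a_1,a_2\in D$ and both are interior to $\Bb$. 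Since $a_i$ is interior to $\Bb$, all four bonds at $a_i$ lie in $\Bb$; in particular the two $\Ab$-bonds at $a_i$ have both endpoints in $\Ab\cap\Bb=D$, so $a_1,a_2$ have no $\Ab$-bond reaching $\Ab\setminus D$. Combined with the fact that atoms interior to both $\Ab$ and $\Bb$ have all four bonds inside $D$, one sees that $\Ab\setminus\{a_1,a_2\}$ decomposes (after treating the remaining case of $D$-atoms that are joints of $\Bb$ interior to $\Ab$) into pieces with $\Ab\setminus D$ on one side and $D\setminus\{a_1,a_2\}$ on the other, and $\Ab\setminus D\ne\varnothing$ by $\Ab\not\subset\Bb$, so $\Ab\setminus\{a_1,a_2\}$ is disconnected — contradicting the first clause of (iii), not the dichotomy clause of (ii). The symmetric statement with (i) rules out both $b_1,b_2$ being interior to $\Ab$. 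Your write-up invokes the wrong hypothesis at the wrong atom, so as stated the decisive step does not close.

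Two further points are asserted but not established and should not be called routine: (1) the claim $\sigma(\Ab)=\sigma(\Bb)=1$ in scenario (a) — ruling out $\sigma=0$ genuinely needs the no-saturated-component clause of the molecule definition, since with $\sigma=0$ and shared joints each joint would acquire degree $4$ and $\Ab\cup\Bb$ would become a saturated component; and ruling out $\sigma=2$ needs (i)/(iii) again (otherwise a double bond between the shared joints detaches them from the interior); (2) the elimination of hyper-blocks — noting that the extra bond lies in $\Bb$ whenever $\{a_1,a_2\}\subset\Bb$ does not by itself produce a contradiction, and the "case-by-case check" is precisely the content that needs to be written out. I would recommend replacing the appeal to (ii) by the joint-removal argument above and carrying out the two case analyses explicitly before regarding this as a complete proof.
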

\begin{proof} See Lemma 4.11 of \cite{DH23}.\end{proof}
Next, we recall a result regarding the relative position of a block $\Bb\subset\Mb(\Gc)$ in a garden $\Gc$.
\begin{prop}[Structure of blocks in gardens]\label{block_clcn} Let $\Gc$ be a garden of width $2R$ and $\Bb\subset\Mb(\Gc)$ be a block with two joints $v_1$ and $v_2$, and let $\uf_j=\nf(v_j)$ with the notations in Definition \ref{defcplmol}.
\begin{enumerate}[{(1)}]
\item Then, up to symmetry, exactly one of the following two scenarios happens.
\begin{enumerate}[{(a)}]
\item There is a child $\uf_{11}$ of $\uf_1$ and two children $\uf_{21},\uf_{22}$ of $\uf_2$, such that (i) $\uf_{11}$ has the same sign as $\uf_1$, $\uf_{21}$ has sign $+$ and $\uf_{22}$ has sign $-$, (ii) $\uf_2$ is a descendant $\uf_1$ but not of $\uf_{11}$, and (iii) all the leaves in the set $\Gc[\Bb]$ are completely paired, where $\Gc[\Bb]$ denotes all nodes that are descendants of $\uf_1$ but not of $\uf_{11},\uf_{21}$ or $\uf_{22}$ (in particular $\uf_1\in\Gc[\Bb]$ and $\uf_{11},\uf_{21},\uf_{22}\not\in\Gc[\Bb]$). Here we say $\Bb$ is a \emph{(CL) block} and also denote $\uf_{23}$ as the child node of $\uf_2$ other than $\uf_{21}$ and $\uf_{22}$, see Figure \ref{fig:couples_cl}.
\item There is a child $\uf_{11}$ of $\uf_1$ and $\uf_{21}$ of $\uf_2$, such that (i) $\uf_{11}$ has the same sign as $\uf_1$ and $\uf_{21}$ has the same sign as $\uf_2$, (ii) $\uf_2$ is either a descendant of $\uf_{11}$ or not a descendant of $\uf_1$ (similar for $\uf_1$), and (iii) all the leaves in the set $\Gc[\Bb]$ are completely paired, where $\Gc[\Bb]$ denotes all the nodes that are descendants of $\uf_1$ but not of $\uf_{11}$, and all the nodes that are descendants of $\uf_2$ but not of $\uf_{21}$ (in particular $\uf_1,\uf_2\in\Gc[\Bb]$ and $\uf_{11},\uf_{21}\not\in\Gc[\Bb]$). Here we say $\Bb$ is a \emph{(CN) block}, see Figure \ref{fig:couples_cn}.
\end{enumerate}
\item For (CL) blocks $\Bb$ we can define a new garden $\Gc^{\mathrm{sp}}$ by removing all nodes $\mf\in\Gc[\Bb]\backslash\{ \uf_1\}$, and turning $\uf_{11},\uf_{21}$ and $\uf_{22}$ into the three new children of $\uf_1$ with corresponding subtrees attached; here the position of $\uf_{11}$ as a child of $\uf_1$ remains the same as in $\Gc$, and the positions of $\uf_{21}$ and $\uf_{22}$ as children of $\uf_1$ are determined by their signs. Then, the molecule $\Mb^{\mathrm{sp}}=\Mb(\Gc^{\mathrm{sp}})$ is formed from $\Mb$ by merging all the atoms in $\Bb$ (including two joints) into one single atom. We call this operation going from $\Gc$ to $\Gc^{\mathrm{sp}}$ \emph{splicing}.
\end{enumerate}
\end{prop}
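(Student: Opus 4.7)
The strategy is to analyze the structure by examining which bonds at the joints $v_1$ and $v_2$ leave the block, and then to read off the corresponding garden-level configuration from the bond-labeling rules of Definition \ref{defcplmol}. First I would note that, by that definition, each atom $v\in\Mb(\Gc)$ has degree at most $4$, contributed from three sources: (a) a PC bond to the parent $\nf(v)^{\mathrm{pr}}$ (absent when $\nf(v)$ is a root of a tree of $\Gc$), (b) a PC bond to each branching child of $\nf(v)$, and (c) an LP bond for each leaf child of $\nf(v)$ paired with a leaf child of another atom. An interior atom of $\Bb$ (degree $4$ inside $\Bb$) must therefore have all four of these bonds pointing at atoms of $\Bb$; this forces its parent to lie in $\Bb$ and each of its three children either to be a branching node of $\Bb$ or to be a leaf paired to another leaf-child of some atom of $\Bb$. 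Writing $S$ for the set of branching nodes $\{\nf(v):v\in\Bb\}$, this shows that $S$ together with the leaves paired entirely inside $S$ forms a connected subforest of $\Gc$ with a very restricted boundary.

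Next I would focus on the two joints. Each joint $v_j$ has exactly two bonds inside $\Bb$ (one in, one out) and therefore $d(v_j)-2$ bonds leaving $\Bb$, which can only be of the three kinds listed above. Using the sign rule from Definition \ref{deftree} (children of a node of sign $\zeta$ carry signs $(\zeta,-\zeta,\zeta)$) and the direction rule for molecule bonds in Definition \ref{defcplmol}, I would enumerate the possibilities for the outside bonds at $\uf_1$ and $\uf_2$ subject to the in-/out-degree constraint inside $\Bb$. The analysis bifurcates according to whether the parent bond of $\uf_1$ (resp.\ $\uf_2$) is inside or outside $\Bb$. Up to symmetry swapping $\uf_1\leftrightarrow\uf_2$, exactly two configurations survive: either one joint ($\uf_1$) keeps its parent bond outside while the other's is inside $\Bb$, which via the PC-chain forces $\uf_2$ to be a descendant of $\uf_1$ and exactly one child $\uf_{11}$ of $\uf_1$ and two children $\uf_{21},\uf_{22}$ of $\uf_2$ to bear the outside bonds --- this is case (CL), with the sign constraints on $\uf_{11},\uf_{21},\uf_{22}$ coming from which child slot matches the required bond direction at each joint; or both joints have their parent bond outside, forcing $\uf_1$ and $\uf_2$ to be unrelated or connected via $\uf_{11},\uf_{21}$, which is precisely case (CN). In both cases the complete pairing of leaves inside $\Gc[\Bb]$ is immediate from the interior-atom analysis in the first paragraph above, since any leaf child of an atom of $\Bb$ other than the designated $\uf_{1i},\uf_{2i}$ must be paired with another leaf child of an atom of $\Bb$.

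For part (2), I would verify the splicing directly. The CL configuration guarantees that $\uf_{11},\uf_{21},\uf_{22}$ carry signs compatible with being the three children (left/mid/right) of $\uf_1$ in the new garden $\Gc^{\mathrm{sp}}$ per Definition \ref{deftree}, and the discarded nodes $\Gc[\Bb]\setminus\{\uf_1\}$ together with their leaf children pair entirely to each other, so no unpaired leaves are created. Thus $\Gc^{\mathrm{sp}}$ is a well-defined garden of smaller order. On the molecular side, going through Definition \ref{defcplmol} for $\Gc^{\mathrm{sp}}$: the bonds incident to the new atom coming from $\uf_1$ are exactly the outside bonds of $\Bb$ at $v_1,v_2$, while all other bonds of $\Mb(\Gc)$ not touching $\Bb$ persist unchanged; hence $\Mb^{\mathrm{sp}}$ is obtained from $\Mb$ by merging $\Bb$ into a single atom, as claimed.

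The main obstacle I anticipate is the case analysis in the second paragraph: one has to rule out configurations where the outside bonds at $v_1,v_2$ appear to produce two disjoint ``block-like'' subsets of $\Bb$ (which would contradict the definition of a single block with precisely two joints) or where the in-/out-degree balance inside $\Bb$ cannot be achieved consistently with the LP/PC direction rules. Keeping track of in-degree versus out-degree at each joint, in concert with the sign-propagation rule of Definition \ref{deftree}, is the delicate bookkeeping step; once this is handled, the CL/CN dichotomy and the splicing assertion follow by direct inspection.
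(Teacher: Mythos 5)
The paper does not give a direct proof of this proposition: it cites Proposition 4.12 of \cite{DH23} and remarks that the extension from couples to general gardens is immediate, so a line-by-line comparison is not possible. That said, your proposed strategy is the natural one (and, based on how \cite{DH23} typically argues, almost certainly the same as the cited proof): classify interior atoms as having all four bonds inside $\Bb$, count the $d(v_j)-2$ bonds that leave $\Bb$ at each joint, and split into cases by which of those are PC (parent) bonds.

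Two things are glossed over. First, you claim that ``up to symmetry exactly two configurations survive,'' but there are three cases up to the swap $\uf_1\leftrightarrow\uf_2$: parent bond of $\uf_1$ outside / $\uf_2$ outside (CN), one outside / one inside (CL), and both inside. The last case is \emph{not} excluded by symmetry; it must be argued away. The argument is: if $\uf_1^{\mathrm{pr}}$ is inside $\Bb$, then following the PC chain upward from $v_1$ through interior atoms (whose parent bonds are all inside) one must exit $\Bb$ at a joint, which can only be $v_2$; if $\uf_2^{\mathrm{pr}}$ is \emph{also} inside $\Bb$, the chain continues upward through interior atoms and can never exit, yet it must terminate at a root --- but a root has degree $\le 3$ and cannot be an interior atom of a block. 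You should state this. Second, the assertion that the in/out-degree balance at $v_2$ forces $\uf_{23}$ to share $\uf_2$'s sign (hence the remaining children $\uf_{21},\uf_{22}$ carry signs $+$ and $-$) is not spelled out; this is a short but essential computation using the bond-direction rules of Definition \ref{defcplmol}, and you correctly flag it as the ``delicate bookkeeping step.'' Neither is a fatal flaw, but both would need to be carried through to upgrade your sketch to a complete proof.
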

\begin{figure}[h!]
\includegraphics[scale=0.4]{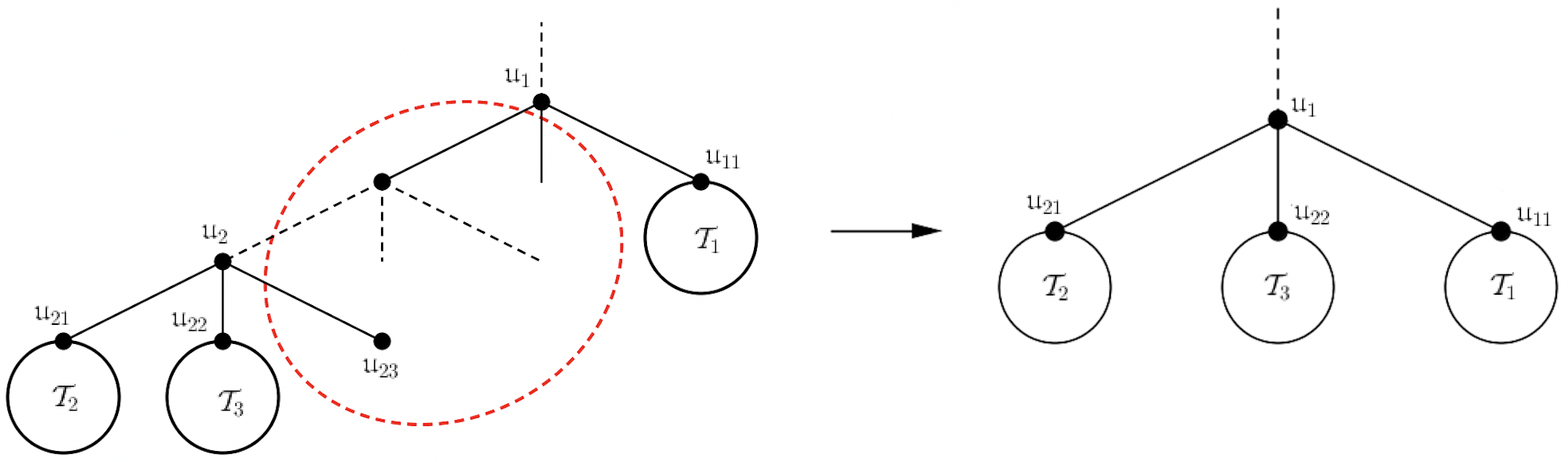}
\caption[Block of type (CL)]{A (CL) block (as in Proposition \ref{block_clcn}) viewed in the garden, together with the result of splicing. Here $\uf_{11}$ is the right (or left) child of $\uf_1$ (assume both have sign $+$) $\uf_{21}$ and $\uf_{22}$ have signs $+$ and $-$ respectively, and all the leaves in the red circle (including $\uf_{23}$) are completely paired.}
\label{fig:couples_cl}
\end{figure}
\begin{figure}[h!]
\includegraphics[scale=0.43]{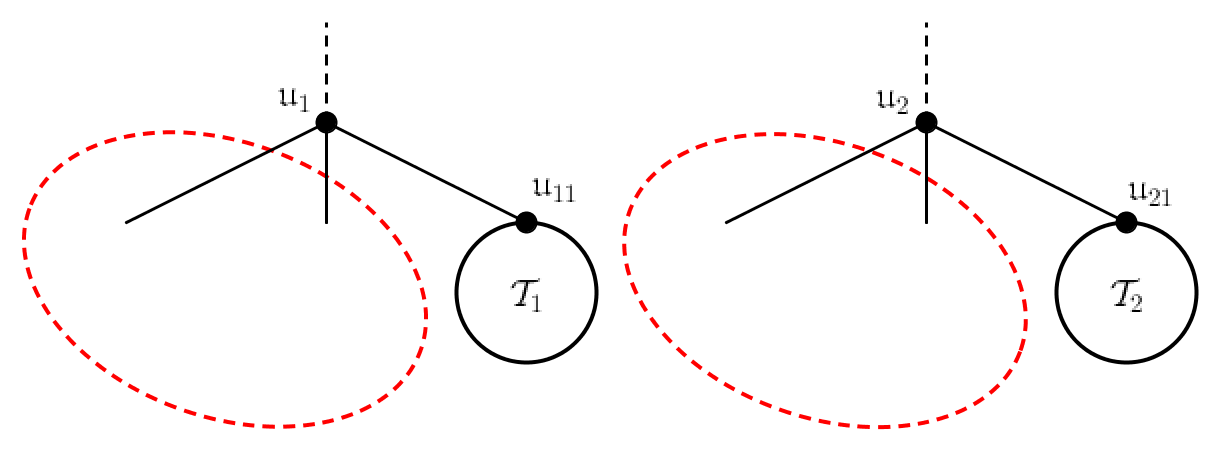}
\caption[Block of type (CN)]{A (CN) block, as in Proposition \ref{block_clcn}, viewed in the garden. Here $\uf_{11}$ is the right (or left) child of $\uf_1$, $\uf_{21}$ is the right (or left) child of $\uf_2$, and all the leaves in the two red circles are completely paired.}
\label{fig:couples_cn}
\end{figure}
\begin{proof} For couples $\Qc$, this is proved in Proposition 4.12 of \cite{DH23}. The general gardens $\Gc$ make no difference.
\end{proof}
\begin{prop}\label{cnblock}
Given a garden $\Gc$ of width $2R$. Consider any (CL) block in $\Mb(\Gc)$, we call it a \emph{root block} if the total degree of its two joints is at most $6$. Then, if we remove \emph{any set of disjoint (CN) and root (CL) blocks}, where by removing a block $\Bb$ we mean removing all non-joint atoms and all bonds $\ell\in\Bb$, then the resulting molecule \emph{has at most $2R$} connected components. If there are exactly $2R$ components, then one of them must be an isolated atom.
\end{prop}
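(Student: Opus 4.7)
The plan is to argue via a contraction (splicing) analysis, reducing the statement about the ``removed'' molecule $\Mb''$ to a statement about a contracted molecule whose connected components are counted via Proposition \ref{propcplmol}. Throughout, the base count will come from the observation that $\Mb(\Gc)$ has one component per irreducible component of $\Gc$, and since each irreducible component has width $\ge 2$, there are at most $R$ components initially, well below the target $2R$. The whole question is therefore how many \emph{new} components each block removal can create.

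First I will set up the contraction. For each root (CL) block $\Bb$ in the chosen disjoint collection, I will use the splicing operation of Proposition \ref{block_clcn}(2), which yields a new garden $\Gc^{\mathrm{sp}}$ of the same width $2R$ whose molecule is obtained from $\Mb(\Gc)$ by merging all atoms of $\Bb$ into a single atom of degree $d(v_1)+d(v_2)-4\leq 2$. For each (CN) block $\Bb$, I will introduce an analogous contraction: using Proposition \ref{block_clcn}(1)(b) and the fact that all leaves in $\Gc[\Bb]$ are completely paired, the two sub-regions of $\Gc[\Bb]$ rooted at $\uf_1$ and $\uf_2$ may be collapsed into a single leaf-pair between $\uf_{11}$ and $\uf_{21}$ (in the garden), which identifies the two joint atoms into one degree-$4$ atom in the molecule. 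Iterating these contractions over all removed disjoint blocks produces a contracted garden $\widetilde{\Gc}$ of width $2R$, whose molecule $\Mb(\widetilde{\Gc})$ has at most $R$ connected components by Proposition \ref{propcplmol}.

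Next I will compare $\Mb''$ with $\Mb(\widetilde{\Gc})$ by ``undoing'' each contraction in turn. Each undoing splits a single contracted atom back into two joint atoms carrying no internal bonds, keeping only their external bonds to the rest of the molecule. For a (CN) block the two joints each carry external degree $2$; splitting can disconnect the two joints into distinct components, increasing the component count by at most one. For a root (CL) block the two joints carry combined external degree $d(v_1)+d(v_2)-4\leq 2$, so the undoing turns a single atom of degree $\leq 2$ into two atoms of total degree $\leq 2$, again increasing the component count by at most one, and any increase forces at least one of the two joints to have no remaining external bond, i.e.\ to be isolated.

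The crux, and the main obstacle, is that a naive per-block sum of ``$+1$'' increases would give $R+(\#\text{blocks})$, which is much weaker than $2R$. To obtain the sharp bound I will assign to each connected component of $\Mb''$ a distinguished ``root representative'': an atom $v$ such that either $\nf(v)$ is the root of one of the $2R$ trees of $\Gc$, or $v$ is a joint of a removed root (CL) block whose other joint sits in a different component of $\Mb''$. Since the blocks are disjoint and each $\Gc[\Bb]$ sits inside the tree structure as described in Proposition \ref{block_clcn}, I expect this assignment to be injective, yielding the $\le 2R$ bound. The equality case $=2R$ will then force each tree-root atom to lie in its own component, and, by the external-degree pigeonholing at the joints of root (CL) blocks, it will force at least one component to reduce to a single isolated joint. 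Setting up this root-representative assignment carefully, and verifying that the (CN) contraction indeed produces a valid garden, will be the main technical points.
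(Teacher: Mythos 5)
Your outline correctly locates the difficulty (the naive per-block ``$+1$'' count is far too weak), but the fix you propose does not close the count, and the approach is missing the two facts that actually make the bound $2R$ work. First, your candidate set for ``root representatives'' --- tree-root atoms together with separated joints of root (CL) blocks --- is not bounded by $2R$: there can be up to $2R$ tree-root atoms \emph{and} additional separated joints, so injectivity of the assignment (which is automatic, since representatives lie in distinct components) gives only $\#\mathrm{components}\leq 2R+2\cdot\#(\text{root (CL) blocks})$. More importantly, nothing in your argument controls the contribution of the (CN) blocks, whose number is not bounded in terms of $R$; charging each of them ``$+1$'' is fatal. The paper handles this globally: removing \emph{all} (CN) blocks produces the molecule of a \emph{generalized} garden ($2R$ trees whose branching nodes have one or three children, each tree retaining an odd number of leaves), which still has at most $R$ components --- this is a structural statement, not a per-block one. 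The root (CL) blocks are then controlled by a degree count (each root block's joints absorb at least $2$ of the total degree deficit $2R$ of $\Mb(\Gc)$, so there are at most $R$ of them), giving $R+R=2R$. Both ingredients are absent from your proposal. Relatedly, your (CN) ``contraction'' merging the two joints into one degree-$4$ atom does not correspond to a garden operation (the joints $\uf_1,\uf_2$ sit in incomparable positions in the tree structure, and $\uf_{11},\uf_{21}$ are roots of subtrees, not leaves that can be paired), so Proposition \ref{propcplmol} cannot be applied to the contracted object.

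Second, your claim that ``any increase forces at least one of the two joints to have no remaining external bond'' is false. A root (CL) block has total external degree at most $2$ at its joints, but this can be split as $(1,1)$: e.g.\ $\uf_1$ is the root of a tree and a child of $\uf_2$ is paired with the root of a trivial tree (the paper's case (ii)), or two children are paired with roots of trivial trees (case (iii)). Removing such a block can separate the joints into two components with neither joint isolated. This breaks your argument for the equality statement. The paper's proof deals with this by classifying root (CL) blocks into cases (i)--(iii), showing case (iii) blocks number at most $R-1$, and treating a distinguished case (i) or (ii) block first (in case (i) producing the isolated atom; in case (ii) restructuring the garden so that the remaining count is at most $2R-1$).
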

\begin{proof} By Proposition \ref{propcplmol} (decomposing $\Gc$ into irreducible gardens if necessary) and counting degrees, we see that there are at most $R$ root (CL) blocks. For each root (CL) block, either (i) one of its joints has degree $2$, or both of its joints have degree $3$. In the latter case, due to a simple discussion using Definition \ref{defcplmol}, we see that there are only two possibilities, namely that (ii) $\uf_1$ is the root of one tree $\Tc_1$ in $\Gc$, and a child of $\uf_2$ (say $\uf_{21}$) is paired with the root of another tree $\Tc_2$ as leaves, or (iii) $\uf_{11}$ and a child of $\uf_2$ (say $\uf_{21}$) are paired with the roots of two trees in $\Gc$ as leaves. Moreover each root (CL) block in case (iii) corresponds to two trivial trees in $\Gc$, so there are at most $R-1$ of them (and there is none when $R=1$).

We may choose a root (CL) block $\Bb_0$ that is either in case (i) or case (ii) (if there are at most $R-1$ root (CL) blocks then we do not need to choose $\Bb_0$ and the proof works the same way). If $\Bb_0$ is case (i), then we first remove all the (CN) blocks. As in the proof of Proposition 4.12 in \cite{DH23}, we may define the molecule $\Mb(\Gc')$ for generalized gardens $\Gc'$ formed by $2R$ trees whose branching nodes have \emph{one or three} children nodes (plus that we only keep the pairing structure but ignore the signs of nodes and directions of bonds), similar to Definition \ref{defcplmol}. Then, the resulting molecule after removing all the (CN) blocks will be $\Mb(\Gc')$, where $\Gc'$ is a generalized garden such that for  each removed (CN) block, only nodes $\{\uf_1,\uf_2,\uf_{11},\uf_{21}\}$ from $\Gc[\Bb]$ remain in $\Gc'$, and that $\uf_{11}$ is the only child of $\uf_1$ and $\uf_{21}$ is the only child of $\uf_2$.

It is now easy to see that $\Mb(\Gc')$ has at most $R$ connected components, because each tree still has an odd number of leaves that cannot all be paired with each other. We next remove $\Bb_0$ which is in case (i), and generates one new connected component which is an isolated atom; subsequently, removing the remaining $R-1$ root (CL) blocks generates at most $R-1$ new connected components, so the result is true. Finally, if $\Bb_0$ is in case (ii), then we first remove $\Bb_0$, the resulting molecule $\Mb'$ will be $\Mb(\Gc')$ plus two separate extra single bonds, where $\Gc'$ is the new garden with the two trees $\Tc_1$ and $\Tc_2$ in $\Gc$ replaced by the two trees rooted at $\uf_{11}$ and 
$\uf_{22}$. Since all the (CN) blocks in $\Mb(\Gc)$ are are also (CN) blocks in $\Mb(\Gc')$, we can repeat the above argument and still get at most $2R-1$ connected components. This completes the proof.
\end{proof}
\begin{rem}\label{rem_realiz}  The set $\Gc[\Bb]$, as defined in Proposition \ref{block_clcn}, will be called the \emph{realization} of the block $\Bb\subset\Mb(\Gc)$ in $\Gc$. Denote also $\Nc[\Bb]$ and $\Lc[\Bb]$ to be the set of branching nodes and leaves in $\Gc[\Bb]$.
\end{rem}
\begin{cor}\label{blockchainprop} Let $\Gc$ be a garden and $\Bb\subset\Mb(\Gc)$ be a block or hyper-block that is concatenated by at least two blocks $\Bb_j\,(1\leq j\leq m)$ as in Definition \ref{defblock}, where $m\geq 2$. Then at most one $\Bb_j$ can be a (CN) block. If $\Bb$ is a block and all $\Bb_j$ are (CL) blocks, then $\Bb$ is a (CL) block. If $\Bb$ is a block and there is one (CN) block $\Bb_j$, then after doing splicing at all other (CL) blocks, this $\Bb$ becomes a single (CN) block $\Bb_j$.
\end{cor}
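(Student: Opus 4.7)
The strategy is induction on $m$, grounded in the following dichotomy extracted from Proposition \ref{block_clcn}: for any block $\Bb$ with joints $v_1\leftrightarrow\uf_1$ and $v_2\leftrightarrow\uf_2$, the tree-parent $\uf_2^{\mathrm{pr}}$ lies in the realization $\Gc[\Bb]$ if and only if $\Bb$ is (CL) (in which case $\uf_2$ is a descendant of $\uf_1$ via a path entirely inside $\Gc[\Bb]$), while the tree-parent $\uf_1^{\mathrm{pr}}$ always lies outside $\Gc[\Bb]$ (as do both tree-parents in the (CN) case). Equivalently, a (CL) block has exactly one joint whose PC-parent bond is internal to the block, whereas a (CN) block has none.

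I would first establish the base case $m=2$. Let $v^{*}$ be the unique shared joint of $\Bb_1,\Bb_2$. Since $v^{*}$ has degree $2$ inside each $\Bb_j$, it has degree $4$ inside $\Bb=\Bb_1\cup\Bb_2$, hence degree $4$ in $\Mb(\Gc)$, so $\uf(v^{*})$ admits a tree-parent $\uf(v^{*})^{\mathrm{pr}}$ and the corresponding PC-bond lies entirely in $\Bb$. Consequently $\uf(v^{*})^{\mathrm{pr}}$ must be an atom of $\Bb_1$ or of $\Bb_2$, which by the dichotomy forces at least one of $\Bb_1,\Bb_2$ to have $v^{*}$ as its inner joint $\uf_2$; this already rules out (CN)+(CN). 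In the (CL)+(CL) case, $v^{*}$ is simultaneously $\uf_1$ of one block and $\uf_2$ of the other, so the two realizations nest to form a single subtree-region rooted at the outer $\uf_1$, and checking the three exit children against Proposition \ref{block_clcn}(a) shows that $\Bb$ is (CL). In the (CL)+(CN) case, $v^{*}$ must be the inner joint of the (CL) and one of the joints of the (CN), so the two non-shared joints both have tree-parents outside $\Gc[\Bb]$, and the realization splits into the two subtree-regions required by Proposition \ref{block_clcn}(b), showing that $\Bb$ is (CN).

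For the inductive step I would apply the splicing operation of Proposition \ref{block_clcn}(2). Any (CL) block $\Bb_j$ in the chain collapses to a single degree-$4$ atom $v_j^{\mathrm{sp}}$, producing a new garden $\Gc^{\mathrm{sp}}$ and molecule $\Mb^{\mathrm{sp}}$. The neighboring blocks $\Bb_{j-1},\Bb_{j+1}$ remain blocks in $\Mb^{\mathrm{sp}}$: each has its former shared joint with $\Bb_j$ relabeled as $v_j^{\mathrm{sp}}$, while its realization and (CL)/(CN) classification are unchanged. Iterating, if every $\Bb_j$ is (CL) we reduce to a single atom, and tracing back through the base-case (CL)+(CL) analysis yields that $\Bb$ is itself (CL). If exactly one $\Bb_j$ is (CN), splicing all the other blocks reduces the chain to this single (CN) block, which is the last assertion of the corollary. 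If two or more $\Bb_j$ were (CN), splicing away the (CL) blocks between them would eventually produce two adjacent (CN) blocks, contradicting the base case and thereby establishing the ``at most one (CN)'' claim in general.

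The main obstacle will be the bookkeeping needed to verify that splicing a (CL) block $\Bb_j$ really leaves the neighboring blocks intact with their type preserved. One must confirm that the merged atom $v_j^{\mathrm{sp}}$, viewed inside $\Bb_{j\pm 1}\subset\Mb^{\mathrm{sp}}$, plays the same structural role as the original shared joint, and that the PC/LP bond labeling of Definition \ref{defcplmol} together with the realization description of Proposition \ref{block_clcn} transfer correctly across the splicing. This amounts to a careful case analysis using Proposition \ref{block_clcn}(2), but care is needed at the interface where the realization of $\Bb_j$ is collapsed, since many leaves and descendant nodes of $\uf_1^{(j)}$ are discarded in $\Gc^{\mathrm{sp}}$ and one must confirm that none of them was essential to the realization of the surviving neighbor.
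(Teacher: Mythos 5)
The paper offers no proof of this corollary beyond the citation of Corollary 4.13 of \cite{DH23}, so your argument has to be judged on its own terms, and on those terms it is correct. The dichotomy you extract is valid and is exactly the right invariant: in case (a) of Proposition \ref{block_clcn} one has $\uf_2^{\mathrm{pr}}\in\Gc[\Bb]$ (it is a descendant of $\uf_1$ and not of $\uf_{11},\uf_{21},\uf_{22}$) while $\uf_1^{\mathrm{pr}}\notin\Gc[\Bb]$, and in case (b) neither parent lies in $\Gc[\Bb]$; since an atomic group contains all bonds between its atoms, this is equivalent to your ``internal PC-parent bond'' test, and since exactly one of (a), (b) occurs for any block, the test characterizes (CL) blocks and singles out the inner joint. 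Your base case then goes through as written. The bookkeeping you flag at the end does close: after splicing $\Bb_j$ the merged atom corresponds to the node $\uf_1^{(j)}$, and whether its parent bond is internal to the surviving neighbour is unchanged --- if the shared joint was the inner joint of $\Bb_j$, its parent bond was internal to $\Bb_j$ and hence external to the neighbour both before and after splicing, while if it was the outer joint, the node (hence its parent bond) is literally unchanged --- so the neighbour remains a block of the same type. That said, for the ``at most one (CN)'' claim you can bypass splicing entirely: each of the $m-1$ shared joints has degree $2$ in each of its two blocks, hence degree $4$ in $\Mb(\Gc)$, so its PC-parent bond exists and lies in exactly one of the two adjacent blocks, forcing that block to be (CL) with that joint as inner joint; since a (CL) block has a unique inner joint these $m-1$ forcings hit $m-1$ distinct blocks, leaving at most one that can be (CN). This shortcut also handles the hyper-block case of the statement, which concerns only this first claim and which your write-up does not explicitly address.
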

\begin{proof} See Corollary 4.13 of \cite{DH23}.
\end{proof}
\subsubsection{Vines and ladders} Next we recall the definition of vines and ladders in \cite{DH23}.
\begin{df}[Vines and ladders \cite{DH23}]\label{defvine}\emph{Vines} are defined as the blocks (I)--(VIII) illustrated in Figure \ref{fig:vines}. We also define the notion of \emph{ladders} as illustrated in Figure \ref{fig:vines}. We require each ladder to have $m+1\geq 2$ double bonds, and that each pair of two parallel single bonds must have opposite directions. Define the \emph{length} of a ladder to be $m\geq 1$. We refer to vines (I)--(II) as \emph{bad vines}, and vines (III)--(VIII) as \emph{normal vines}. Note that $\sigma(\Vb)=0$ for all vines $\Vb$ except vines (V) and vines (I) (see Definition \ref{defblock}), for which $\sigma(\Vb)=1$ and $\sigma(\Vb)=2$ respectively.

Define \emph{hyper-vines} (or HV for short) to be the hyper-blocks that are adjoints of vines, as in Definition \ref{defblock}. We also define \emph{vine-chains} (or VC's), resp. \emph{hyper-vine-chains} (or HVC's), to be the blocks, resp. hyper-blocks, that are formed by concatenating finitely many vines as in Definition \ref{defblock} (these vines are called \emph{ingredients}). Note that a single vine is viewed as a VC, but an HV is not viewed as an HVC. It is easy to verify that assumptions (i) and (ii) in Lemma \ref{disjointlem} hold for any HV, VC or HVC. For simplicity, we will refer to any HV, VC or HVC as \emph{vine-like objects}.

Note that, if the molecule $\Mb=\Mb(\Gc)$ comes from a garden, then any vine could be a (CL) or (CN) vine depending on whether it is a (CL) or (CN) block.
\end{df}
  \begin{figure}[h!]
  \includegraphics[scale=.12]{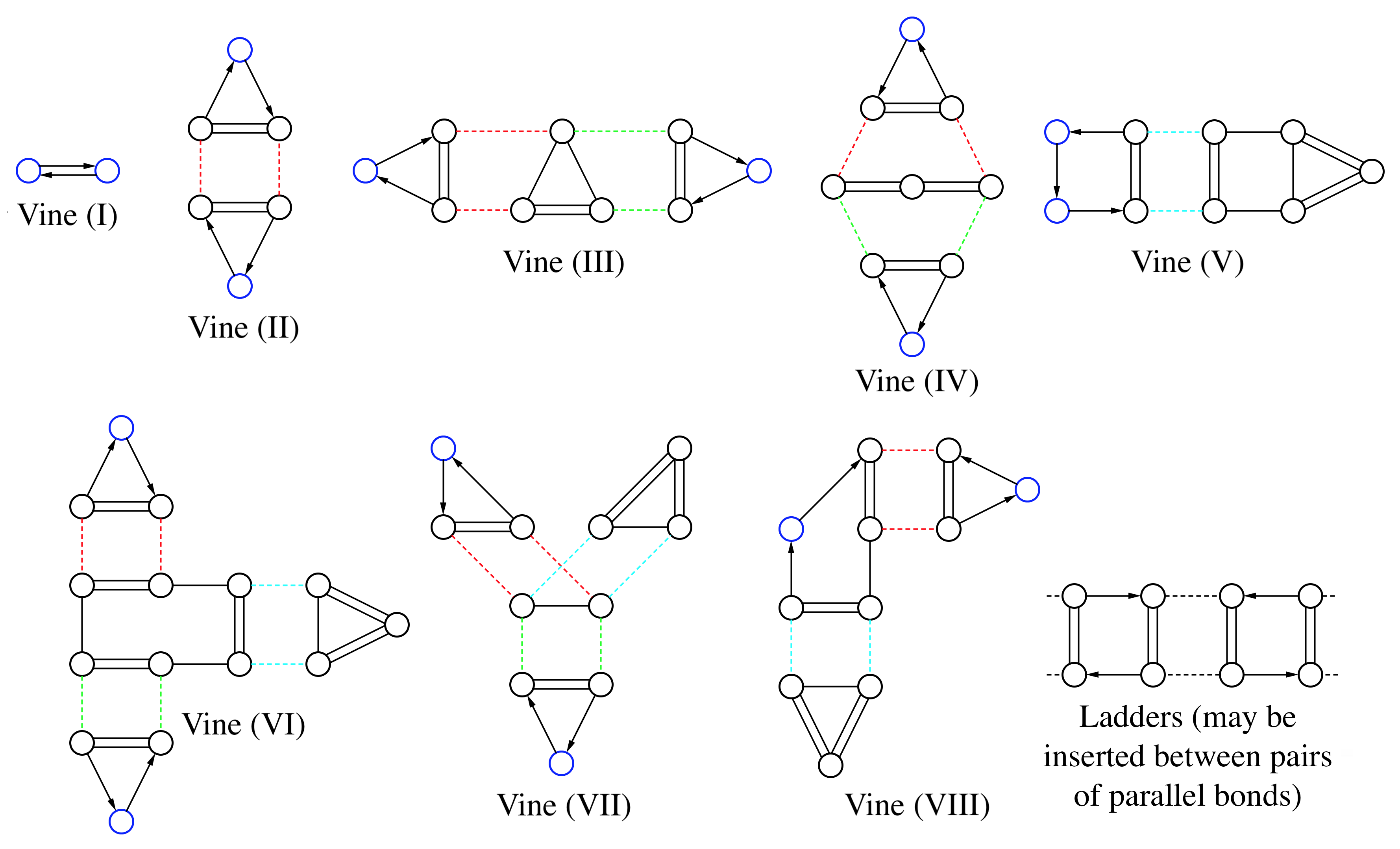}
  \caption[Block families (I)--(V)]{Vines (I)--(VIII). Here some bonds are drawn with directions, indicating that certain pairs of bonds must have opposite directions (see Definition \ref{defblock}). Moreover, a ladder may be inserted between each pair of parallel bonds that are drawn as dashed lines (distinguished by different colors).}
  \label{fig:vines}
\end{figure} 
The next proposition discussed the relative position of (CL) vines (I), and the part of (CL) vines (II) near one of its joints, in a garden.
\begin{prop}[Structure of vines in gardens]\label{molecpl} Consider a (CL) vine $\Vb\subset\Mb(\Gc)$ with joints $v_1$ and $v_2$. Let $\uf_j=\nf(v_j)$, by Proposition \ref{block_clcn} we may assume $\uf_2$ is a descendant of $\uf_1$, and also specify two children $\uf_{21}$ and $\uf_{22}$ of $\uf_2$ that have signs $+$ and $-$ respectively; let $\uf_{23}$ be the other child of $\uf_2$ (as in Proposition \ref{block_clcn}), note that $\uf_{23}$ has the same sign as $\uf_2$.
\begin{enumerate}[{(1)}]
\item If $\Vb$ is Vine (I), then exactly one of the following two scenarios happens. See Figure \ref{fig:block_mole}.
\begin{enumerate}[{(a)}]
\item Vine (I-a): $\uf_2$ is the left or right child of $\uf_1$, and $\uf_{23}$ is paired to the middle child $\uf_0$ of $\uf_1$ as leaves.
\item Vine (I-b): $\uf_2$ is the middle child of $\uf_1$, and $\uf_{23}$ is paired to the left or right child $\uf_0$ of $\uf_1$ as leaves.
\end{enumerate}
\item If $\Vb$ is Vine (II), then $v_2$ is connected to two atoms $v_3$ and $v_4$ by single bonds, while $v_3$ and $v_4$ are connected by a double bond. Let $\uf_j=\nf(v_j)$, then (up to symmetry) exactly one of the following five scenarios happens. See Figure \ref{fig:block_mole}.
\begin{enumerate}[{(a)}]
\item Vine (II-a): $\uf_2$ is a child of $\uf_4$, and $\uf_{23}$ is paired with one child $\uf_0$ of $\uf_3$ as leaves, and the other two children of $\uf_4$ are paired with the other two children of $\uf_3$ as leaves. Here neither $\uf_3$ nor $\uf_4$ is a descendant of the other, but they have a common ancestor, namely $\uf_1$.

\item Vine (II-b): $\uf_2$ is a child of $\uf_3$, and $\uf_{23}$ is paired with one child $\uf_0$ of $\uf_4$ as leaves, and the other two children of $\uf_3$ are paired with the other two children of $\uf_4$ as leaves. Here neither $\uf_4$ nor $\uf_3$ is a descendant of the other, but they have a common ancestor, namely $\uf_1$.

\item Vine (II-c): $\uf_4$ is a child of $\uf_3$ and $\uf_2$ is a child of $\uf_4$. One of the the other two children of $\uf_3$ is paired with one of the other children of $\uf_4$ as leaves, and the remaining child $\uf_0$ of $\uf_3$ is paired with $\uf_{23}$ as leaves. Here $\uf_3$ is a descendant of $\uf_1$.

\item Vine (II-d): $\uf_2$ and $\uf_4$ are two children of $\uf_3$, and $\uf_{23}$ is paired with one child $\uf_0$ of $\uf_4$ as leaves, and the remaining child of $\uf_3$ is paired with another child of $\uf_4$ as leaves. Here $\uf_3$ is a descendant of $\uf_1$.

\item Vine (II-e): $\uf_2$ is a child of $\uf_3$, and $\uf_4=\uf_{23}$. The other two children of $\uf_3$ are paired with two of the children of $\uf_4$ as leaves. Here $\uf_3$ is a descendant of $\uf_1$.
\end{enumerate}
\end{enumerate}
\begin{figure}[h!]
\includegraphics[scale=0.2]{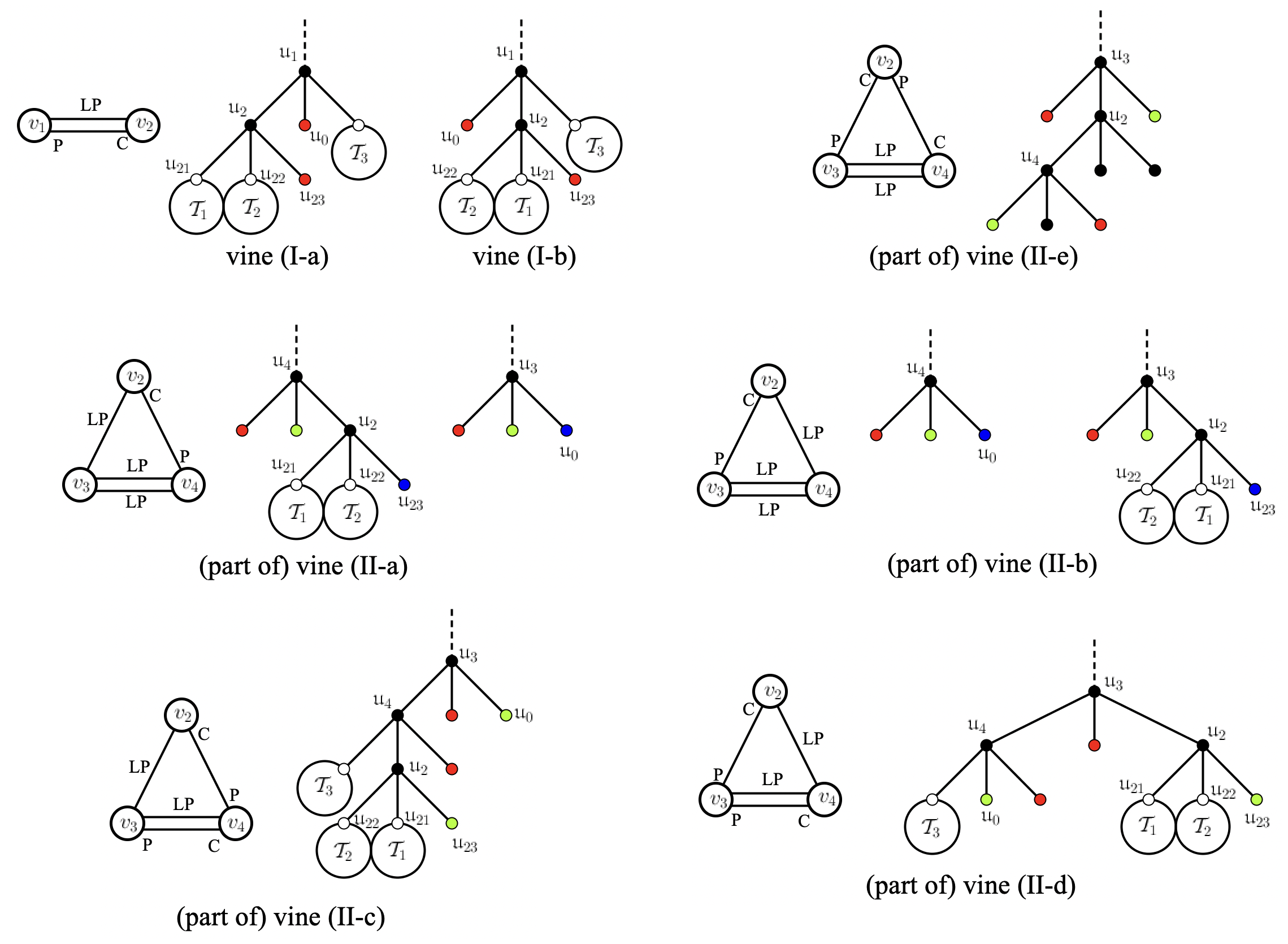}
\caption[Block families]{Vines (II-a)--(II-e) and (I-a)--(I-b) in gardens (the relevant positions of nodes may vary). Here the labels of the bonds in the molecule are indicated as in Definition \ref{defcplmol}. The nodes in white color are called \emph{free children}, with a tree $\Tc_j$ root at each of them; these notions will be used in Definition \ref{twist} below.}
\label{fig:block_mole}
\end{figure}
For simplicity, below we will call a (CL) vine $\Vb$ \emph{core} if it is bad and not Vine (II-e), and \emph{non-core} if it is normal or Vine (II-e).
\end{prop}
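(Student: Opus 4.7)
My plan is to treat the two claims as combinatorial case analyses built on top of Proposition \ref{block_clcn}, using the bond labeling conventions from Definition \ref{defcplmol} (PC versus LP bonds and the P/C markers at atoms) to translate molecular bond data into ancestor/descendant and leaf-pairing data in the garden. The overall scheme is: fix the (CL) realization $\Gc[\Bb]$ from Proposition \ref{block_clcn}(1)(a), note that its set of branching nodes must be exactly the atoms of the vine, and then identify each bond of the vine as either a parent-child link or a leaf pairing. The sign rules of Definition \ref{deftree} together with the requirement that all leaves of $\Gc[\Bb]$ be paired within $\Gc[\Bb]$ will then pin down the allowed configurations.

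For Vine (I), the atoms are just $v_1,v_2$ joined by a double bond. By Proposition \ref{block_clcn}(1)(a), $\uf_2$ is a descendant of $\uf_1$, with designated children $\uf_{11}$ (same sign as $\uf_1$), $\uf_{21}$ (sign $+$), $\uf_{22}$ (sign $-$), and $\Gc[\Bb]$ consists of descendants of $\uf_1$ not under any of these three nodes. Since the only branching nodes of $\Gc[\Bb]$ are $\uf_1$ and $\uf_2$, the node $\uf_2$ must in fact be a \emph{direct} child of $\uf_1$. The sign constraint forces $\uf_{11}$ to be a left or right child (not the middle, whose sign is $-\zeta_{\uf_1}$). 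Writing $\uf_0$ for the remaining child of $\uf_1$ and $\uf_{23}$ for the remaining child of $\uf_2$, both must be leaves (otherwise $\Gc[\Bb]$ would contain additional branching nodes), and they must be paired to each other as these are the only leaves in $\Gc[\Bb]$. The dichotomy between (I-a) and (I-b) is then exactly whether $\uf_2$ sits in a $\pm\zeta_{\uf_1}$-slot among the children of $\uf_1$, i.e.\ whether $\uf_2$ is a left/right child (so $\uf_0$ is the middle child) or the middle child (so $\uf_0$ is left or right).

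For Vine (II), the realization $\Gc[\Bb]$ has the four branching nodes $\uf_1,\uf_2,\uf_3,\uf_4$ and an even number of paired leaves. I will read each bond of the vine off against its label: each bond in $\Mb(\Gc)$ is either PC (a parent-child edge in the garden) or LP (a leaf-pairing edge among children of the two endpoint atoms), and the P/C labels at each atom limit how many of its incident bonds can be PC bonds going to its children. In particular, the double bond $v_3\!-\!v_4$ is either (i) two leaf pairs between children of $\uf_3$ and $\uf_4$ with neither an ancestor of the other, or (ii) one parent-child bond plus one leaf pair, with one of $\uf_3,\uf_4$ a child of the other. Similarly for the single bonds $v_2\!-\!v_3$ and $v_2\!-\!v_4$. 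Combined with the (CL) requirement that $\uf_2$ lies below $\uf_1$ and with the bookkeeping on which children of $\uf_2$ are $\uf_{21},\uf_{22},\uf_{23}$, a straightforward enumeration (organized by the ancestral ordering among $\{\uf_2,\uf_3,\uf_4\}$ and by whether $\uf_{23}$ is paired as a leaf or itself coincides with a branching node) produces exactly the five listed subcases; in each one the remaining free child $\uf_0$ is the unique leaf in $\Gc[\Bb]$ that is forced to pair with $\uf_{23}$.

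The main obstacle will be to verify completeness of the case list for Vine (II) without duplication: the labeling ambiguities (the two bonds of the double bond $v_3\!-\!v_4$ are \emph{a priori} interchangeable, and so are the two single bonds $v_2\!-\!v_3$ versus $v_2\!-\!v_4$ after one breaks symmetry by designating which of $v_3,v_4$ plays which role) mean that several apparently distinct bond assignments collapse to the same configuration up to the ``up to symmetry'' clause. The cleanest way to organize the enumeration will be to first decide the ancestral ordering on $\{\uf_3,\uf_4\}$ (one is a descendant of the other, or they are siblings under a common ancestor $\uf_1$), and within each choice to decide whether the single bond between $v_2$ and its nearest $v_j$ realizes as the parent-child bond $\uf_j\!\to\!\uf_2$ or as a leaf pair between children. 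After that, the free-child analysis and the identification of $\uf_0,\uf_{23}$ proceed mechanically, and the sign constraints simply serve to validate that each configuration is actually achievable.
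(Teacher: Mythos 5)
The paper does not actually prove this proposition in-house: its ``proof'' is a one-line citation to Proposition 5.3 of \cite{DH23}, so there is no internal argument to compare against. That said, your plan is the natural (and, as far as one can reconstruct, the intended) one: translate each bond of the vine into either a parent--child (PC) relation or a leaf pairing (LP) via Definition \ref{defcplmol}, and enumerate. Your Vine (I) argument is complete and correct, including the sign check that $\uf_0$ and $\uf_{23}$ have opposite signs in both subcases. For Vine (II), the crux you correctly isolate is that the parent of $\uf_2$ lies in $\Gc[\Bb]$ (it is a descendant of $\uf_1$ and not below $\uf_{11},\uf_{21},\uf_{22}$), hence corresponds to an atom of the vine adjacent to $v_2$, i.e.\ to $\uf_3$ or $\uf_4$; from there exactly one of the two bonds at $v_2$ is the PC bond to the parent, the other is LP or is PC with $v_2$ labeled P (forcing $\uf_{23}\in\{\uf_3,\uf_4\}$, which is case (II-e)), and the double bond $v_3$--$v_4$ is either LP+LP or LP+PC, yielding the five cases up to symmetry after discarding the combinations ruled out by acyclicity and the single-parent constraint.

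One inaccuracy to fix: you assert that for Vine (II) the realization $\Gc[\Bb]$ has exactly the four branching nodes $\uf_1,\uf_2,\uf_3,\uf_4$. This is false in general --- by Definition \ref{defvine} a ladder of arbitrary length may be inserted into Vine (II), so the vine typically has $2m+2$ atoms and $\Gc[\Bb]$ correspondingly many branching nodes. This does not break your argument, because the entire analysis is local to $v_2$ and the adjacent pair $v_3,v_4$ (and the global claims ``$\uf_3,\uf_4$ are descendants of $\uf_1$'' follow simply from $\uf_3,\uf_4\in\Gc[\Bb]$), but the enumeration should be phrased so that it does not rely on $\uf_3,\uf_4$ being the only other branching nodes. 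Finally, when checking non-duplication, note that (II-a)/(II-b) and (II-c)/(II-d) are genuinely distinct garden realizations of the \emph{same} labeled molecule (they are exchanged by the unit twist of Definition \ref{twist}), so the ``up to symmetry'' clause refers to the relabeling $v_3\leftrightarrow v_4$ only, not to an identification of these pairs.
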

\begin{proof} See Proposition 5.3 of \cite{DH23}.
\end{proof}
\begin{rem} Note that the classification of vines (I)--(VIII) only involves the structure of the molecule $\Mb$ (as a directed graph), but the distinction between (CL) and (CN) vines, as well as classification of vines (II-a)--(II-e), is intrinsic to the structure of the \emph{garden} $\Gc$; for instance, it does not make sense to talk about (CL) vines or vines (II-e) if $\Mb$ does not have the form $\Mb(\Gc)$.
\end{rem}
\subsection{Twists}\label{sectwist}
By exploiting the structure of bad (CL) vines in a garden, as described in Proposition \ref{molecpl}, we can define the operation of \emph{twisting}, which captures the cancellation between such vines.
\begin{df}[Twists \cite{DH23}]\label{twist} Let $\Gc$ be a given garden with the corresponding molecule $\Mb(\Gc)$, and let $\Vb\subset\Mb(\Gc)$ be a core (CL) vine as in Proposition \ref{molecpl}. Let $v_j$ and $\uf_j$ for $1\leq j\leq 4$ be as in Proposition \ref{molecpl}, we shall define a new garden $\Gc'$, which we call a \emph{unit twist} of $\Gc$, as follows.

First, in $\Gc'$, let any possible parent-child relation, as well as any possible children pairings, between $\uf_3$ and $\uf_4$, be exactly the same as in $\Gc$. Next, let the structure of $\Gc'$ \emph{excluding the subtrees rooted at $\uf_3$ and $\uf_4$, or the subtree rooted at $\uf_1$ for Vine (I)}, be exactly the same as $\Gc$. Moreover, consider the \emph{free children} in Figure \ref{fig:block_mole}, i.e. the nodes in white color; we require that the positions of the two free children $\uf_{21}$ and $\uf_{22}$ (as children of $\uf_2$), as well as the positions of the two subtrees (namely $\Tc_1$ and $\Tc_2$) rooted at them, be \emph{switched}\footnote{They are switched because the sign of $\uf_2$ is changed (see Remark \ref{twistexplain}); if we locate $\uf_{21}$ as the child of $\uf_{2}$ other than $\uf_{23}$ that has sign $+$ (and same for $\uf_{22}$ but with sign $-$), then this remains the same for both gardens.} in $\Gc'$ compared to $\Gc$. For the other free child (if it exists), we require that its position (as a child of $\uf_1$ or $\uf_4$) and the subtree (namely $\Tc_3$) rooted at it, be exactly the same in $\Gc'$ as in $\Gc$. Then, it is easy to see that there are exactly two options to insert $\uf_2$, one as a child of $\uf_3$, and the other as a child of $\uf_4$ (for Vine (I), the two options are children of $\uf_1$ that has the same or opposite sign with $\uf_1$). One of these two choices leads to $\Gc$, and we define the garden given by the other choice as $\Gc'$. Clearly $\Gc'$ is prime iff $\Gc$ is, where we recall the notion of prime gardens in Proposition \ref{propstructure2}.

In general, suppose we start with a collection of (CL) vines $\Vb_j\subset\Mb(\Gc)\,(0\leq j\leq q-1)$, such that any two are either disjoint or only share one common joint and no other common atom (i.e. the union of all $\Vb_j$ equals the disjoint union of finitely many VC's and HVC's). Then, we call any garden $\Gc'$ a \emph{twist} of $\Gc$, if $\Gc'$ can be obtained from $\Gc$ by performing the unit twist operation at a subset of these blocks, which only contains core vines. In particular, for any given $\Gc$ and $\Vb_j$, the number of possible twists is a power of two, and at most $2^q$.
\end{df}
Since the notion of twisting is of vital importance in our proof (especially in Section \ref{cancelvine}), we will make several remarks below explaining Definition \ref{twist} in more detail.
\begin{rem}\label{explaintwist} We discuss an example of the (unit) twist operation in Definition \ref{twist}. Suppose $\Vb$ is Vine (II-c) or (II-d) in Figure \ref{fig:block_mole}. Then we have that:
\begin{itemize}
\item The node $\uf_4$ is the left child of $\uf_3$, and the middle child of $\uf_3$ is paired with the right child of $\uf_4$ as leaves.
\item The left child of $\uf_4$ is a free child with subtree $\Tc_3$. The left and middle children of $\uf_2$ are the two free children $\uf_{21}$ and $\uf_{22}$ (or $\uf_{22}$ and $\uf_{21}$), with the subtrees rooted at $\uf_{21}$ and $\uf_{22}$ being $\Tc_1$ and $\Tc_2$ respectively.
\item $\uf_2$ is a child of $\uf_3$ (or $\uf_4$), and the right child $\uf_{23}$ of $\uf_2$ is paired to a child of $\uf_4$ (or $\uf_3$) as leaves.
\end{itemize}

Now by Definition \ref{twist}, all these properties must hold in both $\Gc$ and $\Gc'$; also the structure of $\Gc$ and $\Gc'$, excluding the subtree rooted at $\uf_3$, must be the same. This leaves only two possibilities: either $\uf_2$ is \emph{the middle child} of $\uf_4$ and $\uf_{23}$ is paired to \emph{the right child} of $\uf_3$ as leaves, or $\uf_2$ is \emph{the right child} of $\uf_3$ and $\uf_{23}$ is paired to \emph{the middle child} of $\uf_4$ as leaves. These are exactly vines (II-c) and (II-d) in in Proposition \ref{molecpl}. Note that for vines (II-c), $\uf_{22}$ is the left child of $\uf_2$ and $\uf_{21}$ is the middle child, while for vines (II-d) $\uf_{21}$ is the left child and $\uf_{22}$ is the middle child, which is consistent with the description in Definition \ref{twist}.

In the same way, we can see that performing one unit twist operation \emph{exactly switches vines (I-a), (II-a), (II-c) vines with vines (I-b), (II-b), (II-d) vines}, respectively.
\end{rem}
\begin{rem}\label{twistexplain} Throughout the proof below, for any fixed (CL) vine $\Vb$, we always adopt the notations $(\uf_1,\uf_2,\uf_{11},\uf_{21},\uf_{22})$ as in Proposition \ref{block_clcn}; for bad (CL) vines we also adopt the notations $(\uf_3,\uf_4,\uf_{23},\uf_0)$ as in Proposition \ref{molecpl}, whenever applicable. The following useful facts are easily verified from Definition \ref{twist}. They are stated for unit twists but can be extended to general twists.
\begin{enumerate}[{(a)}]
\item Let $\Gc$ and $\Gc'$ be unit twists of each other at a bad (CL) vine $\Vb\subset\Mb(\Gc)$, then $\Mb(\Gc)$ and $\Mb(\Gc')$ are the same as directed graphs. They also have the same labelings of bonds, except at the atom $v_2$, where the labels of the two bonds connecting $v_2$ to atoms in $\Vb$ are switched (one label is PC with $v_2$ labeled C and the other label is LP), see Figure  \ref{fig:block_mole}.
\item The values of $\zeta_{\uf_j}$ for any branching node $\uf_j\,(j\neq 2)$ are the same for $\Gc$ and $\Gc'$, while the values of $\zeta_{\uf_2}$ are the opposite for $\Gc$ and $\Gc'$.
\item If we do splicing (as defined in Proposition \ref{block_clcn}) for $\Gc$ and the (CL) vine $\Vb$, or for $\Gc'$ and the same (CL) vine $\Vb$ (as shown in (a) above), then the two resulting \emph{gardens}, defined as $\Gc^{\mathrm{sp}}$ and $(\Gc')^{\mathrm{sp}}$, are the same.
\item The $k$-decorations of $\Gc$ are in bijection with $k$-decorations of $\Gc'$, where the values of $k_\mf$ for any branching node or leaf $\mf$ are the same in both cases, but we \emph{switch} the values of $k_{\uf_2}$ and $k_{\uf_{23}}$, see Figure \ref{fig:twist_dec}.
\item Moreover, for any $k$-decoration of $\Gc$ and the corresponding $k$-decoration of $\Gc'$ as in (d), the decorations of $\Gc^{\mathrm{sp}}=(\Gc')^{\mathrm{sp}}$ \emph{inherited} from the  are the same; here \emph{inheriting} means that the value of $k_\mf$ is kept the same for any $\mf$, whether it is viewed as a node of $\Gc$ or $\Gc^{\mathrm{sp}}$ (this notion will also be used in other similar settings).
\end{enumerate}
\end{rem}
\begin{figure}[h!]
\includegraphics[scale=0.2]{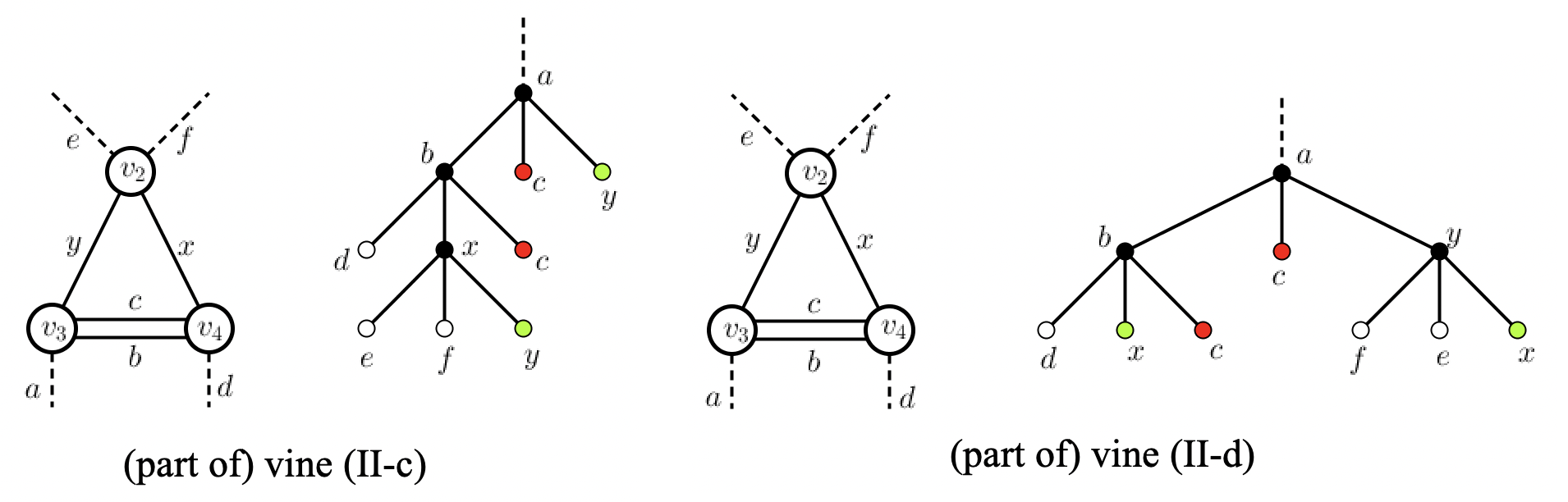}
\caption{Decorations for two gardens with Vine (II-c) and Vine (II-d) which are unit twists of each other, see Remark \ref{twistexplain}. Note that, in the context of Figure \ref{fig:block_mole}, the values of $k_{\mf}$ are the same for each $\mf$, except that the values of $k_{\uf_2}$ and $k_{\uf_{23}}$ are switched.}
\label{fig:twist_dec}
\end{figure}
\subsection{Layered full twists}\label{secfulltwist} Note that the definition and discussion of twists in Section \ref{sectwist} involves only the garden structure of $\Gc$ (and the molecule structure of $\Mb(\Gc)$), but not the \emph{layering structure}. Since we are dealing with layered gardens in Proposition \ref{layergarden}, in the proof below we need to apply twisting operations that take into account the layerings and the regular trees and regular couples in Proposition \ref{propstructure2}. These are called \emph{layered full twists} or \emph{LF twists}, which we will now discuss.

Before proceeding, we first make one observation: let $\Gc$ be a garden obtained from another garden $\Gc_1$ (which may be $\Gc_{\mathrm{sk}}$) by replacing each leaf pair with a regular couple and each branching node with a regular tree, in the sense of Proposition \ref{propstructure2}. Then, any layering of $\Gc$ naturally induces a \emph{pre-layering} of $\Gc_1$, where for each branching node $\mf$ of $\Gc_1$, let the corresponding regular tree in $\Gc$ be $\Tc_{(\mf)}$, then we assign the layer of $\mf$ in $\Gc_1$ to be the layer of the \emph{lone leaf} of $\Tc_{(\mf)}$ in $\Gc$.
\begin{rem}\label{sublayer} In general, a layering of $\Gc$ only induces a pre-layering of $\Gc_1$; however, by Proposition \ref{layerreg1}, if $\Gc$ is canonical, and if all the regular couples and regular trees involved in the process of obtaining $\Gc$ from $\Gc_1$ are \emph{coherent}, then the pre-layering of $\Gc_1$ can be uniquely extended to a canonical layering. In fact, the layer of each leaf pair $(\lf,\lf')$ is just $\min(\Lf_{\lf^{\mathrm{pr}}},\Lf_{(\lf')^{\mathrm{pr}}})$, where $\lf^{\mathrm{pr}}$ and $(\lf')^{\mathrm{pr}}$ are the parents of $\lf$ and $\lf'$.
\end{rem}
\begin{df}[Layered full twists]\label{lftwist} Let $\Gc\in\Gs_{p+1}$ (or $\Cs_{p+1}$) be a canonical layered object, and $\Gc_{\mathrm{sk}}$ be its skeleton as in Proposition \ref{propstructure2}. We then have a pre-layering of $\Gc_{\mathrm{sk}}$ as described above (and also a layering of atoms of the molecule $\Mb(\Gc_{\mathrm{sk}})$ following Definition \ref{defcplmol}). For any branching node $\mf$ of $\Gc_{\mathrm{sk}}$ and any leaf pair $(\lf,\lf')$ of $\Gc_{\mathrm{sk}}$, let the corresponding regular tree and regular couple be $\Tc_{(\mf)}$ and $\Qc_{(\lf,\lf')}$.

Now let $\Vb\subset\Mb(\Gc_{\mathrm{sk}})$ be a core (CL) vine as in Definition \ref{twist}, and let the notations \[(\uf_1,\uf_{11},\uf_{2},\uf_{21},\uf_{22},\uf_{23},\uf_0,\uf_3,\uf_4)\] be as in Propositions \ref{block_clcn} and \ref{molecpl} (note $(\uf_3,\uf_4)$ is absent for vines (I-a) and (I-b)). Assume further that $\Lf_{\uf_2}\leq\min(\Lf_{\uf_3},\Lf_{\uf_4})$ for the pre-layering of $\Gc_{\mathrm{sk}}$, and that the regular tree $\Tc_{(\uf_2)}$ and the regular couple $\Qc_{(\uf_{23},\uf_0)}$ in $\Gc$ are both \emph{coherent}.

Let $\Gc_{\mathrm{sk}}'$ be the unit twist of $\Gc_{\mathrm{sk}}$ at $\Vb$ as in Definition \ref{twist}, then the branching nodes and leaf pairs of $\Gc_{\mathrm{sk}}'$ are in one-to-one correspondence with those of $\Gc_{\mathrm{sk}}$: the branching node $\uf_2$ and the leaf pair $(\uf_{23},\uf_0)$ appear in both gardens, and the other branching nodes and leaf pairs are exactly the same. We then define a \emph{unit layered full twist} (or \emph{unit LF twist}) $\Gc'$ of $\Gc$, as follows.

Let the pre-layering of $\Gc_{\mathrm{sk}}'$ be given such that the layer $\Lf_\mf$ of each branching node $\mf$ of $\Gc_{\mathrm{sk}}'$ equals the layer of $\mf$ in $\Gc_{\mathrm{sk}}$. To form $\Gc'$, starting from either $\Gc_{\mathrm{sk}}$ or $\Gc_{\mathrm{sk}}'$, we replace each branching node $\mf\neq \uf_2$ by the \emph{same} regular tree $\Tc_{(\mf)}$ as in $\Gc$, and replace each leaf pair $(\lf,\lf')\neq (\uf_{23},\uf_0)$ by the \emph{same} regular couple $\Qc_{(\lf,\lf')}$, with the same layerings. Then we replace the coherent regular tree $\Tc_{(\uf_2)}$ and regular couple $\Qc_{(\uf_{23},\uf_0)}$ by regular tree $\Tc_{(\uf_2)}^{\,\prime}$ and regular couple $\Qc_{(\uf_{23},\uf_0)}^{\,\prime}$ respectively, which satisfy the followings:
\begin{itemize}
\item The layer of the lone leaf of $\Tc_{(\uf_2)}^{\,\prime}$ equals the layer $\Lf_{\uf_2}$ of $\uf_2$ in $\Gc_{\mathrm{sk}}'$.
\item The objects $\Tc_{(\uf_2)}^{\,\prime}$ and $\Qc_{(\uf_{23},\uf_0)}^{\,\prime}$ are coherent, and have the exact structure described in Proposition \ref{layerreg1}, where $(q,q')$ in Proposition \ref{layerreg1} are given by the pre-layering of $\Gc_{\mathrm{sk}}'$.
\item We have $n(\Tc_{(\uf_2)}^{\,\prime})+n(\Qc_{(\uf_{23},\uf_0)}^{\,\prime})=n(\Tc_{(\uf_2)})+n(\Qc_{(\uf_{23},\uf_0)})$, and the total number of \emph{layer $p$} branching nodes in $\Tc_{(\uf_2)}^{\,\prime}$ and $\Qc_{(\uf_{23},\uf_0)}^{\,\prime}$ equals that of $\Tc_{(\uf_2)}$ and $\Qc_{(\uf_{23},\uf_0)}$.
\end{itemize}

In general, suppose we start with a collection of (CL) vines $\Vb_j\subset\Mb(\Gc_{\mathrm{sk}})\,(0\leq j\leq q-1)$, such that any two are either disjoint or only share one common joint and no other common atom. Then, we call any garden $\Gc'$ a \emph{layered full twist} (or \emph{LF twist}) of $\Gc$, if $\Gc'$ can be obtained from $\Gc$ by performing some unit LF twist operation at each $\Vb_j$ that is core and satisfies the above assumptions (i.e. $\Lf_{\uf_2}\leq\min(\Lf_{\uf_3},\Lf_{\uf_4})$ and that $\Tc_{(\uf_2)}$ and $\Qc_{(\uf_{23},\uf_0)}$ are both coherent).
\end{df}
\begin{prop}\label{lftwistprop} Let $\Gc\in \Gs_{p+1}$ and let $\Gc'$ be an LF twist of $\Gc$ as in Definition \ref{lftwist}, then we have $\Gc'\in\Gs_{p+1}$ and $n(\Gc')=n(\Gc)$. Recall also the definition of $\Gs_{p+1}^{\mathrm{tr}}$ in Definition \ref{defgtr}. If $n(\Gc)\leq N_{p+1}^4$, then we have $\Gc\in\Gs_{p+1}^{\mathrm{tr}}$ if and only if $\Gc'\in \Gs_{p+1}^{\mathrm{tr}}$. The same holds for $\Cs_{p+1}$ and $\Cs_{p+1}^{\mathrm{tr}}$.
\end{prop}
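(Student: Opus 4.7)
The plan is to verify all three claims directly from the definitions, using Proposition \ref{canonequiv} and the last paragraph of Proposition \ref{layerreg1} as the main structural tools. First I would set up the basic bookkeeping: by Definition \ref{lftwist}, $\Gc$ and $\Gc'$ share the same collection of nodes outside the regions $\Tc_{(\uf_2)}$ and $\Qc_{(\uf_{23},\uf_0)}$ (resp.\ $\Tc_{(\uf_2)}^{\,\prime}$ and $\Qc_{(\uf_{23},\uf_0)}^{\,\prime}$), with identical layer assignments there, and the skeletons $\Gc_{\mathrm{sk}}$ and $\Gc_{\mathrm{sk}}'$ differ only by the unit twist at $\Vb$, which by Remark \ref{twistexplain}(a) preserves the molecule as a directed graph and hence the pre-layering on all skeleton branching nodes. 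Thus $n(\Gc')=n(\Gc)$ reduces immediately to the identity $n(\Tc_{(\uf_2)}^{\,\prime})+n(\Qc_{(\uf_{23},\uf_0)}^{\,\prime})=n(\Tc_{(\uf_2)})+n(\Qc_{(\uf_{23},\uf_0)})$ that is built into Definition \ref{lftwist}.

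Second, to prove $\Gc'\in\Gs_{p+1}$ I would apply Proposition \ref{canonequiv}. Condition (i) is immediate since all layers are unchanged, so they remain $\leq p$. For condition (ii), consider any two nodes $\nf,\nf'$ whose descendant leaves are completely paired. If both lie outside the replaced regions, then the layer values and parent-child relations are identical to those in $\Gc$ (modulo the local skeleton swap, which only affects $\uf_2$ and $\uf_{23}$), so (ii) for $\Gc'$ follows from (ii) for $\Gc$. If one or both lie inside the replaced regions, I would argue in two substeps: first, the last paragraph of Proposition \ref{layerreg1} allows us to swap $\Tc_{(\uf_2)}$ and $\Qc_{(\uf_{23},\uf_0)}$ for any other coherent layered regular tree and regular couple with the same $(q,q')$ values without destroying canonicity; second, the hypothesis $\Lf_{\uf_2}\leq\min(\Lf_{\uf_3},\Lf_{\uf_4})$ in Definition \ref{lftwist} guarantees that the two possible skeleton placements of $\uf_2$ (as a child of $\uf_3$ or of $\uf_4$, or analogously for vines I-a/I-b) yield identical values of $(q,q')$ for the replacements, so the coherent structure described in Proposition \ref{layerreg1} interfaces consistently with the surrounding skeleton in either case.

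Third, for the $\Gs_{p+1}^{\mathrm{tr}}$ (and $\Cs_{p+1}^{\mathrm{tr}}$) equivalence under the order bound $n(\Gc)\leq N_{p+1}^4$, only condition (i) of Definition \ref{defgtr} matters, since condition (ii) becomes redundant in this regime. The trees $\Tc_j^p$ arising in the canonical construction correspond to the connected components of the subgraph of layer-$p$ branching nodes together with the subtrees they span down to lower layers. By the third bullet of Definition \ref{lftwist}, the total count of layer-$p$ branching nodes in $\Tc_{(\uf_2)}^{\,\prime}\cup\Qc_{(\uf_{23},\uf_0)}^{\,\prime}$ matches that in the original pair, and outside these regions the structure is literally unchanged. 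A short case analysis over the vines (I-a)--(I-b) and (II-a)--(II-e) of Proposition \ref{molecpl}, using the structure from Proposition \ref{layerreg1} to locate layer-$p$ nodes inside the coherent replacements, then shows that each $\Tc_j^p$ carries the same order in $\Gc'$ as in $\Gc$.

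The main obstacle is the case analysis in the second step: verifying that for every vine type listed in Proposition \ref{molecpl}, the two possible positions of $\uf_2$ allowed by the twist (together with the corresponding new positions of $\uf_{23}$) are \emph{both} compatible with the pre-layering requirements at $\uf_3$, $\uf_4$, and the free-child subtrees $\Tc_1,\Tc_2,\Tc_3$. The hypothesis $\Lf_{\uf_2}\leq\min(\Lf_{\uf_3},\Lf_{\uf_4})$ is precisely what is needed to make this symmetric, but one must still check each of the seven subcases to confirm that no layer inequality forced by Proposition \ref{canonequiv}(ii) is violated after the swap; once this is done, the remainder of the proof is essentially bookkeeping, and the couple case $\Cs_{p+1}$ follows by the identical argument applied to a couple skeleton.
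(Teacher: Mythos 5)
Your proposal is correct and follows essentially the same route as the paper: reduce to Proposition \ref{canonequiv}, use the structural rigidity of coherent regular objects from Proposition \ref{layerreg1} (the paper collapses them to a single node and a leaf pair via Remark \ref{sublayer}, which is the same mechanism as your swapping argument), invoke $\Lf_{\uf_2}\leq\min(\Lf_{\uf_3},\Lf_{\uf_4})$ for the parent--child layer inequalities, and finish with the case analysis over Figure \ref{fig:block_mole} together with the observation that only the layer-$p$ branching node count matters for the $\Gs_{p+1}^{\mathrm{tr}}$ equivalence when $n\leq N_{p+1}^4$. The paper likewise carries out only one representative vine case explicitly, so your level of detail matches.
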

\begin{proof} We only need to consider $\Gs_{p+1}$ and unit LF twists. The fact that $n(\Gc)=n(\Gc')$ is obvious from definition. Moreover, if $n(\Gc)\leq N_{p+1}^4$, then the requirement (ii) in the definition of $\Gs_{p+1}^{\mathrm{tr}}$ and $\Cs_{p+1}^{\mathrm{tr}}$ in Definition \ref{defgtr} is redundant; also the requirement (i) is just that the number of \emph{layer $p$} branching nodes in each of the trees in $\Gc$ is at most $N_{p+1}$. But for each tree, this number is the same for $\Gc$ and $\Gc'$ due to the requirements about layer $p$ branching nodes in Definition \ref{lftwist}, so we only need to prove $\Gc'\in\Gs_{p+1}$ (same for $\Cs_{p+1}$), then it will follow that $\Gc\in\Gs_{p+1}^{\mathrm{tr}}\Leftrightarrow\Gc'\in\Gs_{p+1}^{\mathrm{tr}}$.

Now we prove $\Gc'\in\Gs_{p+1}$. We may assume that $\Gc'$ is constructed starting from $\Gc_{\mathrm{sk}}'$ (the unit twist of $\Gc_{\mathrm{sk}}$), since the case when $\Gc'$ is constructed from $\Gc_{\mathrm{sk}}$ is easier. Next, in $\Gc$ and $\Gc'$, we shall replace both coherent regular trees $\Tc_{(\uf_2)}$ and $\Tc_{(\uf_2)}^{\,\prime}$ by the single node $\uf_2$, and both coherent regular couples $\Qc_{(\uf_{23},\uf_0)}$ and $\Qc_{(\uf_{23},\uf_0)}^{\,\prime}$ by the leaf pair $(\uf_{23},\uf_0)$. By Proposition \ref{layerreg1} and Remark \ref{sublayer}, we know that these operations do not change the canonicity of $\Gc$ and $\Gc'$. Suppose the layered gardens $\Gc$ and $\Gc'$ become $\Hc$ and $\Hc'$ after these operations, then $\Hc'$ is a unit LF twist of $\Hc$ as in Definition \ref{twist} with each node layered in the same way as the corresponding node in $\Hc$. we only need to prove that if $\Hc\in\Gs_{p+1}$ then $\Hc'\in\Gs_{p+1}$.

To show $\Hc'\in\Gs_{p+1}$, we only need to verify that $\Lf_{\nf'}\leq\Lf_\nf$ for any branching node $\nf$ and its child node $\nf'$, and also verify the statement of Proposition \ref{canonequiv} for any $(\nf,\nf')$ with all their descendant leaves completely paired. The former is true because the layering of $\Hc'$ is the same as that of $\Hc$, and all the parent-child relations in $\Hc'$ and $\Hc$ are also the same (except that $\uf_2$ is a child of $\uf_4$ in vines (II-a) and (II-c), while it is a child of $\uf_3$ in vines (II-b) and (II-d), but this does not matter because $\Lf_{\uf_2}\leq\min(\Lf_{\uf_3},\Lf_{\uf_4})$ in any case). The latter can be verified by examining the different cases in Figure \ref{fig:block_mole}; for example, consider when $\Hc$ is Vine (II-a) and $\Hc'$ is Vine (II-b). Let $(\nf,\nf')$ be arbitrary nodes in $\Hc'$ as in Proposition \ref{canonequiv}. If neither of $(\nf,\nf')$ is an ancestor node of $\uf_{23}$ or $\uf_0$, then $(\nf,\nf')$ is not affected by the twisting, hence the result for $\Hc'$ follows from that of $\Hc$. If one of $(\nf,\nf')$ is an ancestor of $\uf_3$ (or $\uf_4$), then either $\nf$ or $\nf'$ must also be an ancestor of $\uf_4$ (or $\uf_3$) due to complete pairing of leaves, in which case all the descendant leaves of $\nf$ and $\nf'$ are also completely paired in $\Hc$, so the result again follows. The remaining cases are when $(\nf,\nf')=(\uf_0,\uf_{23})$ or $(\nf,\nf')=(\uf_0,\uf_2)$, which are again easily checked. The other cases in Figure \ref{fig:block_mole} follow from similar discussions.
\end{proof}
\section{Layered vines and ladders: Combinatorics}\label{section8} In this section we first introduce the notion of \emph{coherent} vines and ladders, which plays a similar role as that of coherent regular couples and regular trees in Definition \ref{defcoh}, but works on the molecular level.\begin{df}[Coherent vines and ladders]\label{cohmol} Let $\Mb=\Mb(\Gc)$ be the molecule coming from a garden $\Gc$, and $\Vb\subset\Mb$ is a vine as illustrated in Figure \ref{fig:vines}. We may split the set of non-joint atoms of $\Vb$ into groups of $2$ or $3$ elements, as follows:
\begin{itemize}
\item For Vine (I) there is no non-joint atom.
\item For Vine (II) we put each pair of atoms connected by a double bond into one group.
\item For vines (III) and (V)--(VIII) there is a unique triangle that contains no joint, and for Vine (IV) there is a unique three-atom subset with two double bonds between them; we put these three atoms into one group.
\item For vines (III)--(VIII), after fixing this three-atom group, we put each pair of remaining atoms connected by a double bond into one group.
\item For Vine (VII) there are two atoms left after the above, and we put them in the same group.
\end{itemize}

Now, suppose $\Gc$ is \emph{pre-layered}, so each atom $v\in\Mb$ has a layer $\Lf_v$ as in Definition \ref{defcplmol}. Then, we define the vine $\Vb$ to be \emph{coherent} if the atoms in each group are in the same layer. Similarly, we define a ladder to be coherent, if any two atoms connected by a double bond are in the same layer. In both cases, for non-coherent objects, we define the \emph{incoherency index} to be the number $g$ of groups such not all atoms in this group are in the same layer.
\end{df}
\subsection{Number of coherent layerings} We now establish an upper bound for the number of coherent layerings of a given vine or ladder, in the same spirit as Proposition \ref{layerreg2}.
\begin{prop}\label{layerlad} Let $\Lb$ be a ladder of length $m$ in any molecule $\Mb=\Mb(\Gc)$ coming from a garden $\Gc$. Consider a pre-layering of $\Gc$ which leads to a layering of atoms of $\Mb$; assume that $\Lb$ has fixed incoherency index $g$ under this layering, where $0\leq g\leq m+1$. Then we have\begin{equation}\label{sumofexp0}
\sum_{(\Lf_v)}\prod_{\ell}C_0^{|\Lf_v-\Lf_{v'}|}\leq C_0^mC_2^{g+1},
\end{equation} with possibly different values of $C_0$ (see the convention in Section \ref{setupparam}), where the summation is taken over all possible layerings of atoms in $\Lb$ with fixed $g$, and the product is taken over all bonds $\ell$ connecting atoms $v,v'\in\Lb$.
\end{prop}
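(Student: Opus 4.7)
The plan is to bound the sum by direct calculation, exploiting the monotonicity of layerings along the rails of the ladder imposed by the parent-child structure of the garden. First, label the $2(m+1)$ atoms as $v_0, \ldots, v_m$ and $v_0', \ldots, v_m'$ so that $(v_i, v_i')$ is the $i$-th double-bond rung and the rails are formed by the single bonds $(v_i, v_{i+1})$ and $(v_i', v_{i+1}')$; set $a_i := \Lf_{v_i}$ and $b_i := \Lf_{v_i'}$. Since each double bond consists of two parallel bonds with identical endpoints, the product factors as
\[
\prod_{\ell} C_0^{|\Lf_v - \Lf_{v'}|} \;=\; C_0^{2 \sum_{i=0}^{m} |a_i - b_i|}\cdot \prod_{i=0}^{m-1} C_0^{|a_i - a_{i+1}| + |b_i - b_{i+1}|}.
\]
The side bonds correspond to parent-child or leaf-pair relations in $\Gc$ (Definition \ref{defcplmol}); the layering rule in Definition \ref{deflayer} then forces each rail's layer sequence to be monotone, and after orienting the rails appropriately one has $a_0 \geq a_1 \geq \cdots \geq a_m$ and $b_0 \geq b_1 \geq \cdots \geq b_m$, so the side-bond contribution telescopes to $C_0^{(a_0 - a_m) + (b_0 - b_m)}$.

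In the coherent case $g = 0$, setting $r_i := a_i = b_i$ reduces the sum to $\sum_{p \geq r_0 \geq \cdots \geq r_m \geq 0} C_0^{2(r_0 - r_m)}$. Fixing the endpoints $r_0 = q$, $r_m = q'$, the number of intermediate monotone sequences is at most $2^{q - q' + m}$, and a geometric-series estimate in $q - q'$ gives
\[
\sum_{0 \leq q' \leq q \leq p} 2^{q - q' + m}\, C_0^{2(q - q')} \;\leq\; 2^{m}\, (\Df+1)^2\, (2C_0^2)^{\Df+1} \;\leq\; C_0^{m}\, C_2,
\]
since the $(\Df, C_0)$-dependent factor is absorbed into $C_2$. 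For general $g$, first choose which $g$ out of $m+1$ rungs are incoherent at a cost of $\binom{m+1}{g} \leq 2^{m+1}$, absorbed into $C_0^{m}$. This partitions $\{0, \ldots, m\}$ into at most $g+1$ maximal coherent intervals; on each such interval $a_i = b_i$ and the monotone sequence is pinned at its two ends by the values at the bordering incoherent rungs (or the ladder's endpoints), so the $g=0$ analysis applies interval by interval with prescribed endpoints. What remains is a sum over the $g$ independent pairs $(a_{j_k}, b_{j_k})$ at the incoherent rungs (together with the boundary values), each weighted by $C_0^{2|a_{j_k} - b_{j_k}|}$; bounding $\sum_{a, b \in [0, p]} C_0^{2|a-b|} \leq (\Df+1)^2 (2C_0^2)^{\Df+1} \leq C_2$ shows that each such rung contributes at most a single factor of $C_2$. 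Collecting these $g+1$ contributions yields the advertised bound $C_0^{m} C_2^{g+1}$.

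The main bookkeeping obstacle is that the decoupling just described is not literal: the monotonicity constraint $a_{j_k} \geq a_{j_k + 1} \geq \cdots \geq a_{j_{k+1}}$ couples each coherent interval to its neighbouring incoherent rungs, so the sum does not factor exactly over the rungs. Handling this cleanly requires performing the summation in the right order, for instance by induction on $g$ in which one peels off a single incoherent rung at a time while preserving the remaining monotonicity constraints and then applies the coherent estimate on the freed interval with its prescribed endpoint layers. No new analytic input beyond the geometric-series estimates of the $g=0$ case is required, and the combinatorial bookkeeping is the only place where any care is needed.
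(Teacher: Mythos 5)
The central claim in your proof — that the layering rule forces each rail's layer sequence to be monotone, so after a suitable orientation $a_0\geq a_1\geq\cdots\geq a_m$ and $b_0\geq b_1\geq\cdots\geq b_m$ — is false, and this is where the argument fails.

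The side bonds of a ladder in $\Mb(\Gc)$ can be either PC bonds or LP bonds (Definition \ref{defcplmol}). For an LP bond between $v$ and $v'$, Definition \ref{deflayer} imposes no direct inequality between $\Lf_{\nf(v)}$ and $\Lf_{\nf(v')}$ — the constraint $\Lf_\lf=\Lf_{\lf'}$ applies to the paired leaves, which are children, not to the branching nodes themselves. And even among PC bonds, the orientation of the parent-child relation need not be consistent along a rail, so there is no orientation that renders each rail's sequence monotone. The correct structural fact (in the coherent case, where the two rail sequences coincide) is strictly weaker: the common sequence $(q_0,\dots,q_m)$ is \emph{unimodal}, i.e.\ can be split into a nonincreasing part followed by a nondecreasing part. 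Proving even this requires the nontrivial argument of tracing the parent chain of some $\nf(w)$ through the ladder: because that chain cannot terminate inside the ladder (all the $w_i$ are distinct), a local maximum strictly inside the ladder on both sides would be impossible. Your proof has no substitute for this step.

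Concretely, the sequence $(5,3,5)$ is a valid coherent layering on a ladder of length $2$, but it is not monotone in any orientation, so your sum $\sum_{p\geq r_0\geq\cdots\geq r_m\geq 0}C_0^{2(r_0-r_m)}$ misses admissible layerings. The missing configurations happen not to destroy the final inequality — the unimodal count is $\leq 2^{2(m+\Df+1)}$ and the telescoped product is $\leq C_0^{3\Df}$, both still absorbed by $C_0^m C_2$ — but since your argument enumerates only a proper subset of the layerings that actually occur, it does not establish the bound as stated. The reduction from general $g$ to $g=0$ by fixing incoherent rungs (cost $\binom{m+1}{g}$) and decoupling the sum over the incoherent rungs is the same in spirit as the paper and is fine; the $g=0$ core is what needs repair, via the unimodality argument rather than the monotonicity assertion.
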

\begin{proof} Given $g$, there are at most $\binom{m+1}{g}\leq 2^{m+1}$ choices for the locations of the double bonds where incoherency happens. With these locations fixed, we may consider the ladder between any two consecutive locations which is then coherent, so we only need to prove the result when $g=0$. By disregarding the atoms at both ends of $\Lb$, we may also assume that all atoms in the ladder have degree $4$. Let the atoms in $\Lb$ be $u_j$ and $v_j$ (which are connected by a double bond) where $0\leq j\leq m$, and define $\Lf_{u_j}=\Lf_{v_j}:=q_j$.

We claim that, for any $1\leq j\leq m-1$, either $q_j\leq q_{j-1}\leq\cdots\leq q_0$, or $q_j\leq q_{j+1}\leq\cdots\leq q_m$. In fact, suppose not, then there exist $1\leq j'\leq j\leq j''\leq m-1$, such that
\[q_j\leq q_{j-1}\leq\cdots \leq q_{j'},\,\,q_{j'}>q_{j'-1};\quad q_j\leq q_{j+1}\leq\cdots \leq q_{j''},\,\,q_{j''}>q_{j''+1}.\] By symmetry, we may also assume $q_{j'}\geq q_{j''}$. Starting from the atom $w_1:=u_{j'}$, we successively choose $w_i$ such that $\nf(w_i)$ is the parent node of $\nf(w_{i-1})$ in the notation of Definition \ref{defcplmol}, then $w_i$ is connected to $w_{i-1}$ by a bond, so if $w_i\in\{u_{j_i},v_{j_i}\}$ then $j_i\in\{j_{i-1},j_{i-1}+1,j_{i-1}-1\}$. On the other hand $\Lf_{w_i}$ is nondecreasing in $i$, so $j_i$ cannot equal $j'-1$ or $j''+1$, thus we have $j'\leq j_i\leq j''$ for each $i$, which means the process continues indefinitely instead of reaching either end of the ladder. This is clearly impossible as all the $w_i$ must be different, which proves the claim.

Now, by the above claim, we know that the whole sequence $(q_0,\cdots,q_m)$ can be divided into two monotonic subsequences $q_0\geq\cdots\geq q_j$ and $q_{j+1}\leq\cdots \leq q_m$ (just look at the place where the first inequality in the claim changes to the second inequality when $j$ changes to $j+1$). It is well known that there are at most $2^{m+\Df+1}$ ways to choose a nondecreasing sequence of $m+1$ terms in $\{0,\cdots,\Df\}$, so the number of choices for $(q_0,\cdots,q_m)$ is at most $2^{2(m+\Df+1)}=C_0^mC_2$. In addition, due to the monotonicity of the subsequences of $(q_j)$, the product in (\ref{sumofexp0}) is also bounded by
\[C_0^{2(q_0-q_j)}\cdot C_0^{2(q_m-q_{j+1})}\leq C_0^{2\Df}\leq C_2.\] Putting together, this proves (\ref{sumofexp0}).
\end{proof}
\begin{prop}\label{layervine} Let $\Gc\in\Gs_{p+1}$ (or $\Cs_{p+1}$) be a canonical layered object, and let the skeleton $\Gc_{\mathrm{sk}}$, the molecule $\Mb=\Mb(\Gc_{\mathrm{sk}})$ and the regular objects $\Tc_{(\mf)}$ and $\Qc_{(\lf,\lf')}$ be as in Definition \ref{lftwist}. Let $\Vb\subset\Mb$ be any (CL) vine, note that the layering of $\Gc$ induces a layering of atoms of $\Vb$ by Section \ref{secfulltwist} and Definition \ref{defcplmol}. 

Assume $\Vb$ contains $2m+2$ (for bad vine) or $2m+5$ (for normal vine) atoms, and has incoherency index $0\leq g\leq m$ (for bad vine) or $0\leq g\leq m+1$ (for normal vine). Let the two joints of $\Vb$ be $v_1$ and $v_2$, such that $\uf_1=\nf(v_1)$ is an ancestor of $\uf_2=\nf(v_2)$, and assume that $\Lf_{v_1}=q$ and $\Lf_{v_2}=q'$ are given with $q\geq q'$. 

Then the same bound (\ref{sumofexp0}) holds, where the summation is taken over all possible layerings of atoms in $\Vb$ with fixed $g$, and the product is taken over all bonds $\ell$ connecting atoms $v,v'\in\Vb$. Moreover, if $g=0$ and the regular objects $\Tc_{(\mf)}$ and $\Qc_{(\lf,\lf')}$ are \emph{coherent} for all branching nodes $\mf$ and leaf pairs $(\lf,\lf')$ \emph{that belong to} $\Gc_{\mathrm{sk}}[\Vb]\backslash\{\uf_1\}$, then the right hand side of (\ref{sumofexp0}) can be improved to $C_0^{m+|q-q'|}$. Here $\Gc_{\mathrm{sk}}[\Vb]$ is the realization of $\Vb$ in the garden $\Gc_{\mathrm{sk}}$ as defined in Proposition \ref{block_clcn} (see Remark \ref{rem_realiz}).
\end{prop}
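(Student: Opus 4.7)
The plan is to follow the blueprint of Proposition \ref{layerlad}, exploiting two features: the decomposition of $\Vb$ into the groups of Definition \ref{cohmol}, and the fact that the non-core portion of any vine type in Figure \ref{fig:vines} is ladder-like, so the monotonicity argument from Proposition \ref{layerlad} still applies along each such sub-chain. First I would pay a combinatorial factor of at most $\binom{m+1}{g}\leq 2^{m+1}$ to fix which $g$ of the groups are incoherent. Each incoherent group involves at most $3$ atoms whose layers lie in $\{0,\ldots,\Df\}$; choosing these layers and the associated weight factor $C_0^{|\Lf_v-\Lf_{v'}|}\leq C_0^{\Df}$ contributes at most $C_2$ per incoherent group, yielding the global factor $C_2^{g+1}$ in (\ref{sumofexp0}) once we have reduced to the coherent case on each connected segment cut out by these groups.

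Thus it suffices to bound (\ref{sumofexp0}) when $g=0$ (for the segment between two consecutive incoherent groups, or for the whole vine in the second half of the proposition). In this case, the atoms of each coherent group are in a common layer, so I may replace each group by a single layer label. Reading off Figure \ref{fig:vines} and Proposition \ref{molecpl}, the sequence of groups between the joints $v_1,v_2$ (skipping the fixed core triangle or double-bonded triple, which adds only $O(1)$ atoms) is threaded by a sequence of bonds in $\Mb$; by walking up the parent-child relations in $\Gc_{\mathrm{sk}}$ exactly as in the proof of Proposition \ref{layerlad}, I can show that the associated layer sequence $(q_j)$ decomposes into at most two monotonic pieces, now anchored at $q_0=q$ and terminating at $q'$ (or the layer of the core). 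The number of such sequences is at most $C_0^m C_2$ and the telescoping product $\prod_\ell C_0^{|\Lf_v-\Lf_{v'}|}$ is bounded by $C_0^{2\Df}\leq C_2$, giving (\ref{sumofexp0}).

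For the improved bound in the second half, the hypothesis that every regular tree $\Tc_{(\mf)}$ and regular couple $\Qc_{(\lf,\lf')}$ inside $\Gc_{\mathrm{sk}}[\Vb]\setminus\{\uf_1\}$ is coherent allows me to invoke Proposition \ref{layerreg1}: the layer of any branching node in these regular objects lies between the layer of its parent in $\Gc_{\mathrm{sk}}$ and the layers of the corresponding lone leaves, which in turn are pinned down by the pre-layering. Combined with Proposition \ref{canonequiv} (which forces $\Lf_{\nf'}\leq\Lf_\nf$ for child-parent pairs), this traps every atom of $\Vb$ into the interval $[q',q]$. The monotonicity argument then shows that the number of valid layer sequences of length $O(m)$ taking values in an interval of length $|q-q'|$ is at most $\binom{m+|q-q'|}{m}\leq C_0^{m+|q-q'|}$, and the weight factor telescopes to at most $C_0^{2|q-q'|}$, which is absorbed.

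The main obstacle will be verifying the monotonicity step uniformly across all vine shapes (I)--(VIII). Unlike a pure ladder, a vine contains a core (a triangle for (III), (V)--(VIII); a double-bonded triple for (IV); nothing for (I)--(II)) and the ingredient ladders may be inserted between different pairs of parallel bonds, so I have to produce the requisite ancestor chain separately in each case. I would do this by invoking Proposition \ref{molecpl} to place $\uf_2,\uf_3,\uf_4,\uf_{23}$ concretely inside $\Gc$, then propagating upward via the PC labels of Definition \ref{defcplmol}; the key observation is that a ladder inserted between two parallel bonds of $\Vb$ inherits exactly the structure handled in Proposition \ref{layerlad}, so the difficulty reduces to controlling the $O(1)$ atoms of the core, which contribute at most an overall $C_0$ factor absorbed into $C_0^m$.
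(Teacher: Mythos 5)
Your overall strategy (group the atoms as in Definition \ref{cohmol}, pay $2^{m+1}$ to locate the incoherencies, and run a parent-walk monotonicity argument on the coherent pieces) is the right one, and your treatment of the first bound matches the paper's, which simply applies Proposition \ref{layerlad} to the at most three ladders and absorbs the $O(1)$ remaining atoms into a $C_2$ factor. For the main chain of groups between $v_2$ and $v_1$ you should note that, because $\uf_1$ is an ancestor of $\uf_2$ for a (CL) vine, the parent-walk from $v_2$ necessarily terminates at $v_1$ and therefore yields a \emph{single} nondecreasing sequence $q'\leq\Lf_1\leq\cdots\leq\Lf_r\leq q$, not merely two monotonic pieces; it is this single chain that traps the main-chain layers in $[q',q]$ and kills both the counting factor and the telescoping product. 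Your weaker two-piece claim, taken alone, would still allow the sequence to dip below $q'$, reintroducing a $C_2$ loss.

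The genuine gap is in your last paragraph, for vines (V)--(VIII). The set of atoms not on the $v_2$-to-$v_1$ chain of groups is not ``$O(1)$ atoms of the core'': it is a copy of Vine (V) minus its two joints, which contains the triangle \emph{together with the zero-gap ladders}, and these may have unbounded length. Applying Proposition \ref{layerlad} to such a hanging ladder only anchors it at the end where it attaches to the main structure; the other end terminates at the triangle, whose layer is a priori free, so the ladder count degenerates to $C_0^{z}C_2$ --- exactly the $\Df^{O(1)}$ loss the improved bound must avoid. The paper closes this by a separate induction: denoting by $\Hb$ this off-chain set, it shows (using that $\Hb$ meets the rest of the molecule in exactly two single bonds, that the two attachment atoms of $\Hb$ lie in one coherent group, and a case analysis applying Proposition \ref{canonequiv} to node pairs whose descendant leaves are completely paired) that the boundary group of $\Hb$ has layer lying \emph{between} the layers of its two outside neighbours; peeling off that group and iterating squeezes all of $\Hb$ into a single layer contained in $[q',q]$. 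Your assertion that the core ``contributes at most an overall $C_0$ factor absorbed into $C_0^m$'' is precisely this missing step, and it does not follow from a generic combination of Propositions \ref{layerreg1} and \ref{canonequiv}; you would need to exhibit the specific completely-paired node pairs and run the squeeze argument (also note the paper first collapses the coherent $\Tc_{(\mf)}$ and $\Qc_{(\lf,\lf')}$ via Remark \ref{sublayer} so that Proposition \ref{canonequiv} can be applied directly to the vine's branching nodes).
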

\begin{proof} First, (\ref{sumofexp0}) follows from Proposition \ref{layerlad}, because $\Vb$ consists of at most three ladders plus less than $20$ extra atoms, so we can apply Proposition \ref{layerlad} to the part of (\ref{sumofexp0}) involving atoms and bonds in one of these ladders, while the part involving the extra atoms contribute at most a factor of $C_0^{100\Df}\leq C_2$.

Now, we only need to consider the coherent case ($g=0$) and assume all the $\Tc_{(\mf)}$ and $\Qc_{(\lf,\lf')}$ for $\mf,\lf,\lf'\in\Gc_{\mathrm{sk}}[\Vb]\backslash\{\uf_1\}$ are coherent. Using Remark \ref{sublayer}, we may replace each such $\Tc_{(\mf)}$ by a single node and each such $\Qc_{(\lf,\lf')}$ by a leaf pair, without affecting the canonicity of $\Gc$. Therefore, we may replace $\Gc$ by the resulting garden $\Hc\in\Gs_{p+1}$ (whose skeleton is also $\Gc_{\mathrm{sk}}$) such that the same (CL) vine $\Vb$ is contained in $\Mb(\Hc)$. Now we look at the possible layerings of $\Vb$ and the product in (\ref{sumofexp0}).

The case of Vine (I) is obvious. For other vines except Vine (V), by the grouping in Definition \ref{cohmol}, we can always find groups $\Gb_i\,(1\leq i\leq r)$ such that $v_2$ is connected to $\Gb_1$, each $\Gb_i$ is connected to $\Gb_{i+1}$, and $\Gb_r$ is connected to $v_1$. Moreover, for each $i$, after removing atoms in $\Gb_i$, then vine becomes disconnected with $v_2$ and $\Gb_j\,(j<i)$ in one component, and $v_1$ and $\Gb_j\,(j>i)$ in another. These $\Gb_i$ exhaust all groups for vines (II)--(IV), while for vines (VI)--(VIII), all the groups other than $\Gb_i$ form exactly the shape which is Vine (V) minus two joints. Let the layer of atoms in $\Gb_i$ be $\Lf_i$. If we start from $v_2$ and each time move from any atom $v$ to $v^+$ where $\nf(v^+)$ is the parent node of $\nf(v)$, just as in the proof of Proposition \ref{layerlad}, then eventually we will arrive at $v_1$ since $\Vb$ is a (CL) vine, due to Proposition \ref{block_clcn}. In this process we must pass through atoms in $\Gb_1,\cdots,\Gb_r$ in this order, due to the above properties of $\Gb_i$, which implies that $q'\leq \Lf_1\leq\cdots\Lf_r\leq q$. This implies that, both the number of choices for $\Lf_i\,(1\leq i\leq r)$, and the product in (\ref{sumofexp0}) with the bond $\ell$ restricted to those connecting two atoms in $\Gb_i$ and $\Gb_j$, are bounded by $C_0^{m+|q-q'|}$.

This already completes the proof of the improved bound for vines (II)--(IV). As for vines (V)--(VIII), we only need to look at the remaining atoms and bonds not in the groups $\Gb_i$, which form a Vine (V) minus two joints. We shall prove that all these atoms must be in the same layer $\Lf$, and that $q\geq \Lf\geq q'$. This clearly allows to control the number of possible layerings, as well as the product in (\ref{sumofexp0}) involving the remaining bonds, by $C_0^{|q-q'|}$.

To prove this last claim, let $\Hb$ be the set of all remaining atoms not in the groups $\Gb_i$, then every atom in $\Hb$ has degree $4$, and there are only two bonds connecting atoms in $\Hb$ to atoms not in $\Hb$, namely two single bonds between atoms $v_1\not\in\Hb$ and $v_2\in\Hb$, and between $v_3\not\in \Hb$ and $v_4\in\Hb$, see Figure \ref{fig:vines}; note also that $v_2$ and $v_4$ belong to the same group with the grouping in Definition \ref{cohmol}, so $\Lf_{v_2}=\Lf_{v_4}$. We only need to prove that $\Lf_{v_2}$ must be between $\Lf_{v_1}$ and $\Lf_{v_3}$, because then we can remove the atoms $(v_2,v_4)$ and apply the same argument to the resulting set $\Hb'$, and proceed with induction.

If we start from any atom $v\in \Hb$ and keep going from $v$ to $v^+$ as above, then eventually we will reach an atom not in $\Hb$, which is either $v_1$ or $v_3$. Therefore, by symmetry we may assume $\mf_1$ is the parent node of $\mf_2$, where $\mf_j=\nf(v_j)$. If $\mf_3$ is also the parent node of $\mf_4$, then the branching nodes in the trees rooted at $\mf_2$ and $\mf_4$ must all correspond to atoms in $\Hb$, and all the leaves in these two trees must be completely paired (otherwise we will have extra bonds between atoms in $\Hb$ and atoms not in $\Hb$), so by Proposition \ref{canonequiv} we get $\Lf_{\mf_2}=\Lf_{\mf_4}=\min(\Lf_{\mf_1},\Lf_{\mf_3})$, which implies what we need.

Finally, if $\mf_3$ is not the parent node of $\mf_4$, then $\mf_2$ must be an ancestor of $\mf_4$. There are now two cases. The first case is that $\mf_4$ is the parent node of $\mf_3$, in which case we clearly have $\Lf_{\mf_1}\geq \Lf_{\mf_2}=\Lf_{\mf_4}\geq \Lf_{\mf_3}$ by the ancestor-descendant relations, as desired. The second case is that $\mf_4$ has a child node $\mf_6$ paired with a child node $\mf_5$ of $\mf_3$ as leaves. In this case, similar arguments as above imply that the branching nodes in the tree rooted at $\mf_2$ must all correspond to atoms in $\Hb$, and all the leaves in this tree except $\mf_6$ must be completely paired. Since also $\mf_6$ is paired with $\mf_5$, by applying Proposition \ref{canonequiv} to $(\mf_2,\mf_5)$ and $(\mf_5,\mf_6)$ we get that $\Lf_{\mf_5}=\Lf_{\mf_6}=\min(\Lf_{\mf_3},\Lf_{\mf_4})$ and $\max(\Lf_{\mf_2},\Lf_{\mf_5})\geq \min(\Lf_{\mf_1},\Lf_{\mf_3})$. Since also $\Lf_{\mf_2}=\Lf_{\mf_4}$, we get that $\Lf_{\mf_2}\geq \min(\Lf_{\mf_1},\Lf_{\mf_3})$, which implies what we need because also $\Lf_{\mf_2}\leq\Lf_{\mf_1}$. This completes the proof.
\end{proof}
\section{Analysis of vines: Role of cancellation and incoherency}\label{cancelvine} 
\subsection{A general estimate for exponential sums} We will state and prove an estimate for general expressions involving exponential sums (and oscillatory integrals), namely Proposition \ref{sumint}; for this we need some number theory lemmas, which are listed below.
\subsubsection{Number theory lemmas}\label{ntlemma} The number theory estimates we need involve summation over $1$ or $3$ vectors, and are similar to those involving $2$ vectors in Lemmas \ref{lemlayer1} and \ref{lemlayer2}.
\begin{lem}\label{lemlin} We use the notation $e(z)=e^{2\pi iz}$ (same with Lemmas \ref{lemcubic0}--\ref{lemcubic} below). Fix $0\neq r\in\Zb_L^d$ with $L^{-1}\leq|r|\leq L^{-\gamma+\eta}$, and recall the cutoff function $\chi_0$ defined in Section \ref{setupnotat}, then uniformly in $(a,\xi)\in(\Rb^d)^2$ and $\tau\in\Rb$, we have
\begin{equation}\label{lemlin2}\int_{[\tau,\tau+\delta L^{2\gamma}]}\bigg|\sum_{x\in \Zb_L^d}\chi_0(x-a)\cdot e(s\langle r,x\rangle+\langle \xi,x\rangle)\bigg|\,\mathrm{d}s\lesssim_0\min(\delta L^{d+2\gamma},L^d|r|^{-1}).
\end{equation}
\end{lem}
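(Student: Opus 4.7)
The plan is to apply Poisson summation in $x$ and then bound the resulting time integral via rapid decay of $\widehat{\chi_0}$ together with a lattice-point count near a short line segment. Writing $\alpha(s):=sr+\xi$, Poisson summation on the lattice $\Zb_L^d=L^{-1}\Zb^d$ (with dual lattice $L\Zb^d$) gives
\[
\sum_{x\in\Zb_L^d}\chi_0(x-a)\,e(\langle\alpha(s),x\rangle)=L^d\sum_{n\in\Zb^d}e(\langle\alpha(s)-Ln,a\rangle)\,\widehat{\chi_0}(Ln-\alpha(s)),
\]
and since $\chi_0$ is Gevrey of class $2$ one has $|\widehat{\chi_0}(y)|\lesssim_N(1+|y|)^{-N}$ for every $N$, whence pointwise in $s$,
\[
\bigg|\sum_{x}\cdots\bigg|\lesssim_N L^d\sum_{n\in\Zb^d}(1+|Ln-\alpha(s)|)^{-N}.
\]

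For each $n\in\Zb^d$, let $\tilde d_n$ denote the distance from $Ln$ to the segment $\Sigma:=\{\alpha(s):s\in[\tau,\tau+\delta L^{2\gamma}]\}$ and $s_n^\ast$ the point of this interval closest to $Ln$. The elementary inequality $|Ln-\alpha(s)|\geq c(\tilde d_n+|r|\,|s-s_n^\ast|)$, combined with the change of variables $u=|r|(s-s_n^\ast)$ and the trivial time-window bound, yields
\[
\int_\tau^{\tau+\delta L^{2\gamma}}\frac{ds}{(1+|Ln-\alpha(s)|)^N}\;\lesssim_N\;\min\!\Big(\delta L^{2\gamma},\,\tfrac{1}{|r|}\Big)\,(1+\tilde d_n)^{-(N-1)},
\]
so the remaining task is to estimate $\sum_{n\in\Zb^d}(1+\tilde d_n)^{-(N-1)}$.

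This is the main obstacle. Let $M:=\delta L^{2\gamma}|r|$ be the length of $\Sigma$; the hypothesis $|r|\leq L^{-\gamma+\eta}$ gives $M\leq\delta L^{\gamma+\eta}$, which is much smaller than the coarse dual-lattice spacing $L$ provided $\gamma<1$ and $\delta$ is sufficiently small (in the borderline case $\gamma=1$, this also relies on the torus genericity hypothesis recorded in Theorem \ref{main}, which precludes $r$ from being resonant with the dual lattice). In this short-segment regime, a volume count in the tube of radius $R$ about $\Sigma$ shows that the number of $n\in\Zb^d$ with $\tilde d_n\leq R$ is $\lesssim(1+R/L)^{d-1}(1+(R+M)/L)$, and dyadic summation with $N$ chosen larger than $d+1$ yields $\sum_n(1+\tilde d_n)^{-(N-1)}\lesssim 1$.

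Combining the three estimates produces
\[
\int_\tau^{\tau+\delta L^{2\gamma}}\bigg|\sum_{x}\cdots\bigg|\,ds\lesssim L^d\min\!\bigl(\delta L^{2\gamma},|r|^{-1}\bigr)=\min\!\bigl(\delta L^{d+2\gamma},L^d|r|^{-1}\bigr),
\]
which is the claimed bound. The lower bound $|r|\geq L^{-1}$ plays no active role in the argument; it merely records that $L^d|r|^{-1}$ is the meaningful side of the $\min$ (if $|r|$ were allowed to be much smaller, the bound $L^d|r|^{-1}$ would exceed the trivial time-window bound $\delta L^{d+2\gamma}$ and be redundant).
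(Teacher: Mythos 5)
Your proof is correct, and it uses the same two-step skeleton as the paper — Poisson summation, then control of the dual-lattice sum exploiting the shortness of the segment $\Sigma=\{sr+\xi:s\in[\tau,\tau+\delta L^{2\gamma}]\}$ — but the second step is executed in a genuinely different (more geometric, coordinate-free) way. The paper isolates a dominant coordinate $r^1$ of $r$ with $|r^1|\sim|r|$, fixes the dual-lattice coordinate $g^1$, integrates in $s$ and sums over $g^j\ (j\geq 2)$ first, and observes that the shortness $\delta L^{2\gamma}|r^1|\ll L$ forces all but one $g^1$ to contribute $(|g^1|+L)^{-5d}$. You instead introduce the distance $\tilde d_n$ from $Ln$ to the segment, prove a slicing inequality $|Ln-\alpha(s)|\gtrsim \tilde d_n+|r||s-s_n^\ast|$ from the projection/convexity argument, and then close with a tube/volume count of $\{n:\tilde d_n\le R\}$. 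Your route also produces both sides of the $\min$ at once from the time integral rather than handling the trivial side $\delta L^{d+2\gamma}$ separately, which is a small tidying-up. Both require (and invoke) the short-segment condition $M=\delta L^{2\gamma}|r|\ll L$, which holds for $\gamma<1$ once $\eta<1-\gamma$. One small inaccuracy in your parenthetical remark on $\gamma=1$: the lemma as stated concerns the square torus $\Zb_L^d$, which is only in force when $\gamma\in(0,1)$; when $\gamma=1$ the framework switches to a generic rectangular torus and the lattice (hence the lemma) changes form, so "torus genericity rescues $M\ll L$" is not quite the right way to phrase it — rather, the square-torus version of this lemma simply isn't the one used at $\gamma=1$. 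This side comment doesn't affect the correctness of your argument in the regime where the lemma is actually applied.
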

\begin{proof} The bound $\delta L^{d+2\gamma}$ is trivial as the sum over $x$ is bounded by $L^d$. For the other bound, we apply Poisson summation to get
\[\sum_{x\in \Zb_L^d}\chi_0(x-a)\cdot e(s\langle r,x\rangle+\langle \xi,x\rangle)=L^d\sum_{g\in\Zb^d}e(\langle sr+\xi-Lg,a\rangle)\cdot\widehat{\chi_0}(sr+\xi-Lg),\] so the left hand side if (\ref{lemlin2}) is bounded by
\begin{equation}\label{lemlin3}\int_{[\tau,\tau+\delta L^{2\gamma}]}\sum_{g\in\Zb^d}\langle sr+\xi-Lg \rangle^{-10d}\,\mathrm{d}s.\end{equation} Assume the first coordinate of $r$ is $r^1$ with $|r^1|\sim_0|r|$; we fix the first coordinate $g^1$, then the result of integrating in $s$ and summing in $g^j\,(j\geq 2)$ is bounded by $|r^1|^{-1}$, and is moreover bounded by $(|g^1|+L)^{-5d}$ for all but one choice of $g^1$ (as $sr^1$ belongs to a fixed interval of length $\delta L^{2\gamma}|r^1|\leq \delta L^{\gamma+\eta}\ll_0 L$). This allows to bound (\ref{lemlin3}) by $|r|^{-1}$, which then proves (\ref{lemlin2}).
\end{proof}
\begin{lem}\label{lemcubic0} For any $|s|,|t|\leq L$, and uniformly in $(a,b,c,\xi,\xi',\xi'')\in(\Rb^d)^6$, we have
\begin{multline}\label{lemcub01}
\sum_{(g,h,q)\in\Zb^{3d}}\bigg|\int_{\Rb^{3d}}\chi_0(x-a)\chi_0(y-b)\chi_0(z-c)\cdot e[\langle Lg+\xi,x\rangle+\langle Lh+\xi',y\rangle+\langle Lq+\xi'',z\rangle]\\\times e(t\langle x,y\rangle+s\Lambda)\,\mathrm{d}x\mathrm{d}y\mathrm{d}z\bigg|\,\lesssim_0 (1+|s|+|t|)^{-d},
\end{multline}
where $\Lambda\in\{\langle x,z\rangle,\langle z,x+y-z\rangle\}$.
\end{lem}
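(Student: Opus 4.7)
The plan is to adapt the proof of Lemma~\ref{lemlayer1} from two to three variables. Setting $u=x-a$, $v=y-b$, $w=z-c$, the phase becomes, up to an additive constant,
$$\Phi = \langle\alpha,u\rangle + \langle\beta,v\rangle + \langle\gamma,w\rangle + t\langle u,v\rangle + s\widetilde\Lambda(u,v,w),$$
with $\alpha,\beta,\gamma$ the natural shifts of $Lg+\xi,Lh+\xi',Lq+\xi''$ (linear in $s,t,a,b,c$), and $\widetilde\Lambda=\langle u,w\rangle$ in Case~1 or $\langle u,w\rangle+\langle v,w\rangle-|w|^2$ in Case~2 (the extra linear-in-$w$ terms absorbed into $\gamma$). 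The key structural feature is that $\Phi$ is linear in each of $u,v$ separately, and linear (Case~1) or quadratic (Case~2) in $w$. Following Lemma~\ref{lemlayer1}, I split the lattice sum over $(g,h,q)$ into two regimes based on the sizes of $|\alpha|,|\beta|,|\gamma|$.

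In the \emph{large} regime, where one of $|\alpha|,|\beta|,|\gamma|$ exceeds $10dL$, the corresponding gradient $\nabla_\bullet\Phi$ has magnitude $\gtrsim L$ uniformly on the support $|u|,|v|,|w|\leq d$: indeed, using $|s|,|t|\leq L$, the non-leading terms $|tv|,|sw|,|tu|,|su|$ remain $\leq dL$. Integration by parts $10d$ times in that variable, exactly as in Lemma~\ref{lemlayer1} (combined with Fourier estimates $|\widehat{\chi_0}|\lesssim \langle\cdot\rangle^{-N}$ for the two remaining integrations), yields a pointwise bound whose summation over the relevant sub-lattice of $(g,h,q)$ produces a contribution much smaller than the target $(1+|s|+|t|)^{-d}$.

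In the \emph{small} regime, all of $|\alpha|,|\beta|,|\gamma|\leq 10dL$; since $|s|,|t|\leq L$ and the lattice spacing in each variable is $L$, there are only $O_0(1)$ such tuples. For each, I bound $|I_{g,h,q}|\lesssim (1+|s|+|t|)^{-d}$ as follows. In Case~1, Fubini on the linear $v$- and $w$-integrations produces
$$I_{g,h,q} = \int\chi_0(u)\,F(\beta+tu)\,G(\gamma+su)\,e^{i\langle\alpha,u\rangle}\,du,$$
with $F,G$ Schwartz. The essential support of $F(\beta+tu)G(\gamma+su)$ in $u$ lies in the intersection of two balls of radii $\sim 1/|t|$ and $\sim 1/|s|$, whose volume is $\lesssim \max(|s|,|t|,1)^{-d}$. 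Crude $L^1$-estimation gives $|I_{g,h,q}|\leq \|\chi_0 FG\|_{L^1(u)}\lesssim (1+|s|+|t|)^{-d}$. In Case~2, direct factorization fails, but the full $3d$-dimensional Hessian of $\Phi$ has determinant $(2st(s+t))^d$; non-degenerate stationary phase yields $|I_{g,h,q}|\lesssim|st(s+t)|^{-d/2}$, which is $\lesssim(1+|s|+|t|)^{-d}$ whenever $|s+t|\gtrsim\max(|s|,|t|,1)$. In the complementary degenerate sub-regime $t\approx -s$, the change of variables $p=x-z$, $q=y-z$ converts the phase into
$$\langle A,p\rangle+\langle B,q\rangle+\langle A+B+C,z\rangle+t\langle p,q\rangle+(s+t)\big(\langle p,z\rangle+\langle q,z\rangle+|z|^2\big);$$
at $s+t=0$ the $z$-variable decouples completely, the $(p,q)$-integral is of the two-variable form treated by Lemma~\ref{lemlayer1} with coupling $t$, and the $z$-integral is $O_0(1)$, giving $|I_{g,h,q}|\lesssim |t|^{-d}=(1+|s|+|t|)^{-d}$ (since $|t|\approx|s|$ here). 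A perturbative extension (or integration by parts in $z$ when $|A+B+C|$ is comparably large) covers $|s+t|$ small but nonzero, and the strata $s=0$ or $t=0$ are handled by similar decouplings.

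The chief technical obstacle is the Case~2 degeneracy analysis, where the Hessian develops flat directions when $s+t$, $s$, or $t$ is small relative to the others. In each such regime, the resolution is to identify the correct linear change of variable which isolates the flat direction, so that the non-degenerate block is handled by stationary phase (or by the two-variable Lemma~\ref{lemlayer1}) and the degenerate block either contributes an $O_0(1)$ factor or admits additional decay via integration by parts using its surviving linear phase.
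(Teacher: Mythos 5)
Your large-regime argument and your Case 1 small-regime argument are sound and consistent with the paper (the paper handles the large regime by exactly the same integration by parts in whichever of $u,v,w$ carries a linear phase coefficient of size $\gtrsim L$). Where you genuinely diverge is Case 2 of the small regime. The paper never computes the full $3d$-dimensional Hessian: it freezes $z$ and applies $2d$-dimensional stationary phase in $(x,y)$ (for fixed $z$ the only quadratic coupling is $t\langle x,y\rangle$, all $z$-dependent terms being linear in $x,y$, so this gives $\langle t\rangle^{-d}$ uniformly in $z$), and separately freezes $y$ and applies stationary phase in $(x,z)$ (whose quadratic part has Hessian determinant $(\pm s^2)^d$ for both choices of $\Lambda$, giving $\langle s\rangle^{-d}$). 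Taking the minimum yields $(1+|s|+|t|)^{-d}$ for \emph{all} $s,t$ with no case analysis whatsoever. Your route via $\det H=(2st(s+t))^d$ is correct where it is nondegenerate, but it buys nothing over the target bound and forces you into a three-way degenerate sub-regime analysis.

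Moreover, one of your claims is wrong as stated: $|st(s+t)|^{-d/2}\lesssim(1+|s|+|t|)^{-d}$ does \emph{not} follow from $|s+t|\gtrsim\max(|s|,|t|,1)$ alone; you also need $\min(|s|,|t|)\gtrsim 1$. For instance $|s|=L^{-1}$, $|t|=L$ gives $|s+t|\sim L$ but $|st(s+t)|^{-d/2}\sim L^{-d/2}\gg L^{-d}$. So the problematic set is not just the strata $s=0$ or $t=0$ that you defer to ``similar decouplings,'' but the entire region $\min(|s|,|t|)\lesssim1$. The fix is precisely the freezing argument above (freeze $z$ and exploit the $t\langle x,y\rangle$ block, or freeze $y$ and exploit the $s$-block), which also subsumes your $p=x-z$, $q=y-z$ change of variables and the ``perturbative extension'' near $s+t=0$: for fixed $z$ the $(s+t)$-dependent terms are exactly linear in $(p,q)$, so no perturbation argument is needed. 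I would replace the whole Case 2 small-regime analysis by that two-line decoupling.
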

\begin{proof} This is similar to the proof of Lemma \ref{lemlayer1}. For fixed $(a,b,c,\xi,\xi',\xi'')$ and $|t|,|s|\leq L$, we have
\[\nabla_{x,y,z}(t\langle x,y\rangle+s\Lambda)=t\ell_1+s\ell_2+\Oc_0(L),\] where $\ell_j=(\ell_j^1,\ell_j^2,\ell_j^3)$ is a vector that is a fixed linear combination of $(a,b,c)$. The case where
\[\max(|Lg+\xi+t\ell_1^1+s\ell_2^1|,|Lh+\xi'+t\ell_1^2+s\ell_2^2|,|Lq+\xi''+t\ell_1^3+s\ell_2^3|)\geq 10dL\] can be treated as in the proof of Lemma \ref{lemlayer1}, by integrating by parts in one of the variables $u:=x-a$, $v:=y-b$ or $w:=z-c$. If the above maximum if $\leq 10dL$, then for fixed $(t,s)$ there are $\leq C_0$ choices fo $(g,h,q)$. For fixed $(g,h,q)$, we may fix $z$ and apply stationary phase arguments in $(x,y)$, or fix $y$ and apply stationary phase arguments in $(x,z)$, to bound the integral in $(x,y,z)$ by $\min((1+|s|)^{-d},(1+|t|)^{-d})$.
\end{proof}
\begin{lem}\label{lemcubic} Let $\Lambda\in\{\langle x,z\rangle,\langle z,x+y-z\rangle\}$, then uniformly in $(a,b,c,\xi,\xi',\xi'')\in(\Rb^d)^6$, we have
\begin{multline}\label{lemcub2}
\int_{L\leq \max(|t|,|s|)\leq\Df\delta L^{2\gamma}}\bigg|\sum_{x,y,z\in\Zb_L^d}\chi_0(x-a)\chi_0(y-b)\chi_0(z-c)\\\times e\big(t\langle x,y\rangle+s\Lambda+\langle \xi,x\rangle+\langle \xi',y\rangle+\langle \xi'',z\rangle\big)\bigg|\,\mathrm{d}t\mathrm{d}s\lesssim_2 L^{3d+\gamma-2\gamma_0}\log L.
\end{multline}
\end{lem}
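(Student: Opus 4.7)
The proof will extend the strategy of Lemma~6.6 of \cite{DH23} (our Lemma \ref{lemlayer2}) to the three-variable setting. I first observe that if $\gamma<1/2$, the region $\{L\leq\max(|t|,|s|)\leq\Df\delta L^{2\gamma}\}$ is empty for $L$ large enough (since $\Df\delta=\tau_*$ is $\Oc_1(1)$), so the bound holds trivially; I therefore restrict to $\gamma\geq 1/2$, where $\gamma_0=1-\gamma$ and the target becomes $L^{3d+3\gamma-2}\log L$. I then split the integration region dyadically into three sub-regions based on whether only $|t|\geq L$, only $|s|\geq L$, or both; in each sub-region the strategy is to apply Poisson summation to an appropriate subset of the variables $(x,y,z)$ and to reduce the problem to a two-variable exponential sum controlled via Lemma \ref{lemlayer2} and/or Lemma \ref{lemlin}.

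For the sub-regions where only one of $|t|,|s|$ is at least $L$, say $|t|\geq L\geq|s|$, I fix $(s,z)$ and apply Lemma \ref{lemlayer2} directly to the exponential sum in $(x,y)$. For Case 1 ($\Lambda=\langle x,z\rangle$), the phase reorganizes as $t\langle x,y\rangle+\langle \xi+sz,x\rangle+\langle \xi',y\rangle$, which is exactly of the form required by Lemma \ref{lemlayer2} after shifting $\xi$. For Case 2 ($\Lambda=\langle z,x+y-z\rangle$), the quadratic term $-s|z|^2$ necessitates first completing the square in $z$; after Poisson summation in $z$ this yields a stationary phase factor of order $|s|^{-d/2}$ and reduces the effective $(x,y)$ phase to $(t+s/2)\langle x,y\rangle+(s/4)(|x|^2+|y|^2)+\text{linear}$, which can be handled by a direct extension of the Lemma \ref{lemlayer2} argument since $|t+s/2|\sim|t|$ in this regime. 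Summing trivially over $z\in\Zb_L^d\cap B_1(c)$ (giving $L^d$) and integrating $s$ over $[-L,L]$ (giving $L$) then produces a bound of the desired order.

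For the most delicate sub-region $|t|,|s|\geq L$, of measure $\sim L^{4\gamma}$, I would apply Poisson summation simultaneously in $(y,z)$ (after square completion in Case 2), converting the sum to
\[L^{2d}\sum_{x,h,q}\chi_0(x-a)\,\hat\chi_0(Lh-tx-\xi')\,\hat\chi_0(Lq-sx-\xi'')\,e(\langle\Phi_{h,q},x\rangle)\]
modulo modifications in Case 2. For each fixed $(h,q,x)$, the $(t,s)$-integral of the product of cutoffs is $\lesssim |x|^{-2}$ by two applications of Lemma \ref{lemlin}; the sum over $(h,q,x)$ with the constraint $\max(|t|,|s|)\geq L$, combined with the oscillation in $x$ provided by $e(\langle\Phi_{h,q},x\rangle)$ via Poisson summation in $x$, should yield the required bound, with the $\log L$ factor arising from a dyadic decomposition $(|t|,|s|)\sim(T,S)$ for $T,S\in[L,L^{2\gamma}]$. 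The main obstacle will be the analysis of Case 2 in this sub-region: the coupling between $t$ and $s$ induced by square completion prevents a clean separation of the two Poisson sums, and extracting the necessary cancellation requires either a careful stationary phase computation in $x$ or an integration-by-parts argument that exploits the non-degeneracy of the combined phase $t\langle x,y\rangle+s\Lambda$ in directions transverse to its natural degeneracy.
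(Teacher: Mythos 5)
Your reduction to $\gamma\geq 1/2$ and your treatment of the sub-regions where only one of $|t|,|s|$ exceeds $L$ are reasonable, and your first step for $\Lambda=\langle x,z\rangle$ in the doubly-large region (Poisson summation in $(y,z)$) coincides with the paper's. But the proposal has a genuine gap precisely where you flag "the main obstacle": the case $\Lambda=\langle z,x+y-z\rangle$ with both $|t|,|s|\geq L$ is the heart of the lemma, and the idea that resolves it is missing. The paper does \emph{not} complete the square in $z$ or attempt a stationary-phase/integration-by-parts analysis; instead it uses the global linear change of variables $(u,v,w)=(x+y,\,x-y,\,x+y-2z)$, which simultaneously diagonalizes both quadratic forms: $4\langle x,y\rangle=|u|^2-|v|^2$ and $4\Lambda=|u|^2-|w|^2$, so the full phase becomes $\frac{t+s}{4}|u|^2-\frac{t}{4}|v|^2-\frac{s}{4}|w|^2$. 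The triple sum then factors into products of one-dimensional Gauss--Weyl sums $G(t')^d G(s')^d G(t'+s')^d$ with $(t',s')=L^{-2}(t,s)$, and the $(t,s)$-integral is closed by Young's convolution inequality in $L^{3d/2}$ together with the Weyl bound $G(t')\lesssim L/(\sqrt{q}(1+L|t'-a/q|^{1/2}))$ and a Dirichlet-approximation analysis of the localized norm $\|G\|_{L^{3d/2}([L^{-1},\Df\delta L^{2\gamma-2}])}$, which is where the restriction $|t'|\geq L^{-1}$ (forcing $a\neq 0$, hence $q\gtrsim L^{2-2\gamma}$) produces the gain $L^{2\gamma-2}$. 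Without this diagonalization-plus-Weyl-sum mechanism there is no apparent route to the stated exponent; your proposed square completion couples $t$ and $s$ in exactly the way you identify as problematic, and no cancellation mechanism is actually supplied.

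There is a second, smaller gap in your plan for $\Lambda=\langle x,z\rangle$ with both variables large. After Poisson summation in $(y,z)$, the factor $\hat\chi_0(tx+\xi'-Lg)\hat\chi_0(sx+\xi''-Lh)$ localizes $x$ to the set where $\{t'x'+\sigma_1\}\leq L^{-1}$ and $\{s'x'+\sigma_2\}\leq L^{-1}$ coordinate-wise; the remaining $x$-sum is a lattice-point \emph{count}, not an oscillatory sum, so "Poisson summation in $x$ to exploit the oscillation of $e(\langle\Phi_{h,q},x\rangle)$" does not apply. The paper instead bounds this count $H(t',s')$ by $\min(H_0(t'),H_0(s'))$, estimates $H_0$ via the Dirichlet approximation $|s'-a/q|\leq (qL)^{-1}$ (giving $H_0\lesssim\min(LQ^{-1},(LQR)^{-1})$), and runs a level-set argument over dyadic $A$ with Cauchy--Schwarz applied to $\mu_1(A)\mu_2(A)$, where the condition $|t'|\geq L^{-1}$ again forces $a\neq 0$ and yields the crucial volume bound $\lesssim_2 Q^2RL^{2\gamma-2}$. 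Your per-$(h,q,x)$ integration giving $|x|^{-2}$ discards exactly the arithmetic information that produces the factor $L^{-2\gamma_0}$ in the target, so the doubly-large region cannot be closed that way either.
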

\begin{proof} First consider $\Lambda=\langle z,x+y-z\rangle$, then we have $4\langle x,y\rangle=|u|^2-|v|^2$ and $4\Lambda=|u|^2-|w|^2$, where $(u,v,w)=(x+y,x-y,x+y-2z)$, so
\[t\langle x,y\rangle+s\langle z,x+y-z\rangle=\frac{t+s}{4}|u|^2-\frac{t}{4}|v|^2-\frac{s}{4}|w|^2.\] In this case the proof is similar to Lemma 6.6 of \cite{DH23}: by changing to the $(u,v,w)$ variables and applying standard reductions (decomposing various cutoff functions into Fourier integral and suitably shifting $(\xi,\xi',\xi'')$ etc.), we can reduce to
\begin{equation}\label{lemcub3}
\mathrm{LHS\ of\ }(\ref{lemcub2})\lesssim_0 L^{4}\int_{L^{-1}\leq \max(|t'|,|s'|)\leq\Df\delta L^{2\gamma-2}}|G(t')|^d|G(s')|^d|G(t'+s')|^d\,\mathrm{d}t'\mathrm{d}s',
\end{equation} where $(t',s')=L^{-2}\cdot(t,s)$ and
\begin{equation}\label{wyel}G(t')=\sup_{h\in\Zb,\rho\in\Rb}\bigg|\sum_{x\in \Zb}\chi_0\bigg(\frac{x-h}{L}\bigg)e(t'x^2+\rho x)\bigg|\lesssim_0\frac{L}{\sqrt{q}\left(1+L\big|t'-\frac{a}{q}\big|^{1/2}\right)},\end{equation}where the last inequality is due to the Weyl bound, provided that $|a|<q\leq L$, $\gcd(a,q)=1$ and $\big|t'-\frac{a}{q}\big|\leq\frac{1}{qL}$.  By (\ref{lemcub3}) and Young's inequality, we have
\begin{equation}
\mathrm{LHS\ of\ }(\ref{lemcub2})\lesssim_0 L^4\|G\|_{L^{3d/2}([0,1])}^{2d}\cdot\|G\|_{L^{3d/2}([L^{-1},\Df\delta L^{2\gamma-2}])}^d.
\end{equation} Since $3d/2>4$, it is well-known that $\|G\|_{L^{3d/2}([0,1])}^{3d/2}\lesssim_0 L^{3d/2-2}$. To bound the localized norm, since $|t'|\geq L^{-1}$, we must have $a\neq 0$ in the Dirichlet approximation, which implies that $|a|\leq C_2 L^{2\gamma-2}|q|$ and $|q|\geq C_2^{-1}L^{2-2\gamma}$ since also $|t'|\leq \Df\delta L^{2\gamma-2}$. Let $|q|\sim_0 Q\leq L$ and $\big|t'-\frac{a}{q}\big|\sim_0 R$ (or $\big|t'-\frac{a}{q}\big|\lesssim_0 R$ for $R=L^{-2}$) with $L^{-2}\leq R\leq (QL)^{-1}$, then using (\ref{wyel}) we can bound
\begin{equation*}\int_{L^{-1}\leq |t'|\leq \Df\delta L^{2\gamma-2}}|G(t')|^{3d/2}\,\mathrm{d}t'\lesssim_2\sum_{\substack{C_2^{-1}L^{2-2\gamma}\leq Q\leq L\\L^{-2}\leq R\leq (QL)^{-1}}}\sum_{\substack{|q|\leq Q\\|a|\leq C_2L^{2\gamma-2}Q}}(QR)^{-3d/4}R\lesssim_2 L^{3d/2-2+(2\gamma-2)};
\end{equation*} putting together we get
\begin{equation}
\mathrm{LHS\ of\ }(\ref{lemcub2})\lesssim_2 L^4 L^{2d-8/3}\cdot L^{d-4/3-4\gamma_0/3}\lesssim_2 L^{3d-\gamma_0},
\end{equation} which is better than we need.

Now consider $\Lambda=\langle x,z\rangle$. We can apply Poisson summation in $(y,z)$ in (\ref{lemcub2}), to reduce the summation to
\begin{multline}\label{lemcub4}L^{2d}\sum_{x\in\Zb_L^d}\sum_{g,h\in\Zb^d}\chi_0(x-a)\cdot e\big(\langle \xi,x\rangle+\langle tx+\xi'-Lg,b\rangle+\langle sx+\xi''-Lh,c\rangle\big)\\\times\widehat{\chi_0}(tx+\xi'-Lg)\widehat{\chi_0}(sx+\xi''-Lh).
\end{multline} Let $\{z\}$ denote the distance of $z$ to the closest integer. For convenience we will replace $\widehat{\chi_0}$ by a compactly supported function (the proof can be easily adapted), so by (\ref{lemcub4}) we have
\begin{equation}
\mathrm{LHS\ of\ }(\ref{lemcub2})\lesssim_0 L^{2d+4}\int_{L^{-1}\leq\max(|t'|,|s'|)\leq \Df\delta L^{2\gamma-2}}H(t',s')^d\,\mathrm{d}t'\mathrm{d}s',
\end{equation} where $(t',s')=L^{-2}(t,s)$, and
\begin{equation}
H(t',s')=\sup_{r\in\Zb,\sigma_1,\sigma_2\in\Rb}\sum_{x'\in\Zb}\mathbf{1}_{|x'-r|\leq L}\mathbf{1}_{\{t'x'+\sigma_1\}\leq L^{-1}}\mathbf{1}_{\{s'x'+\sigma_2\}\leq L^{-1}},
\end{equation} where $x'=Lx^j$ for some coordinate $x^j$, and correspondingly $r=La^j$ and $(\sigma_1,\sigma_2)=L^{-1}((\xi')^j,(\xi'')^j)$ etc. Assume $|t'|\geq L^{-1}$, clearly $H(t',s')\leq\min(H_0(t'),H_0(s'))$ where
\begin{equation}
H_0(t')=\sup_{r\in\Zb,\sigma\in\Rb}\sum_{x'\in\Zb}\mathbf{1}_{|x'-r|\leq L}\mathbf{1}_{\{t'x'+\sigma\}\leq L^{-1}},
\end{equation} which implies that
\begin{multline}\label{lemcub5}\mathrm{LHS\ of\ }(\ref{lemcub2})\lesssim_0 L^{2d+4}\sum_{A\leq L} A^d\cdot|\{L^{-1}\leq |t'|\leq \Df\delta L^{2\gamma-2}:H_0(t')\geq A\}|\\\times |\{|s'|\leq \Df\delta L^{2\gamma-2}:H_0(s')\geq A\}|:=L^{2d+4}\sum_{A\leq L}A^d\mu_1(A)\mu_2(A),
\end{multline} where $A$ is dyadic. We may assume $A\geq 1$, as the rest of the integral gives $L^{2d+4}\cdot (C_2L^{2\gamma-2})^2$ which is enough for (\ref{lemcub2}). By Cauchy-Schwartz we may replace $\mu_1(A)\mu_2(A)$ by $\mu_1(A)^2$ and $\mu_2(A)^2$ and take geometric average.

Consider $\mu_2(A)^2$; assume $s'$ has Dirichlet approximation $R\sim_0\big|s'-\frac{a}{q}\big|\leq\frac{1}{qL}$ where $|a|<q\sim_0 Q\leq L$ and $L^{-2}\leq R\leq (QL)^{-1}$ (again we assume $\big|s'-\frac{a}{q}\big|\lesssim_0 R$ when $R=L^{-2}$). Then for any $\sigma$, the inequality
\[L^{-1}>\{s'x'+\sigma\}=\bigg\{\frac{ax'}{q}+\sigma+\eta x'\bigg\},\qquad (|\eta|\sim_0 R)\] and the fact that $\gcd(a,q)=1$ imply that $x'$ has less then $4$ choices modulo $q$, and moreover for each choice, $x'$ belongs to a fixed interval of length $(LR)^{-1}$. This shows $H_0(s')\lesssim_0\min(LQ^{-1},(LQR)^{-1})$, while the volume for $s'$ with $(Q,R)$ fixed is bounded by $QR(1+C_2QL^{2\gamma-2})$ upon summing over $(a,q)$ and discussing whether $a=0$ or not. With fixed $A$ such that $H_0(s')\geq A$, by further summing over $(Q,R)$, we get $\mu_2(A)\lesssim_2 L^{-1}A^{-1}\log L+L^{2\gamma-2}A^{-2}$, so by summing over $A\geq 1$ we can bound $(\ref{lemcub5})\lesssim_2 L^{2d+4}[L^{d-4}\log L+L^{4\gamma-4}\max(1,L^{d-4})]$ with $\mu_2(A)^2$ instead of $\mu_1(A)\mu_2(A)$.

The case of $\mu_1(A)^2$ is similar, but now we have $a\neq 0$ due to $|t'|\geq L^{-1}$, so the volume for $t'$ is bounded by $C_2Q^2RL^{2\gamma-2}$, and we have $(\ref{lemcub5})\lesssim_2 L^{2d+4}\cdot L^{4\gamma-4}\max(1,L^{d-4})$ with $\mu_1(A)^2$ instead of $\mu_1(A)\mu_2(A)$.  Putting together using Cauchy-Schwartz, this then proves (\ref{lemcub2}).
\end{proof}
\subsubsection{Exponential sums} We now state the main exponential sum estimate.
\begin{prop}\label{sumint} Consider an expression $\Ic$ defined as follows. It is a function of the time variables $(t,t',t'')$ and vector variables $(e,f,g,h)$ called \emph{output variables}. Assume each of $(t,t',t'')$ belongs to a fixed unit interval of form $[q,q+1]$, where $q$ may be different for different variables (same below), and $t>\max(t',t'')$; denote this set of $(t,t',t'')$ by $\Bc$. Assume each of $(e,f,g,h)$ belongs to a fixed unit ball, and $e-f=h-g:=r\neq 0$ with $|r|\leq L^{-\gamma+\eta}$. 

Assume that $\Ic$ involves \emph{input variables}, including the vector variables $x_0^i,x_j^i,y_j^i\in\Zb_L^d$ and time variables $t_0^i,t_j^i,s_j^i$, as well as \emph{parameters}  including $\lambda_0^i,\lambda_j^i,\mu_j^i$, for $1\leq i\leq I$ and $1\leq j\leq m_i$. Fix $(\mathrm{Nor},\mathrm{Bad})$ as a partition of $\{1,\cdots,I\}$, for $i\in \mathrm{Nor}$, we also include the extra input vector variables $u_1^i,u_2^i,u_3^i\in \Zb_L^d$, time input variables $\tau_1^i,\tau_2^i,\tau_3^i$ and parameters $\sigma_1^i,\sigma_2^i,\sigma_3^i$. Denote ${\textbf{x}}=(x_0^i,x_j^i,y_j^i,u_j^i)$ to be the collection of all input vector variables, and $M$ be the the total number of these variables, which equals the sum of $n_i:=2m_i+1+3\cdot\mathbf{1}_{\mathrm{Nor}}(i)$.

Suppose there exist \emph{alternative variables} $k_j^i,\ell_j^i$ where $1\leq j\leq n_i$ with $n_i$ defined as above, and we write ${\textbf{k}}=(k_j^i)$ and $\boldsymbol{\ell}=(\ell_j^i)$, such that any vector in $(k_j^i,\ell_j^i,e,f,g,h)$ belongs to $\underline{w}+\Zb_L^d$ for a fixed vector $\underline{w}$. Assume also that (i) we have ${\textbf{k}}=T_1{\textbf{x}}+{\textbf{h}}_1$ and $\boldsymbol{\ell}=T_2{\textbf{x}}+{\textbf{h}}_2$ for some constant matrices $T_j$ and some vectors ${\textbf{h}}_j$ depending only on $(e,f,g,h)$, such that all coefficients of $T_1,\,T_1^{-1},\,T_2$ are $0$ or $\pm 1$; (ii) for each $(i,j)$ there exists $(i',j')<(i,j)$ in lexicographic order such that $\ell_{j}^i-\ell_{j'}^{i'}$ is an algebraic sum of at most $\Lambda=\Lambda_j^i$ distinct terms in $(k_\alpha^\beta,e,f,g,h)$, and $\prod_{(i,j)}\Lambda_j^i\leq C_0^M$; (iii) any input variable (such as $x_j^i$ or $u_j^i$ etc.) is the difference of two variables in $(k_\alpha^\beta,\ell_\alpha^\beta,e,f,g,h)$.

The expression $\Ic$ is defined as\begin{align}\label{sumintI1}\Ic=\Ic(t,t',t'',e,f,g,h)&:=\sum_{\textbf{x}}\prod_{i=1}^{I}\prod_{j=1}^{n_i}\Kc_j^i(k_j^i)\cdot\Lc_j^i(\ell_j^i)\cdot \int_{\Dc} \bigg[\prod_{i=1}^I\bigg(\prod_{j=1}^{m_i} e^{\pi i \cdot\delta L^{2\gamma}(t_j^i-s_j^i)\langle x_j^i, y_j^i\rangle}\nonumber\\&\times \prod_{j=0}^{m_i}e^{\pi i\cdot\delta L^{2\gamma}(t_j^i-t)\langle r,\zeta_j^i\rangle}\prod_{j=0}^{m_i}e^{\pi i\lambda_j^i t_j^i}\,\mathrm{d}t_j^i\prod_{j=1}^{m_i} e^{\pi i\mu_j^is_j^i}\,\mathrm{d}s_j^i\bigg)\bigg]
\nonumber\\&\times\bigg[ \prod_{i\in \mathrm{Nor}}\bigg(e^{\pi i\cdot\delta L^{2\gamma}(\tau_1^i-\tau_3^i)\langle u_1^i,u_2^i\rangle}e^{\pi i\delta L^{2\gamma}(\tau_2^i-\tau_3^i)\Lambda_i}e^{\pi i\delta L^{2\gamma}(\tau_3^i-t)\langle r,\xi_i\rangle}\prod_{j=1}^3e^{\pi i\sigma_j^i\tau_j^i}\,\mathrm{d}\tau_j^i\bigg)\bigg]
\end{align} Here in (\ref{sumintI1}), the sum is taken over all input vector variables in $\textbf{x}$, and $\Dc$ is the domain of all input time variables $(t_j^i,s_j^i,\tau_j^i)$ defined by the following conditions: (i) each variable belongs to a fixed unit time interval of form $[q,q+1]\,(q\in\Zb)$, (ii) all variables are \emph{less than $t$}, and $t_0^1>\max(t',t'')$; (iii) any fixed collection of inequalities of form $v<w$ where $v$ and $w$ are variables from $(t_j^i,s_j^i,\tau_j^i)$. For each $1\leq i\leq I$ and $1\leq j\leq n_i$, the functions $\Kc_j^i$ and $\Lc_j^i$ satisfy that 
\begin{equation}
\label{propertyKj}\|\Kc_j^i\|_{\Sf^{30d,30d}}\leq L^{-(\gamma_1-\sqrt{\eta})g_{ij}},\quad \|\Lc_j^i\|_{\Sf^{30d,0}}\leq L^{-(\gamma_1-\sqrt{\eta})g_{ij}'},
\end{equation} 
where $g_{ij},g_{ij}'\geq 0$ are integers. The expressions $\zeta_j^i=a_j^ix_j^i+b_j^iy_j^i+c_j^ir$ for some $a_j^i,b_j^i,c_j^i\in\{0,\pm1\}$ when $j\geq 1$, and $\zeta_0^i=x_0^i$ when $j=0$; moreover $\Lambda_i\in\{\langle u_1^i,u_3^i\rangle,\langle u_3^i,u_1^i+u_2^i-u_3^i\rangle\}$ and $\xi_i=a_iu_1^i+b_iu_2^i+c_iu_3^i+d_ir$ with $a_i,b_i,c_i,d_i\in\{0,\pm1\}$.

In addition, assume for \emph{each} $i\in\mathrm{Bad}$ that one of the followings holds: \begin{enumerate}[{(a)}]
\item The conditions in the definition of $\Dc$ include either $s_1^i<t_0^i<t_1^i$ or $t_1^i<t_0^i<s_1^i$;
\item The summation and integration in (\ref{sumintI1}) contains an extra factor of $|r|$;
\item The summation and integration in (\ref{sumintI1}) contains an extra factor of $|t_1^i-s_1^i|^{1-3\eta}$;
\item For at least one $1\leq j\leq n_i$ we have $\max(g_{ij},g_{ij}')\geq 1$.
\end{enumerate}
Moreover, consider the unit interval that each input time variable belongs to; let $M_0$ be the number of $(i,j)$ such that $t_j^i$ and $s_j^i$ are \emph{not} in the same unit interval, plus the number of $i\in A$ such that $\tau_1^i,\tau_2^i,\tau_3^i$ are \emph{not} in the same unit interval. Also let $M_1$ be the sum of all $g_{ij}$ and $g_{ij}'$ in (\ref{propertyKj}).

Then, uniformly in all the parameters (including $\underline{w}$), in $\Bc$ and in the choice of the unit balls containing $(e,f,g,h)$ described above, we have
\begin{equation}\label{FourierboundI}\|\Ic\|_{\Xf^{2\eta^8,0,0}(\Bc)}\lesssim_1\big[\max_{(i,j)}(\langle\lambda_j^i\rangle,\langle \mu_j^i\rangle,\langle\sigma_j^i\rangle)\big]^{\eta^6}\cdot(C_1\delta^{-1/2})^ML^{M(d-\gamma){-\eta^2(|\mathrm{Bad}|+M_1)-\eta^{8}M_0+\eta^7}},
\end{equation} where the norm $\Xf^{2\eta^8,0,0}(\Bc)$ is defined as in Section \ref{setupnorms}.
\end{prop}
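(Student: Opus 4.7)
\medskip

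\textbf{Proof strategy for Proposition \ref{sumint}.} The plan is to prove the $\Xf^{2\eta^8,0,0}(\Bc)$ bound by first extending $\Ic$ canonically to all $(t,t',t'')\in\Rb^3$ via smooth cutoffs, and then controlling the extension's $\Xf^{2\eta^8,0,0}$ norm via interpolation: an $L^\infty$ bound on $\Ic$ combined with a crude bound of $L^{O(1)}$ on derivatives in $(t,t',t'')$ (obtained by differentiating under the integral sign) will yield the tiny Sobolev exponent $2\eta^8$ with only an $L^{\eta^7}$ loss, which matches the right-hand side of \eqref{FourierboundI}. The parameter dependence $\langle\lambda,\mu,\sigma\rangle^{\eta^6}$ likewise comes out after integrating by parts in the appropriate time variables.

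First I would reduce to the situation where each vector variable is localized in a fixed unit ball and each of $\Kc_j^i$, $\Lc_j^i$ has been Fourier-expanded. Using hypothesis (i), the change of variables $\textbf{x}\leftrightarrow(\textbf{k},\boldsymbol{\ell})$ induced by the $\pm 1$-matrices $T_1,T_2$ is volume-preserving, so after applying \eqref{propertyKj} together with Lemma \ref{treelemma}-type localizations we can write each $\Kc_j^i$ (resp.\ $\Lc_j^i$) as a Fourier integral of mass $\lesssim_1 C_1 L^{-(\gamma_1-\sqrt{\eta})g_{ij}}$ (resp.\ $C_1 L^{-(\gamma_1-\sqrt{\eta})g'_{ij}}$), supported on unit-sized frequency. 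Plugging these Fourier expansions back into \eqref{sumintI1} absorbs the $\Kc_j^i,\Lc_j^i$ factors into additional linear phases in the alternative variables; hypothesis (iii), that every original input variable is a difference of two alternative variables, then converts every quadratic phase $\langle x_j^i,y_j^i\rangle$ (and each $\Lambda_i$) into a cubic/quadratic expression in the $(k,\ell)$ variables with $\pm 1$ coefficients. The total $L^{-(\gamma_1-\sqrt{\eta})M_1}$ gain is already more than enough to cover the $-\eta^2 M_1$ loss we need from the $g_{ij},g'_{ij}$ in situation (d).

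Next I would estimate the inner sum/integral $i$-by-$i$. For $i\in\mathrm{Nor}$ the variables $(u_1^i,u_2^i,u_3^i)$ carry a phase of the form $t\langle x,y\rangle+s\Lambda$ with $\Lambda\in\{\langle x,z\rangle,\langle z,x+y-z\rangle\}$, and after the substitutions above they appear in exactly the form covered by Lemmas \ref{lemcubic0} and \ref{lemcubic}; the former controls the summation when $|s|,|t|\le L$, the latter when $\max(|s|,|t|)\ge L$, yielding a bound of $L^{3(d-\gamma)}\cdot L^{-\gamma_0+O(\eta)}$ per normal block. For each pair $(x_j^i,y_j^i)$ with $j\ge 1$, Lemmas \ref{lemlayer1} and \ref{lemlayer2} give the analogous bound $L^{2(d-\gamma)}$ per ladder rung, with the extra $L^{-\gamma_1+O(\eta)}$ factor available whenever the time interval for $t_j^i-s_j^i$ is forced to be short (this is exactly how the $M_0$ gain $L^{-\eta^8 M_0}$ arises, by the same argument as in \eqref{incohproof13}--\eqref{incohproof15}). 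The linear phases in $r$ are handled by Lemma \ref{lemlin}, which in particular produces the $|r|^{-1}$ upper bound that cancels the extra $|r|$ factor in Bad condition (b). For Bad conditions (a) and (c), the ordering constraint $s_1^i<t_0^i<t_1^i$ (or its mirror) and the factor $|t_1^i-s_1^i|^{1-3\eta}$ both force $|t_1^i-s_1^i|$ to either be small or to be penalized, which after combining with Lemma \ref{lemlayer2} contributes the required $L^{-\eta^2}$ gain per bad $i$; the $|\mathrm{Bad}|$ gain then assembles.

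\emph{Main obstacle.} The hard part will be to implement all the above bookkeeping cleanly without double-counting: the $L^{-\eta^2|\mathrm{Bad}|}$ gain, the $L^{-\eta^8 M_0}$ layer-separation gain, and the $L^{-\eta^2 M_1}$ incoherency gain must all be extracted simultaneously from a single counting algorithm. In particular, one must verify that a Bad $i$ invoking mechanism (b) does not also consume the $|r|^{-1}$ that another Bad $i'$ might need, and that the Fourier-inversion from step one does not destroy the $|t_1^i-s_1^i|^{1-3\eta}$ structure needed for (c). The cleanest way to handle this, following \cite{DH21,DH23}, is to process each $i$-block sequentially and show, by a case analysis on whether $i\in\mathrm{Nor}$ or $i\in\mathrm{Bad}$ (with further split by which of (a)--(d) holds), that the single-block bound already carries the correct pointwise $L^{-\eta^2}$ gain; the product over $i$ then telescopes to the stated bound. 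Finally, the derivative estimates required for the $\Xf^{2\eta^8,0,0}$ interpolation are uniform in the parameters (losing only powers of $\delta L^{2\gamma}\le L^{O(1)}$), which after the $\eta^8$ interpolation costs only the declared $L^{\eta^7}$.
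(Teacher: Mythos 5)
Your Steps 1--2 (localization via the hierarchy condition (ii), Fourier expansion of the $\Kc_j^i,\Lc_j^i$ with total mass $C_1^ML^{-(\gamma_1-\sqrt\eta)M_1}$, and the block-by-block $L^\infty$ counting via Lemmas \ref{lemlin}, \ref{lemlayer1}--\ref{lemlayer2} and \ref{lemcubic0}--\ref{lemcubic}, with the Bad-condition and $M_0$ gains extracted per block) match the paper's argument and are sound in outline.

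The gap is in your final step. The norm $\Xf^{2\eta^8,0,0}$ is a \emph{weighted $L^1$ norm of the time-Fourier transform} (a Wiener-algebra-type quantity), and such a norm is \emph{not} controlled by $\|\Ic\|_{L^\infty}$; the inequality goes the other way. Interpolating an $L^\infty$ bound with crude derivative bounds cannot recover this. Concretely: each $\partial_t$ applied to \eqref{sumintI1} brings down factors of size up to $\delta L^{2\gamma}|\langle r,\zeta_j^i\rangle|\sim L^{\gamma+O(\eta)}$, so $N$ derivatives only force decay of $\widehat{\Ic}$ for frequencies $|\lambda|\gtrsim L^{\gamma}$; on the intermediate range $1\leq|\lambda|\leq L^{\gamma}$ the only available pointwise bound on $\widehat\Ic$ is $\|\Ic\|_{L^1_{t}}$, and integrating it there already costs a factor $\sim L^{\gamma}$ --- a fixed positive power of $L$, far exceeding the permitted $L^{\eta^7}$. (Cauchy--Schwarz on that range still loses $L^{\gamma/2}$.) What actually saves the day in the paper is structural, not generic smoothness: after substituting $\Ic^*(t,t',t'')=\Ic(t,t'+t,t''+t)$ and extracting a factor $e^{\pi i\theta t}$, the integrand becomes independent of $(t,t',t'')$, which enter only through the integration domain $\Dc^*$; the $(t,t',t'')$-integral of the domain indicator then factors into three one-dimensional integrals, each contributing $\langle\xi+\theta\rangle^{-1}\langle\xi'\rangle^{-1}\langle\xi''\rangle^{-1}$ at no cost beyond $\|\Kc\|_{L^1_{\boldsymbol t}}$. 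This yields an $\Xf^{-\eta^8,0,0}$ bound at the $L^\infty$ level, and the positive exponent is then obtained by interpolating against an $\Xf^{1/4,0,0}$ bound proved via Cauchy--Schwarz and Plancherel in the boundary time variables (losing only a harmless $L^{14d}$, absorbed by the interpolation since $2\eta^8\ll 1/4$). Your proposal needs to be rebuilt around this mechanism; as written, the interpolation step fails by a polynomial factor in $L$.

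A secondary (fixable) point: the localization of $(t,t',t'')$ to scale $L^{-\beta}$ with $\beta=\eta^{7.5}$ is also needed to handle the $(i,j)$ counted by $M_0$ after the domain is enlarged to be independent of $(t,t',t'')$; without it the $L^{-\eta^8M_0}$ gain is not recovered in the Fourier-side argument.
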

\begin{proof} \uwave{Step 1: Reductions.} Using the $\langle k_j^i\rangle^{-30d}$ decay of $\Kc_j^i(k_j^i)$ in (\ref{propertyKj}), we can localize $k^i_j$ into sets of the form $|k^i_j-\alpha^i_j|\leq 1$ where $\alpha^i_j\in \Zb^d$, and sum over $\alpha^i_j$ at the end. With all these $\alpha^i_j$ fixed, note that there are $O((\Lambda^1_1)^d)$ choices for the integer parts of the coordinates of $\ell_1^1$. Similarly, once we fix all those integer parts, there are $O((\Lambda^1_2)^d)$ choices for the integer parts of the coordinates of $\ell_2^1$, and so on. As a result, at the expense of a multiplicative factor of $(\prod_{(i, j)}\Lambda_j^i)^d\leq C_0^M$, we may fix points $\beta^i_j\in \Zb^d$ so that $|\ell^i_j-\beta^i_j|\leq 1$. Consequently, since each of $(x^i_j,y^i_j,u_j^i)$ is the difference of some $k^i_j$ and $\ell^i_j$, we may fix some $(a^i_j, b^i_j,c_j^i)$ (which may depend on the unit balls containing $(e,f,g,h)$) such that $\max(|x^i_j-a^i_j|,|y^i_j-b^i_j|,|u^i_j-c^i_j|)\leq 1$ . Finally, note that due to the properties of the transformation $T_1$, the change of variables $\boldsymbol{x} \leftrightarrow \boldsymbol k$ is volume preserving.

Now set \[W(\boldsymbol x):=\prod_{i=1}^{I}\prod_{j=1}^{n_i}\Kc_j^i(k_j^i)\cdot\Lc_j^i(\ell_j^i).\] Under the above support restrictions, we then have $
\|\widehat W\|_{L^1}\leq C_1^ML^{-(\gamma_1-\sqrt{\eta})M_1}.
$ In fact, this bound holds if the Fourier transform is taken in the variables $(k_j^i)_{1\leq j \leq n_i}$ by \eqref{propertyKj}, thus it also holds for Fourier transform taken in $\boldsymbol x$, due to the properties of the affine linear transform. Thus, by expanding $W$ in terms of its Fourier transform, we can replace this function by the product of modulation factors $\prod_{i}\left(\prod_j e^{\pi i \nu^i_j \cdot x^i_j} e^{\pi i \rho^i_j \cdot y^i_j}\prod_{j=1}^3e^{\pi i \varpi^i_j u^i_j}\right)$, and only need to prove uniform estimates in $(\nu^i_j, \rho^i_j,\varpi_j^i)$. Note that $10\%$ of the gain $L^{-(\gamma_1-\sqrt{\eta})M_1}$ already provides the decay in (\ref{FourierboundI}) involving $M_1$, and we can save the other $90\%$ for the decay involving bad vines case (d).

\uwave{Step 2: An $L^\infty$ estimate.} We shall first prove the estimate \eqref{FourierboundI} for $\|\Ic\|_{L^\infty}$. In this case, we shall move the time integral over $\Dc$ out of the expression in \eqref{sumintI1} and take absolute values inside this integral. Denote the integrand (without absolute value) by $\Kc$. This gets rid of all the exponential factors involving $\lambda_j^i, \mu_j^i, \sigma_j^i$ and allows to pointwise bound $\Ic$ by
\begin{align}\label{Iboundstep2} 
|\Ic|&\leq \prod_{i=1}^I \int_{[q_0^i, q_0^i+1]}\mathrm{d}t_{0}^i\cdot\bigg|\sum_{x_0^i}e^{\pi i\cdot\delta L^{2\gamma}(t_0^i-t)\langle r,\zeta_0^i\rangle+\pi i (\nu^i_0 \cdot x^i_0)} \chi_0(x_0^i -a_0^i)\bigg| \cdot\Nc^i\cdot  \prod_{j=1}^{m_i}\mathcal M_{j}^i(t_0^i),\\
\mathcal M_{j}^i&:= \int_{\Dc_j^i(t_0^i)} \mathrm{d}t_j^i \, \mathrm{d}s_j^i\cdot\bigg| \sum_{(x_j^i, y_j^i)}\chi_0(x_j^i -a_j^i)\chi_0(y_j^i -b_j^i)e^{\pi i \cdot\delta L^{2\gamma}[(t_j^i-s_j^i)\langle x_j^i, y_j^i\rangle+(t_j^i-t)\langle r,\zeta_j^i\rangle]+\pi i (\nu^i_j \cdot x^i_j+ \cdot \rho^i_j \cdot y^i_j)}\bigg|\nonumber.
\end{align}
Here $\Nc^i=1$ if $i\in \textrm{Bad}$, and if $i\in \mathrm{Nor}$ we define
$$
\Nc^i:=\int_{\widetilde \Dc_i} \prod_{j=1}^3\,\mathrm{d}\tau_j^i \cdot\bigg|\sum_{(u_1, u_2, u_3)}\prod_{k=1}^3 e^{\pi i \varpi^i_k u^i_k}\chi_0(u_k^i-c_k^i) e^{\pi i\cdot\delta L^{2\gamma}[(\tau_1^i-\tau_3^i)\langle u_1^i,u_2^i\rangle+(\tau_2^i-\tau_3^i)\Lambda_i+(\tau_3^i-t)\langle r,\xi_i\rangle]}\bigg|.
$$
In the definition of $\mathcal M_j^i$ above, we denote by $\Dc_j^i(t_0^i)$ the integration domain for $(t_j^i, s_j^i)$ given by the unit interval restrictions of $t_j^i$ and $s_j^i$, and possibly the conditions in (a) if $j=1$ (in particular, we discard all other inequalities involving $(t_j^i, s_j^i)$). Similarly, in the definition of $\Nc^i$, $\widetilde {\Dc}_i$ is the domain of $(\tau_1^i, \tau_2^i, \tau_3^i)$ given by the unit interval restrictions of $(\tau_1^i, \tau_2^i, \tau_3^i)$.

We start with the estimate for $\mathcal M_j^i$. We first change variables $(t_j^i, s_j^i)$ into $(t_j^i, v_j^i)$ where $v_j^i=\delta L^{2\gamma} (t_j^i-s_j^i)$, and then split the integration region into $|v_j^i|\leq L$ and $|v_j^i|>L$. Denoting by $\Mc_j^{i, 1}$ and $\Mc_j^{i, 2}$ the contribution of those two respective regions, we can bound using Lemma \ref{lemlayer2} that
\[
\begin{aligned}
\Mc_j^{i, 2} &\leq (\delta L^{2\gamma})^{-1} \int_{I_j^i} \mathrm{d}t_j^i \int_{|v_j^i|\geq L}\bigg| \sum_{(x_j^i, y_j^i)}\chi(x_j^i -a_j^i)\chi(y_j^i -b_j^i)
\\&\qquad\qquad\qquad\qquad\times e^{\pi i \cdot v_j^i \langle x_j^i, y_j^i\rangle}e^{\pi i\cdot\delta L^{2\gamma}(t_j^i-t)\langle r,\zeta_j^i\rangle+{\pi i (\nu^i_j \cdot x^i_j+\rho^i_j \cdot y^i_j)}}\bigg|\mathrm{d}v_j^i\\
&\lesssim_2\delta^{-1}L^{2(d-\gamma)-2(d-1)(1-\gamma)+\eta},
\end{aligned}
\]
where we integrate trivially in $t_j^i$ over its unit interval domain $I_j^i$.

To bound $\Mc_j^{i,1}$ we apply Poisson summation in $(x_j^i, y_j^i)$ followed by Lemma \ref{lemlayer1} to bound 
$$
\Mc_j^{i,1} \lesssim_0 (\delta L^{2\gamma})^{-1} L^{2d} \int_{I_j^i} \mathrm{d}t_j^i \int_{|v_j^i|\leq L} \langle v_j^i\rangle^{-d}\mathrm{d}v_j^i
\lesssim_2\delta^{-1}L^{2(d-\gamma)},
$$
if we have no restrictions of type (a)--(d) between $t_j^i$ and $v_j^i$ (in the case when $i \in \textrm{Bad}$). If $i \in \textrm{Bad}$ and we are in case (a), we have that $|t_1^i-t_0^i|\leq |t_1^i-s_1^i|=(\delta L^{2\gamma})^{-1}|v_1^i|$, so integrating in $t_1^i$ first would gain a factor of $(\delta L^{2\gamma})^{-1}$ in the above estimate (since $d\geq 3$). The same argument holds if $t_j^i$ and $s_j^i$ belong to different unit intervals, for which we have 
\begin{equation}\label{timeintgap}
|t_j^i-q_j^i| \leq |t_j^i-s_j^i|=(\delta L^{2\gamma})^{-1}|v_j^i|
\end{equation}
where $q_j^i$ is one of the endpoints of the fixed unit interval containing $t_j^i$. If we are in case (c) then we have an extra factor of $|t_1^i-s_1^i|^{1-3\eta}=(\delta L^{2\gamma})^{-(1-3\eta)}|v_1^i|^{1-3\eta}$ which again gains $(\delta L^{2\gamma})^{-(1-3\eta)}$. If we are in case (b) or case (d), then we clearly gain a factor of $|r|$ or $L^{-0.9(\gamma_1-\sqrt\eta)}$ compared to the above estimate (we only exploit $90\%$ of the $L^{-\gamma_1+\sqrt\eta}$ gain). As a consequence of all this, we have
$$
\prod_{j=1}^{m_i}\Mc_j^{i}\lesssim_1 \big(\delta^{-1} L^{2(d-\gamma)}\big)^{m_i} \cdot(L^{-1.8\gamma}+L^{-0.9(\gamma_1-\sqrt\eta)}+|r|)^{\mathbf 1_{\textrm{Bad}(i)}} \cdot\prod_{j=1}^{m_i}L^{-0.1\gamma\mathbf 1_{\Mf_0}(i,j)},
$$
where $\Mf_0$ is the set of $(i, j)$ such that $t_j^i$ and $s_j^i$ belong to different unit intervals. The estimate for $\Nc^i$ follows in exactly the same way as $\Mc^i_j$ but with the variables $w_1^i=(\delta L^{2\gamma})(\tau^i_1-\tau^i_3)$ and $w_2^i=(\delta L^{2\gamma})(\tau^i_2-\tau^i_3)$. We split into two regions depending on whether $\max(|w_1^i|, |w_2^i|)$ is $\leq L$ or $\geq L$, use Lemmas \ref{lemcubic0} and \ref{lemcubic} in place of Lemmas \ref{lemlayer1} and \ref{lemlayer2}, and integrate trivially in $\tau_3^i$. The contribution of the first region is bounded by $(\delta L^{2\gamma})^{-2}L^{3d}$ by Lemma \ref{lemcubic0}, whereas that of the second region is bounded by $(\delta L^{2\gamma})^{-2}L^{3d+\gamma-2\gamma_0}\log L$ by Lemma \ref{lemcubic}. Note that if one of the $\tau_k^i$ belongs to a different unit interval than the others then we get that $|\tau_3^i-q^i|\leq (\delta L^{2\gamma})^{-1}\max(|w_1^i|, |w_2^i|)$ which allows to gain an extra factor of $(\delta L^{2\gamma})^{-0.9}$. As such, we have that
\begin{equation}\label{Nibound}
\Nc^{i}\lesssim_1 \delta^{-2} L^{3(d-\gamma)} L^{-\gamma_0}\cdot L^{-0.1\gamma_01_{\Mf_1}(i)},
\end{equation}
where $\Mf_1$ is the set of $i \in \mathrm{Nor}$ such that $\tau_1^i, \tau_2^i, \tau_3^i$ do not belong to the same unit interval. Finally, using the above two estimates for $\Mc_j^{i}$ and $\Nc^i$ (which are uniform in $t_0^i$), we can go back to \eqref{Iboundstep2}, and use Lemma \ref{lemlin} (after rescaling $t_0^i\to \delta L^{2\gamma}t_0^i$) to obtain the uniform bound
\begin{align}
|\Ic|\leq& \prod_{i\in \mathrm{Nor}} C_1^{m_i} \delta^{-(m_i+2)}L^{(2m_i+4)(d-\gamma)}L^{-0.1\gamma_0M_0^i}\prod_{i\in \mathrm{Bad}}C_1^{m_i} \delta^{-(m_i+1)}L^{(2m_i+1)(d-\gamma)} L^{-0.1\gamma_0(1+M_0^i)}\nonumber\\
\label{IL^1bound}\leq& C_1^M (\delta^{-1/2})^M L^{M(d-\gamma)}L^{-0.1\gamma_0(|\mathrm{Bad}|+M_0)},
\end{align}
where we denote by $M_0^i$ the number of $j$ such that $t_j^i$ and $s_j^i$ are not in the same unit interval (plus 1 if $i \in \mathrm{Nor}$ and $(\tau_1^i, \tau_2^i, \tau_3^i)$ are not in the same unit interval). Note that in (\ref{IL^1bound}) when $i\in\mathrm{Nor}$, we use the first bound in Lemma \ref{lemlin} if $\gamma\leq 1/2$ and the second bound if $\gamma>1/2$ (when $\gamma>1/2$, the power $L^{-\gamma_0}$ on the right hand side of (\ref{Nibound}) can be replaced by $L^{-\gamma_0}\cdot L^{-\min(\gamma-\gamma_0,0.1\gamma_0)}$ using Lemmas \ref{lemcubic0} and \ref{lemcubic}). Also the decay factor involving $M_1$ in (\ref{FourierboundI}) is already taken care of before (same comment applies in Step 3 below).

\medskip
 
\uwave{Step 3: The $\Xf^{2\eta^8,0,0}$ estimate.} 
Now we prove the needed estimate for $\Ic$ in $\Xf^{2\eta^8, 0, 0}$. First define $ \Ic^*(t, t', t'')=\Ic(t,t'+t, t''+t)$, then we only need to consider $\Ic^*$ because the $\Xf^{2\eta^8,0,0}$ norm for both functions are comparable. Now we have
$$
 \Ic^*(t, t', t'')=e^{\pi i \theta t}\int_{{\Dc^*}} \Kc(\boldsymbol t) \mathrm{d}\boldsymbol t 
$$
for some integrand $\Kc$, where $\theta$ is an algebraic sum of the $(\lambda_j^i, \mu_j^i,\sigma_j^i)$ parameters, and we denote by $\boldsymbol t$ the vector of all the $(t_j^i, s_j^i, \tau_1^i, \tau_2^i, \tau_3^i)$ for $1\leq i \leq I, 1\leq j \leq n_i$. The domain of integration $ \Dc^*$ is the translation by $t$ of the domain $\Dc$. Set $\beta=\eta^{7.5}$. Since $t, t', t''$ each belongs to a unit interval, we can use a partition of unity to write $\Ic^*$ as a sum of $L^{3\beta}$ terms where each term is supported on an interval of size $L^{-\beta}$ in each of the variables $t, t',t''$. As such, up to paying a factor of $L^{3\beta}$, we shall fix $t, t',t''$ to  fixed intervals of size $L^{-\beta}$ and only estimate the $\Xf^{2\eta^8,0,0}$ of the resulting function. Since each cutoff factor $\chi_0(L^\beta t-c)$ amplifies the $\Xf^{2\eta^8,0,0}$ norm by at most $L^\beta$, by paying another $L^{3\beta}$ factor, we can also remove these cutoff functions if necessary.

Note that after the translation, the integrand $\Kc$ has no dependence on $(t, t', t'')$. The domain $\Dc^*$ does depend on $(t, t', t'')$, but we can find a larger domain ${\Qc^*}$ independent of $(t, t',t'')$ (obtained by enlarging the unit interval restrictions for $(t_j^i, s_j^i, \tau_1^i, \tau_2^i, \tau_3^i)$ into intervals of size $1+2L^{-\beta}$ using the fixed $L^{-\beta}$ interval restrictions of $t,t',t''$). We may then assume $\Kc$ is supported in $\Qc^*$, and we can repeat essentially the same argument as in Step 2 to obtain the weaker bound 
\begin{equation}\label{Kcmodifiedarg}
\| \Kc \|_{L_{(e,f, g, h)}^\infty L_{\boldsymbol t}^1( \Qc^*)} \leq C_1^M (\delta^{-1/2})^M L^{M(d-\gamma)}L^{-0.1\gamma_0|\mathrm{Bad}|-\beta M_0+6\beta}.
\end{equation}
The only modification needed for the argument in Step 2 is when dealing with the case where $t_j^i$ and $s_j^i$ belong to different unit intervals before translating by $t$, because now the enlarged intervals of length $(1+2L^{-\beta})$ in $\Qc^*$ might intersect. However, in this case, we simply replace \eqref{timeintgap} with
$$
|t_j^i-q_j^i| \leq2L^{-\beta}+ |t_1^i-s_1^i|
$$
for some other fixed value $q_j^i$, which gives a gain of $C_0L^{-\beta}$ for each such incoherency, thus explaining the bound \eqref{Kcmodifiedarg}. 

Taking the Fourier transforms of $\Ic^*$ in $(t, t',t'')$ and omitting the time cutoff functions as said above, we obtain that
\begin{equation}\label{FouriertildeI}
\Fc\Ic^*(\xi, \xi', \xi'', e,f,g,h)=\int_{\Qc^*}\Kc(\boldsymbol t)\,\mathrm{d}\boldsymbol t \int_{\Rb^3}\mathbf 1_{ \Dc^*}(t,t',t'',\boldsymbol t) e^{\pi i \left((\xi+\theta) t+\xi't'+\xi''t''\right)}\cdot\psi\,\mathrm dt\mathrm dt'\mathrm dt'',
\end{equation}
where $\psi=\psi(t,t',t'')$ is some other suitable unit support cutoff function. It is clear that, due to the definition of $\Dc^*$, the above integral can be written as the product of three integrals in $t$, $t'$ and $t''$ respectively, and each integral is taken on an interval depending on $\boldsymbol t$. Thus the second integral above is bounded (uniformly in $\boldsymbol t$) by $\langle \xi +\theta\rangle^{-1}\langle \xi' \rangle^{-1}\langle \xi'' \rangle^{-1}$ (here we may assume $|\xi+\theta|\geq 1$ etc., since the other cases are easily dealt with), which implies that 
$$
\|\Ic^*\|_{\Xf^{-\eta^8,0,0}}\lesssim_1 \| \Kc \|_{L_{(e,f, g, h)}^\infty L_{\boldsymbol t}^1( \Qc^*)}\lesssim_1 C_1^M (\delta^{-1/2})^M L^{M(d-\gamma)}L^{-0.1\gamma_0|\mathrm{Bad}|-\beta M_0+6\beta}.
$$

We shall interpolate the above estimate with a lossy one for the $\Xf^{1/4,0,0}$ norm to obtain the result. For this, note by definition of $\Dc^*$ that, the $(t,t', t'')$ integral in \eqref{FouriertildeI} can essentially be written as a linear combination (up to unimodular factors) of
\begin{equation}\label{FourierofD*}
\langle \xi+\theta\rangle^{-1}\langle \xi'\rangle^{-1}\langle\xi''\rangle^{-1}e^{\pi i \left((\xi+\theta) t_1+\xi't_2+\xi'' t_3\right)}
\end{equation}
for some variables $t_1, t_2, t_3$ from the vector $\boldsymbol t$ which may or may not not be distinct; note that the exact choice of $(t_1,t_2,t_3)$ may depend on inequality relations between variables in $\boldsymbol t$, but we may fix such a relation at the cost of another $C_0^M$ factor. First consider the case when the three variables $t_1, t_2, t_3$ are distinct. In this case, we can write $\Fc \Ic^*$ essentially as a linear combination of 
$$
\langle \xi+\theta\rangle^{-1}\langle \xi'\rangle^{-1}\langle\xi''\rangle^{-1}\cdot\Fc_{(t_1, t_2, t_3)}\bigg(\int \mathbf 1_{\Qc^*}\Kc (\cdot, \boldsymbol{t'}) \mathrm{d}\boldsymbol{t'}\bigg)(\xi+\theta, \xi', \xi'')
$$
where $\boldsymbol{t'}$ are the variables in $\boldsymbol t$ other than $(t_1, t_2, t_3)$. In this case, we can estimate $\|\Ic^*\|_{\Xf^{1/4,0,0}}$ by first bounding $L^\infty_{(e, f, g, h)}$ by $L_{(e, f, g, h)}^1$, then interchanging the $L^1$ norms in $(\xi, \xi', \xi'')$ and $(e,f, g, h)$, and finally bounding $L_{(e, f, g ,h)}^1$ by $L^\infty_{(e, f, g, h)}$ with a loss of $L^{4d}$ (recall that there only $L^{4d}$ choices of $(e,f, g,h)$ with fixed $\underline{w}$ and fixed unit balls). This allows us to fix the values of $(e, f, g, h)$ and compute the weighted $L^1$ norm in $(\xi, \xi', \xi'')$. This reduces to estimating a \emph{scalar} function for which Plancherel is available. Now we apply Cauchy Schwartz in $(\xi, \xi', \xi'')$ followed by Plancherel to get that
\begin{equation}\label{Xpositivebound}
\|\Ic^*\|_{\Xf^{1/4,0,0}}\leq \langle \theta\rangle^{1/4}\cdot C_0^M L^{4d}\sup_{e, f, g, h}\bigg\|\int \mathbf 1_{\Qc^*}\Kc (\boldsymbol{t})\mathrm{d}\boldsymbol{t'}\bigg\|_{L^2_{(t_1, t_2, t_3)}}.
\end{equation}
The latter can be estimated by fixing $(t_1, t_2, t_3)$ and estimating the integral in $\boldsymbol{t'}$ as in Step 2. The effect of fixing $(t_1, t_2, t_3)$ is to replace at most three bounds on $\Mc_j^i$ or $\Nc^i$ by the trivial bound $L^{2d}$ or $L^{3d}$. So up to an extra factor of $L^{14d}$, we have the same bound for \eqref{Xpositivebound} as \eqref{IL^1bound}. Similarly, if in \eqref{FourierofD*}, two of the variables are the same, say $t_1\equiv t_2$, then $\Fc\Ic^*$ can essentially be bounded by 
$$
\langle \xi+\theta\rangle^{-1}\langle \xi'\rangle^{-1}\langle\xi''\rangle^{-1}\Fc_{(t_1, t_3)} \bigg(\int \mathbf 1_{\Qc^*}\Kc (\cdot, \boldsymbol{t'}) \mathrm{d}\boldsymbol{t'}\bigg)(\xi+\theta+ \xi', \xi''),
$$
where $\boldsymbol{t'}$ are the variables in $\boldsymbol t$ other than $(t_1,t_3)$. We argue in the same way as above, except that we replace the use of Cauchy-Schwarz in $(\xi, \xi', \xi'')$ above by a combination of Cauchy-Schwarz in $(\xi+\xi',\xi'')$ and Young's inequality. The case $t_1\equiv t_2\equiv t_3$ is again simialr. As a result, we can estimate $\|\Ic^*\|_{\Xf^{1/4,0,0}}$ by $L^{14d}\langle \theta\rangle^{1/4}$ times the right hand side of \eqref{Kcmodifiedarg}, which implies (\ref{FourierboundI}) upon interpolation (recall that $\theta$ is an algebraic sum of the $(\lambda_j^i, \mu_j^i,\sigma_j^i)$ parameters).
\end{proof}
\subsection{Application to vine chains} We now apply Proposition \ref{sumint} to expressions associated with vines in molecules coming from gardens. First we discuss the reduction of $\Kc_\Gc$ (Definition \ref{defkg}, (\ref{defkg0})) after collapsing some of the embedded regular couples and regular trees in $\Gc$.
\subsubsection{Reduction and twisting of $\Kc_\Gc$}\label{regred} Let $\Gc$ be a garden, and suppose it is obtained from its skeleton $\Gc_{\mathrm{sk}}$ by replacing each leaf pair $(\lf,\lf')$ with a regular couple $\Qc_{(\lf,\lf')}$, and each branching node $\mf$ with a regular tree $\Tc_{(\mf)}$, as in Definition \ref{lftwist}. Assume we have fixed a canonical layering of $\Gc$, which induces a layering of each $\Qc_{(\lf,\lf')}$ and $\Tc_{(\mf)}$, as well as a pre-layering of $\Gc_{\mathrm{sk}}$. Then, by Definition \ref{defkg} and (\ref{defkg0}), it is easy to verify that
\begin{multline}\label{redkg}
\Kc_\Gc(p+1,k_1,\cdots,k_{2R})=\bigg(\frac{\delta}{2L^{d-\gamma}}\bigg)^{n_{\mathrm{sk}}}\zeta(\Gc_{\mathrm{sk}})\sum_{\Is_{\mathrm{sk}}}\epsilon_{\Is_{\mathrm{sk}}}\int_{\Ic_{\mathrm{sk}}}\prod_{\nf\in\Nc_{\mathrm{sk}}}e^{\pi i\zeta_\nf\cdot\delta L^{2\gamma}\Omega_\nf t_\nf}\,\mathrm{d}t_\nf\\\times\prod_{\lf\in\Lc_{\mathrm{sk}}}^{(+)}\Kc_{\Qc_{(\lf,\lf')}}^*(t_{\lf^{\mathrm{pr}}},t_{(\lf')^{\mathrm{pr}}},k_\lf)\prod_{\mf\in\Nc_{\mathrm{sk}}}\Kc_{\Tc_{(\mf)}}^*(t_{\mf^{\mathrm{pr}}},t_\mf,k_\mf).
\end{multline}
Here $n_{\mathrm{sk}}$ is the order of $\Gc_{\mathrm{sk}}$, $\Ic_{\mathrm{sk}}$ is the domain defined in (\ref{timegarden}) for $\Gc_{\mathrm{sk}}$ with $t$ replaced by $p+1$, $\Is_{\mathrm{sk}}$ is a $(k_1,\cdots,k_{2R})$-decoration of $\Gc_{\mathrm{sk}}$, and the other objects are as before but for the garden $\Gc_{\mathrm{sk}}$; the product $\prod_{\lf\in \Lc_{\mathrm{sk}}}^{(+)}$ is taken over all leaf pairs $(\lf,\lf')$ where $\lf$ has sign $+$, and $\Kc_{\Qc_{(\lf,\lf')}}^*$ and $\Kc_{\Tc_{(\mf)}}^*$ are defined in Definition \ref{defkg} (also, if $\mf$ is the root of a tree then we replace $t_{\mf^{\mathrm{pr}}}$ by $p+1$; this will be assumed throughout).

Now suppose $\Ub$ is a \emph{(CL) vine chain} in the molecule $\Mb(\Gc_{\mathrm{sk}})$, and let $\Gc'$ be an LF twist of $\Gc$ at $\Ub$ (viewed as a collection of (CL) vines, i.e. $\Gc'$ is formed from $\Gc$ by performing some unit LF twist at each vine in $\Ub$ that is core and satisfies the assumptions in Definition \ref{lftwist}). If we write both $\Kc_\Gc$ and $\Kc_{\Gc'}$ in the form of (\ref{redkg}), then the differences between these expressions \emph{only occur} in the part of (\ref{redkg}) \emph{involving the realization} $\Gc_{\mathrm{sk}}[\Ub]$ of $\Ub$ in $\Gc_{\mathrm{sk}}$ (see Remark \ref{rem_realiz} for definition).

More precisely, let the notions $\uf_1,\uf_2,\uf_{11},\uf_{21},\uf_{22}$ etc. be associated to $\Ub$ as in Proposition \ref{block_clcn}. Let $n_{(1)}$ be the number of branching nodes in $\Gc_{\mathrm{sk}}[\Ub]\backslash\{\uf_1\}$, and let $n_{(2)}$ be $n_{(1)}$ plus the total order of regular couples $\Qc_{(\lf,\lf')}$ and regular trees $\Tc_{(\mf)}$ for $\lf,\lf',\mf\in\Gc_{\mathrm{sk}}[\Ub]\backslash\{\uf_1\}$. We may then define the expression
\[\Kc_{(\Gc)}^{\Ub}=\Kc_{(\Gc)}^{\Ub}(t_{\uf_1},t_{\uf_{21}},t_{\uf_{22}},k_{\uf_1},k_{\uf_{11}},k_{\uf_{21}},k_{\uf_{22}})\] similar to (\ref{redkg}), but with the following changes:
\begin{itemize}
\item We replace the power $(\delta/(2L^{d-\gamma}))^{n_{\mathrm{sk}}}$ by $(\delta/(2L^{d-\gamma}))^{n_{(1)}}$, and the factor $\zeta(\Gc_{\mathrm{sk}})$ by the product of $i\zeta_\nf$ where $\nf$ runs over all branching nodes in $\Gc_{\mathrm{sk}}[\Ub]\backslash\{\uf_1\}$.
\item In the summation $\sum_{\Is_{\mathrm{sk}}}(\cdots)$, we only sum over the variables $k_\nf$ for $\nf\in\Gc_{\mathrm{sk}}[\Ub]\backslash\{\uf_1\}$ (including branching nodes and leaves), and treat the other $k_\nf$ variables as fixed. We also replace $\epsilon_{\Is_{\mathrm{sk}}}$ by the product of factors on the right hand side of (\ref{defepscoef}), but only for $\nf\in \Gc_{\mathrm{sk}}[\Ub]$.
\item In the integral $\int_{\Ic_{\mathrm{sk}}}(\cdots)$, we only integrate over the variables $t_\nf$ for all branching nodes $\nf\in\Gc_{\mathrm{sk}}[\Ub]\backslash\{\uf_1\}$, and treat the other $t_\nf$ variables as fixed.
\item In the first product $\prod_{\nf\in\Nc_{\mathrm{sk}}}(\cdots)$, we only include those factors where $\nf\in\Gc_{\mathrm{sk}}[\Ub]$; in the products $\prod_{\lf\in\Lc_{\mathrm{sk}}}^{(+)}(\cdots)$ and $\prod_{\mf\in\Nc_{\mathrm{sk}}}(\cdots)$, we only include those factors where $\lf,\mf\in\Gc_{\mathrm{sk}}[\Ub]\backslash\{\uf_1\}$.
\end{itemize}
It is easy to check that the function $\Kc_{(\Gc)}^{\Ub}$ defined this way indeed only depends on the time variables $t_{\uf_1},t_{\uf_{21}},t_{\uf_{22}}$ and vector variables $k_{\uf_1},k_{\uf_{11}},k_{\uf_{21}},k_{\uf_{22}}$.
\subsubsection{(CL) vine chains} We now prove the main result for (CL) vine chains, which exhibits the key cancellation between $\Kc_\Gc$ and $\Kc_{\Gc'}$ for LF twists $\Gc'$ of $\Gc$.
\begin{prop}\label{vineest} In the same setting as in Section \ref{regred} including all notations, let $\Ub$ be a (CL) vine chain in the molecule $\Mb(\Gc_{\mathrm{sk}})$. Define 
\begin{equation}\label{vineest1}
\widetilde{\Kc}^\Ub(t_{\uf_1},t_{\uf_{21}},t_{\uf_{22}},k_{\uf_1},k_{\uf_{11}},k_{\uf_{21}},k_{\uf_{22}}):=e^{-\pi i\cdot\delta L^{2\gamma}t_{\uf_1}\Gamma}\sum_{\Gc'}\Kc_{(\Gc')}^{\Ub}(t_{\uf_1},t_{\uf_{21}},t_{\uf_{22}},k_{\uf_1},k_{\uf_{11}},k_{\uf_{21}},k_{\uf_{22}}),
\end{equation} where the summation is taken over all the $\Gc'$ which are LF twists of $\Gc$ at $\Ub$, and $\Kc_{(\Gc')}^{\Ub}$ is defined as in Section \ref{regred}, and $\Gamma$ is defined as $\Gamma:=\zeta_{\uf_{11}}|k_{u_{11}}|^2+\zeta_{\uf_{21}}|k_{u_{21}}|^2+\zeta_{\uf_{22}}|k_{u_{22}}|^2-\zeta_{\uf_{1}}|k_{u_{1}}|^2$. Same as Proposition \ref{sumint}, we also fix $\underline{w}$ such that $k_\nf-\underline{w}\in\Zb_L^d$ for each $\nf$.

Let $\Hc_{\uf_{1}},\cdots,\Hc_{\uf_{22}}$ be unit balls, and $\Bc$ be the intersection of the set $\{t_{\uf_1}>\max(t_{\uf_{21}},t_{\uf_{22}})\}$ with the product of three unit intervals, where the unit intervals are $t_{\uf_1}\in[\Lf_{\uf_1},\Lf_{\uf_1}+1]$ and the same for $t_{\uf_{21}}$ and $t_{\uf_{22}}$, with $\Lf_{\uf_1}$ being the layer of $\uf_1$ etc. Then, uniformly in $\Hc_{\uf_{1}},\cdots,\Hc_{\uf_{22}}$ and $\Bc$, we have
\begin{equation}\label{vineest2}
\big\|\mathbf{1}_{\Hc_{\uf_1}}(k_{\uf_1})\cdots \mathbf{1}_{\Hc_{\uf_{22}}}(k_{\uf_{22}})\cdot \widetilde{\Kc}^\Ub\big\|_{\Xf^{2\eta^8,0,0}(\Bc)}\lesssim_1 (C_1\sqrt{\delta})^{n_{(2)}}L^{-\eta^{8}n_{(3)}-\eta^2b+\eta^7}.
\end{equation} Here the norm $\Xf^{2\eta^8,0,0}(\Bc)$ is defined as in Section \ref{setupnorms}, $n_{(1)}$ and $n_{(2)}$ are defined in the same way as in Section \ref{regred}, $b$ denotes the number of \emph{bad} vines in $\Ub$, and $n_{(3)}$ denotes the sum of all incoherency indices for all the vines in $\Ub$ (see Definition \ref{cohmol}) and all the regular couples $\Qc_{(\lf,\lf')}$ and regular trees $\Tc_{(\mf)}$ for $\lf,\lf',\mf\in\Gc_{\mathrm{sk}}[\Ub]\backslash\{\uf_1\}$ (see Definition \ref{defcoh}).
\end{prop}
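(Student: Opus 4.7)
The plan is to reduce the bound on $\widetilde{\Kc}^\Ub$ to an application of Proposition~\ref{sumint}, where the bad-vine gain factors come either from non-coherent/non-dominant regular objects (Propositions~\ref{proplayer1}--\ref{proplayer2}) or from the cancellation among LF twists (Propositions~\ref{proplayer4} and~\ref{lftwistprop}). First I would write each summand $\Kc_{(\Gc')}^{\Ub}$ using the restricted form of (\ref{redkg}) applied to $\Gc_{\mathrm{sk}}[\Ub]$, which expresses it as an integral over the time variables $t_\nf$ and a sum over decorations $k_\nf$ for $\nf\in\Gc_{\mathrm{sk}}[\Ub]\setminus\{\uf_1\}$, with each leaf-pair decorated by a factor $\Kc^*_{\Qc_{(\lf,\lf')}}$ and each branching node by a factor $\Kc^*_{\Tc_{(\mf)}}$. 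I would then decompose each such factor using (\ref{layerregest2}) into an $(\cdots)_{\mathrm{app}}$ piece plus a remainder that obeys the strong bound (\ref{layerregest3}), and further split $(\cdots)_{\mathrm{app}}$ into the coherent contribution described by (\ref{layerregest4})--(\ref{layerregest6}), noting that by Proposition~\ref{proplayer1} every non-coherent factor already carries the gain $L^{-(\gamma_1-\sqrt\eta)}$.

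For any term in this expansion in which at least one factor is a remainder or a non-coherent $(\cdots)_{\mathrm{app}}$ piece, the bound is immediate: the accumulated $L^{-(\gamma_1-\sqrt\eta)}$ factors absorb through the $(g_{ij},g_{ij}')$ weights in (\ref{propertyKj}), so after pulling out the oscillatory factor $e^{-\pi i \delta L^{2\gamma}t_{\uf_1}\Gamma}$ and identifying the remaining structure as an expression of the form (\ref{sumintI1}) --- where the $x_j^i,y_j^i$ pairs correspond to the two single bonds of each Vine~(II-type) ingredient and the $u_1^i,u_2^i,u_3^i$ triples to the triangle in Vines~(III)--(VIII) --- Proposition~\ref{sumint} with case~(d) active at each bad vine gives the claimed bound; the $L^{-\eta^8 n_{(3)}}$ decay follows by collecting the incoherency indices both from the vines themselves (through the direct Poisson-summation gain used in the proof of Proposition~\ref{sumint}) and from the regular objects (through (\ref{layerregest1}) and (\ref{propertyKj})).

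The delicate case is when every regular object is coherent and dominant and no individual vine is non-coherent; this is where the twist cancellation must be used and will be the main obstacle. Here I would group the LF twists $\Gc'$ into orbits under the twist operation at the core bad vines inside $\Ub$. For each such orbit, summing the factors associated to a single bad vine reduces, via (\ref{layerregest4})--(\ref{layerregest6}) together with Proposition~\ref{proplayer4}, to a real quantity of the form $\sum_{\Qc,\Tc}(\Kc^*_\Qc)_{\mathrm{app}}\overline{(\Kc^*_\Tc)_{\mathrm{app}}}$ attached to the two output slots $\uf_{23},\uf_0$, with the additional control that the sum can be split at any prescribed total number of layer-$p$ branching nodes. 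Combining this reality with the sign change at $\uf_2$ recorded in Remark~\ref{twistexplain}(b) and the switched decorations in Remark~\ref{twistexplain}(d), the two twists of a Vine~(I-a/b), (II-a/b), (II-c/d) pair combine to produce (after using Remark~\ref{twistexplain}(c) to identify the spliced structure) either a commutator of two complex exponentials $e^{\pi i\delta L^{2\gamma}t_1\Omega_1}-e^{\pi i\delta L^{2\gamma}s_1\Omega_2}$, whose magnitude is controlled by $\min(\delta L^{2\gamma}|t_1-s_1|,\delta L^{2\gamma}|r|)$ via the mean value theorem, or an analogous expression involving $\langle r,\zeta\rangle$; this precisely produces one of the hypotheses (a)--(c) in Proposition~\ref{sumint} for each core bad vine in the orbit, yielding the $L^{-\eta^2 b}$ decay.

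Finally, the total number of layerings of $\Ub$ compatible with a canonical $\Gc$ is controlled by Proposition~\ref{layervine}, and the number of layerings of the internal regular couples and trees by Proposition~\ref{layerreg2}; these count estimates absorb the combinatorial loss at a cost that is swallowed by $(C_1\sqrt\delta)^{n_{(2)}}$. Assembling the contributions from all three cases --- remainder terms, non-coherent $(\cdots)_{\mathrm{app}}$ terms, and fully coherent/dominant terms treated by twist cancellation --- and invoking Proposition~\ref{sumint} with the parameter $M$ identified with the count of summation variables in $\Gc_{\mathrm{sk}}[\Ub]\setminus\{\uf_1\}$ yields (\ref{vineest2}). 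The principal technical difficulty is the third paragraph above: aligning the combinatorial twist cancellation with the analytical form required by Proposition~\ref{sumint} uniformly across all the vine types in Figure~\ref{fig:block_mole}, including hyper-vine and concatenated configurations where the joints interact with neighboring ingredients.
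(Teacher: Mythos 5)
Your architecture matches the paper's: reduce to Proposition \ref{sumint}, verify one of its hypotheses (a)--(d) at each bad vine, kill the leading term of the twist sum via the reality statement of Proposition \ref{proplayer4}, and control the combinatorics with Propositions \ref{layerreg2} and \ref{layervine}. However, there is a genuine gap in your third paragraph, precisely at the step you identify as the main obstacle. After the leading term cancels (which does work as you say, via $\overline{(\Kc_\Qc^*)}_{\mathrm{app}}(\Kc_\Tc^*)_{\mathrm{app}}-(\Kc_\Qc^*)_{\mathrm{app}}\overline{(\Kc_\Tc^*)}_{\mathrm{app}}=2i\,\mathrm{Im}(\cdots)$ and the reality of the sum in (\ref{layerregest10})), the sub-leading terms are \emph{not} a ``commutator of complex exponentials'' controlled by the mean value theorem. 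Your claimed bound $\min(\delta L^{2\gamma}|t_1-s_1|,\,\delta L^{2\gamma}|r|)$ is of size $\gg 1$ in the relevant regime ($|t_1-s_1|\sim 1$, $|r|$ up to $L^{-\gamma+\eta}$), so it yields no gain; and hypotheses (b) and (c) of Proposition \ref{sumint} require the bare factors $|r|$ and $|t_1^i-s_1^i|^{1-3\eta}$, without the $\delta L^{2\gamma}$ prefactor. The correct sources of gain are: (i) the mismatch of the time-ordering indicators $\mathbf{1}_{t_0^p<t_1^p}$ versus $\mathbf{1}_{t_0^p<s_1^p}$ coming from the two twisted tree structures, which produces the constraint $s_1^p<t_0^p<t_1^p$ or $t_1^p<t_0^p<s_1^p$ of hypothesis (a); (ii) differences of the \emph{smooth momentum profiles} $\widetilde{\Mc}(\cdot)$ evaluated at $x_*$ versus $y_*$ with $|x_*-y_*|=|r|$, controlled by the $\Sf^{30d,30d}$ bounds in (\ref{layerregest5})--(\ref{layerregest6}), giving hypothesis (b); and (iii) differences of the \emph{time profiles} $\widetilde{\Jc}(t_1^p,\cdot)-\widetilde{\Jc}(s_1^p,\cdot)$, from which the $\Xf^{1-\eta}$ bound lets you extract $|t_1^p-s_1^p|^{1-3\eta}$, giving hypothesis (c). The oscillatory exponentials must already have been absorbed into the phase structure of (\ref{sumintI1}) before this comparison is made.

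Two further points you leave unaddressed. First, when the two upper atoms of a bad vine lie in different layers ($\Lf_{\uf_3^p}\neq\Lf_{\uf_4^p}$), the admissible sets of coherent regular objects $(\Qc_{(\uf_0^p,\uf_{23}^p)},\Tc_{(\uf_2^p)})$ attached by the LF twist are \emph{different} for the two twisted skeletons, so the sum is not a difference over a common index set and the reality argument does not apply verbatim; one needs to extend $\widetilde{\Jc_\Qc}(t,\cdot)$ constantly outside its layer interval, replace $s_1^p$ by $t_1^p$ at the cost of another factor $|t_1^p-s_1^p|^{1-3\eta}$, and observe that the surviving terms range over a common subcollection. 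Second, the verification that the vine-chain expression really has the form (\ref{sumintI1}) --- i.e.\ the construction of the variables $x_0^i,x_j^i,y_j^i,u_j^i$ with $\Omega_{\uf_2^i}=2r\cdot x_0^i$, the ladder resonances equal to $\pm 2x_j^i\cdot y_j^i$ up to $2r\cdot\zeta_j^i$, and the linear-algebraic conditions (i)--(iii) of Proposition \ref{sumint} --- is a substantive step, not an identification one can assert in passing; it is also what makes the extraction of the common factor $\widetilde{\Kc}^{\Ub^{<p}}$ across twists legitimate (the decoration switch at $\uf_2^p$ in the inner chain compensates the switch of $e_p,f_p$ in the outer one).
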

\begin{proof} 
The main objective of the proof is to show that \eqref{vineest1} can be reduced to an expression in the form \eqref{sumintI1}, so that the bound (\ref{vineest2}) follows from \eqref{FourierboundI}. This involves a parametrization of the vine chain, which was done for a single vine in \cite{DH23}, Proposition 7.5 (for bad (CL) vine) and Proposition 7.6 (for normal (CL) vine). The proof here is similar to \cite{DH23} with a few additions associated with vine chains.

\uwave{Step 1: Notations and reductions.} Start by considering the expression for $\widetilde \Kc_{(\Gc)}^{\Ub}:=e^{-\pi i\cdot\delta L^{2\gamma} t_{\uf_1}\Gamma}\cdot\Kc_{(\Gc)}^{\Ub}$ which is given by
\begin{multline*}
\widetilde \Kc_{(\Gc)}^{\Ub}=\bigg(\frac{\delta}{2L^{d-\gamma}}\bigg)^{n_{(1)}}\zeta[\Ub] \sum_{\Is [\Ub]}e^{-\pi i\cdot\delta L^{2\gamma} t_{\uf_1}\Gamma} \cdot\epsilon_{\Is[\Ub]}\cdot\int_{ \Ic[\Ub]}  \prod_{\nf\in \Nc[\Ub]} e^{\zeta_\nf\pi i\cdot\delta L^{2\gamma}\Omega_\nf t_\nf}\,\mathrm{d}t_\nf\\
\times\,{\prod_{\lf\in
\Lc[\Ub]}^{(+)}\Kc_{\Qc_{(\lf,\lf')}}^*(t_{\lf^{\mathrm{pr}}},t_{(\lf')^{\mathrm{pr}}},k_\lf)}\prod_{\mf\in\Nc[\Ub]\backslash\{\uf_1\}}\Kc_{\Tc_{(\mf)}}^*(t_{\mf^{\mathrm{pr}}},t_{\mf},k_\mf).
\end{multline*}
Here $\zeta [\Ub]$ is the product of $i\zeta_\nf$ where $\nf$ runs over all branching nodes in $\Gc_{\mathrm{sk}}[\Ub]\backslash\{\uf_1\}$, as in the definition of $\Kc_{(\Gc)}^{\Ub}$ in Section \ref{regred} above; similarly $\Is[\Ub]$,  $\epsilon_{ \Is[\Ub]}$ and $\Ic[\Ub]$ are also defined as above, and $\Nc[\Ub]$ and $\Lc[\Ub]$ are respectively the sets of branching nodes and leaves in $\Gc_{\mathrm{sk}}[\Ub]$. Note that $$\delta L^{2\gamma}\cdot \Gamma=\delta L^{2\gamma}\sum_{\nf\in\Nc[\Ub]} \zeta_\nf \Omega_\nf,
$$
since each factor $\zeta_\nf |k_{\nf}|^2$ with $\nf\in\Gc_{\mathrm{sk}}[\Ub] \setminus\{\uf_1\}$ appears twice in the above sum with opposite signs. We also define $(t,t',t''):=(t_{\uf_1},t_{\uf_{21}},t_{\uf_{22}})$ and $(e,f,g,h)=(k_{\uf_1},k_{\uf_{11}},k_{\uf_{21}},k_{\uf_{22}})$ \emph{but possibly with $e$ and $f$ switched} depending on the signs $\zeta_{\uf_{1}}=\zeta_{\uf_{11}}$. Thus, we can rewrite 
\begin{multline}\label{defKtilde}
\widetilde \Kc_{(\Gc)}^{\Ub}= \bigg(\frac{\delta}{2L^{d-\gamma}}\bigg)^{n_{(1)}}\zeta[\Ub] \sum_{\Is [\Ub]} \epsilon_{\Is[\Ub]}\cdot\int_{ \Ic[\Ub]}  \prod_{\nf\in \Nc[\Ub]} e^{\zeta_\nf\pi i\cdot\delta L^{2\gamma}\Omega_\nf (t_\nf-t)}\,\mathrm{d}t_\nf\\
\times\,{\prod_{\lf\in
\Lc[\Ub]}^{(+)}\Kc_{\Qc_{(\lf,\lf')}}^*(t_{\lf^{\mathrm{pr}}},t_{(\lf')^{\mathrm{pr}}},k_\lf)}\prod_{\mf\in\Nc[\Ub]\backslash\{\uf_1\}}\Kc_{\Tc_{(\mf)}}^*(t_{\mf^{\mathrm{pr}}},t_{\mf},k_\mf).
\end{multline}
We shall number the (CL) vine ingredients of $\Ub$ as $\Ub_i\,(1\leq i\leq I)$ and associate to each $\Ub^i\,(1\leq i\leq I)$ the nodes $\uf_1^i, \uf_{2}^i, \uf_{21}^i$ etc. as in  Proposition \ref{block_clcn}. We order these $\Ub^i$ from bottom to top, i.e. assume that $\uf_1^i=\uf_2^{i+1}$; from this we also have that $\uf_{11}^i=\uf_{23}^{i+1}$, and $(\uf_1, \uf_{11})=(\uf_1^I, \uf_{11}^I)$ and $(\uf_2, \uf_{21}, \uf_{22})=(\uf_2^1, \uf_{21}^1, \uf_{22}^1)$ etc. Also define the two subsets $\mathrm{Bad}$ and $\mathrm{Nor}$ of $\{1, \cdots I\}$ corresponding to bad and normal vines $\Ub_i$. At the level of the molecule, we denote by $v_1^i$ and $v_{2}^i$ the joints of the vine $\Ub^i$ for $1\leq i \leq I$ such that $v_j^i$ corresponds to the branching node $\uf_j^i$, so we have $v_1^i=v_2^{i+1}$ for $1\leq i \leq I-1$. Define $r:=g-h=k_{\uf_{21}}-k_{\uf_{22}}$, we may also assume $r=e-f=k_{\uf_{1}}-k_{\uf_{11}}$ (the opposite sign case being the same), then for each $1\leq i \leq I$ we have that
\[r=k_{\uf_{21}^i}-k_{\uf_{22}^i}=\pm (k_{\uf_1^i}-k_{\uf_{11}^i})\neq 0.\]

\uwave{Step 2: Re-parametrization of decorations.} We now perform re-parametrizations for decorations $\Is[\Ub]$. This will be done for each $\Ub^i$ individually, and depends on the type (bad or normal) of $\Ub^i$.

\smallskip
(A) Suppose $i\in\mathrm{Bad}$ and $\Ub^i$ is Vine (II), then it can be annotated as in Figure \ref{fig:vineAnnotated}. Here we denote by $(v_1^i, v_2^i)$ the joints and $(v_{2j+1}^i, v_{2j+2}^i)\,(1\leq j\leq m_i)$ the interior atoms such that $v^i_{2j+1}$ is connected by a double bond to $v^i_{2j+2}$. Then $\Gc_{\mathrm{sk}}[\Ub^i]\backslash\{\uf_1^i\}$ contains $n_i:=2m_i+1$ branching nodes (which we denote by $\uf_j^i\,(2\leq j\leq 2m_i+2)$ corresponding to $v_j^i$), and $2m_i+1$ leaf pairs.

For any decoration, denote the values of $k_\lf$ for leaves $\lf\in \Gc_{\mathrm{sk}}[\Ub^i]\backslash\{\uf_1^i\}$ by $(k_j^i: 1\leq j \leq 2m_i+1)$, and denote the values of $k_\nf$ for branching nodes $\nf\in\Gc_{\mathrm{sk}}[\Ub^i]\backslash\{\uf_1^i\}$ by $(\ell_j^i: 1\leq j \leq 2m_i+1)$. This decoration leads to decoration of bonds of $\Ub^i$ as in Definition \ref{defdecmol}; for $1\leq j\leq m_i-1$ we we denote by $(a_j^i, b_j^i)$ the decoration of the double bond between $v^i_{2j+1}$ and $v^i_{2j+2}$, and denote by $(c_j^i,d_j^i)$ the decoration of the single bonds between $\{v_{2j+1}^i,v_{2j+2}^i\}$ and $\{v_{2j+3}^i,v_{2j+4}^i\}$. For $j\in\{0,m_i\}$ we make obvious modifications as shown in Figure \ref{fig:vineAnnotated} (A). Finally, we also define $\iota^i_j=+1$ if bond decorated by $c^i_j$ is outgoing from $v^i_{2j+1}$ for $1\leq j \leq m_i$ or from $v^i_2$ for $j=0$, and $\iota^i_j=-1$ otherwise. Note that specifying $\iota^i_j$ completely determines the directions of all the bonds in $\Ub^i$ except for the vertical double bonds in Figure \ref{fig:vineAnnotated} (A).
\begin{figure}[h!]
  \includegraphics[scale=.4]{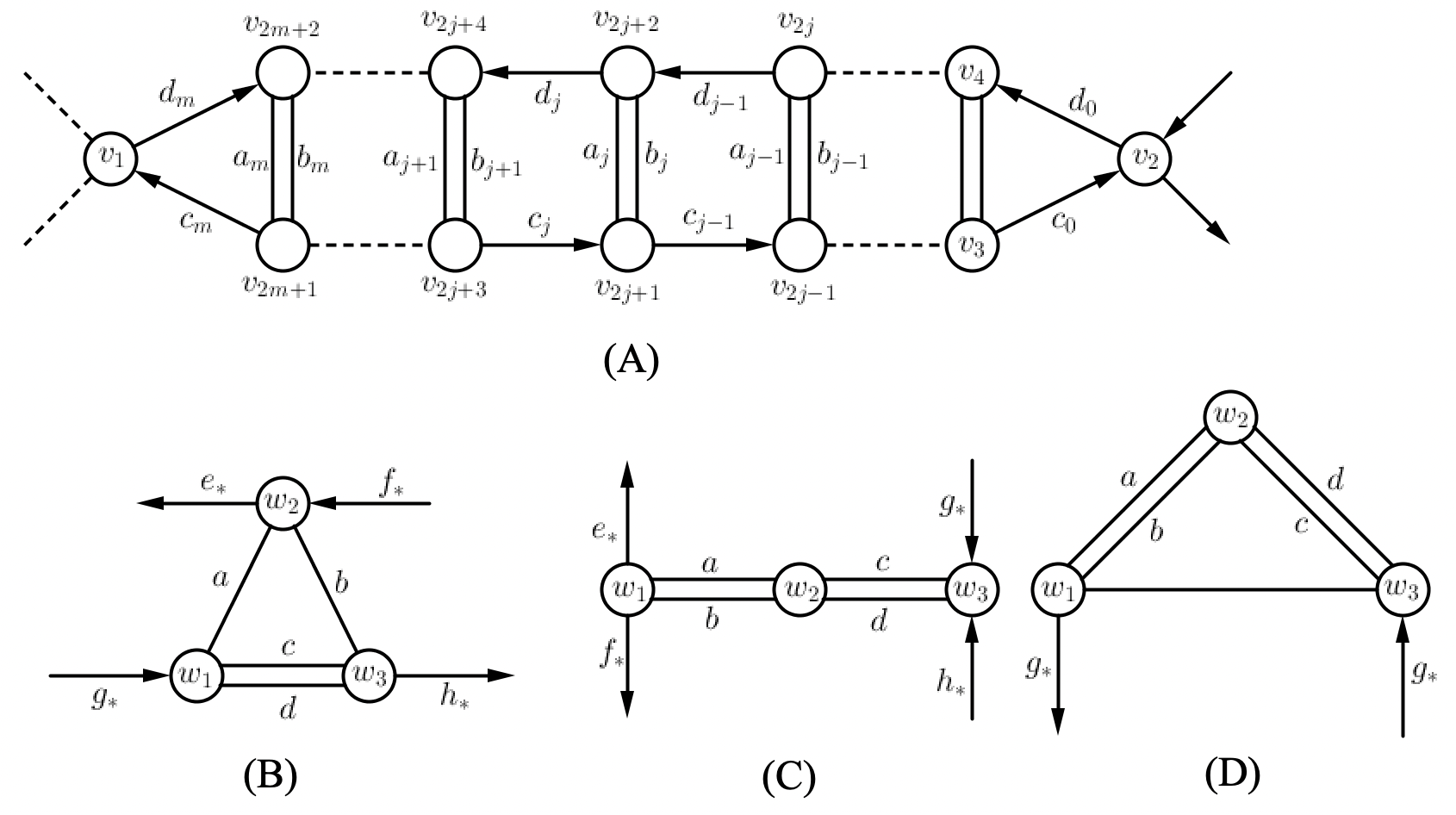}
  \caption{(A) is the annotation of the decoration of a Vine (II), whereas (B)--(D) are the annotations of parts of Vine (III), (IV) and (V)--(VIII) respectively as referred to in Step 2 of the proof of Proposition \ref{vineest}. The $i$ superscripts of $v_j^i$ and $w_j^i$ are omitted in the above diagram to simplify the notation.}
  \label{fig:vineAnnotated}
\end{figure}

From this decoration of the bonds and using Definition \ref{defdecmol}, we have that 
$$
\iota^i_j (c^i_j -d^i_j)=\iota^i_{j-1}(c^i_{j-1}-d^i_{j-1})=\pm r, \quad 1\leq j \leq m_i;
$$
we may assume it is $+r$ without loss of generality. Moreover, we have from \eqref{molegammav} that 
\begin{equation}\label{zetaOmegasum}
\iota_{\uf_{2j+1}^i}\Omega_{\uf_{2j+1}^i}+\iota_{\uf_{2j+2}^i}\Omega_{\uf_{2j+2}^i}=-\Gamma_{v_{2j+1}^i}-\Gamma_{v^i_{2j+2}}=\pm 2r\cdot(c_{j-1}^i-c_j^i\mathrm{\ or\ }c_{j-1}^i-d_j^i).
\end{equation}
As a result of this, we can define new variables $x_0^i, x_j^i , y_j^i$ ($1\leq j \leq m_i$) such that (a) each of $(x_j^i, y_j^i)$ is the difference of two vectors among $(a_j^i, b_j^i, c_j^i, c^i_{j-1})$ for $1\leq j \leq m_i$, (b) for $1\leq j\leq m_i$ we have \begin{equation}\label{Omegax_0ladder}\zeta_{\uf^i_{2j+2}}\Omega_{\uf^i_{2j+2}}=-\Gamma_{v^i_{2j+2}}= 2x_j^i \cdot y_j^i,\qquad\zeta_{\uf^i_{2j+1}}\Omega_{\uf^i_{2j+1}}=-2x^i_j\cdot y^i_j +2r\cdot \zeta^i_j\end{equation} where $\zeta^i_j=\alpha^i_j x^i_j+\beta^i_j y_j+\theta^i_j r$ for some $\alpha^i_j, \beta^i_j, \theta^i_j \in \{0, \pm 1\}$ with $(\alpha^i_j)^2+(\beta^i_j)^2\neq 0$, and (c) for $j=0$, we choose $x_0^i$ so that 
\begin{equation}\label{Omegax_0}
\zeta_{\uf^i_{2}}\Omega_{\uf^i_{2}}=2r\cdot x_0^i.
\end{equation} This choice of $x_0^i$ depends on $c_0^i, d_0^i$ and the decoration of $\Ub^{i-1}$ (if $i=1$, then $x_0^i$ is a linear combination of $(c_0^1, d_0^1, e,f,g,h)$).

We denote $\boldsymbol{x}^i=(x_0^i, x^i_j, y^i_j)_{1\leq j \leq m_i}$, ${\boldsymbol{k}^i}=(k_j^i:1\leq j\leq 2m_i+1)$ and $\boldsymbol{\ell}^i=(\ell_j^i:1\leq j\leq 2m_i+1)$. By the above discussions, we can use $\boldsymbol{x}^i$ as a substitute or re-parametrization for the variables $k_{\uf}$ for $\uf\in \Gc_{\mathrm{sk}}[\Ub^i]\setminus\{\uf_1^i\}$ (which are just ${\boldsymbol{k}^i}$ and $\boldsymbol{\ell}^i$). Note that (A-i) we have ${\boldsymbol{k}^i}=T_1^i{\boldsymbol{x}^i}+{\boldsymbol{h}}_1^i$ and $\boldsymbol{\ell}^i=T_2^i{\boldsymbol{x}^i}+{\boldsymbol{h}}_2^i$ for some matrices $T_k^i$ and some vectors ${\boldsymbol{h}}_k^i$ depending only on $(k^i_{\uf_1},k^i_{\uf_{11}}, k^i_{\uf_{21}}, k_{\uf^i_{22}})$, such that all coefficients of $T_1^i,\,(T_1^i)^{-1},\,T_2^i$ are $0$ or $\pm 1$; and (A-ii) any variable in $\boldsymbol{x}^i$ is the difference of two variables, such that each of them is either a variable in ${\boldsymbol{k}^i}$ or $\boldsymbol{\ell}^i$, or in $(k^i_{\uf_1},k^i_{\uf_{11}}, k^i_{\uf_{21}}, k^i_{\uf_{22}})$. Moreover, we have that (\ref{Omegax_0ladder}) holds for the ladder part and (\ref{Omegax_0}) holds for the joint $v_2^i$. These follow from simple calculations based on the above parametrization.

\smallskip
(B) Suppose $i\in\mathrm{Bad}$ and $\Ub^i$ is Vine (I) (which is just a double bond), then this parametrization is much easier. Indeed, it corresponds to the case $m_i=0$ above, for which we only need to choose $x_0^i$ as above so that \eqref{Omegax_0} holds. Clearly (A-i) and (A-ii) also hold in this simple case.

\smallskip
(C) Now suppose $i\in\mathrm{Nor}$, so $\Ub^i$ is a normal vine. By examining Figure \ref{fig:vines}, we see it contains $m_i$ pairs of atoms connected by double bounds that belong to the ladders represented by colored dashed lines (for Vine (VII) we also include the pair of atoms where all three ladders intersect). Moreover there are in total $n_i:=2m_i+4$ atoms excluding the joint atom $v^i_1$ (hence $2m_i+4$ branching nodes and $2m_i+4$ leaf pairs in $\Gc_{\mathrm{sk}}[\Ub^i]\backslash\{\uf_1^i\}$). These atoms can be divided into three groups: (a) the joint $v_2^i$ which corresponds to the branching node $\uf_2^i$; (b) the $m_i$ pairs of atoms in the ladders; and (c) the three remaining atoms. We denote the atoms in (b) by $v_j^i\,(3\leq j\leq 2m_i+2)$ and the corresponding branching nodes in $\Gc_{\mathrm{sk}}[\Ub^i]\backslash\{\uf_1^i\}$ by $\uf_j^i$ similar to (A), and denote the atoms in (c) by $(w^i_1, w^i_2, w^i_3)$ and the corresponding branching nodes by $(\wf_1^i,\wf_2^i,\wf_3^i)$.

For any decoration, denote the values of $k_\lf$ for leaves $\lf\in \Gc_{\mathrm{sk}}[\Ub^i]\backslash\{\uf_1^i\}$ by $(k_j^i: 1\leq j \leq 2m_i+4)$, and denote the values of $k_\nf$ for branching nodes $\nf\in\Gc_{\mathrm{sk}}[\Ub^i]\backslash\{\uf_1^i\}$ by $(\ell_j^i: 1\leq j \leq 2m_i+4)$. This decoration also leads to decoration of bonds of $\Ub^i$.

Now we want to define new variables $(x^i_0, x^i_j, y^i_j, u^i_1, u^i_2, u^i_3)_{1\leq j \leq m_i}$. First, by the same argument for Vine (II) above, we have that $\Omega_{\uf_2}=2r\cdot x^i_0$ where $x_0^i$ can be defined similar to (A). Next, for each pair of atoms $(v_{2j+1}^i,v_{2j+2}^i)$ connected by a double bond in a ladder and the corresponding branching nodes $\uf_{2j+1}^i,\uf_{2j+2}^i\in\Gc_{\mathrm{sk}}\backslash\{\uf_1^i\}$, the same argument for Vine (II) above shows that $$
\zeta_{\uf_{2j+1}^i} \Omega_{\uf_{2j+1}^i}+\zeta_{\uf_{2j+2}^i}\Omega_{\uf_{2j+2}^i}=2\widetilde r \cdot \mu.
$$ Here $\mu$ is the difference of two $k_\nf$ vectors corresponding to two of the four single bonds at this pair of atoms, and $\widetilde r\in \{0, r\}$ is the the same for all pairs of atoms in the same ladder and is equal to $r$ for the ladders attached to the joints and zero otherwise\footnote{In Figure \ref{fig:vines}, the $\widetilde r$ values for those ladders with cyan color are $0$; we shall call such ladders zero-gap ladders.}. Note that for the pair of atoms where all three ladders intersect in Vine (VII), the argument needs to be slightly adjusted but the result remains the same, with $\widetilde{r}=r$ in this case. In any case, we can define $(x_j^i, y_j^i)\in (\Zb_L^{d})^2$ similar to (A), such that (\ref{Omegax_0ladder}) holds with $r$ replaced by $\widetilde{r}$.

It now remains to define the variables $(u_1^i, u_2^i, u_3^i)$. Here we only discuss Vine (III) in detail below, as arguments in other cases are similar.

(C-i) For Vine (III), using the notation in Figure \ref{fig:vineAnnotated} (B), we may denote $\omega_j^i=\zeta_{\wf_j}\Omega_{\wf_j}$ where $\wf_j^i$ is the node in $\Gc_{\mathrm{sk}}[\Ub^i]$ corresponding to the atom $w^i_j$. In Figure \ref{fig:vineAnnotated} (B) and Figure \ref{fig:vines}, note that $e_*-g_*=f_*-h_*=\pm r$, and $(e_*,g_*)$ is determined by $x^i_0$ and some of the $(x^i_j,y^i_j)$ variables. Now, if the bonds decorated by $c$ and $d$ have opposite directions (say $c$ goes from $w^i_1$ to $w^i_3$, and $d$ goes from $w^i_3$ to $w^i_1$), we may define $(u^i_1,u^i_2,u^i_3)=(c-d,d-a,d-b)$. If the bonds decorated by $c$ and $d$ have the same direction (which has to go from $w^i_1$ to $w^i_3$), then we may define $(u^i_1,u^i_2,u^i_3)=(c-a,a-d,c-b)$. Then we have $\omega^i_1=2u_1^i\cdot u_2^i$, while $\omega_2^i$ equals $2u_1^i\cdot u_3^i$ in the first case, and equals $2u_3^i\cdot (u_1^i+u_2^i-u_3^i)$ in the second case. Moreover we have $\omega_1+\omega_2+\omega_3=\pm(|e_*|^2-|f_*|^2-|g_*|^2+|h_*|^2)=2r\cdot\xi^i$ where $\xi^i=a^iu_1^i+b^iu_2^i+c^i u_3^i+dr$ with $a^i,b^i,c^i,d^i\in\{0,\pm1\}$. In any case, the variables $(u_1^i,u_2^i,u_3^i)$ determine $(a,b,c,d,f_*,h_*)$ of Figure \ref{fig:vineAnnotated}, and allow us to proceed with parametrizing the next ladder starting from $(f_*,h_*)$ by the rest of $(x_j^i,y_j^i)$ variables. The argument for Vine (IV) is similar, see Figure \ref{fig:vineAnnotated} (C).

(C-ii) For Vines (V)--(VIII), the argument is again similar, and in fact much easier. In Figure \ref{fig:vineAnnotated} (D) and Figure \ref{fig:vines}, note that the two bonds going in and out the triangle are both decorated by $g_*$ (which is determined by the $(x_0^i,x_j^i,y_j^i)$ variables), which means that the vector $r$ for Vine (III) above is replaced by $\widetilde{r}=0$. In particular, we have $\omega_1^i+\omega_2^i+\omega_3^i=0$ where $\omega^i_j=\zeta_{\wf_j^i}\Omega_{\wf_j^i}$. Then we argue as above, with $(u_1^i,u_2^i,u_3^i)=(a-e,b-e,a-c)$ if bonds decorated by $a$ and $b$ have the same direction, and $(u_1^i,u_2^i,u_3^i)=(e-g_*,b-g_*,d-g_*)$ if they have opposite directions, then the same results will hold.

As a result, we can always define the variables $(u_1^i, u_2^i, u_3^i)$, such that when put together with the other variables defined above, the variables $\boldsymbol{x}^i=(x_0^i, x_j^i, y_j^i, u_1^i, u_2^i, u_3^i)_{1\leq j \leq m_i}$ can be used as a substitute or re-parametrization for the variables $k_\uf$ for $\uf\in\Gc_{\mathrm{sk}}[\Ub^i]\backslash\{\uf_1^i\}$. These $k_\uf$ variables are formed by $\boldsymbol{k}^i=(k_j^i:1\leq j\leq 2m_i+4)$ and $\boldsymbol{\ell}^i=(\ell_j^i:1\leq j\leq 2m_i+4)$; the variables $\boldsymbol{x}^i$, $\boldsymbol{k}^i$ and $\boldsymbol{\ell}^i$ verify the same properties (A-i) and (A-ii) as stated at the end of (A). Moreover, we have that (\ref{Omegax_0ladder}) holds for each ladder (with $r$ replaced by $\widetilde{r}$), and (\ref{Omegax_0}) holds for the joint $v_2^i$. For the remaining three branching nodes in $\Gc_{\mathrm{sk}}[\Ub^i]$, their corresponding resonance factors are given by $2u_1^i \cdot u_2^i$ and $\Lambda^i\in\{2u_1^i\cdot u_3^i,2u_3^i\cdot (u_1^i+u_2^i-u_3^i)\}$, and $-2u_1^i\cdot u_2^i-\Lambda^i+\widetilde r \cdot\xi^i$ where $\xi^i=au^i_1+bu^i_2+cu^i_3+dr$ with $a,b,c,d\in\{0,\pm1\}$ and $\widetilde{r}\in\{r,0\}$.

\smallskip
We now obtain the new set of variables $\boldsymbol{x}=(\boldsymbol{x}^i:1\leq i\leq I)$, where $\boldsymbol{x}^i=(x_0^i, x_j^i, y_j^i:1\leq j\leq m_i)$ for $i\in\mathrm{Bad}$ and $\boldsymbol{x}^i=(x_0^i, x_j^i, y_j^i, w_1^i, w_2^i, w_3^i:1\leq j\leq m_i)$ for $i\in\mathrm{Nor}$. Let $\boldsymbol{k}=(\boldsymbol{k}^i:1\leq i \leq I)$ and similarly for $\boldsymbol{\ell}$, then these variables satisfy the properties (i)--(iii) stated in Proposition \ref{sumint}. In fact (i) and (iii) follow from putting together for each $i$ the properties (A-i) and (A-ii) stated at the end of (A) (which are true also for (B) and (C)).  Property (ii), after a possible reordering of $\boldsymbol{k}$ and $\boldsymbol{\ell}$ variables, is implied by the following statement which is a consequence of Lemma 6.6 of \cite{DH21}:

$\bullet$ For each vector $k_\uf$ in $\boldsymbol{\ell}$ (where $\uf\in\Nc[\Ub]\backslash\{\uf_1\}$) there exists $\uf'\in\Nc[\Ub]\backslash\{\uf_1\}$ which is a child of $\uf$ such that $k_{\uf}\pm k_{\uf'}$ is an integer linear combination of $(k_\lf:\lf\in\Lc[\Ub])$ and $(k_{\uf_1},k_{\uf_{11}}, k_{\uf_{21}}, k_{\uf_{22}})$ with absolute value sum of coefficients at most $\Lambda_\uf$, and moreover we have $\prod_\uf\Lambda_\uf\leq C_0^{n_{(1)}}$. 

In addition, from the above discussion we know that (\ref{Omegax_0ladder}) holds for each ladder (possibly with $r$ replaced by $\widetilde{r}$), (\ref{Omegax_0}) holds for each joint $v_2^i$, and the remaining three resonance factors at each normal vine $\Ub^i$ is given by expressions involving $u_j^i\,(1\leq j\leq 3)$ as stated above. Also let $n_i:=2m_i+1+3\cdot\mathbf{1}_{\mathrm{Nor}}(i)$, so the sum of these $n_i$ equals $n_{(1)}$ which is also the total number of these variables.

We shall rename the time variables as follows: for each $i$ let $t_0^i:=t_{\uf_2^i}$. For each $1\leq j\leq m_i$ let $(t_j^i,s_j^i):=(t_{\uf_{2j+2}^i},t_{\uf_{2j+1}^i})$, and for each $i\in\mathrm{Nor}$ and $1\leq j\leq 3$ let $\tau_j^i:=t_{\wf_j^i}$. Then, with all the above preparations, we can write
\begin{align}
\widetilde \Kc_{(\Gc')}^{\Ub}&= \bigg(\frac{\delta}{2L^{d-\gamma}}\bigg)^{n_{(1)}}\zeta[\Ub] \sum_{\boldsymbol x} \epsilon_{\boldsymbol x}\cdot\int_{ \Ic[\Ub]} \bigg[\prod_{i=1}^I\bigg(\prod_{j=1}^{m_i} e^{\pi i \cdot\delta L^{2\gamma}(t_j^i-s_j^i)\langle x_j^i, y_j^i\rangle}
\times \prod_{j=0}^{m_i}e^{\pi i\cdot\delta L^{2\gamma}(t_j^i-t)\langle r,\zeta_j^i\rangle}\,\bigg]  \nonumber\\
&\times\bigg[ \prod_{i\in \mathrm{Nor}}e^{\pi i\cdot\delta L^{2\gamma}(\tau_1^i-\tau_3^i)\langle u_1^i,u_2^i\rangle}e^{\pi i\delta L^{2\gamma}(\tau_2^i-\tau_3^i)\Lambda_i}e^{\pi i\delta L^{2\gamma}(\tau_3^i-t)\langle r,\xi_i\rangle}\bigg]\nonumber\\
\label{tildeK2}&\times{\prod_{\lf\in
\Lc[\Ub]}^{(+)}\Kc_{\Qc_{(\lf,\lf')}}^*(t_{\lf^{\mathrm{pr}}},t_{(\lf')^{\mathrm{pr}}},k_\lf)}\prod_{\mf\in\Nc[\Ub]\backslash\{\uf_1\}}\Kc_{\Tc_{(\mf)}}^*(t_{\mf^{\mathrm{pr}}},t_{\mf},k_\mf)\,\mathrm{d}t_{\mf},
\end{align}
where the relationship between $\boldsymbol x$ and $(k_\mf)$ is as described above, and $\epsilon_{\boldsymbol{x}}$ is just the $\epsilon_{\Is[\Ub]}$ in (\ref{defKtilde}).

\uwave{Step 3: The case of non-core vines.} We now consider the case when each $\Ub^i$ is either non-core or does not satisfy the assumptions in Definition \ref{lftwist} (here, if $\Ub^i$ is core, these assumptions are that $\Lf_{\uf_2^i}\leq\min(\Lf_{\uf_3^i},\Lf_{\uf_4^i})$ and that both $\Tc_{\uf_2^i}$ and $\Qc_{(\uf_{23}^i,\uf_0^i)}$ are coherent, where the notations are as in Proposition \ref{molecpl} with superscript $i$). In this case, there is no twist and we just need to study the single term $\widetilde \Kc_{(\Gc)}^{\Ub}$. To this end, we use Propositions \ref{proplayer1} and \ref{proplayer2} to replace the factor $\Kc_{\Qc_{(\lf,\lf')}}^*(t_{\lf^{\mathrm{pr}}},t_{(\lf')^{\mathrm{pr}}},k_\lf)$ on the last line in \eqref{tildeK2} by a linear combination of
$$(\sqrt{\delta})^{n(\Qc_{(\lf,\lf')})}e^{\pi i (\lambda t_{\lf^{\mathrm{pr}}}+\lambda't_{(\lf')^{\mathrm{pr}}})}\cdot\Kc_{(\lf,\lf')}(k_\lf),$$ with coefficient being in $L_{\lambda, \lambda'}^1$ with $(\langle \lambda\rangle+\langle \lambda'\rangle )^\eta$ weight. Here each $\Kc_{(\lf,\lf')}$ is bounded in $\Sf^{30d,30d}$ by $L^{-(\gamma_1-\sqrt{\eta})g_{\lf,\lf'}}$ uniformly in $(\lambda,\lambda')$, where $g_{\lf,\lf'}$ is the incoherency index of $\Qc_{(\lf,\lf')}$. The same can be done for $\Kc_{\Tc_{(\mf)}}^*$, but with the $\Sf^{30d,30d}$ norm replaced by $\Sf^{30d,0}$, and $g_{\lf,\lf'}$ replaced by $g_{\mf}'$ which is the incoherency index of $\Tc_{(\mf)}$. Therefore, with all the $(\lambda,\lambda')$ variables as above fixed, we can reduce $\widetilde \Kc_{(\Gc)}^{\Ub}$ to a term of form
\begin{equation*}
(C_1\sqrt{\delta})^{n_{(0)}} \bigg(\frac{\delta}{2L^{d-\gamma}}\bigg)^{n_{(1)}}\zeta[\Ub] \cdot\Ic(t,t',t'',e,f,g,h).
\end{equation*}
Here $(t,t',t'',e,f,g,h)=(t_{\uf_1},t_{\uf_{21}},t_{\uf_{22}},k_{\uf_1},k_{\uf_{11}},k_{\uf_{21}},k_{\uf_{22}})$ as before, $n_{(0)}$ is the total order of all $\Qc_{(\lf,\lf')}$ and $\Tc_{(\mf)}$, and $\Ic$ is an expression as in \eqref{sumintI1}. Here note that once we specify $r=k_{\uf_{21}}-k_{\uf_{22}}=k_{\uf_1}-k_{\uf_{11}}\neq 0$, we may replace $\epsilon_{\boldsymbol x}$ in \eqref{tildeK2} by $1$, since if any $\epsilon$ factor is not $1$, this would imply some restrictions on the relevant variables (such as $x_j^i=0$ or $y_j^i=0$, or $u_1^i+u_2^i-u_3^i=0$ etc.), which allows us to sum over these variables trivially and gain an extra power $L^{-0.1\gamma_0}$. Moreover, for any bad vine $\Ub^i$, if it is not core, then it must be Vine (II-e) as in Proposition \ref{molecpl}, so we have $t_{\uf_4^i}<t_{\uf_2^i}<t_{\uf_3^i}$ from Figure \ref{fig:block_mole}, which translate to either $t^i_1<t_0^i<s_1^i$ or $s^i_1<t_0^i<t_1^i$ in the new time variables, hence assumption (a) in Proposition \ref{sumint} holds. If $\Ub^i$ is core, then by definition, either $\Lf_{\uf_2^i}>\min(\Lf_{\uf_3^i},\Lf_{\uf_4^i})$ or one of $\Tc_{\uf_2^i}$ and $\Qc_{(\uf_{23}^i,\uf_0^i)}$ is not coherent. In the second case assumption (d) in Proposition \ref{sumint} holds; in the first case, since $\uf_2^i$ is also a child node of either $\uf_3^i$ or $\uf_4^i$, we know that either $t_{\uf_3^i}<t_{\uf_2^i}<t_{\uf_4^i}$ or $t_{\uf_4^i}<t_{\uf_2^i}<t_{\uf_3^i}$, hence assumption (a) in Proposition \ref{sumint} holds.

This justifies that $\Ic$ indeed verifies all assumptions in Proposition \ref{sumint}, thus (\ref{vineest2}) follows from \eqref{FourierboundI}, if we notice that in the notation of Proposition \ref{sumint} we have $M=n_{(1)}$ and $n_{(2)}=n_{(1)}+n_{(0)}$, and $|\mathrm{Bad}|=b$ and $M_0+M_1=n_{(3)}$. Note also that the $\eta^6$ power of the $(\lambda_j^i,\mu_j^i,\sigma_j^i)$ variables in (\ref{FourierboundI}) can be absorbed by exploiting the $(\langle \lambda\rangle+\langle\lambda'\rangle)^\eta$ weight above, as each $\lambda_j^i$ etc. is an algebraic sum of these $\lambda$ and $\lambda'$.

\uwave{Step 4: Cancellation at the core vines.} We now consider the general case $\widetilde{\Kc}^{\Ub}=\sum_{\Gc'} \widetilde \Kc^{\Ub}_{(\Gc')}$. We will prove that it can also be written as a linear combination of expressions of form 
\begin{equation}\label{linearcombI}
(\sqrt{\delta})^{n_{(0)}} \bigg(\frac{\delta}{2L^{d-\gamma}}\bigg)^{n_{(1)}}\zeta[\Ub]\cdot\Ic(t,t',t'',e,f,g,h),
\end{equation} with coefficients in a weighted $L^1$ space, as in Step 3; here $\Ic$ is as in (\ref{sumintI1}). Once this is done, then (\ref{vineest2}) again follows from \eqref{FourierboundI} by the same arguments as above.

We shall argue by induction on the number of \emph{core vines $\Ub^i$ satisfying the assumptions in Definition \ref{lftwist}}. The base case is when there is none, which is treated in Step 3. Suppose there is at least one such vine, let $\Ub^{p}$ be the one with $p$ being largest. Denote by $\Ub^{<p}$ the union of the vines $\Ub^1,\cdots,\Ub^{p-1}$, and define
$$\widetilde{\Kc}^{\Ub^{<p}}(t_{\uf^{p-1}_{1}},t_{\uf_{21}},t_{\uf_{22}},k_{\uf_1^{p-1}},k_{\uf_{11}^{p-1}},k_{\uf_{21}},k_{\uf_{22}})$$ 
to be the expression in \eqref{vineest1} (where we sum over all LF twists) but with $\Ub$ replaced by $\Ub^{<p}$.

Recall that $\uf_1^{p-1}=\uf^{p}_2$ and $\uf^{p-1}_{11}=\uf^{p}_{23}$. Since no LF twisting is needed for $\Ub^i\,(i>p)$, we can use \eqref{tildeK2} to get that
$$
\sum_{\Gc'}\widetilde \Kc^{\Ub}_{(\Gc')}=\sum_{\Gc_p}(\widetilde \Jc_{\Gc_p})^{\Ub^{p}},
$$
where the last sum is taken over all $\Gc_p$ which are (unit) LF twists of $\Gc$ at $\Ub^{p}$ only; for each such twist, we have
\begin{align}
(\widetilde \Jc_{\Gc_p})^{\Ub^{p}}&= \bigg(\frac{\delta}{2L^{d-\gamma}}\bigg)^{n_{(1)}^{\geq p}}\zeta[\Ub^{\geq p}] \sum_{\boldsymbol x_{\geq p}} \int_{ \Ic[\Ub^{\geq p}]} \bigg[\prod_{i=p}^I\bigg(\prod_{j=1}^{m_i} e^{\pi i \cdot\delta L^{2\gamma}(t_j^i-s_j^i)\langle x_j^i, y_j^i\rangle}
\times \prod_{j=0}^{m_i}e^{\pi i\cdot\delta L^{2\gamma}(t_j^i-t_{\uf_1})\langle r,\zeta_j^i\rangle}\,\bigg)  \nonumber\\
&\times \bigg[ \prod_{i\geq p+1}e^{\pi i\cdot\delta L^{2\gamma}(\tau_1^i-\tau_3^i)\langle u_1^i,u_2^i\rangle}e^{\pi i\delta L^{2\gamma}(\tau_2^i-\tau_3^i)\Lambda_i}e^{\pi i\delta L^{2\gamma}(\tau_3^i-t_{\uf_1})\langle r,\xi_i\rangle}\bigg]\nonumber\\
&\times \prod_{\lf\in
\Lc[\Ub^{\geq p}]}^{(+)}\Kc_{\Qc_{(\lf,\lf')}}^*(t_{\lf^{\mathrm{pr}}},t_{(\lf')^{\mathrm{pr}}},k_\lf)\prod_{\mf\in\Nc[\Ub^{\geq p}]\backslash\{\uf_1\}}\Kc_{\Tc_{(\mf)}}^*(t_{\mf^{\mathrm{pr}}},t_{\mf},k_\mf)\,\mathrm{d}t_{\mf}\nonumber\\
\label{tildeJ2}&\times\widetilde{\Kc}^{\Ub^{<p}}(t_{\uf^{p}_{2}},t_{\uf_{21}},t_{\uf_{22}},k_{\uf_2^{p}},k_{\uf_{23}^{p}},k_{\uf_{21}},k_{\uf_{22}}).
\end{align}
Here, $\Ub^{\geq p}$ is the vine chain formed by $\Ub^{i}\,(i\geq p)$, which we identify as the result of splicing/merging all the vines $\Ub^i\,(i<p)$ as in Proposition \ref{block_clcn}, and $n_{(1)}^{\geq p}$ is the same as $n_{(1)}$ but defined for $\Ub^{\geq p}$, and the other objects are all associated with $\Ub^{\geq p}$. We have also replaced $\epsilon_{\boldsymbol x_{\geq p}}$ by $1$ since otherwise we would obtain better estimates.

The last factor in (\ref{tildeJ2}) is
\[\widetilde{\Kc}^{\Ub^{<p}}(t_{p},t',t'',e_{p},f_{p},g,h),\] where $(t_p,t',t'')=(t_{\uf_2^p},t_{\uf_{21}},t_{\uf_{22}})$ and $(g,h)=(k_{\uf_{21}},k_{\uf_{22}})$, and $(e_p,f_p)$ is either $(k_{\uf_2^{p}},k_{\uf_{23}^{p}})$ or $(k_{\uf_{23}^{p}},k_{\uf_2^{p}})$ similar to Step 1. Note that, if we perform a unit LF twist at $\Ub^p$ which corresponds to a unit twist of skeleton, then after this operation, the quantity $\Kc^{\Ub^{<p}}$ is transformed to another quantity with exactly the role of $e_p$ and $f_p$ switched (or equivalently, the roles of $k_{\uf_2^{p}}$ and $k_{\uf_{23}^{p}}$ switched). However, by Remark \ref{twistexplain} (d), we know that the roles of $k_{\uf_2^{p}}$ and $k_{\uf_{23}^{p}}$ in corresponding decorations of $\Ub^{\geq p}$ are switched again due to the unit twist. Thus the quantity $\widetilde{\Kc}^{\Ub^{<p}}$ can be extracted as a common factor for all unit LF twists at $\Ub^{p}$.

Now consider $\Gc_p$ which is a unit LF twist of $\Gc$. By Remark \ref{twistexplain} and Definition \ref{lftwist}, for this $\Gc_p$ we have $\zeta[\Ub^{\geq p}]= \sigma \zeta_{\uf_2^{p}}$ with $\sigma$ independent of the choice of $\Gc_p$. Moreover, the regular couples $\Qc_{(\lf, \lf')}$ and regular trees $\Tc_{(\mf)}$ are also independent of the choice of $\Gc_p$, except for $\Qc_{(\uf_0^{p}, \uf_{23}^{p})}$ and $\Tc_{(\uf_2^{p})}$, which range over all regular couples and trees described at the end of Definition \ref{lftwist} (we denote this set by $\Bs$). Therefore, the sum over all $\Gc_p$ equals the sum over $(\Gc_p)_{\mathrm{sk}}\in\{\Gc_{\mathrm{sk}},\Gc_{\mathrm{sk}}'\}$ where $\Gc_{\mathrm{sk}}'$ is the unit twist of $\Gc_{\mathrm{sk}}$, together with the sum over all $(\Qc_{(\uf_0^{p}, \uf_{23}^{p})},\Tc_{(\uf_2^{p})})\in \Bs$ (where the set $\Bs$ may also depend on the choice of $(\Gc_p)_{\mathrm{sk}}$). Recall from Definition \ref{twist} that the unit twist exchanges Vine (I-a), (II-a), (II-c) with Vine (I-b), (II-b), (II-d) respectively. Below we will only consider Vines (II-a) and (II-b), as Vines (II-c) and (II-d) are similar, and Vines (I-a) and (I-b) are much simpler\footnote{Compared to Vines (II), the case of Vines (I) will always involve an extra factor or $|r|$ corresponding to assumption (b) of Proposition \ref{sumint}.}.

Next consider all the input functions $\Kc_{\Qc_{(\lf, \lf')}}^*$ and $\Kc_{\Tc_{(\mf)}}^*$ in (\ref{tildeJ2}). For $(\lf,\lf')\neq (\uf_0^{p}, \uf_{23}^{p})$ and $\mf\neq \uf_2^{p}$, we simply expand them as in Step 3; this results in $e^{i\lambda t_{\lf^{\mathrm{pr}}}}$ etc. modulation factors and functions of $k_\lf$ with ($\Sf^{30d,30d}$ or $\Sf^{30d,0}$) bounds of $L^{-(\gamma_1-\sqrt{\eta})g}$ where $g$ is the corresponding incoherency index. As for $\Kc_{\Qc_{(\uf_0^{p}, \uf_{23}^{p})}}^*$ and $\Kc_{\Tc_{(\uf_2^{p})}}^*$, we know they must be coherent by Definition \ref{lftwist}. By Propositions \ref{proplayer2} and \ref{proplayer4}, we can replace them by either remainder factors which satisfy the same bound as above with $g\geq 1$ (this $g$ now is not incoherency index but merely indicates a remainder term), or the approximants $(\Kc_{\Qc_{(\uf_0^{p}, \uf_{23}^{p})}}^*)_{\mathrm{app}}$ and $(\Kc_{\Tc_{(\uf_2^{p})}}^*)_{\mathrm{app}}$ which satisfy (\ref{layerregest4})--(\ref{layerregest6}). The remainder term can be treated in the same way as in Step 3, since cancellation is not needed here due to the power gain $L^{-(\gamma_1-\sqrt{\eta}})$, so we will only consider the $(\cdots)_{\mathrm{app}}$ terms.

We consider first the case $\Lf_{\uf_3^p}=\Lf_{\uf_4^p}$, in which case the set $\Bs$ does not depend on the choice of $(\Gc_p)_{\mathrm{sk}}\in\{\Gc_{\mathrm{sk}},\Gc_{\mathrm{sk}}'\}$. By the induction hypothesis, we can write $\widetilde{\Kc}_\Gc^{\Ub^{<p}}$ as a linear combination of expressions of form \eqref{linearcombI} but adapted to $\Ub^{<p}$ (where $\Ic$ is as in (\ref{sumintI1})). We henceforth perform the above reductions for the $\Kc_{\Qc_{(\lf, \lf')}}^*$ and $\Kc_{\Tc_{(\mf)}}^*$ factors, and make the change of vector variables and rename the time variables as in Step 2. Putting all these together, we can write $\sum_{\Gc_p}(\widetilde{\Jc}_{\Gc_p})^{\Ub^p}$ as a linear combination of expressions $\Jc$ that has the same form as (\ref{linearcombI}) now adapted to $\Ub$ (again with $\Ic$ as in (\ref{sumintI1})), but with only one difference. Namely, the factors
\begin{equation}\label{replaceterm0}e^{\pi i\lambda_0^pt_0^p}\cdot e^{\pi i\lambda_1^pt_1^p}\cdot e^{\pi i\mu_1^ps_1^p}\cdot \Kc_1^p(k_1^p)\cdot\Lc_1^p(\ell_1^p)\end{equation}
 that occur in the expression of $\Ic$ in (\ref{sumintI1}) corresponding to $i=p$, should be replaced by
 \begin{multline}\label{replaceterm1}
 (C_1\sqrt{\delta})^{-(n(\Qc)+n(\Tc))}\sum_{(\Qc,\Tc)\in\Bs}^{(+)}\bigg\{\mathbf{1}_{t_0^p<t_1^p}\cdot(\Kc_{\overline{\Qc}}^*)_{\mathrm{app}}(t_0^p,s_1^p,y_*)\cdot(\Kc_\Tc^*)_{\mathrm{app}}(t_1^p,t_0^p,x_*)\\
 -\mathbf{1}_{t_0^p<s_1^p}\cdot(\Kc_{\Qc}^*)_{\mathrm{app}}(t_1^p,t_0^p,x_*)\cdot(\Kc_{\overline{\Tc}}^*)_{\mathrm{app}}(s_1^p,t_0^p,y_*)\bigg\},
 \end{multline} where the summation is taken over all $(\Qc,\Tc)\in\Bs$ such that $\Tc$ has sign $+$.
 
 We explain (\ref{replaceterm1}) as follows: the garden $(\Gc_p)_{\mathrm{sk}}$ has two possibilities which are unit twists of each other, one being Vine (II-a) and the other being Vine (II-b). In either case, by the renaming of time variables above, we always have $(t_0^p,s_1^p,t_1^p)=(t_{\uf_2^p},t_{\uf_3^p},t_{\uf_4^p})$ and the unit interval that each of them belongs to does not depend on the twisting; also in relation with (\ref{replaceterm0}), we assume $k_1^p=k_{\uf_{23}^p}=k_{\uf_0^p}$ and $\ell_1^p=k_{\uf_2^p}$.

For Vine (II-a), see Figure \ref{fig:block_mole}, we may assume $\zeta_{\uf_2^p}=\zeta_{\uf_{23}^p}=+$ and $\zeta_{\uf_0^p}=-$. We then choose $\Qc_{(\uf_0^p,\uf_{23}^p)}=\overline{\Qc}$ and $\Tc_{(\uf_2^p)}=\Tc$. In (\ref{replaceterm1}) we set $x_*=k_{\uf_2^p}$ and $y_*=k_{\uf_{23}^p}=k_{\uf_0^p}$. Finally the structure of the couple forces $t_{\uf_2^p}<t_{\uf_4^p}$, which is $t_0^p<t_1^p$.

For Vine (II-b), we have $\zeta_{\uf_2^p}=\zeta_{\uf_{23}^p}=-$ and $\zeta_{\uf_0^p}=+$. We then choose $\Qc_{(\uf_0^p,\uf_{23}^p)}=\Qc$ and $\Tc_{(\uf_2^p)}=\overline{\Tc}$. In (\ref{replaceterm1}) we set $x_*=k_{\uf_{23}^p}=k_{\uf_0^p}$ and $y_*=k_{\uf_2^p}$ (note that the roles of $k_{\uf_2}^p$ and $k_{\uf_{23}^p}$ in decorations are switched after unit twisting, see Remark \ref{twistexplain} (d)). Finally the structure of the couple forces $t_{\uf_2^p}<t_{\uf_3^p}$, which is $t_0^p<s_1^p$.

In the above discussions, note also that the $x_*-y_*$ is fixed and equals $\pm r$. These arguments then lead to the expression (\ref{replaceterm1}), where the negative sign before the second term is due to the change of $\zeta_{\uf_2^p}$ after the unit twisting. Of course these arguments are for Vines (II-a) and (II-b), but the case of Vines (II-c) and (II-d) is similar, and the case of Vines (I) is much easier.

Now it remains to analyze (\ref{replaceterm1}). Recall the identity (\ref{layerregest9}). We apply (\ref{layerregest4})--(\ref{layerregest6}) to reduce the expression in (\ref{replaceterm1}) to one of the followings:
\begin{enumerate}[{(a)}]
\item Terms containing $\mathbf{1}_{t_1^p< t_0^p<s_1^p}$ or $\mathbf{1}_{s_1^p< t_0^p<t_1^p}$. This leads to assumption (a) in Proposition \ref{sumint}.
\item Terms containing $(\Kc_{\Qc}^*)_{\mathrm{app}}(\cdot,\cdot,y_*)-(\Kc_{\Qc}^*)_{\mathrm{app}}(\cdot,\cdot,x_*)$ or $(\Kc_{\Tc}^*)_{\mathrm{app}}(\cdot,\cdot,x_{*})-(\Kc_{\Tc}^*)_{\mathrm{app}}(\cdot,\cdot,y_{*})$,with the time variables being the same in both functions. Since $x_*-y_*=\pm r$, this is bounded by $|r|$ and leads to assumption (b) in Proposition \ref{sumint}.
\item Terms containing $\widetilde{\Jc_\Qc}(t_1^p,t_0^p)-\widetilde{\Jc_\Qc}(s_1^p,t_0^p)$ or $\widetilde{\Jc_\Tc}(t_1^p,t_0^p)-\widetilde{\Jc_\Tc}(s_1^p,t_0^p)$. These come from the decomposition (\ref{layerregest4}); due to the $\Xf^{1-\eta}$ bounds in (\ref{layerregest5})--(\ref{layerregest6}), we can extract a factor of $|t_1^p-s_1^p|^{1-3\eta}$ and still preserve the same weighted Fourier $L^1$ estimate as above. This leads to assumption (c) in Proposition \ref{sumint}.
\item The leading factor, which we can use (\ref{layerregest9}) to reduce to
$$
\left[ \overline{(\Kc_{\Qc}^*})_{\mathrm{app}}(t_1^p,t_0^p,x_*)(\Kc_{\Tc}^*)_{\mathrm{app}}(t_1^p,t_0^p,x_*)-(\Kc_{\Qc}^*)_{\mathrm{app}}(t_1^p,t_0^p,x_*)\overline{(\Kc_{\Tc}^*)_{\mathrm{app}}}(t_1^p,t_0^p,x_*)\right].
$$ Thanks to (\ref{layerregest10}), this is \emph{exactly zero} after summing over all $(\Qc,\Tc)\in\Bs$.
\end{enumerate}
Here note that, in cases (a)--(c) above, the number of choices for $(\Qc,\Tc)$ is at most $C_2\cdot C_0^{n(\Qc)+n(\Tc)}$ by Proposition \ref{layerreg2}, which is acceptable because this only occurs once for $\Ub^p$ and can be covered by the $L^{-\eta^2}$ gain for this bad vine obtained below. Therefore, in the case $\Lf_{\uf_3^p}=\Lf_{\uf_4^p}$, we may fix one choice of $(\Qc,\Tc)$ which will satisfy one of the assumptions (a)--(d) Proposition \ref{layerreg2}.

Finally, consider the case $\Lf_{\uf_3^p}>\Lf_{\uf_4^p}$ (which is true for Vines (II-c) and (II-d), and can be assumed by symmetry for Vines (II-a) and (II-b)). In this case the set $\Bs$ of regular couples and regular trees defined above, is not the same for $(\Gc_q)_{\mathrm{sk}}=\Gc_{\mathrm{sk}}$ and for $(\Gc_q)_{\mathrm{sk}}=\Gc_{\mathrm{sk}}'$. More precisely, 
\begin{itemize}
\item The set $\Bs$ for Vine (II-a) consists of $(\Qc,\Tc)$ described at the end of Definition \ref{lftwist}, which have the exact structure described in Proposition \ref{layerreg1}, with $(q,q')$ in Proposition \ref{layerreg1} replaced by $(\Lf_{\uf_3^p},\Lf_{\uf_2^p})$ for $\Qc$, and by $(\Lf_{\uf_4^p},\Lf_{\uf_2^p})$ for $\Tc$;
\item The other set $\Bs'$ for Vine (II-b) consists of the same $(\Qc,\Tc)$, but with $(q,q')$ in Proposition \ref{layerreg1} replaced by $(\Lf_{\uf_4^p},\Lf_{\uf_2^p})$ for $\Qc$, and by $(\Lf_{\uf_3^p},\Lf_{\uf_2^p})$ for $\Tc$.
\end{itemize}

To treat this issue we apply the following argument. First, for any $\Qc$ we can extend the definition of $\widetilde{\Jc_\Qc}(t,s)$ (and hence $(\Kc_\Qc^*)_{\mathrm{app}}(t,s,k)$) in Proposition \ref{proplayer2} to include all $t\in\Rb$. In fact, currently it is defined for $t\in[q,q+1]$
(for notations see Proposition \ref{layerreg1} and the beginning of Section \ref{seclayer2}), where $q\geq \Lf_\rf$ with $\rf$ being the positive root of $\Qc$. Moreover, by Definition \ref{defkg}, see also the proof of Proposition \ref{proplayer2}, it is easy to see that (i) if $q>\Lf_{\rf}$ then $\widetilde{\Jc_\Qc}(t,s)$ is constant in $t$ for $t\in[q,q+1]$, and (ii)  if $q=\Lf_{\rf}$ then $\widetilde{\Jc_\Qc}(t,s)$ equals $0$ if $t=\Lf_\rf$. Now we can define $\widetilde{\Jc_\Qc}(t,s)$ for all $t\in\mathbb{R}$, by keeping the current definition for $t\in[\Lf_\rf,\Lf_\rf+1]$ (when $q=\Lf_\rf$) and extending it continuously as constant (in $t$) functions for $t\in(-\infty,\Lf_\rf]$ and $t\in[\Lf_\rf+1,+\infty)$. The same extensions can be defined also for $\widetilde{\Jc_\Tc}$ for any $\Tc$. These extensions are consistent with the current definition for any $q\geq \Lf_\rf$, and equals $0$ for $t\leq \Lf_\rf$; moreover, an elementary argument shows that the norm bounds (\ref{layerregest5})--(\ref{layerregest6}) for $\widetilde{\Jc_\Qc}$ and $\widetilde{\Jc_\Tc}$, which hold for $t\in[q,q+1]$, are also preserved (with $\Xf^{1-\eta}$ weakened by $\Xf^{1-2\eta}$) by such continuous constant extensions on any interval of length $O(1)$.

Now we consider the case $\Lf_{\uf_3^p}>\Lf_{\uf_4^p}$. Recall that $(t_0^p,s_1^p,t_1^p)=(t_{\uf_2^p},t_{\uf_3^p},t_{\uf_4^p})$; if $\Lf_{\uf_3^p}-\Lf_{\uf_4^p}\geq 2$, then we must have $|t_1^p-s_1^p|\geq 1$, so we can trivially insert a factor $|t_1^p-s_1^p|^{1-3\eta}$ without any loss, which leads to assumption (c) in Proposition \ref{sumint}. Thus we may assume $\Lf_{\uf_3^p}-\Lf_{\uf_4^p}=1$, in which case $|t_1^p-s_1^p|\leq 2$. Here, instead of (\ref{replaceterm1}), we have the difference of two sums of products of $\Kc_\Qc^*$ and $\Kc_\Tc^*$ factors, where the first sum is taken over $(\Qc,\Tc)\in\Bs$ and the second sum is taken over $(\Qc,\Tc)\in\Bs'$. However, for any fixed $(\Qc,\Tc)$ we can use the above-defined extensions (which satisfy (\ref{layerregest5})--(\ref{layerregest6}) on any interval of length at most $4$) to replace any time variable $s_1^p$ in any $(\Kc_\Qc^*)_{\mathrm{app}}$ and $(\Kc_\Tc^*)_{\mathrm{app}}$ by $t_1^p$; for the difference created in this process, we can still extract a factor of $|t_1^p-s_1^p|^{1-3\eta}$ and preserve the same weighted Fourier $L^1$ estimates. This again leads to assumption (c) in Proposition \ref{sumint}, just as in case (c) described above.

Finally, once all the time variables $s_1^p$ are replaced by $t_1^p$, we see that the contribution of $(\Qc,\Tc)$ (which is basically $(\Kc_\Qc^*)_{\mathrm{app}}(t_1^p,t_0^p,\cdot)\cdot(\Kc_\Tc^*)_{\mathrm{app}}(t_1^p,t_0^p,\cdot)$) will be zero unless $(\Qc,\Tc)\in\Bs''\subset\Bs\cap\Bs'$, which is defined in the same way as $\Bs$ and $\Bs'$ above, but with $(q,q')$ in Proposition \ref{layerreg1} replaced by $(\Lf_{\uf_4^p},\Lf_{\uf_2^p})$ for \emph{both $\Qc$ and $\Tc$}. This is because $t_1^p\in[\Lf_{\uf_4^p},\Lf_{\uf_4^p}+1]$, so for any $\Qc$ or $\Tc$ with root $\rf$ satisfying $\Lf_{\rf}\geq \Lf_{\uf_4^p}+1$ we must have $\widetilde{\Jc_\Qc}(t_1^p,t_0^p)=\widetilde{\Jc_\Tc}(t_1^p,t_0^p)=0$. Therefore, we can now reduce to a sum of form (\ref{replaceterm1}) but with $\Bs$ replaced by $\Bs''$, and (\ref{layerregest10}) implies that the leading term as described in case (d) above is again $0$. This completes the case $\Lf_{\uf_3^p}>\Lf_{\uf_4^p}$. 

With all the above discussions, and by applying again the same arguments in Step 3 exploiting the weighted Fourier $L^1$ bounds, we can finally reduce the target quantity $\sum_{\Gc_p}(\widetilde{\Jc}_{\Gc_p})^{\Ub^p}$ to a linear combination of expressions of form (\ref{linearcombI}) (with $\Ic$ as in (\ref{sumintI1})) such that at least one of assumptions (a)--(d) in Proposition \ref{sumint} holds for the bad vine $\Ub^p$ (and consequently for every bad vine). Here note that the requirements (i)--(ii) in Proposition \ref{sumint} for the relations between the variables $(\boldsymbol{x},\boldsymbol{k},\boldsymbol{\ell})$ are verified by using the induction hypothesis, while requirement (iii) follows by again applying Lemma 6.6 of \cite{DH21} as in Step 2 above. This completes the proof of (\ref{vineest2}).
\end{proof}
\section{Stage 1 reduction}\label{stage1red} We now describe the first reduction step in the proof of Proposition \ref{layergarden}. In this step we reduce any garden $\Gc$ first to its skeleton $\Gc_{\mathrm{sk}}$ as in Section \ref{regred}, and then to a smaller garden $\Gc_{\mathrm{sb}}$ by splicing a suitably chosen set of (CL) vine chains. This set in particular contains all the bad vines where the cancellation argument in \cite{DH23} is needed.
\subsection{Vine collections $\Vs$ and $\Vs_0$}\label{redsplice0} Recall we are estimating the sum (\ref{expgarden}) over all $\Gc\in\Gs_{p+1}^{\mathrm{tr}}$ (or $\Qc\in\Cs_{p+1}^{\mathrm{tr}}$) of fixed signature $(\zeta_1,\cdots,\zeta_{2R})$ and order $n$. The starting point is to divide this sum into subset sums, where each subset sum is taken over an \emph{equivalence class} of gardens $\Gc\in\Gs_{p+1}^{\mathrm{tr}}$; these equivalence classes are defined via LF twists for a suitable collection of (CL) vines $\Vb_j\subset\Mb(\Gc_{\mathrm{sk}})$ as in Definition \ref{lftwist}. We describe this collection of (CL) vines as follows.

Recall the notions of vines and vine-chains (VC's), hyper-vines (HV's) and hyper-vine-chains (HVC's), and ladders in Definition \ref{defvine}.
\begin{lem}\label{vinechainlem} Define a \emph{double-Vine (V)}, or DV for short, to be the union of two vines (V), see Figure \ref{fig:vines}, that share two common joints and no other common atoms (note this is not a vine-like object in Definition \ref{defvine}). Then, for any molecule $\Mb$, there is a unique collection $\Cs$ of disjoint atomic groups, such that each atomic group in $\Cs$ is an HV, VC, HVC or DV, and any vine-like object in $\Vb$ is a subset of some atomic group in $\Cs$.
\end{lem}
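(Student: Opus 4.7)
The plan is to construct $\Cs$ by a closure process on overlapping vine-like objects and then derive uniqueness from the resulting maximality. Starting with $\Cs_0$ equal to the collection of all vine-like objects (HV, VC, HVC) in $\Mb$, I iteratively replace any two elements $\Ab, \Bb \in \Cs_i$ that overlap non-trivially (neither is contained in the other) by their union $\Ab \cup \Bb$, obtaining $\Cs_{i+1}$. Since $\Mb$ is finite and each merge strictly decreases the number of elements, the process terminates at some $\Cs$ whose elements are pairwise disjoint, and every vine-like object of $\Mb$ is contained in some element of $\Cs$ by construction.

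The central step of the existence argument is to show that $\Ab \cup \Bb$ at each stage remains an HV, VC, HVC, or DV. When both $\Ab$ and $\Bb$ are vine-like they are blocks or hyper-blocks satisfying hypotheses (i)--(ii) of Lemma~\ref{disjointlem} (as noted in Definition~\ref{defvine}), so exactly one of its three scenarios (a), (b), (c) occurs. Scenario (b) gives a one-joint concatenation, which is a VC (if the merged object is a block) or an HVC (if a hyper-block) by Definition~\ref{defvine}. Scenario (c) gives a three-block concatenation $\Cb_0 \cup \Cb_1 \cup \Cb_2$, again a VC or HVC. Scenario (a) is the delicate one: $\Ab, \Bb$ are both blocks with $\sigma = 1$ sharing both joints and no other atom; inspection of Figure~\ref{fig:vines} shows that among the vines only Vine (V) has $\sigma = 1$, and a short case check rules out the possibility that $\Ab$ or $\Bb$ is a genuine VC (longer than a single Vine (V)) in this scenario, forcing both to be Vines (V) and $\Ab \cup \Bb$ to be a DV by definition.

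The hard part will be that once a DV appears in $\Cs_i$, it is not itself a block or hyper-block, so Lemma~\ref{disjointlem} no longer applies directly. I would handle this by a separate analysis: a DV $\Ab$ has all interior atoms of degree $4$ within $\Ab$ and its two joints of degree $3$, so any vine-like $\Bb$ overlapping $\Ab$ must meet $\Ab$ at one of these joints. Using the concrete structure of Vine (V) from Figure~\ref{fig:vines} together with the block hypothesis for $\Bb$, I expect to enumerate the possible overlaps and verify case by case that $\Ab \cup \Bb$ still lies in the allowed class, typically by showing that $\Bb$ is already contained in $\Ab$ unless $\Bb$ extends the structure through the joint, in which case the union is again a DV or an HVC obtained by concatenating $\Bb$ past the joint. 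This step requires a careful graph-theoretic check, and is the main technical obstacle.

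For uniqueness, let $\Cs$ and $\Cs'$ both satisfy the conclusion and fix $A \in \Cs$. Enumerate the ``atomic'' vine-like constituents of $A$: the ingredient vines of a VC/HVC, the adjoint vine of an HV, or the two constituent Vine (V)'s of a DV. Each such constituent $V$ is a vine-like object of $\Mb$, so it lies in some element of $\Cs'$; by disjointness of $\Cs'$ all constituents of $A$ lie in the same $A' \in \Cs'$, whence $A \subset A'$. Reversing the roles of $\Cs$ and $\Cs'$ gives $A' \subset A''$ for some $A'' \in \Cs$, and disjointness of $\Cs$ forces $A'' = A$; hence $A = A'$ and $\Cs = \Cs'$, completing the proof.
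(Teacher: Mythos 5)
The paper does not prove this lemma in-house; it defers entirely to Lemma 8.1 of \cite{DH23}, so there is no internal argument to compare against. Your overall architecture — iterated merging of overlapping vine-like objects, with Lemma \ref{disjointlem} controlling each merge, followed by a maximality argument for uniqueness — is the natural one, and your uniqueness paragraph is essentially complete: the chaining of constituents through shared joints plus disjointness of $\Cs'$ does force $A\subset A'$, and the double inclusion closes correctly. Scenario (a) is also handled correctly: since any VC concatenated from at least two vines has $\sigma=0$ by Definition \ref{defblock}, a block with $\sigma=1$ that is vine-like must be a single Vine (V), so the union in scenario (a) is a DV.

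The genuine gap is the step you yourself flag: once a DV enters the collection, it is neither a block nor a hyper-block, so Lemma \ref{disjointlem} — the only tool your argument actually invokes — no longer applies, and everything you say about this case ("I would handle this by a separate analysis\dots I expect to enumerate\dots") is a plan rather than a proof. This is not a peripheral case; it is precisely the reason the statement must enlarge the allowed class from \{HV, VC, HVC\} to include DVs, so a complete proof must show that any vine-like object meeting a DV non-trivially is in fact contained in it (or that the union stays in the class). A concrete observation that makes this check tractable, and which you should supply: if the two Vines (V) forming a DV did \emph{not} share the joint--joint bond, every atom of the DV would have degree $4$ inside it, making it a saturated component, which Definition \ref{defmol} forbids; hence the two vines share that bond, each joint has degree exactly $3$ within the DV, and the DV communicates with the rest of $\Mb$ through at most one bond per joint. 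From this one can run the case check (any block entering through a joint either violates hypothesis (i) of Lemma \ref{disjointlem} or is absorbed). Two smaller unaddressed points: in scenario (c) you must also verify that the common block $\Cb_1$ is compatible with the vine decompositions of $\Ab$ and $\Bb$, i.e. that $\Cb_0\cup\Cb_1\cup\Cb_2$ is genuinely a concatenation of \emph{vines} and not merely of blocks; and your merging rule never removes an element properly contained in another (your $\Cs_0$ contains, e.g., each ingredient vine of every VC), so a final pass discarding non-maximal elements is needed before the collection is actually disjoint.
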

\begin{proof} See Lemma 8.1 of \cite{DH23}.
\end{proof}
\begin{df}[The collection $\Vs$ \cite{DH23}]\label{defcong}Let $\Gc$ be a garden with skeleton $\Gc_{\mathrm{sk}}$,  and let $\Cs$ be defined for the molecule $\Mb(\Gc_{\mathrm{sk}})$ by Lemma \ref{vinechainlem}. Define a collection $\Vs$ of (CL) vines as follows: for each \emph{non-root VC} in $\Cs$ (i.e. a VC that is not a root block, see Proposition \ref{cnblock}) we add to $\Vs$ all its vine ingredients that are (CL) vines. For each HVC  and root VC in $\Cs$, we add to $\Vs$ \emph{all but one} of its vine ingredients, such that (a) if there is a (CN) vine then we only skip this one, and (b) if all vines are (CL) vines then we only skip the ``top" vine whose $\uf_1$ node is the ancestor of all other $\uf_1$ nodes (as branching nodes of $\Gc_{\mathrm{sk}}$, in the notation of Proposition \ref{block_clcn}). We do not add anything that comes from any HV or DV in $\Cs$.

Note that the vines in $\Vs$ can be arranged into finitely many disjoint VC's, as is consistent with Definition \ref{lftwist}. We then define any couple $\Gc'$ to be \emph{congruent} to $\Gc$, if $\Gc'$ is an LF twist of $\Gc$ at vines in $\Vs$ (i.e. we perform a unit LF twist at each vine in $\Vs$ that is core and satisfies the assumptions in Definition \ref{lftwist}). Clearly, performing a full twist does not affect the molecule $\Mb(\Gc_{\mathrm{sk}})$ nor the choice of vines in $\Vs$, and thus congruence is an equivalence relation.
\end{df}
\begin{df}[Large and small gaps \cite{DH23}]\label{defdiff} Given any molecule $\Mb$ and block $\Bb\subset\Mb$, let the four bonds in $\Bb$ at the two joints be $\ell_1,\ell_2\in\Bb$ at one joint, and $\ell_3,\ell_4\in\Bb$ at the other. Then for any decoration $(k_\ell)$ we have $k_{\ell_1}-k_{\ell_2}=\pm(k_{\ell_3}-k_{\ell_4}):=r$; we call this vector the \emph{gap} of $\Bb$ relative to this decoration. Note that once the parameters $(c_v)$ of a decoration are fixed as in Definition \ref{defdecmol}, then this $r$ can be expressed as a function of the vectors $k_{\ell_j^*}$, where $\ell_j^*$ runs over all bonds connecting a given joint of $\Bb$ to atoms \emph{not} in $\Bb$. If $\Bb$ is concatenated by blocks $\Bb_j$, then all $\Bb_j$ must have the same gap as $\Bb$. We also define the gap of a hyper-block to be the gap of its adjoint block.

More generally, if $v$ is an atom and $\ell_1,\ell_2\leftrightarrow v$ are two bonds with opposite directions, then we define the \emph{gap} of the triple $(v,\ell_1,\ell_2)$ relative to a given decoration as $r:=k_{\ell_1}-k_{\ell_2}$. In particular the gap of any block or hyper-block equals a suitable gap at either of its joints. Next, for any ladder of length $\geq 1$ (see Definition \ref{defvine} and Figure \ref{fig:vines}), the difference $k_\ell-k_{\ell'}$ for any pair of parallel single bonds $(\ell,\ell')$ must be equal (up to a sign change), which we also define to be the \emph{gap} of the ladder. In particular, if $\Vb$ is a vine (or VC) with gap $r$, then for any ladder contained in $\Vb$ (inserted between parallel dashed bonds of the same color in Figure \ref{fig:vines}), the gap of this ladder is either $\pm r$ or $0$. Finally, for all the gaps defined above, we say it is \emph{small gap, large gap or zero gap} (writing SG, LG and ZG for short) if $0<|r|\leq L^{-\gamma+\eta}$, $|r|> L^{-\gamma+\eta}$, or $r=0$.
\end{df}
\subsection{Reduction to $\Gc_{\mathrm{sb}}$ and $\Mb(\Gc_{\mathrm{sb}})$}\label{redsplice} By Definition \ref{defcong} and Proposition \ref{lftwistprop} with the assumption $n\leq N_{p+1}^4$ in the statement of Proposition \ref{layergarden}, we know that the sum (\ref{expgarden}) can be divided into subset sums, where each subset sum is taken over one single congruence class for $\Gc$.

Let us fix a single subset sum for a certain congruence class. Note that we are summing $\Kc_\Gc$ over $\Gc$, while each $\Kc_\Gc$ is a sum of form (\ref{redkg}) over decorations of $\Gc_{\mathrm{sk}}$, which can be identified with decorations of $\Mb(\Gc_{\mathrm{sk}})$ via Definition \ref{defdecmol}; moreover the decorations of congruent gardens are in one-to-one correspondence with each other and preserves the gap of each vine-like object. Thus, let $\Cs$ be as in Lemma \ref{vinechainlem}, then for each vine-like object in $\Cs$, we can specify whether it has SG, LG or ZG under the decoration (each such specification defines \emph{an extra restriction} on the decoration, which will be summarized in the factor $\Zc_{\Is_{\mathrm{sb}}}$ in the proof below). Then, we can further divide the given subset sum into at most $3^n$ terms, such that each vine-like object in $\Cs$ is specified to be either SG, LG or ZG in each term. Let $\Vs_0$ be the collection of VC's in $\Vs$ that are SG (this collection is not affected by any LF twist at vines in $\Vs$). Note that $\Gc$ runs over a congruence class, which is defined by LF twists at vines in $\Vs$; if we strengthen the equivalence relation by allowing only LF twists at vines in $\Vs_0$, then we can further divide the sum into at most $2^n$ terms, each of which is a sum over a new equivalence class. As such, we only need to consider one of these new terms, which we shall refer to as $\Ks_{p+1,n}^{\mathrm{eqc}}$ for below.

Let all the VC's in the collection $\Vs_0$ be $\Ub_1,\cdots,\Ub_{q_{\mathrm{sk}}}$. For each $\Ub_j$, denote the corresponding nodes $\uf_1,\uf_{22}$ etc. (as in Proposition \ref{block_clcn}) by $\uf_1^j,\uf_{22}^j$ etc.; also denote $\Gc_{\mathrm{sk}}[\Ub_j]\backslash\{\uf_1^j\}:=\Uc_j$. The term $\Ks_{p+1,n}^{\mathrm{eqc}}$ is the sum of $\Kc_\Gc$ (with the extra SG, LG or ZG restrictions on the decoration defined above) over all gardens $\Gc$ in a new equivalence class, which is defined by LF twists at all the vines in $\Ub_j$. Now for each $1\leq j\leq q_{\mathrm{sk}}$, we consider the expression (\ref{redkg}) of $\Kc_\Gc$ (with extra restrictions), and the part of it \emph{involving the realization} $\Gc_{\mathrm{sk}}[\Ub_j]$, which is precisely defined as $\Kc_{(\Gc)}^{\Ub_j}$ in Section \ref{regred}. We make three observations, which are easily verified using relevant definitions and structure of the molecule $\Mb(\Gc_{\mathrm{sk}})$:
\begin{itemize}
\item The expression $\Kc_{(\Gc)}^{\Ub_j}$ is not affected by LF twists at vines in $\Ub_i\,(i\neq j)$.
\item For any $(i,j)$, the variables \[(t_{\uf_1^i},t_{\uf_{21}^i},t_{\uf_{22}^i},k_{\uf_1^i},k_{\uf_{11}^i},k_{\uf_{21}^i},k_{\uf_{22}^i})\] which the function $\Gc_{\mathrm{sk}}[\Ub_i]$ depends on, do not overlap with the variables $k_\nf$ for $\nf\in\Uc_j$, nor the variables $t_\nf$ for branching nodes $\nf\in\Uc_j$.
\item Similarly, the extra SG, LG and ZG restrictions defined above can also be expressed using the variables that do not overlap with any $k_\nf$ nor $t_\nf$ for $\nf\in\Uc_j$.
\end{itemize}

Let $\Gc_{\mathrm{sb}}$ be the result of splicing all vines in $\Vs_0$ from $\Gc_{\mathrm{sk}}$ as in Proposition \ref{block_clcn}; note that $\Gc_{\mathrm{sb}}$ is a garden and is not affected by any LF twist for $\Gc$ (see Remark \ref{twistexplain} (c)). It has a pre-layering inherited from $\Gc_{\mathrm{sk}}$ (i.e. when splicing each vine or VC as in Proposition \ref{block_clcn}, all the nodes in the post-splicing garden, including $(\uf_1,\uf_{11},\uf_{21},\uf_{22})$ in Proposition \ref{block_clcn}, are in the same layer as in the pre-splicing garden). By the above observations, and note that for each $j$ we are also summing $\Kc_{(\Gc)}^{\Ub_j}$ for $\Gc$ being an LF twist at $\Ub_j$, we can rewrite
\begin{multline}\label{redvine}
\Ks_{p+1,n}^{\mathrm{eqc}}(k_1,\cdots,k_{2R})=\bigg(\frac{\delta}{2L^{d-\gamma}}\bigg)^{n_{\mathrm{sb}}}\zeta(\Gc_{\mathrm{sb}})\sum_{\Is_{\mathrm{sb}}}\widetilde{\epsilon}_{\Is_{\mathrm{sb}}}\cdot \Zc_{\Is_{\mathrm{sb}}}\cdot\int_{\Ic_{\mathrm{sb}}}\prod_{\nf\in\Nc_{\mathrm{sb}}}e^{\pi i\zeta_\nf\cdot\delta L^{2\gamma}\Omega_\nf t_\nf}\,\mathrm{d}t_\nf\\
\times\prod_{\lf\in\Lc_{\mathrm{sb}}}^{(+)}\Kc_{\Qc_{(\lf,\lf')}}^*(t_{\lf^{\mathrm{pr}}},t_{(\lf')^{\mathrm{pr}}},k_\lf)\prod_{\mf\in\Nc_{\mathrm{sb}}}\Kc_{\Tc_{(\mf)}}^*(t_{\mf^{\mathrm{pr}}},t_\mf,k_\mf)\prod_{j=1}^{q_{\mathrm{sk}}}\widetilde{\Kc}^{\Ub_j}(t_{\uf_1^j},t_{\uf_{21}^j},t_{\uf_{22}^j},k_{\uf_1^j},k_{\uf_{11}^j},k_{\uf_{21}^j},k_{\uf_{22}^j}).
\end{multline} Here in (\ref{redvine}), all the notions $n_{\mathrm{sb}}$, $\Is_{\mathrm{sb}}$ and $\Ic_{\mathrm{sb}}$ etc. are just like in (\ref{redkg}) but for the garden $\Gc_{\mathrm{sb}}$, except that (i) the factor $\widetilde{\epsilon}_{\Is_{\mathrm{sb}}}$ contains the product of factors on the right hand side of (\ref{defepscoef}) \emph{only for} $\nf\not\in\{\uf_1^j:1\leq j\leq q_{\mathrm{sk}}\}$, and (ii) the extra factors $\widetilde{\Kc}^{\Ub_j}$ are the ones defined in (\ref{vineest1}) with $\Ub$ replaced by $\Ub_j$, and $\Zc_{\Is_{\mathrm{sb}}}$ is the indicator function of all the extra SG, LG and ZG restrictions defined above (this $\Zc$ depends only on $k[\Gc_{\mathrm{sb}}]$). Note that the factor $e^{-\pi i\cdot\delta L^{2\gamma}t_{\uf_1}\Gamma}$ in (\ref{vineest1}) exactly cancels the factor $e^{\pi i\zeta_\nf\cdot\delta L^{2\gamma}\Omega_\nf t_\nf}$ in (\ref{redvine}) for $\nf=\uf_1^j$.

We perform some further reductions to (\ref{redvine}). First, by the decay properties of $\Kc_{\Qc_{(\lf,\lf')}}^*$ in Propositions \ref{proplayer1} and \ref{proplayer2}, we may decompose (\ref{redvine}) by restricting each $k_\lf$ to a unit ball $|k_\lf-k_\lf^0|\leq 1$; we only need to treat a single term of this form, as summability is provided by the above decay properties. By Lemma \ref{treelemma}, with a loss of at most $C_0^n$, we may also restrict $k_\nf$ to a unit ball $|k_\nf-k_\nf^0|\leq 1$ for each \emph{node} $\nf$.

Second, by the weighted Fourier $L^1$ bounds for $\Kc_{\Qc_{(\lf,\lf')}}^*$ and $\Kc_{\Tc_{(\mf)}}^*$ (see Propositions \ref{proplayer1} and \ref{proplayer2}), and the same bounds for $\widetilde{\Kc}^{\Ub_j}$ with each vector variable $k_{\uf_1^j}$ etc. restricted to a unit ball (see Proposition \ref{vineest}), we can decompose these functions using time Fourier transform, and reduce (\ref{redvine}) to the following expression (with some fixed parameters $\xi$ and $\xi_\nf$)
\begin{align}\label{redvine2}
\widetilde{\Ks}_{p+1,n}^{\mathrm{eqc}}&:=\zeta(\Gc_{\mathrm{sb}})\cdot e^{\pi i\xi (p+1)}\cdot \big(\max_{\nf\in\Nc_{\mathrm{sb}}}(\langle \xi\rangle+\langle \xi_\nf\rangle)\big)^{-2\eta^8}\cdot(C_1\sqrt{\delta})^{n-n_{\mathrm{sb}}}\cdot L^{-\eta^{8}g_{\mathrm{sk}}-\eta^2b_{\mathrm{sk}}+\eta^7q_{\mathrm{sk}}}\cdot \Ks_{\Gc_{\mathrm{sb}}},\nonumber\\
\Ks_{\Gc_{\mathrm{sb}}}&:=\bigg(\frac{\delta}{2L^{d-\gamma}}\bigg)^{n_{\mathrm{sb}}}\sum_{\Is_{\mathrm{sb}}}\widetilde{\epsilon}_{\Is_{\mathrm{sb}}}\cdot \Zc_{\Is_{\mathrm{sb}}}\cdot\prod_{\mf}\Kc_{(\mf)}(k_\mf)\cdot\Kc^{\dagger}(k[\Wc])\int_{\Ic_{\mathrm{sb}}}\prod_{\nf\in\Nc_{\mathrm{sb}}}e^{\pi i\zeta_\nf\cdot\delta L^{2\gamma}\Omega_\nf t_\nf} \cdot e^{\pi i\xi_\nf t_\nf}\,\mathrm{d}t_\nf.
\end{align} Here $\Kc_{(\mf)}$ is a function of $k_\mf$ for each node $\mf$ (branching node or leaf), which is supported in the unit ball $|k_\mf-k_\mf^0|\leq 1$, and has all derivatives up to order $30d$ bounded by $1$. The function $\Zc_{\Is_{\mathrm{sb}}}$ is the same as above, and $\Kc^\dagger$ is a function of $k[\Wc]$ where $\Wc=\{\uf_1^j,\uf_{11}^j,\uf_{21}^j,\uf_{22}^j:1\leq j\leq q_{\mathrm{sk}}\}$. This function $\Kc^\dagger$ is supported in the unit balls $|k_{\uf_1^j}-k_{\uf_1^j}^0|\leq 1$ etc., and is bounded by $1$ (without derivatives). The quantity $g_{\mathrm{sk}}$ represents the total incoherency index for all the regular couples $\Qc_{(\lf,\lf')}$ and regular trees $\Tc_{(\mf)}$ for $\lf,\lf'\in\Lc_{\mathrm{sk}}$ and $\mf\in\Nc_{\mathrm{sk}}$, plus the total incoherency index for all the vines in $\Vs_0$. The quantity $b_{\mathrm{sk}}$ represents the number of \emph{bad} vines in $\Vs_0$, and $q_{\mathrm{sk}}$ is the number of VC's in $\Vs_0$ as above.
\subsection{Analysis of ladders: $L^1$ bound and extra decay} For simplicity, we denote $\Mb(\Gc_{\mathrm{sb}}):=\Mb_{\mathrm{sb}}$ below. To estimate $\Ks_{\Gc_{\mathrm{sb}}}$ in (\ref{redvine2}), we need to deal with the ladders in the molecule $\Mb_{\mathrm{sb}}$, and for this we need to interpolate between two arguments, which will be presented in Section \ref{redcount} below. Both arguments involve the process of representing (\ref{redvine2}) from the point of view of the molecule $\Mb_{\mathrm{sb}}$, which we now describe.

Recall that we identify $(k_1,\cdots,k_{2R})$-decorations $\Is_{\mathrm{sb}}$ of $\Gc_{\mathrm{sb}}$ with $(k_1,\cdots, k_{2R})$-decorations $\Os_{\mathrm{sb}}$ of $\Mb_{\mathrm{sb}}$ via Definition \ref{defdecmol}. For $\nf=\nf(v)$ the corresponding values of $\zeta_\nf\Omega_\nf$ and $-\Gamma_v$ differ by a constant $\Gamma_v^0$ which depends only on $(k_1,\cdots,k_{2R})$, see (\ref{defomegadec}) and (\ref{molegammav}). Also by definition, the time domain $\Ic_{\mathrm{sb}}$ as in (\ref{timegarden}) can be written as the set $\Oc_{\mathrm{sb}}$ which contains the tuples $(t_v)$ for atoms $v\in\Mb_{\mathrm{sb}}$ satisfying that (i) $t_v\in[\Lf_v,\Lf_v+1]$ where $\Lf_v$ is the layer of $v$ as in Definition \ref{defcplmol}, and (ii) $t_{v_1}>t_{v_2}$ if there is a PC bond between $v_1$ and $v_2$ with $v_2$ labeled C (i.e. $\nf(v_1)$ is the parent of $\nf(v_2)$, see Definition \ref{defcplmol}).

Next, The parameters $\xi_\nf$ are renamed as $\xi_v$, the functions $\Kc_{(\mf)}(k_\mf)$ can be written as $\Kc_{(\ell)}(k_\ell)$ for all bonds $\ell$ which satisfy the same assumptions (such as support in $|k_\ell-k_\ell^0|\leq 1$ with fixed values of $k_\ell^0$ etc.), and the function $\Kc^\dagger$ can be written as a uniformly bounded function of the variables $k[\Wb]$ (which we still denote by $\Kc^\dagger$), where $\Wb$ is the set of bonds $\ell\leftrightarrow v$ for all atoms $v\in\Mb_{\mathrm{sb}}$ such that $\nf(v)\in\{\uf_1^j:1\leq j\leq q_{\mathrm{sk}}\}$. We may also define $\widetilde{\epsilon}_{\Os_{\mathrm{sb}}}$ to be equal to $\widetilde{\epsilon}_{\Is_{\mathrm{sb}}}$ in (\ref{redvine2}); in the molecule setting it can be written as a product of $\epsilon_v$ for all atoms $v\in\Mb_{\mathrm{sb}}$ such that $\nf(v)\not\in\{\uf_1^j:1\leq j\leq q_{\mathrm{sk}}\}$, where each $\epsilon_v$ is a suitable function of $(k_\ell:\ell\leftrightarrow v)$ valued in $\{-1,0,1\}$. Similarly we define $\Zc_{\Os_{\mathrm{sb}}}$ to be equal to $\Zc_{\Is_{\mathrm{sb}}}$ in (\ref{redvine2}), which can be written as a product of $\Zc_v$ for some of the atoms $v\in\Mb_{\mathrm{sb}}$. Here each $\Zc_v$ is a suitable function of $(k_{\ell_1},k_{\ell_2})$ for some opposite-direction bonds $\ell_1,\ell_2\leftrightarrow v$, which has value in $\{0,1\}$ depending on the SG, LG and ZG properties of $v$.

With all the above interpretations, we can rewrite (\ref{redvine2}) as
\begin{equation}\label{redvine3}
\Ks_{\Gc_{\mathrm{sb}}}:=\bigg(\frac{\delta}{2L^{d-\gamma}}\bigg)^{n_{\mathrm{sb}}}\sum_{\Os_{\mathrm{sb}}}\widetilde{\epsilon}_{\Os_{\mathrm{sb}}}\cdot\Zc_{\Os_{\mathrm{sb}}}\cdot\prod_{\ell}\Kc_{(\ell)}(k_\ell)\cdot\Kc^{\dagger}(k[\Wb])\int_{\Oc_{\mathrm{sb}}}\prod_{v\in\Mb_{\mathrm{sb}}}e^{\pi i\delta L^{2\gamma}(-\Gamma_v+\Gamma_v^0)t_v}e^{\pi i\xi_v t_v}\,\mathrm{d}t_v.
\end{equation}

Our first argument depends on an estimate which is in the same spirit as Proposition \ref{vineest} (but technically much easier as it does not involve Fourier $L^1$ norms).
\begin{prop}\label{ladderl1new} Let the molecule $\Mb_{\mathrm{sb}}$ be as in Section \ref{redsplice}. Let $\Lb$ be a ladder in $\Mb_{\mathrm{sb}}$, which has length $m$ and incoherency index $g$, such that $\nf(v)\not\in\{\uf_1^j:1\leq j\leq q_{\mathrm{sk}}\}$ for any atom $v\in\Lb$. Let $v_j$ be the (at most $4$) atoms that are not in $\Lb$ but connected to atoms in $\Lb$ by a bond, and let $\ell_j$ be the (at most $4$) bonds that connect atoms in $\Lb$ to atoms not in $\Lb$. Define the expression \[\Ks^\Lb=\Ks^\Lb(t_{v_1},t_{v_2},t_{v_3},t_{v_4},k_{\ell_1},k_{\ell_2},k_{\ell_3},k_{\ell_4})\] similar to (\ref{redvine3}), but with the following changes:
\begin{itemize}
\item We replace the power $(\delta/(2L^{d-\gamma}))^{n_{\mathrm{sb}}}$ by $(\delta/(2L^{d-\gamma}))^{2m+1}$ and remove the $\Kc^\dagger$ factor.
\item In the summation $\sum_{\Os_{\mathrm{sb}}}(\cdots)$, we only sum over the variables $k_\ell$ for $\ell\in\Lb$ (i.e. $\ell$ connecting two atoms in $\Lb$), and treat the other $k_\ell$ variables as fixed. We also replace $\widetilde{\epsilon}_{\Os_{\mathrm{sb}}}$ and $\Zc_{\Os_{\mathrm{sb}}}$ by the product of $\epsilon_v$ and $\Zc_v$ (defined above) over all $v\in \Lb$.
\item In the integration $\int_{\Oc_{\mathrm{sb}}}(\cdots)$, we only integrate over the variables $t_v$ for $v\in\Lb$, and treat the other $t_v$ variables as fixed.
\item In the product $\prod_{v\in\Mb_{\mathrm{sb}}}(\cdots)$, we only include those factors where $v\in\Lb$; in the product $\prod_{\ell}$, we only include those factors where $\ell\in\Lb$.
\end{itemize}
It is easy to check that the function $\Ks^\Lb$ defined this way indeed only depends on the time variables $t_{v_1},\cdots,t_{v_4}$ and vector variables $k_{\ell_1},\cdots,k_{\ell_4}$. Then, uniformly in all $t_{v_j}$ and $k_{\ell_j}$ variables, we have \begin{equation}\label{redladder2}
|\Ks^{\Lb}|\lesssim_1 (C_1\delta)^{m}L^{-(\gamma_1-\sqrt{\eta})g+20d}.
\end{equation}
\end{prop}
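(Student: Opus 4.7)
The plan is to reduce the estimate to a tensor product of the elementary sum-integral bounds from Lemmas \ref{lemlayer1} and \ref{lemlayer2}, following the template of Step 2 of the proof of Proposition \ref{vineest} (the $L^\infty$ estimate). The proof will be considerably simpler than that of Proposition \ref{vineest} because only a pointwise bound on $\Ks^\Lb$ is needed: there are no Fourier $L^1$ weighted norms to control, and no twisting/cancellation between gardens to exploit.

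First I would localize each $k_\ell$ with $\ell\in\Lb$ to a unit ball $|k_\ell-k_\ell^0|\leq 1$, using that $\Kc_{(\ell)}$ is $C^{30d}$-bounded by $1$ and supported in a unit ball (costing only a constant per bond). The external function $\Kc^\dagger(k[\Wb])$ is $L^\infty_k$-bounded by $1$, so no Fourier decomposition is required. The factors $\widetilde\epsilon_{\Os_{\mathrm{sb}}}$ and $\Zc_{\Os_{\mathrm{sb}}}$ are pointwise bounded by $1$ and can be dropped. Next, using the rigid ladder structure of $\Lb$, I would pair its atoms into groups $(v_{2j+1},v_{2j+2})$ connected by a double bond and introduce the natural re-parametrization $(x_j,y_j)\in\Zb_L^{2d}$ as in Step 2(A) of the proof of Proposition \ref{vineest}. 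This is an integer-coefficient, volume-preserving linear change of variables for the bond decorations within each pair, and satisfies
\[
-\Gamma_{v_{2j+2}} = 2\langle x_j,y_j\rangle,\qquad -\Gamma_{v_{2j+1}} = -2\langle x_j,y_j\rangle + 2\widetilde r\cdot\zeta_j,
\]
where $\widetilde r\in\{0,\pm r\}$ is the gap of the ladder and $\zeta_j$ is an affine combination of $(x_j,y_j,\widetilde r)$, cf.\ \eqref{Omegax_0ladder}; a possible boundary atom of $\Lb$ left unpaired by this procedure contributes at most a trivial $L^d$ bound, absorbed into the $L^{20d}$ loss.

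After the change of variables the expression factors over pairs, and for each pair we must estimate
\[
\Pc_j := \int_{\Bc_j}\mathrm{d}t_{2j+1}\mathrm{d}t_{2j+2}\bigg|\sum_{(x_j,y_j)}\chi_0(x_j-a_j)\chi_0(y_j-b_j)\,e^{\pi i\cdot\delta L^{2\gamma}(t_{2j+2}-t_{2j+1})\langle x_j,y_j\rangle+i(\cdots)}\bigg|,
\]
where $\Bc_j=[\Lf_{v_{2j+1}},\Lf_{v_{2j+1}}+1]\times[\Lf_{v_{2j+2}},\Lf_{v_{2j+2}}+1]$ and the suppressed phases are linear in $(x_j,y_j)$ (modulations that do not affect the moduli of exponential sums). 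Substituting $v=\delta L^{2\gamma}(t_{2j+2}-t_{2j+1})$ and splitting into $|v|\leq L$ (Poisson summation followed by Lemma \ref{lemlayer1}) and $|v|>L$ (Lemma \ref{lemlayer2}) yields $\Pc_j\lesssim_0\delta^{-1}L^{2(d-\gamma)}$, exactly as in \eqref{incohproof13}--\eqref{incohproof14}. Moreover, when the pair is \emph{incoherent}, i.e.\ $\Lf_{v_{2j+1}}\neq\Lf_{v_{2j+2}}$, the integer-layer restriction forces $|t_{2j+2}-t_{2j+1}|\geq 1$, so the same Poisson/stationary-phase analysis gives the improved bound $\Pc_j\lesssim_2\delta^{-1}L^{2(d-\gamma)}\cdot L^{-(\gamma_1-\sqrt\eta)}$, as in \eqref{incohproof15}. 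Multiplying these estimates over all pairs of $\Lb$ (with $g$ of them incoherent) and combining with the prefactor $(\delta/(2L^{d-\gamma}))^{2m+1}$ together with the trivial bound on the boundary atom yields \eqref{redladder2}.

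The main (and essentially only) technical point to watch is the careful bookkeeping of the exponents of $\delta$ and $L$ in the prefactor against the per-pair sum-integrals, together with the lone boundary atom's contribution; together they produce the announced $L^{20d}$ loss. No delicate cancellation between terms is required, which is the crucial simplification relative to the vine case treated in Proposition \ref{vineest}.
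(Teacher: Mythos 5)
Your proposal follows essentially the same route as the paper's own (sketched) proof: re-parametrize the ladder decoration into pairs $(x_j,y_j)$ as in Step 2(A) of the proof of Proposition \ref{vineest}, bound each pair's sum--integral pointwise by $\delta^{-1}L^{2(d-\gamma)}$ (with an extra $L^{-(\gamma_1-\sqrt{\eta})}$ for each incoherent pair) exactly as for the quantities $\Mc_j^i$ in Step 2 of the proof of Proposition \ref{sumint}, and bound the remaining degrees of freedom trivially. No cancellation is needed, as you say.

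Two points need correcting. First, your justification of the incoherency gain is wrong: if $\Lf_{v_{2j+1}}\neq\Lf_{v_{2j+2}}$ the two time variables lie in \emph{distinct} unit intervals, but these intervals may be adjacent, so $|t_{2j+2}-t_{2j+1}|$ can be arbitrarily small and the claim $|t_{2j+2}-t_{2j+1}|\geq 1$ is false. The actual mechanism behind (\ref{incohproof15}) is that a fixed integer $q$ lies \emph{between} the two variables, whence $|t_{2j+2}-q|\leq|t_{2j+2}-t_{2j+1}|=(\delta L^{2\gamma})^{-1}|v|$; integrating $t_{2j+2}$ first over this short window and combining with the $\langle v\rangle^{-d}$ decay from Lemma \ref{lemlayer1} for $|v|\leq L$ (and with Lemma \ref{lemlayer2} for $|v|\geq L$) is what produces the factor $L^{-\gamma_1+\eta}$. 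Since you ultimately invoke (\ref{incohproof15}) as a black box the conclusion stands, but the argument as you wrote it would fail for adjacent layers. Second, a ladder of length $m$ has $2m+2$ atoms, all of which are matched by its $m+1$ double bonds, so there is no ``unpaired boundary atom''; the leftover contribution is the \emph{last double bond}, whose decoration is over-determined by the four fixed external bonds (only one free vector remains, and its phase carries no usable bilinear oscillation), so only $m$ of the $m+1$ pairs receive the two-vector exponential-sum estimate while the last is summed trivially. Either way the surplus is absorbed into $L^{20d}$ and the power counting closes as you indicate.
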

\begin{proof} We shall only sketch the proof here, as the argument is a combination of the simpler parts of the proofs of Propositions \ref{sumint} and \ref{vineest}. First, note that the gap associated with the ladder is fixed by the difference $r:=k_{\ell_1} -k_{\ell_2}$. Next, we argue as in Step 2 of the proof of Proposition \ref{vineest} to re-parametrize the sum over the decoration as we did for the ladder component of Vine (II), see Figure \ref{fig:vineAnnotated} (A). This leads to the new set of variables $(x_j,y_j)_{1\leq j\leq m}$, where each $(x_j,y_j)$ corresponds to a double bond in the ladder connecting two atoms $v$ and $w$, such that $\Gamma_v=2x_j\cdot y_j$ and $\Gamma_w=-2x_j \cdot y_j+2r\cdot \zeta_j$. Here $\zeta_j=\alpha_jx_j+\beta_j y_j+\theta_j r$ for some $\alpha_j, \beta_j, \theta_j\in \{0, \pm 1\}$ with $\alpha_j^2+\beta_j^2\neq 0$.

Recall that in Step 2 of the proof of Proposition \ref{sumint} we estimated each quantity $\Mc_j^i$ pointwise, leading to the final $L^\infty$ bound of $\Ic$. Here we can do exactly the same thing, and estimate the sum in $(x_j, y_j)$ by either $\delta^{-1}L^{2(d-\gamma)}$ (if the corresponding atoms $v$ and $w$ are in the same layer) or $\delta^{-1}L^{2(d-\gamma)}L^{-(\gamma_1-\sqrt{\eta})}$ (if $v$ and $w$ are in different layers). In the end we are left with the last remaining sum, which is taken over the decoration of the last double bond in the ladder, and can be bounded trivially by $L^{2d}$. Putting together, this then implies (\ref{redladder2}).
\end{proof}
Our second argument depends on the following $L^1$ estimate for time integral, which is basically the same as in \cite{DH21,DH23} but with some differences due to layerings.
\begin{prop}\label{ladderl1old} Let the molecule $\Mb_{\mathrm{sb}}$ be as in Section \ref{redsplice}. Consider all the maximal ladders in $\Mb_{\mathrm{sb}}$, which we denote by $\Lb_j\,(1\leq j\leq q_{\mathrm{sb}})$ with length $z_j$. Let $\rho_{\mathrm{sb}}:=q_{\mathrm{sb}}+m_{\mathrm{sb}}$, where $m_{\mathrm{sb}}$ is the number of atoms not in any of these maximal ladders. Note the decoration $\Os_{\mathrm{sb}}$ in $\Ks_{\Gc_{\mathrm{sb}}}$ in (\ref{redvine3}) satisfies $|k_\ell-k_\ell^0|\leq 1$ and $\widetilde{\epsilon}_{\Os_{\mathrm{sb}}}\cdot\Zc_{\Os_{\mathrm{sb}}}\neq 0$; we fix a dyadic number $P_j\in [L^{-1},1]\cup\{0\}$ for each $1\leq j\leq q_{\mathrm{sb}}$, and further restrict $\Os_{\mathrm{sb}}$ such that the gap $r_j$ of the ladder $\Lb_j$ satisfies $|r_j|\sim_0 P_j$ (or $|r_j|\gtrsim_01$ if $P_j=1$).

Consider the integral $\int_{\Oc_{\mathrm{sb}}}(\cdots)$ in (\ref{redvine3}), which is a function of $\alpha_v:=\delta L^{2\gamma}(-\Gamma_v+\Gamma_v^0)+\xi_v$ where $v\in\Mb_{\mathrm{sb}}$, and denote it by $\Bb(\alpha[\Mb_{\mathrm{sb}}])$. Let $\lfloor\alpha_v\rfloor:=\sigma_v$, and we restrict $\Gamma_v$ (and hence $\alpha_v$ and $\sigma_v$) to those that \emph{may occur} for \emph{some} decoration $\Os_{\mathrm{sb}}$ satisfying all the above restrictions, including the further restrictions on gaps for $\Lb_j$. Then, under these assumptions, we have
\begin{equation}\label{ladderl1}
\sum_{\sigma[\Mb_{\mathrm{sb}}]}\sup_{\alpha[\Mb_{\mathrm{sb}}]:|\alpha_v-\sigma_v|\leq 1}|\Bc(\alpha[\Mb_{\mathrm{sb}}])|\lesssim_1(C_1\delta^{-1/2-5\nu})^{n_{\mathrm{sb}}}\cdot L^{\delta^{5\nu}\rho_{\mathrm{sb}}} \prod_{j=1}^{q_{\mathrm{sb}}}\Xf_j^{z_j},
\end{equation} where $\Xf_j:=\min((\log L)^2,1+\delta L^{2\gamma}P_j)$.
\end{prop}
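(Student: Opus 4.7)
The plan is to adapt the $L^1$ time integral estimate from \cite{DH21, DH23} (Proposition 8.6 of \cite{DH21} / Proposition 6.9 of \cite{DH23}) to the layered molecule $\Mb_{\mathrm{sb}}$. The essential structure is the same: ladders produce correlated resonance variables that force the gap extraction, while the layering merely restricts each $t_v$ to a unit interval $[\Lf_v, \Lf_v+1]$, which turns out to be at least as favorable as the unlayered setting.

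First I would unpack $\mathbf{1}_{\Oc_{\mathrm{sb}}}$ as a product of smooth unit-interval cutoffs $\chi(t_v - \Lf_v - 1/2)$ together with indicators $\mathbf{1}_{t_{v_1} > t_{v_2}}$ enforcing PC ordering. Writing each $\mathbf{1}_{t_{v_1}>t_{v_2}}$ via a Fourier representation of the Heaviside function (or equivalently via Lemma \ref{timeintlem}) and each $\chi$ via its Fourier transform, the time integral $\Bc(\alpha[\Mb_{\mathrm{sb}}])$ becomes a product of one-dimensional oscillatory integrals of the form $\int e^{\pi i \beta_v t_v} \chi(t_v)\, dt_v$, whose integrals are bounded by $\langle \beta_v \rangle^{-1}$. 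After combining the PC-ordering shifts, this shows $\Bc$ decays like a product of $\langle \alpha_v \rangle^{-1}$ factors (with logarithmic losses absorbed into the $L^{\delta^{5\nu}\rho_{\mathrm{sb}}}$ slack).

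Second, I would organize the sum $\sum_{\sigma[\Mb_{\mathrm{sb}}]}$ by first summing over the $\sigma_v$ of non-ladder atoms, producing at most $(\log L)^{m_{\mathrm{sb}}}$ from the standard $\ell^1$ version of $\langle\alpha_v\rangle^{-1}$, and then summing over each maximal ladder $\Lb_j$ separately. The ladder analysis proceeds as in \cite{DH21,DH23}: parametrize the ladder by a single running resonance direction plus the gap $r_j$. Along $\Lb_j$, consecutive pairs of atoms $(v,v')$ satisfy $\Gamma_v + \Gamma_{v'} = \pm 2 r_j \cdot \mu_v$ for a vector $\mu_v$ derived from the ladder decoration, so the number of admissible $\sigma_v$ values compatible with $|r_j| \sim P_j$ along the entire ladder is controlled by $\Xf_j^{z_j} = \min((\log L)^2, 1 + \delta L^{2\gamma} P_j)^{z_j}$. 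Each factor $\Xf_j$ corresponds to one step of the ladder: if $P_j$ is tiny then the resonance is nearly locked (giving the $(\log L)^2$ bound), and otherwise one pays $\delta L^{2\gamma} P_j$ per step for summing the shifted variable.

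The main obstacle is verifying that the layering constraints $t_v \in [\Lf_v,\Lf_v+1]$ do not obstruct the ladder-by-ladder reduction. This is non-trivial because atoms in a single ladder may lie in different layers, so the corresponding $t_v$ are not free to range over $\Rb$. However, the unit-interval cutoffs only improve the time integrals (each one-dimensional integral remains bounded by $\langle \beta_v\rangle^{-1}$ with no log loss from infinite integration), and crucially the ladder parameterization of resonance differences is purely algebraic and uses only the molecule structure, which is unaffected by layers. With these two facts in hand, the proof of \cite[Prop.~8.6]{DH21} goes through verbatim, and the final factor $(C_1 \delta^{-1/2 - 5\nu})^{n_{\mathrm{sb}}} L^{\delta^{5\nu}\rho_{\mathrm{sb}}}$ arises from combining the $\langle\alpha_v\rangle^{-1}$ sums, the $\delta^{-1/2}$ inherited from the definition of $\Kc_\Gc$, and the polynomial slack paid for converting weighted $L^1$ sums into $L^\infty$ suprema.
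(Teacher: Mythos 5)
Your outline correctly identifies the two main ingredients---the localization of ladder resonance variables that produces $\prod_j \Xf_j^{z_j}$, and the $L^1$ time-integration argument---and your treatment of the first part (parametrize the ladder by the gap $r_j$, use the $\pm 2 r_j\cdot\mu_v$ relations to constrain the $\sigma_v$) is essentially the paper's Step 1. But there is a genuine gap where you claim the time integral ``goes through verbatim'' because the unit intervals $[\Lf_v,\Lf_v+1]$ ``only improve'' the one-dimensional bounds.

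In the unlayered setting of \cite{DH21}, every $t_v$ ranges over the \emph{same} interval $[0,1]$, so the ordered-simplex volume automatically furnishes a factor $\sim 1/z_j!$ per ladder. In the layered setting the $t_v$ for a ladder range over \emph{different} unit intervals spread across $[0,\Df]$, so the domain $\prod_v[\Lf_v,\Lf_v+1]$ a priori has volume $\sim 1$ with no factorial decay; the PC ordering constraints need not cut this down, since different atoms can live in disjoint unit intervals. Without the factorial, the final summation over $z_j$ would accumulate a factor like $(\log L)^{z_j}\Df^{z_j}\delta^{(1+10\nu)z_j}$, which for $z_j\gtrsim 1/\delta$ grows unboundedly with $L$---precisely the $(C_1\Df\delta)^n=(C_1\tau_*)^n$ divergence that the whole canonical-layering setup is designed to avoid. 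The paper closes this gap by a structural observation (Step 2 of the proof): along a ladder, the sequence $\{\max(t_v,t_w)\}$ over double-bond pairs has no local maximum, hence splits into at most two monotonic subsequences taking values in $[0,\Df]$. This restores the factorial: the monotonicity restricts the effective domain to volume $\lesssim \Df^{z_j}/(z_j-d)!$, and the final elementary inequality $\frac{(\log L)^{z_j}\Df^{z_j}\delta^{(1+10\nu)z_j}}{(z_j-d)!}\leq L^{\delta^{8\nu}}$ is what makes the bookkeeping close. Your proposal as written has no mechanism to produce this factorial, so the ladder sums would not converge to the stated bound. You would need to supply the monotonicity lemma (or an equivalent combinatorial constraint on the layers of ladder atoms) before the $L^1$ integration can be completed.
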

\begin{proof} The proof is divided into several steps.

\smallskip
\uwave{Step 1: Reduction and localization.} First recall the following elementary inequality (see also (10.5) of \cite{DH21}): for any unit box $[0,1]^n$, any function $F:[0,1]^{\Mb_{\mathrm{sb}}}\to\Rb$ and any $\Lf_v\in\Rb$ we have
\begin{equation}\label{elementarylem}\sup_{\alpha\in[0,1]^{\Mb_{\mathrm{sb}}}}|F(\alpha)|\leq\sum_{A\subset\Mb_{\mathrm{sb}}}\int_{[0,1]^{\Mb_{\mathrm{sb}}}}\bigg|\prod_{v\in A}(\partial_{\alpha_v}-\pi i\Lf_v)F(\alpha)\bigg|\,\mathrm{d}\alpha\end{equation} For the proof, simply get rid of $\Lf_v$ by multiplying by the $e^{-\pi i\Lf_v\alpha_v}$ factor, and then proceed by induction in $n$.

Using (\ref{elementarylem}), with an acceptable loss of $C_0^{n_{\mathrm{sb}}}$, we can reduce the left hand side of (\ref{ladderl1}) to the integral of $|\Bc(\alpha[\Mb_{\mathrm{sb}}])|$ on the union of the unit boxes $\lfloor\alpha_v\rfloor=\sigma_v$ for all possible $(\sigma_v)$. The derivatives $\partial_{\alpha_v}-\pi i\Lf_v$ turn into factors $\pi i(t_v-\Lf_v)$ in the $\int_{\Oc_{\mathrm{sb}}}(\cdots)$ integral in (\ref{redvine3}), which are harmless due to the restriction $t_v\in[\Lf_v,\Lf_v+1]$, so we will ignore them below. Denote by $\Wc$ the above union of unit boxes, next we will specify some localization properties of $\Wc$.

Suppose that $\alpha=(\alpha_v)$ is defined from some decoration $\Os_{\mathrm{sb}}$ satisfying all the restrictions stated in the assumptions, and fix one ladder $\Lb_j$ satisfying $\Xf_j=1+\delta L^{2\gamma}P_j \leq (\log L)^2$, and any atoms $v$ and $w$ connected by a double bond, we have $\Gamma_v+\Gamma_w=2r_j\cdot y_w$ where $r_j$ is the gap of $\Lb_j$ and $y_{w}$ is the difference of the two vectors $k_\ell$ for two different single bonds at $v$ and $w$. Using that $|k_\ell-k_\ell^0|\leq 1$, we see that $\Gamma_v+\Gamma_w=2r_j\cdot z_w^0+\Oc_0(P_j)$ where $z_w^0$ is a fixed vector (note however that $r_j$ is not a fixed vector), and thus $\alpha_v+\alpha_w=r_j\cdot y_w^0+c_w+\Oc_0(\Xf_j)$ for some  fixed vector $y_w^0$ and constant $c_{w}$ that are independent of the decoration. The integer part of this $\Oc_0(\Xf_j)$ has at most $\Oc_0(\Xf_j)$ choices, so we may fix one choice for each $j$ and each $(v,w)$, at an additional cost of \begin{equation}\label{ladderloss}C_0^{n_{\mathrm{sb}}}\cdot \prod_{j:\,\Xf_j=1+\delta L^{2\gamma}P_j}\Xf_j^{z_j+1};\end{equation} this can be viewed as dividing $\Wc$ into subsets, where the total number of subsets is given by (\ref{ladderloss}). Once each integer part is fixed, we can then absorb it into $c_w$ and write $\alpha_v+\alpha_w=r_j\cdot y_w^0+c_w+\Oc_0(1)$. Note also that the $z_j+1$ in (\ref{ladderloss}) may be replaced by $z_j$, as the loss caused by this is at most $(\log L)^{2q_{\mathrm{sb}}}$ which is acceptable. 

Now, by Lemma \ref{lincomblem}, for each $j$ we can find $q=q_j\leq d$ and atoms $w_1^j,\cdots,w_q^j\in\Lb_j$, such that for any $w\in \Lb_j$ we can write
\[y_w^0=\sum_{i=1}^q\lambda_iy_{w_i^j}^0,\quad |\lambda_i|\leq C_0.\] This implies that for each pair of atoms $(v,w)$ in $\Lb_j$ as above, we have
\begin{equation}\label{localize1}\alpha_v+\alpha_w=\sum_{i=1}^q\lambda_i\big(\alpha_{v_i^j}+\alpha_{w_i^j}\big)+d_w+\Oc_0(1),\end{equation} where $d_w$ is another constant that is independent of the decoration. Clearly (\ref{localize1}) does not change if each $\alpha_v$ is shifted by $\Oc_0(1)$, so it is satisfied by any point in $\Wc$. In addition, the difference of any two vectors in the decoration $\Os_{\mathrm{sb}}$ is a vector that belongs to $\Zb_L^d$ and a fixed unit ball, so for each $v$ the quantity $\Gamma_v$ may take at most $L^{20d}$ values for all possible decorations. This means that if $\alpha=(\alpha_v)\in \Wc$, then for each $v$ we must have $\alpha_v\in \Sc_v$ where $\Sc_v$ is the union of at most $L^{20d}$ unit intervals that depends only on $v$.

\smallskip
\uwave{Step 2: Monotonicity.} Fix a ladder $\Lb_j$ with $\Xf_j=1+\delta L^{2\gamma}P_j$, and let $w_i^j\,(1\leq i\leq q=q_j)$ be as above. Consider the sequence $\{\max(t_v,t_w)\}$ where $(v,w)$ runs over all pairs of atoms in $\Lb_j$ connected by a double bond. We claim that this sequence can be divided into two subsequences, each of which is \emph{monotnonic} when viewed from left to right as in Figure \ref{fig:vines}.

In fact, fix any pair $(v,w)$, and assume (say) $t_v>t_w$. Note that $\Mb_{\mathrm{sb}}=\Mb(\Gc_{\mathrm{sb}})$; define $v^+$ to be the unique atom such that $\nf(v^+)$ is the parent node of $\nf(v)$ in $\Gc_{\mathrm{sb}}$. Then $v^+\neq w$ and is adjacent to $v$, so it must be the atom to the left or right of $v$ in Figure \ref{fig:vines}. Let $w^+$ be the atom connected to $v^+$ by a double bond, then $\max(t_{v^+},t_{w^+})\geq t_{v^+}>t_v=\max(t_v,t_w)$. This means that the sequence $\{\max(t_v,t_w)\}$ has \emph{no local maximum}, thus it is formed by at most two monotonic subsequences when viewed from let to right as in Figure \ref{fig:vines}.

Now, for each pair $(v,w)$ in $\Lb_j$ such that $w\not\in\{w_i^j:1\leq i\leq q=q_j\}$, we may specify whether $t_v>t_w$ or $t_v<t_w$ in addition to the existing inequality relations between $(t_v)$ forced by parent-child relations. With an acceptable loss of $C_0^{n_{\mathrm{sb}}}$, we may fix such a set of inequalities; then for each such $(v,w)$, we choose the atom corresponding to the bigger time variable. Define the collection of these atoms by $\Xb$, and let $\Yb=\Mb_{\mathrm{sb}}\backslash\Xb$. Note that $\Yb$ consists of (i) the atoms in $(v,w)$ corresponding to the smaller time variable, (ii) the atoms $w_i^j$ and corresponding $v_i^j$ for $1\leq i\leq q$, (iii) all atoms in ladders $\Lb_j$ where $\Xf_j=(\log L)^2$, and (iv) all atoms that are not in any ladder $\Lb_j$.

\smallskip
\uwave{Step 3: Partial time integration.} Now we analyze the target expression, which is
\begin{equation}\label{targettimeint}\int_{\Wc}\bigg|\int_{\widetilde{\Oc}_{\mathrm{sb}}}\prod_{v\in\Xb\cup\Yb}e^{\pi i\alpha_vt_v}\,\mathrm{d}t_v\bigg|\,\mathrm{d}\alpha[\Xb]\mathrm{d}\alpha[\Yb]
\end{equation} thanks to the reduction in Step 1. Here $\widetilde{\Oc}_{\mathrm{sb}}$ is the set $\Oc_{\mathrm{sb}}$ but with the extra inequalities in Step 2 added. In this integral, we will first integrate in the $t[\Yb]$ variables; clearly we may only integrate the product for $v\in \Yb$, and the result may depend on $\alpha[\Yb]$ and $t[\Xb]$, so we denote it by \begin{equation}\label{targettimeint1}\Hc=\Hc(t[\Xb],\alpha[\Yb])=\int_{(t[\Xb],t[\Yb])\in\widetilde{\Oc}_{\mathrm{sb}}}\prod_{v\in\Yb}e^{\pi i\alpha_vt_v}\,\mathrm{d}t_v.\end{equation}

Note that $\alpha_v\in \Sc_v$ for any $\alpha[\Mb_{\mathrm{sb}}]\in\Wc$ as shown above, moreover we have
\begin{equation}\label{targettimeint2}\int_{\alpha_v\in\Sc_v\,\!(\forall v\in \Yb)}|\Hc(t[\Xb],\alpha[\Yb])|\,\mathrm{d}\alpha[\Yb]\leq (C_0\log L)^{|\Yb|}
\end{equation} uniformly in $t[\Xb]$. This can be proved by integrating in the $t[\Yb]$ variables in a certain order in (\ref{targettimeint1}) and proceeding by induction, as in Lemma 10.2 of \cite{DH21} or Proposition 2.3 of \cite{DH19}. The only caution is that each time we should integrate in the $t_v$ variable where $v$ is a \emph{minimal element} under the ancestor-descendant partial ordering (i.e. $v_1\succeq v_2$ if and only if $\nf(v_1)$ is an ancestor of $\nf(v_2)$ in $\Gc_{\mathrm{sb}}$). Note that the lower bound of this integration in $t_v$ is always a constant, and the upper bound is either a constant or another variable $t_{v^+}$ where $\nf(v^+)$ is the parent of $\nf(v)$. After integrating in $t_v$, we obtain a factor that is bounded by $\langle \alpha_v+\beta\rangle^{-1}$, where $\beta$ is either $0$ or some quantity determined by the $\alpha_{v_*}$ variables where $t_{v_*}$ has been integrated out in previous steps (this bound is true even when $|\alpha_v+\beta|\leq 1$ because the interval of integration in $t_v$ has length at most $1$). This factor can be bounded upon integrating in $\alpha_v$, using the elementary inequality
\[\sup_{\beta\in\Rb}\int_{\Sc_v}\frac{1}{\langle \alpha_v+\beta\rangle}\,\mathrm{d}\alpha_v\leq C_0\log L.\] We are then left with an expression that is the sum of at most four integrals with similar forms (possibly depending on the size of $t_{v^+}$) but one fewer integrated variable; next we simply integrate in $t_{v'}$ where $v'$ is a minimal element in the same partial ordering after removing $v$, and so on.

\smallskip
\uwave{Step 4: Completing the proof.} With (\ref{targettimeint2}) we can now complete the proof of (\ref{ladderl1}). First we perform the reduction in Step 1 and reduce to estimating (\ref{targettimeint}) with a loss of $\prod_{j=1}^{q_{\mathrm{sb}}}\Xf_j^{z_j}$. Then we apply (\ref{targettimeint2}) and also apply the fact from Step 2 that, all the variables $t[\Xb\cap\Lb_j]$ belong to $[0,\Df]$ and can be arranged into \emph{at most two monotonic sequences}. Note also that, by definition of $\Xb$ and $\Yb$, once $\alpha[\Yb]$ is fixed, each $\alpha_v\,(v\in \Xb)$ must belong to an interval of length $\Oc_0(1)$ thanks to (\ref{localize1}). Putting together, this leads to the bound
\[(\ref{targettimeint})\leq \int_{(\mathrm{monotonicity})}\,\mathrm{d}t[\Xb]\int_{\Wc}|\Hc(t[\Xb],\alpha[\Yb])|\,\mathrm{d}\alpha[\Xb]\mathrm{d}\alpha[\Yb]\leq C_0^{n_{\mathrm{sb}}}(\log L)^{|\Yb|}\prod_{j:\,\Xf_j=1+\delta L^{2\gamma}P_j}\frac{\Df^{z_j}}{(z_j-d)!},\] where the integral in $t[\Xb]$ is taken under the above monotonicity conditions. Note that
\[(\log L)^{|\Yb|}\leq \prod_{j:\,\Xf_j=(\log L)^2}\Xf_j^{z_j}\cdot\prod_{j:\,\Xf_j=1+\delta L^{2\gamma}P_j}(\log L)^{z_j}\cdot (\log L)^{10d\rho_{\mathrm{sb}}}\] due to the definition of $\Yb$; note also that in view of the $(\delta^{-1/2-5\nu)^{n_{\mathrm{sb}}}}$ factor in (\ref{ladderl1}), we have
\[\delta^{(1/2+5\nu)n_{\mathrm{sb}}}\leq\prod_{j:\,\Xf_j=1+\delta L^{2\gamma}P_j}\delta^{(1+10\nu)z_j}.\] Therefore, the desired bound (\ref{ladderl1}) follows from the elementary inequality
\[\frac{(\log L)^{z_j}\Df^{z_j}\delta^{(1+10\nu)z_j}}{(z_j-d)!}\leq(\log L)^{2d}\exp\big(\Df\delta^{1+10\nu}\log L\big)\leq L^{\delta^{8\nu}}\] for each $j$, where we recall that $\Df\cdot\delta=\tau_*\ll \delta^{-\nu}$ from (\ref{defD}).
\end{proof}
\subsection{Properties of $\Mb_{\mathrm{sb}}$} We state some properties of the molecule $\Mb_{\mathrm{sb}}=\Mb(\Gc_{\mathrm{sb}})$, which are essentially the same as in \cite{DH23}.
\begin{prop}\label{subpro} Consider the garden $\Gc_{\mathrm{sb}}$ and the corresponding molecule $\Mb_{\mathrm{sb}}=\Mb(\Gc_{\mathrm{sb}})$. Consider also a $(k_1,\cdots,k_{2R})$-decoration $\Is_{\mathrm{sb}}$ of $\Gc_{\mathrm{sb}}$ such that $\widetilde{\epsilon}_{\Is_{\mathrm{sb}}}\cdot\Zc_{\Is_{\mathrm{sb}}}\neq 0$, which we identify with a $(k_1,\cdots,k_{2R})$-decoration $\Os_{\mathrm{sb}}$ of $\Mb_{\mathrm{sb}}$ satisfying $\widetilde{\epsilon}_{\Os_{\mathrm{sb}}}\cdot\Zc_{\Os_{\mathrm{sb}}}\neq 0$ via Definition \ref{defdecmol} (this can be viewed as inherited from certain decoration of $\Mb(\Gc_{\mathrm{sk}})$ or $\Gc_{\mathrm{sk}}$). Then we have the followings:
\begin{enumerate}[{(1)}]
\item The molecule $\Mb_{\mathrm{sb}}$ is connected, has $n_{\mathrm{sb}}$ atoms and $2n_{\mathrm{sb}}-R$ bonds.
\item $\Mb_{\mathrm{sb}}$ contains a collection $\Cs_1$ of disjoint SG vine-like objects, such that each of them is either an HV, or a (CN) vine or a root (CL) vine. We can also specify a total of $q_{\mathrm{sk}}$ atoms which we call \emph{hinge atoms}; any triple bond must have an endpoint that is a hinge atom.
\item We say an atom $v$ is \emph{degenerate} if there are two bonds $(\ell_1,\ell_2)$ at $v$ of opposite directions and $k_{\ell_1}=k_{\ell_2}$ (note this is equivalent to being ZG as in Definition \ref{defdiff}), and that it is \emph{tame} if the values of $k_\ell$ for all $\ell\leftrightarrow v$ are equal, and equals a fixed value in $(k_1,\cdots,k_{2R})$ if $d(v)<4$. Then any degenerate/ZG atom that is not a hinge atom must be tame.
\item Any SG vine-like object which is not a subset of an SG vine-like object in $\Cs_1$, and also \emph{does not contain} a hinge atom, must be a subset of a DV or an LG or ZG vine-like object\footnote{Note that a ZG vine-like object means that the joints are degenerate, hence tame by (3).} in $\Mb_{\mathrm{sb}}$, which is disjoint with the vine-like objects in $\Cs_1$ and does not contain any hinge atom. Note that here we must be in one of the cases defined in Lemma \ref{subsetvc}.
\item For any hinge atom $v$, we can find two bonds $(\ell_1,\ell_2)$ at $v$, such that the triple $(v,\ell_1,\ell_2)$ always has SG. If $v$ is also a joint of an SG vine (or HV) in $\Cs_1$, then $(\ell_1,\ell_2)$ is chosen to belong to this vine (or the adjoint vine of this HV).
\item Any SG vine-like object $\Ub$ which is not a subset of an SG vine-like object in $\Cs_1$, and also \emph{contains} a hinge atom, \emph{must} contain a hinge atom $v$, such that either $v$ is an interior atom of $\Ub$, or $v$ is a joint of $\Ub$ and exactly one bond in $(\ell_1,\ell_2)$ belongs to $\Ub$, where $(\ell_1,\ell_2)$ is defined in (5) above.
\end{enumerate}
\end{prop}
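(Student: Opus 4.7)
The plan is to follow the blueprint of Proposition 8.2 of \cite{DH23} essentially verbatim, since the layering introduced in the current paper does not enter the molecular claims (1)--(6): these are properties of $\Mb_{\mathrm{sb}}$ viewed purely as a directed multigraph together with the SG/LG/ZG labelling produced in Section~\ref{redsplice}. I would split the proof into two blocks: the global structural claims (1)--(2), which come from bookkeeping for the splicing operation, and the decoration-driven claims (3)--(6), which come from unfolding the splice and invoking Lemma~\ref{vinechainlem} and Lemma~\ref{subsetvc}.

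For (1), the irreducibility of $\Gc_{\mathrm{sk}}$ and Proposition~\ref{propcplmol} give that $\Mb(\Gc_{\mathrm{sk}})$ is connected with $n_{\mathrm{sk}}$ atoms and $2n_{\mathrm{sk}}-R$ bonds. By Proposition~\ref{block_clcn}(2), splicing each VC $\Ub_j\in\Vs_0$ merges all its atoms into one and deletes every bond internal to $\Ub_j$; this preserves connectivity, and elementary bookkeeping using the block structure (interior atoms have degree $4$ within $\Ub_j$, joints have degree $2$) preserves the Euler-type identity $E = 2V - R$, yielding the stated count. For (2), I would take $\Cs_1$ to consist of the images in $\Mb_{\mathrm{sb}}$ of the HV's in the canonical collection $\Cs$ of Lemma~\ref{vinechainlem} applied to $\Mb(\Gc_{\mathrm{sk}})$, together with the residual vine ingredient (a (CN) vine, or the ``top'' (CL) vine skipped by Definition~\ref{defcong}) inside each HVC or root-VC whose other ingredients got spliced. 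The hinge atoms are the merged atoms produced by the splicings, one per element of $\Vs_0$; since every atom of $\Mb(\Gc_{\mathrm{sk}})$ has in- and out-degree at most $2$, a triple bond in $\Mb_{\mathrm{sb}}$ can only arise when two distinct bonds of $\Mb(\Gc_{\mathrm{sk}})$ coalesce at a merge, forcing a hinge endpoint.

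For (3)--(4), let $v\in\Mb_{\mathrm{sb}}$ be a non-hinge ZG atom, witnessed by opposite bonds $\ell_1,\ell_2\leftrightarrow v$ with $k_{\ell_1}=k_{\ell_2}$. Since $v$ corresponds to a genuine atom of $\Mb(\Gc_{\mathrm{sk}})$ outside every spliced VC, a direct diagrammatic analysis (identical to the one in Proposition~8.2 of \cite{DH23}) shows that $(v,\ell_1,\ell_2)$ is absorbed into a vine-like object in $\Cs$ whose SG/LG/ZG specification and Definition~\ref{defcong}'s selection rule together force all bonds at $v$ to carry the same $k_\ell$; when $d(v)<4$, Definition~\ref{defdecmol} forces this common vector to be one of the $\zeta_j k_j$, giving tameness. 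For (4), any SG vine-like object $\Ub\subset\Mb_{\mathrm{sb}}$ avoiding both $\Cs_1$ and the hinges lifts to an SG vine-like object in $\Mb(\Gc_{\mathrm{sk}})$ which is disjoint from every spliced VC and not contained in any element of $\Cs$; by maximality in Lemma~\ref{vinechainlem}, it must lie inside a DV or inside an LG/ZG vine-like object in $\Cs$, placing us in one of the cases of Lemma~\ref{subsetvc}. For (5), a hinge $v$ is the merge-point of a spliced SG VC $\Ub_j\in\Vs_0$, and the two ``gap-carrying'' single bonds at either joint of $\Ub_j$ survive splicing as a pair at $v$ whose difference equals the SG gap of $\Ub_j$; the parenthetical refinement is automatic because the residual vine of $\Cs_1$ sits adjacent to the splice. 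For (6), unfolding the splice: an SG $\Ub$ containing $v$ but not contained in any element of $\Cs_1$ lifts to an SG object in $\Mb(\Gc_{\mathrm{sk}})$ strictly containing $\Ub_j\in\Vs_0$, which by the maximality of $\Cs$ leaves exactly the two positional alternatives in (6).

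The main obstacle will be the decoration-driven case analysis in (3)--(4): one must enumerate the possible configurations of two opposite ZG bonds at $v$ against the structure of $\Cs$ and the SG/LG/ZG specification, verifying that the selection rule in Definition~\ref{defcong} combined with ``$v$ is not a hinge'' indeed forces tameness rather than merely smallness. This is exactly the step carried out in \cite{DH23}, and I expect the argument there to transfer without change once the hinge atoms are identified, since the layering plays no combinatorial role at the molecular level.
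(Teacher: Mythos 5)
Your proposal follows essentially the same route as the paper: both proofs define the hinge atoms as the merge-points of the spliced VC's in $\Vs_0$, take $\Cs_1$ to be what survives of $\Cs$ after splicing (using Corollary~\ref{blockchainprop} to see these are HV's, (CN) vines, or root (CL) vines), fix the bonds $(\ell_1,\ell_2)$ in (5) as the pair at one joint of the spliced VC that do not belong to it, and then defer the detailed case analysis of (2)--(6) to the analogous proposition in \cite{DH23}. The only substantive divergence is in (1), where the paper applies Proposition~\ref{propcplmol} directly to the irreducible garden $\Gc_{\mathrm{sb}}$ rather than tracking the Euler count through each splicing as you do; both arguments are correct, but the paper's is shorter, and you may also want to double-check the proposition number you cite from \cite{DH23} (the paper points to Proposition~9.1 there, not 8.2).
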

\begin{proof} Recall the reduction process from $\Mb(\Gc_{\mathrm{sk}})$ to $\Mb_{\mathrm{sb}}$ by merging all VC's in $\Vs_0$. Now each VC in $\Vs_0$ is reduced to an atom, and we define them to be the hinge atoms (in fact they are just those atoms $v$ where $\nf(v)\in\{\uf_1^j:1\leq j\leq q_{\mathrm{sk}}\}$). Moreover, by Corollary \ref{blockchainprop}, we know that each SGVC, SGHV and SGHVC in $\Cs$ is reduced to either a single atom, or an SGHV, or an SG vine that is also a (CN) vine, or an SG vine that is also a root (CL) vine; we then define $\Cs_1$ to be the collection of all such vine-like objects (other than single atoms). For each hinge atom $v$, it is merged from an SGVC in $\Cs$ (say $\Ub$), then we choose $(\ell_1,\ell_2)$ in (5) such that in the molecule $\Mb(\Gc_{\mathrm{sk}})$ they are the two bonds at one joint of $\Ub$ that do not belong to $\Ub$ (this must exist because $\Ub$ is not a root VC). Note that if $d(v)=4$ then the other two bonds $(\ell_3,\ell_4)$ also have the same properties. If $v$ is also a joint of an SG vine (or HV) in $\Cs_1$, then we specify $(\ell_1,\ell_2)$ to belong to this vine (or the joint vine of this HV); if not, we can choose freely between $(\ell_1,\ell_2)$ and $(\ell_3,\ell_4)$.

The above definitions are the same as in Proposition 9.1 of \cite{DH23} (with the trivial difference that the DV's are not included in the collection $\Cs_1$ defined in (2), so the case of an SG vine-like object that is a subset of a DV in $\Mb_{\mathrm{sb}}$ is now included in (4)). This means that conclusions (2)--(6) also follow directly from the proof of Proposition 9.1 of \cite{DH23}. Note that here we are dealing with gardens, but the proof in \cite{DH23} does not distinguish between gardens and couples. As for conclusion (1), it simply follows from Proposition \ref{propcplmol} and the fact that $\Gc_{\mathrm{sb}}$ (just as $\Gc_{\mathrm{sk}}$) is an irreducible garden.
\end{proof}
\section{Stage 2 reduction and proof of Proposition \ref{layergarden}}\label{redcount}
\subsection{Reduction to counting estimates}\label{redcount0} To prove Proposition \ref{layergarden}, we need to consider the expression $\Ks_{p+1,n}^{\mathrm{tr}}$ defined in (\ref{expgarden}), and the difference term $\Rs_{p+1,n}$ defined in (\ref{estcouple}). These involve four types of terms:
\begin{enumerate}[{(a)}]
\item The term \begin{equation}\label{finalterma}\Ks_{p+1,n}^{\mathrm{tr}}(\zeta_1,\cdots,\zeta_{2R},k_1,\cdots,k_{2R})\end{equation} as in (\ref{expgarden}) for $R\geq 2$;
\item The term \begin{equation}\label{finaltermb}\Ks_{p+1,n}^{\mathrm{tr,nr}}(+,-,k,k)\end{equation} as in (\ref{expgarden}) for $R=1$, where we only sum over \emph{non-regular} couples;
\item The term \begin{equation}\label{finaltermc}\Ks_{p+1,n}^{\mathrm{tr,nc}}(+,-,k,k)\end{equation} as in (\ref{expgarden}) for $R=1$, where we only sum over \emph{regular, non-coherent} layered couples;
\item The term \begin{equation}\label{finaltermd}\Ks_{p+1,n}^{\mathrm{tr,co}}(+,-,k,k)-\Uc_{n/2}(p+1,k)\end{equation} as in (\ref{expgarden}) for $R=1$, where we only sum over the \emph{regular, coherent} layered couples, and $n$ is even, $\Uc_{n/2}$ is as in (\ref{estcouple}). 
\end{enumerate}

The terms (\ref{finaltermc}) and (\ref{finaltermd}) will be estimated using Propositions \ref{layerreg1}, \ref{proplayer1}, \ref{proplayer2} and \ref{proplayer3}. The terms (\ref{finalterma}) and (\ref{finaltermb}) will be estimated in this section. In both cases, note that the estimates (\ref{estgarden}) and (\ref{estcouple2}) involves the weights $(\langle k_1\rangle\cdots\langle k_{2R}\rangle)^{4\Lambda_{p+1}}$ for both gardens and couples, and the derivatives $\partial_k^\rho\,(|\rho|\leq 4\Lambda_{p+1})$ for couples, which we now discuss.

First, for each $1\leq j\leq 2R$ we have \[\langle k_j\rangle^{4\Lambda_{p+1}}\leq (3n_j)^{4\Lambda_{p+1}}\cdot\langle k_{\lf_j}\rangle^{4\Lambda_{p+1}}\] for some leaf $\lf_j\in\Tc_j$, where $n_j$ is the order of the tree $\Tc_j$, since $k_j$ is an algebraic sum of all such $k_{\lf_j}$. Since $4\Lambda_{p+1}\leq C_2$, we have $(3n_j)^{4\Lambda_{p+1}}\leq 2^{n_j}\cdot C_2$, and putting together all $1\leq j\leq 2R$ yields the factors $2^n$ and $C_2^R$. Moreover, if we include the extra factor $\langle k_{\lf_j}\rangle^{4\Lambda_{p+1}}$ to the input function $F_{\Lf_{\lf_j}}(k_{\lf_j})$ in (\ref{defkg0}), then this function is still bounded in $\Sf^{40d,40d}$ because of $F_q(k)=\varphi(\delta q,k)+\Rs_q(k)$, the bound (\ref{ansatz2}) and $\Lambda_{p}=40\Lambda_{p+1}$, but with norm being $C_2$ instead of $C_1$ for the $\varphi(\delta q,k)$ term. However this happens for at most $R$ input factors, so again the resulting loss is at most $C_2^R$. These losses can be covered by proving slightly stronger bounds with $L^{-(R-1)/4}$ replaced by $L^{-(R-1)(1/4+1/100)}$ in (\ref{estgarden}), and $L^{-(4\theta_{p+1}+\theta_p)/5}$ replaced by $L^{-(3\theta_{p+1}+2\theta_p)/5}$ in (\ref{estcouple2}), and $\delta^{2\nu n}$ replaced by $\delta^{3\nu n}$ in both, but these stronger bounds will follow from same arguments below.

Similarly, note that for couples ($R=1$), the function $\Ks_{p+1,n}^{\mathrm{tr}}$ is defined for all $k\in\Rb^d$ if we extend the input functions $F_q(k)=\varphi(\delta q,k)+\Rs_q(k)$ for $q\leq p$ by using (\ref{ansatz2}). If we translate $k$ and all $k_\lf$ by the same vector the whole expression is invariant, so taking any $\partial_k$ derivative is equivalent to taking exactly one $\partial_{k_\lf}$ derivative for some leaf $\lf\in\Gc$. If we take at most $4\Lambda_{p+1}\leq C_2$ derivatives, then we get at most $n^{C_2}$ terms, where at most $C_2$ input factors are affected for each term, and each affected factor is still bounded in $\Sf^{40d,40d}$ but by $C_2$ instead of $C_1$, for the same reason as above. This again leads to loss of at most $C_2n^{C_2}\leq C_22^n$, which is acceptable provided we prove the slightly stronger bounds above.

Therefore, from now on we will focus on the proof of (\ref{estgarden}) and (\ref{estcouple2}) with $4\Lambda_{p+1}$ replaced by $0$, i.e. the supremum norm in $k$ (or $k_j$). In the proof below we will fix the values of $k$ (or $k_j$), and all our estimates will be uniform in them; the input factors $F_q(k)=\varphi(\delta q,k)+\Rs_q(k)$ are then bounded in $\Sf^{40d,40d}$ by $C_1$.

\smallskip
With the above reductions, we now start with the term $\Ks_{p+1,n}^{\mathrm{tr}}$ in (\ref{finalterma}) or $\Ks_{p+1,n}^{\mathrm{tr,nr}}$ in (\ref{finaltermb}), which is a sum of $\Kc_\Gc$ as in (\ref{expgarden}). We then perform the stage 1 reduction in Section \ref{stage1red} and consider the sum over a single congruence class as defined in Definition \ref{defcong}. Note that we still need to count the number of congruence classes, which will be left to Section \ref{secrigid} below; for now we will fix this single sum and reduce it to (\ref{redvine3}) as done in Section \ref{stage1red}.
\subsubsection{Ladders and $L^1$ bounds}\label{l1reduction} Now start from the expression $\Ks_{\Gc_{\mathrm{sb}}}$ in (\ref{redvine3}). Recall the various parameters defined in Proposition \ref{ladderl1old}: $n_{\mathrm{sb}}$ is the order of $\Gc_{\mathrm{sb}}$, $q_{\mathrm{sb}}$ is the number of maximal ladders in $\Mb_{\mathrm{sb}}=\Mb(\Gc_{\mathrm{sb}})$, $z_j\,(1\leq j\leq q_{\mathrm{sb}})$ are the lengths of these ladders, $m_{\mathrm{sb}}$ is the number of atoms not in these ladders, and $\rho_{\mathrm{sb}}=q_{\mathrm{sb}}+m_{\mathrm{sb}}$. Also recall the parameters coming from the stage 1 reduction process in Sections \ref{redsplice0}--\ref{redsplice}: $g_{\mathrm{sk}}$ is the total incoherency index for all the regular couples $\Qc_{(\lf,\lf')}$ and regular trees $\Tc_{(\mf)}$ for $\lf,\lf'\in\Lc_{\mathrm{sk}}$ and $\mf\in\Nc_{\mathrm{sk}}$, plus the total incoherency index for all the vines in $\Vs_0$, $b_{\mathrm{sk}}$ represents the number of bad vines in $\Vs_0$, and $q_{\mathrm{sk}}$ is the number of VC's in $\Vs_0$. Note that $q_{\mathrm{sk}}$ also equals the number of hinge atoms in $\Mb_{\mathrm{sb}}$ (Proposition \ref{subpro} (2)). In addition, we also define $g_{\mathrm{sb}}$ to be the total incoherency index for all the maximal ladders in Proposition \ref{ladderl1old}.

The goal until the end of Section \ref{secrigid} is to prove the following two estimates:
\begin{equation}\label{finalsbest1}|\Ks_{\Gc_{\mathrm{sb}}}|\lesssim_2(C_1\sqrt{\delta})^{n_{\mathrm{sb}}}L^{-\eta \cdot g_{\mathrm{sb}}+80d\cdot(\rho_{\mathrm{sb}}+q_{\mathrm{sk}})},
\end{equation}
\begin{equation}\label{finalsbest2}|\Ks_{\Gc_{\mathrm{sb}}}|\lesssim_1(C_1\delta^{-10\nu})^{n_{\mathrm{sb}}}L^{-\eta^6\cdot(\rho_{\mathrm{sb}}+q_{\mathrm{sk}})}\cdot L^{-(R-1)(1/4+1/50)},
\end{equation} which we then interpolate to prove Proposition \ref{layergarden}. The proof of (\ref{finalsbest1}) is very easy with Proposition \ref{ladderl1new}, which we describe below.

Starting with (\ref{redvine3}), we choose all the maximal ladders in Proposition \ref{ladderl1old}; if any of them contains hinge atoms, then we replace it by the shorter ladders separated by these hinge atoms. This leads to a new set of $q'\leq q_{\mathrm{sb}}+q_{\mathrm{sk}}$ maximal ladders not containing hinge atom, such that the total incoherency index equals $g'\geq g_{\mathrm{sb}}-2q_{\mathrm{sk}}$. For each new ladder $\Lb$, we apply Proposition \ref{ladderl1new}, noting that the function $\Kc^\dagger(k[\Wb])$ does not depend on any of the $k_\ell$ variables for $\ell\in\Lb$; the resulting functions $\Ks^\Lb$ is then estimated using (\ref{redladder2}). The rest of (\ref{redvine3}), namely summation and integration over the $k_\ell$ and $t_v$ variables for $\ell$ and $v$ not in any ladder, is estimated trivially, using the fact that each $k_\ell$ belongs to a fixed unit ball and that each $t_v$ belongs to a fixed unit interval. Note that the number of atoms not in these new ladders is $m'\leq m_{\mathrm{sb}}+2q_{\mathrm{sk}}$, so we obtain that
\begin{equation}|\Ks_{\Gc_{\mathrm{sb}}}|\leq (C_1\sqrt{\delta})^{n_{\mathrm{sb}}-m'}\delta^{m'}L^{-(\gamma_1-\sqrt{\delta})g'+20dq'}\cdot L^{20dm'},
\end{equation} which easily implies (\ref{finalsbest1}) due to the upper bounds for $(q',m')$ and lower bound for $g'$.

From now on we will focus on the proof of (\ref{finalsbest2}). Starting with (\ref{redvine3}), which we may rewrite as
\begin{equation}\label{redvine4}
\Ks_{\Gc_{\mathrm{sb}}}=\bigg(\frac{\delta}{2L^{d-\gamma}}\bigg)^{n_{\mathrm{sb}}}\sum_{\Os_{\mathrm{sb}}}\widetilde{\epsilon}_{\Os_{\mathrm{sb}}}\cdot \Zc_{\Os_{\mathrm{sb}}}\cdot\prod_{\ell}\Kc_{(\ell)}(k_\ell)\cdot\Kc^{\dagger}(k[\Wb])\cdot\Bc(\alpha[\Mb_{\mathrm{sb}}])
\end{equation} using the definition in Proposition \ref{ladderl1old}, with the notions $\alpha_v:=\delta L^{2\gamma}(-\Gamma_v+\Gamma_v^0)+\xi_v$ etc. Then we may fix the values of $P_j\,(1\leq j\leq q_{\mathrm{sb}})$ as in Proposition \ref{ladderl1old}; this loses a factor $(\log L)^{q_{\mathrm{sb}}}$, which can be covered in view of the gain in (\ref{finalsbest2}) (say if we prove the stronger bound with $\eta^6$ replaced by $2\eta^6$, which follows from same arguments).

Next, we shall fix the values of $\sigma_v=\lfloor\alpha_v\rfloor$, and bound (\ref{redvine4}) by
\begin{multline}\label{redvine5}
|\Ks_{\Gc_{\mathrm{sb}}}|\leq\bigg(\frac{\delta}{2L^{d-\gamma}}\bigg)^{n_{\mathrm{sb}}}\cdot\sum_{\sigma[\Mb_{\mathrm{sb}}]}\sup_{\alpha[\Mb_{\mathrm{sb}}]:|\alpha_v-\sigma_v|\leq 1}|\Bc(\alpha[\Mb_{\mathrm{sb}}])|\\
\times\sup_{\sigma[\Mb_{\mathrm{sb}}]}\sum_{\Os_{\mathrm{sb}}:|\alpha_v-\sigma_v|\leq 1}\big|\widetilde{\epsilon}_{\Os_{\mathrm{sb}}}\cdot \Zc_{\Os_{\mathrm{sb}}}\cdot\prod_{\ell}\Kc_{(\ell)}(k_\ell)\cdot\Kc^{\dagger}(k[\Wb])\big|.
\end{multline} Note that the $\sum_{\sigma[\Mb_{\mathrm{sb}}]}(\cdots)$ factor in (\ref{redvine5}) is bounded by (\ref{ladderl1}), so to prove (\ref{finalsbest2}) we only need to bound the $\sup_{\sigma[\Mb_{\mathrm{sb}}]}(\cdots)$ factor uniformly in $(\sigma_v)$. This corresponds to solving a counting problem for all the $(k_1,\cdots,k_{2R})$-decorations $\Os_{\mathrm{sb}}$ of the molecule $\Mb_{\mathrm{sb}}$. Moreover, the decoration $\Os_{\mathrm{sb}}$ must satisfy the following requirements:
\begin{enumerate}[{(a)}]
\item It is restricted by some fixed $(k_\ell^0)$ and $(\beta_v)$ in the sense of Definition \ref{defdecmol}, where $(k_\ell^0)$ and $(\beta_v)$ are fixed parameters (the $(\beta_v)$ is determined by $(\sigma_v,\Gamma_v^0,\xi_v)$ etc.).
\item It is in the support of $\widetilde{\epsilon}_{\Os_{\mathrm{sb}}}\cdot\Zc_{\Os_{\mathrm{sb}}}$, in particular it must satisfy all the requirements in Proposition \ref{subpro}.
\item The gap $r_j$ or the ladder $\Lb_j$ is such that $|r_j|\sim_0 P_j$.
\end{enumerate}

For later use, we shall impose some further requirements on this decoration $\Os_{\mathrm{sb}}$, which we describe below. Consider all the SG vine-like objects in $\Mb_{\mathrm{sb}}$ that are subsets of DV or LG or ZG vine-like objects (Proposition \ref{subpro} (4), note that here we must be in one of the cases defined in Lemma \ref{subsetvc}), and select the ones that are maximal (i.e. those that are not subsets of larger SG vine-like objects with same properties). Here for any DV, if both vines (V) forming this DV are SG, then we select only one of them, plus all the maximal SG vine-like proper subsets of the other. Define by $\Cs_2$ the collection of SG vine-like objects formed in this way. Then:
\begin{enumerate}[resume*]
\item We fix each atom in $\Mb_{\mathrm{sb}}$ as either SG, LG or ZG; here we say an atom $v$ is SG if there is at least one choice of $(\ell_1,\ell_2)$ such that $(v,\ell_1,\ell_2)$ is SG. Note that an atom may be SG in more than one way, in which case we will further specify it below.
\item For each SG atom $v$, we specify the two bonds $(\ell_1,\ell_2)$ at $v$ such that $(v,\ell_1,\ell_2)$ is SG. Here we require that (i) for any hinge atom $v$ we must specify $(\ell_1,\ell_2)$ as in Proposition \ref{subpro} (5), and (ii) for any joint $v$ of any SG vine-like object in $\Cs_1$ or $\Cs_2$, we must specify $(\ell_1,\ell_2)$ to belong to this vine-like object\footnote{This is consistent with (i) due to Proposition \ref{subpro} (5). Also, if this vine-like object is an HV or HVC, then $(\ell_1,\ell_2)$ is chosen to belong to the corresponding adjoint VC.}; for any other atom $v$, we can specify $(\ell_1,\ell_2)$ arbitrarily.
\item Finally, for each SG atom $v$ which is not an interior atom of an SG vine-like object in $\Cs_1$ or $\Cs_2$, we fix its gap as $|r|\sim_0 Q_v$, where $Q_v\in[L^{-1},L^{-\gamma+\eta}]$ is a dyadic number.
\end{enumerate}
Note that putting all the extra requirements in (d)--(f) leads to a loss of at most $C_0^{n_{\mathrm{sb}}}(\log L)^{p_{\mathrm{sb}}}$, where $p_{\mathrm{sb}}$ is the total number of SG atoms satisfying the condition in (f).

Now, define $\Ef_{\mathrm{sb}}$ to be the number of $(k_1,\cdots,k_{2R})$-decorations $\Os_{\mathrm{sb}}$ of $\Mb_{\mathrm{sb}}$ that satisfy all the requirements (a)--(f) defined above, and define
\begin{equation}\label{defcounta}
\Af_{\mathrm{sb}}:=\Ef_{\mathrm{sb}}\cdot L^{-(d-\gamma)\chi(\Mb_{\mathrm{sb}})}\cdot (C_1^{-1}\delta^{1/2})^{\chi(\Mb_{\mathrm{sb}})}\prod_{j=1}^{q_{\mathrm{sb}}}\Xf_j^{z_j},
\end{equation}
where we recall the circular rank $\chi$ defined in Proposition \ref{propcplmol}. Then, using (\ref{redvine5}) and (\ref{ladderl1}), and that $\chi(\Mb_{\mathrm{sb}})=n_{\mathrm{sb}}-R+1$, we know that (\ref{finalsbest2}) would follow if we can prove that
\begin{equation}\label{finalcount1}
\Af_{\mathrm{sb}}\leq L^{-4\eta^6(\rho_{\mathrm{sb}}+p_{\mathrm{sb}})}\cdot L^{(R-1)(d-\gamma-1/4-1/40)}.
\end{equation} Here note that each hinge atom is SG by Proposition \ref{subpro} (5), and cannot be an interior atom of any SG vine-like object in $\Cs_1$ or $\Cs_2$, so we always have $q_{\mathrm{sk}}\leq p_{\mathrm{sb}}$.
\subsection{Reduction to large gap molecules}\label{redlg} Our goal now is to prove (\ref{finalcount1}). To bound the number of decorations $\Ef_{\mathrm{sb}}$ in the counting problem, we will reduce $\Mb_{\mathrm{sb}}$ by performing suitable \emph{cutting} operations introduced in \cite{DH23}, at all the SG vine-like objects and SG atoms. The proof is essentially the same as in Sections 9--10 of \cite{DH23}, with only minor differences due to the fact that here we are dealing with general gardens instead of couples. We start by recalling the definition of cuts.
\begin{df}[The cutting operation \cite{DH23}]\label{defcut}
Given a molecule $\Mb$ and an atom $v$. Suppose $v$ has two bonds $\ell_1$ and $\ell_2$ of opposite directions, then we may \emph{cut} the atom $v$ along  the bonds $\ell_1$ and $\ell_2$, by replacing $v$ with two atoms $v_1$ and $v_2$, such that the endpoint $v$ for the bonds $\ell_1$ and $\ell_2$ is moved to $v_1$, and the endpoint $v$ for the other bond(s) is moved to $v_2$, see Figure \ref{fig:cut}. We say this cut is an $\alpha$- (resp. $\beta$-) cut, if it does not (resp. does) generate a new connected component, and accordingly we say the resulting atoms $v_1$ and $v_2$ are $\alpha$- or $\beta$- atoms. If we are also given a decoration $(k_\ell)$ of $\Mb$, then we may define the gap of this cut to be $r:=k_{\ell_1}-k_{\ell_2}$.
\begin{figure}[h!]
\includegraphics[scale=0.44]{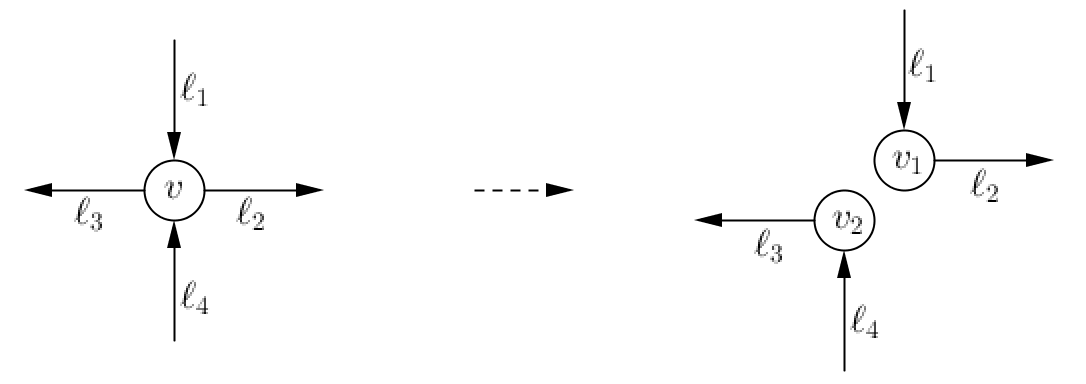}
\caption{A cutting operation executed at a degree $4$ atom $v$, see Definition \ref{defcut}.}
\label{fig:cut}
\end{figure}
\end{df}
\begin{rem} Compared to \cite{DH23}, here we also allow to perform cutting at a degree $2$ atom $v$. This case is in fact trivial: after the cut $v_2$ becomes an isolated atom (without any bond) which forms a single connected component by itself.
\end{rem}

With Definition \ref{defcut}, we shall now reduce $\Mb_{\mathrm{sb}}$ by performing various cutting operations, and possibly removing some connected components created in this process, until reaching a final molecule $\Mb_{\mathrm{fin}}$. At each step, let the molecule before and after the operation be $\Mb_{\mathrm{pre}}$ and $\Mb_{\mathrm{pos}}$, then any decoration $(k_\ell)$ of $\Mb_{\mathrm{pre}}$ can be naturally inherited to obtain a decoration of $\Mb_{\mathrm{pos}}$.

For any molecule $\Mb$ in this process (which could be $\Mb_{\mathrm{pre}}$ or $\Mb_{\mathrm{pos}}$), consider the possible $(c_v)$-decorations of $\Mb$, also restricted by some $(\beta_v)$ and $(k_\ell^0)$; we also assume that this decoration is inherited from a $(k_1,\cdots,k_{2R})$-decoration of $\Mb_{\mathrm{sb}}$ that satisfies all assumptions (a)--(f) in Section \ref{l1reduction}. Then we consider the number of such restricted decorations, take supremum over the parameters $(c_v,\beta_v,k_\ell^0)$, and define it to be $\Ef$. Similar to (\ref{defcounta}) we also define
\begin{equation}\label{defcounta2}\Af:=\Ef\cdot L^{-(d-\gamma)\chi(\Mb)}(C_1^{-1}\delta^{1/2})^{\chi(\Mb)}\prod_{j=1}^q\Xf_j^{z_j},
\end{equation} where $\chi(\Mb)$ is the circular rank of $\Mb$, and $z_j\,(1\leq j\leq q)$ are the lengths of the maximal ladders $\Lb_j$ in $\Mb$, and $\Xf_j$ are defined from $P_j$ as in Proposition \ref{ladderl1old}, while $P_j$ are fixed such that the gap $r_j$ of $\Lb_j$ is such that $|r_j|\sim_0P_j$. Define also $m$ to be the number of atoms not in the maximal ladders, and $\rho=q+m$. The notations $(\Af,\Ef)$ and $(\rho,q,m)$ will apply to the molecules appearing below, with possible subscripts matching those of $\Mb$ (so $\Af_{\mathrm{pre}}$ is defined for the molecule $\Mb_{\mathrm{pre}}$ etc.)

We will prove, for each operation, an inequality of form \begin{equation}\label{defdev}\Af_{\mathrm{pre}}\leq \If\cdot \Af_{\mathrm{pos}}\end{equation} for some quantity $\If$, which we define to be the \emph{deviation} of this operation. Clearly, if we can bound the deviation in each operation step, as well as $\Af_{\mathrm{fin}}$ for the final molecule $\Mb_{\mathrm{fin}}$, then we can deduce from these an upper bound for $\Af_{\mathrm{sb}}$ using (\ref{defdev}).

We will define the needed cutting operations, and prove (\ref{defdev}) for each operation, in Section \ref{redcut}; then in Section \ref{secrigid} we obtain an upper bound for $\Af_{\mathrm{fin}}$ and complete the proof of (\ref{finalcount1}).
\subsubsection{Cutting operations and estimates}\label{redcut} Recall the notion of $V,E,F$ and $\chi$ as in Definition \ref{defmol}; define also $V_\alpha$ and $V_\beta$ to be the number of $\alpha$- and $\beta$-atoms (see Definition \ref{defcut}, and also the description of operations below), and use $\Delta$ to denote increments (such as $\Delta F=F_{\mathrm{pos}}-F_{\mathrm{pre}}$).

\uwave{Operation 1: Removing SG vine-like objects.} In Operation 1, we collect all the SG vine-like objects in $\Cs_1$ and $\Cs_2$, as defined in Section \ref{l1reduction}; note that these also include any possible triple bonds in $\Mb_{\mathrm{sb}}$ (see Proposition \ref{subpro} (2)). For each of these objects $\Ub$, we cut the molecule at each of its joints $v$, along the two bonds $(\ell_1,\ell_2)$ fixed as in requirement (e) defined in Section \ref{l1reduction} (i.e. the two bonds that belong to $\Ub$). This disconnects a VC, denoted by $\Ub'$, from the rest of the molecule, and then we remove $\Ub'$. The two joints then become two atoms of degree at most $2$ (it is possible that they have degree $0$, i.e. isolated atoms); we define them as $\alpha$-atoms if no new connected component is created, and $\beta$-atoms otherwise. If they are $\alpha$-atoms, we also label each of them by the dyadic number $Q\in[L^{-1},L^{-\gamma+\eta}]$ such that $|r|\sim_0 Q$ for the gap $r$ of $\Ub'$ (note $r\neq 0$).

\uwave{Operation 2: Removing triangles.} In Operation 2, we consider the possible triangles $v_1v_2v_3$ in the molecule, such that there are bonds $\ell_j$ connecting $v_{j+1}$ and $v_{j+2}$ (where $v_4=v_1$ and $\ell_4=\ell_1$ etc.), and $(v_j,\ell_{j+1},\ell_{j+2})$ is an SG triple for $j\in\{1,2\}$. Let $|k_{\ell_{j+1}}-k_{\ell_{j+2}}|\sim_0 Q_{j}$ for $j\in\{1,2\}$ with $Q_j\in[L^{-1},L^{-\gamma+\eta}]$. We then cut the molecule at each $v_j$ along the bonds $(\ell_{j+1},\ell_{j+2})$, which disconnects the triangle formed by $v_j$ and $\ell_j$ from the rest of the molecule, and remove the triangle. This leaves $3$ atoms $v_j$ of degree at most $2$. We define $v_3$ as a $\beta$-atom, and define $v_j\,(1\leq j\leq 2)$ as an $\alpha$- (resp. $\beta$-) atom if it belongs to the same (resp. different) component with $v_3$, except when $v_1$ and $v_2$ are in the same component different from $v_3$, in which case we define $v_1$ as an $\alpha$-atom and define $v_2$ as a $\beta$-atom. For any $\alpha$-atom $v_j$ we label it by the corresponding $Q_j$.

\uwave{Operation 3: Remaining SG cuts.} In Operation 3, we select each of the remaining SG atoms $v$, and cut them along the designated bonds $(\ell_1,\ell_2)$ as in requirement (e) in Section \ref{l1reduction}. Note that each cut may be $\alpha$- or $\beta$-cut; we define the resulting atoms as $\alpha$- or $\beta$-atoms, and label any $\alpha$-atom by the dyadic number $Q\in[L^{-1},L^{-\gamma+\eta}]$, as in Definition \ref{defcut}.

\uwave{Operation 4: Remaining $\beta$-cuts.} In Operation 4, we look for all the possible atoms where a $\beta$-cut is possible, and perform the corresponding $\beta$-cut (defining the resulting atoms as $\beta$-atoms), until this can no longer be done. 

\uwave{Order of these operations.} We first perform Operation 1 for all SG vine-like objects in $\Cs_1$ (first for the (CN) vines and root (CL) vines, and then for the HV's), then perform Operation 1 for all SG vine-like objects in $\Cs_2$. Next we perform Operation 2 for all eligible triangles, and then perform Operation 3 for each of the remaining SG atoms. Finally we perform Operation 4 for all remaining atoms where a $\beta$-cut is possible.
\begin{rem}\label{remiota}
After all the cutting operations, the resulting graph will contain some $\alpha$-atoms. For each $\alpha$-atom $v$ and a given decoration we define an auxiliary number $\iota_v\in\{0,1\}$, such that $\iota_v=1$ if a cutting operation happens before the cutting at the atom $v$, such that the gaps $r,r'$ of these cuttings satisfy $0<|r\pm r'|\leq L^{-50\eta}Q_v$; if no such cutting operation exists then define $\iota_v=0$. Note that the number of choices for all $(\iota_v)$ is at most $2^{n_{\mathrm{sb}}}$  which can be safely ignored; therefore we may assume a choice of $(\iota_v)$ is fixed in the proof below.
\end{rem}
\begin{rem}\label{remtame} Note that the joints of the SG vine-like object in Operation 1, the atoms $v_1$ and $v_2$ in Operation 2 and the atom $v$ in Operation 3 are all SG with fixed $(\ell_1,\ell_2)$. We will also assume that no atom $v_3$ in Operation 2 or atom $v$ in Operation 4 is ZG; in fact, if any such atom $v$ is ZG, then it must be tame by Proposition \ref{subpro} (3), and thus can be treated by the arguments in Section 10.5 of \cite{DH23}. Namely, we consider the connected components after removing the atom $v$. If (i) $d(v)<4$, or if some component contains exactly one atom adjacent to $v$, then the values of $k_\ell$ for $\ell\leftrightarrow v$ must all be fixed in the given decoration, so we always have enough gain whether the resulting $\alpha$- or $\beta$-atoms have zero gap. We then cut at $v$ and proceed normally. If (ii) $d(v)=4$ and no component contains exactly one atom adjacent to $v$, then removing $v$ will yield $\Delta\chi\leq -2$ by direct calculation, which leads to a power gain $\If\leq L^{-(d-2\gamma-\eta)}$, as $(k_\ell)$ for $\ell\leftrightarrow v$ have at most $L^d$ choices. We then remove $v$ without assigning any new $\alpha$- or $\beta$-atoms and proceed with the rest of the molecule. The big gain $L^{-(d-2\gamma-\eta)}$ is enough to cover any loss that may occur, see Remark \ref{remtame2}.
\end{rem}
\begin{prop}\label{excessprop1} The followings hold regarding Operations 1--4, and the process of applying them as described above:
\begin{enumerate}[{(1)}]
\item After each Operation 1--4, \emph{except for an Operation 1 that creates a new connected component}, we have that \begin{equation}\label{excesseqn2}\If\leq L^{-\eta^3}\cdot\prod_{v}^{(\alpha)}L^{(\gamma+3\eta)/2-\kappa_v\eta}Q_v\cdot L^{(2\gamma_0+5\eta^2)\Delta F-(\gamma_0+2\eta^2)\Delta V_\beta}
\end{equation} for any $Q_v\in[L^{-1},L^{-\gamma+\eta}]$, where the product is taken over all the newly created $\alpha$-atoms $v$, and $Q_v$ denotes the label of $v$. The parameter $\kappa_v$ equals $0$ if $\iota_v=0$, and equals $50$ if $\iota_v\neq 0$.
\item If an Operation 1 creates a new connected component, then we have that
\begin{equation}\label{excesseqn2+}
\If\leq L^{\gamma_0-\eta^3}\cdot\prod_{v}^{(\alpha)}L^{(\gamma+3\eta)/2-\kappa_v\eta}Q_v\cdot L^{(2\gamma_0+5\eta^2)\Delta F-(\gamma_0+2\eta^2)\Delta V_\beta},
\end{equation} where the notations are the same as in (\ref{excesseqn2}).
\item In the whole process, the number of Operations 1 that indeed create new connected components, is at most $2R-1$. If it is exactly $2R-1$, then one of the new connected components created must be an isolated $\beta$-atom.
\end{enumerate}
\end{prop}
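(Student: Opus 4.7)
The plan is to track the quantity $\Af$ step by step through each operation, using the decomposition $\Af = \Ef \cdot L^{-(d-\gamma)\chi(\Mb)}(C_1^{-1}\delta^{1/2})^{\chi(\Mb)}\prod_j \Xf_j^{z_j}$. For any single operation, the ratio $\If = \Af_{\mathrm{pre}}/\Af_{\mathrm{pos}}$ factors into: (a) the change in decoration count $\Ef_{\mathrm{pre}}/\Ef_{\mathrm{pos}}$, which is exactly the number of decorations of the piece being cut off subject to all SG/LG/ZG and gap constraints, taken uniformly in the boundary data; (b) a rank correction $L^{(d-\gamma)\Delta\chi}(C_1^{-1}\delta^{1/2})^{\Delta\chi}$, which via $\chi = E - V + F$ reduces to tracking $\Delta E$, $\Delta V$, $\Delta F$ and in particular the new $\alpha$- vs.\ $\beta$-atom split (each cut contributes $+1$ to $V$; a $\beta$-cut contributes $+1$ to $F$, an $\alpha$-cut contributes $0$); and (c) the change of ladder factors, which can only decrease $\prod \Xf_j^{z_j}$ since cutting can only shorten maximal ladders. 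Part (a) will be handled by Lemma \ref{countlem} (counting decorations under resonance constraints) applied to the disconnected piece, together with the structural information about that piece provided by Propositions \ref{molecpl} and \ref{subpro}.

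For Operations 1 and 2, the piece removed is a vine-like object or a triangle with fully specified combinatorial form, so its decoration count is controlled in closed form. For an SG vine-like object $\Ub$ with two joints and gap $r$, the decorations of $\Ub$ relative to fixed boundary vectors $k_{\ell_1}, k_{\ell_2}$ at each joint lie in the fiber $|k_{\ell_1}-k_{\ell_2}|=|r|\leq Q$, and a direct count using Lemmas \ref{countlem}--\ref{treelemma} applied via the ladder and triangle substructure of Proposition \ref{molecpl} yields $\Ef_{\mathrm{pre}}/\Ef_{\mathrm{pos}} \leq L^{(d-\gamma)\chi(\Ub)}\cdot\prod_{v}^{(\alpha)}(Q_v L^{\gamma})^{d}\cdot L^{-\eta^3}\cdot L^{(2\gamma_0+5\eta^2)\Delta F}$ with a $(C_1\delta^{-1/2})^{\chi(\Ub)}$ correction absorbed into the rank term; multiplying by the rank and component corrections yields precisely \eqref{excesseqn2} or \eqref{excesseqn2+}. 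The triangle case is similar but simpler. In both operations, the $\iota_v$ label of Remark \ref{remiota} distinguishes decorations whose gap is essentially ``reused'' from a previous cut: when $\iota_v=1$, a nontrivial linear constraint between two cut gaps upgrades the counting bound, yielding the $L^{-\kappa_v\eta}$ gain with $\kappa_v=50$. For Operations 3 and 4 on individual atoms (excluding the tame ZG cases handled separately as in Remark \ref{remtame}), the estimate is essentially a degree-$4$ vertex cut, and we invoke the rigidity argument of Proposition 9.6 of \cite{DH23} (applicable as a black box here because the layering structure is orthogonal to the counting problem at this stage) to get the same shape of bound; the $-\eta^3$ surplus comes from the fact that the removed atom is SG (or tame ZG), not LG generic.

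For part (3), the bound follows directly from Proposition \ref{cnblock} once we observe that every Operation 1 that creates a new connected component corresponds to removing either a (CN) vine or a root (CL) vine in $\Gc_{\mathrm{sb}}$ (the joints of the removed object necessarily become joints in the post-operation molecule that disconnect the piece, and by Proposition \ref{block_clcn} plus the classification of Proposition \ref{subpro}\,(2) these correspond exactly to (CN) or root (CL) vines, with HVs and HVCs never creating new components due to the extra adjoint bond): since the underlying skeleton $\Gc_{\mathrm{sb}}$ has width $2R$, Proposition \ref{cnblock} gives at most $2R$ resulting components, i.e.\ at most $2R-1$ component-creating operations, and in the extremal case one component is an isolated atom. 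The main obstacle I anticipate is the bookkeeping in part (1) for Operation 1 applied to HVs and HVCs in $\Cs_1$: unlike a single (CN) or root (CL) vine, these have a more intricate joint structure, so the counting and the $\Delta F$, $\Delta V_\beta$ accounting have to be set up carefully to match the powers $(2\gamma_0+5\eta^2)$ and $(\gamma_0+2\eta^2)$ exactly, and the ladder factor change $\prod \Xf_j^{z_j}$ has to be tracked through ladders internal to the vine and those crossing its joints.
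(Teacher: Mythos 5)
Your overall framework for part (1) --- decomposing $\If$ into a decoration-count ratio, a rank correction via $\chi=E-V+F$, and a ladder-factor change --- is the right skeleton, and it matches what the cited Proposition 9.4 of \cite{DH23} does. However, the intermediate bound you assert, namely $\Ef_{\mathrm{pre}}/\Ef_{\mathrm{pos}} \leq L^{(d-\gamma)\chi(\Ub)}\cdot\prod_v(Q_vL^\gamma)^d\cdot L^{-\eta^3}\cdot L^{(2\gamma_0+5\eta^2)\Delta F}$, does not obviously reduce to the target form $\prod_v L^{(\gamma+3\eta)/2-\kappa_v\eta}Q_v$ after multiplying by the rank correction (the $Q_v$-exponent is $d$ in one and $1$ in the other), and you do not carry out the claimed verification that this ``yields precisely (\ref{excesseqn2}).'' Note also that the paper simply cites Proposition 9.4 of \cite{DH23} for all of part (1), and that your citation of ``Proposition 9.6 of \cite{DH23}'' for Operations 3--4 is a mismatch: 9.6 is the rigidity theorem for final components (corresponding to Proposition \ref{lgmolect} here), while the deviation estimates for the cutting operations are 9.4.

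The serious gap is in part (2), which is a genuinely new estimate in the garden ($R\geq 2$) setting --- it cannot be cited from \cite{DH23}, where no Operation 1 disconnects. You never isolate the disconnecting case; you assert that your uniform computation ``yields precisely (\ref{excesseqn2}) or (\ref{excesseqn2+})'' without explaining where the extra $L^{\gamma_0}$ comes from, nor why $\Delta F=1$, $\Delta V_\beta=2$, and no $\alpha$-atom is created. The paper's proof of (\ref{excesseqn2+}) relies on two observations that your proposal omits entirely: (i) Operations~1 on objects in $\Cs_2$ \emph{never} disconnect the molecule, by Lemma~\ref{subsetvc}, since they sit inside a DV or an LG/ZG vine-like object which remains connected after their removal, so only single vines in $\Cs_1$ can disconnect; and (ii) for a single vine in $\Cs_1$ whose removal disconnects the molecule, the bond pair $(k_{\ell_1},k_{\ell_2})$ in $\Ub$ at one joint is forced to satisfy \emph{both} a fixed linear relation $k_{\ell_1}\pm k_{\ell_2}=\mathrm{const}$ \emph{and} a resonance window $|k_{\ell_1}|^2\pm|k_{\ell_2}|^2\in$ a fixed $\delta^{-1}L^{-2\gamma}$-interval, because the component sum rules close on the post-operation side. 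Lemma~\ref{atomcountlem} then bounds the count for this pair by $L^{d-\gamma+\gamma_0+\eta^3}$, and combining with the re-parametrization of the vine interior as in the proof of Proposition~\ref{vineest} together with Lemma~\ref{countlem} yields $\If\lesssim_1 L^{\gamma_0+2\eta^3}$. Without (i) you would be estimating a case that cannot occur, and without (ii) the $L^{\gamma_0}$ is unexplained; your ``fixed boundary vectors $k_{\ell_1},k_{\ell_2}$'' phrasing also conflates the bonds internal to $\Ub$ (which are part of the decoration being counted) with the external boundary data.

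Your part (3) is essentially correct and follows the paper's route through Proposition~\ref{cnblock}, but the justification is incomplete in the same way as (2): you explain that HVs and HVCs in $\Cs_1$ do not disconnect because of the adjoint bond, but you do not address why the objects in $\Cs_2$ never disconnect, which again requires Lemma~\ref{subsetvc}. The paper also orders the operations so that the disconnecting ones (all from $\Cs_1$'s (CN) and root (CL) vines) come first, which is what makes the application of Proposition~\ref{cnblock} clean.
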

\begin{proof} (1) This is exactly Proposition 9.4 of \cite{DH23}, and follows from exactly the same proof. Note that in the couple setting in Section 9 of \cite{DH23} (i.e. $R=1$), no Operation 1 can create a new component, and the proof of Proposition 9.4 of \cite{DH23} relies on this fact.

(2) Note that all the SG vine-like objects in $\Cs_1$ and $\Cs_2$ are disjoint (and the DV's and LG and ZG vine-like objects containing those objects in $\Cs_2$ are also disjoint with the objects in $\Cs_1$), so removing any of them will not affect the others. By Lemma \ref{subsetvc}, any Operations 1 concerning $\Cs_2$ will \emph{not} create a new connected component, because the corresponding DV or LG or ZG vine-like object will still remain connected.

Therefore, we only need to consider Operations 1 concerning $\Cs_1$, which is always removing a single vine $\Ub$. By definition we have $\Delta F=1$ and $\Delta V_\beta=2$, and no new $\alpha$-atom is created. If $\Ub$ contains $m$ atoms (including joints). Consider the collection $(k_\ell)$ for all bonds $\ell\in\Ub$ in the decoration of $\Mb_{\mathrm{pre}}$, and let the number of choices for them be $\Ff$, then we have
\begin{equation}\label{dev1}\If\lesssim_1 \Ff\cdot L^{-(m-1)(d-\gamma)}\cdot (C_1^{-1}\delta^{1/2})^{m-1}\prod_{j:\Lb_j\subset\Ub}\Xf_j^{z_j}
\end{equation}
where the product is taken over all $j$ such that the maximal ladder $\Lb_j$ (Proposition \ref{ladderl1old}) is a subset of $\Ub$ (so there are at most three such $j$, see Figure \ref{fig:vines}), and $z_j$ is the length of $\Lb_j$.

Now we calculate $\Ff$. Consider the two bonds $\ell_1$ and $\ell_2$ at one joint of $\Vb\Cb$ that belong to $\Ub$. Note that removing $\Ub$ will create a new connected component; thus in the $(c_v)$-decoration of $\Mb_{\mathrm{pre}}$, we must have that $k_{\ell_1}\pm k_{\ell_2}$ equals a fixed constant and that $|k_{\ell_1}|^2\pm|k_{\ell_2}|^2$ belongs to a fixed interval of length at most $n_{\mathrm{sb}}\cdot\delta^{-1}L^{-2\gamma}$, both depending only on the parameters $(c_v)$ and $(\beta_v)$. By losing another factor $n_{\mathrm{sb}}\leq (\log L)^{C_2}$ which can be ignored in view of the $L^{-\eta^3}$ factor in (\ref{excesseqn2+}), we may assume that $|k_{\ell_1}|^2\pm|k_{\ell_2}|^2$ belongs to a fixed interval of length $\delta^{-1}L^{-2\gamma}$. By (\ref{atomcountbd}) in Lemma \ref{atomcountlem}, the number of choices for $(k_{\ell_1},k_{\ell_2})$ is at most $L^{d-\gamma+\gamma_0+\eta^3}$.

Now, once $(k_{\ell_1},k_{\ell_2})$ is fixed, we can invoke the re-parametrization introduced in the proof of Proposition \ref{vineest} (note that we are dealing with a single vine now), and define the new variables $(x_j,y_j)\,(1\leq j\leq \widetilde{m})$ if $\Ub$ is bad vine, or $(x_j,y_j)\,(1\leq j\leq \widetilde{m})$ and $(u_1,u_2,u_3)$ if $\Ub$ is normal vine, where $m=2\widetilde{m}+2$ for bad vine and $m=2\widetilde{m}+5$ for normal vine. In either case, since each $\Gamma_v$ belongs to a fixed interval of length $\delta^{-1}L^{-2\gamma}$, we know that each $(x_j,y_j)$ satisfies a system of form (\ref{basiccount01}), and $(u_1,u_2,u_3)$ satisfies a system of form (\ref{basiccount02}). Moreover for each $j$, if the atoms associated with $(x_j,y_j)$ (see the proof of Proposition \ref{vineest}) belongs to a certain ladder $\Lb$, then the value of $r$ in (\ref{basiccount01}) must equal the gap of $\Lb$. Therefore, by Lemma \ref{countlem}, we get in either case that
\[\Ff\lesssim_1 L^{d-\gamma+\gamma_0+\eta^3}L^{(m-2)(d-\gamma)}\cdot L^{\eta^3}(C_1\delta^{-1/2})^{m-1}\prod_{j:\Lb_j\subset\Ub}\Xf_j^{-z_j},\] hence $\If\lesssim_1 L^{\gamma_0+2\eta^3}$, so (\ref{excesseqn2+}) is true.

(3) By definition, we are doing Operations 1 concerning $\Cs_1$ before those concerning $\Cs_2$. Moreover, those Operations 1 concerning $\Cs_2$ and HV's in $\Cs_1$ will not create any new connected component, so we only need to consider the Operations 1 concerning (CN) and root (CL) vines in $\Mb_{\mathrm{sb}}=\Mb(\Gc_{\mathrm{sb}})$, which are done before all the others. By Proposition \ref{cnblock}, after removing all these (CN) and root (CL) vines, the number of connected components increases to at most $2R$, so in this process the number of Operations 1 that create new connected components is at most $2R-1$; if equality holds, then one of the new connected components created must be an isolated atom, which also has to be $\beta$-atom by definition.
\end{proof}
\subsection{The rigidity theorem}\label{secrigid} Starting from $\Mb_{\mathrm{sb}}$, we perform all the Operations 1--4 as described in Section \ref{redcut}, and obtain a final molecule $\Mb_{\mathrm{fin}}$. This molecule may not be connected, and let $\Mb$ be any of its connected components. Consider also a decoration of $\Mb_{\mathrm{fin}}$ (and $\Mb$) that is inherited from a decoration of $\Mb_{\mathrm{sb}}$ as in Section \ref{redlg}. Then, they satisfy certain crucial properties, which we summarize in the following proposition.
\begin{prop}\label{finalmol}
In any (connected) component $\Mb$ of $\Mb_{\mathrm{fin}}$, each $\alpha$-atom $v$ is labeled by a dyadic number $Q_v\in [L^{-1},L^{-\gamma+\eta}]$, such that if $v$ has two bonds $(\ell_1,\ell_2)$ then $|k_{\ell_1}-k_{\ell_2}|\sim_0 Q_v$ in the decoration; recall also $\iota_v$ and $\kappa_v$ introduced in Remark \ref{remiota} and Proposition \ref{excessprop1}. Define any atom that is not an $\alpha$- nor $\beta$-atom to be an \emph{$\varepsilon$-atom}. Then, any $\varepsilon$-atom in $\Mb_{\mathrm{fin}}$ \emph{must} be LG or ZG in the decoration, and must be tame if it is ZG. Moreover $\Mb_{\mathrm{fin}}$ contains no triple bond.

We say a component is \emph{perfect} if all $\alpha$- and $\beta$- atoms have degree $2$, and all $\varepsilon$-atoms have degree $4$. Otherwise we say $\Mb$ is \emph{imperfect}. Then, there are at most $R$ imperfect components.  Moreover, any perfect component has at least one $\beta$-atom. If a perfect component $\Mb$ is a cycle (i.e. all atoms have degree $2$), then it is either a double bond (which is also Vine (I)), or a cycle of length at least $4$, or a triangle with at most one $\alpha$-atom. If $\Mb$ is not a cycle, then all its $\alpha$- and $\beta$-atoms form several disjoint chains, such that the two ends of each chain are connected to two distinct $\varepsilon$-atoms. Finally, no $\alpha$- or $\beta$-atom is ZG with the assumptions in Remark \ref{remtame}, and if any (perfect or imperfect) component $\Mb$ is a \emph{vine} with two joints having degree $2$, then it \emph{must} be LG in the decoration.
\end{prop}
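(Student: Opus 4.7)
The plan is to adapt the corresponding analysis for couples from Section 9 of \cite{DH23} to the garden setting, with the main new input being Proposition \ref{excessprop1}(3), which bounds by $2R-1$ the number of Operations 1 that actually create new connected components. I begin with the structural properties of surviving atoms. By construction, each $\alpha$-atom $v$ carries a label $Q_v\in[L^{-1},L^{-\gamma+\eta}]$ with $|k_{\ell_1}-k_{\ell_2}|\sim_0 Q_v$ directly from Definition \ref{defcut} and the labeling rules in Operations 1--3. Every triple bond in $\Mb_{\mathrm{sb}}$ has a hinge-atom endpoint by Proposition \ref{subpro}(2), and the corresponding SG vine-like object in $\Cs_1$ is removed in Operation 1, so $\Mb_{\mathrm{fin}}$ is triple-bond free. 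Any surviving $\varepsilon$-atom $v$ was never cut, hence is not SG in the designated sense of requirement (e) of Section \ref{l1reduction}; by requirement (d) this forces $v$ to be LG or ZG, and in the ZG case Proposition \ref{subpro}(3) gives tameness (note $v$ cannot be a hinge atom since those are SG).

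For the structure of perfect components, I observe that $\alpha$-atoms have degree exactly $2$ (Definition \ref{defcut}), $\beta$-atoms have degree at most $2$, and $\varepsilon$-atoms retain degree at most $4$. In a perfect component, $\alpha$- and $\beta$-atoms thus have degree exactly $2$ and $\varepsilon$-atoms degree exactly $4$; hence all the $\alpha/\beta$-atoms form disjoint chains whose endpoints abut $\varepsilon$-atoms, unless the whole component is a cycle. For cycles, direct case analysis of Operations 1--3 rules out triangles with two SG atoms (these would have been handled by Operation 2), leaving only double bonds, cycles of length $\geq 4$, or triangles with at most one $\alpha$-atom. The existence of a $\beta$-atom in every perfect component follows because $\Mb_{\mathrm{sb}}$ contains no saturated component (Definition \ref{defmol}), so every surviving component must intersect some cut; a cut that produced only $\alpha$-atoms in the component would either be undone (restoring the original $\varepsilon$-structure) or would imply the cut atom could be further $\beta$-cut, contradicting the terminality of Operation 4. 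The no-ZG property for $\alpha/\beta$-atoms follows from Remark \ref{remtame} for Operations 2 and 4, and from the nonzero SG gap at Operations 1 and 3.

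The most delicate step, and the main obstacle, is bounding the number of imperfect components by $R$. Each imperfect component must contain a residual atom of ``wrong'' degree, which can only arise from a cut that did not produce the canonical degree-2 residue, i.e.\ from a cut at a boundary/joint of a removed block where one side loses two bonds. Starting from $F_{\mathrm{sb}}=1$ (Proposition \ref{subpro}(1)), I track $\Delta F$ through the entire cutting procedure: Operations 2--4 each contribute at most one component per $\beta$-cut, and each such component contains the newly created $\beta$-atom as an interior, hence is perfect or contains a $\beta$-atom of degree $< 2$ matched against another; Operations 1, on the other hand, remove entire SG vine-like objects whose interior structure is perfect (interior atoms have degree $4$ within the vine), so a new component created by Operation 1 from the removed vine is automatically perfect, while the ``other side'' of the splitting is what potentially becomes imperfect. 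Proposition \ref{excessprop1}(3) caps such splittings at $2R-1$, and moreover the saturating case produces an isolated $\beta$-atom (which counts as one imperfect component). A careful pairing argument, matching each imperfect side to either a splitting created by Operation 1 or to a component of $\Mb_{\mathrm{sb}}$ opened up by the splicing of root (CN)/(CL) blocks in Proposition \ref{cnblock}, shows the total is bounded by $R$.

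Finally, if a component $\Mb$ of $\Mb_{\mathrm{fin}}$ is a vine whose two joints have degree $2$, then these joints are $\alpha$- or $\beta$-atoms produced by cuts along the two bonds that form the gap of $\Mb$. Were $\Mb$ SG in the inherited decoration, it would be an SG vine-like object not contained in any $\Cs_1$ object (else its joints would have been cut earlier inside that object, and the component would differ), and not containing any hinge atom by Proposition \ref{subpro}(6); but then, by Proposition \ref{subpro}(4) and the maximality construction in Section \ref{l1reduction}, $\Mb$ would be a subset of (or equal to) some maximal SG vine-like object placed into $\Cs_2$, which would then have been removed in Operation 1, contradicting the survival of $\Mb$. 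The hardest part throughout is the bookkeeping of $\Delta F$ across all four operation types in the garden setting, as the widths-$2R$ structure allows genuine splittings absent in the couple case, and one must carefully separate ``good'' splittings (producing perfect vine interiors) from the at most $R$ splittings that seed imperfect residues.
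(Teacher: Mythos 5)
Your proof splits into two parts: the structural facts about $\alpha$/$\beta$/$\varepsilon$-atoms, chains, cycles, ZG/LG, and the final vine statement (which you correctly attribute to adapting the couple-setting arguments of \cite{DH23}, Proposition 9.5), and the new claim that there are at most $R$ imperfect components. The first part is essentially how the paper handles it (the paper cites Proposition 9.5 of \cite{DH23} directly for everything but the imperfect-component count), so I'll focus on the second.

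For the bound on imperfect components, your argument has a genuine gap. You defer the crux to ``A careful pairing argument, matching each imperfect side to either a splitting created by Operation 1 or to a component of $\Mb_{\mathrm{sb}}$ opened up by the splicing of root (CN)/(CL) blocks\ldots shows the total is bounded by $R$,'' but you never actually produce this pairing, and some of the intermediate assertions are shaky. In particular: ``a new component created by Operation 1 from the removed vine is automatically perfect'' is confused — Operation 1 \emph{removes} the vine-like object, so it cannot be a surviving component; the new components, if any, are fragments of the rest of the molecule, and whether they are perfect depends on degree counts you haven't tracked. Likewise ``Operations 2--4 each contribute at most one component per $\beta$-cut\ldots hence is perfect or contains a $\beta$-atom of degree $<2$ matched against another'' does not yield a conclusion: $\beta$-cuts from Operation 4 can multiply and it is not clear why they would not proliferate imperfect components. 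You invoke Proposition \ref{excessprop1}(3), which bounds by $2R-1$ the Operations 1 that increase $F$, but this alone does not bound the total number of components (Operations 2--4 also split), and hence does not bound imperfect ones without further input. The paper avoids all this bookkeeping with a much shorter invariant argument: the quantity $4V-2E-2(V_\alpha+V_\beta)$ is checked to be preserved by each of Operations 1--4, starts at $2R$ for $\Mb_{\mathrm{sb}}$ (since $V=n_{\mathrm{sb}}$, $E=2n_{\mathrm{sb}}-R$, $V_\alpha=V_\beta=0$ by Proposition \ref{subpro}(1)), and equals $\sum_{\varepsilon\text{-atoms}}(4-d(v))+\sum_{\alpha,\beta\text{-atoms}}(2-d(v))$ at the end; every imperfect component contributes at least $2$ and perfect ones contribute $0$, whence at most $R$ are imperfect. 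This is the key idea you're missing — it replaces all of the component-tracking, Proposition \ref{cnblock}, and Proposition \ref{excessprop1}(3) with a single conserved quantity.
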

\begin{proof} We only need to prove that there are at most $R$ imperfect components. All the other statements follow from exactly the same proof as in Proposition 9.5 of \cite{DH23} (note also that any ZG $\varepsilon$-atom cannot be a hinge atom, thus must be tame by Proposition \ref{subpro} (3)).

The molecule $\Mb_{\mathrm{sb}}$ has $n=n_{\mathrm{sb}}$ atoms and $2n-R$ bonds, as in Proposition \ref{subpro} (1). It is easy to check that in each Operation 1--4, the value of $4V-2E-2(V_\alpha+V_\beta)$ is preserved. This value is $2R$ for $\Mb_{\mathrm{sb}}$, and has to be the same for $\Mb_{\mathrm{fin}}$. But this value also equals the sum of $4-d(v)$ over all $\varepsilon$-atoms $v$, plus the sum of $2-d(v)$ over all $\alpha$- and $\beta$- atoms $v$. If we add up these quantities over each component, then by definition, we get $0$ for a perfect component, and at least $2$ for an imperfect component. This implies that the number of imperfect components is at most $R$ (in fact, it is exactly $R$ if $R=1$).
\end{proof}
With Proposition \ref{finalmol}, we can reduce the proof of (\ref{finalcount1}) to the estimates for quantities $\Af$ and $\Ef$ associated with the counting problems for each component $\Mb$ of $\Mb_{\mathrm{fin}}$. These estimates are stated as a \emph{rigidity theorem} in Proposition 9.6 of \cite{DH23}; its proof is a major technical component of \cite{DH23}. Fortunately, here we can use it (almost) as a black box.
\begin{prop}
\label{lgmolect} For each component $\Mb$ of $\Mb_{\mathrm{fin}}$, define $(\Af,\Ef)$ and $(\rho,q,m)$ as in Section \ref{redlg} and (\ref{defcounta2}), but associated with $\Mb$. Let also each $\alpha$-atom $v$ be labeled by the dyadic number $Q_v$, then we have
\begin{equation}\label{finalcount}\Af\leq \prod_v^{(\alpha)}L^{-(\gamma+3\eta)/2+\kappa_v\eta}Q_v^{-1}\cdot L^{(\gamma_0+2\eta^2)V_\beta-(2\gamma_0+5\eta^2)G}\cdot L^{-\eta^5\rho+\eta Z},
\end{equation} where the product is taken over all $\alpha$-atoms $v$, and $V_\beta$ is the number of $\beta$-atoms in $\Mb$, and $G$ is $0$ or $1$ depending on whether $\Mb$ is imperfect or perfect component. Finally $Z$ equals $0$ if $R=1$; if $R\geq 2$ then $Z$ equals the number of $\varepsilon$-atoms \emph{not} of degree $4$ in $\Mb$.
\end{prop}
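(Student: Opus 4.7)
The plan is to invoke Proposition 9.6 of \cite{DH23} as essentially a black box, after verifying that the structural setup is identical in the garden setting modulo the novel $L^{\eta Z}$ factor. Concretely, given a component $\Mb$ of $\Mb_{\mathrm{fin}}$, I would order its atoms so that the decoration is built up bond-by-bond, and at each step count the number of choices for the newly introduced vectors subject to both the bond-decoration constraint $\sum_{\ell\leftrightarrow v}\zeta_{v,\ell}k_\ell = c_v$ and the resonance restriction $|\Gamma_v-\beta_v|\leq\delta^{-1}L^{-2\gamma}$ built into the definition of a restricted decoration. Lemmas \ref{countlem} and \ref{atomcountlem} then provide the per-atom counting bounds, and the SG labels $Q_v$ at each $\alpha$-atom together with Remark \ref{remiota} feed in the $Q_v^{-1}$ and $\kappa_v\eta$ factors.

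The body of the argument separates into two main cases matching Proposition \ref{finalmol}: perfect components and imperfect components. For perfect components that are cycles (double bonds, length-$\geq 4$ cycles, or triangles with at most one $\alpha$-atom) and for perfect components where the $\alpha/\beta$-atoms form chains joining two distinct $\varepsilon$-atoms, I would run exactly the induction of \cite{DH23}: contract the $\alpha/\beta$-chains (each $\alpha$-atom gives $L^{-(\gamma+3\eta)/2+\kappa_v\eta}Q_v^{-1}$ via the small-gap triple, each $\beta$-atom gives $L^{\gamma_0+2\eta^2}$ via Lemma \ref{atomcountlem}, and each ladder $\Lb_j$ inside yields the requisite $\Xf_j^{-z_j}$), and exploit the LG or ZG tameness of the remaining $\varepsilon$-atoms to recoup the $L^{-(d-\gamma)}$ per atom demanded by the definition of $\Af$. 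The extra $L^{-\eta^5\rho}$ and the $G=1$ improvement for perfect components come from the strict circuit inequality $\chi(\Mb)\geq 1$ once all chains are contracted and, for cycles, from the enforced mandatory $\beta$-atom.

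The genuinely new element relative to \cite{DH23} is the $L^{\eta Z}$ factor and the imperfect-component case when $R\geq 2$. The $\varepsilon$-atoms of degree $<4$ arise from atoms whose corresponding branching nodes in $\Gc_{\mathrm{sb}}$ are either roots of trees or parents of leaves paired with roots, so via (\ref{molecv}) each carries an external fixed vector $c_v$ determined by $(k_1,\ldots,k_{2R})$; this only restricts the decoration, and costs at worst a harmless $L^{\eta}$ per atom in the interpolation between the $L^1$ estimate (Proposition \ref{ladderl1old}) and the pointwise bound, which is what $L^{\eta Z}$ records. The main obstacle — the one that forced the careful setup in \cite{DH23} and that I cannot shortcut — is the case analysis for perfect components whose $\varepsilon$-skeleton is itself vine-like: there the $\iota_v$ bookkeeping of Remark \ref{remiota} must be combined with Lemma \ref{atomcountlem} to rule out the ``aligned gap'' configurations where two SG cuts in the sequence have nearly parallel gap vectors, which is precisely where the $\kappa_v\eta$ correction is used and where merely tracking $V_\beta$, $\chi$, and $\rho$ does not suffice. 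This step is verbatim the argument of Proposition 9.6 of \cite{DH23}, and I would cite it rather than reproduce it.
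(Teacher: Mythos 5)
Your treatment of perfect components and of imperfect components with $R=1$ matches the paper: both simply cite Proposition 9.6 of \cite{DH23} (together with Section 10.5 there for tame atoms) as a black box, and your sketch of its internal mechanics is consistent with that. The problem is the genuinely new case, namely imperfect components when $R\geq 2$, where your proposed mechanism does not hold up. You claim each degree-$<4$ $\varepsilon$-atom ``costs at worst a harmless $L^\eta$ in the interpolation between the $L^1$ estimate (Proposition \ref{ladderl1old}) and the pointwise bound.'' This conflates two different objects: Proposition \ref{ladderl1old} bounds the time integral $\Bc(\alpha[\Mb_{\mathrm{sb}}])$, which is a separate factor in (\ref{redvine5}) and plays no role in the decoration count $\Ef$ entering $\Af$. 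Moreover you never explain where the factor $L^{-\eta^5\rho}$ comes from for such components --- your circuit-rank argument for it is stated only for perfect components --- and a direct per-atom accounting of the degree deficiency is exactly what the couple-based induction of \cite{DH23} does \emph{not} provide once the total deficiency exceeds $2$.

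The paper closes this gap by a soft two-bound interpolation rather than any per-atom accounting. For an imperfect component with $R\geq 2$ one first shows $\Af\lesssim_1 1$ (the proof of Proposition 9.6 of \cite{DH23} extends trivially to general imperfect components), and separately applies Lemma 10.2 of \cite{DH23} to get $\Af\lesssim_1 L^{-\eta^2\rho+C_0(V_\alpha+V_\beta+Z)}$ with $V_\alpha+V_\beta+Z\geq 1$. Interpolating with exponent $\eta^2$ yields $\Af\lesssim_1 L^{-\eta^4\rho+C_0\eta^2(V_\alpha+V_\beta+Z)}$, and this is $\ll$ the right-hand side of (\ref{finalcount}) because each $\alpha$-atom factor there is at least $L^{\gamma/4}$ (as $Q_v\leq L^{-\gamma+\eta}$), each $\beta$-atom contributes $L^{\gamma_0+2\eta^2}$, $G=0$, and $\eta^4>\eta^5$. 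In particular the factor $L^{\eta Z}$ is not recording a counting cost of the low-degree $\varepsilon$-atoms; it is a reserve built into the statement precisely so that the $C_0\eta^2 Z$ loss from Lemma 10.2 can be absorbed ($\eta\gg C_0\eta^2$). Without this interpolation step, or an equally concrete substitute, your argument does not establish (\ref{finalcount}) for imperfect components with $R\geq 2$.
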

\begin{proof} For perfect components, or imperfect components when $R=1$ (which are called \emph{odd} components in \cite{DH23}), this is exactly the same as Proposition 9.6 of \cite{DH23}; see also Section 10.5 of \cite{DH23} that addresses tame atoms. Now consider an imperfect component when $R\geq 2$. By repeating the same proof in Proposition 9.6 of \cite{DH23} we can obtain that $\Af\lesssim_11$ (the proof in \cite{DH23} trivially extends to general imperfect components). On the other hand, we may also apply Lemma 10.2 of \cite{DH23} to get \[\Af\lesssim_1 L^{-\eta^2\rho+C_0(V_\alpha+V_\beta+Z)},\] noticing that $V_\alpha+V_\beta+Z\geq 1$. Since for any $Q_v\leq L^{-\gamma+\eta}$ we have
\[L^{-(\gamma+3\eta)/2+\kappa_v\eta}Q_v^{-1}\geq L^{(\gamma-5\eta)/2}\geq L^{\gamma/4},\] and also $G=0$, by interpolation we obtain that
\[\Af\lesssim_1 L^{-\eta^4\rho+C_0\eta^2(V_\alpha+V_\beta+Z)}\ll_1\mathrm{RHS\ of\ }(\ref{finalcount}).\qedhere\]
\end{proof}
With Proposition \ref{lgmolect}, we are finally ready to prove (\ref{finalcount1}).
\begin{proof}[Proof of the bound (\ref{finalcount1})] By iterating (\ref{defdev}) for each of the Operations 1--4 in reducing $\Mb_{\mathrm{sb}}$ to $\Mb_{\mathrm{fin}}$, and using the fact that $\Mb_{\mathrm{fin}}$ is formed by all the components $\Mb$, we have
\begin{equation}\label{together}
\Af_{\mathrm{sb}}\leq\Af_{\mathrm{fin}}\cdot\prod_{\mathrm{(all\ operations)}}\If,\qquad \Af_{\mathrm{fin}}\leq\prod_{\mathrm{(all\ components)}}\Af,
\end{equation} where the products are taken over all Operations 1--4 and all components $\Mb$ of $\Mb_{\mathrm{fin}}$. Putting together (\ref{excesseqn2}), (\ref{excesseqn2+}) and (\ref{finalcount}), we get that
\begin{equation}\label{together2}
\Af_{\mathrm{sb}}\leq L^{(2\gamma_0+5\eta^2)(F_{\mathrm{im}}-1)}\cdot L^{\gamma_0\cdot p_{\mathrm{op}}^{+}}\cdot L^{\eta Z_{\mathrm{tot}}}\cdot L^{-\eta^5\rho_{\mathrm{fin}}-\eta^3p_{\mathrm{op}}}.
\end{equation}Here in (\ref{together2}):
\begin{itemize}
\item $\rho_{\mathrm{fin}}$ is the $\rho$ value for $\Mb_{\mathrm{fin}}$ defined as in Section \ref{redlg}, and $F_{\mathrm{im}}$ is the number of imperfect components in $\Mb_{\mathrm{fin}}$, so $F_{\mathrm{im}}\leq R$ by Proposition \ref{finalmol}.
\item $p_{\mathrm{op}}$ is the total number of Operations 1--4, and $p_{\mathrm{op}}^+$ is the total number of Operations 1 that create new components, so $p_{\mathrm{op}}^+\leq 2R-1$ by Proposition \ref{excessprop1} (3). If equality holds, then one of the components must be an isolated $\beta$-atom, so we gain an extra power $L^{-(\gamma_0+\eta^2)}$ in (\ref{finalcount}) (where $V_\beta=\rho=1$ and $G=Z=0$); thus in practice we can assume $p_{\mathrm{op}}^+\leq 2R-2$.
\item $Z_{\mathrm{tot}}$ is the sum of $Z$, defined in Proposition \ref{lgmolect}, over all components $\Mb$, so $Z_{\mathrm{tot}}=0$ if $R=1$; if $R\geq 2$ then $Z_{\mathrm{tot}}$ is the total number of $\varepsilon$-atoms not of degree $4$, so by Proposition \ref{subpro} (1) we have $Z_{\mathrm{tot}}\leq 2R$. In any case we have $Z_{\mathrm{tot}}\leq 4(R-1)$.
\end{itemize} Note also that if an SG atom $v$ satisfies requirement (f) in Section \ref{l1reduction} (i.e. is not an interior atom of an SG vine-like object in $\Cs_1$ or $\Cs_2$), then we must be cutting at $v$ in one of the Operations 1--4, so we have $p_{\mathrm{sb}}\leq p_{\mathrm{op}}$. Moreover each of Operations 1--4 changes the value of $\rho$ by at most $C_0$, since each vine contains at most $3$ ladders and no more than $20$ atoms apart from these ladders, thus $\rho_{\mathrm{fin}}\geq \rho_{\mathrm{sb}}-C_0\cdot p_{\mathrm{op}}$. Putting together we get that
\begin{equation}\label{together3}
\Af_{\mathrm{sb}}\leq L^{(R-1)(4\gamma_0+4\eta+5\eta^2)}\cdot L^{-\eta^5(\rho_{\mathrm{sb}}-C_0\cdot p_{\mathrm{op}})-\eta^3 p_{\mathrm{op}}}\leq L^{-\eta^5(\rho_{\mathrm{sb}}+p_{\mathrm{sb}})}\cdot L^{(R-1)(4\gamma_0+1/10)},
\end{equation} which implies (\ref{finalcount1}) because $d-\gamma-4\gamma_0\geq 1/2$. This completes the proof.
\end{proof}
\begin{rem}\label{remtame2} Recall the discussion of ZG/tame atoms in Remark \ref{remtame}. In case (i) the proof is modified trivially. In case (ii), by the arguments in Remark \ref{remtame}, we know that this operation satisfies (\ref{excesseqn2}) with an extra power gain $L^{-(d-2\gamma-2\eta)-(2\gamma_0+5\eta^2)}$ (assuming $\Delta F=1$, otherwise $\Delta\chi=-3$ and the estimate is much better). On the other hand, this increases the number of imperfect components by $2$, leading to a loss $L^{2(2\gamma_0+5\eta^2)}$ in (\ref{together2}), but this is acceptable as $d-2\gamma-2\gamma_0\geq 1$.
\end{rem}
\subsubsection{Proof of Proposition \ref{layergarden}} By interpolating (\ref{finalsbest1}) with (\ref{finalsbest2}), which follows from (\ref{finalcount1}), we get that
\begin{equation}\label{finalproof1}
|\Ks_{\Gc_{\mathrm{sb}}}|\lesssim_2(C_1\delta^{10\nu})^{n_{\mathrm{sb}}}\cdot L^{-40\nu\eta\cdot g_{\mathrm{sb}}}\cdot L^{-\eta^6(\rho_{\mathrm{sb}}+q_{\mathrm{sk}})/2}\cdot L^{-(R-1)(1/4+1/60)}.
\end{equation} Combining with (\ref{redvine2}) and arguments in Section \ref{redsplice}, we get that
\begin{align}|\Ks_{p+1,n}^{\mathrm{eqc}}|&\lesssim_2(C_1\sqrt{\delta})^{n-n_{\mathrm{sb}}}(C_1\delta^{10\nu})^{n_{\mathrm{sb}}}\cdot L^{-\eta^{8}g_{\mathrm{sk}}-\eta^2b_{\mathrm{sk}}+\eta^7q_{\mathrm{sk}}}\nonumber\\&\qquad\qquad\qquad\quad\times L^{-40\nu\eta\cdot g_{\mathrm{sb}}}\cdot L^{-\eta^6(\rho_{\mathrm{sb}}+q_{\mathrm{sk}})/2}\cdot L^{-(R-1)(1/4+1/60)}\nonumber\\
\label{finalproof2}&\lesssim_2 (C_1\delta^{10\nu})^n \cdot L^{-\nu\eta(g_{\mathrm{sk}}+g_{\mathrm{sb}})}\cdot L^{-\eta^6(\rho_{\mathrm{sb}}+q_{\mathrm{sk}})/4}\cdot L^{-(R-1)(1/4+1/60)},
\end{align} where $g_{\mathrm{sk}}$ and $b_{\mathrm{sk}}$ are as in Section \ref{redsplice}.

With (\ref{finalproof2}) we can now finish the proof of Proposition \ref{layergarden}.
\begin{proof}[Proof of Proposition \ref{layergarden}] We only need to consider the terms (a)--(d) in (\ref{finalterma})--(\ref{finaltermd}). For each term, we can treat the $(\langle k_1\rangle\cdots \langle k_{2R}\rangle)^{4\Lambda_{p+1}}$ weights and $\partial_k^\rho$ derivative for couples as described in Section \ref{redcount}, so we just need to bound it pointwise for each fixed $(k_1,\cdots, k_{2R})$. By translation invariance, for (\ref{finaltermb})--(\ref{finaltermd}) we will assume $k\in\Zb_L^d$ (the general $k\in\Rb^d$ case is the same).

First consider (\ref{finaltermc}) and (\ref{finaltermd}). If in (\ref{finaltermc}) we restrict to couples of incoherency index $g\geq 1$, then the contribution of each individual couple is bounded by $(C_1\delta)^n\cdot L^{-\gamma_1g/2}$ by (\ref{layerregest1}). For fixed $g$, the number of choices for the regular couple (without layering) is at most $C_0^n$ by Proposition \ref{propstructure2}, and the number of possible layerings is at most $C_0^nC_2^g$ by Proposition \ref{layerreg2}. This implies that
\begin{equation}|(\ref{finaltermc})|\lesssim_2\sum_{g=1}^{n}(C_1\delta)^n\cdot L^{-\gamma_1g/2}\cdot C_0^nC_2^g\leq L^{-\gamma_1/4}\cdot\delta^{n/2}.
\end{equation} As for (\ref{finaltermd}), let the (canonical and) regular, coherent layered couple be $\Qc$. By Proposition \ref{layerreg1}, since both $q$ and $q'$ are now replaced by $p$, we know that all nodes in $\Qc$ must be in layer $p$, in particular the number of choices for $\Qc$ is at most $C_0^n$. Then $\Kc_\Qc$ is simply a restriction of $\Kc_\Qc^*$ defined in Definition \ref{defkg}, so by (\ref{layerregest2}), (\ref{layerregest3}), (\ref{layerregest7}) and (\ref{layerregest8}) we get
\begin{equation}|(\ref{finaltermd})|\leq L^{-(2\theta_{p+1}+3\theta_{p})/5}\cdot\delta^{n/2}.
\end{equation}

Now we consider (\ref{finalterma}) and (\ref{finaltermb}). By reductions in Sections \ref{cancelvine} and \ref{stage1red}, each of them can be reduced to a summation of terms of form $\Ks_{p+1,n}^{\mathrm{eqc}}$ which satisfy (\ref{finalproof2}). Let $\bar{g}:=g_{\mathrm{sk}}+g_{\mathrm{sb}}$ and $\bar{\rho}:=\rho_{\mathrm{sb}}+q_{\mathrm{sk}}$ with the relevant parameters defined as before; clearly these values are not changed under congruence relation defined in Definition \ref{defcong}. Also when $R=1$ and for any non regular couple $\Qc$, the skeleton $\Qc_{\mathrm{sk}}$ is nontrivial and hence $\bar{\rho}\geq 1$.

With $\bar{g}$ and $\bar{\rho}$ fixed, we consider the number of choices for the canonical layered garden $\Gc$. First consider the molecule after removing all maximal ladders from $\Mb_{\mathrm{sb}}=\Mb(\Gc_{\mathrm{sb}})$, which is a molecule of at most $C_0\bar{\rho}$ atoms and thus has at most $(C_0\bar{\rho})!$ choices; then, by adding back at most $C_0\bar{\rho}$ disjoint maximal ladders of total length at most $n$, we get at most $C_0^n(C_0\bar{\rho})!$ choices for $\Mb(\Gc_{\mathrm{sb}})$. Similarly by adding back at most $C_0\bar{\rho}$ disjoint VC's in $\Vs_0$ in Section \ref{redsplice0}, we get at most $C_0^n(C_0\bar{\rho})!$ choices for $\Mb(\Gc_{\mathrm{sk}})$. Then by Proposition \ref{propcplmol} we get at most $C_0^n(C_0\bar{\rho})!(C_0R)!$ choices for the garden $\Gc_{\mathrm{sk}}$, and by Proposition \ref{propstructure2} we get at most $C_0^n(C_0\bar{\rho})!(C_0R)!$ choices for the garden $\Gc$ if we do not consider layering.

Now we consider the possible canonical layerings of $\Gc$. By losing another factor $2^n$, we may fix the exact positions of all the $\bar{g}$ incoherencies. If we fix the pre-layering of $\Gc_{\mathrm{sk}}$ (which is equivalent to the layering of atoms of the molecule $\Mb(\Gc_{\mathrm{sk}})$), then by Propositions \ref{layerreg1} and \ref{layerreg2}, we know that the number of layerings for all the regular couples $\Qc_{(\lf,\lf')}$ and $\Tc_{(\mf)}$ attached to $\Gc_{\mathrm{sk}}$ is bounded by
\begin{equation}C_0^nC_2^{\bar{g}}\cdot \prod_{\ell}C_0^{|\Lf_v-\Lf_{v'}|},\end{equation} where the product is taken over all bonds $\ell$ of $\Mb(\Gc_{\mathrm{sk}})$ connecting atoms $v,v'\in\Mb(\Gc_{\mathrm{sk}})$ similar to  (\ref{sumofexp0}). Therefore, we only need to bound the summation
\begin{equation}\label{sumofexp1}\sum_{(\Lf_v)} \prod_{\ell}C_0^{|\Lf_v-\Lf_{v'}|},
\end{equation} where the summation is taken over all possible layerings of atoms of $\Mb(\Gc_{\mathrm{sk}})$.

Next, consider all the VC's in $\Vs_0$ defined in Definition \ref{defcong} which are merged in reducing $\Mb(\Gc_{\mathrm{sk}})$ to $\Mb(\Gc_{\mathrm{sb}})$ in Sections \ref{redsplice0}--\ref{redsplice}. Since they are formed by (CL) vines only, by Proposition \ref{block_clcn} we know that for each of the VC in $\Vs_0$, the layers of the joint atoms of its vine ingredients form a monotonic sequence. Thus, by Proposition \ref{layervine}, we know that the number of possible layerings of atoms in these VC's, multiplied by the product in (\ref{sumofexp1}) involving bonds in these VC's, is bounded by $C_0^nC_2^{\bar{g}}\cdot C_2^{q_{\mathrm{sk}}}$ (where $q_{\mathrm{sk}}$ is the number of all these VC's).

Finally, consider all the maximal ladders in $\Mb_{\mathrm{sb}}=\Mb(\Gc_{\mathrm{sb}})$. By repeating the above arguments but now using Proposition \ref{layerlad}, we can bound the the number of possible layerings of atoms in these ladders, multiplied by the product in (\ref{sumofexp1}) involving bonds in these ladders, by $C_0^nC_2^{\bar{g}}\cdot C_2^{q_{\mathrm{sb}}}$ (where $q_{\mathrm{sb}}$ is the number of all these maximal ladders). The number of atoms not in these maximal ladders is $m_{\mathrm{sb}}$, and the number of layerings for them, multiplied by the product in (\ref{sumofexp1}) involving bonds at these atoms, is trivially bounded by $C_2^{m_{\mathrm{sb}}}$.

Putting altogether, we know that with fixed values of $n$, $R$, $\bar{g}$ and $\bar{\rho}$, the number of possible canonical layered gardens $\Gc$ is bounded by
\[C_0^n(C_0\bar{\rho})!(C_0R)!C_2^{\bar{g}+\bar{\rho}}.\] The number of congruence classes, which is the number of terms of form $\Ks_{p+1,n}^{\mathrm{eqc}}$ contained in (\ref{finalterma}) and (\ref{finaltermb}), is also bounded by this number. Combining with (\ref{finalproof2}), we get that
\begin{equation}\label{finalprooffin}|(\ref{finalterma})|\mathrm{\ or\ }|(\ref{finaltermb})|\lesssim_2(C_1\delta^{10\nu})^nL^{-\eta^7} \cdot L^{-\nu\eta(\bar{\rho}+\bar{g})}C_2^{\bar{g}+\bar{\rho}}\cdot L^{-(R-1)(1/4+1/60)}(C_0R)!\cdot (C_0\bar{\rho})!.
\end{equation} Since $\max(R,\bar{\rho},\bar{g})\leq n\leq N_{p+1}^4\leq (\log L)^{C_2}$ by the assumption in Proposition \ref{layergarden}, from (\ref{finalprooffin}) we easily deduce that
\begin{equation}\label{finalprooffin2}|(\ref{finalterma})|\mathrm{\ or\ }|(\ref{finaltermb})|\lesssim_2\delta^{8\nu n}L^{-\eta^7} \cdot  L^{-(R-1)(1/4+1/70)}.
\end{equation}  This completes the proof of Proposition \ref{layergarden}.
\end{proof}
\section{Linearization and proof of Proposition \ref{fixedpoint}}\label{endgame}
\subsection{Flower trees and construction of parametrix}\label{flower} We turn to the proof of Proposition \ref{fixedpoint}. Like in \cite{DH21,DH23}, the key point is to construct a suitable parametrix of $1-\Ls_1$ for the $\Rb$-linear operator $\Ls_1$ defined in (\ref{eqnbk1.5}), which involves the notions of of flower trees and flower gardens.
\begin{df}[Flower trees and flower gardens \cite{DH21,DH23}]\label{defflower} We define a \emph{flower tree} to be a ternary tree $\Tc$ with one particular leaf $\ff$ fixed. This leaf $\ff$ is called the \emph{flower} of $\Tc$. There is a unique path from the root $\rf$ to the flower $\ff$, which we call the \emph{stem} of $\Tc$. Similarly we can define a \emph{flower garden} $\Gc$ of width $2R$ to be a collection of $2R$ flower trees whose roots have signs $(+,-,\cdots,+,-)$, such that all the \emph{non-flower} leaves are completely paired\footnote{We may also pair the flowers (for example pair the flower of the first tree with the second, the third with the fourth, etc.) to make $\Gc$ a genuine garden, as was done in \cite{DH23}, but this is not necessary in the proof below.} as in Definition \ref{defgarden}. For any $k,k'\in\Zb_L^d$ we define a $(k,k')$-decoration of $\Tc$ or $\Gc$ to be a decoration in the sense of Definition \ref{defdec} such that $k_\rf=k$ and $k_\ff=k'$ for the root $\rf$ and flower $\ff$ of each tree.

We define the layering of a flower garden as in Definition \ref{deflayer}, but with the additional requirement that all the nodes on the stem of each tree (from the root to the flower) are in layer $p$. We also define the collection $\Hs_{p+1}$ of \emph{canonical layered flower gardens} $\Gc$ in the same way as in Definition \ref{defcanon} (but without any assumption about irreducibility of $\Gc$), where we start from $2R$ flower trees with all branching nodes, flowers and paired leaves in layer $p$, then divide the set of remaining (non-flower) leaves into subsets of size $\geq 4$ and replace each subset by an irreducible proper layered garden in $\Gs_p$. Note that Proposition \ref{canonequiv} also holds for $\Hs_{p+1}$ with essentially the same proof.
\end{df}
For any flower tree $\Tc$ and layered flower garden $\Gc$ of order $n$, and $k,k'\in\Zb_L^d$ and $p+1>t>s>p$, define the quantities
\begin{equation}\label{jtflower}\widetilde{\Jc}_\Tc(t,s,k,k')=\bigg(\frac{\delta}{2L^{d-1}}\bigg)^n\zeta(\Tc)\sum_\Ds\epsilon_\Ds\int_{\Dc}\prod_{\nf\in\Nc}e^{\zeta_\nf\pi i \cdot\delta L^{2\gamma}\Omega_\nf t_\nf}\mathrm{d}t_\nf\cdot\dirac(t_{\ff^{\mathrm{pr}}}-s)\prod_{\ff\neq\lf\in\Lc}a_{k_\lf}(p),\end{equation}
\begin{equation}\label{kqflower}\widetilde{\Kc}_\Gc(t,s,k,k')=\bigg(\frac{\delta}{2L^{d-1}}\bigg)^{n}\zeta(\Gc)\sum_\Is\epsilon_\Is\int_{\Ic}\prod_{\nf\in\Nc}e^{\zeta_\nf\pi i \cdot\delta L^{2\gamma}\Omega_\nf t_\nf}\mathrm{d}t_\nf\prod_{\ff}\dirac(t_{\ff^{\mathrm{pr}}}-s){\prod_{\ff\neq\lf\in\Lc}^{(+)}F_{\Lf_\lf}(k_\lf)},\end{equation} which are slight modifications of (\ref{defjt}) and (\ref{defkg0}). Here in (\ref{jtflower}), $\Ds$ is a $(k,k')$-decoration of $\Tc$, $\Dc$ is defined as in (\ref{timetree}), and the other objects are associated with the tree $\Tc$. In (\ref{kqflower}), $\Is$ is a $(k,k')$-decoration of $\Gc$, the other objects are associated with the garden $\Gc$, and the set $\Ic$ is defined as in (\ref{timegarden}); the second product is taken over all the flowers $\ff$ in $\Gc$, where $\ff^{\mathrm{pr}}$ is the parent node of $\ff$, and in the last product we assume $\lf$ has sign $+$ and is not any flower $\ff$ in $\Gc$.

Now we define the parametrix of the operator $1-\Ls_1$. Define the operator
\begin{equation}\label{operatorx}(\Xs\textit{\textbf{b}})_k(t)=\sum_{\zeta\in\{\pm\}}\sum_{k'}\int_p^{t}\Xs_{kk'}^\zeta(t,s)b_{k'}(s)\,\mathrm{d}s;\quad \textit{\textbf{b}}=b_{k'}(s),
\end{equation} with the kernel given by
\begin{equation}\label{operatorxker}\Xs_{kk'}^\zeta(t,s)=\sum_\Tc\widetilde{\Jc}_\Tc(t,s,k,k'),
\end{equation} where the summation is taken over all flower trees $\Tc$ of order $\leq N_{p+1}$ such that $\zeta_\rf=+$ and $\zeta_\ff=\zeta$ for the root $\rf$ and flower $\ff$.

Let $\Ys=(1-\Ls_1)\Xs-1$ and $\Ws=\Xs(1-\Ls_1)-1$. Note that the operator $\Ls_1$ defined in (\ref{eqnbk1.5}) formally corresponds to attaching two trees to a single node as siblings, as in the proof of Proposition 11.2 of \cite{DH21}, we then see that $\Ys$ and $\Ws$ both have the same expression as in (\ref{operatorx}) and (\ref{operatorxker}), but with the sum in (\ref{operatorxker}) taken over different sets of $\Tc$ (cf. Section 11.1.1 of \cite{DH23}). Namely, for $\Ys$ we require that (Y-1) the order $n$ of each tree $\Tc$ in the sum satisfy $n>N_{p+1}$, and (Y-2) the subtree rooted at each child node of $\rf$ has order $\leq N_{p+1}$; for $\Ws$ we require that (W-1) the order $n>N_{p+1}$, and (W-2) the subtree rooted at each sibling node of $\ff$ has order $\leq N_{p+1}$, and (W-3) the flower tree obtained by replacing the parent $\ff^{\mathrm{pr}}$ of $\ff$ with a new flower has also order $\leq N_{p+1}$. See Figure \ref{fig:flowertree} for illustration. Note that the above requirement imposes that the order of $\Tc$ satisfies $n\in[N_{p+1}+1,3N_{p+1}+1]$ for both $\Ys$ and $\Ws$.
  \begin{figure}[h!]
  \includegraphics[scale=.46]{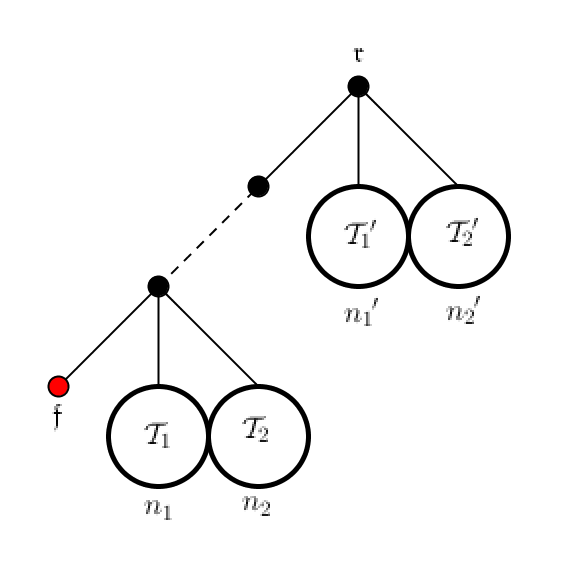}
  \caption{A flower tree $\Tc$ with root $\rf$ and flower $\ff$. Let the order of subtrees $\Tc_j$ be $n_j$ etc., and the order of $\Tc$ be $m$. Then the requirements for $\Ys_m$ is that $N_{p+1}<m\leq N_{p+1}+n_1'+n_2'+1$ and $n_j'\leq N_{p+1}$, the requirements for $\Ws_m$ is that $N_{p+1}<m\leq N_{p+1}+n_1+n_2+1$ and $n_j\leq N_{p+1}$.}
  \label{fig:flowertree}
\end{figure} 
\begin{prop}\label{flowerest1} Let $P$ be an integer that is large enough depending on $C_1$, assume also that $\delta$ is small enough depending on $P$. Let $\Eb_p$ be as in Proposition \ref{propansatz}. Then for any $(t,s,k,k')$ we have
\begin{equation}\label{flowerest2}\Eb_p|\Xs_{kk'}^\zeta(t,s)|^{2P}\leq\langle k-\zeta k'\rangle^{-4\Lambda_{p+1}P}L^{(100d)^3P}.
\end{equation} The same estimate holds for $\Ys$ and $\Ws$, but with an extra factor $e^{-100N_{p+1}}$ on the right hand side.

Next, let $M_1,M_2\geq L$ be two fixed dyadic numbers. Consider the definition (\ref{jtflower}) of $\widetilde{\Jc}_\Tc$ that occurs in (\ref{operatorxker}). Suppose we restrict to \[\max\{|k_\lf|:\ff\neq\lf\in\Lc\}\sim_0 M_1\] (or with $\sim_0$ replaced by $\lesssim_0$ if $M_1=L$) by a smooth cutoff function for the tree $\Tc$, where $\ff$ is the flower of $\Tc$ and $\Lc$ the set of leaves, then (\ref{flowerest2}) holds with an extra factor  $M_1^{-80dP}$ on the right hand side; the same is true for $\Ys$ and $\Ws$.

Moreover, suppose $|k|\geq M_1^{10}$, and in (\ref{jtflower}) we further restrict to \[\max_{\nf\in\Sc}|\Omega_\nf|\sim_0 M_2\] (or with $\sim_0$ replaced by $\lesssim_0$ if $M_2=L$) by a smooth cutoff function for the tree $\Tc$, where $\Sc$ is the set of all branching nodes $\nf$ in the stem of $\Tc$ \emph{other than} the parent node $\ff^{\mathrm{pr}}$ of $\ff$, then (\ref{flowerest2}) holds with an extra factor $M_2^{-P/9}$ on the right hand side. The same is true for $\Ys$ and $\Ws$.
\end{prop}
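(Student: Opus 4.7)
\medskip

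\textbf{Proof proposal.} The plan is to follow the parametrix estimate strategy from \cite{DH21,DH23}, adapted to the new canonical layered setting. The main vehicle is to reduce the moment bound \eqref{flowerest2} to estimates on canonical layered flower gardens in $\Hs_{p+1}$, and then apply the same machinery (Stage 1 and Stage 2 reductions, Propositions \ref{vineest}, \ref{ladderl1new}, \ref{ladderl1old}, \ref{lgmolect}) that was used for Proposition \ref{layergarden}.

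First I would expand $\Eb_p|\Xs_{kk'}^\zeta(t,s)|^{2P}$ using \eqref{operatorxker} as a sum over $2P$-tuples of flower trees $(\Tc_j)_{j=1}^{2P}$ of alternating signs (with the constraint $n(\Tc_j)\leq N_{p+1}$ for $\Xs$, and the analogous conditions (Y-1)--(Y-2) or (W-1)--(W-3) for $\Ys,\Ws$). By \eqref{jtflower} each $\widetilde{\Jc}_{\Tc_j}$ is an integral over decorations with a product $\prod_{\ff\neq\lf\in\Lc}a_{k_\lf}(p)^{\zeta_\lf}$ over non-flower leaves. Taking $\Eb_p$, applying the moment-to-cumulant inversion \eqref{eqncm2}, and then invoking Proposition \ref{propansatz}(3) for each cumulant, turns the sum into one over canonical layered flower gardens $\Gc\in\Hs_{p+1}$, where the $2P$ flower trees (all branching nodes and non-flower paired leaves in layer $p$) are glued at the non-flower leaves to gardens in $\Gs_p$ in exactly the manner of Definition \ref{defflower}. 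The resulting quantity is precisely $\widetilde{\Kc}_\Gc(t,s,k,k')$ in \eqref{kqflower}, up to lower-order remainders controlled by $\Es_p$ and $\widetilde{\Es_p}$ that are easily absorbed thanks to \eqref{ansatz2}--\eqref{ansatz4}.

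Next I would estimate $\widetilde{\Kc}_\Gc$ for a fixed $\Gc\in\Hs_{p+1}$ of order $n$ via the same two-stage reduction as in Sections \ref{stage1red}--\ref{redcount}: collapse the embedded regular couples $\Qc_{(\lf,\lf')}$ and regular trees $\Tc_{(\mf)}$ using Propositions \ref{proplayer1}--\ref{proplayer2}, perform LF twisting and Proposition \ref{vineest} at core (CL) vine chains, and then bound the remaining sum over decorations of the molecule $\Mb(\Gc_{\mathrm{sb}})$ by combining Proposition \ref{ladderl1old} (for the time $L^1$ estimate) and Proposition \ref{lgmolect} (for the rigidity/counting estimate). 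The flower structure only modifies the molecule in a trivial way: each flower produces an extra bond/half-atom of restricted type attached to the stem, and it contributes at most a finite number of extra atoms per tree, so the counting estimates carry through with the same gain per excess circuit. Since the total width is $2P$, the pointwise bound after summing over $\Gc$ with $n\lesssim P\cdot N_{p+1}$ is of the form $(C_1\delta)^{n}L^{Cn}\lesssim L^{(100d)^3 P}$, provided $\delta$ is small enough depending on $P$. The weight $\langle k-\zeta k'\rangle^{-4\Lambda_{p+1}P}$ is recovered by noting that $k-\zeta k'$ is a fixed integer linear combination of non-flower leaf vectors $k_\lf$ across one flower tree (via the decoration equations along the stem), so the $\Sf^{40d,40d}$ decay of $F_q = \varphi(\delta q,\cdot)+\Rs_q$ provides the required $\langle k_\lf\rangle$-weights, which upon summing give $\langle k-\zeta k'\rangle^{-4\Lambda_{p+1}P}$; for $\Ys,\Ws$ the order restriction $n\geq N_{p+1}$ combined with the smallness $(C_1\delta)^n$ produces the extra $e^{-100N_{p+1}}$ factor.

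For the refined estimates, the $M_1^{-80dP}$ gain in the second assertion is immediate from the same argument: restricting one leaf $k_\lf$ to $|k_\lf|\sim M_1$ replaces one copy of $\|F_{\Lf_\lf}\|_{\Sf^{40d,40d}}$ by $M_1^{-40d}$ per tree, giving $M_1^{-80dP}$ over $2P$ trees (using that each flower tree contains at least one leaf at scale $M_1$). The $M_2^{-P/9}$ gain in the third assertion is the main obstacle: since $|k|\geq M_1^{10}\gg M_1\gtrsim|k_\lf|$ for every non-flower leaf, the flower vector $k'=k_\ff$ satisfies $|k'|\sim|k|$, and moreover the decoration along the stem forces each node $\nf\in\Sc$ to have $|k_\nf|\sim|k|$ too. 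I would then exploit the oscillation: for the selected stem node $\nf_*$ with $|\Omega_{\nf_*}|\sim M_2$, integration by parts in $t_{\nf_*}$ against $e^{\pi i\zeta_{\nf_*}\delta L^{2\gamma}\Omega_{\nf_*}t_{\nf_*}}$ (the other time-dependent factors in the stem are smooth of bounded derivatives since neighbouring phases depend on different integration variables) produces a factor $(\delta L^{2\gamma}M_2)^{-1}$ per tree, which after $2P$ trees and an interpolation with the trivial bound yields $M_2^{-P/9}$. The delicate point to verify is that the integration-by-parts can be done cleanly despite the interaction with the off-stem subtrees and the coherent regular objects already processed; this is analogous to the $M$-dependent refinements in Proposition 11.2 of \cite{DH21} and Section 11 of \cite{DH23}, and should transfer with only cosmetic changes because the structural analysis of the stem is identical to the non-layered case.
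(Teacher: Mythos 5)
Your overall plan is the same as the paper's: expand the $2P$-th moment into canonical layered flower gardens in $\Hs_{p+1}^{\mathrm{tr}}$ via Lemma \ref{propertycm} and the induction hypothesis (this is exactly \eqref{flowerexp2}), then run the Stage 1/Stage 2 reductions from Sections \ref{stage1red}--\ref{redcount}. The treatment of the weight $\langle k-\zeta k'\rangle^{-4\Lambda_{p+1}P}$, the $e^{-100N_{p+1}}$ gain for $\Ys,\Ws$, and the $M_1^{-80dP}$ factor all track the paper.

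The part that has a genuine gap is the $M_2$ estimate, and the gap is not where you flagged it. You say the delicate point is whether the integration-by-parts interacts cleanly with off-stem objects, and you assert the stem analysis transfers ``with only cosmetic changes.'' In fact the real obstruction is the compatibility of the $M_2$ cutoff with the LF twisting used in Proposition \ref{vineest}. The vine cancellation requires summing $\Kc_\Gc$ over an entire LF-twist equivalence class before estimating; if an LF twist at an SG (CL) vine changes the stem (e.g.\ by moving a stem node from $\uf_{21}$ to $\uf_{22}$), it changes the set $\Sc$ and hence the values $\Omega_\nf$ for $\nf\in\Sc$, so the $M_2$ cutoff would not be constant across the equivalence class and the cancellation would break. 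The paper's resolution is structural and hinges on the hypothesis $|k|\geq M_1^{10}$: for any SG (CL) vine $\Vb$, the two children $\uf_{21},\uf_{22}$ satisfy $|k_{\uf_{21}}-k_{\uf_{22}}|\leq 1$, while a stem node and an off-stem node differ by $\gtrsim |k|/2\gg M_1$; hence neither $\uf_{21}$ nor $\uf_{22}$ can be a stem ancestor of the flower, so the flower (if below $\uf_1$) must descend through $\uf_{11}$, and the twist does not touch the stem at all. You need this observation (and the companion facts that LF twists preserve both membership in $\Hs_{p+1}^{\mathrm{tr}}$ and the all-stem-in-layer-$p$ constraint, which follow from a similar analysis of which special nodes a vine realization can contain); without it, the argument does not close.

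A secondary inaccuracy: your claimed per-tree gain of $(\delta L^{2\gamma}M_2)^{-1}$ from integrating $e^{\pi i\zeta_\nf\delta L^{2\gamma}\Omega_\nf t_\nf}$ is too optimistic as a final gain, since realizing the oscillatory estimate inside the molecular counting scheme incurs losses of order $L^{C_0dP}$ (from fixing $\Omega_{\nf_j}$ values, from the $\dirac(t_{\ff^{\mathrm{pr}}}-s)$ factor removing a time integration, etc.). The paper ends up with only $M_2^{-P/9}$ over $2P$ trees, i.e.\ $M_2^{-1/18}$ per tree. You do invoke interpolation with a trivial bound, but you should identify where the losses come from, as they determine the exponent $1/9$ and dominate the accounting.
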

\begin{proof} We follow the arguments in Section \ref{secduhamel} used to expand (\ref{cmattr}) based on (\ref{defjt}), and now apply similar arguments to expand the left hand side of (\ref{flowerest2}) based on (\ref{jtflower}); note that now we are dealing with moments instead of cumulants of $(a_{k_\lf}(p))$, so we will make no irreducibility assumption about flower gardens. This gives, upon application of \eqref{eqncm2}, that
\begin{equation}\label{flowerexp2}\Eb_p|\Xs_{kk'}^\zeta(t,s)|^{2P}=\sum_{\Gc}\widetilde{\Kc}_\Gc(t,s,k,k')+\Es^{(1)},\end{equation} where the sum is taken over all flower gardens $\Gc=(\Tc_1,\cdots,\Tc_{2R})\in\Hs_{p+1}^{\mathrm{tr}}$ of width $2P$, and $\Es^{(1)}$ is a remainder term containing one of $\Es_p$ or $\widetilde{\Es}_p$ in (\ref{ansatz1}) and (\ref{ansatz3}). Here $\Hs_{p+1}^{\mathrm{tr}}$ is a subset of $\Hs_{p+1}$ defined similar to $\Gs_{p+1}^{\mathrm{tr}}$ in Definition \ref{defgtr}, where we require in Definition \ref{defflower} that each of the initial flower trees has order $\leq N_{p+1}$, and that each of the layered gardens replacing leaf subsets has order $\leq N_p$. For $\Ys$ and $\Ws$ we have the same formulas, but we also require the initial trees to satisfy (Y-1)--(Y-2) or (W-1)--(W-3) above.

We may treat $\Es^{(1)}$ just as in the proof of Proposition \ref{propansatz} in Section \ref{proofmain}; moreover, if the order of $\Gc$ is $n(\Gc)>N_{p+1}^4$, then the resulting term $\Es^{(2)}$ can also be treated just as in the proof of Proposition \ref{propansatz}. Thus we may restrict to $n(\Gc)\leq N_{p+1}^4$ in (\ref{flowerexp2}), so the definition of $\Hs_{p+1}^{\mathrm{tr}}$ is equivalent to each tree of $\Gc$ having $\leq N_{p+1}$ layer $p$ branching nodes. Similarly, the extra requirements for $\Ys$ and $\Ws$ are also just (Y-1)--(Y-2) or (W-1)--(W-3), but with the notion of order in these requirements being replaced by the number of layer $p$ branching nodes.

Now, to estimate (\ref{flowerexp2}), we simply repeat the arguments in Sections \ref{layerobject1}--\ref{redcount} devoted to the proof of Proposition \ref{layergarden}. Below we only discuss the necessary (minor) adjustments needed in the flower garden case here, as most of the proof is completely analogous.
\begin{enumerate}[{(1)}]
\item Regarding the decay factor $\langle k-\zeta k'\rangle^{-4\Lambda_{p+1}P}$: this is obtained by the same arguments as in Section \ref{redcount0}, using the decay of $F_{\Lf_\lf}(k_\lf)$ in (\ref{kqflower}); note that an algebraic sum of $k_\lf$ for all non-flower leaves in each tree equals $k-\zeta k'$. The extra factor $M_1^{-80dP}$ when assuming $\max\{|k_\lf|:\ff_j\neq\lf\in\Lc_j\}\sim_0 M_1$ comes in the same way (note that if we insert some cutoff to the tree $\Tc$ associated to $\Xs$, then the same cutoff will appear in each tree $\Tc_j$ occurring in the garden $\Gc$ associated with the expansion (\ref{flowerexp2})).
\item Regarding the extra power $L^{(100d)^3P}$: compared to (\ref{defkg0}), the modification in (\ref{kqflower}) in the flower garden case only involves two nodes (the flower $\ff$ and its parent $\ff^{\mathrm{pr}}$) for each tree, and each such modification may cause a loss of $\leq L^{80d}$. For example, this loss may come from taking the $\Sf^{40d,40d}$ norm of the input function $\mathbf{1}_{k_\ff=k'}$ which occurs due to the definition of $(k,k')$-decorations, or from the atoms corresponding to these one or two nodes in the molecule. They may also come from the $\dirac(t_{\ff^{\mathrm{pr}}}-s)$ factor which removes the integration in the $t_{\ff^{\mathrm{pr}}}$ variable, but $\Omega_{\ff^{\mathrm{pr}}}$ has at most $L^{10d}$ possible values once we fix each $k_\lf$ to a unit ball, so we may fix the value of $\Omega_{\ff^{\mathrm{pr}}}$ and safely get rid of the factor $e^{\zeta_\nf\pi i \cdot\delta L^{2\gamma}\Omega_\nf t_\nf}$ for $\nf=\ff^{\mathrm{pr}}$ at a loss of at most $L^{10d}$. In any case the total loss is $\leq L^{200dP}$ for the total of $2P$ trees.
\item Regarding the arbitrariness of $k$ and $k'$: all the proofs in Section \ref{layerobject1} through Section \ref{redcount} are translation invariant\footnote{Except those leading to the $\langle k_j\rangle^{-4\Lambda_{p+1}}$ factors in Proposition \ref{layergarden}; but these factors are replaced by the $\langle k-\zeta k'\rangle^{-4\Lambda_{p+1}P}$ factor, which is treated in (1).}, so this is not a problem once $k$ and $k'$ are fixed.
\item Regarding LF twisting and cancellation: we only need to consider those LF twists for $\Gc$ that may affect the property of belonging to $\Hs_{p+1}^{\mathrm{tr}}$. Note that the definition of $\Hs_{p+1}^{\mathrm{tr}}$, including the modified versions for $\Ys$ and $\Ws$, only involves the number of layer $p$ branching nodes in  (i) each tree $\Tc_j$ of $\Gc$, (ii) each of the trees rooted at the children nodes of each root $\rf_j$, and (iii) each of the trees rooted at the sibling nodes of each flower $\ff_j$, see Figure \ref{fig:flowertree}. Therefore, if the realization of a vine $\Vb$ in $\Gc_{\mathrm{sk}}$ does not contain any of the ``special nodes", i.e. the root $\rf_j$, any of the children nodes of $\rf_j$, the flower $\ff_j$, the parent and any sibling node of $\ff_j$, then any LF twist at this vine will not affect the property of belonging to $\Hs_{p+1}^{\mathrm{tr}}$. Clearly there are less than $20P$ vines containing one of these special nodes; for each of them we can use Proposition \ref{ladderl1new} to treat its ladder parts, and sum and integrate trivially over the $k_\ell$ and $t_v$ variables for $\ell$ and $v$ outside of the ladders (there are at most $20$ such variables per vine). This leads to a loss of at most $L^{20d}$ per vine, and at most $L^{400dP}$ in total.
\item Similarly, the property that all the nodes on the stem of each tree must belong to layer $p$, as in the definition of flower gardens in Definition \ref{defflower}, is also preserved under LF twisting. For example, consider $\Gc$ whose skeleton contains a Vine (II-a) in Figure \ref{fig:block_mole}, while the skeleton of its LF twist $\Gc'$ contains a Vine (II-b). We may assume the flower is in the tree $\Tc_1$ or $\Tc_2$ in the notation of Figure \ref{fig:block_mole} (otherwise it is not affected by LF twisting); now if all stem nodes in $\Gc$ are in layer $p$, then in particular $\Lf_{\uf_2}=p$, which implies that $\Lf_{\uf_3}=\Lf_{\uf_4}=p$ as $\Lf_{\uf_2}\leq\min(\Lf_{\uf_3},\Lf_{\uf_4})$. Thus all stem nodes in $\Gc'$ must also be in layer $p$, including those in the regular tree $\Tc_{(\uf_2)}$, because the layer of $\uf_2$ in the skeleton is given by the layer of the lone leaf of the regular tree at $\uf_2$ in $\Gc$ or $\Gc'$.
\item Regarding the extra factor $M_2^{-P/9}$: if we insert the $M_2$ cutoff, then for each $j$ we fix a branching node $\nf_j\in\Sc_j$ and insert the cutoff such that $|\Omega_{\nf_j}|\sim_0 M_2$ (otherwise these cutoffs play no role in the proof).  Note for each $j$, the expression (\ref{kqflower}) involves an integration of the exponential factor $e^{\zeta_{\nf_j}\pi i\cdot\delta L^{2\gamma}\Omega_{\nf_j}t_{\nf_j}}$ in the time variable $t_j$. By integrating in $t_j$ before manipulating everything else, and arguing similarly as in the proof of Corollary 11.3 in \cite{DH21}, we gain a factor $M_2^{-P/9}$ from this integration. This also leads to a loss of at most $L^{20dP}$ by fixing the values of $\Omega_{\nf_j}$ as in (2) , which is acceptable. We then proceed with the same arguments in Section \ref{layerobject1} through Section \ref{redcount} to treat the rest of the expression (\ref{kqflower}) involving the rest of the garden. All the possible loss (such as those described in (2)) that may come along the way again add up to at most $L^{100dP}$ which is acceptable.
\item Regarding LF twisting in the context of (6): here we are assuming $|k|\geq M_1^{10}$. For any (CL) vine $\Vb$ in $\Mb(\Gc_{\mathrm{sk}})$ that is SG, let the nodes $(\uf_1,\uf_{11},\uf_{21},\uf_{22})$ be as in Proposition \ref{block_clcn}, then we have the following: if the flower is a descendant of $\uf_1$, then it must be a descendant of $\uf_{11}$. In fact, if this is not true, then the flower must be a descendant of $\uf_{21}$ or $\uf_{22}$, which means that exactly one of the nodes $\uf_{21}$ and $\uf_{22}$ is on the stem. But this is not possible, as $|k_{\uf_{21}}-k_{\uf_{22}}|\leq 1$ by the SG assumption, while the difference $|k_\nf-k_{\nf'}|$ for any node $\nf$ on stem and $\nf'$ off stem is at least $|k|/2$ since $|k|\geq M_1^{10}$. Therefore, see Figure \ref{fig:block_mole}, we know that any LF twisting for $\Gc$ will not affect the structure of the stem. In particular it will not affect the value of any $\Omega_{\nf}$ for any $\nf$ on the stem of any tree, nor the smooth cutoff functions of these $\Omega_\nf$ involving $M_2$. The rest of the discussion for LF twisting is the same as in (4) and (5).
\end{enumerate}
With the above discussions, we can then obtain the desired bound for (\ref{flowerexp2}): if we restrict to $\Gc$ of order $n$, then this contribution is bounded by
\[\langle k-\zeta k'\rangle^{-4\Lambda_{p+1}P}L^{(100d)^3P}\cdot \delta^{2\nu n},\]
with the corresponding extra factors if the cutoff functions involving $M_1$ or $M_2$ are present. By adding up all $n\geq 0$ for $\Xs$, and adding up all $N_{p+1}+1\leq n\leq N_{p+1}^4$ for $\Ys$ and $\Ws$, this completes the proof of (\ref{flowerest2}), as well as the variants with the cutoff functions.
\end{proof}
\subsection{Proof of Proposition \ref{fixedpoint}} In this section we prove Proposition \ref{fixedpoint}.

\uwave{Part 1: Proof of (\ref{fixedest1}) and (\ref{fixedest2}).} Recall that
\begin{equation}\label{trterm}a_k^{\mathrm{tr}}(t)=\sum_{\Tc}(\Jc_\Tc)_k(t)\end{equation} from (\ref{defjn}) and (\ref{defbk}), where the sum is taken over all trees of sign $+$ and order $\leq N_{p+1}$. By repeating the same arguments in the proof of Proposition \ref{propansatz} in Section \ref{proofmain}, for each fixed $k\in\Zb_L^d$ and $t\in[p,p+1]$ we can bound
\begin{equation}\label{trterm1}\Eb_p|(a_k^{\mathrm{tr}})(t)|^2\leq L^2\langle k\rangle^{-4\Lambda_{p+1}}.\end{equation}

In the same way, by (\ref{eqnbk1.5}), we can write $(\Ls_0)_k(t)$ in a form similar to (\ref{trterm}), but with the restriction that the order of $\Tc$ is $>N_{p+1}$, while the order of the subtree rooted at each child node of the root $\rf$ is $\leq N_{p+1}$. The same proof can be carried out, and with the lower bound on the order of $\Tc$, using (\ref{taylorbound}) and (\ref{estcouple2}), we get the same bound (\ref{trterm1}) for $\Ls_0$ but with extra factor $e^{-100N_{p+1}}$ on the right hand side. In addition, we can treat the time derivative $\partial_ta_k^{\mathrm{tr}}$ (which simply contains an extra $\dirac(t_{\rf}-t)$ factor in $\Jc_\Tc$ similar to the one in (\ref{jtflower})) using the same arguments in the proof of Proposition \ref{flowerest1}, to get the same bound (\ref{trterm1}) with an additional $L^{100d}$ loss. The same holds also for $\partial_t(\Ls_0)_k(t)$.

Now note that, (\ref{fixedest1}) and (\ref{fixedest2}), compared to (\ref{trterm1}), requires large deviation estimates for the \emph{supremum} of $\langle k\rangle^{(3/2)\Lambda_{p+1}}|a_k^{\mathrm{tr}}(t)|$ and $\langle k\rangle^{(3/2)\Lambda_{p+1}}|(\Ls_0)_k(t)|$. This can be solved using the following trick in \cite{DH21}: by Sobolev in $t$ we have
\begin{equation}\label{sobolev}\sup_{t,k}\big(\langle k\rangle^{(3/2)\Lambda_{p+1}}|a_k^{\mathrm{tr}}(t)|\big)\lesssim_0\big\|\langle k\rangle^{(3/2)\Lambda_{p+1}}a_k^{\mathrm{tr}}(t)\big\|_{L_{k,t}^2}+\big\|\langle k\rangle^{(3/2)\Lambda_{p+1}}\partial_ta_k^{\mathrm{tr}}(t)\big\|_{L_{k,t}^2},\end{equation} where the norm is taken for $k\in\Zb_L^d$ and $t\in[p,p+1]$ (here $L_k^2$ is with respect to the counting measure). Then, using summability in $k$, we can bound the second moment of the above supremum by the right hand side of (\ref{trterm1}) with an additional $L^{200d}$ loss. The bounds (\ref{fixedest1}) and (\ref{fixedest2}) then follow from the standard Chebyshev inequality.

\uwave{Part 2: Proof of (\ref{fixedest3}).} Let $Z:=Z^{(5/4)\Lambda_{p+1}}([p,p+1])$, and recall the definition of $\Xs,\Ys,\Ws$ in Section \ref{flower}. For any operator $\Ks$ with kernel $\Ks_{kk'}^\zeta(t,s)$ as in (\ref{operatorx}), it is clear that
\[\|\Ks\|_{Z\to Z}\leq L^d\sup_{\zeta,k,k',t,s}\langle k-\zeta k'\rangle^{(3/2)\Lambda_{p+1}}|\Ks_{kk'}^\zeta(t,s)|:=L^d\|\Ks\|_{\Nf}.\] Our goal is to prove large deviation estimates for $\|\Xs\|_\Nf$; the cases of $\Ys$ and $\Ws$ are analogous.

Let $M_1,M_2\geq L$. We insert smooth cutoff functions, which are just the ones described in Proposition \ref{flowerest1}, to the $\widetilde{\Jc}_\Tc$ terms defined in (\ref{jtflower}) that occur in (\ref{operatorxker}). Denote the resulting contribution by $\Xs_{M_1M_2}$ (for $|k|\leq M_1^{10}$ we do not insert the $M_2$ cutoff, in which case we write $\Xs_{M_1}$ instead of $\Xs_{M_1M_2}$). The large deviation estimate we will prove is of form\begin{equation}\label{newnormn}\Eb_p\|\Xs_{M_1M_2}\|_\Nf^{2P}\leq L^{C_2}M_1^{-P}M_2^{-P/18}
\end{equation} (for $\Xs_{M_1}$ there is no $M_2^{-P/18}$ factor), and similar bounds for $\Ys$ and $\Ws$ but with the extra factor $e^{-100N_{p+1}}$ on the right hand side. The proof of (\ref{newnormn}) is very similar to the proof of Proposition 12.2 in \cite{DH21}, where we apply Lemma \ref{finitelem} to represent $k$ by a finite system and thus reduce to finitely many possibilities of $k$ (under suitable cutoffs), and estimate the moment for each fixed $k$ by Proposition \ref{flowerest1}. Below we will only sketch the main points.
\begin{enumerate}[{(1)}]
\item Like the cases of (\ref{fixedest1}) and (\ref{fixedest2}) above, if we fix $(k,k',t,s)$, then the desired bound follows from (\ref{flowerest2}); the main difficulty comes from the fact that we are taking $L^\infty$ norms in $(k,k')$ and $(t,s)$.
\item For the $L^\infty$ norm in $(k,k')$, we only need to consider the case when $|k|\geq M_1^{10}$. In fact if $|k|\leq M_1^{10}$ then $(k,k')$ has at most $M_1^{20d}$ choices, so we may replace the $L_{k,k'}^\infty$ norm by the $L_{k,k'}^{2P}$ norm which allows to fix $(k,k')$ and apply Proposition \ref{flowerest1}. The subsequent summation in $(k,k')$ loses at most $M_1^{20d}$ which is acceptable in (\ref{newnormn}).
\item In the same way, we may also assume $|k|\geq M_2^{10}$ (otherwise we lost at most $M_2^{20d}$ from summation in $(k,k')$, which can be covered by the $M_2^{-P/9}$ gain in Proposition \ref{flowerest1} now that we know $|k|\geq M_1^{10}$). Using the definition of $M_2$ and the formula for $\Omega_\nf$, this in particular implies that all branching nodes in the stem of $\Tc$ must have the same sign (as, if two consecutive branching nodes in the stem have opposite signs, then the corresponding resonance factor will satisfy $|\Omega_\nf|\geq |k|^2$ which is not possible).
\item Note that the expression (\ref{jtflower}), hence $\Xs_{M_1M_2}$, depends on $k$ and $k'$ \emph{only via} the $\Omega_\nf$ for branching nodes $\nf$ in the stems of the flower trees $\Tc$; moreover each such $\Omega_\nf$ has the form $\langle k,z\rangle+\widetilde{\Omega}$, where $z$ is a vector depending only on the $k_\lf$ variables for non-flower leaves $\lf$ (in particular $|z|\leq M_1^2$), and $\widetilde{\Omega}$ also depends only on these $k_\lf$ variables (in particular $|\widetilde{\Omega}|\leq M_1^4$).
\item In addition, note from (\ref{jtflower}) and the $\dirac(t_{\ff^{\mathrm{pr}}}-s)$ factor that $\widetilde{\Jc}_\Tc$ can be written as a sum of terms of form $e^{\zeta_\nf\pi i \cdot\delta L^{2\gamma}\Omega_\nf s}\cdot {\Jc}_\Tc^\dagger$, where $\nf=\ff^{\mathrm{pr}}$ and ${\Jc}_\Tc^\dagger$ does not depend on $\Omega_{\ff^{\mathrm{pr}}}$. This sum is taken over $(k_\mf)$ for siblings $\mf$ of $\ff$ (which has at most $M_1^{10d}$ choices), and once these $(k_\mf)$ are fixed the value of $\Omega_{\ff^{\mathrm{pr}}}$ is also a fixed function of $k$ and $k'$. Thus, at a loss of at most $M_1^{10d}$ (which is amplified to $M_1^{20dP}$ with the $2P$ power, but still can be absorbed by the $M_1^{-80dP}$ factor in Proposition \ref{flowerest1}), we may get rid of the exponential factor and replace $\widetilde{\Jc}_\Tc$ by $\Jc_\Tc^\dagger$ in the definition of $\Xs_{M_1M_2}$, so now $\Xs_{M_1M_2}$ depends on $k$ and $k'$ only via the $\Omega_\nf$ for branching nodes $\nf\neq \ff^{\mathrm{pr}}$ in the stems of the flower trees $\Tc$.
\item Since $|\Omega_\nf|\leq M_2$ for all branching nodes $\nf\neq\ff^{\mathrm{pr}}$ on the stem, we know that also $|\langle k,z\rangle|\leq 2M_2$ (and $|z|\leq M_1^2$) in the context of (4) above. By Lemma \ref{finitelem}, we can find $k_*\in\mathfrak{Rep}$, where $\mathfrak{Rep}$ is a fixed subset of $\Zb_L^d$ of at most $(M_1M_2)^{C_0}$ elements\footnote{This part of argument is easier than the irrational torus case \cite{DH21}, as in \cite{DH21} we also have $y\in\Rb^q$ and thus need an extra Sobolev argument in $y$.}, such that (\ref{represent}) holds. In this case, using the above bounds for $\langle k,z\rangle$, we conclude that $(\Xs_{M_1M_2})_{kk'}^\zeta(t,s)=(\Xs_{M_1M_2})_{k_*k_*'}^\zeta(t,s)$ with the $M_2$ cutoffs, where $k_*'$ is such that $k_*-k_*'=k-k'$ (we may also assume $k_*'\in\mathfrak{Rep}$ by enlarging $\mathfrak{Rep}$, as $|k-k'|\leq M_1^2$). We can then replace the supremum in $(k,k')$ by the supremum in $(k_*,k_*')$ and control it by the $L_{(k_*,k_*')\in \mathfrak{Rep}^2}^{2P}$ norm, with a loss of at most $(M_1M_2)^{C_0}$ due to summation in $(k_*,k_*')$, which is acceptable in view of the gain $M_1^{-80dP}M_2^{-P/9}$ in Proposition \ref{flowerest1}.
\item Finally, for the $L^\infty$ norm in $(t,s)$, we shall apply Gagliardo-Nirenberg inequality in slightly more precise way than (\ref{sobolev}) (this applies to any of the cases considered above). Namely, concerning only the $(t,s)$ part, we have
\begin{equation}\label{sobolev2}\sup_{t,s}|\Ks(t,s)|^{2P}\leq M_2^{P/100}\cdot\|\Ks\|_{L_{t,s}^{2P}}^{2P}+M_2^{-100P}\cdot\|\partial_{t,s}\Ks\|_{L_{t,s}^{2P}}^{2P}\end{equation} for any function $\Ks$. Note that the bound in Proposition \ref{flowerest1} without $\partial_{t,s}$ derivative gains a factor $M_2^{-P/9}$ which cancels the $M_2^{P/100}$ factor in (\ref{sobolev2}).
\item On the other hand, the term in (\ref{sobolev2}) with $\partial_{t,s}$ derivative can be estimated in the same way as in Proposition \ref{flowerest1} \emph{after replacing} $\widetilde{\Jc}_\Tc$ by $\Jc_\Tc^\dagger$ as in (5) above (here the $\partial_t$ derivative introduces an extra $\dirac(t_\rf-t)$ factor, while the $\partial_s$ derivative introduces an extra $\dirac(t_\nf-s)$ factor with $\nf$ being the parent or a non-flower child of $\ff^{\mathrm{pr}}$, and all these factors can be treated just as the one in (\ref{jtflower}); the case when $\ff^{\mathrm{pr}}=\rf$ only needs some trivial adjustments). This leads to a bound similar to (\ref{flowerest2}) but with extra loss of at most $L^{(100d)^3P}$ and possibly without the $M_2^{-P/9}$ decay, but we still have the $M_2^{-100P}$ weight in (\ref{sobolev2}) which provides the needed decay.
\end{enumerate}

By putting (1) through (8) together, we arrive at the proof of (\ref{newnormn}). Now, with (\ref{newnormn}) and a simple Chebyshev argument, it is easy to see that by excluding a subset $\Gf\subset \Ff_p$, which has gauge and space translation symmetry and has measure $\leq e^{-4N_{p+1}}$ (so $\Ff_{p+1}=\Ff_p\backslash\Gf$), we have that
\[\|\Ys\|_{Z\to Z}+\|\Ws\|_{Z\to Z}\leq 1/2,\quad \|\Xs\|_{Z\to Z}\leq e^{2N_{p+1}}.\] By Neumann series, this means that the operators $(1-\Ls_1)\Xs$ and $\Xs(1-\Ls_1)$ are both invertible. Therefore $1-\Ls_1$ has both left and right inverses, hence $1-\Ls_1$ is invertible and\[\|(1-\Ls_1)^{-1}\|_{Z\to Z}\leq 2\|\Xs\|_{Z\to Z}\leq e^{3N_{p+1}}.\] This proves (\ref{fixedest3}).

\uwave{Part 3: Proof of (\ref{fixedest4}).} This is essentially trivial once we have (\ref{fixedest1})--(\ref{fixedest3}), if we just use the easily established facts that
\[\|\Ls_3(\textit{\textbf{b}},\textit{\textbf{b}},\textit{\textbf{b}})\|_{Z}\leq L^{30d}\|\textit{\textbf{b}}\|_Z^3,\quad \|\Ls_2(\textit{\textbf{b}},\textit{\textbf{b}})\|_{Z}\leq L^{30d}\|\textit{\textbf{b}}\|_Z^2\cdot\|a_k^{\mathrm{tr}}\|_Z\leq e^{4N_{p+1}}\|\textit{\textbf{b}}\|_Z^2.\] This completes the proof of Proposition \ref{fixedpoint}.
\appendix
\section{Miscellaneous results}\label{appendmisc}
In this appendix we collect some useful lemmas needed in the main proof.
\subsection{Cumulants}\label{appendcm} We start with the properties of cumulants.
\begin{lem}[Property of cumulants]\label{propertycm} Let $S_i\,(1\leq i\leq r)$ be pairwise disjoint index sets, and $X_j\,(j\in S_i)$ be complex random variables. Let $Y_i:=\prod_{j\in S_i}X_j$, then we have
\begin{equation}\label{eqncm3}\Kb(Y_1,\cdots,Y_r)=\sum_{\Pc}\prod_{B\in\Pc}\Kb\big((X_j)_{j\in B}\big),
\end{equation} where the sum is taken over all partitions $\Pc$ of $S_1\cup\cdots\cup S_r$, under the assumption that there is no proper subset $A\subset\{1,\cdots,r\}$ such that $\cup_{i\in A}S_i$ equals the union of \emph{some of the sets} $B\in\Pc$. The same results hold if the cumulants are taken assuming a particular event $\Ff$.
\end{lem}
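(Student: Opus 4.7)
The approach is to invoke the classical Leonov--Shiryaev formula via M\"obius inversion on the partition lattice of $\bigcup_i S_i$. First, I would apply \eqref{eqncm} to write
\[
\Kb(Y_1,\ldots,Y_r)=\sum_{\sigma}(-1)^{|\sigma|-1}(|\sigma|-1)!\prod_{A\in\sigma}\Eb\bigg(\prod_{i\in A}Y_i\bigg),
\]
where $\sigma$ ranges over partitions of $\{1,\ldots,r\}$, and then expand each factor $\Eb(\prod_{i\in A}Y_i)=\Eb(\prod_{j\in T_A}X_j)$, with $T_A:=\bigcup_{i\in A}S_i$, via \eqref{eqncm2} as a sum over partitions of $T_A$ of products of cumulants of the $X_j$. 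Interchanging orders of summation gives
\[
\Kb(Y_1,\ldots,Y_r)=\sum_{\pi}\prod_{B\in\pi}\Kb\big((X_j)_{j\in B}\big)\cdot c(\pi),\qquad c(\pi)=\sum_{\sigma}(-1)^{|\sigma|-1}(|\sigma|-1)!,
\]
where $\pi$ ranges over all partitions of $S_1\cup\cdots\cup S_r$ and the inner sum for $c(\pi)$ is over those $\sigma$ such that every block of $\pi$ is contained in $T_A$ for some $A\in\sigma$.

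Next I would evaluate $c(\pi)$. Let $\sigma(\pi)$ be the coarsest partition of $\{1,\ldots,r\}$ such that every block of $\pi$ lies inside some $T_A$; concretely, $i\sim_{\sigma(\pi)} i'$ iff $i$ and $i'$ are linked by a chain of blocks of $\pi$ overlapping intermediate $S_j$'s. The condition on $\sigma$ in the inner sum is then exactly that $\sigma$ is a coarsening of $\sigma(\pi)$, so via the natural bijection between such coarsenings and partitions of an $m$-element set (with $m:=|\sigma(\pi)|$), $c(\pi)$ reduces to
\[
\sum_{\sigma'}(-1)^{|\sigma'|-1}(|\sigma'|-1)!=\delta_{m,1},
\]
summed over partitions $\sigma'$ of $\{1,\ldots,m\}$. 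This last identity is a standard Stirling-number cancellation and is equivalent to the vanishing of cumulants of order $\geq 2$ of a constant random variable. Hence $c(\pi)=\mathbf{1}_{m=1}$.

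Finally, $m=|\sigma(\pi)|=1$ is precisely the irreducibility hypothesis in the lemma: no proper subset $A\subset\{1,\ldots,r\}$ is such that $\bigcup_{i\in A}S_i$ equals a union of blocks of $\pi$. This proves \eqref{eqncm3}. The only point requiring any genuine care is the combinatorial bookkeeping that matches the inner-sum condition with ``$\sigma$ coarsens $\sigma(\pi)$'' and identifies $\sigma(\pi)=\mathrm{one\ block}$ with the stated irreducibility; the M\"obius cancellation itself is entirely routine. The conditional version of the identity is immediate, since both \eqref{eqncm} and \eqref{eqncm2} remain valid verbatim when $\Eb$ is replaced throughout by $\Eb(\,\cdot\,\mathbf{1}_\Ff)$, so the same argument applies without modification.
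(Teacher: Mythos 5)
Your proof is correct, and it takes a genuinely different route from the paper's. The paper starts from the moment identity $\Eb(Y_1\cdots Y_r)=\sum_{\Pc}\prod_{B\in\Pc}\Kb((X_j)_{j\in B})$ of \eqref{eqncm2}, decomposes each partition $\Pc$ of $\bigcup_i S_i$ uniquely into irreducible pieces indexed by a partition $\Pc_0$ of $\{1,\dots,r\}$, compares this with the expansion $\Eb(Y_1\cdots Y_r)=\sum_{\Pc_0}\prod_{A\in\Pc_0}\Kb((Y_i)_{i\in A})$, and then closes the argument by induction on the cardinality of $A\subset\{1,\dots,r\}$ to isolate the single-block term $\Pc_0=\{A\}$. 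You instead expand $\Kb(Y_1,\dots,Y_r)$ via \eqref{eqncm} into moments, expand those moments back into cumulants of the $X_j$ via \eqref{eqncm2}, interchange the order of summation, and evaluate the resulting scalar coefficient $c(\pi)$ directly as a M\"obius sum on the partition lattice; the Stirling-type cancellation you invoke is precisely the statement $\Kb(1,\dots,1)=0$ for $m\geq 2$ constants, which pins down $c(\pi)=\mathbf{1}_{m=1}$. The paper's route is self-contained and requires only \eqref{eqncm2} plus a short induction; your route is noninductive and exhibits the coefficient in closed form, which makes the appearance of the irreducibility condition more transparent. Both are standard proofs of the Leonov--Shiryaev formula and both are fine here. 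One cosmetic slip worth fixing: you call $\sigma(\pi)$ the \emph{coarsest} partition satisfying the stated containment property, but you mean the \emph{finest} one --- the trivial one-block partition is always coarser and trivially satisfies the property. Your concrete chain-connectivity definition and the subsequent phrase ``$\sigma$ is a coarsening of $\sigma(\pi)$'' both make your intent unambiguous, so this does not affect the argument, but the word should be changed.
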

\begin{proof} Note that by (\ref{eqncm2}) we have
\begin{equation}\label{cmproof1}\Eb(Y_1\cdots Y_r)=\Eb\bigg(\prod_{j\in X_1\cup\cdots\cup X_r}X_j\bigg)=\sum_{\Pc}^{(*)}\prod_{B\in\Pc}\Kb\big((X_j)_{j\in B}\big),\end{equation} where the sum is taken over all partitions of $S_1\cup\cdots\cup S_r$, without the restriction made in (\ref{eqncm3}). If we define those partitions satisfying the restriction in (\ref{eqncm3}) as irreducible partitions, then any partition $\Pc$ can be uniquely decomposed into irreducible partitions; i.e. there is a unique partition $\Pc_0$ of $\{1,\cdots,r\}$ such that $\Pc=\cup_{A\in\Pc_0}\Pc_A$, where $\Pc_A$ is an irreducible partition of $\cup_{i\in A}S_i$. This implies that
\begin{equation}\label{cmproof2}\Eb(Y_1\cdots Y_r)=\sum_{\Pc_0}\prod_{A\in\Pc_0}\sum_{\Pc_A}\prod_{B\in\Pc_A}\Kb\big((X_j)_{j\in B}\big),\end{equation}where $\Pc_0$ and $\Pc_A$ is as above. Also by (\ref{eqncm2}) we have that
\begin{equation}\label{cmproof3}\Eb(Y_1\cdots Y_r)=\sum_{\Pc_0}\prod_{A\in\Pc_0}\Kb\big((Y_i)_{i\in A}\big).\end{equation} Note that both (\ref{cmproof2}) and (\ref{cmproof3}) remain true (with obvious changes) if $\{1,\cdots,r\}$ is replaced by any subset $A\subset\{1,\cdots,r\}$. It then follows, by induction on $|A|$, that (\ref{eqncm3}) also holds (with obvious changes) if $\{1,\cdots,r\}$ is replaced by any subset $A\subset\{1,\cdots,r\}$, and lastly setting $A=\{1,\cdots,r\}$ yields (\ref{eqncm3}). The results assuming $\Ff$ follows in the same way.
\end{proof}
\subsection{Analytical lemmas} Next we prove some analytical lemmas concerning time integrations.
\begin{lem}\label{timeintlem} Consider the operators
\begin{equation}\label{defichi}
\begin{aligned}
\Ic_\chi F(t,s)&=\int_0^t\int_0^s\chi(t)\chi(t')\chi(s)\chi(s')F(t',s')\,\mathrm{d}t'\mathrm{d}s',\\
\Jc_\chi F(t,s,k)&=\int_s^t\int_s^{t'}\chi(t)\chi(t')\chi(s)\chi(s')F(t',s',k)\,\mathrm{d}s'\mathrm{d}t',
\end{aligned}
\end{equation} where $\chi$ is a smooth cutoff function. Then, for \emph{complex valued} $F$, we have
\begin{equation}\label{estichi}\|\Ic_\chi F\|_{\Xf^\eta}\lesssim_1\|F\|_{L^{1+3\eta}(\Rb^2)},\qquad \|\Jc_\chi F\|_{\Xf^{-\eta,0,0}}\lesssim_1\|F\|_{L_k^\infty L^1(\Rb^2)}.
\end{equation}
\end{lem}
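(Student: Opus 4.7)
The plan is to prove both estimates by computing the Fourier transforms in $(t,s)$ explicitly and reducing to properties of the one-variable ``cutoff half-line Fourier integrals'' $\phi_{+}(t',\lambda):=\int_{t'}^{\infty}e^{-2\pi i\lambda t}\chi(t)\,\mathrm{d}t$ and $\phi_{-}(s',\lambda):=\int_{-\infty}^{s'}e^{-2\pi i\lambda s}\chi(s)\,\mathrm{d}s$. By a single integration by parts using the smoothness of $\chi$, $|\phi_{\pm}(\cdot,\lambda)|\lesssim\langle\lambda\rangle^{-1}$ with leading term $\chi(\cdot)/(2\pi i\lambda)e^{\mp 2\pi i\lambda(\cdot)}$.

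For $\Jc_\chi$ (the easier case): although the integration domain $\{s<s'<t'<t\}$ does not factor in $(t',s')$, the crucial observation is that the conditions involving $(t,s)$ — namely $t>t'$ and $s<s'$ — decouple once $(t',s')$ are fixed. Fourier-transforming only in $(t,s)$ and holding $k$ fixed therefore gives
\[
\widehat{\Jc_\chi F}(\lambda_1,\lambda_2,k)=\int\!\!\int\chi(t')\chi(s')\,\mathbf{1}_{s'<t'}\,F(t',s',k)\,\phi_{+}(t',\lambda_1)\phi_{-}(s',\lambda_2)\,\mathrm{d}t'\mathrm{d}s'.
\]
A direct computation (splitting into $\langle\lambda_1\rangle\geq\langle\lambda_2\rangle$ and vice versa) shows $\int\max(\langle\lambda_1\rangle,\langle\lambda_2\rangle)^{-\eta}\langle\lambda_1\rangle^{-1}\langle\lambda_2\rangle^{-1}\,\mathrm{d}\lambda_1\mathrm{d}\lambda_2\lesssim 1$, uniformly in $(t',s')$. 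Taking absolute values, applying Fubini and then $\sup_k$ yields the desired bound by $\|F\|_{L_k^\infty L^1(\Rb^2)}$.

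For $\Ic_\chi$: the operator has the tensor-product structure $\Ic_\chi=T_{(s)}T_{(t)}$ where $Tg(t)=\chi(t)\int_0^t\chi(t')g(t')\,\mathrm{d}t'$. I would first establish the one-variable estimate
\[
\int\langle\lambda\rangle^{\eta}|\widehat{Tg}(\lambda)|\,\mathrm{d}\lambda\lesssim\|g\|_{L^{1+3\eta}(\Rb)}.
\]
To prove this, write $\widehat{Tg}(\lambda)=(2\pi i\lambda)^{-1}\widehat{\chi^2 g}(\lambda)+O(\langle\lambda\rangle^{-2})$ pointwise (via one integration by parts in the kernel $\phi$, noting the boundary term produces $\chi(t')^2 g(t')$). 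Then apply Hausdorff--Young, $\|\widehat{\chi^2 g}\|_{L^{(1+3\eta)/(3\eta)}}\lesssim\|g\|_{L^{1+3\eta}}$, followed by Hölder against $\langle\lambda\rangle^{-(1-\eta)}\in L^{1+3\eta}$; the condition $(1-\eta)(1+3\eta)>1$, valid for $\eta$ small, ensures integrability. For the two-variable estimate, use $\max(\langle\lambda_1\rangle,\langle\lambda_2\rangle)^{\eta}\leq\langle\lambda_1\rangle^{\eta}+\langle\lambda_2\rangle^{\eta}$ and symmetry to reduce to the weight $\langle\lambda_1\rangle^{\eta}$. Apply the one-variable estimate with $\eta=0$ in the $s$-direction to control the inner $\lambda_2$-integral by $\|\widehat{T_{(t)}F}^{(t)}(\lambda_1,\cdot)\|_{L^1_s}$, then the one-variable estimate with weight $\eta$ in the $t$-direction; Fubini and the compact support of $\chi$ (making $F$ effectively compactly supported in $s$) let one pass from the mixed norm $\int\|F(\cdot,s)\|_{L^{1+3\eta}_t}\mathrm{d}s$ to $\|F\|_{L^{1+3\eta}(\Rb^2)}$ via Hölder in $s$.

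The main (modest) technical point is the one-variable $L^{1+3\eta}\to\Xf^{\eta}$ bound, which requires combining integration by parts, Hausdorff--Young, and Hölder with exponents in a compatible range — the choice $p=(1+3\eta)/(3\eta)$, $p'=1+3\eta$ works precisely because $\eta$ is small. The apparent obstacle from the coupled domain in $\Jc_\chi$ dissolves once one notes that only the $(t,s)$-factorization matters on the Fourier side, not the $(t',s')$-factorization.
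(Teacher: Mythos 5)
Your treatment of $\Jc_\chi$ matches the paper: both compute the Fourier transform in $(t,s)$ directly, observe that the two boundary conditions $t>t'$ and $s<s'$ decouple, and read off the $\langle\xi\rangle^{-1}\langle\sigma\rangle^{-1}$ decay from one integration by parts in each of the two half-line integrals. For $\Ic_\chi$, however, you take a genuinely different route — a tensor-product reduction to a one-variable $L^{1+3\eta}\to\Xf^\eta$ bound via integration by parts and Hausdorff--Young — whereas the paper imports the kernel estimate $|K(\xi,\sigma,\xi',\sigma')|\lesssim\langle\xi\rangle^{-1}\langle\xi-\xi'\rangle^{-1}\langle\sigma\rangle^{-1}\langle\sigma-\sigma'\rangle^{-1}$ from \cite{DNY21} and closes with a single Hölder step in $(\xi',\sigma')$. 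Both are morally the same: in each case the extra $\langle\lambda\rangle^{-1}$ decay comes from the jump discontinuity of $\mathbf{1}_{t>t'}$, and the small exponent $\eta$ is used to break the logarithmic divergence this causes.

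There is, however, a gap in your $\Ic_\chi$ argument: the ``one-variable estimate with $\eta=0$'' that you apply in the $s$-direction, namely $\int|\widehat{Tg}(\lambda)|\,\mathrm{d}\lambda\lesssim\|g\|_{L^1}$, is false. If $g_n\to\delta_{t_0}$ is an approximate identity at an interior point $t_0$ of the cutoff, then $Tg_n\to\chi(t)\chi(t_0)\mathbf{1}_{t>t_0}$, whose Fourier transform decays only like $\langle\lambda\rangle^{-1}$ and is therefore not integrable, while $\|g_n\|_{L^1}$ stays bounded. This is exactly the borderline failure your ``$\eta$-small'' Hölder step was designed to avoid in the $t$-direction — you cannot simply set $\eta=0$ in the other direction. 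The fix is not hard: either distribute a small positive exponent $\eta'>0$ to the $s$-direction as well and use the mixed-norm bound $\|F\|_{L^1_sL^{1+3\eta}_t}\lesssim\|F\|_{L^{1+3\eta}(\Rb^2)}$ only after both one-variable estimates have been applied with positive weights, or — cleaner — bypass the iterated (Fubini) structure entirely and estimate the leading term $\frac{1}{(2\pi i\lambda_1)(2\pi i\lambda_2)}\widehat{\chi^2\otimes\chi^2 F}(\lambda_1,\lambda_2)$ by a single two-dimensional Hausdorff--Young followed by a two-dimensional Hölder with the weight $\langle\lambda_1\rangle^{\eta-1}\langle\lambda_2\rangle^{-1}\in L^{1+3\eta}(\Rb^2)$; the $\lambda_2$-integrability is then supplied by $1+3\eta>1$ rather than by a false endpoint. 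A second, more minor, issue: your ``$+O(\langle\lambda\rangle^{-2})$'' remainder is not a uniform-in-$g$ bound; one should keep the full integration-by-parts expansion, whose $k$-th term carries $\widehat{\chi\chi^{(k-1)}g}$ and is treated identically.
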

\begin{proof} For the first inequality, from Lemma 3.1 of \cite{DNY21} we have
\begin{equation}\label{proofichi}\widehat{\Ic_\chi F}(\xi,\sigma)=\int_{\Rb^2}K(\xi,\sigma,\xi',\sigma')\widehat{F}(\xi',\sigma')\,\mathrm{d}\xi'\mathrm{d}\sigma',
\end{equation} where the kernel $K$ satisfies
\begin{equation}\label{proofichi-1}|K|\lesssim_0\frac{1}{\langle \xi\rangle\langle \xi-\xi'\rangle\langle \sigma\rangle\langle \sigma-\sigma'\rangle}.\end{equation} By the definition of $\Xf^\eta$ norm (and assuming $F$ has compact support), we then have
\begin{align}
\|\Ic_\chi F\|_{\Xf^\eta}&\lesssim_1\int_{\Rb^4}\langle \xi\rangle^{\eta-1}\langle \sigma\rangle^{\eta-1}\langle \xi-\xi'\rangle^{-1}\langle \sigma-\sigma'\rangle^{-1}|\widehat{F}(\xi',\sigma')|\,\mathrm{d}\xi\mathrm{d}\sigma\mathrm{d}\xi'\mathrm{d}\sigma'\nonumber\\
&\lesssim_1\int_{\Rb^2}\langle \xi'\rangle^{\eta-1}\langle \sigma'\rangle^{\eta-1}|\widehat{F}(\xi',\sigma')|\,\mathrm{d}\xi'\mathrm{d}\sigma'\nonumber\\
\label{proofichi-2}&\lesssim_1\|\widehat{F}\|_{L^{1/(2\eta)}(\Rb^2)}\lesssim_1\|F\|_{L^{1/(1-2\eta)}(\Rb^2)}\lesssim_1\|F\|_{L^{1+3\eta}(\Rb^2)}.
\end{align}
As for the second inequality, we can calculate
\begin{align}
\widehat{\Jc_\chi F}(\xi,\sigma,k)&=\int_{\Rb^2} e^{-i(t\xi+s\sigma)}\int_s^t\int_s^{t'}\chi(t)\chi(t')\chi(s)\chi(s')F(t',s',k)\,\mathrm{d}s'\mathrm{d}t'\nonumber\\
&=\int_{t'>s'}\chi(t')\chi(s')F(t',s',k)\,\mathrm{d}t'\mathrm{d}s'\int_{t'}^\infty\int_{-\infty}^{s'}\chi(t)\chi(s)e^{-i(t\xi+s\sigma)}\,\mathrm{d}s\mathrm{d}t\nonumber\\
\label{proofichi-3}&+\int_{t'<s'}\chi(t')\chi(s')F(t',s',k)\,\mathrm{d}t'\mathrm{d}s'\int_{-\infty}^{t'}\int_{s'}^\infty\chi(t)\chi(s)e^{-i(t\xi+s\sigma)}\,\mathrm{d}s\mathrm{d}t.
\end{align}
From this we easily see that $|\widehat{\Jc_\chi F}(\xi,\sigma,k)|\lesssim_1\langle\xi\rangle^{-1}\langle \sigma\rangle^{-1}\|F(t',s',k)\|_{L^1(\Rb^2)}$ for each $k$, which finishes the proof in view of the definition (\ref{normX}).
\end{proof}
\begin{lem}\label{timeintlem2} Consider the following operators
\begin{equation}\label{timeintlem2-1}
\begin{aligned}
\Uc_\chi F(t,s,k)&=\int_s^t\int_s^{t_1}\chi(t)\chi(t_1)\chi(s)\chi(s_1)e^{i\Gamma_1(k)(t_1-s_1)}F(t_1,s_1,k)\,\mathrm{d}s_1\mathrm{d}t_1,\\
\Vc_\chi F(t,s,k)&=\int_{s}^t\int_{s}^{t_1}\int_s^{t_2}\int_s^{s_2}\chi(t)\chi(t_1)\chi(t_2)\chi(s)\chi(s_1)\chi(s_2)\\&\qquad\qquad\qquad\qquad\times e^{i\Gamma_1(k)(t_1-t_2)}e^{i\Gamma_2(k)(s_2-s_1)}F(t_2,s_2,k)\,\mathrm{d}s_1\mathrm{d}s_2\mathrm{d}t_2\mathrm{d}t_1,
\end{aligned}
\end{equation} then for complex valued $F$ we have
\begin{equation}\label{timeintlem2-2}
\|\Uc_\chi F\|_{\Xf^{1/4,0,0}}\lesssim_0\|\partial_{t_1}\partial_{s_1}F\|_{L_k^\infty L^1(\Rb^2)},\qquad\|\Vc_\chi F\|_{\Xf^{1/4,0,0}}\lesssim_0\|\partial_{t_2}\partial_{s_2}F\|_{L_k^\infty L^1(\Rb^2)}.
\end{equation}
\end{lem}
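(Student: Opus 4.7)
The plan is to compute the time Fourier transform of both $\Uc_\chi F$ and $\Vc_\chi F$ in the variables $(t,s)$ for fixed $k$, then use integration by parts twice to transfer both derivatives onto $F$, and finally verify that the resulting multipliers are integrable against the weight $\max(\langle\xi\rangle,\langle\sigma\rangle)^{1/4}$. By density we may assume $F$ is smooth and compactly supported in $(t_1,s_1)$ (respectively $(t_2,s_2)$).

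For $\Uc_\chi F$, I would first interchange the order of integration so as to integrate out the $t,s$ variables: setting
\[
A(t_1,\xi)=\int_{t>t_1}\chi(t)e^{-it\xi}\,\mathrm{d}t,\qquad B(s_1,\sigma)=\int_{s<s_1}\chi(s)e^{-is\sigma}\,\mathrm{d}s,
\]
one obtains $\widehat{\Uc_\chi F}(\xi,\sigma,k)=\int F(t_1,s_1,k)P(t_1)Q(s_1)\,\mathrm{d}t_1\mathrm{d}s_1$ where $P(t_1)=\chi(t_1)e^{i\Gamma_1 t_1}A(t_1,\xi)$ and $Q(s_1)=\chi(s_1)e^{-i\Gamma_1 s_1}B(s_1,\sigma)$. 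Integration by parts once in $t$ (inside $A$) gives $A(t_1,\xi)=\chi(t_1)e^{-it_1\xi}/(i\xi)+A_1(t_1,\xi)$ with $|A_1|\lesssim_0\langle\xi\rangle^{-2}$, and then $\widetilde P(t_1):=\int_{-\infty}^{t_1}P(a)\,\mathrm{d}a$ splits into a factor involving $\int\chi(a)^2 e^{i(\Gamma_1-\xi)a}\mathrm{d}a$ times $(i\xi)^{-1}$ (handled by another integration by parts in $a$) plus a harmless lower order term; this yields the pointwise bound $|\widetilde P(t_1)|\lesssim_0\langle\xi\rangle^{-1}\langle\Gamma_1-\xi\rangle^{-1}+\langle\xi\rangle^{-2}$, and similarly $|\widetilde Q(s_1)|\lesssim_0\langle\sigma\rangle^{-1}\langle\Gamma_1+\sigma\rangle^{-1}+\langle\sigma\rangle^{-2}$.

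Integrating by parts in both $t_1$ and $s_1$ (the boundary terms vanish by compact support) produces
\[
\widehat{\Uc_\chi F}(\xi,\sigma,k)=\int \partial_{t_1}\partial_{s_1}F(t_1,s_1,k)\,\widetilde P(t_1)\widetilde Q(s_1)\,\mathrm{d}t_1\mathrm{d}s_1,
\]
so $|\widehat{\Uc_\chi F}(\xi,\sigma,k)|\lesssim_0 \|\partial_{t_1}\partial_{s_1}F(\cdot,\cdot,k)\|_{L^1}\cdot\Omega(\xi,\sigma,\Gamma_1)$ where $\Omega$ is the product of the two pointwise bounds. The desired $\Xf^{1/4,0,0}$ estimate then reduces to verifying $\int\max(\langle\xi\rangle,\langle\sigma\rangle)^{1/4}\Omega(\xi,\sigma,\Gamma_1)\,\mathrm{d}\xi\mathrm{d}\sigma\lesssim_0 1$ uniformly in $\Gamma_1$. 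Splitting $\max(\langle\xi\rangle,\langle\sigma\rangle)^{1/4}\leq\langle\xi\rangle^{1/4}+\langle\sigma\rangle^{1/4}$ and using the standard one-dimensional bound $\int\langle\xi\rangle^{-a}\langle\xi-\Gamma_1\rangle^{-b}\mathrm{d}\xi\lesssim\langle\Gamma_1\rangle^{1-a-b}\log\langle\Gamma_1\rangle$ for $0<a,b<1$, $a+b>1$, every resulting double integral is bounded by a negative power of $\langle\Gamma_1\rangle$ times a log, so uniformly bounded.

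For $\Vc_\chi F$ the strategy is identical but applied in two stages. I would integrate out $t,s$ first (same $A,B$ as above), and then integrate out $t_1$ and $s_1$ against the exponentials $e^{i\Gamma_1 t_1}$ and $e^{-i\Gamma_2 s_1}$; the only change is that the antiderivatives now run on the half-lines $\{t_1>t_2\}$ and $\{s_1<s_2\}$, producing functions $\widetilde P_2(t_2),\widetilde Q_2(s_2)$ satisfying exactly the same pointwise bounds as $\widetilde P,\widetilde Q$ (with $\Gamma_1$ replaced by $\Gamma_1$ and $\Gamma_2$ respectively). This leaves an integral of the form $\int\chi(t_2)\chi(s_2)\mathbf 1_{s_2<t_2}G(t_2,s_2)F(t_2,s_2,k)\,\mathrm{d}t_2\mathrm{d}s_2$ with $|G|=|\widetilde P_2||\widetilde Q_2|$. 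Integrating by parts in $t_2$ then $s_2$ (with the Heaviside $\mathbf 1_{s_2<t_2}$ simply restricting the region of the double antiderivative $\Psi(t_2,s_2)=\int_{-\infty}^{s_2}\int_{-\infty}^{t_2}\chi\chi\mathbf 1 G\,\mathrm{d}\tau\mathrm{d}\tau'$) gives a uniform bound $|\Psi|\lesssim_0\sup|\widetilde P_2|\cdot\sup|\widetilde Q_2|$ by the compact support of $\chi$, after which the same weighted $\xi,\sigma$ integration argument as for $\Uc_\chi$ closes the estimate.

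The main obstacle is the sharp pointwise bound on $\widetilde P(t_1)$: the naive estimate $|\widetilde P|\lesssim\langle\xi\rangle^{-1}$ (from $|A|\lesssim\langle\xi\rangle^{-1}$ and the $L^1$ bound on $\chi$) is insufficient, since then $\int\max(\langle\xi\rangle,\langle\sigma\rangle)^{1/4}\langle\xi\rangle^{-1}\langle\sigma\rangle^{-1}\mathrm{d}\xi\mathrm{d}\sigma$ diverges logarithmically. The key is therefore the two-step integration by parts (first in the inner variable $t$ defining $A$, extracting the main term $\chi(t_1)e^{-it_1\xi}/(i\xi)$, then in the outer variable $a$ defining $\widetilde P$, exploiting the combined phase $e^{i(\Gamma_1-\xi)a}$) that yields the extra $\langle\Gamma_1-\xi\rangle^{-1}$ factor; this is precisely what makes the $\xi,\sigma$-integrals converge uniformly in $\Gamma_1$.
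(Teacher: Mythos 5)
Your treatment of $\Uc_\chi$ has a genuine gap: the domain of the $(t_1,s_1)$-integral is not all of $\Rb^2$ but the half-plane $\{s_1<t_1\}$ (the original nested integrals force $s<s_1<t_1<t$), and your formula $\widehat{\Uc_\chi F}=\int F\,P\,Q\,\mathrm{d}t_1\mathrm{d}s_1$ silently drops the indicator $\mathbf{1}_{s_1<t_1}$. Once the indicator is restored, integrating by parts in $t_1$ and $s_1$ produces boundary terms along the diagonal $s_1=t_1$ which do \emph{not} vanish by compact support; the worst of these, e.g. $\int P(t_1)F(t_1,t_1)\widetilde Q(t_1)\,\mathrm{d}t_1$, only carries the decay $\langle\xi\rangle^{-1}\langle\sigma\rangle^{-1}\langle\Gamma_1+\sigma\rangle^{-1}$, and $\int\max(\langle\xi\rangle,\langle\sigma\rangle)^{1/4}\langle\xi\rangle^{-1}\langle\sigma\rangle^{-1}\langle\Gamma_1+\sigma\rangle^{-1}\,\mathrm{d}\xi\,\mathrm{d}\sigma$ diverges (the $\xi$-integral of $\langle\xi\rangle^{-3/4}$ is infinite). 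Equivalently: your claimed pointwise bound $\langle\xi\rangle^{-1}\langle\Gamma_1-\xi\rangle^{-1}\cdot\langle\sigma\rangle^{-1}\langle\Gamma_1+\sigma\rangle^{-1}$ is not achievable for the true kernel, because the jump of $\mathbf{1}_{t_1>s_1}$ across the diagonal caps the combined decay in the antidiagonal direction at one full power. This is exactly the point the paper's proof is organized around: it keeps the indicator, observes that $|\Fs(F\cdot\mathbf{1}_{t_1>s_1})(\xi,\sigma,k)|\lesssim_0\langle\xi\rangle^{-1/2}\langle\sigma\rangle^{-1/2}$ (only half powers, the second bound in the paper's display labelled with the two estimates on $\widehat F$), and concludes $|\widehat{\Uc_\chi F}|\lesssim_0\langle\xi\rangle^{-1}\langle\sigma\rangle^{-1}\langle\Gamma_1-\xi\rangle^{-1/2}\langle\Gamma_1+\sigma\rangle^{-1/2}\lesssim_0\langle\xi\rangle^{-1}\langle\sigma\rangle^{-1}\langle\xi+\sigma\rangle^{-1/2}$. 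The half power in $\langle\xi+\sigma\rangle$ is precisely why the lemma only claims $\Xf^{1/4,0,0}$; your argument, if it were valid, would yield nearly $\Xf^{1,0,0}$, which should have been a warning sign. To repair your proof you would need to estimate the diagonal boundary terms by exploiting the combined oscillation $e^{-i(\xi+\sigma)t_1}$ against the $W^{1,1}$ regularity of $t_1\mapsto F(t_1,t_1,k)$ --- which essentially reproduces the paper's half-power mechanism.

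The $\Vc_\chi$ part of your argument is sound: there you correctly keep the Heaviside factor inside the kernel, take the double antiderivative $\Psi$ of the full kernel (so the integration by parts against the compactly supported $F$ really has no boundary terms), and the crude $L^1$ bound $|\Psi|\lesssim_0\sup|\widetilde P_2|\cdot\sup|\widetilde Q_2|$ suffices because the two extra inner integrations over $t_1$ and $s_1$ have already produced the factors $\langle\Gamma_1-\xi\rangle^{-1}$ and $\langle\Gamma_2+\sigma\rangle^{-1}$ before the diagonal is ever encountered. This differs in form from the paper's route (which again invokes the $\langle\xi\rangle^{-1/2}\langle\sigma\rangle^{-1/2}$ bound for $\Fs(F\cdot\mathbf{1}_{t_2>s_2})$ to land on $\langle\xi\rangle^{-3/2}\langle\sigma\rangle^{-3/2}$), but both close. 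The asymmetry between your two cases is instructive: $\Uc_\chi$ has no such slack, and that is where your proof breaks.
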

\begin{proof} Assume $\|\partial_{t_j}\partial_{s_j}F\|_{L_k^\infty L^1(\Rb^2)}=1$, where $j$ is $1$ or $2$ in the case of $\Uc_\chi$ or $\Vc_\chi$. We may assume $F$ is compactly supported, then a simple calculation yields that
\begin{equation}\label{weakdecay}|\widehat{F}(\xi,\sigma,k)|\lesssim_0\langle \xi\rangle^{-1}\langle\sigma\rangle^{-1},\qquad |\Fs(F\cdot\mathbf{1}_{t_j>s_j})(\xi,\sigma,k)|\lesssim_0\langle \xi\rangle^{-1/2}\langle\sigma\rangle^{-1/2}
\end{equation} uniformly in $k$, where $\Fs$ is the Fourier transform in $(t,s)$.

Consider first $\Uc_\chi$. By calculations similar to those for $\Jc_\chi$ in the proof of Lemma \ref{timeintlem}, we get
\begin{multline}
\label{timeintproof2-1}
\widehat{\Uc_\chi F}(\xi,\sigma,k)=\int_{t_1>s_1}\chi(t_1)\chi(s_1)e^{i\Gamma_1(k)(t_1-s_1)}F(t_1,s_1,k)\,\mathrm{d}t_1\mathrm{d}s_1\cdot\int_{t_1}^\infty \chi(t)e^{-it\xi}\,\mathrm{d}t\cdot\int_{-\infty}^{s_1}\chi(s)e^{-is\sigma}\,\mathrm{d}s\\
+(\mathrm{similar\ terms}).
\end{multline} We may restrict to the case $|\xi|,|\sigma|\geq 1$ (otherwise the proof is similar and much easier), and integrate by parts in the two integrals in $t$ and $s$. The resulting bulk term involving $\chi'$ has the same form but has better decay in $\xi$ and $\sigma$, and can be treated in the same way; for convenience we will only consider the hardest case, which is the boundary term. With this simplification we thus write
\begin{multline}\label{timeintproof2-2}
\widehat{\Uc_\chi F}(\xi,\sigma,k)=\frac{1}{\xi\sigma}\int_{t_1>s_1}\chi^2(t_1)\chi^2(s_1)F(t_1,s_1,k)e^{i(\Gamma_1(k)-\xi)t_1}\cdot e^{i(-\Gamma_1(k)-\sigma)s_1}\,\mathrm{d}t_1\mathrm{d}s_1\\+(\mathrm{similar\ or\ better\ terms}).
\end{multline}Using (\ref{weakdecay}) we get that
\begin{equation}|\widehat{\Uc_\chi F}(\xi,\sigma,k)|\lesssim_0\frac{1}{\langle \xi\rangle\langle \sigma\rangle}\cdot\frac{1}{\langle \Gamma_1(k)-\xi\rangle^{1/2}\langle \Gamma_1(k)+\sigma\rangle^{1/2}}\lesssim_0\frac{1}{\langle \xi\rangle\langle \sigma\rangle\langle \xi+\sigma\rangle^{1/2}}
\end{equation} uniformly in $k$, which easily implies (\ref{timeintlem2-2}) for $\Uc_\chi$.

The estimate for $\Vc_\chi$ is similar. We have
\begin{equation}\label{timeintproof2-3}
\widehat{\Vc_\chi F}(\xi,\sigma,k)=\int_{t_2>s_2}\chi(t_2)\chi(s_2)F(t_2,s_2,k)\cdot G(t_2,\xi,k)\cdot H(s_2,\sigma,k)\,\mathrm{d}t_2\mathrm{d}s_2+(\mathrm{similar\ terms}),
\end{equation} where
\begin{equation}\label{timeintproof2-4}
\begin{aligned}
G(t_2,\xi,k)&=\int_{t>t_1>t_2}\chi(t)\chi(t_1)e^{i\Gamma_1(k)(t_1-t_2)}e^{-it\xi}\,\mathrm{d}t\mathrm{d}t_1,\\
H_j(s_2,\sigma,k)&=\int_{s<s_1<s_2}\chi(s)\chi(s_1)e^{i\Gamma_2(k)(s_2-s_1)}e^{-is\sigma}\,\mathrm{d}s\mathrm{d}s_1.
\end{aligned}
\end{equation} We will assume $|\xi|,|\sigma|\geq 1$. In the expression of $G$, we first integrate in $t$ and may replace this integral by the boundary term $(i\xi)^{-1}\chi(t_1)e^{-it_1\xi}$ as above; this yields that
\[G(t_2,\xi,k)=(i\xi)^{-1}e^{-it_2\xi}\int_{t_2}^\infty\chi^2(t_1)e^{i(\Gamma_1(k)-\xi)(t_1-t_2)}\,\mathrm{d}t_1+(\mathrm{better\ terms}),\] and notice that this remaining integral is bounded in Schwartz class with respect to $t_2$ (for example by a change of variable $t_1-t_2:=u$), uniformly in $\xi$ and $k$. The same happens for $H$, and putting together into (\ref{timeintproof2-3}) we get that
\begin{equation}|\widehat{\Vc_\chi F}(\xi,\sigma,k)|\lesssim_0\frac{1}{\langle \xi\rangle^{3/2}\langle \sigma\rangle^{3/2}}
\end{equation} uniformly in $k$, which implies (\ref{timeintlem2-2}) for $\Vc_\chi$.
\end{proof}
\subsection{Combinatorial lemmas} Next we present some combinatorial results needed in the proof. Most of them are the same as in \cite{DH21,DH23}, but we include them here for reader's convenience.
\begin{lem}\label{treelemma}
Given a tree $\Tc$ of order $m$.
\begin{enumerate}[{(1)}]
\item We can order all the branching nodes of $\Tc$ as $\nf_1,\cdots,\nf_m$, such that the following holds for any decoration $(k_\nf)$: for each $j$ there exists $j'<j$ such that $k_{\nf_{j'}}-k_{\nf_j}$ is an algebraic sum of at most $\Lambda_j$ distinct terms in $(k_\lf)$ where $\lf$ ranges over the leaves, and $\prod_{j=1}^m\Lambda_j\leq C_0^m$.
\item Moreover, consider the collections of all decorations $(k_\nf)$ suc that $|k_\lf-k_\lf^0|\leq 1$ for each leaf $\lf$, where $k_\lf^0\in\Zb_L^d$ are fixed vectors for each leaf $\lf$. Then this collection can be divided into at most $C_0^m$ sub-collections, such that for any decoration $(k_\nf)$ in a given sub-collection, each $k_\nf$ belongs to a fixed unit ball that depends only on $(k_\nf^0)$, $\nf$ and the sub-collection but not on $(k_\nf)$ itself.
\end{enumerate}
\end{lem}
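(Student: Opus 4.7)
\textbf{Proof Plan for Lemma \ref{treelemma}.}

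The plan is to order the branching nodes by breadth-first search from the root, and then exploit the parent-child relations built into the tree decoration.

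For part (1), I would set $\nf_1 = \rf$ and then list the remaining branching nodes in any BFS order, so that for every non-root $\nf_j$ its parent appears as some $\nf_{j'}$ with $j' < j$. Writing the decoration relation at $\nf_{j'}$ as $k_{\nf_{j'}} = k_{c_1} - k_{c_2} + k_{c_3}$ where $c_1,c_2,c_3$ are the three children of $\nf_{j'}$, and recognizing $\nf_j$ as one of these children, gives $k_{\nf_{j'}} - k_{\nf_j} = \epsilon_1 k_{s_1} + \epsilon_2 k_{s_2}$ with $\epsilon_i \in \{\pm 1\}$, where $s_1, s_2$ are the two siblings of $\nf_j$. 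Iterating the decoration relation inside the sibling subtrees expresses each $k_{s_i}$ as a signed sum of its descendant leaves with $\pm 1$ coefficients; since the leaf sets of the two sibling subtrees are disjoint, we obtain $k_{\nf_{j'}} - k_{\nf_j}$ as an algebraic sum of exactly $\Lambda_j := L(s_1) + L(s_2)$ distinct leaf terms, where $L(\nf)$ denotes the number of leaves in the subtree at $\nf$.

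The combinatorial heart of the lemma is the bound $\prod_j \Lambda_j \leq C_0^m$. A naive induction $P(T) \leq C^{m(T)}$ fails because the root-level contribution $\prod_{i \in I} \Lambda_{\text{root}(T_i)}$ can grow polynomially in $m$ (for instance $\sim m^3$ when the root has three internal children of comparable size). The workaround is to strengthen the inductive hypothesis to one of the form $P(T) \leq K^{L(T)} L(T)^{-c}$ for suitable universal constants $K$ and $c$ (for example $c=2$). The polynomial penalty factor $L(T)^{-c}$ absorbs the oversized root contributions via the algebraic identity $L(T) = 1 + \sum_i L(T_i)$, which I would verify in the four cases indexed by the number $|I| \in \{0,1,2,3\}$ of internal children of the root, choosing $K$ large enough for each. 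Once the strengthened hypothesis is proved, the desired $\prod_j \Lambda_j \leq C_0^m$ follows from $L(T) = 2m+1$. I expect this inductive bookkeeping, in particular identifying the correct exponent $c$ and verifying the base cases, to be the main obstacle.

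For part (2), I would process the branching nodes in the same order as in (1). Assume inductively that each leaf vector $k_\lf$ lies in a fixed unit ball around $k_\lf^0$ and that each already-processed $k_{\nf_{j'}}$ has been localized, within the current sub-collection, to a fixed unit ball. The identity $k_{\nf_j} = k_{\nf_{j'}} - \epsilon_1 k_{s_1} - \epsilon_2 k_{s_2}$, expanded into the $\Lambda_j$ leaf contributions, then places $k_{\nf_j}$ into a ball of radius at most $1 + \Lambda_j$ centered at a point determined by the previous localizations. Covering this ball by $\lesssim \Lambda_j^d$ unit balls and choosing one such cover element for $\nf_j$ splits the current sub-collection further. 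Carrying this out for all $j$ yields at most $\prod_j C_0 \Lambda_j^d \leq C_0^m \cdot (\prod_j \Lambda_j)^d \leq C_0^{(d+1)m}$ sub-collections by part (1), which gives the claim after redefining $C_0$. The localization of each $k_\nf$ to a fixed unit ball inside every sub-collection is automatic from the construction.
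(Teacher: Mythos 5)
Your plan for part (2) is sound given a working version of part (1), and you correctly identify the heart of the matter as the bound $\prod_j \Lambda_j \le C_0^m$. However, the BFS-plus-compare-to-parent choice does not give this bound, and the strengthened hypothesis $P(T)\le K^{L(T)}L(T)^{-c}$ cannot save it, because that inequality is simply false for this ordering. Concretely, define $T_k$ recursively: $T_0$ is a single branching node with three leaf children, and $T_k$ has a root whose three children are the root of a fresh $T_0$, the root of $T_{k-1}$, and a leaf. Then $m(T_k)=2k+1$ and $L(T_k)=4k+3$. With compare-to-parent, the small $T_0$-copy at depth one has siblings $T_{k-1}$ (with $4k-1$ leaves) and a leaf, so its $\Lambda$ equals $4k$; recursing, the $T_0$-copies contribute $\Lambda$'s $4k, 4(k-1),\dots,4$, while the chain nodes each contribute $\Lambda=4$. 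Thus $\prod_j\Lambda_j = 4^k\cdot 4^k k! = 16^k\,k!$, which is super-exponential in $m$ and defeats $C_0^m$ as well as $K^{L}L^{-c}$ for every choice of $C_0$, $K$, $c$. In your recursion this surfaces in the $|I|=2$ case with $L_1=3$ small and $L_2$ large: the root factor $(L-L_1)(L-L_2)\sim 4L_2$ grows linearly in the tree size while the gain from $L_i^{-c}$ versus $L^{-c}$ stays bounded, so no choice of $c$ closes the induction. (A smaller slip: $L(T)=\sum_i L(T_i)$, without the $+1$, since the root is a branching node and not a leaf.)

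The missing idea is to change the comparison node: compare each branching node $\nf_j$ not to its parent but to its \emph{largest branching child}, treating any branching node with three leaf children as a base case. Then $\Lambda_j = L(\nf_j)-\max\{L(c): c\text{ a branching child of }\nf_j\}$, and ordering the branching nodes by increasing subtree leaf count ensures the comparison node appears earlier. With this choice your strengthened invariant does close, e.g.\ $P(T)\le 3^{L(T)}/L(T)^2$: the previously problematic $|I|=2$ case reduces to $(L_2+1)(L_1+L_2+1)^2\le 3(L_1L_2)^2$ (true for $L_1\ge L_2\ge 3$), and the $|I|=3$ case to $(L_2+L_3)(L_1+L_2+L_3)^2\le (L_1L_2L_3)^2$, whose left-to-right ratio attains its maximum $2/3$ at $L_1=L_2=L_3=3$. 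The bound $\prod_j\Lambda_j\le C_0^m$ then follows from $L(T)=2m+1$, and your part (2) argument goes through unchanged on top of this.
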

\begin{proof} See Lemma 6.6 of \cite{DH21} and Lemma A.4 of \cite{DH23}.
\end{proof}
\begin{lem}\label{subsetvc} Consider a given molecule with a given decoration. Suppose $\Ub$ is either a DV, or vine-like object that is LG or ZG, and assume $\Ub$ contains some disjoint SG vine-like objects $\Ub_j$ as subsets. Then each $\Ub_j$ is either one double bond or two double bonds or Vine (V), or the adjoint of a VC denoted by $\Ub_j'$, which is either Vine (V) or two double bonds. In any case, $\Ub$ still remains connected after removing all the $\Ub_j$ (in case $\Ub_j$ is a vine) or $\Ub_j'$ (in case $\Ub_j'$ is a vine).
\end{lem}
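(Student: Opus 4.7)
The plan is to proceed by a structural case analysis on $\Ub$, using the gap propagation properties in Definition \ref{defdiff} (namely, that for a vine or VC with gap $r$, the gap of any contained ladder is either $\pm r$ or $0$, and gaps at joints are similarly constrained) to rule out most candidates for $\Ub_j$. The starting observation is that a vine-like object $\Ub_j \subset \Ub$ must itself be a vine, HV, VC or HVC, and that its gap is determined either by the bonds at one of its joints inside $\Ub$, or by the pair of "external" bonds exiting $\Ub_j$ at that joint. In either case, the SG condition $0 < |r_{\Ub_j}| \leq L^{-\gamma+\eta}$ imposes sharp numerical constraints that must be reconciled with the gap structure of $\Ub$.

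First I would dispense with the DV case: a DV consists of two Vine (V)'s sharing two common joints and no other atoms, for a total of $8$ atoms. By the structure in Figure \ref{fig:vines}, any connected vine-like subset of a DV is either (i) a single double bond formed inside one of the two triangles, (ii) a pair of double bonds (a VC of two Vine (I)'s) stretching across a triangle, (iii) one of the two Vine (V)'s forming the DV, or (iv) the HV adjoint of (ii) or (iii) using the bond(s) between the shared joints. Each of these is exactly of one of the listed forms. Moreover, since $\Ub_j$ is SG, the case of Vine (V) implies the gap relations in the opposite Vine (V) are unconstrained by $\Ub_j$, so SG is achievable. Removing any such $\Ub_j$ (or its VC adjoint $\Ub_j'$) leaves the complementary Vine (V) together with the two shared joints intact, hence the DV remains connected.

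Next I would handle the case where $\Ub$ is an LG or ZG vine-like object. Here the key is that the gap of any contained ladder inside $\Ub$ is $\pm r_\Ub$ or $0$ by Definition \ref{defdiff}. When $\Ub$ is LG, any ladder with gap $\pm r_\Ub$ is itself LG, so any SG vine-like $\Ub_j$ must be housed either in a zero-gap ladder of $\Ub$ or supported at the triangle-like/joint-like special atoms of $\Ub$ (from Figure \ref{fig:vines}, these are the triangles in vines (III), (V)--(VIII) and the three-atom subset in Vine (IV)). Using the explicit structure of each vine and tracking which bond differences are forced to be $\pm r_\Ub$ and which are free, the possibilities for $\Ub_j$ reduce precisely to: a double bond (Vine (I)), a concatenation of two Vine (I)'s (two double bonds as a VC), a Vine (V) arising from a triangle component, or the HV adjoint of one of these last two. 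The ZG case is analogous but all internal ladders have gap $0$, so the SG gap of $\Ub_j$ must come from pairs of bonds where one bond is internal to a triangle or three-atom subset. In every subcase the list of structural possibilities matches the one stated in the lemma, and in each subcase $\Ub_j$ (or $\Ub_j'$) sits inside a triangle/three-atom sub-structure or a small segment of a ladder of $\Ub$; removing its interior bonds leaves the backbone of $\Ub$ (the chain of joints plus the remaining ladder and triangle atoms) intact, so $\Ub$ remains connected.

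The main obstacle will be the bookkeeping across Vines (III)--(VIII): several of these contain three ladders together with triangles or adjacent three-atom configurations, so the enumeration of vine-like subsets whose joints line up with an internal structural feature of $\Ub$ is where the combinatorics becomes delicate. The payoff of doing the analysis carefully, however, is clean: the gap propagation forces any SG $\Ub_j$ to either coincide with a triangle (yielding a Vine (V) candidate), or sit within two consecutive double bonds of a ladder (yielding one or two double bonds, or their HV adjoints), and in either case the complementary structure in $\Ub$ is visibly connected.
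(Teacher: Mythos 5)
Your plan diverges from the paper's argument at the key reduction step, and the substitute you use in its place is flawed. The paper's proof of Lemma \ref{subsetvc} first shows that each $\Ub_j$ must lie entirely inside a \emph{single} vine ingredient of $\Ub$ (for the VC/HVC case, using the gap mismatch between $\Ub_j$ and $\Ub$: if $\Ub_j$ contained an ingredient or straddled a concatenation joint, then by Definition \ref{defdiff} together with Lemma \ref{disjointlem} the SG gap of $\Ub_j$ would have to agree with the LG/ZG gap of $\Ub$), and only then enumerates vine-like subsets by inspecting Vines (I)--(VIII). Your proposal never makes this reduction; instead you try to constrain the \emph{location} of $\Ub_j$ by arguing that the LG/ZG ladders of $\Ub$ cannot contain SG sub-objects. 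That step is incorrect: the gap of an embedded Vine (I) inside a ladder is the difference of the decorations of the \emph{two double-bond edges}, which Definition \ref{defdiff} does not relate to the gap of the ambient ladder (the latter is the constant difference of the \emph{parallel single} bonds). Writing out the decoration equations at a double-bond pair $(u_j,v_j)$ shows $a_j - b_j$ (the double-bond gap) is governed by the \emph{sums} $c_j+d_j$, not by $r_\Ub = \pm(c_j-d_j)$. So an SG double bond can sit in an LG ladder; the mechanism you propose would in fact exclude exactly the "one double bond" case that the lemma lists.

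There is also a structural confusion in the claim that $\Ub_j$ can be "two consecutive double bonds of a ladder". Inside a ladder the double bonds $(u_j,v_j)$ and $(u_{j+1},v_{j+1})$ share no atom, so they cannot be concatenated into a VC; the "two double bonds" case in the statement comes from the three-atom configuration with two double bonds inside Vine (IV), not from a ladder segment. Finally, without the single-ingredient reduction you have no mechanism to exclude larger $\Ub_j$ (say a Vine (II) or a two-ingredient VC running across a joint of $\Ub$), which is precisely why you flag the Vines (III)--(VIII) bookkeeping as the obstacle; the paper sidesteps that difficulty by performing the reduction first. I'd suggest restructuring: establish that each $\Ub_j$ is confined to one vine ingredient (respectively, one of the two Vines (V) of the DV) via the gap mismatch argument, then enumerate vine-like subsets of a single vine and read off the connectivity after removal from Figure \ref{fig:vines}.
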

\begin{proof} First assume $\Ub$ is a VC or HVC, which is concatenated by one or multiple vine ingredients. Then each $\Ub_j$ must be a subset of a single vine ingredient, because the gap of $\Ub_j$ is not the same as the gap of $\Ub$, hence $\Ub_j$ cannot be concatenated by any of the vine ingredients of $\Ub$. Therefore we may focus on a single vine ingredient, which is Vine (I)--(VIII) as in Figure \ref{fig:vines}. The result is then self-evident by examining the structures of these vines. The case of DV is similar, since each $\Ub_j$ must be a subset of one of the two vines (V) forming $\Ub$.
\end{proof}
\begin{lem}\label{finitelem} Fix $M\geq L$. Then for any $k=(k^1,\cdots,k^d)\in \Zb_L^d$, there must exist $0\leq q \leq d$ and nonzero orthogonal vectors $v_j\in \Zb_L^d\,(1\leq j\leq q)$, and vector $y=(y^1,\cdots,y^q)\in(L^{-2}\Zb)^q$, such that $|v_j|\leq M^{C_0}$ and $|y|\leq M^{C_0}$, where $C_0$ is a constant depending only on $d$. Moreover the system $(q,v_1,\cdots,v_q,y)$ \emph{represents} $k$, in the following sense:\begin{enumerate}
\item If $z=(z^1,\cdots,z^d)\in \Zb_L^d$, $|z|\leq M$ and $|\langle k,z\rangle|\leq M$, then $z$ is a linear combination of $\{v_1,\cdots,v_q\}$;
\item If $z=\gamma^1v_1+\cdots +\gamma^qv_q$, then we have $\langle k,z\rangle=y^1\gamma^1+\cdots +y^q\gamma^q$.
\end{enumerate} In particular, there exists a subset $\mathfrak{Rep}\subset\Zb_L^d$ with at most $M^{C_0}$ elements, such that for any $k\in\Zb_L^d$ there exists $k_*\in\mathfrak{Rep}$ and
\begin{multline}\label{represent}
\mathrm{For\ any\ }z\in\Zb_L^d,\,|z|\leq M,\mathrm{\ we\ have\ that\ }|\langle k,z\rangle|\leq M\Leftrightarrow|\langle k_*,z\rangle|\leq M;\\
\mathrm{in\ this\ case\ we\ also\ have\ }\langle k,z\rangle=\langle k_*,z\rangle.
\end{multline}
\end{lem}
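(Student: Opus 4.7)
The plan is to construct the orthogonal system $(v_1,\ldots,v_q,y)$ iteratively via a Gram--Schmidt-type procedure on the set
\[S := \{z\in\Zb_L^d : |z|\leq M,\ |\langle k,z\rangle|\leq M\},\]
and then exploit the polynomial bound on the number of such systems to obtain $\mathfrak{Rep}$. Setting $W:=\mathrm{span}_\Rb(S)$ and $q:=\dim_\Rb W$, observe that condition (1) is equivalent to $S\subseteq W$, which is automatic, and that once the $v_j$ are chosen and one defines $y^j:=\langle k,v_j\rangle$, condition (2) follows by bilinearity. The nontrivial task is therefore to produce orthogonal vectors $v_j\in\Zb_L^d$ spanning $W$ with $|v_j|,|y^j|\leq M^{C_0}$ for some $C_0=C_0(d)$.

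I would build the $v_j$ by induction on $j$. Having chosen orthogonal $v_1,\ldots,v_j\in\Zb_L^d$ with the stated bounds, pick any $z_{j+1}\in S\setminus\mathrm{span}(v_1,\ldots,v_j)$ (which exists while $j<q$) and form the Gram--Schmidt projection
\[\tilde v_{j+1}:=z_{j+1}-\sum_{i=1}^j\frac{\langle z_{j+1},v_i\rangle}{|v_i|^2}v_i \in W\cap\mathrm{span}(v_1,\ldots,v_j)^\perp\setminus\{0\}.\]
Since $v_i\in L^{-1}\Zb^d$ implies $L^2|v_i|^2\in\Zb$ and $L^2\langle z_{j+1},v_i\rangle\in\Zb$, multiplying by the integer $D_j:=L^{2j}\prod_{i=1}^j|v_i|^2$ clears all denominators, so $v_{j+1}:=D_j\tilde v_{j+1}\in\Zb_L^d$ is orthogonal to $v_1,\ldots,v_j$. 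Using $|\tilde v_{j+1}|\leq|z_{j+1}|\leq M$ together with the inductive bound on $|v_i|$ and $M\geq L$, one checks $|v_{j+1}|\leq M^{C_0'}$; the bound $|y^{j+1}|=|\langle k,v_{j+1}\rangle|\leq M^{C_0'}$ follows by a parallel computation using $|\langle k,z_{j+1}\rangle|\leq M$ and the inductive bounds on the previous $|y^i|$. After at most $d$ iterations the final $C_0$ depends only on $d$. The (mild) main obstacle here is propagating the polynomial bound on $|v_{j+1}|$ through the iteration; the hypothesis $M\geq L$ is precisely what is needed to absorb the denominator-clearing factor $L^{2j}$ into a polynomial in $M$.

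For the set $\mathfrak{Rep}$, I would simply count systems: there are at most $d+1$ values for $q$, at most $(CLM^{C_0})^d$ choices for each $v_j\in\Zb_L^d$ with $|v_j|\leq M^{C_0}$, and at most $CL^2M^{C_0}$ choices for each $y^j\in L^{-2}\Zb$ with $|y^j|\leq M^{C_0}$; using $M\geq L$ the total is bounded by $M^{C_0''}$. For each system realized by at least one $k\in\Zb_L^d$, pick one such $k$ as its representative; these form $\mathfrak{Rep}$, of cardinality at most $M^{C_0''}$. Finally, if $k$ and $k_*$ share the same system and $z\in\Zb_L^d$ satisfies $|z|\leq M$, then $|\langle k,z\rangle|\leq M$ forces $z\in S\subseteq W=\mathrm{span}(v_1,\ldots,v_q)$ by (1), whereupon (2) gives $\langle k,z\rangle=\sum_j\gamma^jy^j=\langle k_*,z\rangle$; the reverse implication together with the equality follows by symmetry. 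This verifies~(\ref{represent}).
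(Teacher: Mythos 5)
Your proposal is correct and follows essentially the same route as the paper: Gram--Schmidt on a maximal independent subset of $S$, clearing denominators to stay in the lattice, polynomial bounds on $|v_j|$ and $|y^j|$ via $M\geq L$, and a counting argument over the finitely many systems to produce $\mathfrak{Rep}$. The only cosmetic difference is that the paper first rescales by $L$ to work in $\Zb^d$ and performs Gram--Schmidt on a fixed maximal independent set all at once, whereas you build the $v_j$ iteratively with explicit denominator-clearing factors.
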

\begin{proof} Upon multiplying everything by $L$, we may replace $\Zb_L$ by $\Zb$. Fix $k\in\Zb^d$, consider the set $H$ of $z\in\Zb^d$ such that $|z|\leq M$ and $|\langle k,z\rangle|\leq M$ (clearly $0\in H$). Let $q$ be the maximal number of linearly independent vectors in $H$, we may fix a maximum independent set $\{w_1,\cdots, w_q\}\subset H$, and apply Gram-Schmidt process to get orthogonal vectors $(v_1,\cdots,v_q)$. Since each $w_j\in\Zb^d$ and $|w_j|\leq M$, we can easily make $v_j\in\Zb^d$ and $|v_j|\leq M^{C_0}$. Moreover, since $|\langle k,w_j\rangle|\leq M$ for $1\leq j\leq q$, we know that also $|y|\leq M^{C_0}$ where $y=(y^1,\cdots,y^q)$ with $y^j=\langle k,v_j\rangle$. This means that $k$ is represented by the system $(q,v_1,\cdots,v_q,y)$, as desired. Clearly, if both $k$ and $k_*$ are represented by the same system $(q,v_1,\cdots,v_q,y)$, then (\ref{represent}) holds by definition; this completes the proof as the number of choices of $(q,v_1,\cdots,v_q,y)$ is at most $M^{C_0}$.
\end{proof}
\begin{rem} Lemma \ref{finitelem} is adapted to the current setting of square torus. A similar (and slightly more complicated) version was proved in \cite{DH21}, which is adapted to generic irrational tori. If we consider more general (partly rational and partly irrational) tori, then the corresponding result can be stated and proved by combining the arguments here and in \cite{DH21}. We omit the details.
\end{rem}

\begin{lem}\label{lincomblem}
Let $S$ be any finite set of vectors in $\Rb^d$. Then there exist $q\leq d$ vectors $X_1, \cdots X_q \in S$ such that any $Y\in S$ is a linear combination of $(X_k)_{1\leq k \leq q}$ with coefficients bounded by $2^{d-1}$.
\end{lem}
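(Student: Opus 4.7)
The plan is to produce the vectors $X_1,\ldots,X_q$ via a greedy maximal-distance selection, and then to establish the coefficient bound by a short induction that tracks how projections decay as we peel off the basis from the top.

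First I would let $q:=\dim\,\mathrm{span}(S)\le d$ and choose $X_1,\ldots,X_q\in S$ inductively, taking $X_k\in S$ to maximize the orthogonal distance
\[
d_k:=\mathrm{dist}\bigl(X_k,\,\mathrm{span}(X_1,\ldots,X_{k-1})\bigr).
\]
Since $q$ equals the dimension of $\mathrm{span}(S)$, we have $d_k>0$ for all $1\le k\le q$, and by the greedy choice, every $Y\in S$ satisfies $\mathrm{dist}(Y,\mathrm{span}(X_1,\ldots,X_{k-1}))\le d_k$ for each $k$. In particular $(X_1,\ldots,X_q)$ is a basis of $\mathrm{span}(S)$, so any $Y\in S$ admits a unique expansion $Y=\sum_{i=1}^q c_i X_i$; it remains to show $|c_i|\le 2^{d-1}$ for every $i$.

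Next I would prove by induction on $j\in\{0,1,\ldots,q-1\}$ that $|c_{q-j}|\le 2^j$. The base case $j=0$ follows because $\mathrm{dist}(Y,\mathrm{span}(X_1,\ldots,X_{q-1}))$ equals $|c_q|\,d_q$ on the one hand, and is at most $d_q$ by the greedy choice on the other. For the inductive step, assuming $|c_{q-i}|\le 2^i$ for all $i<j$, consider
\[
Z_j:=Y-\sum_{i=0}^{j-1}c_{q-i}X_{q-i}=\sum_{i=1}^{q-j}c_i X_i.
\]
The distance of $Z_j$ to $\mathrm{span}(X_1,\ldots,X_{q-j-1})$ equals $|c_{q-j}|\,d_{q-j}$. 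By the triangle inequality, this distance is bounded by
\[
\mathrm{dist}(Y,\mathrm{span}(X_1,\ldots,X_{q-j-1}))+\sum_{i=0}^{j-1}|c_{q-i}|\cdot \mathrm{dist}(X_{q-i},\mathrm{span}(X_1,\ldots,X_{q-j-1})),
\]
and each of the $j+1$ distances appearing is at most $d_{q-j}$ by the greedy choice. Hence $|c_{q-j}|\,d_{q-j}\le d_{q-j}\bigl(1+\sum_{i=0}^{j-1}2^i\bigr)=2^j d_{q-j}$, giving $|c_{q-j}|\le 2^j$ as claimed. Setting $j=q-1$ yields $\max_i|c_i|\le 2^{q-1}\le 2^{d-1}$, which finishes the proof.

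There is no real obstacle: the only point requiring any care is the bookkeeping in the triangle inequality so that the geometric sum $1+1+2+\cdots+2^{j-1}=2^j$ lands exactly on the claimed bound $2^{d-1}$ rather than something slightly larger; this is what forces us to use the greedy rule simultaneously for $Y$ and for each previously chosen $X_{q-i}$, rather than, say, applying Cramer's rule to a volume-maximizing subset (which would give a different constant).
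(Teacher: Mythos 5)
Your proposal is correct and is essentially the paper's proof in different notation. The paper selects $X_k$ greedily to maximize the wedge-product volume $|X_1\wedge\cdots\wedge X_k|$; since $|X_1\wedge\cdots\wedge X_k| = |X_1\wedge\cdots\wedge X_{k-1}|\cdot\mathrm{dist}(X_k,\mathrm{span}(X_1,\ldots,X_{k-1}))$, this is exactly your greedy orthogonal-distance rule, and the paper's inductive bound $|\lambda_k|\le 1+\sum_{l>k}|\lambda_l|$ (obtained by expanding $|X_1\wedge\cdots\wedge X_{k-1}\wedge(Y-\sum_{l>k}\lambda_l X_l)|$ by multilinearity and using the greedy choice of $X_k$) is the same triangle-inequality recursion you run on distances, yielding the identical geometric series and the bound $2^{q-1}\le 2^{d-1}$.
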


\begin{proof}
Let $X_1\in S$ be such that $|X_1|$ is maximal. For $k\geq 2$, pick $X_k\in S$ successively such that $|X_1\wedge X_2\cdots \wedge X_k|$ is positive and maximal, if such an $X_k$ exists. This process must terminate after at most $q\leq d$ steps, which results in a subset $\{X_1,\cdots,X_q\}$. Clearly they are linearly independent, and any $Y\in S$ can be written as a unique linear combination $Y=\lambda_1 X_1 +\cdots+\lambda_q X_q$.

Now we have $|X_1 \wedge \cdots \wedge X_{q-1}\wedge Y|=|\lambda_q|\cdot|X_1 \wedge \cdots \wedge X_{q-1}\wedge X_q|$; by the choice of $X_q$ we have $|\lambda_q|\leq 1$. Next consider $|X_1 \wedge \cdots \wedge X_{q-2}\wedge (Y-\lambda_q X_q)|=|\lambda_{q-1}||X_1 \wedge \cdots \wedge X_{q-2}\wedge X_{q-1}|$; since both $Y$ and $X_q$ belong to $S$, by the choice of $X_{q-1}$ we have $|\lambda_{q-1}|\leq |\lambda_q|+1$. In the same way we get that $|\lambda_k|\leq |\lambda_{k+1}|+\cdots+|\lambda_q|+1$ for all $1\leq k \leq q-1$, which gives the result.
\end{proof}
\subsection{Counting lemmas} Finally we recall some counting estimates from \cite{DH23}.
\begin{lem}\label{countlem} Fix $\alpha,\beta\in\Rb$ and $r,v\in\Zb_L^d$, such that $|r|\sim_0 P$ with $P\in[L^{-1},1]\cup\{0\}$ (with $|r|\gtrsim_0 1$ if $P=1$). Let each of $(x,y,z)$ belong to $\Zb_L^d$ intersecting a fixed unit ball, assume $\xi\in\{x,y,x+y\}$ and $\zeta\in\{x-v,x+y-v,x+y-v-z\}$, and define $\Xf:=\min((\log L)^2,1+\delta L^{2\gamma}P)$ as in Proposition \ref{ladderl1old}. Then we have the following counting bounds
\begin{align}
\label{basiccount01}\big\{(x,y):|x\cdot y-\alpha|\leq \delta^{-1}L^{-2\gamma},\,\, |r\cdot\xi-\beta|\leq \delta^{-1}L^{-2\gamma}\big\}&\lesssim_1\delta^{-1}L^{2(d-\gamma)}\Xf^{-1},\\\label{basiccount02}
\big\{(x,y,z):|x\cdot y-\alpha|\leq \delta^{-1}L^{-2\gamma},\,\, |\zeta\cdot z-\beta|\leq \delta^{-1}L^{-2\gamma}\big\}&\lesssim_1\delta^{-2}L^{3(d-\gamma)-\gamma_0}.
\end{align} 
\end{lem}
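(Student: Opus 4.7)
Both estimates are elementary lattice-point counts in $\Zb_L^d$ organized around a single quadratic constraint (coming from $x\cdot y$) together with one linear or quadratic constraint (coming from $r\cdot\xi$ or $\zeta\cdot z$). The plan is to reduce each count to a product of two or three one-variable slab counts and to verify the stated bounds in every regime of the relevant parameters, namely the size $P$ of $r$ in (\ref{basiccount01}) and the value of $\gamma$ in (\ref{basiccount02}). Throughout, the key lattice count I will use is that the number of $w\in\Zb_L^d\cap B(0,1)$ lying in a slab of thickness $\tau$ is $\lesssim L^d\min(1,\tau)+L^{d-1}$ (the last term accounting for sub-lattice-scale slabs).

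\textbf{Plan for (\ref{basiccount01}).} After re-parametrizing $(x,y)$ as $(\xi,w)$ with $w\in\{x,y\}$, the bilinear constraint becomes a single quadratic inequality in $w$ with gradient $\xi-2w$ (when $\xi=x+y$) or $\xi$ (when $\xi\in\{x,y\}$). A standard slab count in the direction of this gradient yields $\#\{w\}\lesssim \delta^{-1}L^{d-2\gamma}+L^{d-1}$, uniformly in $\xi$. The outer count of $\xi\in\Zb_L^d\cap B(0,2)$ satisfying $|r\cdot\xi-\beta|\leq\delta^{-1}L^{-2\gamma}$ is controlled by a slab of thickness $\delta^{-1}L^{-2\gamma}/P$ perpendicular to $r$, giving $\#\{\xi\}\lesssim L^d\min(1,\delta^{-1}L^{-2\gamma}/P)+L^{d-1}$. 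Multiplying the two counts and splitting into the three natural sub-regimes $P\lesssim\delta^{-1}L^{-2\gamma}$, $\delta^{-1}L^{-2\gamma}<P\lesssim \delta^{-1}L^{1-2\gamma}$, and $P\gtrsim\delta^{-1}L^{1-2\gamma}$ reproduces $\delta^{-1}L^{2(d-\gamma)}\Xf^{-1}$ in each case. In the third regime the true count is $\delta^{-1}L^{2(d-\gamma)-1}$, which is much smaller than the claim $\delta^{-1}L^{2(d-\gamma)}(\log L)^{-2}$; this is exactly why capping $\Xf$ at $(\log L)^2$ costs nothing. The cases $\xi\in\{x,y\}$ where $|x|$ or $|y|$ is small will be treated by the same dyadic slab-versus-ball tradeoff described in the next paragraph.

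\textbf{Plan for (\ref{basiccount02}).} I will apply the $P=0$ special case of (\ref{basiccount01}) to bound $\#\{(x,y)\}\lesssim \delta^{-1}L^{2(d-\gamma)}$, and then count $z$ against the second constraint conditionally on $(x,y)$. For $\zeta\in\{x-v,\ x+y-v\}$ the $z$-constraint is linear with gradient $x-v$ or $x+y-v$; for $\zeta=x+y-v-z$ it is quadratic with gradient $x+y-v-2z$, handled by the same slab count. In the non-degenerate regime where the relevant gradient has length $\sim 1$ the inner count is $\lesssim\delta^{-1}L^{d-2\gamma}$, producing the overall bound $\delta^{-2}L^{3d-4\gamma}$. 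When $\gamma\leq 1/2$ we have $\gamma_0=\gamma$ and this equals the claim $\delta^{-2}L^{3(d-\gamma)-\gamma_0}$; when $\gamma>1/2$ we have $\gamma_0=1-\gamma<\gamma$, so $3d-4\gamma<3(d-\gamma)-\gamma_0$ and the claim is automatic.

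\textbf{Main obstacle.} The hardest point will be the degenerate subcases of (\ref{basiccount02}) in which the linear coefficient for $z$, e.g.\ $|x-v|$, is smaller than $1/L$, so that the inner $z$-count is forced up to $L^d$. I plan to dyadically decompose over the scale $|x-v|\sim 2^{-k}$ (handling $x=v$ separately), using that the number of admissible $x$'s is $\lesssim\min(1,(2^{-k}L)^d)$ while the degraded $z$-count is $\lesssim L^d\min(1,2^{k}\delta^{-1}L^{-2\gamma})+L^{d-1}$. The $y$-count in the slab $|x\cdot y-\alpha|\leq \delta^{-1}L^{-2\gamma}$ must be treated in the same dyadic spirit when $|x|$ itself is small. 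A case-by-case dyadic summation then confirms the bound in all ranges of $k$; the worst sub-regime is where $|x-v|\sim\delta^{-1}L^{-2\gamma}$ and corresponds exactly to the endpoint $3(d-\gamma)-\gamma_0$. The analogous discussion applies to $\zeta=x+y-v$ and $\zeta=x+y-v-z$, the latter being easier because of the additional quadratic term $|z|^2$ which only sharpens the quadratic constraint.
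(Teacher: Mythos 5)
Your approach and the paper's cannot be usefully compared: the paper gives no self-contained argument here but simply cites Lemma A.2 of \cite{DH23}, with the remark that the bounds needed are weaker than what is proved there. So you are attempting a genuinely independent, from-scratch proof.

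There is, however, a real gap in the core step, and it breaks (\ref{basiccount01}) for every $\gamma\in(1/2,1]$. You rely on the generic slab count, namely that $\Zb_L^d$ intersected with a unit ball and a slab of thickness $\tau$ contains $\lesssim L^d\min(1,\tau)+L^{d-1}$ points, \emph{uniformly in the slab direction}. But the slabs arising here are lattice-aligned and the uniform bound is far from sharp on average. Concretely, take $\xi=x$ with $|x|\sim 1$, set $T:=\delta^{-1}L^{-2\gamma}$ and $g:=\gcd(Lx)$. Since $x\cdot y$ lies in the arithmetic progression $\tfrac{g}{L^2}\Zb$, the window $|x\cdot y-\alpha|\leq T$ admits $\lesssim TL^2/g+1$ values of $x\cdot y$, and each level set $\{y: x\cdot y=c\}$ carries only $\lesssim gL^{d-2}$ points of $\Zb_L^d$ in a unit ball (the affine sublattice has covolume $\sim L/g$ in its hyperplane). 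Thus the correct inner count is $\lesssim TL^d+gL^{d-2}$; the $L^{d-1}$ in your bound is the extreme case $g\sim L$, which is achieved for only $O(L)$ of the $O(L^d)$ admissible $\xi$'s. Taking the worst case uniformly and multiplying by the outer count of $\xi$ (say when $P=0$, which is also the case fed into (\ref{basiccount02})) gives $L^{2d}T+L^{2d-1}$, whereas the $g$-weighted sum $\sum_{g\geq 1}(L/g)^d\bigl(TL^d+gL^{d-2}\bigr)\lesssim L^{2d}T+L^{2d-2}$ is a full power of $L$ smaller in the second term. For $\gamma>1/2$ one has $T<L^{-1}$, so the second term dominates, your estimate produces $L^{2d-1}$, and (\ref{basiccount01}) with $P=0$ demands $\lesssim_1\delta^{-1}L^{2(d-\gamma)}$; the ratio $\delta L^{2\gamma-1}$ diverges as $L\to\infty$, so the bound fails. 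The corrected count $L^{2d}T+L^{2d-2}$ is acceptable since $L^{2d-2}\leq L^{2(d-\gamma)}\leq\delta^{-1}L^{2(d-\gamma)}$ for all $\gamma\leq 1$.

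To repair the argument you must replace the uniform slab count by the $\gcd$-weighted one, or equivalently sum over the exact value of $x\cdot y\in L^{-2}\Zb$ using that each level set $\{(x,y):x\cdot y=c\}$ has $\lesssim L^{2d-2}$ solutions in a product of unit balls (a divisor-sum bound, uniform in $c$ for $d\geq 3$). The dyadic decomposition over $|x-v|$ and over the gradient magnitude that you propose addresses geometric degeneracy of the gradient, not this arithmetic degeneracy in $\gcd(Lx)$, so it does not close the gap; and the same deficiency propagates into your proof of (\ref{basiccount02}) once you substitute your own, rather than the stated, bound for $\#\{(x,y)\}$ in the regime $\gamma>1/2$.
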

\begin{proof} This follows from Lemma A.2 of \cite{DH23} (the bounds (\ref{basiccount01})--(\ref{basiccount02}) needed here are much weaker than in \cite{DH23}).
\end{proof}
\begin{lem}\label{atomcountlem}
Let $r\in\{2,3\}$ and $\epsilon_j\in\{\pm1\}$, and $k_j\,(1\leq j\leq r)$ be in $\Zb_L^d$ intersecting a fixed unit ball, that solve the following system
\begin{equation}\label{atomsystem}
\sum_{j=1}^r\epsilon_jk_j=k,\quad \bigg|\sum_{j=1}^r\epsilon_j|k_j|^2-\beta\bigg|\leq\delta^{-1}L^{-2\gamma},
\end{equation} where $k\in\Zb_L^d$ and $\beta\in\Rb$ are fixed parameters. Let the number of solutions to this system be $\Ef_r$, then we have
\begin{equation}\label{atomcountbd}\Ef_2\lesssim_1 \delta^{-1}\min(L^d,L^{d+1-2\gamma}),\quad\,\,\Ef_2\lesssim_1\delta^{-1}L^{d-\gamma-\eta}\,\,\mathrm{if}\,\, |r|\geq L^{-\gamma+\eta};\quad\,\,\Ef_3\lesssim_1\delta^{-1}L^{2(d-\gamma)}.
\end{equation} Moreover, suppose we have four vector variables $k_j\,(1\leq j\leq 4)$ as above, that (\ref{atomsystem}) with $r=3$ holds for $(k_1,k_2,k_3)$, and that the same equation holds also for the three vectors $(k_1,k_2,k_4)$ with the same $\epsilon_j$ and possibly different $(k,\beta)$, then the number of solutions is bounded by
\begin{equation}\label{atomcountbd3}\Ef_4\lesssim_1\delta^{-1}L^{2(d-\gamma)}\Xf^{-1},
\end{equation} where $\Xf$ is defined as in Proposition \ref{ladderl1old} from $P$, and $P\sim_0|k_3-k_4|$.
\end{lem}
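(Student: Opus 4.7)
The plan is to reduce each of the three estimates to classical counting of lattice points on thin neighborhoods of algebraic varieties, most of which is already encapsulated in Lemma~\ref{countlem}.

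For $\Ef_2$, I would use the linear constraint to eliminate $k_2 = \epsilon_1 \epsilon_2(k - \epsilon_1 k_1)$, leaving a single quadratic condition on $k_1$ inside a unit ball. Two cases arise by sign: if $\epsilon_1=\epsilon_2$, a direct expansion gives $|k_1|^2+|k_2|^2 = 2|k_1-k/2|^2 + |k|^2/2$, so $k_1$ is forced onto a spherical shell of radius $\lesssim 1$ and thickness $\lesssim \delta^{-1}L^{-2\gamma}$; if $\epsilon_1=-\epsilon_2$, the expansion gives $|k_1|^2-|k_2|^2 = 2k_1\cdot k - |k|^2$, so $k_1$ is forced into an affine slab with normal $k$ and thickness $\lesssim \delta^{-1}L^{-2\gamma}/|k|$. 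In both cases counting $L^{-1}$-spaced lattice points in such a set yields the trivial bound $\delta^{-1}L^d$ together with the refinement $\delta^{-1}L^{d+1-2\gamma}$ from the volume of the shell, and the slab case provides the additional gain $\delta^{-1}L^{d-\gamma-\eta}$ when the normal vector has size $\geq L^{-\gamma+\eta}$.

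For $\Ef_3$ the linear constraint again eliminates $k_3$, and the single remaining quadratic condition cuts out a neighborhood of a quadric hypersurface in the $2d$-dimensional space $(k_1,k_2)$. Counting as above (a single-constraint version of \eqref{basiccount01}) gives $\delta^{-1}L^{2d}\cdot L^{-2\gamma} = \delta^{-1}L^{2(d-\gamma)}$. This is essentially a degenerate case of Lemma~\ref{countlem} and can be cited from there (or from Lemma A.2 of \cite{DH23}).

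The substantive point is \eqref{atomcountbd3}. Here there are two linear equations and two scalar quadratic equations for $(k_1,k_2,k_3,k_4)$. Subtracting the two linear equations forces $\epsilon_3(k_3-k_4)$ to equal a fixed vector, so $k_3-k_4=:r$ is fixed with $|r|\sim_0 P$. Subtracting the two quadratic constraints and using the identity $|k_3|^2-|k_4|^2=(k_3-k_4)\cdot(k_3+k_4)=r\cdot(k_3+k_4)$, together with the fact that $k_3+k_4$ is an explicit affine function of $(\epsilon_1 k_1+\epsilon_2 k_2)$ after substitution, converts the difference into a linear constraint of the form
\[
|r\cdot\xi - \beta'|\leq C\delta^{-1}L^{-2\gamma},\qquad \xi\in\{k_1,k_2,k_1+k_2\}\ \text{(up to signs)}.
\]
Coupled with either of the original quadratic conditions, which after eliminating $k_3$ becomes an $|x\cdot y-\alpha|\leq \delta^{-1}L^{-2\gamma}$-type constraint on $(k_1,k_2)$, we are exactly in the setting of \eqref{basiccount01}, and Lemma~\ref{countlem} delivers $\delta^{-1}L^{2(d-\gamma)}\Xf^{-1}$.

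The main obstacle is purely bookkeeping in the $\Ef_4$ case: matching the linear form produced by the subtraction, $r\cdot\xi$ with $\xi$ an affine function of $(k_1,k_2)$, with the precise choices of $\xi\in\{x,y,x+y\}$ allowed by Lemma~\ref{countlem}. Once this identification is made for each combination of signs $(\epsilon_1,\epsilon_2,\epsilon_3)$, the estimate follows as a direct application, and no new number-theoretic input beyond that already recorded in Lemma~\ref{countlem} (and hence Lemma~A.2 of \cite{DH23}) is required.
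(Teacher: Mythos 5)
Your argument is correct and is essentially the intended one: the paper's own "proof" is just the citation to Lemma A.3 of \cite{DH23}, and your reduction — eliminating one vector by the linear constraint, counting lattice points near a sphere/slab/hyperbolic quadric for $\Ef_2,\Ef_3$, and for $\Ef_4$ subtracting the two systems to fix $k_3-k_4=r$ and extract the extra linear constraint $|r\cdot\xi-\beta'|\leq C\delta^{-1}L^{-2\gamma}$ feeding into \eqref{basiccount01} — is exactly how that lemma is proved. Two small caveats that do not affect correctness: the bound $\delta^{-1}L^{d+1-2\gamma}$ is not literally "$L^d$ times the volume of the shell" (that would give $\delta^{-1}L^{d-2\gamma}$); the extra factor of $L$ comes from the lattice discretization in the radial/normal direction, though the crude slicing count does deliver the stated (non-sharp) bound. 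Also, when all $\epsilon_j$ have the same sign the eliminated quadratic form is definite rather than of the type $x\cdot y$ covered by \eqref{basiccount01}, so that case needs a separate (easy) shell count — but it never arises from molecule decorations, where each atom has at most two bonds of each direction.
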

\begin{proof} This follows from (parts of) Lemma A.3 of \cite{DH23}.
\end{proof}
\section{Failure of a physical space $L^1$ bound}\label{physicalL1} Consider the reduced Fourier coefficients $a_k(t)$ in (\ref{reducedcoef}). In the Gibbs measure case of \cite{LS11}, these coefficients are not independent, but their cumulants satisfy the (discrete analog of) physical space $L^1$ bound, namely
\begin{equation}\label{physicall1-2}\int_{(\Tb_L^d)^3}\big|\Kb\big(\check{a}(0,t),\overline{\check{a}(x_1,t)},\check{a}(x_2,t),\overline{\check{a}(x_3,t)}\big)\big|\,\mathrm{d}x_1\mathrm{d}x_2\mathrm{d}x_3\leq C_0L^{-\gamma}\end{equation} for $t=0$ (and any $t>0$ due to invariance), where $\check{a}(x,t)$ is the inverse Fourier transform of $a$ as defined in (\ref{fourier}). The bound (\ref{physicall1-2}) is crucial in the derivation in \cite{LS11}, see the discussions in Section \ref{strategyintro}; however it is not true for $t>0$, in the off-equilibrium setting in this paper. To see this, just choose $t=1$, expand $a_k(1)$ into Taylor series consisting of multilinear expressions of $a_k(0)$, and take lowest order terms. By using (\ref{akeqn})--(\ref{akeqn2}) and Lemma \ref{propertycm}, we get that the cumulant $\Kb\big(a_{k_1}(1),\overline{a_{k_2}(1)},a_{k_3}(1),\overline{a_{k_4}(1)}\big)$ contains a leading contribution
\begin{multline}\label{physicall1-3}\Kb_0(k_1,k_2,k_3,k_4):=\frac{-\delta i}{2L^{d-\gamma}}\mathbf{1}_{k_1-k_2+k_3-k_4=0}\int_0^1 e^{-\pi i\cdot \delta L^{2\gamma}\Omega s}\,\mathrm{d}s\\\times \varphi_{\mathrm{in}}(k_1)\varphi_{\mathrm{in}}(k_2)\varphi_{\mathrm{in}}(k_3)\varphi_{\mathrm{in}}(k_4)\bigg(\frac{1}{\varphi_{\mathrm{in}}(k_1)}-\frac{1}{\varphi_{\mathrm{in}}(k_2)}+\frac{1}{\varphi_{\mathrm{in}}(k_3)}-\frac{1}{\varphi_{\mathrm{in}}(k_4)}\bigg),\end{multline} where
\[\Omega:=|k_1|^2-|k_2|^2+|k_3|^2-|k_4|^2\] and we have omitted the $\epsilon$ factor in (\ref{defcoef0}) (as (\ref{physicall1-3}) vanishes if $\epsilon\neq 1$). Note the other contributions contain either higher powers of $\delta$ or more negative powers of $L$, so we may ignore these other terms and replace $\Kb$ by $\Kb_0$.

Now, choose $k_j^*$ such that $k_1^*-k_2^*+k_2^*-k_4^*=|k_1^*|^2-|k_2^*|^2+|k_3^*|^2-|k_4^*|^2=0$, but
\[\frac{1}{\varphi_{\mathrm{in}}(k_1^*)}-\frac{1}{\varphi_{\mathrm{in}}(k_2^*)}+\frac{1}{\varphi_{\mathrm{in}}(k_3^*)}-\frac{1}{\varphi_{\mathrm{in}}(k_4^*)}\neq 0;\] this is possible due to the off-equilibrium setting. Using the definition of $\check{a}$, we can calculate
\begin{multline}\int_{(\Tb_L^d)^3}\Kb\big(\check{a}(0,t),\overline{\check{a}(x_1,t)},\check{a}(x_2,t),\overline{\check{a}(x_3,t)}\big)\cdot e^{-2\pi i(k_2^*\cdot x_1-k_3^*\cdot x_2+k_4^*\cdot x_3)}\,\mathrm{d}x_1\mathrm{d}x_2\mathrm{d}x_3\\=L^d\sum_{k_1}\Kb_0(k_1,k_2^*,k_3^*,k_4^*)=L^d\Kb_0(k_1^*,k_2^*,k_3^*,k_4^*)\sim L^\gamma,\end{multline} which means that (\ref{physicall1-2}) cannot be true.
\section{Schwartz solutions to (\ref{wke})}\label{schwartzwke} In this appendix we prove Proposition \ref{wkelwp}. Recall the equation (\ref{wke}) and its nonlinearity, which is given by the trilinear form (\ref{wke2}). We can decompose (\ref{wke2}) as 
\[\Kc(\varphi_1,\varphi_2,\varphi_3)=\sum_{j=0}^3(-1)^j\Kc_j(\varphi_1,\varphi_2,\varphi_3),\] where (note that $\Kc_1$ and $\Kc_3$ are essentially equivalent up to permutation of input functions)
\begin{multline}\label{schappend1}
\Kc_0(\varphi_1,\varphi_2,\varphi_3)(k)=\int_{(\Rb^d)^3}\varphi_1(k_1)\varphi_2(k_2)\varphi_3(k_3)\dirac(k_1-k_2+k_3-k)\\\times\dirac(|k_1|^2-|k_2|^2+|k_3|^2-|k|^2)\,\mathrm{d}k_1\mathrm{d}k_2\mathrm{d}k_3,
\end{multline}
\begin{multline}\label{schappend2}
\Kc_1(\varphi_1,\varphi_2,\varphi_3)(k)=\int_{(\Rb^d)^3}\varphi_1(k)\varphi_2(k_2)\varphi_3(k_3)\dirac(k_1-k_2+k_3-k)\\\times\dirac(|k_1|^2-|k_2|^2+|k_3|^2-|k|^2)\,\mathrm{d}k_1\mathrm{d}k_2\mathrm{d}k_3,
\end{multline}
\begin{multline}\label{schappend3}
\Kc_2(\varphi_1,\varphi_2,\varphi_3)(k)=\int_{(\Rb^d)^3}\varphi_1(k_1)\varphi_2(k)\varphi_3(k_3)\dirac(k_1-k_2+k_3-k)\\\times\dirac(|k_1|^2-|k_2|^2+|k_3|^2-|k|^2)\,\mathrm{d}k_1\mathrm{d}k_2\mathrm{d}k_3,
\end{multline}
\begin{multline}\label{schappend4}
\Kc_3(\varphi_1,\varphi_2,\varphi_3)(k)=\int_{(\Rb^d)^3}\varphi_1(k_1)\varphi_2(k_2)\varphi_3(k)\dirac(k_1-k_2+k_3-k)\\\times\dirac(|k_1|^2-|k_2|^2+|k_3|^2-|k|^2)\,\mathrm{d}k_1\mathrm{d}k_2\mathrm{d}k_3.
\end{multline}

Clearly each $\Kc_j$ is a positive trilinear operator. Next, in (\ref{schappend1}), by making the change of variables $k_1=k+\ell_1$ and $k_3=k+\ell_3$ (so $k_2=k+\ell_1+\ell_3$) we can rewrite
\begin{equation}\label{schappend5}
\Kc_0(\varphi_1,\varphi_2,\varphi_3)(k)=\int_{(\Rb^d)^2}\varphi_1(k+\ell_1)\varphi_2(k+\ell_1+\ell_3)\varphi_3(k+\ell_3)\dirac(-2\langle \ell_1,\ell_3\rangle)\,\mathrm{d}\ell_1\mathrm{d}\ell_3,
\end{equation} noticing also that $|k_1|^2-|k_2|^2+|k_3|^2-|k|^2=-2\langle \ell_1,\ell_3\rangle$. By Leibniz rule it then follows that
\begin{equation}\label{schappend6}\nabla\Kc_0(\varphi_1,\varphi_2,\varphi_3)=\Kc_0(\nabla\varphi_1,\varphi_2,\varphi_3)+\Kc_0(\varphi_1,\nabla\varphi_2,\varphi_3)+\Kc_0(\varphi_1,\varphi_2,\nabla\varphi_3),
\end{equation}
where $\nabla$ refers to $\nabla_k$, and clearly the same is true for each $\Kc_j\,(1\leq j\leq 3)$. Moreover, the integral in (\ref{schappend1}) is supported in $k_1-k_2+k_3-k=0$, so similarly we get
\begin{equation}\label{schappend7}
k^i\cdot\Kc_0(\varphi_1,\varphi_2,\varphi_3)=\Kc_0((k_1)^i\varphi_1,\varphi_2,\varphi_3)-\Kc_0(\varphi_1,(k_2)^i\varphi_2,\varphi_3)+\Kc_0(\varphi_1,\varphi_2,(k_3)^i\varphi_3),
\end{equation} where $k^i$ represents any coordinate for $1\leq i\leq d$. For other $\Kc_j\,(1\leq j\leq 3)$ a simpler version of (\ref{schappend7}) holds, with only one term on the right hand side that involves $k^i\cdot\varphi_j$, and the other two terms has coefficient $0$. Finally, both (\ref{schappend6}) and (\ref{schappend7}) can be iterated to get corresponding equalities for multi-indices $\alpha$.

\medskip
Now we can start the proof of Proposition \ref{wkelwp}. Fix any $s>d/2-1$ and define
\[\|\varphi\|_{Z_s}:=\|\langle k\rangle^s\varphi\|_{L^2};\quad \|\varphi\|_{Z_{s,m}}:=\sup_{|\alpha|+|\beta|\leq m}\|k^{\alpha}\nabla^{\beta} \varphi\|_{Z_s}\] for nonnegative integers $m$. It is proved in \cite{GIT20}, see Lemmas 4.1--4.3 of \cite{GIT20} (also Remark 2.6 of \cite{GIT20}), that each $\Kc_j$ satisfies the trilinear estimate
\begin{equation}\label{schappend8}\|\Kc_j(\varphi_1,\varphi_2,\varphi_3)\|_{Z_s}\leq C_0(s)\|\varphi_1\|_{Z_s}\cdot\|\varphi_2\|_{Z_s}\cdot\|\varphi_3\|_{Z_s}\end{equation} for some constant $C_0(s)$ depending on $s$. It then follows that (\ref{wke}) is locally well-posed in $Z_s$.

The main point in our proof is to establish the following: for any $T>0$ and any $s$ and $m$, if the solution $\varphi$ to (\ref{wke}) exists in $Z_s$ space until and including time $T$ and \begin{equation}\label{schappend9}\sup_{\tau\in[0,T]}\|\varphi(\tau)\|_{Z_s}\leq A,\end{equation} and if the initial data satisfies $\|\varphi_{\mathrm{in}}\|_{Z_{s,m}}\leq B$, and the solution $\varphi(\tau)\in Z_{s,m}$ for all $\tau\in[0,T]$, then we have the a priori estimate \begin{equation}\label{schappend10}\sup_{\tau\in[0,T]}\|\varphi(\tau)\|_{Z_{s,m}}\leq C(s,m,T,A,B)\end{equation} for some constant $C=C(s,m,T,A,B)$.

To prove (\ref{schappend10}) we induct on $m$; the case $m=0$ is known. Suppose (\ref{schappend10}) is true for smaller $m$, then to control the $Z_{s,m}$ norm of $\varphi(\tau)$, we only need to consider multi-indices $\alpha,\beta$ such that $|\alpha|+|\beta|=m>0$. By iterating (\ref{schappend6}) and (\ref{schappend7}), and applying Duhamel formula, we get
\begin{equation}\label{schappend11}
k^\alpha\nabla^\beta (\varphi(\tau)-\varphi_{\mathrm{in}})=\sum_{j=0}^3\sum_{\substack{\alpha_1+\alpha_2+\alpha_3=\alpha\\\beta_1+\beta_2+\beta_3=\beta}}\iota\cdot\int_0^{\tau}\Kc_j\big(k^{\alpha_1}\nabla^{\beta_1}\varphi(\tau'),k^{\alpha_2}\nabla^{\beta_2}\varphi(\tau'),k^{\alpha_3}\nabla^{\beta_3}\varphi(\tau')\big)\,\mathrm{d}\tau',
\end{equation} where $\iota\in\{-1,0,1\}$ is a coefficient  depending on $j$ and $\alpha_i,\beta_i$ etc. In each of these terms, either $\alpha_i+\beta_i<m$ for all $i$, or $\alpha_i=\alpha$ and $\beta_i=\beta$ for exactly one $i$, and $\alpha_{i'}=\beta_{i'}=0$ for all other $i'$. Now we apply (\ref{schappend8}) to (\ref{schappend11}), and use the induction hypothesis, to get
\[\|k^\alpha\nabla^\beta\varphi(\tau)\|_{Z_s}\leq B+4(m+1)^{6d}C_0(s)T\cdot C(s,m-1,T,A,B)^3+12A^2C_0(s)\cdot\int_0^{\tau}\|k^\alpha\nabla^\beta\varphi(\tau')\|_{Z_s}\,\mathrm{d}\tau'.\] By Gronwall this then implies (\ref{schappend10}) with \begin{multline*}C(s,m,T,A,B)=C(s,m-1,T,A,B)+Be^{12A^2TC_0(s)}\\+4T(m+1)^{6d}C_0(s)e^{12A^2TC_0(s)}\cdot C(s,m-1,T,A,B)^3.\end{multline*}

Now we can finish the proof of Proposition \ref{wkelwp}. For $s>d/2-1$ and $m$, consider the maximal time $\tau_{\mathrm{max}}^{(s,m)}$ such that (\ref{wke}) has a (unique) solution in $C_t ([0,\tau_{\mathrm{max}}^{(s,m)})\to Z_{s,m})$. Since $\varphi_{\mathrm{in}}\in\Sc$ and (\ref{wke}) is locally well-posed in $Z_{s,m}$ (because the trilinear estimate (\ref{schappend8}) holds for $Z_{s,m}$ with constant $C_0=C_0(s,m)$, which follows from the same proof as above), we know $\tau_{\mathrm{max}}^{(s,m)}>0$. Moreover, if $\tau_{\mathrm{max}}^{(s,m)}<\tau_{\mathrm{max}}^{(s,0)}$, then (\ref{schappend10}) implies that $\varphi(\tau)$ is \emph{uniformly bounded} in $Z_{s,m}$ for all $\tau\in[0,\tau_{\mathrm{max}}^{(s,m)})$, so by applying local well-posedness in $Z_{s,m}$ to $\varphi(\tau_{\mathrm{max}}^{(s,m)}-\epsilon)$ for sufficiently small $\epsilon$, we can construct the solution to (\ref{wke}) that belongs to $Z_{s,m}$ up to time $\tau_{\mathrm{max}}^{(s,m)}+\epsilon$, which is impossible.

Now we know $\tau_{\mathrm{max}}^{(s,m)}=\tau_{\mathrm{max}}^{(s,0)}$. But for any $s_1<s_2$, we can find $m$ such that the $Z_{s_1,m}$ norm is stronger than $Z_{s_2,0}$ norm, thus $\tau_{\mathrm{max}}^{(s_2,0)}\leq \tau_{\mathrm{max}}^{(s_1,0)}=\tau_{\mathrm{max}}^{(s_1,m)}\leq \tau_{\mathrm{max}}^{(s_2,0)}$. Therefore the values of $\tau_{\mathrm{max}}^{(s,m)}$ are equal for all $(s,m)$, and define this to be $\tau_{\mathrm{max}}$. Then for $\tau\in[0,\tau_{\mathrm{max}})$ the solution $\varphi(\tau)$ belongs to all $Z_{s,m}$ spaces, and hence is Schwartz; moreover if $\tau_{\mathrm{max}}<\infty$ then the blowup criterion (\ref{blowuptest}) has to hold, since otherwise we can extend the solution in $Z_{s,0}$. This proves Proposition \ref{wkelwp}.
\section{Table of notations}\label{appendtable} In this appendix we collect some important notations used in this paper. Table \ref{table1} contains notations about trees, couples and gardens. Table \ref{table2} contains notations about molecules. Table \ref{table3} contains notations about layerings and others, which are specific to this paper.

\medskip
\begin{center}
\NiceMatrixOptions
{
    custom-line =
    {letter = I ,
    tikz = {line width=0.08em},
    width=0.08em
    },  
}
\begin{NiceTabular}{Il c lI}
\toprule
Concept           &  Symbol                & Where defined \\
\midrule
Tree              &$\Tc$           & Definition \ref{deftree}\\
Root, node, leaf  &$\rf,\nf,\lf$ &Definition \ref{deftree}\\
Couple, garden & $\Qc,\Gc$&Definition \ref{defgarden}\\
Paired tree & --- & Definition \ref{defgarden}\\
Lone leaf & $\lf_{\mathrm{lo}}$ & Definition \ref{defgarden}\\
Leaf set & $\Lc$ & Definitions \ref{deftree}, \ref{defgarden}\\
Branching node set&$\Nc$ &Definitions \ref{deftree}, \ref{defgarden}\\
Order & $n(\Tc)$ etc. & Definitions \ref{deftree}, \ref{defgarden}\\
Decoration (trees etc.) & $\Ds,\Is$ & Definition \ref{defdec}\\
Regular couple, regular tree & --- & Definition \ref{defreg}\\
Dominant couple &--- &Proposition \ref{propstructure}\\
Prime couple& --- & Proposition \ref{propstructure2}\\
Skeleton& $\Gc_{\mathrm{sk}}$ & Proposition \ref{propstructure2}\\
\bottomrule
\end{NiceTabular}
\captionof{table}{Notations about trees, couples and gardens.}\label{table1}

\medskip
\begin{NiceTabular}{Il c lI}
\toprule
Concept           &  Symbol                & Where defined \\
\midrule
Molecule, atom, bond & $\Mb,v,\ell$ & Definition \ref{defmol}\\
Circular rank & $\chi$ & Definition \ref{defmol}\\
Molecule from garden &$\Mb(\Gc)$ & Definition \ref{defcplmol}\\
Decoration (molecules) & --- &Definition \ref{defdecmol}\\
Block, realization & $\Bb,\Gc[\Bb]$ & Definition \ref{defblock}, Remark \ref{rem_realiz}\\
(CL) and (CN) blocks, splicing & --- & Proposition \ref{block_clcn}\\
Vines (I)--(VIII), ladder & $\Vb,\Lb$ &Definition \ref{defvine}\\
HV, VC, HVC, DV, vine-like object &$\Ub$ & Definition \ref{defvine}, Lemma \ref{vinechainlem}\\
Vines (I-a), (II-b) etc. & --- & Proposition \ref{molecpl}\\
Twist & --- & Definition \ref{twist}\\
Gap, SG, LG, ZG &$r$ &Definition \ref{defdiff}\\
(CL) vines set, result after splicing & $\Vs,\Vs_0,\Gc_{\mathrm{sb}}$ & Definition \ref{defcong}, Section \ref{redsplice}\\
Degenerate and tame atoms &--- & Proposition \ref{subpro} (3)\\
Cut, $\alpha$- and $\beta$-atoms &---& Definition \ref{defcut}\\
\bottomrule
\end{NiceTabular}
\captionof{table}{Notations about molecules.}\label{table2}

\medskip
\begin{NiceTabular}{Il c lI}
\toprule
Concept           &  Symbol                & Where defined \\
\midrule
Cumulant & $\Kb$ & Definition \ref{defcm}\\
Layering, pre-layering & $\Lf[\Gc]$ & Definition \ref{deflayer}\\
Canonical layered garden &$\Gs_{p}$ & Definition \ref{defcanon}\\
Canonical layered couple &$\Cs_{p}$ & Definition \ref{defcanon}\\
Expression for layered objects & $\Kc_\Gc$ etc. & Definition \ref{defkg}\\
Coherent regular objects &--- & Definition \ref{defcoh}\\
Incoherency index (regular objects) &$g$ & Definition \ref{defcoh}\\
Layered full (LF) twist &--- &Definition \ref{lftwist}\\
Coherent vines/ladders &--- & Definition \ref{cohmol}\\
Incoherency index (vines/ladders) &$g$ & Definition \ref{cohmol}\\
Expression for (CL) vine chains & $\Kc^\Ub$, $\widetilde{\Kc}^\Ub$ etc. & Proposition \ref{vineest} \\
\bottomrule
\end{NiceTabular}
\end{center}
\captionof{table}{Notations about layerings and others.}\label{table3}


\begin{thebibliography}{99}
\bibitem{ACG21} I. Ampatzoglou, C. Collot and P. Germain. Derivation of the kinetic wave equation for quadratic dispersive problems in the inhomogeneous setting. arXiv:2107.11819.
\bibitem{BS66} B. J. Benney and P. Saffman. Nonlinear interaction of random waves in a dispersive medium. \emph{Proc. R. Soc. A} 289 (1966), 301--320.
\bibitem{BN69} B. J. Benney and A. C. Newell. Random wave closures. \emph{Studies. Appl. Math.} 48 (1969), 29.
\bibitem{BGSS20} T. Bodineau, I. Gallagher, L. Saint-Raymond and S. Simonella. Statistical dynamics of a hard sphere gas: fluctuating Boltzmann equation and large deviations. arXiv:2008.10403.
\bibitem{BGSS20-1} T. Bodineau, I. Gallagher, L. Saint-Raymond and S. Simonella. Long-time correlations for a hard-sphere gas at equilibrium. arXiv:2012.03813.
\bibitem{BGSS22} T. Bodineau, I. Gallagher, L. Saint-Raymond and S. Simonella. Long-time derivation at equilibrium of the fluctuating Boltzmann equation. arXiv:2201.04514.
\bibitem{Bol72} L. Boltzmann, Weitere Studien uber das Warme gleichgenicht unfer Gasmolakular. Sitzungsberichte der Akademie der Wissenschaften 66 (1872), 275--370.
\bibitem{BGHS19} T. Buckmaster, P. Germain, Z. Hani and J. Shatah. Onset of the wave turbulence description of the long-time behavior of the nonlinear Schr\"{o}dinger equation. \emph{Invent. Math.} 225 (2021), 787--855.
\bibitem{CL19} S. Cai and X Lu. The spatially Homogeneous Boltzmann Equation for Bose-Einstein Particles: Rate of Strong Convergence to Equilbrium. \emph{J. Stat. Phys.} 175 (2019), 289--350.
\bibitem{CCH23} E. Cardenas and T. Chen. Quantum Boltzmann dynamics and bosonized particle-hole interactions in Fermion gases. \emph{Preprint}, arXiv2306.03300.
\bibitem{CHHol23} X. Chen and J. Holmer. The Derivation of the Boltzmann Equation from Quantum Many-body Dynamics. \emph{Preprint}. arXiv:2312.08239.
\bibitem{CHH23} T. Chen and M. Hott. On the Emergence of Quantum Boltzmann Fluctuation Dynamics near a Bose- Einstein Condensate. \emph{Journal of Statistical Physics}, 190, 85 (2023).
\bibitem{CDG22} C. Collot, H. Dietert and P. Germain. Stability and cascades for thr Kolmogorov-Zakharov spectrum of wave turbulence. arXiv:2208.00947. 
\bibitem{CG19} C. Collot and P. Germain. On the derivation of the homogeneous kinetic wave equation. arXiv:1912.10368.
\bibitem{CG20} C. Collot and P. Germain. Derivation of the homogeneous kinetic wave equation: longer time scales. arXiv:2007.03508.
\bibitem{Dav72} R. Davidson. \emph{Methods in Nonlinear Plasma Theory}, First Edition, Academic Press, 1972.
\bibitem{DeS22} A.-S. De Suzzoni, Singularities in the weak turbulence regime for the quintic Schrödinger equation. \emph{Doc. Math.} 27, 2491-2561 (2022).

\bibitem{DH19} Y. Deng and Z. Hani. On the derivation of the wave kinetic equation for NLS. \emph{Forum of Math. Pi.} 9 (2021), e6.
\bibitem{DH21} Y. Deng and Z. Hani. Full derivation of the wave kinetic equation. \emph{Invent. Math.} 233 (2023), no. 2, 543--724.
\bibitem{DH21-2} Y. Deng and Z. Hani. Propagation of chaos and higher order statistics in wave kinetic theory. \emph{J. Eur. Math. Soc. (JEMS)}, to appear.
\bibitem{DH22} Y. Deng and Z. Hani. Rigorous justification of wave kinetic theory. arXiv:2207.08358.
\bibitem{DH23} Y. Deng and Z. Hani. Derivation of the wave kinetic equation: full range of scaling laws. arXiv:2301.07063.
\bibitem{DHM} Y. Deng, Z. Hani and X. Ma. Long time justification of the Boltzmann equation. In preparation.
\bibitem{DNY21} Y. Deng, A. Nahmod and H. Yue. Optimal local well-posedness for the periodic derivative nonlinear Schr\"{o}dinger equation. \emph{Comm. Math. Phys.} 384 (2021), no. 2, 1061--1107.
\bibitem{DK21} A. Dymov and S. Kuksin. Formal expansions in stochastic model for wave turbulence 1: kinetic limit. \emph{Comm. Math. Phys.} 382 (2021), 951--1014.
\bibitem{DK23} A. Dymov and S. Kuksin. Formal expansions in stochastic model for wave turbulence 2: method of diagram decomposition. \emph{J. Stat. Phys.} 190 (2023), article number: 3.
\bibitem{ESY04} L. Erd\"{o}s, M. Salmhofer and H-T. Yau. On the quantum Boltzmann equation. \emph{Journal of Statistical Physics}. Vol. 116, Nos. 1/4, August 2004.
\bibitem{ESY08} L. Erd\"{o}s, M. Salmhofer and H-T. Yau. Quantum diffusion of the random Schr\"{o}dinger evolution in the scaling limit. \emph{Acta Math.} 200 (2008), no. 2, 211--277.
\bibitem{ESY07}  L. Erd\"{o}s, B. Schlein, and H-T. Yau. Derivation of the cubic non-linear Schr\"odinger equation from quantum dynamics of many-body systems. Invent. math. 167, 515--614 (2007).
\bibitem{EY00} L. Erd\"{o}s and H-T. Yau. Linear Boltzmann equation as the weak coupling limit of a random Schr\"{o}dinger equation. \emph{Comm. Pure Appl. Math.} 53 (2000), 667--735.
\bibitem{EV15} M. Escobedo and J. Vel\'{a}zquez. Finite time blow-up and condensation for the bosonic Nordheim equation. \emph{Invent.
Math.} 200 (2015), 761--847.
\bibitem{EV15-2} M. Escobedo and J. Vel\'{a}zquez. On the theory of Weak Turbulence for the Nonlinear Schr\"{o}dinger Equation. \emph{Mem. Amer. Math. Soc.} 238 (2015), number 1124.
\bibitem{Fao20} E. Faou. Linearized Wave Turbulence Convergence Results for Three-Wave Systems. \emph{Comm. Math. Phys.} 378, 807--849 (2020).
\bibitem{GS79}  A. A. Galeev and R. Z. Sagdeev. \emph{Review of plasma physics}. In: Leontovich, M.A. (ed.) vol. 7 (1979), p. 307. Consultants Bureau, New York.
\bibitem{GST14} I. Gallagher, L. Saint-Raymond, and B. Texier. From Newton to Boltzmann: the case of hard-spheres and short-range potentials, ZLAM (2014).
\bibitem{GIT20} P. Germain, A. Ionescu and M-B. Tran. Optimal local well-posedness theory for the kinetic wave equation. \emph{J. Funct.
Anal.} 279 (2020), no. 4, 108570.
\bibitem{GT19} I. Gallagher and I. Tristani. On the convergence of smooth solutions from Boltzmann to Navier-Stokes. \emph{Ann. Henri Lebesgue} 3 (2020), 561--614.
\bibitem{GBE22}  J. Guioth, F. Bouchet, and G. Eyink. Path Large Deviations for the Kinetic Theory of Weak Turbulence. \emph{Journal of Statistical Physics}, Volume 189, Issue 2, 2022.
\bibitem{GS04} F. Golse and L. Saint-Raymond. The Navier-Stokes limit of the Boltzmann equation for bounded collision kernels. \emph{Invent. Math.} 155 (2004), 81--161.
\bibitem{Go16} F. Golse. On the dynamics of large particle systems in the mean field limit. Macroscopic and large scale phenomena: coarse graining, mean field limits and ergodicity, 1--144. \emph{Lect. Notes Appl. Math. Mech.} 3 Springer, 2016.
\bibitem{Her22} Felipe Hern\'{a}ndez. Quantum Diffusion via an Approximate Semigroup Property. arXiv 2206.12998.
\bibitem{HJ15} M. Hauray, P.-E. Jabin. Particle approximation of Vlasov equations with singular forces: propagation of chaos. \emph{Ann. Sci. \`Ec. Norm. Sup\'er.} (4) 48 (2015), no.4, 891--940.
\bibitem{Hil00}D. Hilbert. Mathematical Problems. \emph{Bulletin of the American Mathematical Society}. 8 (1901), 437--479.
\bibitem{HSZ23} Z. Hani, J. Shatah, and H. Zhu. Inhomogeneous turbulence for Wick NLS. \emph{Preprint}. 
\bibitem{HRST22} A. Hannani, M. Rosenzweig, G. Staffilani and M.-B. Tran. On the wave turbulence theory for a stochastic KdV type equation - generalization for the inhomogeneous kinetic limit. arXiv:2210.17445.
\bibitem{Has62} K. Hasselmann. On the nonlinear energy transfer in a gravity wave spectrum part 1. \emph{J. Fluid Mech.} 12 (1962), 481--500.
\bibitem{Has63} K. Hasselmann. On the nonlinear energy transfer in a gravity wave spectrum. Part 2. \emph{J. Fluid Mech.} 15 (1963), 273--281.
\bibitem{IP86} R. Illner and M. Pulvirenti. Global validity of the Boltzmann equation for a two-dimensional rare gas in vacuum. \emph{Comm. Math. Phys.} 105 (1986), no. 2, 189--203.
\bibitem{IP89} R. Illner and M. Pulvirenti. Global validity of the Boltzmann equation for two- and three-dimensional rare gas in vacuum: Erratum and improved result. \emph{Comm. Math. Phys.} 121 (1989), no. 1, 143--146.
\bibitem{J14} P.-E. Jabin. A review of the mean field limits for Vlasov equations. \emph{Kinet. Relat. Models} 7(2014), no.4, 661--711.
\bibitem{Jan08} P. A. Janssen, Progress in ocean wave forecasting. \emph{Journal of Computational Physics}, 227 (2008), 3572--3594.
\bibitem{Kin75} F. King, BBGKY hierarchy for positive potentials, Ph.D. dissertation, Dept. Mathematics, Univ. California, Berkeley, 1975.
\bibitem{KM08} S. Klainerman and M. Machedon, On the uniqueness of solutions to the Gross-Pitaevskii hierarchy, Comm. Math. Phys. 279 (2008), no. 1, 169--185.
\bibitem{Lan75} O. E. Lanford. Time Evolution of Large Classical Systems, ed. J. Moser, \emph{Lecture Notes in Physics} Vol. 38 (1975), Springer, Heidelberg, 1--111.
\bibitem{LS11} J. Lukkarinen and H. Spohn. Weakly nonlinear Schr\"{o}dinger equation with random initial data. \emph{Invent. Math.} 183 (2011), 79--188.
\bibitem{Ma22} X. Ma. Almost sharp wave kinetic theory of multidimensional KdV type equations with $d\geq 3$. arXiv:2204.06148.
\bibitem{Naz11} S. Nazarenko. Wave turbulence. \emph{Lecture Notes in Physics} Vol. 825 (2011), Springer, Heidelberg.
\bibitem{Nor28} L. Nordheim. On the kinetic method in the new statistics and application in the electron theory of conductivity. \emph{Proc. Roy. Soc. London Ser. A} 119 (1928), 689--698.
\bibitem{OVY93} S. Olla, S. R. S. Varadhan, H.-T. Yau. Hydrodynamical limit for a Hamiltonian system with weak noise. \emph{Comm. Math. Phys.} 155(3): 523--560 (1993).
\bibitem{Pei29} R. E. Peierls. Zur kinetischen Theorie der W\"{a}rmeleitung in Kristallen. Annalen Physik 3 (1929), 1055--1101.
\bibitem{Pul87} M. Pulvirenti. Global validity of the Boltzmann equation for a three-dimensional rare gas in vacuum. \emph{Comm. Math. Phys.} 113 (1987), no. 1, 79--85.
\bibitem{PS16} M. Pulvirenti and S. Simonella. The Boltzmann-Grad limit of a hard sphere system: analysis of the correlation error. \emph{Inventiones Math.} Volume 207, pages 1135--1237 (2017).
\bibitem{SR09} L. Saint-Raymond. Hydrodynamic limits of the Boltzmann equation. \emph{Lecture Notes in Mathematics}. Springer-Verlag Berlin Heidelberg 2009.
\bibitem{Sch17} B. Schlein. Derivation of effective evolution equations from many-body quantum mechanics. \emph{Riv. Math. Univ. Parma (N.S.)} 8(2017), no.1, 83--108.

\bibitem{Sec98} \emph{Guide to Wave Analysis and Forecasting}, Secretariat of the World Meteorological Organization, Geneva, Switzerland 1998.

\bibitem{ST20} A. Soffer and M.-B. Tran. On the Energy Cascade of 3-Wave Kinetic Equations: Beyond Kolmogorov-Zakharov Solutions. \emph{Comm. Math. Phys.} 376 (2020), 2229--2276.
\bibitem{Spo77} H. Spohn. Derivation of the transport equation for electrons moving through random impurities. \emph{J. Statist. Phys.} 17 (1977), no.6, 385--412.
\bibitem{Spo80} H. Spohn. Kinetic equations from Hamiltonian dynamics. \emph{Rev. Mod. Phys.} 52 (1980), no. 3, 569--615.
\bibitem{Spo08} H. Spohn. On the Boltzmann equation for weakly nonlinear wave equations, in \emph{Boltzmann's Legacy, ESI Lectures
in Mathematics and Physics} (2008), ISBN print 978-3-03719-057-9,  145--159.
\bibitem{ST21} G. Staffilani and M.-B. Tran. On the wave turbulence theory for the stochastic and random multidimensional KdV type equations. arXiv:2106.09819.
\bibitem{SS99} C. Sulem and P.-L. Sulem. The nonlinear Schr\"{o}dinger equation: Self-Focusing and Wave Collapse. Springer Applied Sciences Series. Volume 139. ISBN-13: 978-0387986111.
\bibitem{UU33} E. Uehling and G. Uhlenbeck. Transport Phenomena in Einstein-Bose and Fermi-Dirac Gases. \emph{Phys. Rev.} 43 (1933), 552--561.
\bibitem{Ved67} A. A. Vedenov. Theory of weakly turbulent plasma. In: Leontovich, M.A. (ed.) \emph{Reviews of Plasma Physics}, vol.
3 (1967), 229. Consultants Bureau, New York.
\bibitem{Zak65} V. E. Zakharov. Weak turbulence in media with decay spectrum. \emph{Zh. Priklad. Tech. Fiz.} 4 (1965), 5--39.
\bibitem{ZLF92} V.E. Zakharov, V.S. L'vov and and G. Falkovich. \emph{Kolmogorov Spectra of Turbulence: I Wave Turbulence}. Springer, Berlin, 1992.
\bibitem{ZS67} G. M. Zaslavskii and R. Z. Sagdeev. \emph{Sov. Phys. JETP} 25 (1967), 718.
\end{thebibliography}
\end{document}